%% file: main.tex
\documentclass[10pt,a4paper,twoside,reqno]{amsart}

\input{Preamble}
\usepackage{mlmodern}

\begin{document}

\title[Complex spinorial forms, Brinkmann four-manifolds, and self-dual bundle gerbes]{Complex spinorial forms, Brinkmann four-manifolds, and self-dual bundle gerbes}

\author[Alejandro Gil-García]{Alejandro Gil-García \orcidlink{0000-0002-9370-241X}}
\address{Scuola Internazionale Superiore di Studi Avanzati, Trieste, Italy}
\email{agilgarc@sissa.it}

\author[C. S. Shahbazi]{C. S. Shahbazi \orcidlink{0000-0003-1185-9569}}
\address{Departamento de Matem\'aticas, Universidad UNED - Madrid, Reino de Espa\~na}
\email{cshahbazi@mat.uned.es}

\begin{abstract}
We develop the differential theory of complex spinorial forms associated with irreducible complex spinors across all dimensions and signatures. This framework enables the study of constrained parallelicity conditions for irreducible complex spinors by reformulating them as equivalent differential systems for exterior forms within a prescribed semi-algebraic body of the Kähler-Atiyah bundle. To illustrate this approach, we first apply it to the spin-c Killing spinor equation in low dimensions, refining existing results by relaxing standard assumptions of simply connectedness and completeness. Then, we proceed to apply our framework to supersymmetry conditions in supergravity, and we prove that every quasi-supersymmetric solution of Freedman’s gauged supergravity belongs to an explicit four-parameter family of geodesically complete, globally hyperbolic gyratonic Brinkmann waves with spherical wave fronts. Finally, we study the quasi-supersymmetric solutions of six-dimensional minimal supergravity, defined by a system that couples a self-dual curving on a bundle gerbe to a Lorentzian metric with an irreducible chiral spinor parallel under a metric connection with totally skew-symmetric torsion given by the curvature of the aforementioned curving. Along the way, we prove that a Lorentzian six-manifold admits a skew-torsion parallel spinor with an integrable screen bundle only if it admits a foliation whose leaves are locally conformally Kähler complex surfaces.\bigskip

\noindent
\emph{Keywords: complex spinors, spinorial forms, bundle gerbes, Kundt and Brinkmann space-times, supersymmetry, supergravity}\medskip

\noindent
\emph{MSC2020: Primary 53C27; Secondary 53C50, 83C35, 83C60, 83E50}

\end{abstract}
 
\maketitle

\setcounter{tocdepth}{1} 

\tableofcontents


\section{Introduction}


The main goal of this article is to develop a geometric framework that enables the study of any constrained parallelicity condition for an irreducible complex spinor in terms of an equivalent differential system for an algebraically constrained differential form, which is interpreted as the algebraic square of the original constrained parallel spinor \cite{AtiyahSeminar,Algebraic_Complex_Square_2025,Krasnov2024}. The development of this general framework, to which we refer as the \emph{theory of spinorial forms}, is inspired by the \emph{method of bilinears} \cite{Tod:1983pm,Gauntlett:2002nw,Gutowski:2003rg,Meessen:2006tu}, see also \cite{Ortin:2015hya,Tomasiello:2022dwe}, which emerged in the supergravity literature as a formalism for classifying and studying supersymmetric solutions. In line with its physical origin, we expect the theory of spinorial forms to be especially relevant in the following cases:

\begin{itemize}
    \item Generally, to study parallel spinors in pseudo-Riemannian signatures and in cases where the representation theory of the spin-c group is yet to be established.

    \item To study moduli spaces of solutions to parallelicity conditions for which the metric together with the spinor are both variables, which is the common situation.

    \item To study evolution problems involving spinorial equations for which the metric evolves non-trivially, and in particular to Cauchy problems for parallel spinors and supersymmetric configurations in Lorentzian signature, as illustrated in \cite{Murcia:2020zig,Murcia:2021dur,ShahbaziThesis} for real spinors of real type.
\end{itemize}

The reason why the theory of spinorial forms is potentially especially relevant for these cases is that it is a priori \emph{blind} to representation theory, and instead resorts to brute algebraic force. On the other hand, the theory of spinorial forms eliminates any explicit dependence on the choice of spinor bundle, and therefore removes at once any complication associated with dealing with bundles that formally depend on the choice of variable underlying metric \cite{BGM05}. This hints at the relevance of the theory of spinorial forms in the study of moduli spaces of spinorial parallelicity conditions.\medskip

A \emph{real version} of the framework of \emph{complex} spinorial forms presented here was developed for irreducible real spinors of real type in \cite{CLS21,Sha24}, which therefore restricts the signature $(p,q)$ of the underlying pseudo-Riemannian metric to $p-q\equiv_8 0, 1, 2$. Compared with the real case considered in \emph{opere citato}, the complex case presents new challenges; in particular, it involves two distinct notions of the square of a spinor, as already explained in \cite{Algebraic_Complex_Square_2025}. On the other hand, whereas the complex case considered in this article covers every dimension and signature, the real cases considered in \cite{CLS21,Sha24} are restricted to the aforementioned signatures to guarantee that the corresponding irreducible Clifford module is surjective. The real irreducible case of quaternionic type remains, therefore, open.


\subsection*{Outline and main results}


In Section \ref{sec:spinorialforms}, we review the algebraic theory of the square of an irreducible spinor as developed in \cite{Algebraic_Complex_Square_2025}, highlighting the existence of two natural squares associated with an irreducible complex spinor, which we call the Hermitian and complex-bilinear squares. They are obtained, as their name indicates, through the squaring construction associated with a choice of admissible Hermitian or complex-bilinear pairings, respectively.\medskip

In Section \ref{sec:constrained_parallel_spinors}, we introduce some geometric preliminaries and define the notion of constrained parallel spinor that we will use throughout the paper, which consists of a section $\eta$ of a bundle of irreducible complex spinors $S$ over a pseudo-Riemannian manifold $(M,g)$ satisfying $\cD\eta = 0$ and $\cQ(\eta) = 0$ for a connection $\cD$ on $S$ and an endomorphism $\cQ$ of $S$.\medskip

In Sections \ref{sec:Hermitian_parallel_forms} and \ref{sec:Complex_bilinear_parallel_forms}, we develop the differential theory for constrained parallel spinors through their Hermitian and complex-bilinear squares, respectively, hence establishing the theory of complex spinorial forms. Our main result characterizes constrained parallel spinors in terms of a necessary set of differential equations for their Hermitian squares, or an equivalent differential system for their complex-bilinear squares.\medskip

We summarize the results of these two sections in the following abbreviated theorems:

\begin{thm} 
\label{thm:hermitianeven}
Let $(M,g)$ be a strongly spin-c pseudo-Riemannian manifold of even dimension $d$ and let $\fra\in\Omega^1(M,\wedge T^*_\C M)$ and $\frq\in\Gamma(\wedge T^*_\C M\otimes W)$ be given, where $W$ is a vector bundle over $M$. Then $(M,g)$ admits a nowhere vanishing constrained parallel spinor relative to a pair $(\cA,\cQ)$ whose symbol is $(\fra,\frq)$ only if there exists a nowhere vanishing complex exterior form $\alpha\in\Omega_\C(M)$ satisfying the following algebraic and differential equations: 
\begin{gather*}
\alpha\diamond\alpha = 2^{\frac{d}{2}}\alpha^{(0)}\alpha,\qquad \alpha \diamond \beta \diamond \alpha=2^{\frac{d}{2}}(\alpha\diamond\beta)^{(0)}\alpha,\qquad(\pi^{\frac{1-s}{2}}\circ\tau)(\bar\kappa\alpha) = \kappa \bar{\alpha},\\
\nabla^g\alpha=\fra\diamond\alpha+\alpha\diamond(\pi^{\frac{1-s}{2}}\circ\tau)(\overline{\fra}),\qquad\frq\diamond\alpha=0
\end{gather*} 
for a complex exterior form $\beta\in\Omega_\C(M)$ satisfying $(\alpha\diamond\beta)^{(0)}\neq0$.
\end{thm}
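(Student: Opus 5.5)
The plan is to take $\alpha$ to be the Hermitian square of the constrained parallel spinor $\eta$. Concretely, $\alpha$ is the exterior form corresponding, under the Kähler--Atiyah isomorphism $\Psi_\gamma\colon(\wedge T^*_\C M,\diamond)\to\End(S)$, to the endomorphism $\eta\otimes\eta^\ast$. Here $\eta^\ast=\scrH(-,\eta)$ is built from an admissible Hermitian pairing $\scrH$ of adjoint type $s$ and with the appropriate normalization constant $\kappa$.

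The algebraic conditions are pointwise, so they follow from the algebraic characterization of Hermitian squares reviewed in Section \ref{sec:spinorialforms}. I invoke it fiberwise, using that $d$ is even, so $\Psi_\gamma$ is an isomorphism.

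\begin{itemize}
\item The relation $\alpha\diamond\alpha=2^{d/2}\alpha^{(0)}\alpha$ is the image of $(\eta\otimes\eta^\ast)^2=\scrH(\eta,\eta)\,\eta\otimes\eta^\ast$, combined with $\mathrm{tr}\,\Psi_\gamma(\omega)=2^{d/2}\omega^{(0)}$.
\item Likewise, $\alpha\diamond\beta\diamond\alpha=2^{d/2}(\alpha\diamond\beta)^{(0)}\alpha$ expresses that $\Psi_\gamma(\alpha)$ has rank one.
\item The reality condition $(\pi^{\frac{1-s}{2}}\circ\tau)(\bar\kappa\alpha)=\kappa\bar\alpha$ encodes the $\scrH$-self-adjointness of $\eta\otimes\eta^\ast$, because the $\scrH$-adjoint corresponds to $\pi^{\frac{1-s}{2}}\circ\tau$ composed with complex conjugation.
\end{itemize}

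For the auxiliary form $\beta$, I choose a local spinor $\xi$ with $\scrH(\eta,\xi)\neq0$ and set $\beta=\Psi_\gamma^{-1}(\xi\otimes\xi^\ast)$. Such a $\xi$ exists by nondegeneracy of $\scrH$ and since $\eta$ is nowhere vanishing. This gives $(\alpha\diamond\beta)^{(0)}\neq0$, and $\alpha$ is nowhere vanishing for the same reason. To get a global $\beta$, I would either patch with a partition of unity or note that the statement only requires existence locally.

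Next comes the differential equation. Write $\cD=\nabla^S+\cA$ with $\nabla^S$ the spin-c connection and $\Psi_\gamma(\fra)=\cA$. Then $\cD\eta=0$ gives $\nabla^S\eta=-\cA\eta$. I differentiate $\eta\otimes\eta^\ast$ using two facts. First, $\nabla^S$ is compatible with $\scrH$ in the strongly spin-c setting, where the auxiliary line bundle connection is unitary. Second, the Clifford multiplication is parallel, so $\Psi_\gamma$ intertwines $\nabla^g$ with the induced connection on $\End(S)$. This yields
\[\nabla^S(\eta\otimes\eta^\ast)=-\cA\eta\otimes\eta^\ast-\eta\otimes(\cA\eta)^\ast=-\cA\circ(\eta\otimes\eta^\ast)-(\eta\otimes\eta^\ast)\circ\cA^{\dagger},\]
where $\cA^\dagger$ is the $\scrH$-adjoint. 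Translating through $\Psi_\gamma^{-1}$, with $\cA^\dagger\leftrightarrow(\pi^{\frac{1-s}{2}}\circ\tau)(\bar\fra)$, gives the stated equation up to sign conventions for $\fra$, which I fix so that it matches. Finally, $\cQ(\eta)=0$ gives $\cQ\circ(\eta\otimes\eta^\ast)=0$, that is, $\frq\diamond\alpha=0$ after applying $\Psi_\gamma^{-1}$ componentwise in $W$.

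The main obstacle is bookkeeping for the adjoint. I must check that, for an admissible Hermitian pairing of symmetry and adjoint type $s$, the $\scrH$-adjoint of $\Psi_\gamma(\omega)$ is exactly $\Psi_\gamma((\pi^{\frac{1-s}{2}}\circ\tau)(\bar\omega))$, including the conjugation of the complex structure. I also need the compatibility of $\scrH$ with the spin-c connection to hold without extra phase terms. I would verify the adjoint formula on the generators $\gamma(e_i)$ and extend it multiplicatively, since $\tau$ is an anti-automorphism of $\diamond$ and $\pi$ is an automorphism.
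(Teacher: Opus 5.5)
Your proposal is correct and is essentially the paper's proof. You take $\alpha=\hcE^\kappa_\gamma(\eta)$, get the algebraic identities fiberwise from Theorem \ref{thm:even_Hermitian_forms}, get $\frq\diamond\alpha=0$ from $\cQ\circ E_\eta=0$, and get the differential equation by differentiating $E_\eta=\kappa\,\eta\otimes\scS(-,\eta)$ with the $\scS$-compatibility of $\nabla^{g,A}$ and Proposition \ref{prop:nabla_compatible_Psi_gamma}, exactly as in Lemma \ref{lemma:nabla_square_alpha}. The sign you leave open is fixed by the paper's convention $\cA:=\nabla^{g,A}-\cD$, and the adjoint formula you list as the main obstacle is literally the defining property of an admissible Hermitian pairing; your partition-of-unity construction of a global $\beta$ works because $(\alpha\diamond\beta)^{(0)}=2^{-\frac{d}{2}}\kappa\sum_a\rho_a\vert\scS(\eta,\xi_a)\vert^2$ has no cancellations, a point the paper leaves implicit.
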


\begin{thm}
\label{thm:hermitianodd}
Let $(M,g)$ be a strongly spin-c pseudo-Riemannian manifold of odd dimension $d$ and let $\fra\in\Omega^1(M,\wedge^< T^*_\C M)$ and $\frq\in\Gamma(\wedge^< T^*_\C M\otimes W)$ be given, where $W$ is a vector bundle over $M$. Then $(M,g)$ admits a nowhere vanishing constrained parallel spinor relative to a pair $(\cA,\cQ)$ whose symbol is $(\fra,\frq)$ only if there exists a nowhere vanishing complex exterior form $\alpha\in\Omega^<_\C(M)$ satisfying the following algebraic and differential equations: 
\begin{gather*}
\alpha\vee\alpha = 2^{\frac{d-1}{2}}\alpha^{(0)}\alpha,\qquad \alpha\vee\beta\vee\alpha=2^{\frac{d-1}{2}}(\alpha\vee\beta)^{(0)}\alpha,\qquad(\pi^{\frac{1-s}{2}}\circ\tau)(\bar\kappa\alpha) = \kappa \bar{\alpha},\\
\nabla^g\alpha=\fra\vee\alpha+\alpha\vee(\pi^{\frac{1-s}{2}}\circ\tau)(\overline{\fra}),\qquad\frq\vee\alpha=0
\end{gather*} 
for a complex exterior form $\beta\in\Omega^<_\C(M)$ satisfying $(\alpha\vee\beta)^{(0)}\neq0$.
\end{thm}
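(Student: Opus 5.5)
The plan is to reduce Theorem \ref{thm:hermitianodd} pointwise to the algebraic characterization of Hermitian squares of irreducible complex spinors in odd dimension, as reviewed in Section \ref{sec:spinorialforms}, and then to transport the parallelicity and constraint equations through the squaring map. In odd dimension $d$ the complex Clifford algebra is not simple, and an irreducible module $S$ is a module over one of its two simple factors. Consequently the Kähler--Atiyah algebra is replaced by the truncated algebra $(\wedge^< T^*_\C M,\vee)$, of real rank $2^{d-1}$ over $\C$, which is isomorphic via the truncated Clifford map to $\mathrm{End}(S)$. I would take as given from Section \ref{sec:spinorialforms} the following fibrewise statement. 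Given an admissible Hermitian pairing $\mathcal{B}$ on $S$ with symmetry type $s$ and adjoint type encoded by $\kappa$, the form $\alpha\in\wedge^<T^*_\C M$ corresponding to $\eta\otimes\eta^\ast$ (with $\eta^\ast=\mathcal{B}(-,\eta)$) satisfies three conditions: $\alpha\vee\alpha=2^{\frac{d-1}{2}}\alpha^{(0)}\alpha$, $\alpha\vee\beta\vee\alpha=2^{\frac{d-1}{2}}(\alpha\vee\beta)^{(0)}\alpha$ for some $\beta$ with $(\alpha\vee\beta)^{(0)}\neq0$, and the reality condition $(\pi^{\frac{1-s}{2}}\circ\tau)(\bar\kappa\alpha)=\kappa\bar\alpha$. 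The first two conditions encode that the endomorphism has rank one. The third encodes self-adjointness of $\eta\otimes\eta^\ast$ with respect to $\mathcal{B}$; recall that $\pi^{\frac{1-s}{2}}\circ\tau$ realizes the $\mathcal{B}$-adjoint on the form side. The prefactor $2^{\frac{d-1}{2}}$ comes from the trace normalization $\mathrm{tr}(\Psi(\alpha))=2^{\frac{d-1}{2}}\alpha^{(0)}$ on $S$ of complex rank $2^{\frac{d-1}{2}}$.

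\textbf{Step 1.} Define $\alpha$ globally. Given a nowhere vanishing constrained parallel spinor $\eta$ relative to $(\cA,\cQ)$, the strong spin-c hypothesis guarantees an admissible Hermitian pairing on $S$ that is globally defined. I would also need this pairing to be compatible with the connection, i.e.\ the spin-c connection preserves it, which is part of the setup in Section \ref{sec:constrained_parallel_spinors}. Set $\alpha:=\Psi^{-1}(\eta\otimes\eta^\ast)$. This form is smooth and nowhere vanishing because $\eta$ is nowhere vanishing and $\Psi$ is an isomorphism of bundles. The three algebraic equations then hold pointwise by the algebraic result above, with $\beta$ chosen locally; for instance, $\beta=\Psi^{-1}(\xi\otimes\xi^\ast)$ for a local spinor $\xi$ with $\mathcal{B}(\eta,\xi)\neq0$, which exists by nondegeneracy of $\mathcal{B}$.

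\textbf{Step 2.} Derive the differential equation. Write $\cD=\nabla^S+\cA$, where $\nabla^S$ is the spin-c lift of $\nabla^g$ (including the auxiliary $U(1)$ connection) and $\cA=\Psi(\fra)$. Since $\Psi$ intertwines $\nabla^g$ with the connection induced by $\nabla^S$ on $\mathrm{End}(S)$, and $\nabla^S$ preserves $\mathcal{B}$, the Leibniz rule gives
\begin{equation*}
\nabla^S(\eta\otimes\eta^\ast)=(\nabla^S\eta)\otimes\eta^\ast+\eta\otimes(\nabla^S\eta)^\ast=-\cA\circ(\eta\otimes\eta^\ast)-(\eta\otimes\eta^\ast)\circ\cA^{\dagger},
\end{equation*}
where $\dagger$ denotes the $\mathcal{B}$-adjoint. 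The $U(1)$ part cancels here because the pairing is Hermitian, so the phase contributions from $\eta$ and $\eta^\ast$ are conjugate. Translating $\cA^\dagger$ back to forms via the antiautomorphism identity $\Psi(\beta)^\dagger=\Psi((\pi^{\frac{1-s}{2}}\circ\tau)(\bar\beta))$ gives the stated equation. The signs must be absorbed into the convention relating $\cA$ and $\fra$ used in Theorem \ref{thm:hermitianeven}.

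\textbf{Step 3.} Derive the constraint. From $\cQ(\eta)=0$ we get $\cQ\circ(\eta\otimes\eta^\ast)=0$, which translates to $\frq\vee\alpha=0$ after tensoring with $W$.

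\textbf{Main obstacle.} I expect the main difficulty to lie in the odd-dimensional truncation. The map $\Psi$ restricted to $\wedge^<$ is an algebra isomorphism only for the truncated product $\vee$. One must check that the adjoint anti-automorphism $\pi^{\frac{1-s}{2}}\circ\tau$ preserves $\wedge^<$, or rather that it is compatible with truncation after composing with the projection that uses the volume form acting as a scalar on $S$. Concretely, I need $\Psi\circ(\pi^{\frac{1-s}{2}}\circ\tau)$ to agree with the adjoint on truncated forms. Since $\tau$ and $\pi$ act on degrees $k$ and $d-k$ with signs related via Hodge duality, I would check that the admissible choice of $s$ makes these signs coincide, and otherwise insert the appropriate Hodge-duality correction.
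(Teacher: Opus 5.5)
Your proposal is correct and is essentially the paper's argument: the algebraic identities come from the pointwise odd-dimensional characterization of Hermitian squares (Theorem \ref{thm_Hermitian_square_odd}), and the constraint comes from Lemma \ref{lemma:constrainedspinorodd}; the differential equation comes from the Leibniz identity $\nabla^{g,A}E_\eta=\cA\circ E_\eta+E_\eta\circ\cA^\dagger$ for $E_\eta=\kappa\,\eta\otimes\scS(-,\eta)$, pulled back through the connection-compatible algebra isomorphism $\Psi^<_\ell$ as in Lemma \ref{lemma:nabla_square_alpha}. Three small corrections: with the paper's convention $\nabla^{g,A}\eta=\cA(\eta)$ the signs come out as stated with nothing to absorb; $s$ is the adjoint type and $\kappa$ is only the phase of the square map; and your anticipated obstacle does not arise, because $\pi$ and $\tau$ act degreewise (hence preserve $\wedge^<$) and $\Psi^<_\ell(\alpha)=\gamma_\ell(\Psi(\alpha))$ for $\alpha\in\wedge^<$ since $\gamma_\ell(\nu_\C)=\ell$, so the admissibility identity restricts verbatim without any Hodge-duality correction.
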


In the next two theorems, $\cL$ is a complex line bundle over $M$.

\begin{thm}
Let $(M,g)$ be a strongly spin-c pseudo-Riemannian manifold of even dimension $d$ and let $\fra\in\Omega^1(M,\wedge T^*_\C M)$ and $\frq\in\Gamma(\wedge T^*_\C M\otimes W)$ be given, where $W$ is a vector bundle over $M$. Then $(M,g)$ admits a nowhere vanishing constrained parallel spinor relative to a pair $(\cA,\cQ)$ whose symbol is $(\fra,\frq)$ if and only if there exists a nowhere vanishing $\cL$-valued complex exterior form $\alpha\in\Omega_\C(M,\cL)$ satisfying the following algebraic and differential equations: 
\begin{gather*}
    \alpha\diamond\alpha=2^{\frac{d}{2}}\alpha^{(0)}\otimes\alpha,\qquad \alpha\diamond\beta\diamond\alpha=2^{\frac{d}{2}}(\alpha\diamond\beta)^{(0)}\otimes\alpha,\qquad (\pi^{\frac{1-s}{2}}\circ\tau)(\alpha)=\sigma\alpha,\\
    \nabla^{g,A}\alpha=\fra\diamond\alpha+\alpha\diamond(\pi^{\frac{1-s}{2}}\circ\tau)(\fra),\qquad \frq\diamond\alpha=0
\end{gather*} 
for a $\cL$-valued complex exterior form $\beta\in\Omega_\C(M,\cL)$ satisfying $(\alpha\diamond\beta)^{(0)}\neq0$.
\end{thm}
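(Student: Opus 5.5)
The plan is to pass through the complex-bilinear square. The algebraic input is the characterization from \cite{Algebraic_Complex_Square_2025}, reviewed in Section \ref{sec:spinorialforms}. Fix an admissible complex-bilinear pairing $\cB$ on the irreducible complex Clifford module $S_x\simeq\C^{2^{d/2}}$, with symmetry $\sigma$ and adjoint type $s$. Then the map $\eta\mapsto\alpha_\eta$, defined by transporting $\eta\otimes\cB(\eta,-)\in\End(S_x)$ to $\wedge T^*_{x,\C}M$ via the Kähler--Atiyah isomorphism, has a precise image away from zero. A nonzero form $\alpha$ is of the form $\alpha_\eta$, with $\eta$ unique up to sign, if and only if three conditions hold for some $\beta$ with $(\alpha\diamond\beta)^{(0)}\neq0$: $\alpha\diamond\alpha=2^{d/2}\alpha^{(0)}\alpha$, $\alpha\diamond\beta\diamond\alpha=2^{d/2}(\alpha\diamond\beta)^{(0)}\alpha$, and $(\pi^{\frac{1-s}{2}}\circ\tau)(\alpha)=\sigma\alpha$. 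The first two equations say that $\alpha$ is a rank-one endomorphism; the third says it is $\cB$-symmetric of the right type, so that $\alpha=\xi\otimes\cB(\xi,-)$. Since the pairing is complex-bilinear rather than Hermitian, no phase ambiguity arises beyond the sign. This is why an ``if and only if'' is possible here, in contrast with Theorem \ref{thm:hermitianeven}.

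Globally, the spinor bundle of a strongly spin-c structure is $S=S_0\otimes\cL^{1/2}$ only formally. The bilinear square $\eta\otimes\cB(\eta,-)$ therefore naturally takes values in $\End(S)\otimes\cL$, where $\cL$ is the determinant line of the spin-c structure, equipped with the connection $A$. So I define $\alpha=\Psi(\eta\otimes\cB(\eta,-))\in\Omega_\C(M,\cL)$. The algebraic equations then hold pointwise by the previous paragraph, and $\alpha$ is nowhere vanishing if and only if $\eta$ is.

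For the differential equations, write $\cD=\nabla^{g,A}-\cA$, where $\cA$ has symbol $\fra$. Since $\cB$ is parallel with respect to the spin-c connection (tensored with $\cL$), differentiating gives
\[
\nabla^{g,A}\alpha = \cA(\eta)\otimes\cB(\eta,-)+\eta\otimes\cB(\cA\eta,-).
\]
The first term corresponds to $\fra\diamond\alpha$. For the second, the $\cB$-adjoint of Clifford multiplication by a form $\omega$ is multiplication by $(\pi^{\frac{1-s}{2}}\circ\tau)(\omega)$, so this term corresponds to $\alpha\diamond(\pi^{\frac{1-s}{2}}\circ\tau)(\fra)$. Similarly, $\cQ(\eta)\otimes\cB(\eta,-)$ corresponds to $\frq\diamond\alpha$. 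Since $\cB(\eta,-)\neq0$ at every point, this vanishes exactly when $\cQ(\eta)=0$. This proves the ``only if'' direction.

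For the converse, the algebraic equations produce, locally, a spinor $\eta$ with $\alpha=\alpha_\eta$, unique up to sign. A sign choice that is only local yields a double cover, so I will also need to track the parity/orientation. Running the computation above backwards, the differential equation gives
\[
(\cD\eta)\otimes\cB(\eta,-)+\eta\otimes\cB(\cD\eta,-)=0 .
\]
I then evaluate this on a spinor $\chi$ with $\cB(\eta,\chi)=1$. This is possible because $\cB$ is nondegenerate, and $\chi$ can be produced from $\beta$. The result is $\cD\eta=-\cB(\cD\eta,\chi)\eta$. Pairing both sides with $\eta$ gives $\cB(\cD\eta,\eta)=0$ when $\sigma=1$; in general one uses $d\,\cB(\eta,\eta)$-type identities. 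Hence $\cD\eta=c\,\eta$ with $2c=0$, so $\cD\eta=0$.

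I expect the main obstacle to be exactly this last step, together with the global sign. If $\cB(\eta,\eta)=0$, as for isotropic spinors, the pairing argument must be replaced. There, I would write $\cD\eta=\lambda\eta$ and substitute back, obtaining $2\lambda\,\eta\otimes\cB(\eta,-)=0$, hence $\lambda=0$. This algebraic version works regardless of isotropy. The sign ambiguity I would resolve by noting that the local solutions are parallel for a connection, so their sign classes are locally constant. I would then absorb any resulting $\Z_2$ monodromy into the choice of square root $\cL^{1/2}$, that is, into the spin-c structure.
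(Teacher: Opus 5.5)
Your proposal is correct and essentially reproduces the paper's argument. It uses the algebraic characterization of the complex-bilinear square pointwise, together with the $\cL$-valued identity $\nabla E_\eta=\cA\circ E_\eta+E_\eta\circ\cA^t$ and its converse via $\cD\eta=\lambda\eta\Rightarrow 2\lambda\,\eta\otimes\scB(\eta,-)=0$ (Lemma~\ref{lemma:complex_bilinear_parallel_spinor}). It then removes the $\Z_2$-spinor class by twisting the spin-c cocycle $[x_{ab},z_{ab}]\mapsto[x_{ab},s_{ab}z_{ab}]$, which is your ``absorb the monodromy into $\cL^{1/2}$'' and leaves $\cL$ and $A$ unchanged. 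Appealing to parallelism for local constancy of the signs is unnecessary, since continuity of the local lifts already gives it, and the pairing-with-$\eta$ step is superseded by your substitution argument; neither affects correctness.
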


\begin{thm} 
Let $(M,g)$ be a strongly spin-c pseudo-Riemannian manifold of odd dimension $d$ and let $\fra\in\Omega^1(M,\wedge^< T^*_\C M)$ and $\frq\in\Gamma(\wedge^< T^*_\C M\otimes W)$ be given, where $W$ is a vector bundle over $M$. Then $(M,g)$ admits a nowhere vanishing constrained parallel spinor relative to a pair $(\cA,\cQ)$ whose symbol is $(\fra,\frq)$ if and only if there exists a nowhere vanishing $\cL$-valued complex exterior form $\alpha\in\Omega^<_\C(M,\cL)$ satisfying the following algebraic and differential equations:
\begin{gather*}
    \alpha\vee\alpha=2^{\frac{d-1}{2}}\alpha^{(0)}\otimes\alpha,\qquad \alpha\vee\beta\vee\alpha=2^{\frac{d-1}{2}}(\alpha\vee\beta)^{(0)}\otimes\alpha,\qquad (\pi^{\frac{1-s}{2}}\circ\tau)(\alpha)=\sigma\alpha,\\
    \nabla^{g,A}\alpha=\fra\vee\alpha+\alpha\vee(\pi^{\frac{1-s}{2}}\circ\tau)(\fra),\qquad \frq\vee\alpha=0
\end{gather*} 
for a $\cL$-valued complex exterior form $\beta\in\Omega^<_\C(M,\cL)$ satisfying $(\alpha\vee\beta)^{(0)}\neq0$.
\end{thm}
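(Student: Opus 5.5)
The proof reduces the statement, pointwise and then globally, to the algebraic characterization of complex-bilinear squares recalled in Section \ref{sec:spinorialforms} (following \cite{Algebraic_Complex_Square_2025}). It then transports the differential equations through the squaring map.

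First the algebra. Since $d$ is odd, we use the truncated Kähler-Atiyah algebra $(\wedge^< T^*_\C M,\vee)$. Fibrewise it is isomorphic, through the truncated symbol map, to $\mathrm{End}(S_x)$ of an irreducible complex Clifford module of complex dimension $N=2^{\frac{d-1}{2}}$. The complex-bilinear square $\eta\otimes\eta$, built with an admissible bilinear pairing $\mathcal{B}$ of symmetry $\sigma$ and type $s$, is a rank-one endomorphism. The earlier algebraic result states that a nonzero form $\alpha$ is such a square if and only if it satisfies:
\begin{itemize}
\item idempotency up to scale, $\alpha\vee\alpha=N\alpha^{(0)}\alpha$;
\item the rank-one condition $\alpha\vee\beta\vee\alpha=N(\alpha\vee\beta)^{(0)}\alpha$ for some $\beta$ with $(\alpha\vee\beta)^{(0)}\neq 0$;
\item the $\mathcal{B}$-symmetry condition $(\pi^{\frac{1-s}{2}}\circ\tau)(\alpha)=\sigma\alpha$.
\end{itemize}
Moreover, $\alpha$ determines $\eta$ up to a sign. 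This sign ambiguity is exactly why the statement uses $\cL$-valued forms. A spinor $\eta\in\Gamma(S)$, with $S$ twisted by the spin-c structure, squares to a section of $\wedge^<T^*_\C M\otimes\cL$, where $\cL$ is the determinant-type line bundle whose square root twists $S$. So I would first check that the fibrewise statement globalizes: $\eta\mapsto\eta\otimes\eta$ is a bundle map $S\to\wedge^<T^*_\C M\otimes\cL$. Conversely, a nowhere vanishing $\alpha$ satisfying the algebraic constraints has kernel/image given by a line subbundle of $S$ whose tensor square is trivialized by $\alpha$, and this yields a global $\eta$.

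Second, the differential equation. Let $\cD=\nabla^{g,A}-\cA$ be the spinor connection, with symbol $\fra$ for $\cA$. Differentiating $\alpha=\eta\otimes\eta$ by the Leibniz rule gives $\nabla^{g,A}\alpha=\cA(\eta)\otimes\eta+\eta\otimes\cA(\eta)$. Under the symbol map the first term is $\fra\vee\alpha$. For the second term I would use that left multiplication on the second factor corresponds to right multiplication by the $\mathcal{B}$-adjoint. That adjoint is $(\pi^{\frac{1-s}{2}}\circ\tau)(\fra)$ because $\mathcal{B}$ is admissible. Similarly $\cQ(\eta)\otimes\eta$ has symbol $\frq\vee\alpha$, and $\frq\vee\alpha=0$ if and only if $\cQ(\eta)=0$, since $\eta\neq0$ and $\cdot\otimes\eta$ is injective. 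Hence $\cD\eta=0$ implies the stated differential equation.

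For the converse, suppose $\alpha$ satisfies the system, and let $\eta$ be the section with $\alpha=\eta\otimes\eta$, defined locally and globally up to sign. The equation gives $\cD\eta\otimes\eta+\eta\otimes\cD\eta=0$. Write $\cD\eta=\theta\otimes\eta+\xi$ with $\xi$ in a complement of $\eta$. The identity forces $\xi=0$ and $2\theta=0$, so $\cD\eta=0$. This uses that $u\otimes\eta+\eta\otimes u=0$ implies $u=0$; here we are in characteristic zero and $\eta\neq 0$, and the map $u\mapsto u\otimes\eta+\eta\otimes u$ is injective on $S_x$.

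I expect the main obstacle to be the global step: producing $\eta$ from $\alpha$ consistently on all of $M$, and keeping track of the twisting by $\cL$ and $A$. Locally $\eta$ is determined only up to sign. The differential equation $\cD\eta=0$ is sign-invariant, and $\eta$ is continuous. So the sign ambiguity defines a $\mathbb{Z}_2$-torsor, and I would absorb it into the choice of $\cL$ (a square root of the relevant line bundle), which is what the "strongly spin-c" hypothesis should provide. I would verify this carefully against the definitions in Section \ref{sec:constrained_parallel_spinors}. The odd-dimensional specifics, namely the truncation $\wedge^<$ and the factor $2^{\frac{d-1}{2}}$, enter only through the choice of irreducible module, handled by the algebraic result.
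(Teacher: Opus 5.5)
Your algebraic and differential steps match the paper. The pointwise characterization is Theorem \ref{thm:odd_complex-bilinear_square}, and the constraint $\frq\vee\alpha=0$ is Lemma \ref{lemma:constrainedspinorodd}. Your Leibniz computation with the $\scB$-adjoint, together with the converse step ``$\cD\eta=\theta\eta$, hence $\theta=0$'', is exactly the odd analogue of Lemma \ref{lemma:complex_bilinear_parallel_spinor}.

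\textbf{The gap is the global step of the converse.} In a fixed spinor bundle $S$, your claim that a nowhere vanishing $\alpha$ ``yields a global $\eta$'' is false. The image line subbundle $L_\alpha\subset S$ only has $L_\alpha^{\otimes 2}$ trivialized by $\alpha$. The local square roots of $\alpha$ form a principal $\Z_2$-bundle whose class is the $\Z_2$-spinor class $\mathfrak{s}_\gamma(\alpha)\in H^1(M,\Z_2)$, and this class need not vanish. So you cannot take $\eta$ to be ``defined globally up to sign''.

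Your proposed repair, absorbing this torsor into the choice of $\cL$, points in the wrong direction. Here $\alpha$ is a section of $\wedge^<T^*_\C M\otimes\cL$ and $A$ is a connection on $\cL$, so $\cL$ must stay fixed for the equations to keep their meaning. Twisting $S$ by a complex line bundle $L$ would replace $\cL$ by $\cL\otimes L^{2}$. Nor does the strongly spin-c hypothesis resolve this: it only guarantees a $\Spin^c_o$-reduction and hence the existence of admissible pairings. It does not kill $\mathfrak{s}_\gamma(\alpha)$.

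\textbf{What is needed is to change the spinor bundle while keeping $\cL$.}
\begin{enumerate}
\item Choose local square roots $\eta_a$ of $\alpha$ on a good cover, with $\eta_a=s_{ab}\eta_b$ on overlaps.
\item Replace the $\Spin^c_o$-cocycle $[x_{ab},z_{ab}]$ of the spin-c structure by $[x_{ab},s_{ab}z_{ab}]$. Since $(s_{ab}z_{ab})^2=z_{ab}^2$, the new structure has the same characteristic bundle $\cL$, with the same connection $A$.
\item The new spinor bundle is $S$ twisted by the flat real line bundle with transition functions $s_{ab}$. It still carries an admissible $\cL$-valued pairing, and the $\eta_a$ glue to a global nowhere vanishing section.
\item Take $(\cA,\cQ)$ on this new bundle to be the quantizations of the same symbols $(\fra,\frq)$. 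The differential system for $\alpha$ is then literally unchanged, and your local argument gives $\cD\eta=0$ globally.
\end{enumerate}

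This is exactly what the existential quantifier in the statement permits (``admits \dots\ relative to a pair whose symbol is $(\fra,\frq)$''). It is how the paper passes from Theorem \ref{thm:odd_CB_CPS}, which carries the hypothesis $\mathfrak{s}_\gamma(\alpha)=0$, to the stated result: it uses the twisting proposition proved in the even-dimensional section, applied analogously here.
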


We briefly explain the notation used in these theorems. The number $d\in\N$ denotes the real dimension of the manifold $M$, while the numbers $\sigma\in\Z_2$ and $s\in\Z_2$ denote the symmetry and adjoint type of the admissible bilinear pairing, respectively. Throughout the article, we will use $\Z_2=\{\pm1\}$. We denote by $(\wedge T^*_\C M,\diamond)$ the (complexified) Kähler-Atiyah bundle of $(M,g)$ when $d$ is even and by $(\wedge^<T^*_\C M,\vee)$ the (complexified) truncated Kähler-Atiyah bundle of $(M,g)$ when $d$ is odd. The symbol $\alpha^{(0)}\in C^{\infty}(M,\C)$ denotes the degree zero component of the complex exterior form $\alpha\in\Omega_\C(M)$. The map $\pi$ is the parity automorphism and $\tau$ is the reversion anti-automorphism. They act by $\pi(\alpha)=(-1)^k$ and $\tau(\alpha)=\smash{(-1)^{\binom{k}{2}}}\alpha$ for every $k$-form $\alpha$. Finally, $\kappa\in\U(1)$ is just a complex number of unit norm.\medskip

The \emph{strongly} spin-c condition is included in order to guarantee that every spin-c structure on $(M,g)$ reduces to its identity component. We refer the reader to Theorems \ref{thm:even_constrained_parallel_spinors}, \ref{thm:odd_constrained_parallel_spinors}, \ref{thm:even_CB_CPS}, and \ref{thm:odd_CB_CPS} for more details and for an expanded version of the previous theorems. It is important to recall that the formalism of spinorial forms laid out in the previous theorems establishes a correspondence between spinorial forms and pairs $(g,\eta)$, rather than simply a correspondence between spinorial forms and \emph{bare} spinors $\eta$, hinting at why this formalism is well-adapted to study evolution and moduli problems. The metric is naturally a variable of the system: explicit on one side and implicitly determined on the other.\medskip

The remaining sections of the article are devoted to applying the theory of complex spinorial forms established in the previous theorems to three different classes of parallelicity conditions.\medskip

In Section \ref{sec:lowKilling}, we apply the theory of spinorial forms to complex spin-c Killing spinors in low dimensions \cite{Moroianu1997}. The main goal is to illustrate the inner workings of the formalism and show how it readily recovers well-known results in the field. Among the several results recovered in this section, we highlight the following:

\begin{cor}
A three-dimensional Riemannian manifold $(M,g)$ admits a real spin-c Killing spinor with Killing number $\lambda\in\R\setminus\{0\}$ if and only if it is an $\alpha$-Sasakian manifold with $\alpha=2\lambda$.
\end{cor}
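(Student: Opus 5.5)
The plan is to take the odd-dimensional correspondence (Theorem \ref{thm:hermitianodd}, or its complex-bilinear version) in $d=3$, Riemannian signature, with symbol $\fra = \lambda\, e$ for the spin-c Killing equation $\nabla^A_X\eta = \lambda X\cdot\eta$. Here $e$ denotes the tautological one-form, $\fra(X)=\lambda X^\flat$. For $d=3$ the truncated Kähler–Atiyah algebra $\wedge^{<}T^*_\C M$ consists of forms of degree $0$ and $1$, and the relation $\alpha\vee\alpha = 2\alpha^{(0)}\alpha$ forces $\alpha = \tfrac12|\eta|^2(1+\xi)$, with $\xi$ a one-form. In the real (Hermitian) case we have $\xi$ real and of unit norm. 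Since $\eta$ is nowhere vanishing, we normalise $|\eta|=\mathrm{const}$.

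The first step is to check that the norm is indeed constant. The scalar part of the differential equation reads
\[
d\alpha^{(0)} = \bigl(\fra\vee\alpha+\alpha\vee(\pi^{\frac{1-s}{2}}\circ\tau)(\overline{\fra})\bigr)^{(0)} .
\]
For $\lambda$ real and the appropriate admissible pairing ($s=-1$), the two contributions cancel, giving $d\alpha^{(0)}=0$.

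The second step is the one-form part of the same equation. In three dimensions the Clifford product of one-forms in the truncated algebra is $X\vee\xi = g(X,\xi) + \ast(X\wedge\xi)$ up to the sign convention. The one-form part then gives
\[
\nabla^g_X\xi = c\,\lambda\,\ast(X^\flat\wedge\xi)
\]
for a universal constant $c$, which we expect to be $\pm2$. This says that $\xi$ is a unit Killing vector field with $d\xi = 2c\lambda \ast\xi$. Equivalently, $\nabla_X\xi = -\alpha\,\phi X$ with $\phi = \ast(\xi\wedge\cdot)$ and $\alpha=2\lambda$, up to an orientation sign. From this one verifies the $\alpha$-Sasakian identity
\[
(\nabla_X\phi)Y = \alpha\bigl(g(X,Y)\xi - \xi(Y)X\bigr).
\]
This follows by differentiating $\phi = \ast(\xi\wedge\cdot)$, using that $\ast$ is parallel and substituting $\nabla\xi$, which is a short three-dimensional computation.

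For the converse, given an $\alpha$-Sasakian structure $(\xi,\phi)$, we set $\alpha_{\mathrm{form}}=\tfrac12(1+\xi)$. The algebraic equations hold automatically because $\xi$ is a unit one-form; the reality condition holds because $\xi$ is real; and the differential equation reduces to the computation above run backwards. To produce a genuine spinor from the form we invoke the ``if and only if'' version in odd dimension (the complex-bilinear theorem with an $\cL$-valued form). Here $\cL$ and the connection $A$ are chosen so that the spin-c structure is the canonical one determined by the contact distribution $\ker\xi$ with its complex structure $\phi$. Alternatively, the spinor can be defined directly as the section spanning the $+1$-eigenline of Clifford multiplication by $\xi$ (a line bundle inside $S$); the differential equation for the form then implies that this line is preserved by $\nabla^A - \lambda e\cdot$, for a suitable choice of $A$.

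The main obstacle is the bookkeeping of signs and constants: the admissible pairing's $s,\sigma$, the $\pi\circ\tau$ twist, and the identification of $\vee$ with cross product. Together these must produce exactly $\alpha = 2\lambda$ rather than $\lambda$ or $-2\lambda$. A second delicate point in the converse is the existence of the spin-c connection $A$, i.e. choosing the auxiliary line bundle so that the Killing spinor is globally defined without simple connectedness. I would try first to take $\cL$ to be the anticanonical bundle of $(\ker\xi,\phi)$ and read off $A$ from the curvature term in the integrability condition.
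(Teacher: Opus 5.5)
Your forward direction is the paper's argument. You take the Hermitian square $r+\theta$ with $r^2=\escal{\theta,\theta}$. The scalar part of the equation gives $\dd r=0$, and the one-form part gives $\nabla^g_X\theta=2\ell\lambda\,*(X^\flat\wedge\theta)$. Then $\xi=\theta^\sharp/r$ and $\varphi=-\frac{1}{2\lambda}\nabla^g\xi$ give the $\alpha$-Sasakian identities.

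Your bookkeeping is wrong in three places, and the errors happen to cancel:
\begin{itemize}
\item In the paper's plus convention a real Killing spinor satisfies $\nabla^{g,A}_X\eta=i\lambda X^\flat\cdot\eta$, so the symbol is $\fra_X=i\lambda X^\flat$, not $\lambda X^\flat$.
\item In signature $(3,0)$ there is no admissible Hermitian pairing with $s=-1$. Applying admissibility to $z=\nu_\C$ gives $\ell\,\scS=-\ell\,\scS$, so $s=+1$.
\item The product is $X^\flat\vee\theta=\theta(X)-i\ell*(X^\flat\wedge\theta)$, with a factor $i$.
\end{itemize}
With your $\fra=\lambda e$ and the only available $s=+1$, you would get $\dd r=2\lambda\theta$ and $\nabla^g\theta=2\lambda r g$. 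That is the imaginary, warped-product case, not the Sasakian one.

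The converse, as you first phrase it, has a gap. The Hermitian equations are only necessary, so checking them for $\tfrac12(1+\xi)$ produces no spinor. Theorem~\ref{thm:odd_CB_CPS} needs an $\cL$-valued complex-bilinear square as input, and you never construct one. Its equation, $\nabla^{g,A}_X\beta=-2\ell\lambda\,\iota_X(*\beta)$ for an isotropic $\beta\in\Omega^1_\C(M,\cL)$, is a different computation, not the Hermitian one run backwards. The paper's proof supplies exactly this:
\begin{itemize}
\item it takes $\cL=(\ker(\zeta)^*,J^*)$ with the unitary connection induced by $\nabla^{\perp_\xi}$;
\item it sets $\beta=(e^1+ie^2)\otimes(e^1-ie^2)$;
\item it computes $\nabla^{g,A}_X\beta=-2\ell\lambda(*\beta)(X^{\perp_\xi})$;
\item it shifts the connection by $-2i\ell\lambda\zeta$ to absorb the $\zeta$-component.
\end{itemize}

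Your alternative eigenline argument does work, and it is a genuinely different route once stated correctly. Let $D=\nabla^{g,A}-i\lambda X^\flat\cdot$ and $L=\{\xi\cdot\eta=\eta\}$. Using $\Psi_\ell(\nu)=-i\ell$, hence $\Psi_\ell(X^\flat\wedge\xi)=-i\ell\,\Psi_\ell(*(X^\flat\wedge\xi))$, one finds for $\eta\in\Gamma(L)$:
\begin{equation*}
(\Id-\xi\cdot)D_X\eta=\bigl(\nabla^g_X\xi-2\ell\lambda*(X^\flat\wedge\xi)\bigr)\cdot\eta .
\end{equation*}
So $L$ is $D$-invariant if and only if the $\alpha$-Sasakian condition holds, for every $A$. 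Your phrase ``for a suitable choice of $A$'' is therefore misplaced at this step.

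Since $\lambda$ is real and $s=+1$, $D$ is $\scS$-unitary. Hence for a unit section $\eta$ of $L$ one has $D\eta=c\,\eta$ with $c$ imaginary. Shifting $A$ by $-2c$ (the spinor bundle sees half of the connection on $\cL$) makes $\eta$ a Killing spinor. No curvature or integrability condition on $A$ is needed. The only global input is that $L$ be trivial. You get this by twisting the spinor bundle as in Proposition~\ref{prop:spin_spinor_class=0}, or equivalently by choosing the spin-c structure whose characteristic bundle is $\ker\xi$ or its dual.

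This route avoids the complex-bilinear square and gives both directions at once. It shows that, because $A$ is unconstrained, the Hermitian square already suffices here. The paper's route instead exercises the complex-bilinear ``if and only if'' theorem and writes down $\cL$ and $A$ explicitly.
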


Hence, we recover the three-dimensional case of \cite[Thm.\ 4.1]{Moroianu1997} without assuming either simply connectedness or completeness. This illustrates the applicability of the formalism for studying the global topological and geometric structure of pseudo-Riemannian manifolds that carry parallel spinors. In particular, it is reasonable to expect that it can be used to obtain global geometric and topological results in other related types of Killing spinors studied recently in the literature \cite{Grosse_Nakad_2015,Lockman_2025,Carmona_2026}.\medskip

In Section \ref{sec:sugraspinors} we apply the framework of spinorial forms to the supersymmetry conditions of a particularly elegant supergravity theory, namely Freedman's minimal gauged four-dimensional supergravity. The bosonic sector of this theory reduces to the Einstein-Maxwell theory with a positive cosmological constant, and hence has as variables pairs of the form $(g,A)$, where $g$ is a Lorentzian metric on $M$ and $A$ is a connection on a principal $\U(1)$-bundle over the underlying four-manifold $M$. The supersymmetry conditions of the theory are given in Equation \eqref{eq:susytrans}. The first supersymmetry condition requires the existence of a chiral complex spinor $\eta$ parallel with respect to a $\Spin^c(3,1)$-connection constructed from $g$ and $A$. The second equation requires that the action of the curvature of $A$ on $\eta$ via Clifford multiplication is proportional to the identity, and therefore is a Lorentzian analog of an instanton condition for $A$. The main result of this section is given in Theorem \ref{thm:NoGoFreedman}, which we summarize here as follows:

\begin{thm}
Every standard stationary quasi-supersymmetric solution of Freedman's gauged supergravity is isometric to a Lorentzian four-manifold $(M,g)$ of the form:
\begin{equation*}
M = \mathbb{R}^2\times S^2 \, , \qquad g =  -(R^{-2}\psi^2+\frc)\dd\fru\otimes \dd\fru + \dd \fru \odot ( \dd\frv +  \ast_h\dd\psi) + h\, ,
\end{equation*}

\noindent
where $(\fru,\frv)$ are the Cartesian coordinates of $\mathbb{R}^2$, $h$ is the round metric on $S^2$ of radius $R$, $\frc$ is a real constant, and $\psi$ is a first eigenfunction of the Laplacian on $(S^2,h)$, that is, $\Delta_h\psi=2R^{-2}\psi$. Furthermore, every such standard stationary quasi-supersymmetric solution is geodesically complete and globally hyperbolic.
\end{thm}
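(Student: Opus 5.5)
We translate the quasi-supersymmetry conditions into conditions on spinorial forms, read off the local geometry, and then globalize using the standard stationary hypothesis. The key tool is the even-dimensional theorem above (Theorem~\ref{thm:hermitianeven} together with its complex-bilinear counterpart), applied with $d=4$, Lorentzian signature, the $\Spin^c(3,1)$-connection built from $(g,A)$ and the positive cosmological constant as $\cA$, and the Clifford action of $F_A$ as $\cQ$.

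\textbf{Step 1: decompose the square of $\eta$.} For a chiral complex spinor in signature $(3,1)$, the Hermitian square $\alpha$ splits as a real null one-form $u$ (the Dirac current) plus a complex two-form $\omega$ with $u\wedge\omega=0$ and $\iota_u\omega=0$, so that $\omega = u\wedge e$ for a complex screen one-form $e$. The algebraic identities $\alpha\diamond\alpha=2^{2}\alpha^{(0)}\alpha$ give $u$ null, and, since $\eta$ is chiral with vanishing scalar part, $\alpha^{(0)}=0$.

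\textbf{Step 2: extract the differential equations.} From $\nabla^g\alpha=\fra\diamond\alpha+\dots$ we read off the components. The one-form part gives $\nabla u = c\,u\otimes u$-type terms together with terms involving $F_A$. Using the instanton-type condition $\frq\diamond\alpha=0$, i.e.\ the Clifford action of $F_A$ being proportional to the identity on $\eta$, one shows that $F_A$ is null and aligned, $F_A = u\wedge \theta$. It then follows that $u$ is recurrent with $\dd u=0$, and, after rescaling, $u$ is Killing. The two-form part shows that the screen distribution $u^\perp/u$ carries a parallel rotation, up to a gauge phase tied to $A$. Its integrability condition forces the screen to have constant Gaussian curvature $R^{-2}$, fixed by the cosmological constant.

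\textbf{Step 3: globalize and identify the metric.} Standard stationarity provides a global parallel null $u=\dd\fru$ and a complete splitting $M=\R^2\times N$. The leaves $N$ inherit a metric $h$ of constant positive curvature. Compactness and simple connectivity of $N$, which we extract from the standard-stationary hypothesis together with Myers' theorem and passage to the universal cover, then give $N=S^2$ with $h$ round of radius $R$. Writing the metric in Brinkmann form
\[
g=H\,\dd\fru\otimes\dd\fru+\dd\fru\odot(\dd\frv+\beta)+h,
\]
the remaining equations (Einstein–Maxwell with cosmological constant, combined with Step 2) force $\dd\beta$ to be proportional to $\psi\,\mathrm{vol}_h$ with $\beta=\ast_h\dd\psi$, where $\Delta_h\psi=2R^{-2}\psi$, and force $H=-(R^{-2}\psi^2+\frc)$.

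I expect this last identification to be the main obstacle: showing that the gyratonic term is exactly $\ast_h\dd\psi$ with $\psi$ a first eigenfunction, and that $H$ has the stated form up to a $\fru$-dependent diffeomorphism and the addition of a constant $\frc$. The approach is to solve $\dd\ast_h\beta=0$ by Hodge decomposition on $S^2$, which has no harmonic one-forms. This is followed by a separation of variables on spherical harmonics, where only the $\ell=1$ mode survives, and finally an elimination of the $\fru$-dependence by coordinate changes.

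\textbf{Step 4: completeness and global hyperbolicity.} Geodesic completeness follows from the known criteria for Brinkmann waves with compact wave fronts and $\fru$-independent, bounded profile: the $\fru$-equation is linear, and the transverse motion lives on the compact $S^2$. Global hyperbolicity follows from the fact that the hypersurfaces $\{\fru+\frv=\text{const}\}$, suitably corrected, are Cauchy hypersurfaces, which one checks using boundedness of $\psi$ on $S^2$.
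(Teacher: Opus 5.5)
Your proposal has genuine gaps in Steps 1--3. They come from using information that quasi-supersymmetry does not provide, while discarding information that it does.

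\textbf{Steps 1--2.} For a chiral spinor in signature $(3,1)$ the Hermitian square is $u+i\mu\ast_g u$, a one-form plus a three-form. The decomposable two-form $u\wedge(l+i\mu n)$ you describe is the $\cL$-valued complex-bilinear square. Its $\nabla^{g,A}$-parallelism is the full supersymmetry condition, which quasi-supersymmetric solutions need not satisfy (this is the point of the closing remark about $P$ and the unit tangent bundle of $S^2$). So there is no ``parallel rotation of the screen'' whose integrability condition could make the wave-front round. Also, the gravitino equation is $\nabla^{g,A}\eta=0$, so the symbol $\fra$ vanishes and no cosmological constant enters; one gets $\nabla^g u=0$ directly.

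Your claim that $F_A=u\wedge\theta$ is null is false. The gaugino equation is equivalent to $F_A(u^{\sharp_g})+\lambda_R u=0$ and $F_A\wedge u+\mu\lambda_I\ast_g u=0$. Stationarity forces $\lambda_R=0$, hence $\lambda_I\neq0$ and $F_A\wedge u\neq0$. Writing $A=\psi\,\dd\fru+\phi\,\dd\frv+a$, one finds $F_A=-\dd\fru\wedge\dd\psi+F_a$ with $F_a=\mu\lambda_I\nu_h$. It is this transverse flux, fed into the Einstein equations, that makes the wave-front round. The $(\partial_{\fru},\partial_{\frv})$ and transverse components give $2\Lambda+s_h=\mathfrak{e}^2\lambda_I^2$ and $2\Lambda+\mathfrak{e}^2\lambda_I^2=0$, whence $s_h=2\mathfrak{e}^2\lambda_I^2$. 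With a null $F_A$ the transverse component would instead force $\Lambda=0$, a contradiction. Separately, to get $X\cong S^2$ use orientability of $X$ to exclude $\mathbb{RP}^2$; passing to a universal cover changes the manifold.

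\textbf{Step 3 (the central gap).} You never say what $\psi$ is. In the statement, $\psi$ is the $\dd\fru$-component of the gauge potential. The twist $\cA$ is tied to it by two equations your plan does not use:
\begin{itemize}
\item the Maxwell equation $\dd\ast_h\dd\psi=\mu\lambda_I\,\dd\cA$;
\item the mixed $(\partial_{\fru},\partial_i)$ Einstein equation $\nabla^{h\ast}\dd\cA=2\mathfrak{e}^2\mu\lambda_I\ast_h\dd\psi$.
\end{itemize}
The first integrates on $S^2$ to $\cA=\mu\lambda_I^{-1}(\ast_h\dd\psi+\dd\frf)$. Inserting this into the second gives $\Delta_h\ast_h\dd\psi=2R^{-2}\ast_h\dd\psi$, which is what selects the first eigenspace.

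Hodge-decomposing the twist alone and separating variables cannot single out $\ell=1$. Every coclosed one-form on $S^2$ is $\ast_h\dd\chi$ for some $\chi$, and $\dd\ast_h\cA=0$ is merely the gauge choice absorbing $\dd\frf$ into $\frv$, not a field equation.

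You must then solve the $(\partial_{\fru},\partial_{\fru})$ component. It becomes the Poisson equation $\lambda_I^2\nabla^{h\ast}\dd\cH=2R^{-2}\vert\dd\psi\vert^2_h-4R^{-4}\psi^2$. Its right-hand side has zero mean exactly because of the eigenvalue equation. The function $-\lambda_I^{-2}(R^{-2}\psi^2+\frc)$ solves it, and uniqueness up to constants on $S^2$ makes this the general solution. Finally, rescale $\fru$ and $\frv$ to absorb $\mu\lambda_I$. Stationarity means there is no $\fru$-dependence left to eliminate.

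\textbf{Step 4.} This is acceptable in spirit. For global hyperbolicity the paper compares $g$ with a twist-free generalized plane wave obtained by completing the square. Your time-function route also works, with two corrections. Take $t=\frv-c\fru$ with $c$ large, so that $g^{-1}(\dd t,\dd t)=\vert\cA\vert^2_h-\cH-2c<0$; in general $\fru+\frv$ is not a time function. You also need an argument that $t$ is unbounded along every inextendible causal curve, e.g.\ from $(\fru,\frv)$-translation invariance and compactness of $S^2$.
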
 

The notion of \emph{quasi-supersymmetric} solution of a supergravity theory is a novel addition of this article, and arises as a natural consequence of having two distinct squares of a given irreducible complex spinor, namely the Hermitian and complex-bilinear squares. Roughly speaking, a solution to a supergravity theory is said to be \emph{quasi-supersymmetric} if there exists an irreducible complex spinor whose Hermitian square satisfies the differential system induced by the supersymmetry conditions of the theory via Theorem \ref{thm:hermitianeven} or Theorem \ref{thm:hermitianodd}, depending on the dimension of the underlying manifold. This provides a first-order integrability condition for supergravity solutions that is a priori weaker than supersymmetry and may therefore be useful for exploring supergravity solutions beyond supersymmetry. It would be very interesting to compare the notion of quasi-supersymmetry proposed in this article with the supersymmetry-breaking deformation introduced in \cite{Legramandi:2019ulq} for pure spinors in type II supergravity.\medskip

In Section \ref{sec:skewtorsionparallel}, we apply the theory of spinorial forms to irreducible chiral complex spinors parallel under a metric connection with totally skew-symmetric torsion, to which we refer as skew-torsion parallel spinors. Our main result in this section is Theorem \ref{thm:transverse_LCK}, which proves that a spin Lorentzian six-manifold admits a skew-torsion parallel spinor with an integrable screen bundle only if it admits a foliation with locally conformally Kähler leaves. The notion of skew-torsion parallel spinor on a six-dimensional Lorentzian manifold is fundamental in ten-dimensional and six-dimensional supergravity and is one of the main motivations for their study.\medskip

In Section \ref{sec:sugra6d} we apply the theory of spinorial forms to minimal six-dimensional supergravity and its quasi-supersymmetric solutions. This supergravity theory can be formulated on a bundle gerbe with a connective structure, as we do in this article, or on an exact Courant algebroid, as proposed in \cite{Garcia-Fernandez:2015lsa,MarioStreets}. From the bundle gerbe perspective, the bosonic sector of the theory has as variables a Lorentzian metric and a curving. On the other hand, the supersymmetry condition of the theory requires the existence of a skew-torsion parallel spinor with respect to a metric connection with totally skew-symmetric torsion given by the curvature of the given curving. The general theory of its supersymmetric solutions was pioneered in \cite{Gutowski:2003rg}, see also \cite{Chamseddine:2003yy}, where the local isomorphism type of these solutions was determined in terms of a minimal set of differential equations in an adapted coordinate system. In particular, it was apparent that the class of supersymmetric solutions of six-dimensional minimal supergravity is vast and rich. In this article, we focus on the reduction of the general bosonic sector of the theory to the wave-front of a standard Kundt space-time, which in particular implies reducing the underlying bundle gerbe and its curvings. Our main result in this section is the complete decoupled reduced system, which we present as a purely four-dimensional differential system in Equation \eqref{eq:finalreducedsystem} for a Riemannian metric, a curving, and a function, whose solutions yield a large class of non-twisting solutions of six-dimensional minimal supergravity of infinite topological types. This reduced system, although different, belongs to the class of generalized Ricci soliton systems \cite{MarioStreets} in the Einstein frame \cite[Lemma 2.24]{Bunk:2025glt}, suggesting appropriate techniques for studying its solutions \cite{StreetsI,StreetsUstinovskiy,StreetsUstinovskiyII}. In Corollary \ref{cor:nonquasisusy}, we apply the results of Section \ref{sec:skewtorsionparallel} to show that these solutions are generically not quasi-supersymmetric, and hence non-supersymmetric. It would be very interesting to understand the relation between the Hermitian and complex-bilinear squares used in this article, with the results of \cite{Figueroa_Lorentzian_6d_2018}, which also involves the Hermitian square of the supersymmetry parameter.\medskip

Finally, we recall that in our conventions, the Laplacian on functions on a pseudo-Riemannian manifold $(M,g)$ is defined as follows: \begin{equation*}
\Delta_g f := \nabla^{g*}\dd f := -\Tr_g(\nabla^g\dd f) \, , \qquad f\in C^{\infty}(M).
\end{equation*}

\noindent
In Riemannian signature, this corresponds to the \emph{positive} Laplacian.

\begin{ack}
The work of AGG is supported by the Scuola Internazionale Superiore di Studi Avanzati (SISSA). The work of CSS was partially supported by the Leonardo grant LEO22-2-2155 of the BBVA Foundation and the research grant PID2023-152822NB-I00 of the Ministry of Science of the government of Spain. 
\end{ack}

\begin{center}
\begin{tikzpicture}
    \node[
        draw,                  
        minimum width=12cm,     
        minimum height=2cm,    
        align=center,          
        line width=1pt,        
        fill=white!10           
    ] 
    at (0,0) 
    {\quad \emph{Achtung}: we define Clifford algebras using the plus convention $v^2 = h(v,v)$!\quad}; 
\end{tikzpicture}
\end{center}


\section{Algebraic complex spinorial forms}
\label{sec:spinorialforms}


In this section we recall the algebraic characterization of the \emph{squares} of an irreducible complex spinor $\eta\in\Sigma$ for an irreducible complex Clifford module $(\Sigma,\gamma)$ associated to a real quadratic vector space $(V,h)$ of dimension $d$ and signature $(p,q)$, where $p$ is the number of \emph{pluses} and $q$ is the number of \emph{minuses}. We closely follow the exposition of \cite{Algebraic_Complex_Square_2025}, to which we refer for more details. Throughout the paper we will use $\Z_2=\{\pm1\}$.\medskip

Let $\Sigma$ be a complex vector space. A \emph{Hermitian pairing} $\scS$ on $\Sigma$ is a non-degenerate sesquilinear pairing $\scS\colon\Sigma\times\Sigma\to\C$ which is anti-linear in the second entry and satisfies $\scS(\eta_1,\eta_2)=\overline{\scS(\eta_2,\eta_1)}$ for all $\eta_1,\eta_2\in\Sigma$. We do not assume that $\scS$ is necessarily positive-definite. The \emph{Hermitian square maps} of such a $(\Sigma,\scS)$ are the quadratic maps $\hcE_\kappa\colon\Sigma\to\End(\Sigma)$ defined by: $$\hcE_\kappa(\eta):=\kappa\,\eta\otimes\scS(-,\eta),\quad \kappa\in\U(1).$$

Let $\scB\colon\Sigma\times\Sigma\to\C$ be a \emph{complex-bilinear pairing}. We assume $\scB$ to be either symmetric or skew-symmetric and we say that $\scB$ has \emph{symmetry type} $\sigma\in\Z_2$ if $\scB(\eta_1,\eta_2)=\sigma\scB(\eta_2,\eta_1)$. The \emph{complex-bilinear square map} of such a $(\Sigma,\scB)$ is the quadratic map $\cE\colon\Sigma\to\End(\Sigma)$ defined by: $$\cE(\eta):=\eta\otimes\scB(-,\eta).$$

Note that in contrast with the Hermitian case, any multiplicative factor included in the definition of $\cE$ can be reabsorbed by a complex homothety of $\eta$, and therefore is irrelevant for our purposes.


\subsection{Spinorial forms in even dimensions}


Let $(V,h)$ be a quadratic real vector space of even dimension $d$. We will use the \emph{Kähler-Atiyah model} $(\wedge V^*,\diamond)$ for the universal Clifford algebra $\mathrm{Cl}(V^*,h^*)$ of the quadratic vector space $(V^*,h^*)$ dual to $(V,h)$. We briefly recall this model in Appendix \ref{app:KA}. Let $\mathrm{Cl}(V^*,h^*)\to\End(\Sigma)$ be an irreducible complex representation of $\mathrm{Cl}(V^*,h^*)$ on a complex vector space $\Sigma$, whence $\dim_\C\Sigma=2^{\smash{\frac{d}{2}}}$. Consider the complexification $\mCl(V^*,h^*):=\mathrm{Cl}(V^*,h^*)\otimes_\R\C$ of the real Clifford algebra $\mathrm{Cl}(V^*,h^*)$. Since $d$ is even, the map: $$\gamma\colon\mCl(V^*,h^*)\to\End(\Sigma)$$ is an isomorphism of unital and associative algebras. Composing $\gamma$ with the Chevalley-Riesz isomorphism: $$\Psi\colon(\wedge V^*_\C,\diamond)\to\mCl(V^*,h^*)$$ gives an isomorphism of unital and associative algebras which we denote by: \begin{equation}\label{eq:Psi_gamma}
    \Psi_\gamma:=\gamma\circ\Psi\colon(\wedge V^*_\C,\diamond)\to(\End(\Sigma),\circ).
\end{equation}

To define the spinor square maps we combine the isomorphism $\Psi_\gamma\colon(\wedge V^*_\C,\diamond)\to\End(\Sigma)$ together with the squaring maps $\cE\colon\Sigma\to\End(\Sigma)$ and $\hcE_\kappa\colon\Sigma\to\End(\Sigma)$ introduced before as quadratic maps associated to a choice of complex-bilinear pairing $\scB$ and a choice of Hermitian pairing $\scS$ on $\Sigma$, respectively. For our purposes, we cannot use arbitrary non-degenerate complex-bilinear or Hermitian pairings on $\Sigma$. Instead, it is convenient to work with non-degenerate pairings on $\Sigma$ that are adapted to its structure as a complex Clifford module. This leads to the notion of \emph{admissible pairing}, introduced in \cite{AC97,ACDVP05}. In our case, we must distinguish between Hermitian and complex-bilinear admissible pairings.

\begin{definition}\label{def:admissible_pairing}
    Let $(\Sigma,\gamma)$ be an irreducible complex Clifford module. An \emph{admissible Hermitian pairing} on $(\Sigma,\gamma)$ is a non-degenerate Hermitian pairing $\scS$ on $\Sigma$ satisfying: $$\scS(\gamma(z)\eta_1,\eta_2)=\scS(\eta_1,\gamma(\overline{(\pi^{\frac{1-s}{2}}\circ\tau)(z)})\eta_2),\quad s\in\Z_2$$ for all $z\in\mCl(V^*,h^*)$ and $\eta_1,\eta_2\in\Sigma$. Similarly, an \emph{admissible complex-bilinear pairing} on $(\Sigma,\gamma)$ is a non-degenerate complex-bilinear pairing $\scB$ on $\Sigma$ satisfying: $$\scB(\gamma(z)\eta_1,\eta_2)=\scB(\eta_1,\gamma((\pi^{\frac{1-s}{2}}\circ\tau)(z))\eta_2),\quad s\in\mathbb{Z}_2$$ again for all $z\in\mCl(V^*,h^*)$ and $\eta_1,\eta_2\in\Sigma$. The sign factor $s$ is called the \emph{adjoint type} of the admissible pairing. We say that an admissible pairing is of \emph{positive adjoint type} if $s=+1$ and of \emph{negative adjoint type} if $s=-1$.
\end{definition}

Proof of the existence of admissible Hermitian and complex-bilinear pairings can be found in \cite{AC97,ACDVP05,HarveyBook,Mei13}, and is given explicitly in \cite[Prop.\ 3.9]{Algebraic_Complex_Square_2025} and \cite[Prop.\ 3.18]{Algebraic_Complex_Square_2025}, respectively. We will always assume that, given an irreducible complex Clifford module $(\Sigma,\gamma)$, the square maps $\cE\colon\Sigma\to\End(\Sigma)$ and $\hcE_\kappa\colon\Sigma\to\End(\Sigma)$ are constructed using an admissible pairing. We will refer to such triples $(\Sigma, \gamma ,\scS)$ and $(\Sigma, \gamma ,\scB)$ as \emph{Hermitian} and \emph{complex-bilinear paired Clifford modules}. We have the following commutative diagram:
\begin{equation*}
\begin{tikzcd}
{\mathbb{C}\mathrm{l}(V^*,h^*)} \arrow[r, "\gamma"]  & \mathrm{End}(\Sigma) & \Sigma \arrow[l, " \widehat{\mathcal{E}}_\kappa / \mathcal{E}"'] \\
(\wedge V^*_{\mathbb{C}},\diamond) \arrow[u, "\Psi"] \arrow[ru, "\Psi_\gamma"'] &                      &             
\end{tikzcd}
\end{equation*}

Consequently, we define the \emph{complex-bilinear} and \emph{Hermitian square spinor maps} respectively as follows: $$\cE_\gamma:=\Psi_\gamma^{-1}\circ\cE\colon\Sigma\to\wedge V^*_\C,\qquad \hcE_\gamma^\kappa:=\Psi_\gamma^{-1}\circ\hcE_\kappa\colon\Sigma\to\wedge V^*_\C.$$

We define the \emph{squares} of an irreducible complex spinor in even dimensions in terms of these square spinor maps.

\begin{definition}\label{def:squares}
    The \emph{complex-bilinear square} of an irreducible complex spinor $\eta \in \Sigma$ in even dimensions is the complex exterior form $\cE_\gamma(\eta)\in \wedge V^*_{\mathbb{C}}$. The \emph{Hermitian square} of an irreducible complex spinor $\eta \in \Sigma$ in even dimensions is the complex exterior form $\hcE_\gamma^\kappa(\eta)\in \wedge V^*_{\mathbb{C}}$ for $\kappa\in\U(1)$.
\end{definition}

An element in the image of $\hcE_\gamma^\kappa$ determines a complex spinor uniquely modulo a multiplicative unitary complex number. On the other hand, an element in the image of $\cE_\gamma$ determines a complex spinor uniquely modulo a sign. Hence, the complex-bilinear square of a complex spinor contains \emph{more} information than its Hermitian square and is equivalent to the spinor itself modulo a sign. This fact is key to developing the theory of \emph{complex spinorial forms} and its applications to the study of spinors parallel under a general connection on the spinor bundle.\medskip

We introduce the following terminology, which we borrow from \cite{Mei13}.

\begin{definition}
    Let $(\Sigma,\gamma)$ be an irreducible complex Clifford module. The \emph{dequantization} $\frq \in \wedge V^{\ast}_{\C}$ of an endomorphism $Q\in \End(\Sigma)$ is $\frq := \Psi_{\gamma}^{-1}(Q)$.
\end{definition}

\begin{lemma}[{\cite[Lemma 3.8]{Algebraic_Complex_Square_2025}}]\label{lemma:constrainedspinoreven}
    Let $Q\in \End(\Sigma)$. Then, the spinor $\eta\in \Sigma$ satisfies $Q(\eta) = 0$ if and only if $\frq\diamond \cE_{\gamma}(\eta) = 0$ if and only if $\frq\diamond \hcE^{\kappa}_{\gamma}(\eta) = 0$.
\end{lemma}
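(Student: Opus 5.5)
The plan is to translate the condition $Q(\eta)=0$ into a statement about endomorphisms of $\Sigma$, and then carry it back to $\wedge V^*_\C$ through the algebra isomorphism $\Psi_\gamma$ of \eqref{eq:Psi_gamma}. The key identity is that for any $\xi\in\Sigma^*$ (a linear or antilinear functional),
\begin{equation*}
Q\circ(\eta\otimes\xi) = (Q\eta)\otimes\xi .
\end{equation*}
Take $\xi=\scB(-,\eta)$ in the complex-bilinear case, and $\xi=\kappa\,\scS(-,\eta)$ in the Hermitian case. The first is complex-linear. The second is complex-linear in its argument, since $\scS$ is antilinear only in the second entry. The identity then gives $Q\circ\cE(\eta)=(Q\eta)\otimes\scB(-,\eta)$ and $Q\circ\hcE_\kappa(\eta)=\kappa\,(Q\eta)\otimes\scS(-,\eta)$.

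Applying $\Psi_\gamma^{-1}$ and using that it is multiplicative, with $\Psi_\gamma^{-1}(Q)=\frq$, we get
\begin{equation*}
\frq\diamond\cE_\gamma(\eta)=\Psi_\gamma^{-1}\big((Q\eta)\otimes\scB(-,\eta)\big),\qquad
\frq\diamond\hcE^\kappa_\gamma(\eta)=\Psi_\gamma^{-1}\big(\kappa\,(Q\eta)\otimes\scS(-,\eta)\big).
\end{equation*}
Since $\Psi_\gamma$ is injective, each of the two conditions in the statement is equivalent to the vanishing of the corresponding rank-at-most-one endomorphism.

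The implication $Q\eta=0\Rightarrow$ both vanish is then immediate. For the converse, $\kappa\neq0$, and a tensor $w\otimes\xi$ vanishes if and only if $w=0$ or $\xi=0$. So it suffices to treat $\eta\neq0$ (if $\eta=0$ everything holds trivially) and show that $\xi\neq0$.

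This nondegeneracy step is the only real point, and it is where the two cases differ.

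In the Hermitian case it is direct: $\scS$ is nondegenerate, so $\scS(-,\eta)=0$ forces $\eta=0$. Hence $\kappa\,(Q\eta)\otimes\scS(-,\eta)=0$ with $\eta\neq0$ forces $Q\eta=0$.

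In the complex-bilinear case the same reasoning applies, since nondegeneracy of $\scB$ gives that $\scB(-,\eta)=0$ implies $\eta=0$. Note that we need nondegeneracy, not $\scB(\eta,\eta)\neq0$, which can fail when $\sigma=-1$.

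Composing the two chains of equivalences yields the lemma in both cases.
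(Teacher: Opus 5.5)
Your proof is correct, and it is the standard argument. The paper imports this lemma from its companion work without reproducing a proof. The intended reasoning is yours: multiplicativity of $\Psi_\gamma^{-1}$ turns $\frq\diamond\cE_\gamma(\eta)$ and $\frq\diamond\hcE^\kappa_\gamma(\eta)$ into $\Psi_\gamma^{-1}$ of the rank-at-most-one endomorphisms $(Q\eta)\otimes\scB(-,\eta)$ and $\kappa\,(Q\eta)\otimes\scS(-,\eta)$. Their vanishing is then equivalent to $Q\eta=0$ by non-degeneracy of the pairing. The paper uses the same mechanism (compose with the square, apply $\Psi_\gamma^{-1}$, conclude by non-degeneracy) in the converse direction of Lemma~\ref{lemma:complex_bilinear_parallel_spinor}.
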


\begin{remark}\label{remark:equivalent_constrained_eq}
    Let $\alpha^t:=(\pi^{\frac{1-s}{2}}\circ\tau)(\alpha)$ be the $\scB$-transpose of $\alpha\in\wedge V^*_\C$, see \cite[Lemma 3.21]{Algebraic_Complex_Square_2025}. If $\alpha=\cE_\gamma(\eta)$, then the following equations are equivalent: $$\frq\diamond\alpha=0,\qquad \alpha\diamond(\pi^{\frac{1-s}{2}}\circ\tau)(\frq)=0.$$

    Similarly, let $\alpha^\dagger:=(\pi^{\frac{1-s}{2}}\circ\tau)(\overline{\alpha})$ be the $\scS$-transpose of $\alpha\in\wedge V^*_\C$, see \cite[Lemma 3.12]{Algebraic_Complex_Square_2025}. If $\alpha=\hcE^{\kappa}_{\gamma}(\eta)$ for some $\kappa\in\U(1)$, then the following equations are equivalent: $$\frq\diamond\alpha=0,\qquad\alpha\diamond(\pi^{\frac{1-s}{2}}\circ\tau)(\overline{\frq})=0.$$
\end{remark}

For further reference, we introduce the standard spin-c group as follows:
\begin{equation*}
\Spin^c(p,q):=\Spin(p,q)\U(1):=(\Spin(p,q)\times\U(1))/\mathbb{Z}_2 \subset\mCl(d)    
\end{equation*}
and we denote its identity component by $\Spin^c_o(p,q)$. Hence, we view elements in the group $\Spin^c(p,q)$ as equivalence classes $[u,z]$ with $u\in\Spin(p,q)$ and $z\in\U(1)$. There is a natural homomorphism $\Ad^c\colon\Spin^c(p,q)\to\SO(p,q)\times\U(1)$ defined by: 
\begin{equation*}
\Ad^c_{[u,z]}:=(\Ad_u,z^2) \, , \qquad [u,z] \in \Spin^c(p,q),
\end{equation*}
where $\Ad \colon \Spin(p,q) \to \SO(p,q)$ is the standard double cover of $\SO(p,q)$ defined via the adjoint representation of the spin group. Then, the map $\Ad^c$ is a double cover of $\SO(p,q)\times\U(1)$. \medskip

We now give the algebraic characterization of the complex spinorial forms associated to a Hermitian paired Clifford module $(\Sigma,\gamma,\scS)$. Recall that we denote by $\alpha^{(0)}\in\C$ the degree zero component of the exterior form $\alpha\in\wedge V^*_\C$, and that: $$\pi(\alpha)=(-1)^k\alpha,\qquad\tau(\alpha)=(-1)^{\binom{k}{2}}\alpha$$ for every $k$-form $\alpha\in\wedge^k V^*_\C$.

\begin{thm}[{\cite[Thm.\ 3.13 and Cor.\ 3.23]{Algebraic_Complex_Square_2025}}]\label{thm:even_Hermitian_forms}
    Let $(\Sigma,\gamma)$ be an irreducible complex Clifford module equipped with an admissible Hermitian pairing $\scS$ of adjoint type $s\in \mathbb{Z}_2$. Then the following statements are equivalent for a complex exterior form $\alpha\in\wedge V^*_\C$: \begin{enumerate}
        \item[\normalfont(a)] $\alpha$ is the Hermitian square of a spinor $\eta\in\Sigma$, that is, $\alpha = \hcE_\gamma^\kappa(\eta)$ for some $\kappa\in\U(1)$.
        \item[\normalfont(b)] $\alpha$ satisfies the following equations: $$\alpha\diamond\alpha = 2^{\frac{d}{2}}\alpha^{(0)}\alpha,\qquad \alpha \diamond \beta \diamond \alpha=2^{\frac{d}{2}}(\alpha\diamond\beta)^{(0)}\alpha,\qquad(\pi^{\frac{1-s}{2}}\circ\tau)(\bar\kappa\alpha) = \kappa \bar{\alpha}$$ for an exterior form $\beta\in\wedge V^*_\C$ satisfying $(\alpha\diamond\beta)^{(0)}\neq0$.
        \item[\normalfont(c)] The following equations hold: $$\alpha \diamond \beta \diamond \alpha=2^{\frac{d}{2}}(\alpha\diamond\beta)^{(0)}\alpha,\qquad(\pi^{\frac{1-s}{2}}\circ\tau)(\bar\kappa\alpha) = \kappa \bar{\alpha}$$ for every exterior form $\beta\in\wedge V^*_\C$.
    \end{enumerate}

    If in addition the spinor $\eta\in\Sigma$ is chiral of chirality $\mu\in\Z_2$, then we have to add the condition: $$i^{q+\frac{d}{2}}*(\pi\circ\tau)(\alpha)=\mu\alpha.$$
\end{thm}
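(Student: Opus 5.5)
Since $\Psi_\gamma\colon(\wedge V^*_\C,\diamond)\to(\End(\Sigma),\circ)$ is a unital algebra isomorphism, I will prove the statement on the endomorphism side and transport it back. The dictionary needs three facts. First, the degree zero component is a normalized trace: $\alpha^{(0)}=2^{-\frac{d}{2}}\mathrm{Tr}(\Psi_\gamma(\alpha))$. This holds because every nonscalar basis monomial $e^{i_1}\diamond\cdots\diamond e^{i_k}$ with $k\geq 1$ is traceless: it anticommutes with a suitable generator, or with the volume element when $d$ is even and $k=d$. Second, admissibility of $\scS$ says precisely that the $\scS$-adjoint of $\Psi_\gamma(\alpha)$ is $\Psi_\gamma(\alpha^\dagger)$, where $\alpha^\dagger=(\pi^{\frac{1-s}{2}}\circ\tau)(\bar\alpha)$. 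Third, since the reality condition reads $\bar\kappa\,\alpha^\dagger=\kappa\bar{\alpha}$, I will verify it in the form $\alpha^\dagger=\kappa^2\bar{\alpha}$.

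\textbf{(a)$\Rightarrow$(c),(b).} Put $E=\hcE_\kappa(\eta)=\kappa\,\eta\otimes\scS(-,\eta)$, a rank at most one endomorphism. For any $B\in\End(\Sigma)$ we have $EBE=\mathrm{Tr}(EB)\,E$, because $EBE(\xi)=\kappa^2\scS(\xi,\eta)\,\scS(B\eta,\eta)\,\eta$ and $\mathrm{Tr}(EB)=\kappa\,\scS(B\eta,\eta)$. Translating with the trace formula gives $\alpha\diamond\beta\diamond\alpha=2^{\frac{d}{2}}(\alpha\diamond\beta)^{(0)}\alpha$ for every $\beta$, and taking $\beta=1$ gives the idempotency relation. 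For the reality condition, compute the $\scS$-adjoint: $\scS(E\xi_1,\xi_2)=\kappa\,\scS(\xi_1,\eta)\,\scS(\eta,\xi_2)=\scS(\xi_1,\bar\kappa\,\eta\,\scS(\xi_2,\eta))$. So $E^\dagger=\bar\kappa^2 E$, which is a statement in $\End(\Sigma)$. I then need to convert this into the condition on forms, using $\overline{\Psi_\gamma^{-1}(\bar\kappa^2E)}=\kappa^2\bar\alpha$ and the conjugation conventions of the second fact. I expect this bookkeeping of $\kappa$-phases and complex conjugation to be the fiddliest step, and I would check it against the normalization in \cite[Lemma 3.12]{Algebraic_Complex_Square_2025}. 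The case $\eta=0$ is trivial. If $\eta\neq0$, nondegeneracy of $\scS$ provides $B$ with $(\alpha\diamond\beta)^{(0)}\neq0$, which gives (b). Note that (c), holding for all $\beta$, does not require this nonvanishing.

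\textbf{(b)$\Rightarrow$(a).} Let $E=\Psi_\gamma(\alpha)$ and $B=\Psi_\gamma(\beta)$, so that $EBE=cE$ with $c=\mathrm{Tr}(EB)\neq0$. Then $P=c^{-1}EB$ is idempotent and $E=PE$. Since $\mathrm{Tr}P=1$, $P$ has rank one, and therefore $E$ has rank exactly one; it is nonzero because $c\neq0$. Write $E=\eta_0\otimes\scS(-,\zeta)$. The reality condition gives $E^\dagger=\bar\kappa^2E$, and $E^\dagger=\zeta\otimes\scS(-,\eta_0)$. Equating images gives $\zeta=\lambda\eta_0$, and comparing coefficients gives $\bar\lambda\kappa^2\in\R$. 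Hence $E=r\kappa\,\eta_0\otimes\scS(-,\eta_0)$ for some real $r\neq 0$. If $r>0$, rescale $\eta=\sqrt r\,\eta_0$. If $r<0$, replace $\kappa$ by $-\kappa$, which is still in $\U(1)$ and leaves the reality condition unchanged since it is quadratic in $\kappa$. \textbf{(c)$\Rightarrow$(b)} follows by picking any $\beta$ with $(\alpha\diamond\beta)^{(0)}\neq 0$. Such a $\beta$ exists as long as $\alpha\neq0$, because the trace pairing is nondegenerate; the case $\alpha=0$ corresponds to $\eta=0$.

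\textbf{Chirality.} With $\gamma(\nu)$ the chirality operator, a normalized multiple of the volume form, chirality of $\eta$ means $\gamma(\nu)E=\mu E$. Translating this uses the Kähler--Atiyah identity $\nu\diamond\alpha=\pm*(\pi\circ\tau)(\alpha)$ together with the normalization $i^{q+\frac d2}$ making $\nu^2=1$. Conversely, if $\gamma(\nu)E=\mu E$, then the image of $E$, which is spanned by $\eta$, lies in the $\mu$-eigenspace. I would take the exact sign from Appendix \ref{app:KA} and \cite[Cor.\ 3.23]{Algebraic_Complex_Square_2025}.
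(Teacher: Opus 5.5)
Your argument is correct and is the standard one: transport through $\Psi_\gamma$, identify $\alpha^{(0)}$ with $2^{-\frac{d}{2}}\mathrm{Tr}\,\Psi_\gamma(\alpha)$, characterize rank-one endomorphisms by $EBE=\mathrm{Tr}(EB)E$, and encode the reality condition as $E^\dagger=\bar\kappa^2E$ via admissibility (the paper does not prove this theorem; it quotes it from its companion paper, and this is the natural complex analogue of the real-case argument there). Two slips need fixing. First, with your definition $\alpha^\dagger=(\pi^{\frac{1-s}{2}}\circ\tau)(\bar\alpha)$, the stated condition $(\pi^{\frac{1-s}{2}}\circ\tau)(\bar\kappa\alpha)=\kappa\bar\alpha$ is equivalent to $\alpha^\dagger=\bar\kappa^2\alpha$, not to $\alpha^\dagger=\kappa^2\bar\alpha$, which is a genuinely different condition; the correct form is literally $\Psi_\gamma^{-1}$ applied to your identity $E^\dagger=\bar\kappa^2E$, so the step you flagged as fiddly is immediate in both directions. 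Second, $\lambda=\bar\kappa^2\bar\lambda$ gives $\kappa\lambda\in\R$, not $\bar\lambda\kappa^2\in\R$, although your resulting form $E=r\kappa\,\eta_0\otimes\scS(-,\eta_0)$ with $r=\kappa\lambda$ is right.
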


Let $\{e^1,\ldots, e^d\}$ be an orthonormal basis of $(V^*,h^*)$ and define $\gamma^{i_1\cdots i_k}:=\Psi_\gamma(e^{i_1})\circ\cdots\circ\Psi_\gamma(e^{i_k})$. Then we can give an explicit form of the Hermitian square $\widehat{\alpha}_\eta$ of an irreducible complex spinor $\eta\in\Sigma$ in terms of an explicit orthonormal basis of $(V^*,h^*)$, see \cite[Eq.\ 10]{Algebraic_Complex_Square_2025}: \begin{equation*}\label{eq:explicitHermitian}
    \widehat{\alpha}_{\eta} = \frac{\kappa}{2^{\frac{d}{2}}} \scS(\eta,\eta) + \frac{\kappa}{2^{\frac{d}{2}}} \sum_{k=1}^d \sum_{i_1 < \cdots < i_k} \scS((\gamma^{i_1 \cdots i_k})^{-1}\eta , \eta)\, e^{i_1}\wedge  \cdots \wedge e^{i_k}.
\end{equation*}

Moreover, in \cite[Section 3.3]{Algebraic_Complex_Square_2025} it is shown that the quadratic maps $\hcE_\gamma^\kappa \colon \Sigma \to \wedge V^*_\C$ are equivariant with respect to the natural action of $\Spin_o^c(p,q)$ on $\Sigma$ and $\wedge V^*_\C$ defined via $\gamma$ and the adjoint representation $\Ad\colon \Spin_o^c(p,q) \to \SO_o(p,q)$, respectively.\medskip

We now give the algebraic characterization of the complex spinorial forms associated with a complex-bilinear paired Clifford module $(\Sigma,\gamma,\scB)$.

\begin{thm}[{\cite[Thm.\ 3.22 and Cor.\ 3.24]{Algebraic_Complex_Square_2025}}]\label{thm:bilinearsquare}
    Let $(\Sigma,\gamma)$ be an irreducible complex Clifford module equipped with an admissible complex-bilinear pairing $\scB$ of adjoint type $s\in \mathbb{Z}_2$ and symmetry type $\sigma\in \mathbb{Z}_2$. Then the following statements are equivalent for a complex exterior form $\alpha\in\wedge V^*_\C$: \begin{enumerate}
        \item[\normalfont(a)] $\alpha$ is the complex-bilinear square of a spinor $\eta\in\Sigma$, that is, $\alpha=\cE_\gamma(\eta)$.
        \item[\normalfont(b)] $\alpha$ satisfies the following equations: $$\alpha\diamond\alpha = 2^{\frac{d}{2}} \alpha^{(0)}\alpha,\qquad \alpha \diamond \beta \diamond \alpha = 2^{\frac{d}{2}}(\alpha\diamond\beta)^{(0)}\alpha,\qquad (\pi^{\frac{1-s}{2}}\circ\tau) (\alpha) = \sigma \alpha$$ for an exterior form $\beta\in\wedge V^*_\C$ satisfying $(\alpha\diamond\beta)^{(0)} \neq 0$.
        \item[\normalfont(c)] The following equations hold: $$\alpha \diamond \beta \diamond \alpha = 2^{\frac{d}{2}}(\alpha\diamond\beta)^{(0)}\alpha,\qquad (\pi^{\frac{1-s}{2}}\circ\tau) (\alpha) = \sigma \alpha$$ for every exterior form $\beta\in\wedge V^*_\C$.
    \end{enumerate}
    
    If in addition the spinor $\eta\in\Sigma$ is chiral of chirality $\mu\in\Z_2$, then we have to add the condition: $$i^{q+\frac{d}{2}}*(\pi\circ\tau)(\alpha)=\mu\alpha.$$
\end{thm}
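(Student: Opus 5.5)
We prove the equivalence by working entirely inside $\End(\Sigma)$, transporting everything through the algebra isomorphism $\Psi_\gamma\colon(\wedge V^*_\C,\diamond)\to(\End(\Sigma),\circ)$ of \eqref{eq:Psi_gamma}. Two facts are used throughout. First, $\Psi_\gamma^{-1}$ of the trace is a multiple of the degree-zero part: since every $\gamma^{i_1\cdots i_k}$ with $k\geq 1$ is traceless in even dimension, $\operatorname{Tr}(\Psi_\gamma(\alpha))=2^{\frac d2}\alpha^{(0)}$. Second, the admissibility condition for $\scB$ in Definition \ref{def:admissible_pairing} says exactly that the $\scB$-transpose of $\Psi_\gamma(\alpha)$ is $\Psi_\gamma((\pi^{\frac{1-s}{2}}\circ\tau)(\alpha))$.

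For (a)$\Rightarrow$(c), write $E=\cE(\eta)=\eta\otimes\scB(-,\eta)$. Then
\[
E\circ B\circ E=\scB(B\eta,\eta)\,E=\operatorname{Tr}(E\circ B)\,E
\]
for every $B\in\End(\Sigma)$. Taking $B=\Psi_\gamma(\beta)$ and pulling back gives the first identity in (c). Next, the $\scB$-transpose of $E$ is $\scB(-,\eta)\otimes\eta$ read through $\scB$. Symmetry type $\sigma$ shows that this transpose equals $\sigma E$, which by the second fact yields $(\pi^{\frac{1-s}{2}}\circ\tau)(\alpha)=\sigma\alpha$.

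The implication (c)$\Rightarrow$(b) is immediate once we choose $\beta=1$ and find some $\beta$ with $(\alpha\diamond\beta)^{(0)}\neq0$. Such a $\beta$ exists whenever $\alpha\neq0$, since the pairing $\operatorname{Tr}(AB)$ is non-degenerate. The case $\alpha=0$ is trivially the square of $\eta=0$; in the statement it is excluded by the requirement $(\alpha\diamond\beta)^{(0)}\neq 0$.

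The substantive direction is (b)$\Rightarrow$(a). Set $A=\Psi_\gamma(\alpha)$ and $B=\Psi_\gamma(\beta)$, so that $ABA=\operatorname{Tr}(AB)\,A$ with $\operatorname{Tr}(AB)\neq0$. From this we show that $A$ has rank one:
\[
A=\operatorname{Tr}(AB)^{-1}\,A\circ(BA),
\]
and $(BA)^2=\operatorname{Tr}(AB)\,BA$, so $P:=BA/\operatorname{Tr}(BA)$ is an idempotent. Its rank equals its trace, which is $1$. Hence $A=\operatorname{Tr}(AB)\,A\circ P/\operatorname{Tr}(AB)$ has rank at most one, and $A\neq0$. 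Thus $A=\xi\otimes\theta$ for some $\xi\in\Sigma$ and $\theta\in\Sigma^*$, and by non-degeneracy $\theta=\scB(-,\zeta)$ for a unique $\zeta$.

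The symmetry condition then says that the $\scB$-transpose of $A$ is $\sigma A$. The transpose of $\xi\otimes\scB(-,\zeta)$ is $\sigma\,\zeta\otimes\scB(-,\xi)$, so we get $\zeta\otimes\scB(-,\xi)=\xi\otimes\scB(-,\zeta)$. Hence $\zeta=c\,\xi$ with $c\in\C^*$, and the equation forces consistency. Choosing $\eta=\sqrt{c}\,\xi$ gives $A=\eta\otimes\scB(-,\eta)$, that is, $\alpha=\cE_\gamma(\eta)$.

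The main obstacle is this transpose bookkeeping. We must verify carefully that the $\scB$-adjoint of a rank-one operator $\xi\otimes\scB(-,\zeta)$ is $\sigma\,\zeta\otimes\scB(-,\xi)$, with the correct sign coming from $\sigma$; a sign error here would break the conclusion. The equation $\alpha\diamond\alpha=2^{\frac d2}\alpha^{(0)}\alpha$ is just the case $\beta=1$ and is not needed separately.

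Finally, for a chiral $\eta$ with chirality $\mu$, the extra condition expresses $\gamma(\nu)\circ A=\mu A$, where $\nu$ is the normalized volume element. Using $\Psi(\nu\diamond\alpha)$ together with the Hodge-star identity $\nu\diamond\alpha=\pm*(\pi\circ\tau)(\alpha)$, with the prefactor $i^{q+\frac d2}$ fixed by the normalization $\nu^2=1$, this becomes the stated equation. Conversely, that equation forces $\gamma(\nu)\eta=\mu\eta$, so $\eta$ is chiral of chirality $\mu$.
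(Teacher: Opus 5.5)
Your argument is correct and follows the standard route for this result, which the paper does not reprove but quotes from \cite{Algebraic_Complex_Square_2025}: transport to $\End(\Sigma)$ via $\Psi_\gamma$, use $\mathrm{Tr}\,\Psi_\gamma(\alpha)=2^{\frac{d}{2}}\alpha^{(0)}$ and admissibility to identify the $\scB$-transpose, characterize rank-one endomorphisms by $A\circ B\circ A=\mathrm{Tr}(A\circ B)\,A$ with $\mathrm{Tr}(A\circ B)\neq0$, and read off the symmetry type. Two small corrections: the equivalence with (b) genuinely requires $\alpha\neq0$ (for $\alpha=0$, statements (a) and (c) hold but (b) cannot), and in (b) the identity $\alpha\diamond\alpha=2^{\frac{d}{2}}\alpha^{(0)}\alpha$ is not the case $\beta=1$ of the second identity, since there $\beta$ is fixed; it is nonetheless redundant, because your idempotent argument never uses it.
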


As in the Hermitian case, for the complex-bilinear square $\alpha_\eta$ of a spinor $\eta\in\Sigma$, we have the following form in terms of an orthonormal basis (see \cite[Eq.\ 13]{Algebraic_Complex_Square_2025}): \begin{equation*}\label{eq:explicitbilinear}
    \alpha_{\eta} = \frac{1}{2^{\frac{d}{2}}} \scB(\eta,\eta) + \frac{1}{2^{\frac{d}{2}}} \sum_{k=1}^d \sum_{i_1 < \cdots < i_k} \scB((\gamma^{i_1 \cdots i_k})^{-1}\eta , \eta)\, e^{i_1}\wedge  \cdots \wedge e^{i_k}.
\end{equation*}

Let $\Ad\colon\Spin_o(p,q)\to\SO_o(p,q)$ be the standard double cover of $\SO_o(p,q)$ and define the following map: $$\Ad^c\colon \Spin_o^c(p,q) \to \SO_o(p,q) \times \U(1),\qquad [u,z]\mapsto \Ad^c_{[u,z]} := (\Ad_u,z^2).$$

Then, by \cite[Section 3.5]{Algebraic_Complex_Square_2025}, the quadratic map $\cE_\gamma \colon \Sigma \to \wedge V^*_\C$ is equivariant with respect to the natural action of $\Spin_o^c(p,q)$ on $\Sigma$ and $\wedge V^*_\C$ defined via $\gamma$ and the double cover representation $\Ad^c \colon \Spin_o^c(p,q) \to \SO_o(p,q)\times \U(1)$, respectively.


\subsection{Spinorial forms in odd dimensions}


Let $V$ be an oriented real vector space of odd dimension $d$ equipped with a non-degenerate metric of signature $(p,q)$. Let $(V^*,h^*)$ be the quadratic vector space dual to $(V,h)$ and denote by $\mathrm{Cl}(V^*,h^*)$ the Clifford algebra of $(V^*,h^*)$. Consider the complexification $\mCl(V^*,h^*) := \mathrm{Cl}(V^*,h^*) \otimes_\R\C$. Denote by $\nu$ the pseudo-Riemannian volume form of $\mathrm{Cl}(V^*,h^*)$ and define the \emph{complex volume form} in $\mCl(V^*,h^*)$ by: $$\nu_\C:=i^{q+\frac{d-1}{2}}\nu\in\mCl(V^*,h^*).$$

It can be easily seen that the complex volume form satisfies $\nu_\C^2=1$ and lies in the center of $\mCl(V^*,h^*)$. Therefore, $\mCl(V^*,h^*)$ is non-simple and splits as a direct sum of unital and associative algebras: \begin{equation}\label{eq:splitting_Clifford}
    \mCl(V^*,h^*)=\mCl_+(V^*,h^*)\oplus\mCl_-(V^*,h^*),
\end{equation} where: $$\mCl_\ell(V^*,h^*)=\{z\in\mCl(V^*,h^*)\mid\nu_\C z=\ell z\}=\frac{1}{2}(1+\ell\nu_\C)\mCl(V^*,h^*),\quad\ell\in\mathbb{Z}_2.$$

The algebra $\mCl_\ell(V^*,h^*)$ is simple and isomorphic to $\Mat(2^{\frac{d-1}{2}},\C)$. Therefore, $\mCl(V^*,h^*)$ admits two irreducible left Clifford modules of complex dimension $2^{\smash{\frac{d-1}{2}}}$: $$\gamma_\ell\colon\mCl(V^*,h^*)\to\End(\Sigma)$$ that correspond to the projection of $\mCl(V^*,h^*)$ to the factor $\mCl_\ell(V^*,h^*)$ composed with an isomorphism of the later to the algebra $\End(\Sigma)$ of endomorphisms of a complex vector space $\Sigma$ of dimension $2^{\smash{\frac{d-1}{2}}}$. These two Clifford modules are distinguished by the value they take at the complex volume form $\nu_\C\in\mCl(V^*,h^*)$, namely: $$\gamma_\ell(\nu_\C)=\ell\,\Id\in\End(\Sigma).$$

We transport the splitting \eqref{eq:splitting_Clifford} of $\mCl(V^*,h^*)$ to the Kähler-Atiyah algebra $(\wedge V^*_\C,\diamond)$ through the Chevalley-Riesz isomorphism $\Psi\colon(\wedge V^*_\C,\diamond)\to\mCl(V^*,h^*)$. This gives: $$(\wedge V^*_\C,\diamond)=(\wedge_+V^*_\C,\diamond)\oplus(\wedge_- V^*_\C,\diamond),$$ where: $$\wedge_\ell V^*_\C=\{\alpha\in\wedge V^*_\C\mid\nu_\C\diamond\alpha=\ell\alpha\}=\{\alpha\in\wedge V^*_\C\mid i^{q+\frac{d-1}{2}}*\tau(\alpha)=\ell\alpha\},\quad\ell\in\mathbb{Z}_2.$$

Here we have used the identity: 
\begin{equation}\label{eq:mult_volume_form_odd}
    \alpha\diamond\nu_\C=\nu_\C\diamond\alpha=i^{q+\frac{d-1}{2}}*\tau(\alpha)
\end{equation} 
given in Proposition \ref{prop:product_volume_form}. By composing the Chevalley-Riesz isomorphism with the irreducible complex representation $\gamma_\ell\colon\mCl(V^*,h^*)\to\End(\Sigma)$ we obtain a surjective morphism of unital and associative complex algebras that we denote by: $$\Psi_\ell:=\gamma_\ell\circ\Psi\colon(\wedge V^*_\C,\diamond)\to\End(\Sigma).$$

Let $\cP_\ell\colon\wedge V^*_\C\to\wedge_\ell V^*_\C$ be the natural projection given explicitly by: \begin{equation}\label{eq:projection_l}
    \cP_\ell(\alpha):=\frac{1}{2}(1+\ell\nu_\C)\diamond\alpha=\frac{1}{2}(\alpha+i^{q+\frac{d-1}{2}}\ell*\tau(\alpha)),
\end{equation} where we have used the identity \eqref{eq:mult_volume_form_odd}. Consider the canonical linear inclusion $\iota_\ell\colon\wedge_\ell V_\C^*\hookrightarrow\wedge V_\C^*$, which is a right inverse to $\cP_\ell$. In particular: $$\Psi_\ell\circ\iota_\ell\colon(\wedge_\ell V^*_\C,\diamond)\to\End(\Sigma)$$ is an isomorphism of unital and associative algebras. Following \cite{LBC13,LB13,LBC16}, we define: \begin{equation*}\label{eq:vee_product}
    \wedge^<V^*_\C:=\bigoplus_{k=0}^{\frac{d-1}{2}}\wedge^kV^*_\C.
\end{equation*}

Note that $\wedge V^*_\C=\wedge^<V_\C^*\oplus*\wedge^<V^*_\C$. Then, restricting $\cP_\ell\colon\wedge V^*_\C\to\wedge_\ell V^*_\C$ to $\wedge^<V^*_\C$, we obtain an isomorphism of vector spaces that we use to transport the algebra product in $(\wedge_\ell V^*_\C,\diamond)$ to $\wedge^<V^*_\C$. For every pair $\alpha,\beta\in\wedge^<V^*_\C$ we define the \emph{truncated geometric product} by: $$\alpha\vee\beta:=2\cP_<(\cP_\ell(\alpha\diamond\beta)),$$ where $\cP_<\colon\wedge V^*_\C\to\wedge^<V^*_\C$ is the natural projection. By construction, $(\wedge_\ell V^*_\C,\diamond)$ and the \emph{truncated Kähler-Atiyah algebra} $(\wedge^<V^*_\C,\vee)$ are naturally isomorphic as unital and associative complex algebras. For further reference we introduce the following linear map: \begin{equation}\label{eq:truncated_iso}
    \Psi^<_\ell:=\Psi_\ell\circ\iota_\ell\circ\cP_\ell\vert_{\wedge^<V^*_\C}\colon(\wedge^<V^*_\C,\vee)\to\End(\Sigma),
\end{equation} which, by the previous discussion, is an isomorphism of unital and associative algebras. Altogether, we obtain the following commutative diagram of unital and associative algebras:
$$\begin{tikzcd}
                                                                                                                                                        &  & {\mathbb{C}\mathrm{l}_\ell(V^*,h^*)} \arrow[d]                                                                                                                                   &  &                                                                                                                                           \\
{\mathbb{C}\mathrm{l}(V^*,h^*)} \arrow[rru, "\mathrm{pr}_\ell"] \arrow[rr, "\gamma_\ell"]                                                  &  & \mathrm{End}(\Sigma)                                                                                                                                                             &  & {(\wedge^<V^*_{\mathbb{C}},\vee)} \arrow[ll, "\Psi^<_\ell"'] \arrow[lldd, "\mathcal{P}_\ell|_{\wedge^<V^*_{\mathbb{C}}}"', shift right] \\
                                                                                                                                                        &  &                                                                                                                                                                                  &  &                                                                                                                                           \\
{(\wedge V^*_{\mathbb{C}},\diamond)} \arrow[uu, "\Psi"] \arrow[rruu, "\Psi_\ell"] \arrow[rr, "\mathcal{P}_\ell", shift left] &  & {(\wedge_\ell V^*_{\mathbb{C}},\diamond)} \arrow[uu, "\Psi_\ell\circ\iota_\ell"] \arrow[ll, "\iota_\ell", shift left] \arrow[rruu, "2\mathcal{P}_<"', shift right] &  &                                                                                                                                          
\end{tikzcd}$$

Recall that, similarly to the even-dimensional case, a Hermitian pairing $\scS\colon\Sigma\times\Sigma\to\C$ is called admissible if it satisfies: $$\scS(\gamma_\ell(z)\eta_1,\eta_2)=\scS(\eta_1,\gamma_\ell(\overline{(\pi^{\frac{1-s}{2}}\circ\tau)(z)})\eta_2),\quad s\in\mathbb{Z}_2,$$ whereas a complex-bilinear pairing $\scB\colon\Sigma\times\Sigma\to\C$ is called admissible if it satisfies: $$\scB(\gamma_\ell(z)\eta_1,\eta_2)=\scB(\eta_1,\gamma_\ell((\pi^{\frac{1-s}{2}}\circ\tau)(z))\eta_2),\quad s\in\mathbb{Z}_2$$ for all $z\in\mCl(V^*,h^*)$ and $\eta_1,\eta_2\in\Sigma$. Proof of the existence of admissible Hermitian and complex-bilinear pairings can be found in \cite{AC97,ACDVP05,HarveyBook,Mei13}, and is given explicitly in \cite[Prop.\ 4.3]{Algebraic_Complex_Square_2025} and \cite[Props.\ 4.8 and 4.11]{Algebraic_Complex_Square_2025}, respectively. We will always assume that, given an irreducible complex Clifford module $(\Sigma,\gamma_\ell)$, the square maps $\cE\colon\Sigma\to\End(\Sigma)$ and $\hcE_\kappa\colon\Sigma\to\End(\Sigma)$ are constructed using an admissible pairing.\medskip

Let $(\Sigma,\gamma_\ell)$ be an irreducible complex Clifford module equipped with a complex-bilinear pairing $\scB$ and a Hermitian pairing $\scS$. Using the isomorphism $\Psi^<_\ell\colon(\wedge^<V^*_\C,\vee)\to\End(\Sigma)$ given in \eqref{eq:truncated_iso}, we define the \emph{complex-bilinear} and \emph{Hermitian square spinor maps} respectively as follows: $$\cE_\ell:=(\Psi_\ell^<)^{-1}\circ\cE\colon\Sigma\to\wedge^<V^*_\C,\qquad\hcE^\kappa_\ell:=(\Psi_\ell^<)^{-1}\circ\hcE_\kappa\colon\Sigma\to\wedge^<V^*_\C.$$

As in the even-dimensional case, we have the following characterization of \emph{constrained} spinors.

\begin{lemma}[{\cite[Lemma 4.2]{Algebraic_Complex_Square_2025}}]\label{lemma:constrainedspinorodd}
    Let $Q\in \End(\Sigma)$. Then, the spinor $\eta\in \Sigma$ satisfies $Q(\eta) = 0$ if and only if $\frq\vee \cE_{\ell}(\eta) = 0$ if and only if $\frq\vee \hcE^{\kappa}_{\ell}(\eta) = 0$, where $\frq:=(\Psi_\ell^<)^{-1}(Q)$ is the \emph{dequantization} of $Q$.
\end{lemma}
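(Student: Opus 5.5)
The statement to prove is Lemma \ref{lemma:constrainedspinorodd}: for $Q\in\End(\Sigma)$ with dequantization $\frq:=(\Psi^<_\ell)^{-1}(Q)$, the conditions $Q(\eta)=0$, $\frq\vee\cE_\ell(\eta)=0$ and $\frq\vee\hcE^\kappa_\ell(\eta)=0$ are equivalent.

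The plan is to transport both constraints through the algebra isomorphism $\Psi^<_\ell\colon(\wedge^<V^*_\C,\vee)\to\End(\Sigma)$ of \eqref{eq:truncated_iso}, where they become statements about rank-one endomorphisms. Since $\Psi^<_\ell$ is an isomorphism of unital associative algebras, and by definition $\Psi^<_\ell(\cE_\ell(\eta))=\cE(\eta)$ and $\Psi^<_\ell(\hcE^\kappa_\ell(\eta))=\hcE_\kappa(\eta)$, we have
\begin{equation*}
\Psi^<_\ell(\frq\vee\cE_\ell(\eta))=Q\circ\cE(\eta)=Q(\eta)\otimes\scB(-,\eta),
\end{equation*}
and similarly
\begin{equation*}
\Psi^<_\ell(\frq\vee\hcE^\kappa_\ell(\eta))=\kappa\, Q(\eta)\otimes\scS(-,\eta).
\end{equation*}
Injectivity of $\Psi^<_\ell$ then reduces the lemma to showing that each of these endomorphisms vanishes if and only if $Q(\eta)=0$.

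The trivial direction is immediate: if $Q(\eta)=0$, both tensors vanish. For the converse, first dispose of $\eta=0$, where everything holds trivially. For $\eta\neq0$, use that $\scB$ and $\scS$ are non-degenerate, so the functionals $\scB(-,\eta)$ and $\scS(-,\eta)$ are nonzero. Pick $\xi$ with $\scB(\xi,\eta)\neq0$. Evaluating $Q(\eta)\otimes\scB(-,\eta)=0$ on $\xi$ gives $Q(\eta)\scB(\xi,\eta)=0$, hence $Q(\eta)=0$. The Hermitian case is identical, using $\kappa\neq0$ and the fact that $\scS(-,\eta)$ is a nonzero anti-linear functional.

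No real obstacle is expected; the only point needing care is bookkeeping. Concretely, one must check that $\Psi^<_\ell$, rather than $\Psi_\ell$, is multiplicative with respect to $\vee$. This holds because $\cP_\ell$ restricted to $\wedge^<V^*_\C$ intertwines $\vee$ with $\diamond$ on $\wedge_\ell V^*_\C$ by construction, and $\Psi_\ell\circ\iota_\ell$ is an algebra isomorphism. This is exactly the same argument as in the even-dimensional Lemma \ref{lemma:constrainedspinoreven}, with $\diamond$ and $\Psi_\gamma$ replaced by $\vee$ and $\Psi^<_\ell$.
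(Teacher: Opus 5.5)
Your proof is correct and is essentially the standard argument behind the cited result: push both constraints through the algebra isomorphism $\Psi^<_\ell$ to obtain the rank-one endomorphisms $Q(\eta)\otimes\scB(-,\eta)$ and $\kappa\,Q(\eta)\otimes\scS(-,\eta)$, then conclude by non-degeneracy of the admissible pairings. One harmless slip: $\scS$ is anti-linear in its second entry, so $\scS(-,\eta)$ is a complex-linear (not anti-linear) functional; the argument only uses that it is non-zero when $\eta\neq 0$.
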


We now give the algebraic characterization of the complex spinorial forms associated to a Hermitian paired Clifford module $(\Sigma,\gamma_\ell,\scS)$.

\begin{thm}[{\cite[Thm.\ 4.4]{Algebraic_Complex_Square_2025}}]\label{thm_Hermitian_square_odd}
    Let $(\Sigma,\gamma_\ell)$ be an irreducible complex Clifford module equipped with an admissible Hermitian pairing $\scS$ of adjoint type $s\in\mathbb{Z}_2$. Then the following statements are equivalent for a complex exterior form $\alpha\in\wedge^<V^*_\C$: \begin{enumerate}
        \item[\normalfont(a)] $\alpha$ is the Hermitian square of a spinor $\eta\in\Sigma$, that is, $\alpha=\hcE_\ell^\kappa(\eta)$ for some $\kappa\in\U(1)$.
        \item[\normalfont(b)] $\alpha$ satisfies the following equations: $$\alpha\vee\alpha = 2^{\frac{d-1}{2}}\alpha^{(0)}\alpha,\qquad\alpha\vee\beta\vee\alpha=2^{\frac{d-1}{2}}(\alpha\vee\beta)^{(0)}\alpha,\qquad(\pi^{\frac{1-s}{2}}\circ\tau)(\bar\kappa\alpha) = \kappa \bar{\alpha}$$ for an exterior form $\beta\in\wedge^<V^*_\C$ satisfying $(\alpha\vee\beta)^{(0)}\neq0$.
        \item[\normalfont(c)] The following equations hold: $$\alpha\vee\beta\vee\alpha=2^{\frac{d-1}{2}}(\alpha\vee\beta)^{(0)}\alpha,\qquad(\pi^{\frac{1-s}{2}}\circ\tau)(\bar\kappa\alpha) = \kappa \bar{\alpha}$$ for every exterior form $\beta\in\wedge^<V^*_\C$.
    \end{enumerate}
\end{thm}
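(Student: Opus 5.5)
The plan is to transport everything through the algebra isomorphism $\Psi^<_\ell\colon(\wedge^<V^*_\C,\vee)\to\End(\Sigma)$ of \eqref{eq:truncated_iso}. This turns the statement into a purely matrix-theoretic characterization of endomorphisms of the form $\kappa\,\eta\otimes\scS(-,\eta)$ on a complex vector space $\Sigma$ of dimension $N=2^{\frac{d-1}{2}}$, which is the same argument as in the even-dimensional Theorem \ref{thm:even_Hermitian_forms}. First I will record the dictionary. Since $\Psi^<_\ell$ is a unital algebra isomorphism, $\vee$ corresponds to composition. The degree-zero part corresponds to the normalized trace: $\alpha^{(0)}=N^{-1}\Tr(\Psi^<_\ell(\alpha))$. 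To check this, only the scalar part survives the trace in the Clifford algebra. The $k$-forms with $1\le k\le \frac{d-1}{2}$ are traceless because they are products of distinct anticommuting generators, and after projecting with $\cP_\ell$ the top-degree partner does not contribute to $\cP_<$. The last entry of the dictionary is the adjoint. The map $\alpha\mapsto(\pi^{\frac{1-s}{2}}\circ\tau)(\bar\alpha)$ corresponds to the $\scS$-adjoint, by admissibility. Here I must check that $\pi^{\frac{1-s}{2}}\circ\tau$ and conjugation preserve $\wedge^<V^*_\C$ and commute with $\cP_\ell$ up to the correct identification. This is the main technical obstacle: $\tau$ and $\pi$ act on $\nu_\C$ by signs, and one needs the admissible pairing to be compatible with $\gamma_\ell$ (rather than swapping $\ell\to-\ell$). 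That compatibility is exactly what the existence of admissible pairings for $\gamma_\ell$ guarantees. With it, $\Psi^<_\ell\big((\pi^{\frac{1-s}{2}}\circ\tau)(\bar\alpha)\big)=\Psi^<_\ell(\alpha)^{\dagger_\scS}$.

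For (a)$\Rightarrow$(c), set $E=\kappa\,\eta\otimes\scS(-,\eta)$. Then $E\circ B\circ E=\kappa\,\scS(B\eta,\eta)\,E=\Tr(E\circ B)\,E$ for every $B$. Translating through the dictionary, this becomes $\alpha\vee\beta\vee\alpha=N(\alpha\vee\beta)^{(0)}\alpha$. Hermitian symmetry of $\scS$ gives $E^{\dagger}=\bar\kappa\,\eta\otimes\scS(-,\eta)=\bar\kappa^2E$, which is equivalent to $(\pi^{\frac{1-s}{2}}\circ\tau)(\bar\kappa\alpha)=\kappa\bar\alpha$ after conjugating and using the dictionary.

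The implication (c)$\Rightarrow$(b) is immediate. If $\alpha\neq0$, choose $\beta$ with $(\alpha\vee\beta)^{(0)}\ne0$, which is possible by nondegeneracy of the trace pairing; taking $\beta=1$ in (c) gives the first equation of (b).

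For (b)$\Rightarrow$(a), let $A=\Psi^<_\ell(\alpha)$. The identity $ABA=\Tr(AB)A$ with $\Tr(AB)\neq0$ shows that $A\ne0$ and $\operatorname{rank}A\le\operatorname{rank}(ABA)$. Since $P=AB/\Tr(AB)$ is an idempotent with $PA=A$, we get $\Tr P=\operatorname{rank}P=1$, so $\operatorname{rank}A=1$. Hence $A=\xi\otimes\scS(-,\zeta)$ for some $\xi,\zeta$, by nondegeneracy of $\scS$. The adjoint condition then reads $\zeta\otimes\scS(-,\xi)=\bar\kappa^2\,\xi\otimes\scS(-,\zeta)$, which forces $\zeta=c\,\xi$ with $\bar c=\bar\kappa^2 c$, i.e.\ $c\in\kappa\R^\times$. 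Rescaling $\xi$ by $\sqrt{|c|}$ absorbs the modulus. Any remaining sign is absorbed by replacing $\kappa$ with $-\kappa$; this is harmless because the statement quantifies over some $\kappa\in\U(1)$ and the adjoint equation is invariant under $\kappa\mapsto-\kappa$. This yields $A=\kappa\,\eta\otimes\scS(-,\eta)$, so $\alpha=\hcE^\kappa_\ell(\eta)$.
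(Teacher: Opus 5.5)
Your proposal is correct. The paper itself gives no proof and only cites \cite[Thm.\ 4.4]{Algebraic_Complex_Square_2025}; your route is the standard one that citation relies on. You transport everything through $\Psi^<_\ell$, identify the degree-zero part with the normalized trace and the map $\alpha\mapsto(\pi^{\frac{1-s}{2}}\circ\tau)(\bar\alpha)$ with the $\scS$-adjoint, and characterize rank-one endomorphisms. The ``technical obstacle'' you flag is vacuous: $\gamma_\ell(\nu_\C)=\ell\,\Id$ gives $\Psi^<_\ell(\alpha)=\gamma_\ell(\Psi(\alpha))$ for $\alpha\in\wedge^<V^*_\C$, so both dictionary entries follow directly from admissibility relative to $\gamma_\ell$.

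One small slip: with $\zeta=c\,\xi$ the adjoint condition gives $c=\bar\kappa^2\bar c$, not $\bar c=\bar\kappa^2 c$. Hence $c\in\bar\kappa\R^\times$, and $A=\xi\otimes\scS(-,\zeta)=\bar c\,\xi\otimes\scS(-,\xi)$ with $\bar c\in\kappa\R^\times$. That last fact is what your final rescaling step actually uses, so the conclusion stands.
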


As it happened in the even-dimensional case, we can explicitly expand the Hermitian square $\widehat{\alpha}_\eta\in\wedge^<V^*_\C$ of an irreducible complex spinor $\eta\in\Sigma$ in terms of an orthonormal basis $\{e^1,\ldots,e^d\}$ of $(V^*,h^*)$: \begin{equation*}\label{eq:oddhermitianexpansion}
    \widehat{\alpha}_{\eta} = \frac{\kappa}{2^{\frac{d-1}{2}}} \scS(\eta,\eta) + \frac{\kappa}{2^{\frac{d-1}{2}}} \sum_{k=1}^{\frac{d-1}{2}} \sum_{i_1 < \cdots < i_k} \scS((\gamma_\ell^{i_1 \cdots i_k})^{-1}\eta , \eta)\, e^{i_1}\wedge  \cdots \wedge e^{i_k},
\end{equation*} where $\gamma_\ell^{i_1\cdots i_k}:=\Psi^<_\ell(e^{i_1})\circ\cdots\circ\Psi^<_\ell(e^{i_k})$. Furthermore, the Hermitian spinor square maps in odd dimensions are also equivariant with respect to the natural action of $\Spin^c_o(p,q)$.\medskip

We now give the algebraic characterization of the complex spinorial forms associated to a complex-bilinear paired Clifford module $(\Sigma,\gamma_\ell,\scB)$.

\begin{thm}[{\cite[Thm.\ 4.12]{Algebraic_Complex_Square_2025}}]\label{thm:odd_complex-bilinear_square}
    Let $(\Sigma,\gamma_\ell)$ be an irreducible complex Clifford module equipped with an admissible complex-bilinear pairing $\scB$ of adjoint type $s\in\mathbb{Z}_2$ and symmetry type $\sigma\in\mathbb{Z}_2$. Then the following statements are equivalent for a complex exterior form $\alpha\in\wedge^<V^*_\C$: \begin{enumerate}
        \item[\normalfont(a)] $\alpha$ is the complex-bilinear square of a spinor $\eta\in\Sigma$, that is, $\alpha=\cE_\ell(\eta)$.
        \item[\normalfont(b)] $\alpha$ satisfies the following equations: $$\alpha\vee\alpha=2^{\frac{d-1}{2}}\alpha^{(0)}\alpha,\qquad\alpha\vee\beta\vee\alpha=2^{\frac{d-1}{2}}(\alpha\vee\beta)^{(0)}\alpha,\qquad(\pi^{\frac{1-s}{2}}\circ\tau)(\alpha)=\sigma\alpha$$ for an exterior form $\beta\in\wedge^<V^*_\C$ satisfying $(\alpha\vee\beta)^{(0)}\neq0$.
        \item[\normalfont(c)] The following equations hold: $$\alpha\vee\beta\vee\alpha=2^{\frac{d-1}{2}}(\alpha\vee\beta)^{(0)}\alpha,\qquad(\pi^{\frac{1-s}{2}}\circ\tau)(\alpha)=\sigma\alpha$$ for every exterior form $\beta\in\wedge^<V^*_\C$.
    \end{enumerate}
\end{thm}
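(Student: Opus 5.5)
Since $\Psi^<_\ell\colon(\wedge^<V^*_\C,\vee)\to\End(\Sigma)$ is an isomorphism of unital associative algebras, every statement about $\alpha$ can be transported to a statement about the endomorphism $E:=\Psi^<_\ell(\alpha)$. The plan is to reduce the claim to the corresponding characterization of rank-one endomorphisms in $\End(\Sigma)$, mirroring the even-dimensional Theorem \ref{thm:bilinearsquare}. Two preliminary facts are needed.

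\emph{Trace identity.} For $\beta\in\wedge^<V^*_\C$,
\[
\Tr(\Psi^<_\ell(\beta))=2^{\frac{d-1}{2}}\beta^{(0)}.
\]
To see this, write $\Psi^<_\ell(\beta)=\gamma_\ell(\Psi(\cP_\ell\beta))$ and note $\cP_\ell\beta=\tfrac12(\beta+\ell\, i^{q+\frac{d-1}{2}}*\tau\beta)$. Every positive-degree form of degree $<d$ maps to a traceless endomorphism: any Clifford monomial of degree $k\neq0,d$ is a commutator up to a nonzero scalar. The top-degree component is $\ell\nu_\C$ times the scalar part and acts as $\ell\cdot\ell=1$. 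The two halves therefore contribute equally, which gives the factor $2\cdot\tfrac12$ in front of $\dim\Sigma=2^{\frac{d-1}{2}}$.

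\emph{Transpose compatibility.} One shows
\[
\Psi^<_\ell\bigl((\pi^{\frac{1-s}{2}}\circ\tau)(\alpha)\bigr)=E^t,
\]
where $E^t$ is the $\scB$-transpose. Admissibility gives $\gamma_\ell(z)^t=\gamma_\ell((\pi^{\frac{1-s}{2}}\circ\tau)(z))$ for all $z$. The point to check is that $\pi^{\frac{1-s}{2}}\circ\tau$ preserves $\wedge^<V^*_\C$ and commutes with $\cP_<\circ\cP_\ell$ up to the identification. This holds because $\gamma_\ell$ is a representation, so applying it to $\tau$ of the $*$-part and to $\tau$ of the $\wedge^<$-part gives compatible images (both equal $E^t$).

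With these two facts, (a)$\Rightarrow$(c) is a direct computation. If $E=\eta\otimes\scB(-,\eta)$, then
\[
E\circ B\circ E=\scB(B\eta,\eta)\,E=\Tr(E\circ B)\,E,
\]
which is the first identity by the trace lemma. Also $E^t=\sigma E$ by the symmetry type of $\scB$, giving the second. Next, (c)$\Rightarrow$(b): choose $\beta=1$ to obtain the quadratic relation, and pick $\beta$ with $(\alpha\vee\beta)^{(0)}\neq0$. Such a $\beta$ exists if $\alpha\neq0$ by nondegeneracy of the trace pairing; the case $\alpha=0=\cE_\ell(0)$ is trivial.

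The main work is (b)$\Rightarrow$(a). From $E B E=\Tr(EB)E$ with $\Tr(EB)\neq0$, the endomorphism $E$ is nonzero and $P:=EB/\Tr(EB)$ is an idempotent with $\Tr P=1$, hence of rank one. Since $E=PE\cdot$(scalar) via $EBE=\Tr(EB)E$, $E$ has rank at most one, hence exactly one. So $E=\xi\otimes\scB(-,\zeta)$ for some $\xi,\zeta\in\Sigma$, using nondegeneracy of $\scB$. The transpose condition $E^t=\sigma E$ reads $\zeta\otimes\scB(-,\xi)=\xi\otimes\scB(-,\zeta)$ after using the symmetry of $\scB$, which forces $\zeta=c\,\xi$ with $c\in\C^*$. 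Then $\eta:=\sqrt{c}\,\xi$ gives $E=\cE(\eta)$; the complex scaling is the reason the bilinear case needs no $\kappa$. I expect the only genuinely delicate step to be the transpose compatibility through the truncation $\cP_<\circ\cP_\ell$, so I would verify it first on basis monomials of degree $\le\frac{d-1}{2}$.
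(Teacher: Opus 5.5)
Your proof is correct and is essentially the argument behind the cited result and its even-dimensional analogue. You transport everything through the algebra isomorphism $\Psi^<_\ell$, use the trace identity $\Tr(\Psi^<_\ell(\beta))=2^{\frac{d-1}{2}}\beta^{(0)}$ and $\scB$-transpose compatibility, and then characterize rank-one endomorphisms by $E\circ B\circ E=\Tr(E\circ B)E$ with $\Tr(E\circ B)\neq0$, followed by $E^t=\sigma E$. The transpose step you call delicate is actually immediate: $\gamma_\ell(\nu_\C)=\ell\,\Id$ gives $\Psi^<_\ell=\gamma_\ell\circ\Psi$ on $\wedge^<V^*_\C$, so admissibility applies verbatim.

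One correction: $\alpha=0$ is not a trivial case but a genuine exception. No $\beta$ satisfies $(0\vee\beta)^{(0)}\neq0$, so (b) fails there while (a) and (c) hold. The equivalence must therefore be read for $\alpha\neq0$ (i.e.\ $\eta\neq0$), which is how the paper uses it.
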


Analogously to the Hermitian case, the complex-bilinear square $\alpha_\eta\in\wedge^<V^*_\C$ of an irreducible complex spinor $\eta\in\Sigma$ can be expanded as follows: \begin{equation*}\label{eq:odd_explicit_complexbilinearsquare}
    \alpha_{\eta} = \frac{1}{2^{\frac{d-1}{2}}} \scB(\eta,\eta) + \frac{1}{2^{\frac{d-1}{2}}} \sum_{k=1}^{\frac{d-1}{2}} \sum_{i_1 < \cdots < i_k} \scB((\gamma_\ell^{i_1 \cdots i_k})^{-1}\eta , \eta)\, e^{i_1}\wedge  \cdots \wedge e^{i_k}
\end{equation*} in terms of any orthonormal basis $\{e^1,\ldots,e^d\}$ of $(V^*,h^*)$. Furthermore, the complex-bilinear spinor square map in odd dimensions is also equivariant with respect to the natural action of $\Spin^c_o(p,q)$.


\section{Constrained parallel complex spinors}
\label{sec:constrained_parallel_spinors}


In this section, we recall the spinorial background that we will use throughout the article to then introduce the notion of \emph{constrained parallel spinor}, which is our main object of study.


\subsection{Bundles of irreducible complex Clifford modules}


Let $(M,g)$ denote a connected and \emph{oriented} pseudo-Riemannian manifold of signature $(p,q)$ and dimension $d=p+q$. The bundle of Clifford algebras of $(M,g)$ will be denoted by $\mathrm{Cl}(M,g)$, whereas its complexification will be denoted by $\mCl(M,g)$. Let $\pi$ and $\tau$ be the canonical automorphism and anti-automorphism of the Clifford bundle, given by fiberwise extension of the corresponding objects defined fiberwise.

\begin{definition}
A \emph{bundle of complex Clifford modules}, or a \emph{complex spinor bundle} for short, on $(M,g)$ is a pair $(S,\gamma)$, where $S$ is a complex vector bundle on $M$ and $\gamma\colon\mCl(M,g)\to\End(S)$ is a morphism of bundles of unital and associative complex algebras, where $\End(S)$ denotes the bundle of endomorphisms of $S$.
\end{definition}

Given a complex spinor bundle $(S,\gamma)$, we obtain a natural morphism of unital and associative \emph{real} algebras:
\begin{equation*}
\gamma \colon \mathrm{Cl}(M,g) \to \End(S)
\end{equation*}

obtained by restricting $\gamma$ to $\mathrm{Cl}(M,g) \hookrightarrow \mCl(M,g) = \mathrm{Cl}(M,g)\otimes_{\R}\mathbb{C}$. Since $M$ is connected, any bundle of complex Clifford modules $(S,\gamma)$ is modeled on a given Clifford module called its \emph{type}, which is well-defined modulo \emph{unbased} isomorphisms of Clifford modules, see \cite{LS18,LS19} for more details. If this type is irreducible, then we say that $(S,\gamma)$ is a bundle of irreducible complex spinors or an irreducible complex spinor bundle for short. In this case, the rank of a bundle of irreducible complex Clifford modules is $\rk_\C S=\dim_\C\Sigma=2^{\smash{[\frac{d}{2}]}}$, where $[\frac{d}{2}]$ denotes the integer part of $\frac{d}{2}$ and $d$ is the dimension of $M$. In this situation, it is shown in \cite{LS18,LS19,LS22_complex_type} that an orientable pseudo-Riemannian manifold $(M,g)$ admits a bundle of irreducible complex Clifford modules if and only if it admits a spin-c structure $Q \to (M,g)$, in which case $S$ may be obtained from $Q$ via the associated vector bundle construction through an irreducible representation of $\Spin^c(p,q)$. In the following, given an irreducible complex spinor bundle $(S,\gamma)$, we will always assume that $S$ is given as an associated vector bundle to a given spin structure $Q$, that is, we fix a presentation of the form:
\begin{equation*}
S = Q\times_{\varrho} \Sigma,
\end{equation*}
where $\varrho\colon \Spin^c(p,q)\to \mathrm{GL}(\Sigma)$ is an irreducible representation of $\Spin^c(p,q)$ on the complex vector space $\Sigma$. In particular, we have a canonically defined principal $\U(1)$-bundle:
\begin{equation*}
P = Q\times_{\xi} \U(1)
\end{equation*}
associated to $Q$ via the \emph{square} morphism $\xi \colon \Spin^c(p,q) \to \U(1)$ defined by $\xi([u,z])=z^2$. Equivalently, we can consider $P$ as a Hermitian complex line bundle $\cL\to M$. We will respectively refer to $P$ and $\cL$ as the \emph{characteristic} $\U(1)$ and complex line bundles associated to $S$.\medskip

To proceed further, we need to distinguish between even and odd dimensions.


\subsubsection{The even-dimensional case}


We denote by $(\wedge T^*_\C M,\diamond)$ the complex exterior bundle: $$\wedge T^*_\C M:=\bigoplus_{k=0}^d\wedge^kT^*_\C M$$ equipped with the pointwise extension of the geometric product $\diamond$, which depends on the metric $g$. This bundle of unital and associative algebras is called the \emph{(complexified) Kähler-Atiyah bundle} of $(M,g)$ (see \cite{LBC13,LBC16}). The Chevalley-Riesz map extends to a unital isomorphism of bundles of algebras: $$\Psi\colon(\wedge T^*_\C M,\diamond)\to\mCl(M,g),$$ which allows us to view the Kähler-Atiyah bundle as a model for the Clifford bundle. We again denote by $\pi$ and $\tau$ the (anti-)automorphisms of the Kähler-Atiyah bundle obtained by transporting the corresponding objects from the Clifford bundle through $\Psi$.\medskip

The following is a well-known property of the Kähler-Atiyah bundle $(\wedge T^*_\C M,\diamond)$, which also follows from the definition of the geometric product $\diamond$ (see \cite{LBC13,LBC16}).

\begin{prop}\label{prop:geometric_product_derivations}
    The canonical extension to $\wedge T^*_\C M$ of the Levi-Civita connection $\nabla^g$ of $(M,g)$, which we again denote by $\nabla^g$, acts by derivations of the geometric product, that is: $$\nabla^g(\alpha\diamond\beta)=(\nabla^g\alpha)\diamond\beta+\alpha\diamond(\nabla^g\beta)$$ for all $\alpha,\beta\in\Omega_\C(M):=\Gamma(\wedge T^*_\C M)$.
\end{prop}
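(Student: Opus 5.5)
The claim is a pointwise statement, so I will work in a local orthonormal coframe $\{e^1,\dots,e^d\}$ on an open set $U\subset M$. The plan is to reduce the derivation property to the Leibniz rule for the wedge product together with compatibility of $\nabla^g$ with contraction by vectors. The tool is the standard explicit formula for the geometric product in the Kähler-Atiyah model, which I assume from Appendix \ref{app:KA}. For a $1$-form $\theta$ and any form $\beta$,
\[
\theta\diamond\beta=\theta\wedge\beta+\iota_{\theta^\sharp}\beta ,
\]
where $\theta^\sharp$ is the $g$-dual vector. The general product is then determined by this formula together with associativity and distributivity. Equivalently, it is given by the finite expansion $\alpha\diamond\beta=\sum_k \frac{(-1)^{\binom{k+1}{2}}}{k!}\,(\pi^k\alpha)\wedge_k\beta$ in terms of generalized contractions $\wedge_k$ built from $g^{-1}$.

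The first step is the case of a $1$-form $\theta$ acting on an arbitrary form $\beta$. The Levi-Civita connection is a derivation of $\wedge$, which is a standard property of the induced tensor connection. Since $\nabla^g g=0$, the musical isomorphism commutes with $\nabla^g$, so $(\nabla^g_X\theta)^\sharp=\nabla^g_X\theta^\sharp$. In addition, $\nabla^g_X(\iota_Y\beta)=\iota_{\nabla^g_XY}\beta+\iota_Y\nabla^g_X\beta$ holds for any tensor connection. Together these give
\[
\nabla^g_X(\theta\diamond\beta)=(\nabla^g_X\theta)\diamond\beta+\theta\diamond\nabla^g_X\beta .
\]

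The second step extends this to all $\alpha$. I will show that the set of forms $\alpha$ for which $\nabla^g_X(\alpha\diamond\beta)=(\nabla^g_X\alpha)\diamond\beta+\alpha\diamond\nabla^g_X\beta$ holds for all $\beta$ is closed under $\diamond$-products and under $C^\infty$-linear combinations. For closure under products, take $\alpha_1,\alpha_2$ in this set and expand $\nabla^g_X(\alpha_1\diamond(\alpha_2\diamond\beta))$ using associativity, applying the hypothesis twice. For closure under function multiples, note that $\diamond$ is $C^\infty(M)$-bilinear, so the derivative of $f$ appears correctly on both sides. This set contains the constant functions, since $\nabla^g 1=0$, and by the first step it contains all $1$-forms. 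It therefore contains every form $e^{i_1}\diamond\cdots\diamond e^{i_k}$ with $i_1<\dots<i_k$. For orthonormal covectors with distinct indices this product equals $e^{i_1}\wedge\cdots\wedge e^{i_k}$, so these products span $\wedge T^*_\C U$ over $C^\infty(U,\C)$. Complexification causes no trouble, because every operation involved is $\C$-linear.

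The main point needing care is the first step: verifying that contraction $\iota_{\theta^\sharp}$ commutes appropriately with $\nabla^g$. This is exactly where metric compatibility enters, and it is the reason the Levi-Civita connection, or any metric connection, acts by derivations of $\diamond$ while an arbitrary connection does not. Torsion-freeness is not used. An alternative route is to argue that $\nabla^g$ is induced from a connection on the orthonormal frame bundle, and that $\diamond$ is $\mathrm{O}(p,q)$-equivariant because it is built naturally from $h$. The induced connection on an associated bundle of algebras, with fibrewise product coming from an equivariant bilinear map, then acts by derivations. I would mention this as a one-line conceptual justification.
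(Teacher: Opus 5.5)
Your proposal is correct and follows the paper's route: first the $1$-form case $\theta\diamond\beta=\theta\wedge\beta+\iota_{\theta^\sharp}\beta$, via the Leibniz rule for $\wedge$, the contraction identity and $\nabla^g g=0$, then extension to arbitrary forms by associativity and $C^\infty$-linearity. Your closure argument (the set of good $\alpha$ is a $C^\infty$-submodule closed under $\diamond$ and contains $1$ and all $1$-forms, hence also every $e^{i_1}\diamond\cdots\diamond e^{i_k}=e^{i_1}\wedge\cdots\wedge e^{i_k}$) just spells out what the paper calls ``linear and associative extension.'' You also make explicit the step $(\nabla^g_X\theta)^\sharp=\nabla^g_X\theta^\sharp$, which the paper leaves implicit.
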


\begin{proof}
    Let $X,Y\in\Gamma(TM)$. It is well-known that: $$\nabla^g_X(\alpha\wedge\beta)=(\nabla^g_X\alpha)\wedge\beta+\alpha\wedge(\nabla^g_X\beta),\qquad \nabla^g_X(\iota_Y\alpha)=\iota_{\nabla^g_XY}\alpha+\iota_Y(\nabla^g_X\alpha).$$

    Using the above formulas and the definition of the geometric product, see \eqref{eq:def_geometric_product}, it follows that $\nabla^g(\theta\diamond\alpha)=(\nabla^g\theta)\diamond\alpha+\theta\diamond(\nabla^g\alpha)$ for all $\theta\in\Omega^1_\C(M)$ and $\alpha\in\Omega_\C(M)$. The general statement follows by linear and associative extension.
\end{proof}

The map $\Psi_\gamma$ defined in \eqref{eq:Psi_gamma} extends to a unital isomorphism of bundles of algebras which we denote by the same symbol: $$\Psi_\gamma:=\gamma\circ\Psi\colon(\wedge T^*_\C M,\diamond)\to(\End(S),\circ).$$

This map allows us to identify bundles $(S,\gamma)$ of modules over $\mCl(M,g)$ with bundles of modules $(S,\Psi_\gamma)$ over the Kähler-Atiyah bundle $(\wedge T^*_\C M,\diamond)$. For ease of notation, we denote by \emph{dot} the Clifford multiplication of $(S,\Psi_\gamma)$, whose action on global sections is: $$\alpha\cdot\eta:=\Psi_\gamma(\alpha)\eta$$ for all $\alpha\in\Omega_\C(M)$ and $\eta\in\Gamma(S)$.

\begin{definition}
    Let $(S,\gamma)$ be a complex spinor bundle on $(M,g)$ and let $W$ be any vector bundle on $M$. The \emph{symbol} of a section $\cQ\in\Gamma(\End(S)\otimes W)$ is the section $\frq\in\Gamma(\wedge T^*_\C M\otimes W)$ defined through: $$\frq:=(\Psi_\gamma\otimes\Id_W)^{-1}(\cQ),$$ where $\Id_W$ is the identity endomorphism of $W$.
\end{definition}

\begin{remark}
In particular, the symbol of an endomorphism $\cQ\in\Gamma(\End(S))$ is an exterior form $\frq\in\Omega_\C(M)$, while the symbol of an $\End(S)$-valued one-form $\cA\in\Omega^1(M,\End(S))$ is an element $\fra\in\Omega^1(M,\wedge T^*_\C M)$, that is, a one-form taking values on the complex exterior algebra of $M$.
\end{remark}


\subsubsection{The odd-dimensional case}


Here we consider the \emph{(complexified) truncated Kähler-Atiyah bundle} $(\wedge^<T^*_\C M,\vee)$ of $(M,g)$, where: $$\wedge^<T^*_\C M:=\bigoplus_{k=0}^{\frac{d-1}{2}}\wedge^kT^*_\C M$$ and for $\alpha,\beta\in\wedge^<T^*_\C M$ the \emph{truncated geometric product} is given by: $$\alpha\vee\beta:=\cP_<(\alpha\diamond\beta+i^{q+\frac{d-1}{2}}\ell*\tau(\alpha\diamond\beta)),$$ where $\cP_<\colon\wedge T^*_\C M\to\wedge^<T^*_\C M$ is the natural projection and we have used \eqref{eq:projection_l}. We have the following result analogous to Proposition \ref{prop:geometric_product_derivations}.

\begin{prop}
    The canonical extension to $\wedge^<T^*_\C M$ of the Levi-Civita connection $\nabla^g$ of $(M,g)$, which we again denote by $\nabla^g$, acts by derivations of the truncated geometric product, that is: $$\nabla^g(\alpha\vee\beta)=(\nabla^g\alpha)\vee\beta+\alpha\vee(\nabla^g\beta)$$ for all $\alpha,\beta\in\Omega^<_\C(M):=\Gamma(\wedge^< T^*_\C M)$.
\end{prop}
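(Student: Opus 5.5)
The plan is to reduce everything to Proposition \ref{prop:geometric_product_derivations}, using the explicit formula for the truncated product,
\[
\alpha\vee\beta=\cP_<\bigl(\alpha\diamond\beta+i^{q+\frac{d-1}{2}}\ell\,*\tau(\alpha\diamond\beta)\bigr)=2\,\cP_<\bigl(\cP_\ell(\alpha\diamond\beta)\bigr),
\]
viewing $\alpha,\beta\in\Omega^<_\C(M)$ as sections of the full Kähler-Atiyah bundle $\wedge T^*_\C M$. The derivation property of $\vee$ will then follow once we know that $\nabla^g$ commutes with each linear operation in this formula.

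First, I will show that $\nabla^g$ commutes with the operations used to build $\vee$:
\begin{itemize}
\item The projection $\cP_<$ commutes with $\nabla^g$, because $\nabla^g$ preserves form degree. Hence it preserves the splitting $\wedge T^*_\C M=\bigoplus_k\wedge^kT^*_\C M$.
\item The reversion $\tau$ commutes with $\nabla^g$, because $\tau$ acts by a constant sign on each degree.
\item The Hodge star $*$ commutes with $\nabla^g$, because $*$ is built from $g$ and the volume form $\nu$, and both are $\nabla^g$-parallel ($M$ is oriented).
\end{itemize}
Consequently $\nabla^g$ commutes with $\cP_\ell=\frac12(\Id+i^{q+\frac{d-1}{2}}\ell*\tau)$. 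Equivalently, one may note that $\nu_\C$ is parallel and apply Proposition \ref{prop:geometric_product_derivations} to $\nu_\C\diamond\alpha$.

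Second, I will compute directly:
\[
\nabla^g(\alpha\vee\beta)=2\cP_<\cP_\ell\bigl(\nabla^g(\alpha\diamond\beta)\bigr)=2\cP_<\cP_\ell\bigl((\nabla^g\alpha)\diamond\beta\bigr)+2\cP_<\cP_\ell\bigl(\alpha\diamond\nabla^g\beta\bigr).
\]
The last equality uses Proposition \ref{prop:geometric_product_derivations}. The first term is $(\nabla^g\alpha)\vee\beta$ and the second is $\alpha\vee(\nabla^g\beta)$. This holds because $\nabla^g\alpha$ and $\nabla^g\beta$ are again sections of $\wedge^<T^*_\C M$ (with one-form coefficients), again since $\nabla^g$ preserves degree, so the definition of $\vee$ applies to them.

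The only point needing care is the commutation $\nabla^g\circ *=*\circ\nabla^g$. This is standard: $*$ is a tensor contraction with the parallel volume form through the parallel metric. Everything else is formal.
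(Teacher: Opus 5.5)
Your argument is correct. The paper gives no separate proof of this statement. It only asserts it as the odd-dimensional analogue of Proposition \ref{prop:geometric_product_derivations}, whose proof checks the Leibniz rule on products $\theta\diamond\alpha$ with $\theta$ a one-form, using the Leibniz rules for $\wedge$ and interior products, and then extends by linearity and associativity.

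Transplanted literally to $\vee$, that route would require knowing that one-forms generate $(\wedge^{<}T^*_{\C}M,\vee)$ and computing $\theta\vee\alpha$ explicitly. The Hodge-star term would appear there anyway, because $\theta\diamond\alpha$ can leave $\wedge^{<}T^*_{\C}M$. Your route avoids this by viewing $\vee$ as $\diamond$ followed by the fixed linear map $2\cP_{<}\circ\cP_{\ell}$, and observing that this map commutes with $\nabla^g$. This makes clear that the only ingredient beyond the even case is $\nabla^g\nu=0$, i.e.\ that $\nu_{\C}$ is parallel.

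One point to state explicitly: Proposition \ref{prop:geometric_product_derivations} sits in the even-dimensional subsection. Its proof never uses the parity of $d$, so it holds for the full K\"ahler-Atiyah bundle $(\wedge T^*_{\C}M,\diamond)$ in odd dimension, which is the case you invoke.
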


Since the dimension $d$ of $M$ is odd, a bundle of irreducible complex Clifford modules on $(M,g)$ has two non-equivalent irreducible types distinguished by the value they take at the complex volume form $\nu_\C$. We denote by $(S,\gamma_\ell)$ the corresponding bundle of irreducible complex Clifford modules, where $\gamma_\ell\colon\mCl(M,g)\to\End(S)$ satisfies $\gamma_\ell(\nu_\C)=\ell\,\Id_S$ for $\ell\in\Z_2$.\medskip

The map $\Psi^<_\ell$ defined in \eqref{eq:truncated_iso} extends to a unital isomorphism of bundles of algebras which we denote by the same symbol: $$\Psi^<_\ell\colon(\wedge^<T^*_\C M,\vee)\to(\End(S),\circ).$$

This map allows us to identify bundles $(S,\gamma_\ell)$ of modules over $\mCl(M,g)$ with bundles of modules $(S,\Psi^<_\ell)$ over the truncated Kähler-Atiyah bundle $(\wedge^<T^*_\C M,\vee)$. For ease of notation, we denote by \emph{dot} the Clifford multiplication of $(S,\Psi^<_\ell)$, whose action on global sections is: $$\alpha\cdot\eta:=\Psi^<_\ell(\alpha)\eta$$ for all $\alpha\in\Omega^<_\C(M)$ and $\eta\in\Gamma(S)$.


\subsection{Hermitian spinor bundles}


In order to define the Hermitian square of a spinor $\eta\in\Gamma(S)$, we need to consider the appropriate notion of Hermitian pairing on the bundle $S$. In order to simultaneously deal with the even and odd cases, whenever necessary in the following $\Psi^{\bullet}_{\gamma}$ will denote $\Psi_{\gamma}$ if $d$ is even and $\Psi^{<}_{\ell}$ if $d$ is odd, and similarly for $\wedge^{\bullet}T^*_\C M$ and $\Omega^{\bullet}_{\mathbb{C}}(M)$. Then $(\wedge^{\bullet}T^*_\C M,\diamond)$ denotes the Kähler-Atiyah model if $d$ is even, and the truncated Kähler-Atiyah model $(\wedge^<T^*_\C M,\vee)$ if $d$ is odd.

\begin{definition}
Let $(S,\gamma)$ be a complex spinor bundle on $(M,g)$. A fiberwise Hermitian pairing $\scS$ on $S$ is called \emph{admissible} if $\scS_p\colon S_p\times S_p\to\C$ is an admissible Hermitian pairing on the complex Clifford module $(S_p,\gamma_p)$ for all $p\in M$. A \emph{Hermitian spinor bundle} on $(M,g)$ is a tuple $(S,\gamma,\scS)$, where $(S,\gamma)$ is a complex spinor bundle on $(M,g)$ and $\scS$ is an admissible Hermitian pairing on $S$.
\end{definition}

Since $M$ is connected, the adjoint type $s\in\Z_2$ of the admissible Hermitian pairings $\scS_p$, which are non-degenerate by definition, is constant on $M$. Then $s\in\Z_2$ is called the \emph{adjoint type} of $\scS$ or $(S,\gamma,\scS)$. Since $M$ is paracompact, the defining algebraic properties of an admissible Hermitian pairing can be formulated equivalently as follows using global sections when viewing $(S,\gamma)$ as a bundle $(S,\Psi^{\bullet}_{\gamma})$ of modules over the Kähler-Atiyah bundle $(\wedge^{\bullet}T^*_\C M,\diamond)$ of $(M,g)$: 
\begin{itemize}
\item $\scS(\eta_1,\eta_2)=\overline{\scS(\eta_2,\eta_1)}$ for all $\eta_1,\eta_2\in\Gamma(S)$.
\item $\scS(\Psi^{\bullet}_\gamma(\alpha)\eta_1,\eta_2)=\scS(\eta_1,\Psi^{\bullet}_\gamma((\pi^{\frac{1-s}{2}}\circ\tau)(\overline{\alpha}))\eta_2)$ for all $\alpha\in\Omega^{\bullet}_{\mathbb{C}}(M)$ and $\eta_1,\eta_2\in\Gamma(S)$.
\end{itemize}

To guarantee the existence of an admissible Hermitian pairing on an irreducible complex spinor bundle, we introduce the following terminology.

\begin{definition}
Let $(M,g)$ be a pseudo-Riemannian manifold. We say that $(M,g)$ is: \begin{itemize}
\item \emph{Strongly orientable} if it admits a \emph{strong orientation}, that is, if the orthonormal frame bundle of $(M,g)$ reduces to the identity component $\SO_o(p,q)$ of $\SO(p,q)$.
\item \emph{Strongly spin} if it admits a \emph{strong spin structure}, that is, a spin structure that admits a reduction to the identity component $\Spin_o(p,q)$ of the spin group $\Spin(p,q)$.
\item \emph{Strongly spin-c} if it admits a \emph{strong spin-c structure}, that is, a spin-c structure that admits a reduction to the identity component $\Spin^c_o(p,q)$ of the spin-c group $\Spin^c(p,q)$.
\end{itemize}
\end{definition}

\begin{remark}
When $pq=0$, the special orthogonal and spin groups are connected. In this case, orientation and strong orientation are equivalent, as are the properties of being spin and strongly spin. When $pq\neq0$, the groups $\SO(p,q)$ and $\Spin(p,q)$ have two connected components. In this case, $(M,g)$ is strongly orientable if and only if it is orientable and in addition the principal $\Z_2$-bundle associated to its bundle of oriented orthonormal frames $F(M,g)$ through the group morphism $\SO(p,q)\to\SO(p,q)/\SO_o(p,q)$ is trivial, while a spin structure $Q$ reduces to a strong spin structure if and only if the principal $\Z_2$-bundle associated to $Q$ through the group morphism $\Spin(p,q)\to\Spin(p,q)/\Spin_o(p,q)$ is trivial. When $(M,g)$ is strongly spin, the short exact sequence: $$1\to\Z_2\to\Spin_o(p,q)\to\SO_o(p,q)\to1$$ induces a sequence in \u{C}ech cohomology which implies that the set of strong spin structures on $(M,g)$ is a $H^1(M,\Z_2)$-torsor. A particularly simple case arises when $H^1(M,\Z_2)=0$, e.g.\ when $M$ is simply connected. In this situation, $M$ is strongly orientable and any spin structure on $(M,g)$ reduces to a strong spin structure\footnote{This is because the set of isomorphism classes of principal $\Z_2$-bundles over $M$ is $[M,B\Z_2]=[M,K(\Z_2,1)]\cong H^1(M,\Z_2)$.}. Up to isomorphism, in this special case, there exists at most one spin structure, one strong spin structure, and one complex spinor bundle on $(M,g)$.
\end{remark}

\begin{remark}\label{remark:H2(M,Z)-torsor}
When $pq=0$, the spin-c group is connected, so in this case, the properties of being spin-c and strongly spin-c are equivalent. When $pq\neq0$, the group $\Spin^c(p,q)$ has two connected components. In this case, $(M,g)$ is strongly spin-c if and only if it is spin-c and strongly orientable. When $(M,g)$ is strongly spin-c, the short exact sequence: $$1\to\U(1)\to\Spin^c_o(p,q)\to\SO_o(p,q)\to1$$ induces a sequence in \u{C}ech cohomology which implies that the set of strong spin-c structures on $(M,g)$ is a $H^2(M,\Z)$-torsor. A particularly simple case arises when $H^1(M,\Z_2)=0$ and $H^2(M,\Z)=0$. In this situation, $M$ is strongly orientable and any spin-c structure on $(M,g)$ reduces to a strong spin-c structure. Up to isomorphism, in this special case, there exists at most one spin-c structure, one strong spin-c structure, and one complex spinor bundle on $(M,g)$.
\end{remark}

An irreducible complex spinor bundle $(S,\gamma)$ associated to a strong spin-c structure admits an admissible Hermitian pairing, since the latter is by construction preserved by the identity component $\Spin_o^c(p,q)$ of the spin-c group. On the other hand, every spin-c structure on a strongly orientable pseudo-Riemannian manifold $(M,g)$ is strongly spin-c, namely, reduces to $\Spin_o^c(p,q)$. Hence, strong orientability of $(M,g)$ suffices to guarantee that every irreducible complex spinor bundle on $(M,g)$ admits an admissible Hermitian pairing. 

\begin{prop}
\label{prop:Hermitianexistence}
Let $(M,g)$ be a strongly oriented pseudo-Riemannian manifold and let $(S,\gamma)$ be an irreducible complex spinor bundle over $(M,g)$. Then, $(S,\gamma)$ admits an admissible Hermitian pairing.
\end{prop}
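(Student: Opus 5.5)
The plan is to build $\scS$ by the associated-bundle construction, in three steps.

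\emph{Step 1: reduce the structure group.} By Remark \ref{remark:H2(M,Z)-torsor}, when $pq\neq0$ a spin-c structure on a strongly orientable $(M,g)$ reduces to the identity component $\Spin^c_o(p,q)$. The reason is that $\Spin^c(p,q)\to\SO(p,q)$ induces a bijection on component groups, since the $\U(1)$ factor is connected. Consequently the $\Z_2$-bundle $Q/\Spin^c_o(p,q)$ is isomorphic to $F(M,g)/\SO_o(p,q)$, and strong orientability makes the latter trivial. When $pq=0$ there is nothing to prove. Fix such a reduction $Q_o\subset Q$, so that $S\cong Q_o\times_{\varrho}\Sigma$ with $\varrho$ the restriction of the irreducible representation to $\Spin^c_o(p,q)$.

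\emph{Step 2: invariance of the model pairing.} Pick on the model irreducible Clifford module $(\Sigma,\gamma)$ an admissible Hermitian pairing $\scS_0$ of some adjoint type $s$; it exists by \cite[Prop.\ 3.9]{Algebraic_Complex_Square_2025} in even dimension and \cite[Prop.\ 4.3]{Algebraic_Complex_Square_2025} in odd dimension. Admissibility gives, for $[u,z]\in\Spin^c_o(p,q)$,
\begin{equation*}
\scS_0(\gamma(uz)\eta_1,\gamma(uz)\eta_2)=\scS_0\big(\eta_1,\gamma(\overline{(\pi^{\frac{1-s}{2}}\circ\tau)(uz)}\,uz)\eta_2\big).
\end{equation*}
Since $u$ is even, $\pi$ acts trivially on $u$ and $\overline{(\pi^{\frac{1-s}{2}}\circ\tau)(uz)}=\bar z\,\tau(u)$. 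For $u\in\Spin_o(p,q)$ one has $\tau(u)u=1$. Therefore the right-hand side equals $|z|^2\scS_0(\eta_1,\eta_2)=\scS_0(\eta_1,\eta_2)$.

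I expect the main subtlety to be exactly this identity $\tau(u)u=1$ on the identity component. For $u$ a product of an even number of unit vectors $v_i$, $\tau(u)u=\prod_i h(v_i,v_i)=\pm1$, and the sign is $+1$ precisely when the number of negative-norm vectors is even. I would argue that this holds on $\Spin_o(p,q)$ by connectedness: the map $u\mapsto\tau(u)u$ is continuous with values in $\{\pm1\}$ and equals $1$ at the identity. The argument fails on the other component, which is why the strong orientation hypothesis is needed.

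\emph{Step 3: globalize.} By Step 2, $\scS_0$ is $\Spin^c_o(p,q)$-invariant, so it descends to a fiberwise non-degenerate Hermitian pairing $\scS$ on $S=Q_o\times_\varrho\Sigma$, given by $\scS([q,\eta_1],[q,\eta_2]):=\scS_0(\eta_1,\eta_2)$. The construction is well defined because of invariance, and it is smooth because it is locally constant in trivializations.

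Admissibility holds fiberwise because $\gamma$ on $S$ is induced, in each frame $q\in Q_o$, by the model $\gamma$. The identification of $\mCl(M,g)$ with $Q_o\times_{\Ad}\mCl(V^*,h^*)$ is compatible with $\pi$, $\tau$ and complex conjugation, since $\Ad_u$ commutes with all three. Hence the defining identity of $\scS_0$ transfers pointwise to $\scS_p$, and $\scS$ is an admissible Hermitian pairing on $(S,\gamma)$.
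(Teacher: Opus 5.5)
Your proposal is correct and follows the same route as the paper, which justifies the proposition in the paragraph preceding it: a spin-c structure on a strongly orientable manifold reduces to $\Spin^c_o(p,q)$, an admissible Hermitian pairing is $\Spin^c_o(p,q)$-invariant, and so it descends to the associated bundle. You supply the details the paper leaves implicit, namely the component-group argument for the reduction and the identity $\tau(u)u=1$ on $\Spin_o(p,q)$ on which the invariance rests.
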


Since $S$ is associated to a strong spin-c structure, the spinorial lift of the Levi-Civita connection $\nabla^g$ of $(M,g)$ to $S$ and the choice of a connection $A\in\Omega^1(P,i\R)$ on $P$ define a covariant derivative $\nabla^{g,A}$ on $S$. The connection $\nabla^{g,A}$ acts on $\Gamma(S)$ by module derivations: \begin{equation}\label{eq:nabla_on_alpha.eta}
    \nabla^{g,A}_X(\alpha\cdot\eta)=(\nabla^g_X\alpha)\cdot\eta+\alpha\cdot(\nabla^{g,A}_X\eta)
\end{equation} for all $X\in\Gamma(TM)$, $\alpha\in\Omega^{\bullet}_{\mathbb{C}}(M)$, and $\eta\in\Gamma(S)$; and is compatible with $\scS$: $$X(\scS(\eta_1,\eta_2))=\scS(\nabla^{g,A}_X\eta_1,\eta_2)+\scS(\eta_1,\nabla^{g,A}_X\eta_2)$$ for all $X\in\Gamma(TM)$ and $\eta_1,\eta_2\in\Gamma(S)$ (see \cite[Section 3.1]{Friedrich2000}).\medskip

The connection $\nabla^{g,A}$ induces a linear connection, which we denote by the same symbol for ease of notation, on the bundle of endomorphisms $\End(S)$. Given $X\in\Gamma(TM)$, by definition we have: $$(\nabla^{g,A}_XT)\eta:=\nabla^{g,A}_X(T\eta)-T(\nabla^{g,A}_X\eta)$$ for all $T\in\Gamma(\End(S))$ and $\eta\in\Gamma(S)$. Moreover, the connection $\nabla^{g,A}$ acts on $\Gamma(\End(S))$ by derivations: $$\nabla^{g,A}_X(T_1\circ T_2)=(\nabla^{g,A}_XT_1)\circ T_2+T_1\circ(\nabla^{g,A}_XT_2)$$ for all $X\in\Gamma(TM)$ and $T_1,T_2\in\Gamma(\End(S))$.

\begin{prop}
\label{prop:nabla_compatible_Psi_gamma}
Let $(S,\gamma,\scS)$ be a Hermitian spinor bundle. Then the map $\Psi^{\bullet}_\gamma$ induces a unital isomorphism of complex algebras $(\Omega^{\bullet}_\C(M),\diamond)\cong\Gamma(\End(S))$ which is compatible with $\nabla^g$ and $\nabla^{g,A}$. In other words, the following equation holds: $$\nabla^{g,A}_X(\Psi^{\bullet}_\gamma(\alpha))=\Psi^{\bullet}_\gamma(\nabla^g_X\alpha)$$ for all $X\in\Gamma(TM)$ and $\alpha\in\Omega^{\bullet}_\C(M)$.
\end{prop}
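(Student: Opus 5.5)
The isomorphism statement is immediate from the pointwise statement, since $\Psi^\bullet_\gamma$ is a unital isomorphism of bundles of algebras. Passing to global sections gives a unital isomorphism $(\Omega^\bullet_\C(M),\diamond)\cong\Gamma(\End(S))$. The content is therefore the compatibility identity $\nabla^{g,A}_X(\Psi^\bullet_\gamma(\alpha))=\Psi^\bullet_\gamma(\nabla^g_X\alpha)$, and I would prove it by unwinding the definition of the induced connection on $\End(S)$ and applying the module-derivation property \eqref{eq:nabla_on_alpha.eta}.

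Concretely, fix $X\in\Gamma(TM)$, $\alpha\in\Omega^\bullet_\C(M)$ and an arbitrary $\eta\in\Gamma(S)$. By the definition of the connection on $\End(S)$,
\begin{equation*}
(\nabla^{g,A}_X\Psi^\bullet_\gamma(\alpha))\eta=\nabla^{g,A}_X(\alpha\cdot\eta)-\alpha\cdot\nabla^{g,A}_X\eta .
\end{equation*}
By \eqref{eq:nabla_on_alpha.eta}, the right-hand side equals $(\nabla^g_X\alpha)\cdot\eta=\Psi^\bullet_\gamma(\nabla^g_X\alpha)\eta$. Since $\eta$ is arbitrary, the two endomorphisms coincide.

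The only genuine point is to justify \eqref{eq:nabla_on_alpha.eta} itself, which I would check rather than merely quote. By $\C$-linearity and the Leibniz rule it suffices to treat functions and one-forms. Once it holds for $\theta\in\Omega^1_\C(M)$, it extends to products: for $\alpha\diamond\beta$ one applies the rule twice and uses that $\nabla^g$ is a derivation of $\diamond$ (Proposition \ref{prop:geometric_product_derivations}, or its truncated analogue in odd dimensions). In the odd case one also uses that $\Psi^<_\ell$ is an algebra morphism, and that $\nabla^g$ commutes with $\cP_\ell$ and $\cP_<$ because it preserves $*$, $\tau$ and degree.

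For one-forms I would work locally. Take a local section of the strong spin-c structure $Q$ and the induced local orthonormal frame $\{e_i\}$. There $\nabla^{g,A}=\dd+\frac14\omega_{ij}\gamma^{ij}+\frac12A$ for the Levi-Civita connection forms $\omega_{ij}$. The $A$-term is scalar and commutes with Clifford multiplication, so it cancels. The commutator $[\frac14\omega_{ij}(X)\gamma^{ij},\gamma(e^k)]$ reproduces $\gamma$ applied to the connection term of $\nabla^g_Xe^k$; this is the standard spin-lift computation, with signs to be checked against the convention $v^2=h(v,v)$.

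I expect this bookkeeping of the local spin connection in the plus convention to be the only place requiring care. Everything else is formal.
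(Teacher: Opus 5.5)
Your argument is correct, and its core is exactly the paper's proof: unwind the induced connection on $\End(S)$ as $(\nabla^{g,A}_X\Psi^{\bullet}_\gamma(\alpha))\eta=\nabla^{g,A}_X(\alpha\cdot\eta)-\alpha\cdot\nabla^{g,A}_X\eta$, then apply the module-derivation rule \eqref{eq:nabla_on_alpha.eta}. The only difference is that the paper takes \eqref{eq:nabla_on_alpha.eta} as known from Friedrich, whereas you also sketch its verification (reduction to one-forms via Proposition \ref{prop:geometric_product_derivations}, then a local spin-connection commutator computation); that addition is sound but optional.
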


\begin{proof}
    Recall that $\alpha\cdot\eta:=\Psi^{\bullet}_\gamma(\alpha)\eta$. Using \eqref{eq:nabla_on_alpha.eta} we obtain: \begin{align*}
        \big(\nabla^{g,A}_X(\Psi^{\bullet}_\gamma(\alpha))\big)\eta&=\nabla^{g,A}_X(\Psi^{\bullet}_\gamma(\alpha)\eta)-\Psi^{\bullet}_\gamma(\alpha)(\nabla^{g,A}_X\eta)\\
        &=\Psi^{\bullet}_\gamma(\nabla^g_X\alpha)\eta+\Psi^{\bullet}_\gamma(\alpha)(\nabla^{g,A}_X\eta)-\Psi^{\bullet}_\gamma(\alpha)(\nabla^{g,A}_X\eta)\\
        &=\Psi^{\bullet}_\gamma(\nabla^g_X\alpha)\eta
    \end{align*} for all $\eta\in\Gamma(S)$.
\end{proof} 


\subsection{Complex-bilinear paired spinor bundles}


Whereas admissible Hermitian pairings on $(\Sigma,\gamma_0)$ are $\Spin^c_o(p,q)$-invariant, admissible complex-bilinear pairings $\scB$ are not. Instead, as a consequence of $\scB$ being $\Spin_o(p,q)$-invariant, it satisfies the following equation: 
\begin{equation*}
\scB(\gamma_0([u,z])\eta_1,\gamma_0([u,z])\eta_2)=\scB(z\gamma_0(u)\eta_1,z\gamma_0(u)\eta_2)=z^2\scB(\gamma_0(u)\eta_1,\gamma_0(u)\eta_2)=z^2\scB(\eta_1,\eta_2)    
\end{equation*} 
for all $u\in\Spin_o(p,q)$, $z\in\U(1)$, and $\eta_1,\eta_2\in\Sigma$. Therefore, a complex-bilinear pairing $\scB$ on $(S,\gamma)$ takes values on the complex line bundle $\cL$ naturally associated to $S$. This motivates the following definition.

\begin{definition}
Let $(S,\gamma)$ be a complex spinor bundle on $(M,g)$ with associated characteristic line bundle $\cL$. An \emph{admissible complex-bilinear pairing} $\scB$ on $(S,\gamma)$ is a complex-bilinear pairing $\scB$ taking values on $\cL$ and such that $\scB_m\colon S_m\times S_m\to\cL_m\cong\C$ is an admissible complex-bilinear pairing on $(S_m,\gamma_m)$ for all $m\in M$. A \emph{complex-bilinear paired spinor bundle} on $(M,g)$ is a tuple $(S,\gamma,\scB)$, where $(S,\gamma)$ is a complex spinor bundle on $(M,g)$ and $\scB$ is an admissible complex-bilinear pairing on $S$.
\end{definition}

Since $M$ is connected, the symmetry and adjoint type $\sigma,s\in\Z_2$ of the admissible complex-bilinear pairings $\scB_m$, which are non-degenerate by definition, are constant on $M$. Then $\sigma\in\Z_2$ and $s\in\Z_2$ are called the \emph{symmetry type} and \emph{adjoint type} of $\scB$ or $(S,\gamma,\scB)$, respectively. Since $M$ is paracompact, the defining algebraic properties of an admissible complex-bilinear pairing can be formulated equivalently as follows using global sections when viewing $(S,\gamma)$ as a bundle $(S,\Psi^{\bullet}_\gamma)$ of modules over the Kähler-Atiyah bundle $(\wedge^{\bullet}T^*_\C M,\diamond)$ of $(M,g)$: 
\begin{itemize}
\item $\scB(\eta_1,\eta_2)=\sigma\scB(\eta_2,\eta_1)$ for all $\eta_1,\eta_2\in\Gamma(S)$.
\item $\scB(\Psi^{\bullet}_\gamma(\alpha)\eta_1,\eta_2)=\scB(\eta_1,\Psi^{\bullet}_\gamma((\pi^{\frac{1-s}{2}}\circ\tau)(\alpha))\eta_2)$ for all $\alpha\in\Omega^{\bullet}_\C(M)$ and $\eta_1,\eta_2\in\Gamma(S)$.
\end{itemize} 

Similarly to Proposition \ref{prop:Hermitianexistence}, we obtain the following result.

\begin{prop}
\label{prop:ComplexBilinealexistence}
Let $(M,g)$ be a strongly oriented pseudo-Riemannian manifold and let $(S,\gamma)$ be an irreducible complex spinor bundle over $(M,g)$. Then, $(S,\gamma)$ admits an admissible complex-bilinear pairing.
\end{prop}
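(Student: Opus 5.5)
The plan is to mirror the Hermitian case by constructing the pairing locally from the algebraic data and then checking that it glues. The key point is that $\scB$ takes values in the characteristic line bundle $\cL$.

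Fix an admissible complex-bilinear pairing $\scB_0$ on the model module $(\Sigma,\gamma_0)$ of adjoint type $s$ and symmetry type $\sigma$. It exists by \cite[Prop.\ 3.18]{Algebraic_Complex_Square_2025} in even dimension, and by \cite[Props.\ 4.8 and 4.11]{Algebraic_Complex_Square_2025} in odd dimension. Since $(M,g)$ is strongly oriented, the spin-c structure $Q$ underlying $S=Q\times_\varrho\Sigma$ reduces to $\Spin^c_o(p,q)$, by Remark \ref{remark:H2(M,Z)-torsor} and the discussion preceding Proposition \ref{prop:Hermitianexistence}. We may therefore assume $Q$ has structure group $\Spin^c_o(p,q)$, so that $S=Q\times_\varrho\Sigma$ and $\cL=Q\times_\xi\C$ with $\xi([u,z])=z^2$.

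Define $\scB\colon S\times S\to\cL$ on equivalence classes by
\begin{equation*}
\scB([q,\eta_1],[q,\eta_2]):=[q,\scB_0(\eta_1,\eta_2)].
\end{equation*}
The main step is to check that this is well defined, i.e.\ that $\scB_0$ is equivariant from $\Sigma\otimes\Sigma$ to $\C_\xi$:
\begin{equation*}
\scB_0(\gamma_0([u,z])\eta_1,\gamma_0([u,z])\eta_2)=z^2\scB_0(\eta_1,\eta_2),\qquad [u,z]\in\Spin^c_o(p,q).
\end{equation*}
The factor $z^2$ comes from bilinearity, as in the computation displayed before the definition. What remains is $\Spin_o(p,q)$-invariance of $\scB_0$. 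Using admissibility, $\gamma_0(u)$ for $u\in\Spin_o(p,q)$ has $\scB_0$-adjoint $\gamma_0((\pi^{\frac{1-s}{2}}\circ\tau)(u))$. On the even Clifford algebra $\pi$ acts trivially, so this adjoint is $\gamma_0(\tau(u))$. For $u$ in the identity component we have $\tau(u)u=1$, whereas a general spin element only satisfies $\tau(u)u=\pm1$. Hence $\scB_0(\gamma_0(u)\eta_1,\gamma_0(u)\eta_2)=\scB_0(\eta_1,\gamma_0(\tau(u)u)\eta_2)=\scB_0(\eta_1,\eta_2)$.

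I expect this identity-component step to be the only real obstacle; it is exactly where strong orientability enters. In the odd case the same argument applies with $\gamma_\ell$, and $\gamma_\ell(\nu_\C)=\ell$ causes no extra issue.

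Finally, fiberwise admissibility, non-degeneracy and symmetry type $\sigma$ follow immediately, because in any local trivialization induced by a local section of $Q$ the pairing $\scB_m$ is $\scB_0$ and $\gamma_m$ is $\gamma_0$. Smoothness is clear from the associated-bundle construction.
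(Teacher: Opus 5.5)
Your proposal is correct and follows the paper's own route. The paper likewise reduces to $\Spin^c_o(p,q)$ via strong orientability, then uses the $\Spin_o(p,q)$-invariance of the algebraic admissible pairing to get the factor $z^2$ under $[u,z]$, so that the pairing descends to an $\cL$-valued pairing. Your derivation of that invariance from admissibility, using $\tau(u)u=1$ on the identity component, fills in a step the paper only asserts.
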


Since $S = Q\times_{\varrho} \Sigma$ is associated to a spin-c structure $Q$, the combination of the Levi-Civita connection $\nabla^g$ of $(M,g)$ together with a choice of a connection $A\in\Omega^1(P,i\R)$ on the characteristic $\U(1)$-bundle $P$ defines a covariant derivative $\nabla^{g,A}$ on $S$. The connection $\nabla^{g,A}$ acts on $\Gamma(S)$ by module derivations: 
\begin{equation*}
\nabla^{g,A}_X(\alpha\cdot\eta)=(\nabla^g_X\alpha)\cdot\eta+\alpha\cdot(\nabla^{g,A}_X\eta)    
\end{equation*}
for all $X\in\Gamma(TM)$, $\alpha\in\Omega^{\bullet}_\C(M)$, and $\eta\in\Gamma(S)$. The connection $A\in\Omega^1(P,i\R)$ induces a Hermitian connection $\nabla^{A}$ on $\cL$. Then the compatibility with $\scB$ reads as follows: $$\nabla^{A}_X(\scB(\eta_1,\eta_2))=\scB(\nabla^{g,A}_X\eta_1,\eta_2)+\scB(\eta_1,\nabla^{g,A}_X\eta_2)$$ for all $X\in\Gamma(TM)$ and $\eta_1,\eta_2\in\Gamma(S)$. Therefore, $\scB\in\Gamma(S^*\otimes S^*\otimes\cL)$ is parallel with respect to the tensor product connection $\nabla^{g,A}\otimes\nabla^{A}$.\medskip

To obtain a result analogous to Proposition \ref{prop:nabla_compatible_Psi_gamma} for complex-bilinear paired spinor bundles, we tensor $\End(S)$ and $\wedge^{\bullet}T^*_\C M$ with the complex line bundle $\cL$ and extend the corresponding products. That is, if $T_1,T_2\in\End(S)$ and $\lambda_1,\lambda_2\in\cL$, then: $$(T_1\otimes\lambda_1)\circ(T_2\otimes\lambda_2):=(T_1\circ T_2)\otimes(\lambda_1\otimes\lambda_2)\in\End(S)\otimes\cL\otimes\cL.$$

Similarly, if $\alpha_1,\alpha_2\in\wedge^{\bullet}T^*_\C M$ and $\lambda_1,\lambda_2\in\cL$, then: $$(\alpha_1\otimes\lambda_1)\diamond(\alpha_2\otimes\lambda_2):=(\alpha_1\diamond\alpha_2)\otimes(\lambda_1\otimes\lambda_2)\in\wedge^{\bullet}T^*_\C M\otimes\cL\otimes\cL.$$

We define an action of $\cL$-valued complex exterior forms on spinors as follows. Let $\alpha_0\in\Omega^{\bullet}_\C(M)$, $\lambda\in\Gamma(\cL)$, and $\eta\in\Gamma(S)$. Then: $$(\alpha_0\otimes\lambda)\cdot\eta:=(\Psi^{\bullet}_\gamma\otimes\Id_{\cL})(\alpha_0\otimes\lambda)\eta=(\Psi^{\bullet}_\gamma(\alpha_0)\eta)\otimes\lambda=(\alpha_0\cdot\eta)\otimes\lambda\in\Gamma(S\otimes\cL),$$ where $\Psi^{\bullet}_\gamma\otimes\Id_{\cL}\colon\wedge^{\bullet}T^*_\C M\otimes\cL\to\End(S)\otimes\cL$. The connection $\nabla^{g,A}\otimes\nabla^{A}$ acts on $\Gamma(S\otimes\cL)$ by: $$(\nabla^{g,A}_X\otimes\nabla^{A}_X)((\alpha_0\otimes\lambda)\cdot\eta)=((\nabla^g_X\alpha_0)\cdot\eta)\otimes\lambda+(\alpha_0\cdot(\nabla^{g,A}_X\eta))\otimes\lambda+(\alpha_0\cdot\eta)\otimes\nabla^{A}_X\lambda.$$

\begin{prop}\label{prop:L-valued_compatibility}
Let $(S,\gamma,\scB)$ be a complex-bilinear paired spinor bundle. Then the map $\Psi^{\bullet}_\gamma\otimes\Id_{\cL}$ induces an isomorphism of $C^{\infty}(M,\C)$-modules $\Omega^{\bullet}_\C(M,\cL) \cong \Gamma(\End(S)\otimes\cL)$ which intertwines the extended products into $\cL^{\otimes 2}$ and is compatible with $\nabla^{g,A}$ and $\nabla^{A}$. In other words, the following equation holds: 
\begin{equation*}
(\nabla^{g,A}_X\otimes\nabla^{A}_X)((\Psi^{\bullet}_\gamma\otimes\Id_{\cL})(\alpha))=(\Psi^{\bullet}_\gamma\otimes\Id_{\cL})(\nabla^{g,A}_X\alpha)
\end{equation*}
for all $X\in\Gamma(TM)$ and $\alpha\in\Omega^{\bullet}_\C(M,\cL)$.
\end{prop}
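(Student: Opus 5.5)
The plan is to follow the proof of Proposition \ref{prop:nabla_compatible_Psi_gamma} closely, with one extra bookkeeping step for the $\cL$-factor. Throughout, $\nabla^{g,A}$ acting on $\Omega^{\bullet}_\C(M,\cL)=\Gamma(\wedge^{\bullet}T^*_\C M\otimes\cL)$ means the tensor product connection $\nabla^g\otimes\nabla^A$.

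\textbf{Algebraic part.} This is fiberwise and needs no computation. $\Psi^{\bullet}_\gamma$ is a bundle isomorphism $\wedge^{\bullet}T^*_\C M\to\End(S)$, so $\Psi^{\bullet}_\gamma\otimes\Id_{\cL}$ is a bundle isomorphism. Hence it induces a $C^\infty(M,\C)$-linear isomorphism on sections. Compatibility with the extended products also holds fiberwise on decomposable elements: by definition and because $\Psi^{\bullet}_\gamma$ is multiplicative,
\begin{equation*}
(\Psi^{\bullet}_\gamma\otimes\Id)\big((\alpha_1\otimes\lambda_1)\diamond(\alpha_2\otimes\lambda_2)\big)=\Psi^{\bullet}_\gamma(\alpha_1\diamond\alpha_2)\otimes\lambda_1\otimes\lambda_2=(\Psi^{\bullet}_\gamma(\alpha_1)\otimes\lambda_1)\circ(\Psi^{\bullet}_\gamma(\alpha_2)\otimes\lambda_2).
\end{equation*}
This extends by bilinearity, and in odd dimensions $\vee$ replaces $\diamond$.

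\textbf{Differential part.} I will define the induced connection on $\End(S)\otimes\cL$ by the Leibniz rule. For $T\in\Gamma(\End(S)\otimes\cL)$ and $\eta\in\Gamma(S)$,
\begin{equation*}
((\nabla^{g,A}\otimes\nabla^A)_X T)\eta := (\nabla^{g,A}_X\otimes\nabla^A_X)(T\eta)-T(\nabla^{g,A}_X\eta).
\end{equation*}
This agrees with the tensor product of the connection on $\End(S)$ with $\nabla^A$. The identity is local and $C^\infty(M,\C)$-linear in $\alpha$ up to Leibniz terms, so it suffices to check it on decomposable sections $\alpha=\alpha_0\otimes\lambda$, with $\alpha_0\in\Omega^{\bullet}_\C(M)$ and $\lambda\in\Gamma(\cL)$. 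These span locally, and both sides satisfy the same Leibniz rule in $f\in C^\infty(M,\C)$.

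For decomposable $\alpha$, I will use the displayed formula for the action of $\nabla^{g,A}\otimes\nabla^A$ on $(\alpha_0\otimes\lambda)\cdot\eta$. This formula rests on the module-derivation property \eqref{eq:nabla_on_alpha.eta}. Subtracting $(\alpha_0\otimes\lambda)\cdot\nabla^{g,A}_X\eta$ leaves
\begin{equation*}
((\nabla^g_X\alpha_0)\cdot\eta)\otimes\lambda+(\alpha_0\cdot\eta)\otimes\nabla^A_X\lambda.
\end{equation*}
This equals $(\Psi^{\bullet}_\gamma\otimes\Id)\big(\nabla^g_X\alpha_0\otimes\lambda+\alpha_0\otimes\nabla^A_X\lambda\big)\eta$, which is the claim. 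Equivalently, one can apply Proposition \ref{prop:nabla_compatible_Psi_gamma} to $\Psi^{\bullet}_\gamma(\alpha_0)$ and tensor with $\nabla^A$.

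\textbf{Expected obstacle.} The only point needing care is the Clifford-module derivation property \eqref{eq:nabla_on_alpha.eta} for the spin-c connection. It holds because the $\U(1)$ part of $\nabla^{g,A}$ acts by scalars and therefore commutes with Clifford multiplication, while the spin part lifts $\nabla^g$. With that in hand, everything else is formal. One must also check that the answer is consistent under local trivializations of $\cL$, but this is automatic from the intrinsic, Leibniz-rule definition above.
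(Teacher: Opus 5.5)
Your proposal is correct and takes essentially the same route as the paper. The paper also checks the identity on decomposable sections $\alpha_0\otimes\lambda$ by expanding $(\nabla_X E)\eta=\nabla_X(E\eta)-E(\nabla^{g,A}_X\eta)$ with the module-derivation property of $\nabla^{g,A}$; your extra remarks on the fiberwise isomorphism, the compatibility with the extended products, and the reduction to decomposable sections simply make explicit points the paper leaves implicit.
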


\begin{proof}
Let $\alpha_0\in\Omega^{\bullet}_\C(M)$ and $\lambda\in\Gamma(\cL)$. Set $E := \Psi^{\bullet}_\gamma(\alpha_0)\otimes\lambda\in\Gamma(\End(S)\otimes\cL)$ and $\nabla:=\nabla^{g,A}\otimes\nabla^{A}$. ~Then: \begin{align*}
        (\nabla_XE)\eta&=\nabla_X(E(\eta))-E(\nabla^{g,A}_X\eta)\\
        &=\nabla_X((\alpha_0\cdot\eta)\otimes\lambda)-(\alpha_0\cdot(\nabla^{g,A}_X\eta))\otimes\lambda\\
        &=(\nabla^{g,A}_X(\alpha_0\cdot\eta))\otimes\lambda+(\alpha_0\cdot\eta)\otimes\nabla^{A}_X\lambda-(\alpha_0\cdot(\nabla^{g,A}_X\eta))\otimes\lambda\\
        &=((\nabla^g_X\alpha_0)\cdot\eta)\otimes\lambda+(\alpha_0\cdot\eta)\otimes\nabla^{A}_X\lambda\\
        &=(\Psi^{\bullet}_\gamma\otimes\Id_{\cL})(\nabla^g_X\alpha_0\otimes\lambda)\eta+(\Psi^{\bullet}_\gamma\otimes\Id_{\cL})(\alpha_0\otimes\nabla^{A}_X\lambda)\eta\\
        &=(\Psi^{\bullet}_\gamma\otimes\Id_{\cL})((\nabla^g_X\otimes\nabla^{A}_X)(\alpha_0\otimes\lambda))\eta
    \end{align*} for all $\eta\in\Gamma(S)$.
\end{proof}


\subsection{Constrained parallel spinors}


We proceed to introduce the main character of this article.

\begin{definition}
Let $(S,\gamma)$ be a complex spinor bundle on $(M,g)$ equipped with a connection $\cD$ and let $\cQ\in\Gamma(\End(S)\otimes W)$ with $W$ a vector bundle on $M$. A section $\eta\in\Gamma(S)$ is a \emph{constrained parallel spinor} with respect to $(\cD,\cQ)$ if: $$\cD\eta=0,\qquad\cQ(\eta)=0.$$
\end{definition}

Suppose that $(S,\gamma)$ is an irreducible complex spinor bundle. Since it is associated to a spin-c structure, we can write $\cD=\nabla^{g,A}-\cA$ with $\cA\in\Omega^1(M,\End(S))$. In this case, the equation satisfied by a constrained parallel spinor can be written as: $$\nabla^{g,A}\eta=\cA(\eta),\qquad\cQ(\eta)=0,$$ and their solutions are called constrained parallel spinors \emph{relative} to $(\cA,\cQ)$. Using connectedness of $M$ and the parallel transport of $\cD$, the equation $\cD\eta=0$ implies that the space of constrained parallel spinors relative to $(\cA,\cQ)$ is finite-dimensional and that a constrained parallel spinor which is not zero at some point of $M$ is automatically nowhere vanishing on $M$.


\section{Parallel Hermitian spinorial forms}
\label{sec:Hermitian_parallel_forms}


To study constrained parallel complex spinors, we will extend the algebraic theory developed in \cite{Algebraic_Complex_Square_2025}, whose main results we have recalled in Section \ref{sec:spinorialforms}, to bundles of irreducible complex Clifford modules equipped with an arbitrary connection $\cD$. In this section, we focus on the Hermitian case, while we devote Section \ref{sec:Complex_bilinear_parallel_forms} to studying the complex-bilinear case. As in the algebraic case, we consider the even-dimensional and odd-dimensional cases separately.


\subsection{Hermitian spinorial forms in even dimensions}


Let $(S,\gamma,\scS)$ be a Hermitian spinor bundle on $(M,g)$. The admissible Hermitian pairing $\scS$ allows us to construct pointwise extensions to $M$ of the Hermitian square maps $\hcE_\kappa\colon\Sigma\to\End(\Sigma)$ and the Hermitian square spinor maps $\hcE^\kappa_\gamma\colon\Sigma\to\wedge V^*_\C$ of Section \ref{sec:spinorialforms}. We denote these by the same symbol: $$\hcE_\kappa\colon S\to\End(S),\qquad \hcE^\kappa_\gamma\colon S\to\wedge T^*_\C M.$$

We have the following diagram of vector bundles: $$\begin{tikzcd}
S \arrow[rr, "\widehat{\mathcal{E}}_\kappa"] \arrow[dd, "\widehat{\mathcal{E}}^\kappa_\gamma"'] &  & \mathrm{End}(S)                                   \\
                                                                                                &  &                                                   \\
{(\wedge T^*_{\mathbb{C}}M,\diamond)} \arrow[rr, "\Psi"] \arrow[rruu, "\Psi_\gamma"]            &  & {\mathbb{C}\mathrm{l}(M,g)} \arrow[uu, "\gamma"']
\end{tikzcd}$$

which extends to maps of sections the algebraic maps, denoted with the same symbol, that we introduced in Section \ref{sec:spinorialforms}. Note that although $\hcE^\kappa_\gamma$ preserves fibers, it is not a morphism of vector bundles since it is fiberwise quadratic.

\begin{definition}
The \emph{Hermitian spinor square maps} of the Hermitian spinor bundle $(S,\gamma,\scS)$ are the maps $\hcE^\kappa_\gamma\colon\Gamma(S)\to\Omega_\C(M)$, $\kappa\in\U(1)$, induced by $\hcE^\kappa_\gamma$ on sections. The \emph{Hermitian square} of an irreducible complex spinor $\eta\in\Gamma(S)$ is the complex exterior form $\hcE^\kappa_\gamma(\eta)\in\Omega_\C(M)$ for $\kappa\in\U(1)$. Elements in the image of $\hcE^\kappa_\gamma$ are generically called \emph{Hermitian spinorial forms}.
\end{definition}

By the results of Section \ref{sec:spinorialforms}, we have that $\hcE^\kappa_\gamma$ are real quadratic maps and satisfy: $$\mathrm{supp}(\hcE^\kappa_\gamma(\eta))=\mathrm{supp}(\eta)$$ for all $\eta\in\Gamma(S)$. Define: $$\Im(\hcE_\gamma):=\bigcup_{\kappa\in\U(1)}\Im(\hcE^\kappa_\gamma)\subset\Omega_\C(M)$$ as the disjoint union of the images of the Hermitian square maps for all $\kappa\in\U(1)$.

\begin{prop}
\label{prop:spin_cohomology_class}
Let $(S,\gamma,\scS)$ be an irreducible Hermitian spinor bundle. Then every nowhere vanishing form in $\Im(\hcE_\gamma)$ determines a cohomology class:
\begin{equation*}
\mathfrak{s}_{\gamma}(\alpha)\in H^2(M,\Z)    
\end{equation*}
encoding the obstruction to the existence of a globally defined irreducible complex spinor $\eta\in\Gamma(S)$, necessarily nowhere vanishing, such that $\alpha=\hcE^\kappa_\gamma(\eta)$ for some $\kappa\in\U(1)$. In particular, such $\eta\in\Gamma(S)$ exists if and only if $\mathfrak{s}_{\gamma}(\alpha)=0$.
\end{prop}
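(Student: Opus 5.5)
Given a nowhere vanishing $\alpha\in\Im(\hcE_\gamma)$, I would build a principal $\U(1)$-bundle over $M$ from the spinors locally squaring to $\alpha$, and take $\mathfrak{s}_\gamma(\alpha)$ to be its first Chern class. First I fix the $\kappa\in\U(1)$ with $\alpha\in\Im(\hcE^\kappa_\gamma)$. Since $M$ is connected and $\alpha$ is nowhere vanishing, the condition $(\pi^{\frac{1-s}{2}}\circ\tau)(\bar\kappa\alpha)=\kappa\bar\alpha$ of Theorem \ref{thm:even_Hermitian_forms} should pin $\kappa$ down up to the sign ambiguity $\kappa\mapsto-\kappa$. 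That ambiguity can be absorbed by noting that $\hcE^{-\kappa}_\gamma(\eta)=-\hcE^\kappa_\gamma(\eta)$; if needed I would also use $\eta\mapsto$ multiplication by a constant, which is compatible with the fibres below. So I treat $\kappa$ as fixed.

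For each $m\in M$ set $F_m:=\{\eta\in S_m\mid \hcE^\kappa_\gamma(\eta)=\alpha_m\}$. By the algebraic statement recalled before Definition \ref{def:squares} (the Hermitian square determines the spinor up to a unitary phase) together with Theorem \ref{thm:even_Hermitian_forms}, each $F_m$ is nonempty and is a single $\U(1)$-orbit under $\eta\mapsto z\eta$. The action is free because $\alpha_m\neq 0$ forces $\eta\neq0$. Thus $F:=\bigsqcup_m F_m\subset S$ is a set-theoretic principal $\U(1)$-bundle. The odd-dimensional case is identical, using Theorem \ref{thm_Hermitian_square_odd}.

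The main step is to show that $F$ is a smooth submanifold and locally trivial, that is, that $\alpha$ admits smooth local square roots. I would work in a local trivialization $S|_U\cong U\times\Sigma$ and use the explicit formula $\hcE_\kappa(\eta)=\kappa\,\eta\otimes\scS(-,\eta)$. The endomorphism $E:=\Psi_\gamma(\alpha)$ is then a smooth rank-one field, $E=\kappa\,\eta\otimes\scS(-,\eta)$ pointwise. Near $m_0$, I pick a local section $\xi$ with $E(\xi)(m_0)\neq0$ and set $\eta_0:=E(\xi)$, which is smooth and nonvanishing near $m_0$. Pointwise $\eta_0=c\,\eta$ with $c=\kappa\,\scS(\xi,\eta)\neq0$, so $\eta$ is recovered as $\eta=\eta_0/c$. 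To make this smooth, I compute
\begin{equation*}
\scS(E\xi,\xi)=\kappa|\scS(\xi,\eta)|^2,
\end{equation*}
so $|c|^2=\bar\kappa\,\scS(E\xi,\xi)$ is a smooth positive function. Then $\eta:=\sqrt{\bar\kappa\,\scS(E\xi,\xi)}\;\eta_0/\scS(\eta_0,\ldots)$ can be arranged explicitly: concretely, $\eta':=\eta_0\,|c|/c'$ with any smooth choice of phase, for instance $\eta':=\eta_0/\sqrt{|c|^2}\cdot$(a phase). Since only the phase is free, $\eta'':=\eta_0\cdot|c|^{-1}\cdot\overline{(c/|c|)}$ is not computable, but $\eta_0\,|c|^{-1}$ differs from a true root only by a unit phase. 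Hence $\eta_0/|c|$ is a smooth local section of $F$, which gives local triviality.

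Granting local triviality, $F\to M$ is a principal $\U(1)$-bundle, and I define $\mathfrak{s}_\gamma(\alpha):=c_1(F)\in H^2(M,\Z)$. A global $\eta\in\Gamma(S)$ with $\hcE^\kappa_\gamma(\eta)=\alpha$ is exactly a global section of $F$, and it is nowhere vanishing automatically. Such a section exists if and only if $F$ is trivial, which holds if and only if $c_1(F)=0$, since principal $\U(1)$-bundles are classified by $H^2(M,\Z)$. The obstacle I expect is the smoothness argument together with the bookkeeping of the $\kappa$ choice, which I would settle by checking that the class is independent of the sign of $\kappa$.
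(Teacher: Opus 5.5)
Your argument is correct and is essentially the paper's. Your bundle $F$ of exact square roots is the unit-circle bundle inside the paper's line subbundle $L_\alpha\subset S$, namely the image of $\Psi_\gamma(\alpha)$, i.e.\ the line defined by the section $(\P\hcE^\kappa_\gamma)^{-1}([\alpha])$ of $\P(S)$. Hence $c_1(F)=c_1(L_\alpha)$. Your local section $\eta_0/\sqrt{\bar\kappa\,\scS(E\xi,\xi)}$ makes explicit two steps the paper leaves implicit: smoothness, and rescaling a nowhere vanishing section of $L_\alpha$ so that it squares to exactly $\alpha$.

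The $\pm\kappa$ discussion can be dropped. $\alpha_m$ never lies in the image of both $\hcE_\kappa$ and $\hcE_{-\kappa}$: an equality $\hcE_\kappa(\eta)=\hcE_{-\kappa}(\eta')$ forces $\eta'=z\eta$ and then $1=-|z|^2$. In any case, constant rescalings $\eta\mapsto z\eta$ only multiply the square by $|z|^2>0$, so they could not absorb a sign.
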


\begin{proof}
For every $\kappa\in\U(1)$, the map $\hcE^\kappa_\gamma\colon S\setminus\{0\} \to \wedge T^*_\C M\setminus\{0\}$ induces a base-point preserving smooth map: 
\begin{equation*}
\P\hcE^{\gamma}_{\kappa} \colon\P(S)\to\P(\wedge T^*_\C M),\qquad [\eta] \mapsto[\alpha_{\eta}]    
\end{equation*} 

\noindent
between the complex projectivizations $\P(S)$ and $\P(\wedge T^*_\C M)$, of $S$ and $\wedge T^*_\C M$, respectively. This map is bijective onto its image. Since $\alpha\in\Im(\hcE^\kappa_\gamma)$, it follows that $[\alpha] \in \Im(\P\hcE^\kappa_\gamma)$ and therefore: 
\begin{equation*}
(\P\hcE^\kappa_\gamma)^{-1}([\alpha]) \colon M \to  \P(S)
\end{equation*}

\noindent
defines a smooth section of $\P(S)$, which is equivalent to a complex line subbundle $L_{\alpha}\subset S$ whose first Chern class determines the cohomology class $\mathfrak{s}_{\gamma}(\alpha):=c_1(L_\alpha)\in H^2(M,\Z)$ that we associate to $\alpha$. A section $\eta$ of $S$ such that $\hcE^\kappa_\gamma(\eta)=\alpha$ would be a nowhere vanishing section of $L_{\alpha}$, which therefore exists if and only if $\mathfrak{s}_{\gamma}(\alpha) = 0$.
\end{proof}

We will refer to the cohomology class $\mathfrak{s}_{\gamma}(\alpha)\in H^2(M,\Z)$ given by Proposition \ref{prop:spin_cohomology_class} as the \emph{spinor class} of the nowhere vanishing exterior form $\alpha\in\Im(\hcE_\gamma)$. Note that $\mathfrak{s}_{\gamma}(\alpha)$ depends only on $(S,\gamma,\scS)$ and $\alpha$, since all admissible Hermitian pairings on $(S,\gamma)$ are related to each other by automorphisms of $S$, see \cite[Prop.\ 3.10]{Algebraic_Complex_Square_2025}. In particular, $\mathfrak{s}_{\gamma}(\alpha)$ is not a characteristic class of $S$ since it depends on $\alpha$.\medskip

The category of bundles of irreducible complex spinors on $(M,g)$ is well-known to be a torsor over the category of complex line bundles on $M$ with the action given by the tensor product \cite{FriedrichTrautman,LS18}. The action of such a complex line bundle $L$ on a given $(S,\gamma,\scS)$ induces naturally an admissible Hermitian pairing $\scS_L$ on $(S_L:=S\otimes L, \gamma_L) $ by setting:
\begin{equation}\label{eq:Hermitial_HL}
    \scS_L(\eta_1\otimes\lambda_1,\eta_2\otimes\lambda_2):=h_L(\lambda_1,\lambda_2)\scS(\eta_1,\eta_2)   
\end{equation}
for every Hermitian inner product $h_L$ on $L$. With this definition, $(S_L,\gamma_L,\scS_L)$ is a Hermitian spinor bundle. Using this torsor structure on the category of bundles of irreducible complex spinors, we can conveniently make the spinor class $\mathfrak{s}_{\gamma}(\alpha)$ vanish.

\begin{prop}
\label{prop:spin_spinor_class=0}
Let $(S_o,\gamma_o,\scS_o)$ be an irreducible Hermitian spinor bundle. For every nowhere vanishing form $\alpha\in \Im(\hcE_{\gamma_o})$ there exists an irreducible Hermitian spinor bundle $(S,\gamma,\scS)$ on $(M,g)$, unique modulo isomorphism, such that $\mathfrak{s}_{\gamma}(\alpha)=0$.
\end{prop}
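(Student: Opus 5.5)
The plan is to twist $S_o$ by the dual of the line subbundle $L_\alpha\subset S_o$ built in the proof of Proposition \ref{prop:spin_cohomology_class}. Given the nowhere vanishing $\alpha\in\Im(\hcE_{\gamma_o})$, say $\alpha\in\Im(\hcE^\kappa_{\gamma_o})$, that proof produces a complex line subbundle $L_\alpha\subset S_o$ with $\mathfrak{s}_{\gamma_o}(\alpha)=c_1(L_\alpha)$. We set $L:=L_\alpha^*$, equip it with any Hermitian inner product $h_L$, and take $S:=S_o\otimes L$ with $\gamma:=\gamma_L$ and $\scS:=\scS_L$ as in \eqref{eq:Hermitial_HL}. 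This is again an irreducible Hermitian spinor bundle, by the torsor structure recalled above. We must then show three things: $\alpha$ still lies in the image of the Hermitian square map of $(S,\gamma,\scS)$; $\mathfrak{s}_\gamma(\alpha)=0$; and uniqueness holds.

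For the first point we use the canonical identification $\End(S_o\otimes L)=\End(S_o)$. Under it, $\gamma_L$ corresponds to $\gamma_o$, so $\Psi_{\gamma}=\Psi_{\gamma_o}$.

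1. Take a local nonvanishing section $\eta_o$ of $L_\alpha$ with $\hcE^\kappa_{\gamma_o}(\eta_o)=\alpha$. Such a section exists locally because $[\alpha]$ determines $L_\alpha$, and rescaling $\eta_o$ by a positive real function and a phase fixes the normalization; if needed, $\kappa$ is absorbed into the choice of phase.
2. Take $\lambda$ a local section of $L$ with $h_L(\lambda,\lambda)=1$. Then
\begin{equation*}
\hcE_\kappa(\eta_o\otimes\lambda)=\kappa\,(\eta_o\otimes\lambda)\otimes\scS_L(-,\eta_o\otimes\lambda),
\end{equation*}
which, as an element of $\End(S_o)$, equals $\kappa\,\eta_o\otimes\scS_o(-,\eta_o)$, because $\lambda\otimes\bar\lambda$ pairs to $h_L(\lambda,\lambda)=1$.
3. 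Hence $\hcE^\kappa_\gamma(\eta_o\otimes\lambda)=\alpha$. The line subbundle of $S$ associated to $\alpha$ is therefore $L_\alpha\otimes L=L_\alpha\otimes L_\alpha^*\cong\underline{\C}$.
4. Consequently $\mathfrak{s}_\gamma(\alpha)=c_1(L_\alpha)+c_1(L)=0$. Globally, the nowhere vanishing section $\eta_o\otimes\eta_o^*$, which is independent of the local choices, gives a global $\eta$ with $\hcE^\kappa_\gamma(\eta)=\alpha$ after the normalization fixed in step 1.

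For uniqueness, any irreducible Hermitian spinor bundle is isomorphic to $S_o\otimes L'$ for some line bundle $L'$. By the computation above, its spinor class is $\mathfrak{s}_{\gamma_o}(\alpha)+c_1(L')$. Vanishing of this class forces $c_1(L')=-c_1(L_\alpha)$, so $L'\cong L_\alpha^*$, since line bundles are classified by $c_1$. Moreover, the admissible Hermitian pairing is unique up to automorphism by \cite[Prop.\ 3.10]{Algebraic_Complex_Square_2025}.

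The main obstacle is the normalization in step 1 and the role of $\kappa$. The phase of $\hcE^\kappa(\eta)$ depends on $\kappa$, and $\scS(\eta,\eta)$ may have indefinite sign, so we must check that the local representatives can be chosen consistently with a unit-norm $\lambda$. We plan to handle this with the characterization in Theorem \ref{thm:even_Hermitian_forms}. It shows that $\alpha$ lies in the image for a fixed $\kappa$ on each fibre, so $\kappa$ is locally constant, hence constant since $M$ is connected, and only a real positive rescaling of $\eta_o$ remains.
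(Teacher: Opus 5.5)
Your proposal is correct and is essentially the paper's argument: you twist $S_o$ by the dual of a line bundle whose first Chern class is $\mathfrak{s}_{\gamma_o}(\alpha)$ (you take $L_\alpha$ itself, the paper an abstract $L$ with $c_1(L)=\mathfrak{s}_{\gamma_o}(\alpha)$), so that the line subbundle cut out by $\alpha$ becomes $L_\alpha\otimes L_\alpha^*\cong\underline{\C}$; your unit-norm computation $\hcE^\kappa_\gamma(\eta_o\otimes\lambda)=\hcE^\kappa_{\gamma_o}(\eta_o)$ is a slightly sharper form of the paper's observation that $\P(S_o\otimes L^*)=\P(S_o)$ carries the same projectivized square map, and your $c_1(L')=-c_1(L_\alpha)$ argument supplies the uniqueness that the paper's proof does not spell out. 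The only slip is inessential: $\kappa$ cannot be absorbed into the phase of $\eta_o$, since $\hcE_\kappa(e^{i\theta}\eta)=\hcE_\kappa(\eta)$, and the \emph{locally constant $\kappa$} discussion is unnecessary because $\kappa$ is fixed from the start by $\Im(\hcE_{\gamma_o})=\bigcup_{\kappa}\Im(\hcE^\kappa_{\gamma_o})$, after which only a positive real rescaling (equivalently, choosing $h_L$ so that $\eta_o^*$ has unit norm) is required.
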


\begin{proof}
The irreducible Hermitian spinor bundle $(S_o,\gamma_o,\scS_o)$ yields $\mathfrak{s}_{\gamma_o}(\alpha)\in H^2(M,\mathbb{Z})$, generally non-vanishing. Consider a complex line bundle $L\to M$ whose first Chern class is $c_1(L)=\mathfrak{s}_{\gamma_o}(\alpha)$. We compute now the spinor class $\mathfrak{s}_{\gamma}(\alpha)$ for the irreducible Hermitian spinor bundle $(S,\gamma,\scS)$, where $S=S_o\otimes L^{\ast}$, $L^*$ is the dual line bundle of $L$, $\gamma$ is the trivial $L^*$-extension of $\gamma_o$, and $\scS$ is defined as in \eqref{eq:Hermitial_HL} for any choice of Hermitian inner product on $L^*$. First, we note that $\P(S)=\P(S_o\otimes L^{\ast})=\P(S_o)$, and furthermore $\hcE^\kappa_\gamma$ and $\hcE^\kappa_{\gamma_o}$ induce the same smooth map at the projectivized level, that is: $$\P\hcE^\kappa_\gamma=\P\hcE^\kappa_{\gamma_o}\colon\P(S)=\P(S_o)\to\P(\wedge T^*_\C M).$$

Hence: $$(\P\hcE^\kappa_\gamma)^{-1}([\alpha])=(\P\hcE^\kappa_{\gamma_o})^{-1}([\alpha])\colon M\to\P(S)=\P(S_o).$$
    
Hence, if this section defines the line bundle $L^{\prime}\subset S_o$, necessarily isomorphic to $L$ since $c_1(L')=\mathfrak{s}_{\gamma_o}(\alpha)$, then it defines the line bundle $L^{\prime}\otimes L^{\ast}\subset S_o\otimes L^{\ast}=S$. However, we have $L^{\prime}\otimes L^{\ast}\cong L\otimes L^{\ast}\cong\End(L)$, which is therefore trivial. Hence, Proposition \ref{prop:spin_cohomology_class} implies that there exists a nowhere vanishing section $\eta\in\Gamma(S)$ such that $\hcE^\kappa_\gamma(\eta)=\alpha$, which implies $\mathfrak{s}_\gamma(\alpha)=0$.
\end{proof} 

Let $(S,\gamma,\scS)$ be a Hermitian spinor bundle on $(M,g)$. Recall that if $\cQ\in\Gamma(\End(S)\otimes W)$, then its symbol is defined as $\frq=(\Psi_\gamma\otimes\Id_W)^{-1}(\cQ)\in\Gamma(\wedge T^*_\C M\otimes W)$. Lemma \ref{lemma:constrainedspinoreven} and Remark \ref{remark:equivalent_constrained_eq} imply:

\begin{lemma}
\label{lemma:constrained_spinor_as_section}
A spinor $\eta\in\Gamma(S)$ satisfies $\cQ(\eta)=0$ if and only if one of the following equivalent equations holds: $$\frq\diamond\alpha=0,\qquad\alpha\diamond(\pi^{\frac{1-s}{2}}\circ\tau)(\overline{\frq})=0,$$ where $\alpha=\hcE^\kappa_\gamma(\eta)$ for some $\kappa\in\U(1)$.
\end{lemma}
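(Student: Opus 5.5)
The plan is to reduce the statement pointwise to the algebraic result, Lemma \ref{lemma:constrainedspinoreven}, together with Remark \ref{remark:equivalent_constrained_eq}, applied fiberwise. Both conditions $\cQ(\eta)=0$ and $\frq\diamond\alpha=0$ are pointwise (tensorial) in $\eta$, $\alpha$ and $\frq$, since $\hcE^\kappa_\gamma$ is a fiberwise map and $\diamond$ is the fiberwise geometric product. It therefore suffices to prove the equivalence at each point $m\in M$. There $(S_m,\gamma_m,\scS_m)$ is an irreducible complex Clifford module with an admissible Hermitian pairing of adjoint type $s$. By construction, $\alpha_m=\hcE^\kappa_{\gamma_m}(\eta_m)$ and $\frq_m=(\Psi_{\gamma_m}\otimes\Id_{W_m})^{-1}(\cQ_m)$.

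The one point beyond the scalar case is the auxiliary bundle $W$. I would choose a local frame $\{w_a\}$ of $W$ near $m$ and write $\cQ=\sum_a\cQ_a\otimes w_a$ with $\cQ_a\in\Gamma(\End(S))$. Since $\Psi_\gamma\otimes\Id_W$ is applied componentwise, this gives $\frq=\sum_a\frq_a\otimes w_a$ with $\frq_a=\Psi_\gamma^{-1}(\cQ_a)$ the symbol of $\cQ_a$. The operations involved are componentwise: the action on spinors sends $\cQ(\eta)$ to $\sum_a\cQ_a(\eta)\otimes w_a$, and the product sends $\frq\diamond\alpha$ to $\sum_a(\frq_a\diamond\alpha)\otimes w_a$ (the latter taken with values in $\wedge T^*_\C M\otimes W$). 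Because the $w_a$ are linearly independent, $\cQ(\eta)=0$ holds if and only if $\cQ_a(\eta)=0$ for all $a$, and $\frq\diamond\alpha=0$ holds if and only if $\frq_a\diamond\alpha=0$ for all $a$.

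Applying Lemma \ref{lemma:constrainedspinoreven} to each $Q=(\cQ_a)_m$ gives $(\cQ_a)_m(\eta_m)=0$ if and only if $(\frq_a)_m\diamond\alpha_m=0$. This yields the first equivalence. For the second form, I would apply Remark \ref{remark:equivalent_constrained_eq} to each component, using the $\scS$-transpose: $\frq_a\diamond\alpha=0$ if and only if $\alpha\diamond(\pi^{\frac{1-s}{2}}\circ\tau)(\overline{\frq_a})=0$. Since conjugation, $\pi$ and $\tau$ act only on the form factor, these recombine into $\alpha\diamond(\pi^{\frac{1-s}{2}}\circ\tau)(\overline{\frq})=0$.

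I expect no real obstacle. The only point needing care is that the statement involves a conjugate on the $W$ side, $\overline{\frq}$. If $W$ is complex, I would interpret this conjugation as acting on the exterior-form factor only (or, equivalently, conjugate with respect to the chosen frame), which is how the componentwise recombination above works. Independence of the frame then follows because each condition in the statement is equivalent to $\cQ(\eta)=0$, which is frame-independent.
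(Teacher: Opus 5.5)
Your proposal is correct and is essentially the paper's argument: the paper obtains this lemma directly as a pointwise consequence of Lemma \ref{lemma:constrainedspinoreven} and Remark \ref{remark:equivalent_constrained_eq}. Your componentwise treatment of the auxiliary bundle $W$ via a local frame, including reading $\overline{\frq}$ as conjugation on the form factor, only makes explicit details that the paper leaves implicit.
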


Let $(S,\gamma,\scS)$ be an \emph{irreducible} Hermitian spinor bundle on $(M,g)$ equipped with a connection $\cD$. Set $\cA:=\nabla^{g,A}-\cD\in\Omega^1(M,\End(S))$ and let: 
\begin{equation*}
\fra:=(\Psi_\gamma\otimes\Id_{T^*M})^{-1}(\cA)\in\Omega^1(M,\wedge T^*_\C M)    
\end{equation*}
be the symbol of $\cA$. We have:

\begin{lemma}
\label{lemma:nabla_square_alpha}
A nowhere vanishing spinor $\eta\in\Gamma(S)$ satisfies $\cD\eta=0$ only if: $$\nabla^g\alpha=\fra\diamond\alpha+\alpha\diamond(\pi^{\frac{1-s}{2}}\circ\tau)(\overline{\fra}),$$ where $\alpha=\hcE^\kappa_\gamma(\eta)$ for some $\kappa\in\U(1)$.
\end{lemma}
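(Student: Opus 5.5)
We work at the level of endomorphisms and then transport the result to forms through $\Psi_\gamma$. The Hermitian square is $\hcE_\kappa(\eta)=\kappa\,\eta\otimes\scS(-,\eta)\in\Gamma(\End(S))$, and $\alpha=\hcE^\kappa_\gamma(\eta)=\Psi_\gamma^{-1}(\hcE_\kappa(\eta))$. By Proposition \ref{prop:nabla_compatible_Psi_gamma} we have $\Psi_\gamma(\nabla^g_X\alpha)=\nabla^{g,A}_X(\hcE_\kappa(\eta))$. It therefore suffices to compute the covariant derivative of the endomorphism $\hcE_\kappa(\eta)$ and rewrite it in terms of $\cA$.

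The first step is a Leibniz rule for $\hcE_\kappa$. For any $\xi\in\Gamma(S)$ we use the definition $(\nabla_X T)\xi=\nabla_X(T\xi)-T(\nabla_X\xi)$ together with the compatibility of $\nabla^{g,A}$ with $\scS$. Since $\scS$ is antilinear in the second slot, this gives
\begin{equation*}
(\nabla^{g,A}_X\hcE_\kappa(\eta))\xi=\kappa\,(\nabla^{g,A}_X\eta)\,\scS(\xi,\eta)+\kappa\,\eta\,\scS(\xi,\nabla^{g,A}_X\eta).
\end{equation*}
Next we impose $\cD\eta=0$, that is $\nabla^{g,A}_X\eta=\cA_X\eta=\Psi_\gamma(\fra_X)\eta$. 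The first term then becomes $\Psi_\gamma(\fra_X)\circ\hcE_\kappa(\eta)$ applied to $\xi$. For the second term we use admissibility of $\scS$ to move the Clifford action across the pairing:
\begin{equation*}
\scS(\xi,\Psi_\gamma(\fra_X)\eta)=\scS\big(\Psi_\gamma((\pi^{\frac{1-s}{2}}\circ\tau)(\overline{\fra_X}))\xi,\eta\big).
\end{equation*}
To justify this, we apply the admissibility identity to $z=(\pi^{\frac{1-s}{2}}\circ\tau)(\overline{\fra_X})$. This requires that $\pi^{\frac{1-s}{2}}\circ\tau$ composed with conjugation be an involution, which holds because $\pi$ and $\tau$ commute, are involutive, and commute with complex conjugation. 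The second term is therefore $\hcE_\kappa(\eta)\circ\Psi_\gamma((\pi^{\frac{1-s}{2}}\circ\tau)(\overline{\fra_X}))$ applied to $\xi$.

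Putting the two terms together and applying $\Psi_\gamma^{-1}$, which is an algebra isomorphism, yields
\begin{equation*}
\nabla^g_X\alpha=\fra_X\diamond\alpha+\alpha\diamond(\pi^{\frac{1-s}{2}}\circ\tau)(\overline{\fra_X})
\end{equation*}
for every $X$, which is the claimed equation. Nowhere vanishing plays no role in this direction. The only delicate point is the bookkeeping of the conjugation and the involution when moving the Clifford action across the antilinear slot of $\scS$, so that the adjoint appears exactly as $(\pi^{\frac{1-s}{2}}\circ\tau)(\overline{\fra})$ and not with a stray conjugate or a sign. I expect that verification to be the main, though minor, obstacle.
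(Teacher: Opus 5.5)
Your proposal is correct and follows essentially the same route as the paper: differentiate the endomorphism $E_\eta=\hcE_\kappa(\eta)$ using the compatibility of $\nabla^{g,A}$ with $\scS$, obtain $\nabla^{g,A}E_\eta=\cA\circ E_\eta+E_\eta\circ\cA^\dagger$, and transport back to forms via $\Psi_\gamma^{-1}$ and Proposition \ref{prop:nabla_compatible_Psi_gamma}. The paper cites the formula $\Psi_\gamma^{-1}(\cA^\dagger)=(\pi^{\frac{1-s}{2}}\circ\tau)(\overline{\fra})$ from the $\scS$-transpose in Remark \ref{remark:equivalent_constrained_eq}, whereas you verify it directly, using that $\pi^{\frac{1-s}{2}}\circ\tau$ is an involution commuting with conjugation.
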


\begin{proof}
    Assume that $\eta$ satisfies $\nabla^{g,A}\eta=\cA(\eta)$. Fix $\kappa\in\U(1)$ and set $E_\eta:=\hcE_\kappa(\eta)\in\Gamma(\End(S))$. Then: \begin{align*}
        (\nabla^{g,A}E_\eta)\chi&=\nabla^{g,A}(E_\eta(\chi))-E_\eta(\nabla^{g,A}\chi)\\
        &=\nabla^{g,A}(\kappa\scS(\chi,\eta)\eta)-\kappa\scS(\nabla^{g,A}\chi,\eta)\eta\\
        &=\kappa\scS(\chi,\nabla^{g,A}\eta)\eta+\kappa\scS(\chi,\eta)\nabla^{g,A}\eta\\
        &=\kappa\scS(\chi,\cA(\eta))\eta+\kappa\scS(\chi,\eta)\cA(\eta)\\
        &=E_\eta(\cA^\dagger(\chi))+\cA(E_\eta(\chi)),
    \end{align*} for all $\chi\in\Gamma(S)$, where $\cA^\dagger$ denotes the adjoint of $\cA$ taken with respect to $\scS$. The previous equation implies: $$\nabla^{g,A}E_\eta=\cA\circ E_\eta+E_\eta\circ\cA^\dagger.$$
    
    Applying $\Psi_\gamma^{-1}$ to this equation, and using Proposition \ref{prop:nabla_compatible_Psi_gamma} and $\Psi_\gamma^{-1}(\cA^\dagger)=\fra^\dagger=(\pi^{\frac{1-s}{2}}\circ\tau)(\overline{\fra})$, see Remark \ref{remark:equivalent_constrained_eq}, gives the result from the statement.
\end{proof}

We arrive at the following set of necessary conditions for a complex exterior form $\alpha\in\Omega_\C(M)$ to be the Hermitian square of a constrained parallel spinor relative to $(\cA,\cQ)$.

\begin{thm}
\label{thm:even_constrained_parallel_spinors}
Let $(S,\gamma,\scS)$ be an irreducible Hermitian spinor bundle on $(M,g)$ of adjoint type $s\in\Z_2$. Let $\cA \in \Omega^1(M,\End(S))$ and $\cQ\in\Gamma(\End(S)\otimes W)$ for a vector bundle $W$ on $M$. Then there exists a nowhere vanishing constrained parallel spinor $\eta\in\Gamma(S)$ relative to $(\cA,\cQ)$ only if: 
\begin{itemize}
\item There exists a nowhere vanishing complex exterior form $\alpha\in\Omega_\C(M)$ with vanishing spinor class $\mathfrak{s}_{\gamma}(\alpha)$.
\item The complex exterior form $\alpha\in\Omega_\C(M)$ satisfies the following algebraic and differential equations: \begin{gather*}
\alpha\diamond\alpha = 2^{\frac{d}{2}}\alpha^{(0)}\alpha,\qquad \alpha \diamond \beta \diamond \alpha=2^{\frac{d}{2}}(\alpha\diamond\beta)^{(0)}\alpha,\qquad(\pi^{\frac{1-s}{2}}\circ\tau)(\bar\kappa\alpha) = \kappa \bar{\alpha},\\
\nabla^g\alpha=\fra\diamond\alpha+\alpha\diamond(\pi^{\frac{1-s}{2}}\circ\tau)(\overline{\fra}),\qquad\frq\diamond\alpha=0
\end{gather*} 
for a complex exterior form $\beta\in\Omega_\C(M)$ satisfying $(\alpha\diamond\beta)^{(0)}\neq0$ or, equivalently, satisfies the equations: 
\begin{gather*}
\alpha \diamond \beta \diamond \alpha=2^{\frac{d}{2}}(\alpha\diamond\beta)^{(0)}\alpha,\qquad(\pi^{\frac{1-s}{2}}\circ\tau)(\bar\kappa\alpha) = \kappa \bar{\alpha},\\
\nabla^g\alpha=\fra\diamond\alpha+\alpha\diamond(\pi^{\frac{1-s}{2}}\circ\tau)(\overline{\fra}),\qquad\frq\diamond\alpha=0
\end{gather*} 
for every complex exterior form $\beta\in\Omega_\C(M)$.
\end{itemize}
    
If in addition the spinor $\eta\in\Gamma(S)$ is chiral of chirality $\mu\in\Z_2$, then we have to add the equation: $$i^{q+\frac{d}{2}}*(\pi\circ\tau)(\alpha)=\mu\alpha.$$

The complex exterior form $\alpha\in\Omega_\C(M)$ as above is determined by $\eta\in\Gamma(S)$ through the relation: $$\alpha=\hcE^\kappa_\gamma(\eta)$$ for some $\kappa\in\U(1)$. Moreover, $\alpha\in\Omega_\C(M)$ satisfying the algebraic equations of the theorem determines a nowhere vanishing spinor $\eta\in\Gamma(S)$, unique up to a $\U(1)$-valued function, satisfying the relation $\alpha=\hcE^\kappa_\gamma(\eta)$.
\end{thm}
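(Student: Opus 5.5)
The plan is to assemble the theorem from the pointwise algebraic characterization (Theorem \ref{thm:even_Hermitian_forms}) together with the two lemmas on constraints and derivatives already proven in this section. Suppose $\eta\in\Gamma(S)$ is a nowhere vanishing constrained parallel spinor relative to $(\cA,\cQ)$. Fix $\kappa\in\U(1)$ and set $\alpha:=\hcE^\kappa_\gamma(\eta)\in\Omega_\C(M)$. Since $\mathrm{supp}(\hcE^\kappa_\gamma(\eta))=\mathrm{supp}(\eta)$ pointwise, $\alpha$ is nowhere vanishing. By construction $\alpha$ lies in $\Im(\hcE_\gamma)$ and has a global preimage $\eta$. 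Proposition \ref{prop:spin_cohomology_class} then gives $\mathfrak{s}_\gamma(\alpha)=0$, because the line subbundle $L_\alpha\subset S$ is spanned by the nowhere vanishing section $\eta$ and is therefore trivial.

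Next I apply Theorem \ref{thm:even_Hermitian_forms} fiberwise at each $m\in M$ to the paired module $(S_m,\gamma_m,\scS_m)$. This yields the identity $\alpha\diamond\alpha=2^{\frac d2}\alpha^{(0)}\alpha$, the reality-type condition $(\pi^{\frac{1-s}{2}}\circ\tau)(\bar\kappa\alpha)=\kappa\bar\alpha$, and the Fierz identity $\alpha\diamond\beta\diamond\alpha=2^{\frac d2}(\alpha\diamond\beta)^{(0)}\alpha$ for every $\beta$. The equivalence between the formulation with a single $\beta$ satisfying $(\alpha\diamond\beta)^{(0)}\neq0$ and the formulation for all $\beta$ is the pointwise (b)$\Leftrightarrow$(c) of that theorem. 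The one point requiring care is producing a global smooth $\beta$ with $(\alpha\diamond\beta)^{(0)}$ nowhere zero. I would take $\beta:=(\pi^{\frac{1-s}{2}}\circ\tau)(\bar\alpha)=\alpha^\dagger$, or a locally chosen $\beta$ patched with a partition of unity. Pointwise, $\alpha\diamond\alpha^\dagger$ corresponds to $\hcE_\kappa(\eta)\circ\hcE_\kappa(\eta)^\dagger$, and its trace is proportional to $\scS(\eta,\eta)^2$. This vanishes when $\eta$ is null, which is the main obstacle in indefinite signature. In that case I would instead choose $\beta=\Psi_\gamma^{-1}(\chi\otimes\scS(-,\xi))$ locally, with $\scS(\eta,\xi)\neq0$, which is possible by nondegeneracy. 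Since the condition $(\alpha\diamond\beta)^{(0)}\neq0$ is open, I would then glue these local choices, and if gluing fails, read the statement as requiring such a $\beta$ locally.

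The differential and constraint equations follow directly from earlier results. Lemma \ref{lemma:nabla_square_alpha} gives $\nabla^g\alpha=\fra\diamond\alpha+\alpha\diamond(\pi^{\frac{1-s}{2}}\circ\tau)(\overline{\fra})$, and Lemma \ref{lemma:constrained_spinor_as_section} gives $\frq\diamond\alpha=0$. If $\eta$ is chiral, the chirality condition is the last clause of Theorem \ref{thm:even_Hermitian_forms}, again applied pointwise. The relation $\alpha=\hcE^\kappa_\gamma(\eta)$ holds by construction.

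For the final uniqueness claim, let $\alpha$ satisfy the algebraic equations. Pointwise, Theorem \ref{thm:even_Hermitian_forms} places $\alpha$ in the image of $\hcE^\kappa_\gamma$, and $\kappa$ is constant by continuity and connectedness. When $\mathfrak{s}_\gamma(\alpha)=0$, Proposition \ref{prop:spin_cohomology_class} supplies a nowhere vanishing $\eta$ with $\hcE^\kappa_\gamma(\eta)=\alpha$. If $\eta'$ is another such spinor, then $\eta'=f\eta$ with $f$ a nowhere zero function, and $|f|^2=1$ because $\hcE_\kappa(f\eta)=|f|^2\hcE_\kappa(\eta)$. Hence $f$ is $\U(1)$-valued.
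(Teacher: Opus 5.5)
Your proposal follows the paper's proof. The paper likewise just combines four ingredients:
\begin{itemize}
\item Proposition~\ref{prop:spin_cohomology_class}, for the vanishing spinor class;
\item the pointwise version of Theorem~\ref{thm:even_Hermitian_forms}, for the algebraic and chirality conditions;
\item Lemma~\ref{lemma:constrained_spinor_as_section}, for $\frq\diamond\alpha=0$;
\item Lemma~\ref{lemma:nabla_square_alpha}, for the differential equation;
\end{itemize}
together with the fact that $\alpha_p=0$ if and only if $\eta_p=0$.

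You go beyond the paper in one place, the existence of a global $\beta$ with $(\alpha\diamond\beta)^{(0)}$ nowhere zero. You are right to flag it, and right that $\beta=\alpha^\dagger$ fails for null $\eta$. Two parts of your treatment need fixing:
\begin{itemize}
\item Openness of the condition does not justify gluing. A partition-of-unity combination of local $\beta_a$ can make $(\alpha\diamond\beta)^{(0)}$ vanish through cancellation of complex values.
\item Falling back on a local reading would weaken the statement.
\end{itemize}
The repair is immediate, because $\beta\mapsto(\alpha\diamond\beta)^{(0)}$ is $C^\infty(M,\C)$-linear. Rescale each local choice so that $(\alpha\diamond\beta_a)^{(0)}\equiv1$ on $U_a$, and set $\beta:=\sum_a\rho_a\beta_a$ for a subordinate partition of unity $\{\rho_a\}$; then $(\alpha\diamond\beta)^{(0)}\equiv1$. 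Alternatively, work globally. The map $\chi\mapsto\scS(\chi,\eta)$ is a fiberwise surjective bundle map $S\to M\times\C$, and splitting it gives $\xi\in\Gamma(S)$ with $\scS(\xi,\eta)\equiv1$. Then $\beta=\Psi_\gamma^{-1}(\xi\otimes\scS(-,\xi))$ gives $(\alpha\diamond\beta)^{(0)}=2^{-\frac{d}{2}}\kappa$.

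Your uniqueness argument ($\eta'=f\eta$ with $|f|=1$) is correct. The continuity argument for $\kappa$ is unnecessary, since $\kappa$ is already a fixed constant in the reality condition.
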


\begin{proof}
Condition $\mathfrak{s}_{\gamma}(\alpha)=0$ is required by Proposition \ref{prop:spin_cohomology_class}. The algebraic conditions of the theorem follow from the pointwise extension of Theorem \ref{thm:even_Hermitian_forms} together with Lemma \ref{lemma:constrained_spinor_as_section}. The necessary differential condition follows from Lemma \ref{lemma:nabla_square_alpha}. Note that $\eta\in\Gamma(S)$ vanishes at a point $p\in M$ if and only if its Hermitian square $\alpha$ satisfies $\alpha_p=0$.
\end{proof}

\begin{remark}
Suppose that $(S,\gamma,\scS)$ is an irreducible Hermitian spinor bundle and $\alpha\in\Omega_\C(M)$ is a nowhere vanishing complex exterior form for which $\mathfrak{s}_{\gamma}(\alpha)\neq 0$, but satisfies the remaining algebraic and differential conditions of Theorem \ref{thm:even_constrained_parallel_spinors}. Then Proposition \ref{prop:spin_spinor_class=0} implies that there exists another irreducible Hermitian spinor bundle that admits a nowhere vanishing section whose Hermitian square is precisely $\alpha$. 
\end{remark}

In practical applications, such as in the study of supersymmetric configurations of supergravity theories, the specific form of $\cA$ and $\cQ$ as vector-valued endomorphisms of $S$ is usually not fundamental. Instead, only their \emph{symbols} are relevant. This is because we are interested in the consequences of such a parallel spinor existing with respect to \emph{some} irreducible Hermitian spinor bundle, and for this, we only need to be concerned with studying constrained parallel spinors relative to pairs $(\cA,\cQ)$ that have a given fixed symbol $(\fra,\frq)$. In this regard, the following form of the previous theorem is the most convenient for our purposes.

\begin{cor}
\label{cor:even_constrained_parallel_spinors}
Let $(M,g)$ be a strongly spin-c pseudo-Riemannian manifold and let $\fra\in\Omega^1(M,\wedge T^*_\C M)$ and $\frq\in\Gamma(\wedge T^*_\C M\otimes W)$ be given. Then $(M,g)$ admits a nowhere vanishing constrained parallel spinor relative to a pair $(\cA,\cQ)$ whose symbol is $(\fra,\frq)$ only if there exists a nowhere vanishing complex exterior form $\alpha\in\Omega_\C(M)$ satisfying the following algebraic and differential equations: 
\begin{gather*}
\alpha\diamond\alpha = 2^{\frac{d}{2}}\alpha^{(0)}\alpha,\qquad \alpha \diamond \beta \diamond \alpha=2^{\frac{d}{2}}(\alpha\diamond\beta)^{(0)}\alpha,\qquad(\pi^{\frac{1-s}{2}}\circ\tau)(\bar\kappa\alpha) = \kappa \bar{\alpha},\\
\nabla^g\alpha=\fra\diamond\alpha+\alpha\diamond(\pi^{\frac{1-s}{2}}\circ\tau)(\overline{\fra}),\qquad\frq\diamond\alpha=0
\end{gather*} 
for a complex exterior form $\beta\in\Omega_\C(M)$ satisfying $(\alpha\diamond\beta)^{(0)}\neq0$.
\end{cor}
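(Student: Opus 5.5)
The plan is to reduce Corollary \ref{cor:even_constrained_parallel_spinors} to Theorem \ref{thm:even_constrained_parallel_spinors}, keeping only the necessary part and dropping the spinor-class condition. Suppose $(M,g)$ carries a nowhere vanishing constrained parallel spinor $\eta$ relative to some pair $(\cA,\cQ)$ with symbol $(\fra,\frq)$. By definition, $\eta$ is a section of some irreducible complex spinor bundle $(S,\gamma)$ associated to a spin-c structure. Since $(M,g)$ is strongly spin-c, this structure reduces to $\Spin^c_o(p,q)$; in particular $(M,g)$ is strongly orientable by Remark \ref{remark:H2(M,Z)-torsor}. Proposition \ref{prop:Hermitianexistence} therefore provides an admissible Hermitian pairing $\scS$ on $(S,\gamma)$, of some adjoint type $s$. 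This makes $(S,\gamma,\scS)$ an irreducible Hermitian spinor bundle to which the theorem applies.

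Next I would set $\alpha:=\hcE^\kappa_\gamma(\eta)$ for a fixed $\kappa\in\U(1)$. It is nowhere vanishing because $\mathrm{supp}(\hcE^\kappa_\gamma(\eta))=\mathrm{supp}(\eta)$ and $\eta$ has no zeros.
\begin{itemize}
\item The algebraic equations, including the existence of $\beta$ with $(\alpha\diamond\beta)^{(0)}\neq 0$, follow pointwise from Theorem \ref{thm:even_Hermitian_forms}.
\item The equation $\frq\diamond\alpha=0$ follows from Lemma \ref{lemma:constrained_spinor_as_section}, using that the symbol of $\cQ$ is $\frq$.
\item The differential equation $\nabla^g\alpha=\fra\diamond\alpha+\alpha\diamond(\pi^{\frac{1-s}{2}}\circ\tau)(\overline{\fra})$ follows from Lemma \ref{lemma:nabla_square_alpha}, using that the symbol of $\cA$ is $\fra$.
\end{itemize}
Thus all listed equations hold for this $\alpha$. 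The spinor-class condition of the theorem is simply forgotten, which is allowed because the corollary asserts only a necessary condition.

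The only delicate point is choosing $\beta$ globally as a smooth form with $(\alpha\diamond\beta)^{(0)}$ nowhere zero, since the algebraic theorem provides such a $\beta$ only pointwise. I would handle this with an explicit choice. Take $\beta:=\hcE^{\kappa'}_\gamma(\chi)$, or more simply the dequantization of $\eta\otimes\scS(-,\eta)$ suitably adjusted, for a section $\chi$ with $\scS(\eta,\chi)\neq 0$ everywhere. Such a $\chi$ exists because $\scS$ is nondegenerate and $\eta$ is nowhere zero. Then $(\alpha\diamond\beta)^{(0)}$ is proportional to a trace that is a nonzero multiple of $|\scS(\eta,\chi)|^2$, hence nowhere zero. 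Alternatively, I would invoke version (c) of Theorem \ref{thm:even_Hermitian_forms}, which holds for every $\beta$, and choose $\beta$ by this construction.
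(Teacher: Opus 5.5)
Your proposal is correct and follows the paper's route: the corollary is Theorem \ref{thm:even_constrained_parallel_spinors} with the spinor-class condition dropped, and that theorem is proved by combining the pointwise Theorem \ref{thm:even_Hermitian_forms} with Lemmas \ref{lemma:constrained_spinor_as_section} and \ref{lemma:nabla_square_alpha}, exactly as you do, with the strong spin-c hypothesis guaranteeing an admissible Hermitian pairing via strong orientability and Proposition \ref{prop:Hermitianexistence}. Your explicit global choice $\beta=\hcE^{\kappa'}_\gamma(\chi)$, a point the paper leaves implicit, is sound, since a smooth $\chi$ with $\scS(\chi,\eta)\equiv 1$ exists by splitting the fiberwise surjection $\chi\mapsto\scS(\chi,\eta)$ onto the trivial line bundle; however, drop the parenthetical alternative built from $\eta\otimes\scS(-,\eta)$, which gives $(\alpha\diamond\beta)^{(0)}\propto\scS(\eta,\eta)^2$ and so vanishes identically, for instance, for chiral spinors in Lorentzian signature.
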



\subsection{Hermitian spinorial forms in odd dimensions}


In this section, we present the characterization of Hermitian spinorial forms associated with irreducible Hermitian spinor bundles when $M$ is odd-dimensional. The theory is completely analogous to the even-dimensional case considered in the previous subsection, and thus we will omit explicit proofs. The main difference in the odd-dimensional case is the use of the \emph{truncated Kähler-Atiyah model}, as reviewed in Section \ref{sec:spinorialforms} and explained in detail in \cite{Algebraic_Complex_Square_2025,LBC13,Sha24}. Let $(S,\gamma_\ell,\scS)$ be an irreducible Hermitian spinor bundle on $(M,g)$. The admissible Hermitian pairing $\scS$ allows us to construct pointwise extensions to $M$ of the Hermitian square maps $\hcE_\kappa\colon\Sigma\to\End(\Sigma)$ and the Hermitian square spinor maps $\hcE^\kappa_\ell \colon \Sigma \to \wedge^<V^*_\C$ of Section \ref{sec:spinorialforms}. For simplicity in the exposition, we denote these by the same symbol, that is: 
\begin{eqnarray*}
\hcE_\kappa\colon S\to\End(S),\qquad \hcE^\kappa_\ell\colon S\to\wedge^<T^*_\C M,
\end{eqnarray*}

\noindent
where $\wedge^<T^*_\C M$ is the natural pointwise extension of $\wedge^<V^*_\C$.

\begin{definition}
The \emph{Hermitian spinor square maps} of the Hermitian spinor bundle $(S,\gamma_\ell,\scS)$ are the maps $\hcE^\kappa_\ell\colon\Gamma(S)\to\Omega^<_\C(M)$, $\kappa\in\U(1)$, induced by $\hcE^\kappa_\ell$ on sections. The \emph{Hermitian square} of an irreducible complex spinor $\eta\in\Gamma(S)$ is the complex exterior form $\hcE^\kappa_\ell(\eta)\in\Omega^<_\C(M)$ for $\kappa\in\U(1)$. Elements in the image of $\hcE^\kappa_\ell$ are generically called \emph{Hermitian spinorial forms}.
\end{definition}

Let $(S,\gamma_\ell,\scS)$ be an \emph{irreducible} Hermitian spinor bundle on $(M,g)$, which is necessarily associated to a strong spin-c structure. Set $\cA:=\nabla^{g,A}-\cD\in\Omega^1(M,\End(S))$ and let: $$\fra:=(\Psi^<_\ell\otimes\Id_{T^*M})^{-1}(\cA)\in\Omega^1(M,\wedge^<T^*_\C M)$$ be the symbol of $\cA$. Recall that if $\cQ\in\Gamma(\End(S)\otimes W)$ for a vector bundle $W$ on $M$, then its symbol is defined as $\frq:=(\Psi^<_\ell\otimes\Id_W)^{-1}(\cQ)\in\Gamma(\wedge^<T^*_\C M\otimes W)$. Then we have the following result, whose proof is completely analogous to that of Theorem \ref{thm:even_constrained_parallel_spinors}.

\begin{thm}\label{thm:odd_constrained_parallel_spinors}
Let $(S,\gamma_\ell,\scS)$ be an irreducible Hermitian spinor bundle on $(M,g)$ of adjoint type $s\in\Z_2$. Let $\cA\in\Omega^1(M,\End(S))$ and $\cQ\in\Gamma(\End(S)\otimes W)$ for a vector bundle $W$ on $M$. Then there exists a nowhere vanishing constrained parallel spinor $\eta\in\Gamma(S)$ relative to $(\cA,\cQ)$ only if: \begin{itemize}
\item There exists a nowhere vanishing complex exterior form $\alpha\in\Omega^<_\C(M)$ with vanishing spinor class $\mathfrak{s}_{\gamma_{\ell}}(\alpha)$.
\item The complex exterior form $\alpha\in\Omega^<_\C(M)$ satisfies the following algebraic and differential equations: \begin{gather*}
\alpha\vee\alpha = 2^{\frac{d-1}{2}}\alpha^{(0)}\alpha,\qquad \alpha\vee\beta\vee\alpha=2^{\frac{d-1}{2}}(\alpha\vee\beta)^{(0)}\alpha,\qquad(\pi^{\frac{1-s}{2}}\circ\tau)(\bar\kappa\alpha) = \kappa \bar{\alpha},\\
\nabla^g\alpha=\fra\vee\alpha+\alpha\vee(\pi^{\frac{1-s}{2}}\circ\tau)(\overline{\fra}),\qquad\frq\vee\alpha=0
\end{gather*} 
for a complex exterior form $\beta\in\Omega^<_\C(M)$ satisfying $(\alpha\vee\beta)^{(0)}\neq0$ or, equivalently, satisfies the equations: 
\begin{gather*}
\alpha \vee \beta \vee \alpha=2^{\frac{d-1}{2}}(\alpha\vee\beta)^{(0)}\alpha,\qquad(\pi^{\frac{1-s}{2}}\circ\tau)(\bar\kappa\alpha) = \kappa \bar{\alpha},\\
\nabla^g\alpha=\fra\vee\alpha+\alpha\vee(\pi^{\frac{1-s}{2}}\circ\tau)(\overline{\fra}),\qquad\frq\vee\alpha=0
\end{gather*} 
for every complex exterior form $\beta\in\Omega^<_\C(M)$.
\end{itemize}
    
The complex exterior form $\alpha\in\Omega^<_\C(M)$ as above is determined by $\eta\in\Gamma(S)$ through the relation: $$\alpha=\hcE^\kappa_\ell(\eta)$$ for some $\kappa\in\U(1)$. Moreover, $\alpha\in\Omega^<_\C(M)$ satisfying the algebraic equations of the theorem determines a nowhere vanishing spinor $\eta\in\Gamma(S)$, unique up to a $\U(1)$-valued function, satisfying the relation $\alpha=\hcE^\kappa_\ell(\eta)$.
\end{thm}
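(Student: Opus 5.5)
The plan is to mirror the proof of Theorem \ref{thm:even_constrained_parallel_spinors} line by line. Throughout, the Kähler-Atiyah isomorphism $\Psi_\gamma$ is replaced by the truncated isomorphism $\Psi^<_\ell\colon(\wedge^<T^*_\C M,\vee)\to(\End(S),\circ)$ of \eqref{eq:truncated_iso}, and the geometric product $\diamond$ is replaced by $\vee$. The argument has four ingredients, handled in the following order.

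\textbf{The spinor class.} First I would note that the construction of $\mathfrak{s}_{\gamma_\ell}(\alpha)$ in Proposition \ref{prop:spin_cohomology_class} uses only one fact: the projectivized square map $\P(S)\to\P(\wedge^<T^*_\C M)$ is injective onto its image. This holds because $\hcE_\kappa(\eta)=\kappa\,\eta\otimes\scS(-,\eta)$ determines $\eta$ up to a unit complex number, and $\Psi^<_\ell$ is an isomorphism. So the obstruction class is defined verbatim. The existence of a global $\eta$ with $\hcE^\kappa_\ell(\eta)=\alpha$ forces the line subbundle $L_\alpha$ to be trivial, giving $\mathfrak{s}_{\gamma_\ell}(\alpha)=0$.

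\textbf{The algebraic equations.} Next, the algebraic equations follow by applying Theorem \ref{thm_Hermitian_square_odd} fiberwise. The constraint $\frq\vee\alpha=0$ follows from Lemma \ref{lemma:constrainedspinorodd} applied pointwise, with $W$ handled componentwise in a local frame of $W$. The remark that $\eta_p=0$ if and only if $\alpha_p=0$ gives the nowhere-vanishing of $\alpha$.

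\textbf{The differential equation.} This is the only step needing some care. I would repeat the computation of Lemma \ref{lemma:nabla_square_alpha}: with $E_\eta=\hcE_\kappa(\eta)$ and $\nabla^{g,A}\eta=\cA(\eta)$, metric compatibility of $\nabla^{g,A}$ with $\scS$ gives
$$\nabla^{g,A}E_\eta=\cA\circ E_\eta+E_\eta\circ\cA^\dagger.$$
I would then apply $(\Psi^<_\ell)^{-1}$, using two facts.
\begin{itemize}
\item The odd analogue of Proposition \ref{prop:nabla_compatible_Psi_gamma}. It holds because $\nabla^g$ acts by derivations of $\vee$ and commutes with $\cP_\ell$: the Hodge star and $\tau$ are $\nabla^g$-parallel, and the complex volume form is parallel.
\item The identity $(\Psi^<_\ell)^{-1}(\cA^\dagger)=(\pi^{\frac{1-s}{2}}\circ\tau)(\overline{\fra})$.
\end{itemize}
The second fact is the main point to check. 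Admissibility gives $\scS$-adjoint $=\gamma_\ell(\overline{(\pi^{\frac{1-s}{2}}\circ\tau)(z)})$ at the level of $\mCl$. One must verify that $\pi^{\frac{1-s}{2}}\circ\tau$ and conjugation are compatible with the truncation, i.e.\ that they commute with $\cP_<\circ\cP_\ell$ on $\wedge^<$. Since $\pi$, $\tau$ and conjugation all preserve degree, they preserve $\wedge^<$, and truncation is just projection to low degrees. The worry is whether they map $\wedge_\ell$ to $\wedge_\ell$, since conjugation and $\tau$ may send $\nu_\C$ to $\pm\nu_\C$. I would resolve this by computing directly with the representative $\iota_\ell\cP_\ell(\fra)$: the adjoint is computed in $\mCl$ and then pushed through $\gamma_\ell$. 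If the map sends $\nu_\C$ to $\pm\nu_\C$, the adjoint formula still holds after re-projecting, because $\gamma_\ell$ kills the other summand and $\Psi^<_\ell$ only sees the $\wedge^<$ part. This is the analogue of the statement cited from \cite[Thm.\ 4.4]{Algebraic_Complex_Square_2025} for the odd Hermitian transpose.

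\textbf{Converse statement.} Finally, the claim that $\alpha$ determines $\eta$ up to a $\U(1)$-valued function follows from fiberwise injectivity modulo phase, combined with the vanishing of the spinor class, which provides a global nowhere-vanishing section of $L_\alpha$.
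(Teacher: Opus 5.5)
This is correct and is the paper's own argument: the paper simply declares the proof ``completely analogous'' to Theorem \ref{thm:even_constrained_parallel_spinors}, meaning the spinor class from the projectivized square map, the algebraic equations and $\frq\vee\alpha=0$ fiberwise from Theorem \ref{thm_Hermitian_square_odd} and Lemma \ref{lemma:constrainedspinorodd}, and the differential equation from $\nabla^{g,A}E_\eta=\cA\circ E_\eta+E_\eta\circ\cA^\dagger$ together with Proposition \ref{prop:nabla_compatible_Psi_gamma}, which the paper already states for $\Psi^\bullet_\gamma$ in both parities. The step you flag is actually immediate: $\gamma_\ell\circ\Psi\circ\cP_\ell=\gamma_\ell\circ\Psi$ gives $\Psi^<_\ell=\Psi_\ell\vert_{\wedge^<V^*_\C}$, and admissibility for all $z\in\mCl(V^*,h^*)$ forces $z\mapsto\overline{(\pi^{\frac{1-s}{2}}\circ\tau)(z)}$ to fix $\nu_\C$ (it must preserve $\ker\gamma_\ell$), so $(\Psi^<_\ell)^{-1}(\cA^\dagger)=(\pi^{\frac{1-s}{2}}\circ\tau)(\overline{\fra})$ follows directly; in your hypothetical case $\nu_\C\mapsto-\nu_\C$ the re-projection would give $0$ rather than the adjoint, so admissibility rules that case out rather than your re-projection argument handling it.
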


If we do not worry about studying constrained parallel spinors of a fixed irreducible Hermitian spinor bundle, we obtain the odd-dimensional analog of Corollary \ref{cor:even_constrained_parallel_spinors}.

\begin{cor}
\label{cor:odd_constrained_parallel_spinors}
Let $(M,g)$ be a strongly spin-c pseudo-Riemannian manifold and let $\fra\in\Omega^1(M,\wedge^< T^*_\C M)$ and $\frq\in\Gamma(\wedge^< T^*_\C M\otimes W)$ be given. Then $(M,g)$ admits a nowhere vanishing constrained parallel spinor relative to a pair $(\cA,\cQ)$ whose symbol is $(\fra,\frq)$ only if there exists a nowhere vanishing complex exterior form $\alpha\in\Omega^<_\C(M)$ satisfying the following algebraic and differential equations: 
\begin{gather*}
\alpha\vee\alpha = 2^{\frac{d-1}{2}}\alpha^{(0)}\alpha,\qquad \alpha\vee\beta\vee\alpha=2^{\frac{d-1}{2}}(\alpha\vee\beta)^{(0)}\alpha,\qquad(\pi^{\frac{1-s}{2}}\circ\tau)(\bar\kappa\alpha) = \kappa \bar{\alpha},\\
\nabla^g\alpha=\fra\vee\alpha+\alpha\vee(\pi^{\frac{1-s}{2}}\circ\tau)(\overline{\fra}),\qquad\frq\vee\alpha=0
\end{gather*} 
for a complex exterior form $\beta\in\Omega^<_\C(M)$ satisfying $(\alpha\vee\beta)^{(0)}\neq0$.
\end{cor}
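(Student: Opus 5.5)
The strategy is to derive Corollary \ref{cor:odd_constrained_parallel_spinors} from Theorem \ref{thm:odd_constrained_parallel_spinors}, in the same way Corollary \ref{cor:even_constrained_parallel_spinors} follows from Theorem \ref{thm:even_constrained_parallel_spinors}. First we fix the data. Since $(M,g)$ is strongly spin-c, it carries a strong spin-c structure $Q$. We form the associated irreducible complex spinor bundle $(S,\gamma_\ell)=Q\times_\varrho\Sigma$ for the given $\ell\in\Z_2$. By Proposition \ref{prop:Hermitianexistence}, this bundle admits an admissible Hermitian pairing $\scS$ of some adjoint type $s$, so $(S,\gamma_\ell,\scS)$ is an irreducible Hermitian spinor bundle. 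We also choose a connection $A$ on the characteristic $\U(1)$-bundle $P$, which gives the spin-c connection $\nabla^{g,A}$.

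Next, suppose there is a nowhere vanishing constrained parallel spinor $\eta\in\Gamma(S)$ relative to some pair $(\cA,\cQ)$ with symbol $(\fra,\frq)$. That is, $\cA=(\Psi^<_\ell\otimes\Id_{T^*M})(\fra)$ and $\cQ=(\Psi^<_\ell\otimes\Id_W)(\frq)$, which makes sense because $\Psi^<_\ell$ is an isomorphism of algebra bundles. We set $\alpha:=\hcE^\kappa_\ell(\eta)$ for a fixed $\kappa\in\U(1)$. Theorem \ref{thm:odd_constrained_parallel_spinors} then gives the equations directly:
\begin{itemize}
\item the algebraic identities hold pointwise by Theorem \ref{thm_Hermitian_square_odd} applied fibrewise;
\item $\frq\vee\alpha=0$ follows from Lemma \ref{lemma:constrainedspinorodd};
\item the differential equation $\nabla^g\alpha=\fra\vee\alpha+\alpha\vee(\pi^{\frac{1-s}{2}}\circ\tau)(\overline{\fra})$ follows from the odd analogue of Lemma \ref{lemma:nabla_square_alpha}.
\end{itemize}
The differential equation is proved by differentiating $E_\eta=\kappa\,\eta\otimes\scS(-,\eta)$ with $\nabla^{g,A}$, using compatibility of $\nabla^{g,A}$ with $\scS$ to obtain $\nabla^{g,A}E_\eta=\cA\circ E_\eta+E_\eta\circ\cA^\dagger$. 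One then applies $(\Psi^<_\ell)^{-1}$, using that $\nabla^g$ acts by derivations of $\vee$, the odd case of Proposition \ref{prop:nabla_compatible_Psi_gamma}, and that the $\scS$-adjoint dequantizes to $(\pi^{\frac{1-s}{2}}\circ\tau)(\overline{\fra})$. Finally, $\alpha$ is nowhere vanishing because $\mathrm{supp}(\alpha)=\mathrm{supp}(\eta)$ pointwise and $\eta$ is nowhere vanishing.

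The vanishing spinor class in the theorem is simply dropped, since the corollary only asks for necessary conditions on $\alpha$. The existence of some $\beta$ with $(\alpha\vee\beta)^{(0)}\neq0$ is part of the pointwise algebraic characterization. Explicitly, one can take $\beta$ to be the dequantization of $\kappa^{-1}\chi\otimes\scS(-,\chi')$ locally, or use item (c) of Theorem \ref{thm_Hermitian_square_odd} with a suitable local $\beta$.

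The main obstacle is the adjoint identity in the truncated model. We need $(\Psi^<_\ell)^{-1}(\cA^\dagger)=(\pi^{\frac{1-s}{2}}\circ\tau)(\overline{\fra})$, with both sides lying in $\wedge^<$. This is not automatic, because $\tau$ and $\pi$ need not commute with the projection $\cP_\ell$: $\tau(\nu_\C)$ may equal $\pm\nu_\C$. I would handle this by checking that the admissibility condition for $\scS$ is stated for $z\in\mCl$ through $\gamma_\ell$, and that $\Psi^<_\ell(\theta)=\gamma_\ell(\Psi(\theta))$ for $\theta\in\wedge^<$, since $\Psi_\ell\circ\iota_\ell\circ\cP_\ell=\Psi_\ell$ on $\wedge^<$. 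Admissibility then gives $\cA^\dagger=\gamma_\ell\bigl(\Psi((\pi^{\frac{1-s}{2}}\circ\tau)(\overline{\fra}))\bigr)$. Because $\pi,\tau$ preserve degree, the form $(\pi^{\frac{1-s}{2}}\circ\tau)(\overline{\fra})$ stays in $\wedge^<$, so it is exactly the dequantization required.
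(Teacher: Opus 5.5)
Your proposal is correct and follows the paper's route. The corollary is read off from Theorem \ref{thm:odd_constrained_parallel_spinors}, applied to the bundle carrying $\eta$, which is associated to a strong spin-c structure and so admits an admissible $\scS$ by Proposition \ref{prop:Hermitianexistence}. That theorem is in turn the odd analogue of the even argument via Theorem \ref{thm_Hermitian_square_odd}, Lemma \ref{lemma:constrainedspinorodd} and the computation of Lemma \ref{lemma:nabla_square_alpha}. Your observation that $\Psi^<_\ell=\gamma_\ell\circ\Psi$ on $\wedge^<$, so that the $\scS$-adjoint of $\cA$ dequantizes to $(\pi^{\frac{1-s}{2}}\circ\tau)(\overline{\fra})$, fills in a step the paper leaves implicit.

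The one point to tighten is that $\beta$ must be a global form, and you only give a local choice. You can get a global one directly. Since $\eta$ is nowhere vanishing and $\scS$ is non-degenerate, there is a global $\chi\in\Gamma(S)$ with $\scS(\chi,\eta)=1$. Then $\beta:=(\Psi^<_\ell)^{-1}(\chi\otimes\scS(-,\chi))$ gives $(\alpha\vee\beta)^{(0)}=2^{-\frac{d-1}{2}}\kappa$ everywhere on $M$.
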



\section{Parallel complex-bilinear spinorial forms}
\label{sec:Complex_bilinear_parallel_forms}


In Section \ref{sec:Hermitian_parallel_forms}, we have translated differential equations on an irreducible complex spinor into differential equations on its Hermitian square. However, these equations give only \emph{necessary} conditions. This is because, as we explained in Section \ref{sec:spinorialforms}, the Hermitian square of a spinor determines it only up to phase, while the complex-bilinear square determines it up to sign. In this section, we obtain \emph{necessary and sufficient} conditions on a complex-bilinear spinorial form for it to be the complex-bilinear square of an irreducible complex spinor parallel under a general connection on the spinor bundle. As in the Hermitian case, we will consider separately the cases where the dimension $d$ of $M$ is even or odd.


\subsection{Complex-bilinear spinorial forms in even dimensions}


As in Section \ref{sec:Hermitian_parallel_forms}, we consider the Kähler-Atiyah bundle $(\wedge T^*_\C M,\diamond)$ of $(M,g)$ as a model for the Clifford algebra $\mCl(M,g)$. We will also consider bundles of irreducible complex Clifford modules $(S,\gamma)$ on $(M,g)$, which we identify with bundles of modules $(S,\Psi_\gamma)$ over the Kähler-Atiyah bundle $(\wedge T^*_\C M,\diamond)$, where: 
\begin{eqnarray*}
\Psi_\gamma=\gamma\circ\Psi\colon(\wedge T^*_\C M,\diamond)\to(\End(S),\circ)    
\end{eqnarray*}

is the composition of the Chevalley-Riesz bundle map $\Psi\colon(\wedge T^*_\C M,\diamond)\to\mCl(M,g)$ and the unital morphism of bundles of algebras $\gamma\colon\mCl(M,g)\to\End(S)$.\medskip

Let $(S,\gamma,\scB)$ be a complex-bilinear paired spinor bundle on $(M,g)$. Recall that $\scB$ is a complex-bilinear pairing taking values in the characteristic line bundle $\cL$ canonically associated to the spin-c structure $Q$ in terms of which we have $S = Q \times_{\varrho} \Sigma$. The admissible complex-bilinear pairing $\scB$ allows us to construct global extensions to $M$ of the complex-bilinear square map $\cE\colon\Sigma\to\End(\Sigma)$ and the complex-bilinear square spinor map $\cE_\gamma\colon\Sigma\to\wedge V^*_\C$ of Section \ref{sec:spinorialforms}. However, these global maps are decorated with a complex line bundle $\cL$, which is absent in the algebraic theory laid out in Section \ref{sec:spinorialforms} but is crucial to develop the theory globally. In particular, we have:
\begin{equation*}
\cE\colon S\to\End(S)\otimes\cL,\qquad \cE_\gamma\colon S\to\wedge T^*_\C M\otimes\cL.
\end{equation*}

Furthermore, we have the following diagram of vector bundles: $$\begin{tikzcd}
S \arrow[rr, "\mathcal{E}"] \arrow[dd, "\mathcal{E}_\gamma"'] &  & \mathrm{End}(S)\otimes\cL                                   \\
                                                                                                &  &                                                   \\
{(\wedge T^*_{\mathbb{C}}M\otimes\cL,\diamond)} \arrow[rr, "\Psi\otimes\Id_{\cL}"] \arrow[rruu, "\Psi_\gamma\otimes\Id_{\cL}"]            &  & {\mathbb{C}\mathrm{l}(M,g)\otimes\cL} \arrow[uu, "\gamma\otimes\Id_{\cL}"']
\end{tikzcd}$$

which extends to maps of sections that we denote by the same symbol.

\begin{definition}
    The \emph{complex-bilinear spinor square map} of the complex-bilinear paired spinor bundle $(S,\gamma,\scB)$ is the map $\cE_\gamma\colon\Gamma(S)\to\Omega_\C(M,\cL)$ induced by $\cE_\gamma$ on sections. The \emph{complex-bilinear square} of an irreducible complex spinor $\eta\in\Gamma(S)$ is the $\cL$-valued complex exterior form $\cE_\gamma(\eta)\in\Omega_\C(M,\cL)$. Elements in the image of $\cE_\gamma$ are generically called \emph{complex-bilinear spinorial forms}.
\end{definition}

Let $(S,\gamma,\scB)$ be a complex-bilinear paired spinor bundle on $(M,g)$. Recall that if $\cQ\in\Gamma(\End(S)\otimes W)$, then its symbol is defined as $\frq=(\Psi_\gamma\otimes\Id_W)^{-1}(\cQ)\in\Gamma(\wedge T^*_\C M\otimes W)$. Lemma \ref{lemma:constrainedspinoreven} and Remark \ref{remark:equivalent_constrained_eq} imply the following.

\begin{lemma}\label{lemma:cx_bilinear_constrained_spinor}
    A spinor $\eta\in\Gamma(S)$ satisfies $\cQ(\eta)=0$ if and only if one of the following equivalent equations holds: $$\frq\diamond\alpha=0,\qquad\alpha\diamond(\pi^{\frac{1-s}{2}}\circ\tau)(\frq)=0,$$ where $\alpha=\cE_\gamma(\eta)\in\Omega_\C(M,\cL)$. These are equations as sections of $\Omega_\C(M,W\otimes\cL)$.
\end{lemma}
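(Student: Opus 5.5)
The plan is to reduce the statement to the algebraic Lemma \ref{lemma:constrainedspinoreven} and Remark \ref{remark:equivalent_constrained_eq}, applied pointwise. The only new feature is that $\alpha=\cE_\gamma(\eta)$ now takes values in $\wedge T^*_\C M\otimes\cL$, so some care is needed with the line bundle. Since $\cQ(\eta)=0$ is a pointwise condition, it suffices to verify the claimed equivalences fibrewise at each $m\in M$. At $m$ we choose a unit trivialization $\cL_m\cong\C$. This identifies $\scB_m$ with an admissible complex-bilinear pairing on the Clifford module $(S_m,\gamma_m)$ of symmetry type $\sigma$ and adjoint type $s$. It also identifies $\alpha_m$ with the algebraic complex-bilinear square $\cE_{\gamma_m}(\eta_m)\in\wedge T^*_{m,\C}M$ of Section \ref{sec:spinorialforms}.

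Next we handle $W$. Choose a local frame $\{w_a\}$ of $W$ and write $\cQ=\sum_a \cQ_a\otimes w_a$, hence $\frq=\sum_a\frq_a\otimes w_a$ with $\frq_a=\Psi_\gamma^{-1}(\cQ_a)$. Then $\cQ(\eta)=0$ holds if and only if $\cQ_a(\eta)=0$ for every $a$. Likewise, $\frq\diamond\alpha=\sum_a(\frq_a\diamond\alpha)\otimes w_a$ lies in $\Omega_\C(M,W\otimes\cL)$, and it vanishes if and only if each $\frq_a\diamond\alpha=0$. So we may reduce to a single endomorphism $Q=\cQ_{a,m}$ with dequantization $\frq_{a,m}$.

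With this reduction, Lemma \ref{lemma:constrainedspinoreven} gives $Q(\eta_m)=0\iff \frq_{a,m}\diamond\cE_{\gamma_m}(\eta_m)=0$. Remark \ref{remark:equivalent_constrained_eq} then gives the second equivalence, with $\alpha\diamond(\pi^{\frac{1-s}{2}}\circ\tau)(\frq_{a,m})=0$; this uses that $\alpha_m^t=\sigma\alpha_m$ and that transposition is an anti-automorphism. For intuition: $\frq\diamond\alpha$ corresponds to $Q\eta\otimes\scB(-,\eta)$, which vanishes iff $Q\eta=0$ by non-degeneracy of $\scB$.

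It remains to check that the trivialization is harmless. Both sides are $\C$-linear in the $\cL_m$ factor, and changing the trivialization multiplies $\alpha_m$ by a nonzero scalar. This rescaling does not affect the vanishing of either equation. The only genuinely delicate point is this bookkeeping: one must make sure that $\diamond$ acts only on the form factor, so that the resulting identities are correctly interpreted as sections of $\Omega_\C(M,W\otimes\cL)$. No analytic obstacle arises.
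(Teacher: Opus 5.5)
Your proposal is correct and follows the paper's route: the paper obtains this lemma directly as a pointwise consequence of Lemma~\ref{lemma:constrainedspinoreven} and Remark~\ref{remark:equivalent_constrained_eq}. Your trivialization of $\cL_m$, the frame of $W$, and the transpose argument via $\alpha^t=\sigma\alpha$ simply spell out the bookkeeping that the paper leaves implicit.
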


Since $(S,\gamma,\scB)$ is an irreducible complex-bilinear paired spinor bundle on $(M,g)$, it is necessarily associated to a strong spin-c structure \cite{FriedrichTrautman,LS18}, and thus we can write $\cA:=\nabla^{g,A}-\cD\in\Omega^1(M,\End(S))$ upon choosing a connection $A$ on $\cL$. Let: $$\fra:=(\Psi_\gamma\otimes\Id_{T^*M})^{-1}(\cA)\in\Omega^1(M,\wedge T^*_\C M)$$ be the symbol of $\cA$. We have:

\begin{lemma}
\label{lemma:complex_bilinear_parallel_spinor}
A nowhere vanishing spinor $\eta\in\Gamma(S)$ satisfies $\cD\eta=0$ if and only if: 
\begin{equation}
\label{eq:complex_bilinear_parallel_spinor}
\nabla^{g,A}\alpha=\fra\diamond\alpha+\alpha\diamond(\pi^{\frac{1-s}{2}}\circ\tau)(\fra) \in \Omega^1(M,\wedge T^*_\C M\otimes\cL),
\end{equation} 
where $\alpha=\cE_\gamma(\eta)\in\Omega_\C(M,\cL)$ and $\nabla^{g,A} = \nabla^g\otimes\nabla^{A}$.  
\end{lemma}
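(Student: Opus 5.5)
The forward direction mirrors the proof of Lemma \ref{lemma:nabla_square_alpha}, now working in $\End(S)\otimes\cL$. Write $\cD = \nabla^{g,A}-\cA$ and set $E_\eta := \cE(\eta) = \eta\otimes\scB(-,\eta)\in\Gamma(\End(S)\otimes\cL)$, with $\nabla := \nabla^{g,A}\otimes\nabla^A$ as in Proposition \ref{prop:L-valued_compatibility}. For any $\chi\in\Gamma(S)$ I will compute
\begin{equation*}
(\nabla_X E_\eta)\chi=\nabla_X(\scB(\chi,\eta)\eta)-\scB(\nabla^{g,A}_X\chi,\eta)\eta .
\end{equation*}
The compatibility of $\scB$ with $\nabla^{g,A}$ and $\nabla^A$ gives $\nabla^A_X\scB(\chi,\eta)=\scB(\nabla^{g,A}_X\chi,\eta)+\scB(\chi,\nabla^{g,A}_X\eta)$, so
\begin{equation*}
(\nabla_X E_\eta)\chi=\scB(\chi,\nabla^{g,A}_X\eta)\eta+\scB(\chi,\eta)\nabla^{g,A}_X\eta .
\end{equation*}
If $\nabla^{g,A}\eta=\cA(\eta)$, admissibility gives $\scB(\chi,\cA_X\eta)=\scB(\cA_X^t\chi,\eta)$, where $\cA^t$ is the $\scB$-transpose, whose symbol is $(\pi^{\frac{1-s}{2}}\circ\tau)(\fra)$ by Remark \ref{remark:equivalent_constrained_eq}. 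Hence $\nabla E_\eta=\cA\circ E_\eta+E_\eta\circ\cA^t$. Applying $(\Psi_\gamma\otimes\Id_\cL)^{-1}$ and using Proposition \ref{prop:L-valued_compatibility}, together with the fact that this map intertwines composition with $\diamond$, yields \eqref{eq:complex_bilinear_parallel_spinor}.

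For the converse, I use the same computation without assuming anything about $\eta$. Set $\xi:=\cD\eta=\nabla^{g,A}\eta-\cA(\eta)\in\Omega^1(M,S)$. Subtracting $\cA\circ E_\eta+E_\eta\circ\cA^t$ from the identity above, the hypothesis becomes
\begin{equation*}
\scB(\chi,\xi_X)\,\eta+\scB(\chi,\eta)\,\xi_X=0\qquad\text{for all }\chi,\ X .
\end{equation*}
This is a pointwise linear-algebra statement, and the point is to show that it forces $\xi=0$ when $\eta\neq0$. This is the key step, and it is where nowhere vanishing enters.

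Fix a point and $X$. Since $\scB$ is non-degenerate and $\eta\neq0$, choose $\chi$ with $\scB(\chi,\eta)=1$. Then $\xi_X=-\scB(\chi,\xi_X)\eta$, so $\xi_X=c\,\eta$ for some $c\in\C$. Substituting back gives $2c\,\scB(\chi,\eta)\eta=0$ for all $\chi$, and non-degeneracy again forces $c=0$. Here $\C$ has characteristic zero, so the factor $2$ is harmless. Therefore $\xi=0$, that is, $\cD\eta=0$.

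Notice that the symmetry type $\sigma$ plays no role, because only the second slot is used. This contrasts with the Hermitian case, where a phase $i c$ could survive; that is exactly why Lemma \ref{lemma:nabla_square_alpha} is only a necessary condition.

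The main obstacle is the bookkeeping of the $\cL$-twist. It requires checking that the derivative of $\scB(\chi,\eta)$ is taken with $\nabla^A$, and that Proposition \ref{prop:L-valued_compatibility} applies to $E_\eta$, which has the form $\End(S)\otimes\cL$. Both points reduce to the stated parallelism of $\scB$ under $\nabla^{g,A}\otimes\nabla^A$.
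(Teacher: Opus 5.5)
Your proposal is correct and follows the paper's proof essentially step by step. The forward direction uses the same computation, giving $\nabla E_\eta=\cA\circ E_\eta+E_\eta\circ\cA^t$ and then transporting through $(\Psi_\gamma\otimes\Id_{\cL})^{-1}$. The converse is the same pointwise argument: $\cD_X\eta$ must be a multiple of $\eta$, and that multiple vanishes by non-degeneracy of $\scB$. Your version is slightly more explicit, since you choose $\chi$ with $\scB(\chi,\eta)\neq0$ in a local trivialization of $\cL$ and keep track of the factor $2$ that the paper silently drops.
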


\begin{proof}
    Assume that the spinor $\eta$ satisfies $\nabla^{g,A}\eta=\cA(\eta)$. Set $E_\eta:=\cE(\eta)=\eta\otimes\scB(-,\eta)\in\Gamma(\End(S)\otimes\cL)$ and $\nabla:=\nabla^{g,A}\otimes\nabla^{A}$. Then: \begin{align*}
        (\nabla E_\eta)\chi&=\nabla(E_\eta(\chi))-E_\eta(\nabla^{g,A}\chi)\\
        &=\nabla(\eta\otimes\scB(\chi,\eta))-\eta\otimes\scB(\nabla^{g,A}\chi,\eta)\\
        &=\nabla^{g,A}\eta\otimes\scB(\chi,\eta)+\eta\otimes\nabla^{A}(\scB(\chi,\eta))-\eta\otimes\scB(\nabla^{g,A}\chi,\eta)\\
        &=\nabla^{g,A}\eta\otimes\scB(\chi,\eta)+\eta\otimes\scB(\chi,\nabla^{g,A}\eta)\\
        &=\cA(\eta)\otimes\scB(\chi,\eta)+\eta\otimes\scB(\chi,\cA(\eta))\\
        &=\cA(\eta)\otimes\scB(\chi,\eta)+\eta\otimes\scB(\cA^t(\chi),\eta)
    \end{align*} for all $\chi\in\Gamma(S)$, where $\cA^t$ denotes the adjoint of $\cA$ taken with respect to $\scB$. The equation above implies: \begin{equation}\label{eq:cov_dev_E_eta}
        (\nabla^{g,A}\otimes\nabla^{A})E_\eta=(\cA\otimes\Id_{\cL})\circ E_\eta+E_\eta\circ\cA^t.
    \end{equation}
    
    Set $\alpha=\cE_\gamma(\eta)=(\Psi_\gamma\otimes\Id_{\cL})^{-1}\circ E_\eta$. Taking $(\Psi_\gamma\otimes\Id_{\cL})^{-1}$ on Equation \eqref{eq:cov_dev_E_eta} and using Proposition \ref{prop:L-valued_compatibility} we obtain Equation \eqref{eq:complex_bilinear_parallel_spinor}.\medskip
    
    Conversely, assume that $\alpha\in\Omega_\C(M,\cL)$ satisfies Equation \eqref{eq:complex_bilinear_parallel_spinor}. Applying $\Psi_\gamma\otimes\Id_{\cL}$ to it gives Equation \eqref{eq:cov_dev_E_eta}, which reads: $$\cD_X\eta\otimes\scB(\chi,\eta)+\eta\otimes\scB(\chi,\cD_X\eta)=0$$ for all $\chi\in\Gamma(S)$ and $X\in\Gamma(TM)$. Hence $\cD_X\eta=\beta(X)\eta$ for some $\beta\in\Omega^1_\C(M)$. Plugging this into the above expression gives us: $$\beta(X)\eta\otimes\scB(\chi,\eta)=0$$ for all $\chi\in\Gamma(S)$ and $X\in\Gamma(TM)$. This implies $\beta=0$ since $\scB$ is non-degenerate and $\eta\in\Gamma(S)$ is nowhere vanishing. Hence $\cD\eta=0$.
\end{proof}

To globalize Theorem \ref{thm:bilinearsquare}, we need to acknowledge that the complex-bilinear square of an irreducible complex spinor $\eta\in\Gamma(S)$ is a $\cL$-valued complex exterior form $\alpha=\cE_\gamma(\eta)\in\Omega_\C(M,\cL)$. To deal with this fact, we extend the geometric product $\diamond$ trivially along $\cL$, so given two forms $\alpha,\beta \in \Omega_{\C}(M,\cL)$ we have:
\begin{equation*}
\alpha \diamond \beta \in \Omega_\C(M,\cL^{\otimes 2}).
\end{equation*}

Therefore, the algebraic equations of Theorem \ref{thm:bilinearsquare} now read as follows: $$\alpha\diamond\alpha = 2^{\frac{d}{2}}\alpha^{(0)}\otimes\alpha,\qquad \alpha\diamond\beta\diamond\alpha=2^{\frac{d}{2}}(\alpha\diamond\beta)^{(0)}\otimes\alpha,\qquad (\pi^{\frac{1-s}{2}}\circ\tau)(\alpha)=\sigma\alpha$$ for a $\cL$-valued complex exterior form $\beta\in\Omega_\C(M,\cL)$ satisfying $(\alpha\diamond\beta)^{(0)}\neq0$. Note the following: \begin{itemize}
    \item Since $\alpha^{(0)}\in\Omega_\C^0(M,\cL)=\Gamma(\cL)$, the first equation is as sections of $\Omega_\C(M,\cL^{\otimes2})$.
    \item Since $(\alpha\diamond\beta)^{(0)}\in\Omega_\C^0(M,\cL^{\otimes2})=\Gamma(\cL^{\otimes2})$, the second equation is as sections of $\Omega_\C(M,\cL^{\otimes3})$.
\end{itemize}

We proceed now to associate a cohomology class in $H^1(M,\mathbb{Z}_2)$ to every nowhere vanishing element $\alpha \in \Omega_{\C}(M,\cL)$. Given $\alpha \in \Omega_{\C}(M,\cL)$ together with a good open cover $\left\{ U_a \right\}$ for some index $a$, choose local sections $\eta_a \colon U_a \to S$ such that $\cE_{\gamma}(\eta_a) = \alpha\vert_{U_a}$. This defines a family of functions:
\begin{equation*}
s_{ab} \colon U_a\cap U_b \to \mathbb{Z}_2
\end{equation*}

\noindent
defined by the condition $\eta_a = s_{ab} \eta_b$ for all indices $a$ and $b$ such that $U_a\cap U_b \neq \emptyset$. This family of functions can be seen to satisfy, by verifying the consistency of its defining relation on triple overlaps, that $s_{ab}s_{bc}s_{ca} = 1$. Hence, the collection of $s_{ab}$ defines a \u{C}ech cocycle with values in $\mathbb{Z}_2$. Furthermore, different choices of lifts define cohomologous cocycles, whence we obtain a well-defined class in $H^1(M,\mathbb{Z}_2)$, which we denote by $\mathfrak{s}_{\gamma}(\alpha)$ and refer to as the $\mathbb{Z}_2$-\emph{spinor class} of $\alpha$. Clearly, we have the following.

\begin{lemma}
\label{lemma:Z2spinorclass}
Let $(S,\gamma,\scB)$ be a complex-bilinear paired spinor bundle on $(M,g)$ and $\alpha \in \Omega_{\C}(M,\cL)$. There exists a nowhere vanishing spinor $\eta\in\Gamma(S)$ such that $\alpha = \cE_{\gamma}(\eta)$ if and only if $\mathfrak{s}_{\gamma}(\alpha) = 0$.
\end{lemma}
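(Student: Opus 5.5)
The plan is to make the \v{C}ech construction preceding the statement precise, and then identify $\mathfrak{s}_{\gamma}(\alpha)$ with the obstruction to gluing local square roots of $\alpha$. Implicitly $\alpha$ is assumed to be a nowhere vanishing complex-bilinear spinorial form, so that local lifts exist. First I would justify the existence of the local sections $\eta_a$. Over each contractible $U_a$ trivialize $S$ and $\cL$. The pointwise version of Theorem \ref{thm:bilinearsquare} says that each $\alpha_m$ lies in the image of $\cE_\gamma$. Since $\cE_\gamma(\eta)=\cE_\gamma(\eta')$ with $\eta\neq 0$ forces $\eta'=\pm\eta$, the map $\cE_\gamma\colon S\setminus\{0\}\to(\wedge T^*_\C M\otimes\cL)\setminus\{0\}$ is a two-to-one local diffeomorphism onto its image. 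It is a quadratic map with injective differential away from zero, by the same argument as the projectivized map in Proposition \ref{prop:spin_cohomology_class}. Hence $\cE_\gamma^{-1}(\alpha)\to M$ is a smooth double cover. Over the contractible sets $U_a$ this cover admits smooth sections $\eta_a$, which are nowhere vanishing.

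Next I would check the cocycle data. On $U_a\cap U_b$ we have $\cE_\gamma(\eta_a)=\cE_\gamma(\eta_b)$, so pointwise $\eta_a=\pm\eta_b$. By continuity and nowhere vanishing, this defines a locally constant $s_{ab}$. On triple overlaps we get $\eta_a=s_{ab}s_{bc}s_{ca}\eta_a$, hence the cocycle condition. Changing the lifts $\eta_a\mapsto\epsilon_a\eta_a$ with $\epsilon_a\in\Z_2$ changes $s_{ab}$ by the coboundary $\epsilon_a\epsilon_b$, and refining the cover changes nothing. So the class $\mathfrak{s}_\gamma(\alpha)\in H^1(M,\Z_2)$ is well defined; it is simply the class of the double cover $\cE_\gamma^{-1}(\alpha)$.

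Then I would prove the two implications. If $\alpha=\cE_\gamma(\eta)$ with $\eta$ global, take $\eta_a=\eta|_{U_a}$; then all $s_{ab}=1$, so the class vanishes. Conversely, if $\mathfrak{s}_\gamma(\alpha)=0$, write $s_{ab}=\epsilon_a\epsilon_b$ and replace $\eta_a$ by $\epsilon_a\eta_a$. The new local sections agree on overlaps and glue to a global nowhere vanishing $\eta$ with $\cE_\gamma(\eta)=\alpha$, since $\cE_\gamma$ is even.

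The only step requiring care is the first one, namely the smoothness of the local lifts. I would handle it through the double-cover argument above, using the fact that $\cE_\gamma(\eta)$ determines $\eta$ up to sign (Section \ref{sec:spinorialforms}). The remaining steps are routine.
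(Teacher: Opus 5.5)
Your proposal is correct and follows the paper's own argument. The paper defines $\mathfrak{s}_{\gamma}(\alpha)$ as the \u{C}ech class of the sign ambiguities $s_{ab}$ between local lifts and calls the lemma clear. Your cocycle, coboundary and gluing steps are exactly what it leaves implicit, and your double-cover argument supplies the smooth local lifts that the paper takes for granted.

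One phrase needs tightening: a two-to-one map with injective differential is a covering onto its image only if it is proper. Here properness follows from $\|\cE(\eta)\|\geq c\,|\eta|^2$, which holds because $\scB$ is non-degenerate. You can also avoid the issue entirely by writing local lifts explicitly. With $E:=(\Psi_\gamma\otimes\Id_{\cL})(\alpha)$, take $\eta=E\chi/\sqrt{\scB(\chi,E\chi)}$ for a local section $\chi$ with $E\chi\neq0$, a trivialization of $\cL$, and a local branch of the square root.
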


The $\mathbb{Z}_2$-spinor class of a nowhere vanishing form $\alpha \in \Im(\cE_{\gamma})$ can be made to vanish by tuning the complex-bilinear paired spinor bundle without changing $\cL$.

\begin{prop}
Let $(S_o,\gamma_o,\scB_o)$ be a complex-bilinear paired spinor bundle and let $\alpha \in \Im(\cE_{\gamma_o})$ be nowhere vanishing. Then, there exists a complex-bilinear paired spinor bundle $(S,\gamma,\scB)$ and a nowhere vanishing section $\eta \in \Gamma(S)$ such that $\cE_{\gamma}(\eta) = \alpha$.
\end{prop}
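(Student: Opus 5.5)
The plan is to mimic the proof of Proposition \ref{prop:spin_spinor_class=0}, replacing the $H^2(M,\Z)$-torsor structure by the $\Z_2$-twisting of spinor bundles by real line bundles. First, compute the $\Z_2$-spinor class $\mathfrak{s}_{\gamma_o}(\alpha)\in H^1(M,\Z_2)$. Choose a good cover $\{U_a\}$ and local lifts $\eta_a$ with $\cE_{\gamma_o}(\eta_a)=\alpha\vert_{U_a}$. These exist because $\alpha\in\Im(\cE_{\gamma_o})$: pointwise $\alpha$ is a square, and the lifts can be chosen smoothly on contractible opens. To see this, note that the fiberwise square map $S_o\setminus\{0\}\to\wedge T^*_\C M\otimes\cL$ is a two-to-one local diffeomorphism onto its image, since $\cE_\gamma(\eta)=\cE_\gamma(\eta')$ iff $\eta'=\pm\eta$. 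The resulting cocycle $s_{ab}$ defines a real line bundle, or equivalently a principal $\Z_2$-bundle, $L\to M$ with $w_1(L)=\mathfrak{s}_{\gamma_o}(\alpha)$.

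Next, set $S:=S_o\otimes_\R L\cong S_o\otimes_\C L_\C$, where $L_\C:=L\otimes\C$, and take $\gamma$ to be the trivial extension of $\gamma_o$. The key observation is that $L\otimes L\cong\underline{\R}$ canonically, via the transition functions $s_{ab}^2=1$ or a fiber metric. So we define
\begin{equation*}
\scB(\eta_1\otimes l_1,\eta_2\otimes l_2):=\scB_o(\eta_1,\eta_2)\,(l_1 l_2),
\end{equation*}
which is again $\cL$-valued. It is admissible with the same symmetry and adjoint types, because the $L$-factor is a real scalar on which Clifford multiplication acts trivially. In particular, $S$ has the same characteristic line bundle $\cL$, since $L_\C^{\otimes 2}$ is trivial. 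This is where we check that $(S,\gamma,\scB)$ is a complex-bilinear paired spinor bundle ``without changing $\cL$''. Under the identification $\End(S)\otimes\cL=\End(S_o)\otimes\cL$, one gets $\cE(\eta\otimes l)=\cE_o(\eta)\,l^2$, and hence $\cE_\gamma(\eta\otimes l)=\cE_{\gamma_o}(\eta)\,l^2$.

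Finally, glue a global section. In terms of the local trivializing sections $l_a$ of $L$ with $l_a=s_{ab}l_b$ and $l_a^2=1$, set $\eta:=\eta_a\otimes l_a$ on $U_a$. On overlaps we have $\eta_a\otimes l_a=(s_{ab}\eta_b)\otimes(s_{ab}l_b)=\eta_b\otimes l_b$, so $\eta$ is a global, nowhere vanishing section of $S$. Moreover $\cE_\gamma(\eta)\vert_{U_a}=\cE_{\gamma_o}(\eta_a)\,l_a^2=\alpha\vert_{U_a}$, so $\cE_\gamma(\eta)=\alpha$, and Lemma \ref{lemma:Z2spinorclass} confirms $\mathfrak{s}_\gamma(\alpha)=0$.

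The main obstacle is the first step: showing that the local lifts $\eta_a$ exist smoothly and that the sign ambiguity is exactly $\Z_2$ with locally constant transition functions. This relies on Theorem \ref{thm:bilinearsquare}, namely that the square determines the spinor up to sign, together with the fact that $\alpha$ is nowhere vanishing, so the two-sheeted cover of lifts is a genuine covering space over $M$. I would phrase this as: the set $\{\eta\in S_m:\cE_\gamma(\eta)=\alpha_m\}$, $m\in M$, forms a double cover of $M$ whose associated real line bundle is $L$. The remaining checks on admissibility of $\scB$ and the identification of $\cL$ are routine.
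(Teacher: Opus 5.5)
Your proposal is correct and is essentially the paper's argument. Tensoring $S_o$ with the complexification of the real line bundle $L$ whose cocycle is $s_{ab}$ is the same as the paper's twist $[x_{ab},z_{ab}]\mapsto[x_{ab},s_{ab}z_{ab}]$ of the spin-c cocycle, and it leaves $\cL$ unchanged because $s_{ab}^2=1$; your version also spells out two things the paper leaves implicit, namely the smooth existence of the local lifts and the construction of the twisted pairing $\scB$.
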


\begin{proof}
Let $\mathfrak{s}_{\gamma_o}(\alpha)$  be the $\mathbb{Z}_2$-spinor class defined by $\alpha$, and fix a spin-c structure $Q_o$ to which $(S_o,\gamma_o,\scB_o)$ is associated naturally. Denote by:
\begin{equation*}
[x_{ab} , z_{ab}]\colon U_a \cap U_b \to \Spin^c_o(p,q)
\end{equation*}

\noindent
a \u{C}ech cocycle for $Q_o$. Then:
\begin{equation}
\label{eq:twistedtransition}
[x_{ab} , s_{ab} z_{ab}]\colon U_a \cap U_b \to \Spin^c_o(p,q)
\end{equation}

\noindent
is again a $\Spin^c_o(p,q)$-valued \u{C}ech cocycle, and hence defines a $\Spin^c_o(p,q)$ structure $Q$ which has precisely the same characteristic bundle as $Q_o$. Via the associated bundle construction, $Q$ defines a complex-bilinear paired spinor bundle $(S,\gamma,\scB)$. If $\eta_a$ is the family of local sections of $S_o$ used to define the maps $s_{ab}\colon U_a\cap U_b \to \mathbb{Z}_2$, then it becomes a well-defined, nowhere vanishing section of $S$, since multiplying by $s_{ab}$ is acting with the structure functions of $S$ as determined in Equation \eqref{eq:twistedtransition}.
\end{proof}

We arrive at the final characterization of a constrained parallel spinor relative to $(\cA,\cQ)$ in terms of its associated complex-bilinear square.

\begin{thm}\label{thm:even_CB_CPS}
Let $(S,\gamma,\scB)$ be an irreducible complex-bilinear paired spinor bundle of symmetry type $\sigma\in\Z_2$ and adjoint type $s\in\Z_2$ on $(M,g)$. Let $\cA\in\Omega^1(M,\End(S))$ and $\cQ\in\Gamma(\End(S)\otimes W)$ for a vector bundle $W$ on $M$. Then the following statements are equivalent: 
\begin{enumerate}
\item[\normalfont(a)] There exists a nowhere vanishing constrained parallel spinor $\eta\in\Gamma(S)$ relative to $(\cA,\cQ)$.
\item[\normalfont(b)] There exists a nowhere vanishing $\cL$-valued complex exterior form $\alpha\in\Omega_\C(M,\cL)$ with vanishing $\mathbb{Z}_2$-spinor class $\mathfrak{s}_{\gamma}(\alpha)\in H^1(M,\Z_2)$ which satisfies the following algebraic and differential equations: \begin{gather*}
            \alpha\diamond\alpha=2^{\frac{d}{2}}\alpha^{(0)}\otimes\alpha,\qquad \alpha\diamond\beta\diamond\alpha=2^{\frac{d}{2}}(\alpha\diamond\beta)^{(0)}\otimes\alpha,\qquad (\pi^{\frac{1-s}{2}}\circ\tau)(\alpha)=\sigma\alpha,\\
            \nabla^{g,A}\alpha=\fra\diamond\alpha+\alpha\diamond(\pi^{\frac{1-s}{2}}\circ\tau)(\fra),\qquad \frq\diamond\alpha=0
        \end{gather*} for a $\cL$-valued complex exterior form $\beta\in\Omega_\C(M,\cL)$ satisfying $(\alpha\diamond\beta)^{(0)}\neq0$ or, equivalently, satisfies the equations: \begin{gather*}
            \alpha\diamond\beta\diamond\alpha=2^{\frac{d}{2}}(\alpha\diamond\beta)^{(0)}\otimes\alpha,\qquad (\pi^{\frac{1-s}{2}}\circ\tau)(\alpha)=\sigma\alpha,\\
            \nabla^{g,A}\alpha=\fra\diamond\alpha+\alpha\diamond(\pi^{\frac{1-s}{2}}\circ\tau)(\fra),\qquad \frq\diamond\alpha=0
        \end{gather*} for every $\cL$-valued complex exterior form $\beta\in\Omega_\C(M,\cL)$.
    \end{enumerate}

    If in addition the spinor $\eta\in\Gamma(S)$ is chiral of chirality $\mu\in\Z_2$, then we have to add the equation: $$i^{q+\frac{d}{2}}*(\pi\circ\tau)(\alpha)=\mu\alpha.$$

    The $\cL$-valued complex exterior form $\alpha\in\Omega_\C(M,\cL)$ as above is determined by $\eta\in\Gamma(S)$ through the relation: $$\alpha=\cE_\gamma(\eta).$$
    
    Moreover, $\alpha\in\Omega_\C(M,\cL)$ satisfying the algebraic equations of the theorem determines a nowhere vanishing spinor $\eta\in\Gamma(S)$, unique up to sign, satisfying the relation $\alpha=\cE_\gamma(\eta)$.
\end{thm}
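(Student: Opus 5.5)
The plan is to assemble the equivalence from the pointwise algebraic characterization (Theorem \ref{thm:bilinearsquare}), the global lifting criterion (Lemma \ref{lemma:Z2spinorclass}), the constraint criterion (Lemma \ref{lemma:cx_bilinear_constrained_spinor}) and the differential criterion (Lemma \ref{lemma:complex_bilinear_parallel_spinor}). For (a)$\Rightarrow$(b), given a nowhere vanishing constrained parallel spinor $\eta$, I set $\alpha:=\cE_\gamma(\eta)\in\Omega_\C(M,\cL)$. Since $\eta$ is a global nowhere vanishing lift of $\alpha$, we can take $\eta_a=\eta|_{U_a}$ for every index, so all $s_{ab}=1$ and $\mathfrak{s}_\gamma(\alpha)=0$. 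The algebraic equations then hold fiberwise by Theorem \ref{thm:bilinearsquare}, after trivializing $\cL$ locally. The trivialization is harmless: all equations are homogeneous in the appropriate tensor powers of $\cL$, so they are independent of the local frame of $\cL$. This includes the existence of $\beta$ with $(\alpha\diamond\beta)^{(0)}\neq 0$, which may also be chosen globally, for instance $\beta=\cE_\gamma(\chi)$ for a suitable $\chi$, or constructed via a partition of unity from local choices. Lemma \ref{lemma:cx_bilinear_constrained_spinor} gives $\frq\diamond\alpha=0$, and Lemma \ref{lemma:complex_bilinear_parallel_spinor} gives the differential equation. The chirality condition follows likewise from the chiral clause of Theorem \ref{thm:bilinearsquare}.

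For (b)$\Rightarrow$(a), I work on a good cover $\{U_a\}$ trivializing $\cL$. The algebraic equations, via Theorem \ref{thm:bilinearsquare} applied pointwise, show that $\alpha|_{U_a}$ lies fiberwise in the image of $\cE_\gamma$. I then need a \emph{smooth} local lift $\eta_a$ with $\cE_\gamma(\eta_a)=\alpha|_{U_a}$. Because $\alpha$ is nowhere vanishing, the fiberwise preimage consists of exactly two points $\pm\eta$. The map $\cE_\gamma$ restricted to $S\setminus\{0\}$ is a smooth two-to-one map onto its image, in fact a $\Z_2$-quotient onto a smooth submanifold (cone minus origin) of $\wedge T^*_\C M\otimes\cL$. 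Hence it is a double covering, and sections over contractible $U_a$ lift smoothly. Vanishing of $\mathfrak{s}_\gamma(\alpha)$ together with Lemma \ref{lemma:Z2spinorclass} then produces a global nowhere vanishing $\eta$ with $\cE_\gamma(\eta)=\alpha$, unique up to a global sign since $M$ is connected. Finally, $\frq\diamond\alpha=0$ gives $\cQ(\eta)=0$ by Lemma \ref{lemma:cx_bilinear_constrained_spinor}, and the differential equation gives $\cD\eta=0$ by the converse direction of Lemma \ref{lemma:complex_bilinear_parallel_spinor}.

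The equivalence of the two sets of algebraic equations (a single $\beta$ with nonvanishing trace versus all $\beta$) is inherited pointwise from the (b)$\Leftrightarrow$(c) equivalence in Theorem \ref{thm:bilinearsquare}, and the uniqueness statement follows from the fiberwise sign ambiguity.

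I expect the main obstacle to be the smoothness of the local lifts in (b)$\Rightarrow$(a). Theorem \ref{thm:bilinearsquare} is purely algebraic and gives only pointwise preimages. I would handle this by showing that $\cE_\gamma$ is an immersion on $S\setminus\{0\}$: its differential at $\eta$ is $\chi\mapsto\chi\otimes\scB(-,\eta)+\eta\otimes\scB(-,\chi)$, which is injective by non-degeneracy of $\scB$ whenever $\eta\neq0$. This makes $\cE_\gamma$ a local diffeomorphism onto its image, so its image is an embedded submanifold double covered by $S\setminus\{0\}$.
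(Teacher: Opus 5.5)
Your overall assembly is the paper's proof, which just cites Lemma~\ref{lemma:Z2spinorclass}, the pointwise version of Theorem~\ref{thm:bilinearsquare} together with Lemma~\ref{lemma:cx_bilinear_constrained_spinor}, and Lemma~\ref{lemma:complex_bilinear_parallel_spinor}. Your concern about smooth local lifts is legitimate. It addresses something the paper takes for granted when it defines $\mathfrak{s}_\gamma(\alpha)$ by choosing local sections $\eta_a$ with $\cE_\gamma(\eta_a)=\alpha\vert_{U_a}$.

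\textbf{The global choice of $\beta$ fails.} Your claim in (a)$\Rightarrow$(b) that $\beta$ can be chosen globally is false in general. Indeed $(\alpha\diamond\beta)^{(0)}=2^{-\frac{d}{2}}\scB\big((\Psi_\gamma\otimes\Id_{\cL})(\beta)\eta,\eta\big)$ is a section of $\cL^{\otimes 2}$. The map $\beta\mapsto(\alpha\diamond\beta)^{(0)}$ is fiberwise surjective, so a global $\beta$ with nowhere vanishing $(\alpha\diamond\beta)^{(0)}$ exists if and only if $\cL^{\otimes 2}$ is trivial. A partition of unity cannot produce one, since complex values in a line can cancel. Your choice $\beta=\cE_\gamma(\chi)$ gives $(\alpha\diamond\beta)^{(0)}=2^{-\frac{d}{2}}\sigma\,\scB(\eta,\chi)^2$, which would need $\scB(\eta,\chi)$ to be a nowhere vanishing section of $\cL$.

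For a concrete failure, take $\mathbb{C}\mathrm{P}^2$. There $c_1(\cL)$ is odd for every spin-c structure, so $\cL^{\otimes2}$ is never trivial. Yet (a) holds for a suitable $\cA$: any nowhere vanishing $\eta$ solves $\nabla^{g,A}\eta=\cA(\eta)$ with $\cA_X=(\nabla^{g,A}_X\eta)\otimes\phi$ and $\phi(\eta)=1$. So the single-$\beta$ condition must be read pointwise, with $\beta$ allowed to vary locally. That is how the paper's ``pointwise extension'' uses it, and it is also all your (b)$\Rightarrow$(a) needs. Drop the global claim rather than try to prove it.

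\textbf{The lifting step needs one more ingredient.} An injective differential plus two-to-one-ness does not by itself make the image an embedded submanifold, nor make $S\setminus\{0\}\to\cE(S\setminus\{0\})$ a covering, because an injective immersion need not be a homeomorphism onto its image. You need properness of $\cE$ on $S\setminus\{0\}$ in addition. This follows from homogeneity of degree two together with $\cE(\eta)\neq0$ for $\eta\neq0$.

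A shorter route avoids the covering altogether. Near $p$, take a local section $\chi$ with $\scB(\chi,\eta_p)\neq 0$ and a local frame $\mathfrak{l}$ of $\cL$. Write $(\Psi_\gamma\otimes\Id_{\cL})(\alpha)\chi=\xi\otimes\mathfrak{l}$ and $\scB(\chi,\xi)=F\,\mathfrak{l}$. Pointwise $\xi=f\eta$ and $F=f^2$, where $\scB(\chi,\eta)=f\,\mathfrak{l}$. Hence $F^{-1/2}\xi$, for a local branch of the square root, is a smooth local lift of $\alpha$. Refining to a good cover then yields the cocycle $s_{ab}$, and the rest of your argument goes through.
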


\begin{proof}
Condition $\mathfrak{s}_{\gamma}(\alpha)=0$ is required by Lemma \ref{lemma:Z2spinorclass}. The algebraic conditions of the theorem follow from the pointwise extension of Theorem \ref{thm:bilinearsquare} together with Lemma \ref{lemma:cx_bilinear_constrained_spinor}. The necessary and sufficient differential condition follows from Lemma \ref{lemma:complex_bilinear_parallel_spinor}. 
\end{proof}

As in the Hermitian case considered in Section \ref{sec:Hermitian_parallel_forms}, we are interested in the necessary and sufficient conditions for a manifold $(M,g)$ to admit a constrained parallel spinor, regardless of which spinor bundle it is a section of. 

\begin{cor}
\label{cor:even_constrained_parallel_spinorsbilinear}
Let $(M,g)$ be a strongly spin-c pseudo-Riemannian manifold and let $\fra\in\Omega^1(M,\wedge T^*_\C M)$ and $\frq\in\Gamma(\wedge T^*_\C M\otimes W)$ be given. Then $(M,g)$ admits a nowhere vanishing constrained parallel spinor relative to a pair $(\cA,\cQ)$ whose symbol is $(\fra,\frq)$ if and only if there exists a nowhere vanishing $\cL$-valued complex exterior form $\alpha\in\Omega_\C(M,\cL)$ satisfying the following algebraic and differential equations: 
\begin{gather*}
    \alpha\diamond\alpha=2^{\frac{d}{2}}\alpha^{(0)}\otimes\alpha,\qquad \alpha\diamond\beta\diamond\alpha=2^{\frac{d}{2}}(\alpha\diamond\beta)^{(0)}\otimes\alpha,\qquad (\pi^{\frac{1-s}{2}}\circ\tau)(\alpha)=\sigma\alpha,\\
    \nabla^{g,A}\alpha=\fra\diamond\alpha+\alpha\diamond(\pi^{\frac{1-s}{2}}\circ\tau)(\fra),\qquad \frq\diamond\alpha=0
\end{gather*} 
for a $\cL$-valued complex exterior form $\beta\in\Omega_\C(M,\cL)$ satisfying $(\alpha\diamond\beta)^{(0)}\neq0$.
\end{cor}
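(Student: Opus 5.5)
The proof reduces Corollary \ref{cor:even_constrained_parallel_spinorsbilinear} to Theorem \ref{thm:even_CB_CPS}. The only new ingredient is the freedom to choose the irreducible complex-bilinear paired spinor bundle. The $\mathbb{Z}_2$-spinor class is then disposed of by the proposition preceding Theorem \ref{thm:even_CB_CPS}, which kills $\mathfrak{s}_\gamma(\alpha)$ without changing $\cL$.

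For the direct implication, suppose $(M,g)$ admits a nowhere vanishing constrained parallel spinor $\eta\in\Gamma(S)$ relative to some $(\cA,\cQ)$ on some irreducible complex spinor bundle $(S,\gamma)$ with symbol $(\fra,\frq)$. Since $(M,g)$ is strongly spin-c, $S$ is associated to a strong spin-c structure. By Proposition \ref{prop:ComplexBilinealexistence}, $(S,\gamma)$ carries an admissible complex-bilinear pairing $\scB$ of some symmetry type $\sigma$ and adjoint type $s$. We then apply Theorem \ref{thm:even_CB_CPS} (a)$\Rightarrow$(b) and set $\alpha:=\cE_\gamma(\eta)\in\Omega_\C(M,\cL)$. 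This $\alpha$ is nowhere vanishing because $\eta$ is, and it satisfies all the listed algebraic and differential equations. The choice of $\beta$ with $(\alpha\diamond\beta)^{(0)}\neq 0$ is made pointwise, or globally from statement (c), which holds for every $\beta$. For instance, one can take $\beta$ to be a suitable section built from $\alpha$, since $(\alpha\diamond\beta)^{(0)}$ is locally a nonzero multiple of $\scB(\chi,\eta)$ for an appropriate spinor $\chi$.

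For the converse, start from a nowhere vanishing $\alpha\in\Omega_\C(M,\cL)$ satisfying the equations, where $\cL$ is the characteristic line bundle of some strong spin-c structure $Q_o$ with associated paired bundle $(S_o,\gamma_o,\scB_o)$. The algebraic equations, via the pointwise Theorem \ref{thm:bilinearsquare}, show that $\alpha_m\in\Im(\cE_{\gamma_o})$ at each $m$. On each contractible open set $U_a$ of a good cover we obtain local lifts $\eta_a$ with $\cE_{\gamma_o}(\eta_a)=\alpha|_{U_a}$, each unique up to sign and chosen continuously because the fibers of $\cE_{\gamma_o}$ are $\{\pm\eta\}$. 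The preceding proposition then produces $(S,\gamma,\scB)$ with the same $\cL$ and a global nowhere vanishing $\eta$ with $\cE_\gamma(\eta)=\alpha$.

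Next we take $\cA,\cQ$ on $S$ to be the endomorphisms with symbols $\fra,\frq$, namely $\cA:=(\Psi_\gamma\otimes\Id_{T^*M})(\fra)$ and $\cQ:=(\Psi_\gamma\otimes\Id_W)(\frq)$. Lemma \ref{lemma:complex_bilinear_parallel_spinor} and Lemma \ref{lemma:cx_bilinear_constrained_spinor} then give $\cD\eta=0$ and $\cQ(\eta)=0$.

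The main obstacle is this converse step: producing continuous local lifts and checking that the twisted bundle is still a legitimate complex-bilinear paired bundle whose connection $\nabla^{g,A}$ induces the same $\nabla^A$ on $\cL$. The latter holds because the cocycle twist by $s_{ab}\in\mathbb{Z}_2$ changes neither $z_{ab}^2$ nor the local connection forms. A secondary point is that $\sigma$ and $s$ are constant and fixed by the model $(\Sigma,\gamma,\scB)$, so the equation $(\pi^{\frac{1-s}{2}}\circ\tau)(\alpha)=\sigma\alpha$ refers to the same types on both sides.
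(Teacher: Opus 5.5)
Your proposal is correct and follows the paper's route. The corollary comes from Theorem~\ref{thm:even_CB_CPS}, and the $\mathbb{Z}_2$-spinor class is removed by the preceding proposition, which twists the spin-c cocycle by $s_{ab}$ and leaves $\cL$ and its connection unchanged. One caution: your aside about constructing a global $\beta$ only works pointwise or locally, because $(\alpha\diamond\beta)^{(0)}$ is a section of $\cL^{\otimes 2}$, and that bundle need not admit a nowhere vanishing section.
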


If the characteristic bundle $\cL$ is topologically trivial, then the spin-c structure reduces to a spin structure. Moreover, if the connection $A$ on $\cL$ is trivial, then the connection $\nabla^{g,A}$ on $S$ corresponds to the spinorial Levi-Civita connection $\nabla^g$ on $S$. In this case, we obtain the following corollary.

\begin{cor}\label{cor:spin_CB_CPS}
    Let $(S,\gamma,\scB)$ be an irreducible complex-bilinear paired spinor bundle of symmetry type $\sigma\in\Z_2$ and adjoint type $s\in\Z_2$. Let $\cA\in\Omega^1(M,\End(S))$ and $\cQ\in\Gamma(\End(S)\otimes W)$ for a vector bundle $W$ on $M$. Then the following statements are equivalent: 
    \begin{enumerate}
        \item[\normalfont(a)] There exists a nowhere vanishing constrained parallel spinor $\eta\in\Gamma(S)$ relative to $(\cA,\cQ)$.
        \item[\normalfont(b)] There exists a nowhere vanishing complex exterior form $\alpha\in\Omega_\C(M)$ with vanishing $\mathbb{Z}_2$-spinor class $\mathfrak{s}_{\gamma}(\alpha)\in H^1(M,\Z_2)$ which satisfies the following algebraic and differential equations: 
        \begin{gather*}
            \alpha\diamond\alpha=2^{\frac{d}{2}}\alpha^{(0)}\alpha,\qquad \alpha\diamond\beta\diamond\alpha=2^{\frac{d}{2}}(\alpha\diamond\beta)^{(0)}\alpha,\qquad (\pi^{\frac{1-s}{2}}\circ\tau)(\alpha)=\sigma\alpha,\\
            \nabla^g\alpha=\fra\diamond\alpha+\alpha\diamond(\pi^{\frac{1-s}{2}}\circ\tau)(\fra),\qquad \frq\diamond\alpha=0
        \end{gather*} for a complex exterior form $\beta\in\Omega_\C(M)$ satisfying $(\alpha\diamond\beta)^{(0)}\neq0$ or, equivalently, satisfies the equations: \begin{gather*}
            \alpha\diamond\beta\diamond\alpha=2^{\frac{d}{2}}(\alpha\diamond\beta)^{(0)}\alpha,\qquad (\pi^{\frac{1-s}{2}}\circ\tau)(\alpha)=\sigma\alpha,\\
            \nabla^g\alpha=\fra\diamond\alpha+\alpha\diamond(\pi^{\frac{1-s}{2}}\circ\tau)(\fra),\qquad \frq\diamond\alpha=0
        \end{gather*} for every complex exterior form $\beta\in\Omega_\C(M)$.
    \end{enumerate}

    If in addition the spinor $\eta\in\Gamma(S)$ is chiral of chirality $\mu\in\Z_2$, then we have to add the equation: $$i^{q+\frac{d}{2}}*(\pi\circ\tau)(\alpha)=\mu\alpha.$$

    The complex exterior form $\alpha\in\Omega_\C(M)$ as above is determined by $\eta\in\Gamma(S)$ through the relation: $$\alpha=\cE_\gamma(\eta).$$
    
    Moreover, $\alpha\in\Omega_\C(M)$ satisfying the algebraic equations of the theorem determines a nowhere vanishing spinor $\eta\in\Gamma(S)$, unique up to sign, satisfying the relation $\alpha=\cE_\gamma(\eta)$.
\end{cor}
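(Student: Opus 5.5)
The plan is to deduce Corollary \ref{cor:spin_CB_CPS} from Theorem \ref{thm:even_CB_CPS} by showing that, in the spin situation described just before the statement, all the $\cL$-decorations become trivial. We are assuming that $\cL$ is topologically trivial and that the connection $A$ is trivial, so that $\nabla^{g,A}$ on $S$ is the spinorial lift of $\nabla^g$. The key steps are as follows.

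\textbf{Step 1 (trivialising $\cL$).} Fix a global unit section $\lambda_0\in\Gamma(\cL)$ that is parallel for the trivial connection $\nabla^A$, so $\nabla^A\lambda_0=0$. This gives isomorphisms
\[
\Omega_\C(M)\to\Omega_\C(M,\cL^{\otimes k}),\qquad \alpha_0\mapsto\alpha_0\otimes\lambda_0^{\otimes k},
\]
for each $k$. These are compatible with the extended geometric product, since $(\alpha_0\otimes\lambda_0)\diamond(\beta_0\otimes\lambda_0)=(\alpha_0\diamond\beta_0)\otimes\lambda_0^{\otimes2}$. They are also compatible with taking the degree-zero part, with $\pi$, $\tau$, and the Hodge star, and with covariant derivatives:
\[
(\nabla^g\otimes\nabla^A)(\alpha_0\otimes\lambda_0)=(\nabla^g\alpha_0)\otimes\lambda_0 .
\]
Composing the $\cL$-valued pairing $\scB$ with $\lambda_0^{-1}$ yields an ordinary $\C$-valued admissible pairing with the same $\sigma$ and $s$. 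Under these identifications the complex-bilinear square map $\cE_\gamma$ becomes a map $\Gamma(S)\to\Omega_\C(M)$.

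\textbf{Step 2 (transferring the equations).} Each equation of Theorem \ref{thm:even_CB_CPS}(b) is now written in $\Omega_\C(M,\cL^{\otimes k})$ for $k=1,2,3$. Under the identifications of Step 1 it corresponds exactly to the displayed equation in (b) of the corollary, because both sides carry the same power $\lambda_0^{\otimes k}$ and that factor can be cancelled. For the constraint $\frq\diamond\alpha=0$ one also uses $W\otimes\cL\cong W$. The quantifier over $\beta$ ("for some $\beta$ with $(\alpha\diamond\beta)^{(0)}\neq0$" or "for every $\beta$") transfers through the bijection $\beta_0\mapsto\beta_0\otimes\lambda_0$. The $\Z_2$-spinor class is unchanged, since the local lifts $\eta_a$ are the same. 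The chirality condition and the uniqueness up to sign transfer verbatim.

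\textbf{Step 3 (conclusion).} Apply Theorem \ref{thm:even_CB_CPS} to get the equivalence (a)$\Leftrightarrow$(b) in its $\cL$-valued form, and then translate it via Steps 1 and 2.

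The only delicate point is Step 1: one must check that the trivialisation is simultaneously topological and flat, so that $\nabla^{g,A}$ on $\cL$-valued forms reduces to $\nabla^g$. That this holds is exactly the hypothesis that $A$ is the trivial connection. Everything else is bookkeeping, so I expect no substantial obstacle.
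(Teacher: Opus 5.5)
Your proposal is correct and follows the paper's route. The paper gives no separate proof: it obtains the corollary by specializing Theorem \ref{thm:even_CB_CPS} to a topologically trivial characteristic bundle $\cL$ with trivial connection $A$, so that $\nabla^{g,A}$ becomes the spinorial Levi-Civita connection. Your global parallel section $\lambda_0$ simply makes explicit the bookkeeping that this specialization leaves implicit.
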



\subsection{Complex-bilinear spinorial forms in odd dimensions}


Let $(S,\gamma_\ell,\scB)$ be a complex-bilinear paired spinor bundle on $(M,g)$. The admissible complex-bilinear pairing $\scB$ allows us to construct global extensions to $M$ of the complex-bilinear square map $\cE\colon\Sigma\to\End(\Sigma)$ and the complex-bilinear square spinor map $\cE_\ell\colon\Sigma\to\wedge^<V^*_\C$ of Section \ref{sec:spinorialforms}. As in the even-dimensional case, these global maps are valued in the complex line bundle $\cL$, that is:
\begin{equation*}
\cE\colon S\to\End(S)\otimes\cL,\qquad \cE_\ell\colon S\to\wedge^<T^*_\C M\otimes\cL.
\end{equation*}

\begin{definition}
The \emph{complex-bilinear spinor square map} of the complex-bilinear paired spinor bundle $(S,\gamma_\ell,\scB)$ is the map $\cE_\ell\colon\Gamma(S)\to\Omega^<_\C(M,\cL)$ induced by $\cE_\ell$ on sections. The \emph{complex-bilinear square} of an irreducible complex spinor $\eta\in\Gamma(S)$ is the $\cL$-valued complex exterior form $\cE_\ell(\eta)\in\Omega^<_\C(M,\cL)$. Elements in the image of $\cE_\ell$ are generically called \emph{complex-bilinear spinorial forms}.
\end{definition}

Let $(S,\gamma_\ell,\scB)$ be an \emph{irreducible} complex-bilinear paired spinor bundle on $(M,g)$, which, as explained earlier, we consider to be associated to a given spin-c structure $Q$. Hence, after choosing a connection $A$ on the characteristic $\U(1)$-bundle $P$ of $Q$, we can set $\cA:=\nabla^{g,A}-\cD\in\Omega^1(M,\End(S))$  and let: $$\fra:=(\Psi^<_\ell\otimes\Id_{T^*M})^{-1}(\cA)\in\Omega^1(M,\wedge^<T^*_\C M)$$ be the symbol of $\cA$. Recall that if $\cQ\in\Gamma(\End(S)\otimes W)$ for a vector bundle $W$ on $M$, then its symbol is defined as $\frq:=(\Psi^<_\ell\otimes\Id_W)^{-1}(\cQ)\in\Gamma(\wedge^<T^*_\C M\otimes W)$. Then we have the following result, whose proof is completely analogous to that of Theorem \ref{thm:even_CB_CPS}.

\begin{thm}\label{thm:odd_CB_CPS}
Let $(S,\gamma_\ell,\scB)$ be an irreducible complex-bilinear paired spinor bundle of symmetry type $\sigma\in\Z_2$ and adjoint type $s\in\Z_2$ on $(M,g)$. Let $\cA\in\Omega^1(M,\End(S))$ and $\cQ\in\Gamma(\End(S)\otimes W)$ for a vector bundle $W$ on $M$. Then the following statements are equivalent: 
\begin{enumerate}
        \item[\normalfont(a)] There exists a nowhere vanishing constrained parallel spinor $\eta\in\Gamma(S)$ relative to $(\cA,\cQ)$.
        \item[\normalfont(b)] There exists a nowhere vanishing $\cL$-valued complex exterior form $\alpha\in\Omega^<_\C(M,\cL)$ with vanishing $\mathbb{Z}_2$-spinor class $\mathfrak{s}_{\gamma}(\alpha)\in H^1(M,\Z_2)$ which satisfies the following algebraic and differential equations: \begin{gather*}
            \alpha\vee\alpha=2^{\frac{d-1}{2}}\alpha^{(0)}\otimes\alpha,\qquad \alpha\vee\beta\vee\alpha=2^{\frac{d-1}{2}}(\alpha\vee\beta)^{(0)}\otimes\alpha,\qquad (\pi^{\frac{1-s}{2}}\circ\tau)(\alpha)=\sigma\alpha,\\
            \nabla^{g,A}\alpha=\fra\vee\alpha+\alpha\vee(\pi^{\frac{1-s}{2}}\circ\tau)(\fra),\qquad \frq\vee\alpha=0
        \end{gather*} for a $\cL$-valued complex exterior form $\beta\in\Omega^<_\C(M,\cL)$ satisfying $(\alpha\vee\beta)^{(0)}\neq0$ or, equivalently, satisfies the equations: \begin{gather*}
            \alpha\vee\beta\vee\alpha=2^{\frac{d-1}{2}}(\alpha\vee\beta)^{(0)}\otimes\alpha,\qquad (\pi^{\frac{1-s}{2}}\circ\tau)(\alpha)=\sigma\alpha,\\
            \nabla^{g,A}\alpha=\fra\vee\alpha+\alpha\vee(\pi^{\frac{1-s}{2}}\circ\tau)(\fra),\qquad \frq\vee\alpha=0
        \end{gather*} for every $\cL$-valued complex exterior form $\beta\in\Omega^<_\C(M,\cL)$.
    \end{enumerate}
    
    The $\cL$-valued complex exterior form $\alpha\in\Omega^<_\C(M,\cL)$ as above is determined by $\eta\in\Gamma(S)$ through the relation: $$\alpha=\cE_\ell(\eta).$$
    
    Moreover, $\alpha\in\Omega^<_\C(M,\cL)$ satisfying the algebraic equations of the theorem determines a nowhere vanishing spinor $\eta\in\Gamma(S)$, unique up to sign, satisfying the relation $\alpha=\cE_\ell(\eta)$.
\end{thm}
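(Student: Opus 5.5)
The plan is to mirror the proof of Theorem \ref{thm:even_CB_CPS} verbatim, replacing the Kähler-Atiyah model $(\wedge T^*_\C M,\diamond)$ and $\Psi_\gamma$ by the truncated model $(\wedge^<T^*_\C M,\vee)$ and the algebra isomorphism $\Psi^<_\ell$ of \eqref{eq:truncated_iso}. First I will record the odd-dimensional ingredients that the even proof used. The constraint lemma follows from Lemma \ref{lemma:constrainedspinorodd}, applied pointwise and tensored with $W\otimes\cL$: $\cQ(\eta)=0$ if and only if $\frq\vee\alpha=0$, where $\alpha=\cE_\ell(\eta)\in\Omega^<_\C(M,\cL)$. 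The compatibility statement is Proposition \ref{prop:L-valued_compatibility} with $\Psi^\bullet_\gamma=\Psi^<_\ell$, which was already formulated for both parities. It relies on $\nabla^g$ acting by derivations of $\vee$, and on $\Psi^<_\ell$ being a unital algebra isomorphism intertwining $\nabla^g$ with the induced connection on $\End(S)$. For the latter I will note that $\nabla^g$ commutes with $*$, $\tau$, $\cP_<$ and $\cP_\ell$, so the argument of Proposition \ref{prop:nabla_compatible_Psi_gamma} carries over.

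Second, I will prove the odd analogue of Lemma \ref{lemma:complex_bilinear_parallel_spinor}. Set $E_\eta=\eta\otimes\scB(-,\eta)$. The same computation, using the parallelity of $\scB$ under $\nabla^{g,A}\otimes\nabla^A$, gives $\nabla E_\eta=(\cA\otimes\Id_\cL)\circ E_\eta+E_\eta\circ\cA^t$. Here $\cA^t$ is the $\scB$-adjoint of $\cA$. Its dequantization is $(\pi^{\frac{1-s}{2}}\circ\tau)(\fra)$, by the admissibility identity $\scB(\Psi^<_\ell(\alpha)\eta_1,\eta_2)=\scB(\eta_1,\Psi^<_\ell((\pi^{\frac{1-s}{2}}\circ\tau)(\alpha))\eta_2)$ stated for global sections in the bundle setting. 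Applying $(\Psi^<_\ell\otimes\Id_\cL)^{-1}$ yields the differential equation of the theorem. For the converse I will apply $\Psi^<_\ell\otimes\Id_\cL$ and get $\cD_X\eta\otimes\scB(\chi,\eta)+\eta\otimes\scB(\chi,\cD_X\eta)=0$ for all $\chi$. Choosing $\chi$ with $\scB(\chi,\eta)\neq0$, which is possible by non-degeneracy, forces $\cD_X\eta=\beta(X)\eta$. Substituting back then gives $2\beta(X)\,\eta\otimes\scB(\chi,\eta)=0$, hence $\beta=0$.

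Third, I will assemble the theorem. Theorem \ref{thm:odd_complex-bilinear_square}, applied fiberwise after trivializing $\cL$ locally, shows that the algebraic equations characterize $\cE_\ell(S_m)$ pointwise, and that the two formulations (with a single $\beta$ satisfying $(\alpha\vee\beta)^{(0)}\neq0$, or with every $\beta$) are equivalent. Because the equations are homogeneous in the $\cL$-factors, they are independent of the local trivialization. Next I will define the $\Z_2$-spinor class for $\Omega^<_\C(M,\cL)$ exactly as in the even case, via local lifts that are unique up to sign. The odd analogue of Lemma \ref{lemma:Z2spinorclass} then holds: a global nowhere vanishing $\eta$ with $\cE_\ell(\eta)=\alpha$ exists if and only if $\mathfrak{s}_\gamma(\alpha)=0$. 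Combining these gives (a)$\Leftrightarrow$(b), together with the uniqueness of $\eta$ up to sign. Nowhere vanishing of $\alpha$ is equivalent to that of $\eta$, since $\cE_\ell$ vanishes only at $0$.

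The main obstacle is checking that the local smooth lifts exist, and that the sign ambiguity of the square map is the only one in odd dimensions. Here I will invoke uniqueness up to sign from the algebraic theorem, and obtain smoothness from the fact that $\cE$ is a quadratic map whose image is an immersed submanifold locally away from $0$. Everything else is a formal transcription of the even-dimensional argument.
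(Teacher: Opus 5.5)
Your proposal is correct and is essentially the paper's proof. The paper simply declares the result ``completely analogous'' to Theorem~\ref{thm:even_CB_CPS}, namely the odd version of Lemma~\ref{lemma:Z2spinorclass}, the fiberwise use of Theorem~\ref{thm:odd_complex-bilinear_square} with Lemma~\ref{lemma:constrainedspinorodd}, and the odd analogue of Lemma~\ref{lemma:complex_bilinear_parallel_spinor} transported through $\Psi^<_\ell$, exactly as you do.

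The only point to tighten, which the paper also takes for granted, is the existence of smooth local lifts. An immersed image alone does not guarantee them, but the lift can be written explicitly as $\eta=\pm(\alpha\cdot\chi)\otimes\lambda^{-1}$. Here $\chi$ is a local section with $\alpha\cdot\chi\neq0$, and $\lambda$ is a local section of $\cL$ with $\lambda\otimes\lambda=\scB(\chi,\alpha\cdot\chi)$, which is nowhere zero near the point.
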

 
The analog of Corollary \ref{cor:even_constrained_parallel_spinorsbilinear} is given as follows.

\begin{cor}
\label{cor:odd_constrained_parallel_spinorsbilinear}
Let $(M,g)$ be a strongly spin-c pseudo-Riemannian manifold and let $\fra\in\Omega^1(M,\wedge^< T^*_\C M)$ and $\frq\in\Gamma(\wedge^< T^*_\C M\otimes W)$ be given. Then $(M,g)$ admits a nowhere vanishing constrained parallel spinor relative to a pair $(\cA,\cQ)$ whose symbol is $(\fra,\frq)$ if and only if there exists a nowhere vanishing $\cL$-valued complex exterior form $\alpha\in\Omega^<_\C(M,\cL)$ satisfying the following algebraic and differential equations:
\begin{gather*}
    \alpha\vee\alpha=2^{\frac{d-1}{2}}\alpha^{(0)}\otimes\alpha,\qquad \alpha\vee\beta\vee\alpha=2^{\frac{d-1}{2}}(\alpha\vee\beta)^{(0)}\otimes\alpha,\qquad (\pi^{\frac{1-s}{2}}\circ\tau)(\alpha)=\sigma\alpha,\\
    \nabla^{g,A}\alpha=\fra\vee\alpha+\alpha\vee(\pi^{\frac{1-s}{2}}\circ\tau)(\fra),\qquad \frq\vee\alpha=0
\end{gather*} 
for a $\cL$-valued complex exterior form $\beta\in\Omega^<_\C(M,\cL)$ satisfying $(\alpha\vee\beta)^{(0)}\neq0$.
\end{cor}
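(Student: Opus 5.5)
We deduce Corollary \ref{cor:odd_constrained_parallel_spinorsbilinear} from Theorem \ref{thm:odd_CB_CPS}, in the same way that Corollary \ref{cor:even_constrained_parallel_spinorsbilinear} follows from Theorem \ref{thm:even_CB_CPS}. The only new ingredient is to remove the hypothesis on the $\Z_2$-spinor class by changing the spinor bundle while keeping $\cL$ fixed.

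\emph{Necessity.} Suppose $(M,g)$ carries a nowhere vanishing constrained parallel spinor $\eta\in\Gamma(S)$ relative to some $(\cA,\cQ)$ with symbol $(\fra,\frq)$. Here $(S,\gamma_\ell)$ is an irreducible complex spinor bundle associated to a strong spin-c structure $Q$ with characteristic bundle $\cL$, and $A$ is a connection on $\cL$. By Proposition \ref{prop:ComplexBilinealexistence}, $S$ admits an admissible complex-bilinear pairing $\scB$. Applying the implication (a)$\Rightarrow$(b) of Theorem \ref{thm:odd_CB_CPS} to $(S,\gamma_\ell,\scB)$, the form $\alpha:=\cE_\ell(\eta)\in\Omega^<_\C(M,\cL)$ is nowhere vanishing and satisfies all the listed equations. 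For the existence of $\beta$, note that the pointwise version of Theorem \ref{thm:odd_complex-bilinear_square} gives $\alpha\vee\beta\vee\alpha=2^{\frac{d-1}{2}}(\alpha\vee\beta)^{(0)}\otimes\alpha$ for every $\beta$. Since $\alpha$ is nowhere vanishing, one can choose $\beta$ with $(\alpha\vee\beta)^{(0)}\neq0$ everywhere. A natural choice is $\beta=\cE_\ell(\chi)$, where $\chi$ satisfies $\scB(\eta,\chi)\neq 0$ pointwise. Such a $\chi$ is obtained locally from non-degeneracy of $\scB$ and patched globally with a partition of unity; I expect only minor care is needed here, since the theorem already packages this step.

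\emph{Sufficiency.} Suppose $\alpha\in\Omega^<_\C(M,\cL)$ is nowhere vanishing and satisfies the equations, where $\cL$ is the characteristic bundle of some strong spin-c structure $Q_o$ with paired bundle $(S_o,\gamma_\ell,\scB_o)$. By Theorem \ref{thm:odd_complex-bilinear_square} applied fiberwise, $\alpha$ lies pointwise in the image of $\cE_\ell$. Hence it defines a $\Z_2$-spinor class $\mathfrak{s}_{\gamma}(\alpha)\in H^1(M,\Z_2)$, constructed exactly as in the even case through local square roots $\eta_a$ and their sign cocycle $s_{ab}$.

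The main step is to kill this class. Twisting the \v{C}ech cocycle $[x_{ab},z_{ab}]$ of $Q_o$ to $[x_{ab},s_{ab}z_{ab}]$ gives a new strong spin-c structure $Q$ with the same characteristic bundle $\cL$, since $(s_{ab}z_{ab})^2=z_{ab}^2$. This is the odd-dimensional analogue of the proposition preceding Theorem \ref{thm:even_CB_CPS}. I must check that the associated bundle $S$ inherits an admissible $\scB$ of the same types $(\sigma,s)$, and that endomorphisms with symbol $(\fra,\frq)$ exist on it. The first point holds because $\scB$ is $\Spin_o$-invariant and scales by $z^2$ under the $\U(1)$ factor. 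The second holds because $\Psi^<_\ell\otimes\Id$ is an isomorphism, so we may define $\cA$ and $\cQ$ on $S$ as the images of $\fra$ and $\frq$. On $S$ the local sections $\eta_a$ glue to a global nowhere vanishing $\eta$, so $\mathfrak{s}_\gamma(\alpha)=0$.

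Now apply the implication (b)$\Rightarrow$(a) of Theorem \ref{thm:odd_CB_CPS}. Its differential direction rests on the odd analogue of Lemma \ref{lemma:complex_bilinear_parallel_spinor}, which holds verbatim with $\diamond$ replaced by $\vee$ and $\Psi_\gamma$ by $\Psi^<_\ell$. This gives $\cD\eta=0$ and $\cQ(\eta)=0$, so $\eta$ is the required nowhere vanishing constrained parallel spinor. The one remaining point is that the connection $\nabla^{g,A}$ on $\cL$ is unchanged by the twist, which holds because $\cL$ itself is unchanged.
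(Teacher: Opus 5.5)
Your argument is essentially the paper's: the corollary is obtained from Theorem \ref{thm:odd_CB_CPS} in both directions. The $\Z_2$-spinor class is removed by twisting the \v{C}ech cocycle of the spin-c structure to $[x_{ab},s_{ab}z_{ab}]$, exactly as in the proposition preceding Theorem \ref{thm:even_CB_CPS}, which leaves the frame bundle, $\cL$ and $A$ unchanged.

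One caveat concerns your aside about $\beta$. Partition-of-unity patching does not preserve $\scB(\eta,\chi)\neq 0$. More fundamentally, $(\alpha\vee\beta)^{(0)}$ is a section of $\cL^{\otimes 2}$, which has no nowhere-vanishing section when $\cL^{\otimes 2}$ is nontrivial. So the condition on $\beta$ must be read pointwise, or replaced by the equivalent ``for every $\beta$'' form of Theorem \ref{thm:odd_complex-bilinear_square}, and that is all either direction actually uses.
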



\section{Spin-c Killing spinors in low dimensions}
\label{sec:lowKilling}


In this section, we apply the results obtained in the previous sections to study real and imaginary Killing spinors on spin-c Riemannian manifolds $(M,g)$ of dimension two, three, and four. These cases will be used to exemplify the applicability of the formalism of complex spinorial forms developed in this paper. We will recover some well-known results of \cite{Moroianu1997,Herzlich_Moroianu_1999,Grosse_Nakad_2015} without requiring $(M,g)$ to be simply connected or complete.

\begin{definition}
    Let $(M,g)$ be a spin-c Riemannian manifold. A nowhere vanishing irreducible complex spinor $\eta\in\Gamma(S)$ satisfying: $$\nabla^{g,A}_X\eta=i\lambda X^\flat\cdot\eta$$ for all $X\in\Gamma(TM)$ and some non-zero complex number $\lambda=\lambda_R+i\lambda_I\in\C\setminus\{0\}$ is called a \emph{Killing spinor} with \emph{Killing number} $\lambda$. It turns out that $\lambda$ must be either real or imaginary. Hence, such $\eta\in\Gamma(S)$ is called a \emph{real Killing spinor} if $\lambda=\lambda_R\in\R\setminus\{0\}$ and an \emph{imaginary Killing spinor} if $\lambda=i\lambda_I\in i\R\setminus\{0\}$.
\end{definition}

\begin{remark}
    Note the factor $i$ in the definition of a Killing spinor. In the mathematical literature, and in particular in the articles cited above, Clifford multiplication is defined using the convention opposite to ours. The inclusion of the factor $i$ in our definition ensures that our notions and results of real and imaginary spin-c Killing spinors are consistent with those appearing in the literature.
\end{remark}

For the following, recall that for one-forms $\theta,\vartheta\in\Omega^1(M)$ we define: $$\theta\wedge\vartheta:=\theta\otimes\vartheta-\vartheta\otimes\theta,\qquad \theta\odot\vartheta:=\theta\otimes\vartheta+\vartheta\otimes\theta$$ and: \begin{align*}
    \mathrm{Alt}(\nabla^g\theta)(X,Y)&:=(\nabla^g\theta)(X,Y)-(\nabla^g\theta)(Y,X),\\
    \mathrm{Sym}(\nabla^g\theta)(X,Y)&:=(\nabla^g\theta)(X,Y)+(\nabla^g\theta)(Y,X)
\end{align*} for all $X,Y\in\Gamma(TM)$. It is not difficult to show that: $$\mathrm{Alt}(\nabla^g\theta)=\dd\theta,\qquad\mathrm{Sym}(\nabla^g\theta)=\mathscr{L}_{\theta^\sharp}g.$$


\subsection{Spin-c Killing spinors in dimension two}


Let $(S,\gamma)$ be a bundle of irreducible complex Clifford modules on a two-dimensional spin-c Riemannian manifold $(M,g)$. Assume that $(S,\gamma)$ is equipped with an admissible Hermitian pairing $\scS$ of positive adjoint type. Set $\kappa=1$ for simplicity. By the global version of Theorem \ref{thm:even_Hermitian_forms}, a complex exterior form $\widehat{\alpha}\in\Omega_\C(M)$ is the Hermitian square of a nowhere vanishing irreducible complex spinor $\eta\in\Gamma(S)$ if and only if: $$\widehat{\alpha}\diamond\widehat{\alpha}=2\widehat{\alpha}^{(0)}\widehat{\alpha},\qquad \widehat{\alpha}\diamond\beta\diamond\widehat{\alpha}=2(\widehat{\alpha}\diamond\beta)^{(0)}\widehat{\alpha},\qquad \tau(\widehat{\alpha})=\overline{\widehat{\alpha}}$$ for a complex exterior form $\beta\in\Omega_\C(M)$ satisfying $(\widehat{\alpha}\diamond\beta)^{(0)}\neq0$.

\begin{prop}\label{prop:square_(2,0)}
    Let $(M,g)$ be a two-dimensional spin-c Riemannian manifold. A complex exterior form $\widehat{\alpha}\in\Omega_\C(M)$ is the Hermitian square of a nowhere vanishing irreducible complex spinor $\eta\in\Gamma(S)$ if and only if: $$\widehat{\alpha}=r+\theta+if\nu,$$ where $r\in C^\infty(M)$ is nowhere vanishing, $f\in C^\infty(M)$ and $\theta\in\Omega^1(M)$ satisfying $r^2=f^2+\escal{\theta,\theta}$.
\end{prop}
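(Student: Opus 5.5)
The plan is to apply the characterization recalled just before the statement (the pointwise, global version of Theorem \ref{thm:even_Hermitian_forms} with $d=2$, $s=+1$, $\kappa=1$). The form $\widehat{\alpha}$ is a Hermitian square if and only if
\begin{equation*}
\tau(\widehat{\alpha})=\overline{\widehat{\alpha}},\qquad \widehat{\alpha}\diamond\widehat{\alpha}=2\widehat{\alpha}^{(0)}\widehat{\alpha},
\end{equation*}
together with the existence of a $\beta$ with $(\widehat{\alpha}\diamond\beta)^{(0)}\neq0$. I will solve these equations degree by degree.

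\textbf{Reality condition.} Write $\widehat{\alpha}=a+\vartheta+b\,\nu$ with $a,b$ complex functions and $\vartheta$ a complex one-form. Since $\tau$ acts by $+1$ on degrees $0,1$ and by $-1$ on degree $2$, the condition $\tau(\widehat{\alpha})=\overline{\widehat{\alpha}}$ forces $a=r$ real, $\vartheta=\theta$ real, and $b=if$ with $f$ real.

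\textbf{Quadratic condition.} Next I expand $\widehat{\alpha}\diamond\widehat{\alpha}$ in the Kähler--Atiyah algebra of a Riemannian surface with the plus convention, using
\begin{equation*}
\theta\diamond\theta=\escal{\theta,\theta},\qquad \nu\diamond\nu=-1,\qquad \theta\diamond\nu=-\nu\diamond\theta=\pm\ast\theta .
\end{equation*}
The cross terms between $\theta$ and $if\nu$ cancel, since $\theta$ and $\nu$ anticommute. Hence
\begin{equation*}
\widehat{\alpha}\diamond\widehat{\alpha}=r^2+\escal{\theta,\theta}+f^2+2r\theta+2irf\nu .
\end{equation*}
Equating this with $2r\widehat{\alpha}=2r^2+2r\theta+2irf\nu$, the degree-one and degree-two parts agree automatically. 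The scalar part gives $r^2=f^2+\escal{\theta,\theta}$.

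\textbf{Nondegeneracy and nowhere vanishing.} By the construction in Section \ref{sec:spinorialforms}, $\widehat{\alpha}^{(0)}=2^{-1}\scS(\eta,\eta)$. Positivity of $\scS$ (Riemannian, $s=+1$) makes this nonzero wherever $\eta\neq0$; this is the point of the argument I expect to need the most care. The cleaner route is the trace: the $\beta$-condition with $\beta=1$ reads $(\widehat{\alpha})^{(0)}\neq0$, i.e.\ $r\neq0$. Conversely, if $r\neq0$ everywhere, then $\beta=1$ works, and the theorem gives that $\widehat{\alpha}$ is pointwise a square. Also, $r=0$ together with $r^2=f^2+\escal{\theta,\theta}$ in Riemannian signature forces $\widehat{\alpha}=0$. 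So nowhere vanishing of $\eta$ (equivalently of $\widehat{\alpha}$) is equivalent to $r$ nowhere vanishing. For the necessity direction, $\widehat{\alpha}^{(0)}=\tfrac12\scS(\eta,\eta)\neq0$ when the pairing is definite.

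\textbf{Remaining consistency check and the global lift.} I would still verify that the $\beta$-equation $\widehat{\alpha}\diamond\beta\diamond\widehat{\alpha}=2(\widehat{\alpha}\diamond\beta)^{(0)}\widehat{\alpha}$ holds. By part (b) of Theorem \ref{thm:even_Hermitian_forms}, it suffices to check it for $\beta=1$, where it reduces to the quadratic equation already solved. Finally, the global existence of $\eta$ on the chosen bundle is handled, as in Theorem \ref{thm:even_constrained_parallel_spinors}, by the spinor class: Proposition \ref{prop:spin_spinor_class=0} allows choosing $S$ so that $\mathfrak{s}_\gamma(\widehat{\alpha})=0$. I read the statement as referring to such a spin-c bundle.
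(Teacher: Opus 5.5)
Your proposal is correct and follows the paper's proof essentially step by step: $\tau(\widehat{\alpha})=\overline{\widehat{\alpha}}$ forces the form $r+\theta+if\nu$, the scalar part of $\widehat{\alpha}\diamond\widehat{\alpha}=2r\widehat{\alpha}$ gives $r^2=f^2+\escal{\theta,\theta}$, and $r=\tfrac12\scS(\eta,\eta)\neq0$ lets you take $\beta=1$. Your additional observations are sound refinements of points the paper leaves implicit: $r=0$ forces $\widehat{\alpha}=0$ in Riemannian signature, and the converse on a fixed bundle requires the spinor class to vanish, which can be arranged via Proposition \ref{prop:spin_spinor_class=0}.
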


\begin{proof}
    Let $\widehat{\alpha}=\widehat{\alpha}^{(0)}+\widehat{\alpha}^{(1)}+\widehat{\alpha}^{(2)}$, where $\widehat{\alpha}^{(k)}\in\Omega^k_\C(M)$. The linear equation $\tau(\widehat{\alpha})=\overline{\widehat{\alpha}}$ implies that $\widehat{\alpha}^{(0)}$ and $\widehat{\alpha}^{(1)}$ are real and that $\widehat{\alpha}^{(2)}$ is imaginary. Set $r:=\widehat{\alpha}^{(0)}\in C^\infty(M)$, $\theta:=\widehat{\alpha}^{(1)}\in\Omega^1(M)$, and $i\omega:=\widehat{\alpha}^{(2)}$ for $\omega\in\Omega^2(M)$. Since $\omega$ is a two-form, we can write $\omega=f\nu$ for some $f\in C^\infty(M)$, where $\nu$ is the Riemannian volume form. Hence $\widehat{\alpha}=r+\theta+if\nu$. We compute: $$\widehat{\alpha}\diamond\widehat{\alpha}=(r+\theta+if\nu)\diamond(r+\theta+if\nu)=r^2+2r\theta+2irf\nu+\escal{\theta,\theta}+f^2.$$
    
    Then the equation $\widehat{\alpha}\diamond\widehat{\alpha}=2\widehat{\alpha}^{(0)}\widehat{\alpha}$ amounts to $r^2=f^2+\escal{\theta,\theta}$. Since $\scS$ is a positive-definite admissible Hermitian pairing and $\eta\in\Gamma(S)$ is nowhere vanishing, we have that $r=\frac{1}{2}\scS(\eta,\eta)$ is nowhere vanishing as well, thus it suffices to take $\beta=1$ in the quadratic equation $\widehat{\alpha}\diamond\beta\diamond\widehat{\alpha}=2(\widehat{\alpha}\diamond\beta)^{(0)}\widehat{\alpha}$ to conclude.
\end{proof}

\begin{remark}\label{remark:(2,0)_chiral}
    If $\theta=0$, then $r^2=f^2$ and $f=\mu r$ for $\mu\in\Z_2$. In this situation $\widehat{\alpha}=r+i\mu r\nu$ is the Hermitian square of a \emph{chiral} spinor with chirality $\mu\in\Z_2$, see \cite[Cor.\ 5.1]{Algebraic_Complex_Square_2025}. Note that $\theta$ need not be nowhere vanishing; hence there may exist points $p\in M$ where $\theta_p=0$. At such points $\widehat{\alpha}_p=r_p+i\mu r_p\nu_p$, so the spinor $\eta$ is pointwise chiral at $p$, even if $\eta$ is not chiral on all of $M$.
\end{remark}

\begin{prop}\label{prop:(2,0)_Killing}
    Let $(M,g)$ be a two-dimensional spin-c Riemannian manifold and let $\eta\in\Gamma(S)$ be a Killing spinor with Killing number $\lambda=\lambda_R+i\lambda_I$. Then: $$\dd r=-2\lambda_I\theta,\qquad \nabla^g\theta=-2\lambda_Irg-2\lambda_Rf\nu,\qquad \dd f=-2\lambda_R*\theta,$$ where $\widehat{\alpha}=r+\theta+if\nu$ is the Hermitian square of $\eta$.
\end{prop}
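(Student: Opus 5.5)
We apply Lemma \ref{lemma:nabla_square_alpha} with $\kappa=1$ and positive adjoint type $s=1$, so that $\pi^{\frac{1-s}{2}}\circ\tau=\tau$. The Killing spinor equation $\nabla^{g,A}_X\eta=i\lambda X^\flat\cdot\eta$ has the form $\nabla^{g,A}\eta=\cA(\eta)$ with symbol $\fra(X)=i\lambda X^\flat$, a one-form-valued one-form. Since $\tau$ acts trivially on one-forms, $\tau(\overline{\fra(X)})=-i\bar\lambda X^\flat$. The Lemma therefore gives
\begin{equation*}
\nabla^g_X\widehat{\alpha}=i\lambda\, X^\flat\diamond\widehat{\alpha}-i\bar\lambda\,\widehat{\alpha}\diamond X^\flat
= i\lambda_R\,[X^\flat,\widehat{\alpha}]_\diamond-\lambda_I\,\{X^\flat,\widehat{\alpha}\}_\diamond ,
\end{equation*}
where the brackets denote the $\diamond$-commutator and anticommutator.

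Next we expand these using the Kähler-Atiyah product, with $\widehat{\alpha}=r+\theta+if\nu$ from Proposition \ref{prop:square_(2,0)}. We use $X^\flat\diamond\beta=X^\flat\wedge\beta+\iota_X\beta$ and $\beta\diamond X^\flat=(-1)^k(X^\flat\wedge\beta-\iota_X\beta)$ for $k$-forms $\beta$, with the plus convention.

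For the scalar part $r$: the commutator vanishes and the anticommutator is $2rX^\flat$.

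For the one-form part $\theta$: the commutator is $2X^\flat\wedge\theta$ and the anticommutator is $2\langle X^\flat,\theta\rangle$.

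For the two-form part $\nu$: the commutator is $2\iota_X\nu=2\ast X^\flat$ up to the orientation sign, and the anticommutator is $2X^\flat\wedge\nu=0$ in dimension two.

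Collecting these, the right-hand side becomes
\begin{equation*}
-2\lambda_I\langle X^\flat,\theta\rangle-2\lambda_I rX^\flat+2i\lambda_R X^\flat\wedge\theta-2\lambda_R f\,\iota_X\nu .
\end{equation*}

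On the left-hand side, $\nabla^g_X\widehat{\alpha}=X(r)+\nabla^g_X\theta+iX(f)\nu$, because $\nu$ is parallel. We now compare terms degree by degree.

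Degree zero gives $\dd r(X)=-2\lambda_I\theta(X)$, that is, $\dd r=-2\lambda_I\theta$.

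Degree one gives $\nabla^g_X\theta=-2\lambda_I rX^\flat-2\lambda_R f\iota_X\nu$. As a tensor this reads $\nabla^g\theta=-2\lambda_I r g-2\lambda_R f\nu$, once we fix the convention $\nu(X,Y)=(\iota_X\nu)(Y)$.

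Degree two gives $iX(f)\nu=2i\lambda_R X^\flat\wedge\theta$. Writing $X^\flat\wedge\theta=\langle X^\flat,\ast\theta\rangle\nu$ up to sign yields $\dd f=-2\lambda_R\ast\theta$.

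The main obstacle is sign bookkeeping. Several sign choices interact: the ordering convention in $\iota_X\nu$ versus $\ast$, the right-multiplication signs $(-1)^k$, and the identification of $X^\flat\wedge\theta$ with $(\ast\theta)(X)\nu$. These must be checked consistently, for instance in an oriented orthonormal frame $e^1,e^2$ with $\nu=e^1\wedge e^2$ and $\ast e^1=e^2$, to reproduce exactly the stated signs. The real and imaginary parts of $\lambda$ separate cleanly in the computation, so no further difficulty is expected.
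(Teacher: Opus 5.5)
Your proposal is correct and follows the paper's proof: apply the Hermitian-square equation with symbol $\fra_X=i\lambda X^\flat$ and $\tau(\overline{\fra}_X)=-i\bar\lambda X^\flat$, expand the two geometric products with $\widehat{\alpha}=r+\theta+if\nu$, and separate by degree. Grouping into $\diamond$-commutators and anticommutators is only a cosmetic way of splitting the $\lambda_R$ and $\lambda_I$ contributions. The sign ambiguities you flag resolve exactly as you predict: in the paper's conventions $\iota_X\nu=X^\flat\diamond\nu=\ast X^\flat$ (Proposition \ref{prop:product_volume_form}) and $\ast(X^\flat\wedge\theta)=-\iota_X(\ast\theta)$, which give $\nabla^g\theta=-2\lambda_Irg-2\lambda_Rf\nu$ and $\dd f=-2\lambda_R\ast\theta$.
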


\begin{proof}
    In the case of a Killing spinor, we have that $\cA_X=i\lambda\Psi_\gamma(X^\flat)\in\Gamma(\End(S))$, so its symbol is just $\fra_X=i\lambda X^\flat\in\Omega^1_\C(M)$. We compute: \begin{align*}
        \fra_X\diamond\widehat{\alpha}&=i\lambda X^\flat\diamond(r+\theta+if\nu)=i\lambda(rX^\flat+X^\flat\wedge\theta+\theta(X)+if*X^\flat),\\
        \widehat{\alpha}\diamond\tau(\overline{\fra}_X)&=(r+\theta+if\nu)\diamond(-i\bar{\lambda}X^\flat)=-i\bar{\lambda}(rX^\flat+\theta\wedge X^\flat+\theta(X)-if*X^\flat).
    \end{align*}
    
    By Corollary \ref{cor:even_constrained_parallel_spinors}, if the spinor $\eta\in\Gamma(S)$ satisfies $\nabla^{g,A}_X\eta=\cA_X(\eta)$, then $\nabla^g_X\widehat{\alpha}=\fra_X\diamond\widehat{\alpha}+\widehat{\alpha}\diamond\tau(\overline{\fra}_X)$. Separating by degrees this equation we obtain: $$\nabla^g_Xr=-2\lambda_I\theta(X),\qquad \nabla^g_X\theta=-2\lambda_IrX^\flat-2\lambda_Rf*X^\flat,\qquad(\nabla^g_Xf)\nu=2\lambda_RX^\flat\wedge\theta$$ for all $X\in\Gamma(TM)$. From the first equation, we obtain $\dd r=-2\lambda_I\theta$, and from the second equation, we obtain $\nabla^g\theta=-2\lambda_Irg-2\lambda_Rf\nu$. Taking the Hodge star operator of the third equation and using: $$*(X^\flat\wedge\theta)=-*(\theta\wedge X^\flat)=-\iota_X(*\theta)=-(*\theta)(X)$$ we obtain $\nabla^g_Xf=-2\lambda_R(*\theta)(X)$, which in turn implies that $\dd f=-2\lambda_R*\theta$.
\end{proof}

Using $\dd r=-2\lambda_I\theta$ and $\nabla^g\theta=-2\lambda_Irg-2\lambda_Rf\nu$ we compute the Hessian of the function $r$: $$\Hess(r):=\nabla^g\dd r=4\lambda_I^2rg+4\lambda_R\lambda_If\nu.$$

Since the Hessian of a function is a symmetric tensor (the Levi-Civita connection is torsion-free), the term $\lambda_R\lambda_If\nu\in\Omega^2(M)$ must vanish. Suppose that $\lambda_R\lambda_I\neq0$. Then $f=0$ and from $\dd f=-2\lambda_R*\theta$ we obtain $\theta=0$, thus $r=0$, which is not possible. Therefore $\lambda_R\lambda_I=0$, as expected.

\begin{cor}
    Let $(M,g)$ be a two-dimensional spin-c Riemannian manifold and let $\eta\in\Gamma(S)$ be a real Killing spinor with Killing number $\lambda_R$. Then $(M,g)$ has constant positive Gaussian curvature $K^g\equiv4\lambda_R^2$.
\end{cor}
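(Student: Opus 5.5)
Since $\eta$ is a real Killing spinor, $\lambda_I=0$ and $\lambda=\lambda_R\neq0$. Proposition \ref{prop:(2,0)_Killing} then gives
\begin{equation*}
\dd r=0,\qquad \nabla^g\theta=-2\lambda_R f\nu,\qquad \dd f=-2\lambda_R*\theta ,
\end{equation*}
where $\widehat{\alpha}=r+\theta+if\nu$ is the Hermitian square of $\eta$. Since $M$ is connected, $r$ is a nonzero constant, and by Proposition \ref{prop:square_(2,0)} we have $f^2+|\theta|^2=r^2$. The plan is to read off the Gaussian curvature from the Ricci identity, applied to $\theta$ on the open set $U=\{\theta\neq0\}$ and to $f$ on the open set $\{f\neq 0\}$. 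Together these sets cover $M$, because $f^2+|\theta|^2=r^2>0$.

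First I would compute the Hessian of $f$. Since the Hodge star commutes with $\nabla^g$, differentiating $\dd f=-2\lambda_R*\theta$ gives $\nabla^g\dd f=-2\lambda_R*\nabla^g\theta$, where $*$ acts on the second slot. Substituting $\nabla^g\theta=-2\lambda_Rf\nu$ gives $\nabla^g_X\dd f=4\lambda_R^2 f\,{*}(\iota_X\nu)$. Since ${*}(*X^\flat)=-X^\flat$ in dimension two, this yields
\begin{equation*}
\Hess(f)=-4\lambda_R^2 f\,g .
\end{equation*}
I would fix the signs using an oriented orthonormal frame $e_1,e_2$ with $*e^1=e^2$ and $*e^2=-e^1$. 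The sign here is the step I consider most delicate, so I would double check it against the constraint $f^2+|\theta|^2=\text{const}$. Differentiating that constraint gives $f\,\dd f+\langle\nabla^g\theta,\theta\rangle=0$. Indeed $\langle\iota_X\nu,\theta\rangle=\nu(X,\theta^\sharp)$, while $f\,\dd f(X)=-2\lambda_R f(*\theta)(X)$, and these cancel consistently. This confirms the relative sign.

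Next, $\dd f$ is a gradient of constant length relation type, and I would apply the Ricci identity on a surface. For $\xi=\dd f$ with $\nabla^g\xi=-4\lambda_R^2 f g$, the skew part of $\nabla^2\xi$ gives $R(X,Y)\xi=-4\lambda_R^2(\dd f(X)Y^\flat-\dd f(Y)X^\flat)$. On a surface, $R(X,Y)\xi=K^g(\xi(X)Y^\flat-\xi(Y)X^\flat)$ up to the standard sign convention. Comparing the two expressions gives $(K^g-4\lambda_R^2)\,\dd f=0$. Since $\dd f=-2\lambda_R*\theta$, this shows $K^g=4\lambda_R^2$ on $U=\{\theta\neq0\}$.

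It remains to handle the interior of the set where $\theta$ vanishes. There $f^2=r^2$, so $f=\pm r$ is locally constant and nonzero. However, $\Hess(f)=-4\lambda_R^2fg$ is then nonzero while $f$ is locally constant, which is impossible. Hence $\theta=0$ has empty interior and $U$ is dense. By continuity $K^g\equiv4\lambda_R^2>0$ on all of $M$.

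The main obstacle I expect is keeping the sign conventions (Hodge star, curvature, and the plus Clifford convention) consistent, so that the result is $+4\lambda_R^2$ rather than $-4\lambda_R^2$. As a sanity check I would use the round sphere, where $K^g>0$ is expected, and also positivity: a negative value would contradict $\Hess(f)=-cfg$ having the correct sign for the first eigenfunctions.
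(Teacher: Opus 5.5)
Your argument follows the paper's proof. You use Proposition~\ref{prop:(2,0)_Killing} with $\lambda_I=0$, then a Ricci-identity computation on a surface to get $K^g=4\lambda_R^2$ on $U=\{\theta\neq0\}$. Finally, you show $U$ is dense using $\Hess(f)=-4\lambda_R^2fg$ and $f^2+|\theta|^2=r^2$.

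The only difference is cosmetic. You apply the Ricci identity to $\xi=\dd f=-2\lambda_R*\theta$, whereas the paper applies it to $\theta$ through $\omega=f\nu$. In dimension two these are the same computation rotated by the parallel operator $*$; it is also the device the paper uses for $V_r$ in the imaginary case.

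Your Hessian computation is correct, as is $R(X,Y)\xi=-4\lambda_R^2(\xi(X)Y^\flat-\xi(Y)X^\flat)$ for $R(X,Y)=[\nabla^g_X,\nabla^g_Y]-\nabla^g_{[X,Y]}$, and so is the density argument. Your opening plan to cover $M$ by $\{\theta\neq0\}\cup\{f\neq0\}$ would not work, because the identity gives nothing where $\dd f=0$. You rightly abandon it for the density argument.

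There is one concrete error in the key step. The surface curvature formula you quote, $R(X,Y)\xi=K^g(\xi(X)Y^\flat-\xi(Y)X^\flat)$, has the wrong sign for the convention you used on the left-hand side. The sign is the entire content of the statement, so ``up to the standard sign convention'' cannot stand.

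With $R(X,Y)=[\nabla^g_X,\nabla^g_Y]-\nabla^g_{[X,Y]}$, a surface satisfies $R(X,Y)Z=K^g(g(Y,Z)X-g(X,Z)Y)$. Since $\nabla^g$ commutes with $\flat$, this gives $R(X,Y)\xi=K^g(\xi(Y)X^\flat-\xi(X)Y^\flat)$, which is what the paper uses. Taken literally, your formula would give $K^g=-4\lambda_R^2$. With the correct one, the comparison gives $(K^g-4\lambda_R^2)(\xi(X)Y^\flat-\xi(Y)X^\flat)=0$. Choosing $X=\xi^\sharp$ and a nonzero $Y\perp X$ then yields $K^g=4\lambda_R^2$ wherever $\dd f\neq0$, that is, on $U$.

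Your round-sphere and eigenfunction checks are heuristics and do not replace this computation. With the corrected formula the proof is complete.
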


\begin{proof}
    By Proposition \ref{prop:(2,0)_Killing} we have: $$\dd r=0,\qquad \nabla^g\theta=-2\lambda_Rf\nu,\qquad \dd f=-2\lambda_R*\theta.$$

    Let $\omega:=f\nu$. Then $\nabla^g_X\omega=2\lambda_R X^\flat\wedge\theta$ and $\nabla^g_X\theta=-2\lambda_R\iota_X\omega$. A computation gives us: \begin{equation}\label{eq:R(X,Y)theta}
        R^g(X,Y)\theta=2\lambda_R(\iota_X(\nabla^g_Y\omega)-\iota_Y(\nabla^g_X\omega))=4\lambda_R^2(\theta(Y)X^\flat-\theta(X)Y^\flat).
    \end{equation}
    
    The curvature of a two-dimensional Riemannian manifold $(M,g)$ is completely determined by its Gaussian curvature $K^g\in C^\infty(M)$. For a one-form $\theta$, the curvature acts as $(R^g(X,Y)\theta)Z=-\theta(R^g(X,Y)Z)$. Then a computation gives: $$R^g(X,Y)\theta=K^g(\theta(Y)X^\flat-\theta(X)Y^\flat).$$
    
    Comparing this expression with \eqref{eq:R(X,Y)theta} we obtain $(K^g-4\lambda_R^2)(\theta(Y)X^\flat-\theta(X)Y^\flat)=0$ for all $X,Y\in\Gamma(TM)$.\medskip

    Let $U:=\{p\in M\mid \theta_p\neq0\}$. The subset $U\subset M$ is open. Let $p\in U$, take $X_p=\theta^\sharp_p\in T_pM$ and $Y_p\in T_pM$ non-zero and orthogonal to $X_p$. Then $(\theta(Y)X^\flat-\theta(X)Y^\flat)_p=-\escal{\theta_p,\theta_p}Y_p\neq0$ and $K^g_p=4\lambda_R^2$. Since the point $p\in U$ was arbitrary, we conclude that $K^g\equiv4\lambda_R^2$ on $U$.\medskip

    Suppose that $U\subset M$ is not dense, then its complement $U^c:=M\setminus U=\{p\in M\mid\theta_p=0\}$ contains an open set $W$. On $W$, $\theta=0$ implies that $\dd f=0$, hence $f$ is constant on $W$ and $\Hess(f)=0$ on $W$. The relation $\Hess(f)=-4\lambda_R^2fg$ implies that $\lambda_R^2f=0$ on $W$. Since $\lambda_R\neq0$, then $f=0$ on $W$. This implies that $r=0$ on $W$ since $r^2=f^2+\escal{\theta,\theta}$, but this is not possible since $r$ is nowhere vanishing. Therefore, $U^c$ has empty interior, i.e.\ $U$ is dense. Finally, since $K^g\equiv4\lambda_R^2$ on a dense open subset $U\subset M$, we conclude that $K^g\equiv4\lambda_R^2$ on $M$.
\end{proof}

\begin{cor}
    Let $(M,g)$ be a two-dimensional spin-c Riemannian manifold and let $\eta\in\Gamma(S)$ be an imaginary Killing spinor with Killing number $i\lambda_I$. Then $(M,g)$ has constant negative Gaussian curvature $K^g\equiv-4\lambda_I^2$.
\end{cor}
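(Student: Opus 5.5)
Setting $\lambda=i\lambda_I$, i.e.\ $\lambda_R=0$, in Proposition \ref{prop:(2,0)_Killing} gives
\begin{equation*}
\dd r=-2\lambda_I\theta,\qquad \nabla^g\theta=-2\lambda_I r g,\qquad \dd f=0 .
\end{equation*}
So $f$ is locally constant, hence constant since $M$ is connected, and $\theta=-\frac{1}{2\lambda_I}\dd r$ is closed, with $\Hess(r)=4\lambda_I^2 r g$. I would mirror the proof of the real case. Differentiating $\nabla^g_X\theta=-2\lambda_I r X^\flat$ once more, and using that $X^\flat$ is parallel when $X$ is, I get
\begin{equation*}
R^g(X,Y)\theta=-2\lambda_I\big((\nabla^g_X r)Y^\flat-(\nabla^g_Y r)X^\flat\big)=-4\lambda_I^2\big(\theta(Y)X^\flat-\theta(X)Y^\flat\big).
\end{equation*}
Comparing with the two-dimensional identity $R^g(X,Y)\theta=K^g(\theta(Y)X^\flat-\theta(X)Y^\flat)$ from the previous proof yields
\begin{equation*}
(K^g+4\lambda_I^2)\big(\theta(Y)X^\flat-\theta(X)Y^\flat\big)=0 .
\end{equation*}
As before, $K^g=-4\lambda_I^2$ on the open set $U=\{\theta\neq0\}$.

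The main point is density of $U$. Suppose $W\subset M\setminus U$ is open and nonempty. On $W$ we have $\theta=0$, so $\nabla^g\theta=0$, which forces $-2\lambda_I r g=0$, i.e.\ $r=0$ on $W$ because $\lambda_I\neq0$. This contradicts $r$ being nowhere vanishing. So $U$ is dense, and by continuity $K^g\equiv-4\lambda_I^2$ on all of $M$.

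The argument is easier than the real case, since the second equation directly ties $\theta$ to $r$. The only delicate step is the sign bookkeeping in the curvature computation, which I would double-check with the convention $(R^g(X,Y)\theta)Z=-\theta(R^g(X,Y)Z)$ used above.
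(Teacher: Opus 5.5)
Your proof is correct and follows essentially the same route as the paper. The paper runs the identical curvature comparison on the gradient $V_r=(\dd r)^\sharp=-2\lambda_I\theta^\sharp$ instead of on $\theta$, and gets density from $\Hess(r)=4\lambda_I^2 r g$ vanishing on an open set where $\dd r=0$; this is equivalent to your observation that $\nabla^g\theta=-2\lambda_I r g$ forces $r=0$ there, and your sign bookkeeping leading to $(K^g+4\lambda_I^2)(\theta(Y)X^\flat-\theta(X)Y^\flat)=0$ checks out.
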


\begin{proof}
    By Proposition \ref{prop:(2,0)_Killing} we have: $$\dd r=-2\lambda_I\theta,\qquad \nabla^g\theta=-2\lambda_Irg,\qquad \dd f=0.$$

    These equations imply that $\Hess(r)=4\lambda_I^2rg$. Let $V_r:=\mathrm{grad}(r)=(\dd r)^\sharp\in\Gamma(TM)$. Then: $$g(\nabla^g_XV_r,Y)=\Hess(r)(X,Y)=4\lambda_I^2rg(X,Y)=g(4\lambda_I^2rX,Y)$$ for all $X,Y\in\Gamma(TM)$, thus $\nabla^g_XV_r=4\lambda_I^2rX$. A computation gives us: \begin{equation}\label{eq:R(X,Y)V_r}
        R^g(X,Y)V_r:=(\nabla^g_X\nabla^g_Y-\nabla^g_Y\nabla^g_X-\nabla^g_{[X,Y]})V_r=4\lambda_I^2(X(r)Y-Y(r)X).
    \end{equation}

    Recall that the curvature of a two-dimensional Riemannian manifold $(M,g)$ is completely determined by its Gaussian curvature $K^g\in C^\infty(M)$, which is given by: $$R^g(X,Y)Z=K^g(g(Y,Z)X-g(X,Z)Y).$$

    Setting $Z=V_r$ we obtain $R^g(X,Y)V_r=K^g(Y(r)X-X(r)Y)$. Comparing this expression with \eqref{eq:R(X,Y)V_r} we obtain $(K^g+4\lambda_I^2)(X(r)Y-Y(r)X)=0$ for all $X,Y\in\Gamma(TM)$.\medskip
    
    Let $U:=\{p\in M\mid (V_r)_p\neq0\}$. The subset $U\subset M$ is open. Let $p\in U$, take $X_p=(V_r)_p\in T_pM$ and $Y_p\in T_pM$ non-zero and orthogonal to $X_p$. Then $(X(r)Y-Y(r)X)_p=g_p((V_r)_p,(V_r)_p)Y_p\neq0$ and $K^g_p=-4\lambda_I^2$. Since the point $p\in U$ was arbitrary, we conclude that $K^g\equiv-4\lambda_I^2$ on $U$.\medskip

    Suppose that $U\subset M$ is not dense, then its complement $U^c:=M\setminus U=\{p\in M\mid(V_r)_p=0\}$ contains an open set $W$. On $W$, $V_r=0$ implies that $\dd r=0$, hence $r$ is constant on $W$ and $\Hess(r)=0$ on $W$. The relation $\Hess(r)=4\lambda_I^2rg$ implies that $\lambda_I^2r=0$ on $W$. Since $r$ is nowhere vanishing and $\lambda_I\neq0$, this is not possible. Therefore, $U^c$ has empty interior, i.e.\ $U$ is dense. Finally, since $K^g\equiv-4\lambda_I^2$ on a dense open subset $U\subset M$, we conclude that $K^g\equiv-4\lambda_I^2$ on $M$.
\end{proof}


\subsection{Spin-c Killing spinors in dimension three}


Let $(S,\gamma_\ell)$ be a bundle of irreducible complex Clifford modules on a three-dimensional spin-c Riemannian manifold $(M,g)$. Assume that $(S,\gamma_\ell)$ is equipped with an admissible Hermitian pairing $\scS$ of positive adjoint type. The truncated Kähler-Atiyah bundle in this dimension and signature is defined on: $$\wedge^<T^*_\C M=\C\oplus T^*_\C M,$$ where here $\C$ denotes the trivial complex line bundle, endowed with the product: $$\alpha\vee\beta=\cP_<(\alpha\diamond\beta+i\ell*\tau(\alpha\diamond\beta)),\quad \ell\in\Z_2$$ for all $\alpha,\beta\in\wedge^<T^*_\C M$. Set $\kappa=1$ for simplicity. By the global version of Theorem \ref{thm_Hermitian_square_odd}, a complex exterior form $\widehat{\alpha}\in\Omega^<_\C(M)$ is the Hermitian square of a nowhere vanishing irreducible complex spinor $\eta\in\Gamma(S)$ if and only if: $$\widehat{\alpha}\vee\widehat{\alpha}=2\widehat{\alpha}^{(0)}\widehat{\alpha},\qquad \widehat{\alpha}\vee\beta\vee\widehat{\alpha}=2(\widehat{\alpha}\vee\beta)^{(0)}\widehat{\alpha},\qquad \tau(\widehat{\alpha})=\overline{\widehat{\alpha}}$$ for a complex exterior form $\beta\in\Omega^<_\C(M)$ satisfying $(\widehat{\alpha}\vee\beta)^{(0)}\neq0$.

\begin{prop}\label{prop:square_(3,0)}
    Let $(M,g)$ be a three-dimensional spin-c Riemannian manifold. A complex exterior form $\widehat{\alpha}\in\Omega^<_\C(M)$ is the Hermitian square of a nowhere vanishing irreducible complex spinor $\eta\in\Gamma(S)$ if and only if: $$\widehat{\alpha}=r+\theta,$$ where $r\in C^\infty(M)$ is nowhere vanishing and $\theta\in\Omega^1(M)$ satisfying $r^2=\escal{\theta,\theta}$.
\end{prop}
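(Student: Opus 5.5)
The proof mirrors that of Proposition \ref{prop:square_(2,0)}, now using the global version of Theorem \ref{thm_Hermitian_square_odd} with $d=3$, $s=+1$ and $\kappa=1$. By that theorem, $\widehat{\alpha}\in\Omega^<_\C(M)$ is the Hermitian square of a nowhere vanishing spinor if and only if three conditions hold: $\widehat{\alpha}\vee\widehat{\alpha}=2\widehat{\alpha}^{(0)}\widehat{\alpha}$, the cubic condition $\widehat{\alpha}\vee\beta\vee\widehat{\alpha}=2(\widehat{\alpha}\vee\beta)^{(0)}\widehat{\alpha}$ for some $\beta$ with $(\widehat{\alpha}\vee\beta)^{(0)}\neq0$, and the reality condition $\tau(\widehat{\alpha})=\overline{\widehat{\alpha}}$. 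The plan is to solve these one at a time on $\wedge^<T^*_\C M=\C\oplus T^*_\C M$, and to use nowhere vanishing of $\eta$ to control $\widehat{\alpha}^{(0)}$.

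\emph{Linear condition.} Write $\widehat{\alpha}=\widehat{\alpha}^{(0)}+\widehat{\alpha}^{(1)}$. Since $\tau$ acts as the identity on degrees $0$ and $1$, the condition $\tau(\widehat{\alpha})=\overline{\widehat{\alpha}}$ says both components are real. Set $r:=\widehat{\alpha}^{(0)}\in C^\infty(M)$ and $\theta:=\widehat{\alpha}^{(1)}\in\Omega^1(M)$.

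\emph{Quadratic condition.} We need the truncated product $\widehat{\alpha}\vee\widehat{\alpha}$. In the full Kähler-Atiyah algebra,
\begin{equation*}
(r+\theta)\diamond(r+\theta)=r^2+2r\theta+\escal{\theta,\theta},
\end{equation*}
because $\theta\wedge\theta=0$. This product has no component of degree $2$ or $3$. Hence the term $*\tau(\cdot)$ in $\vee$ contributes only in degrees $\geq2$, and these are removed by $\cP_<$. We therefore get $\widehat{\alpha}\vee\widehat{\alpha}=r^2+\escal{\theta,\theta}+2r\theta$. Comparing with $2r\widehat{\alpha}=2r^2+2r\theta$, the degree-one parts agree automatically, and the degree-zero part gives exactly $r^2=\escal{\theta,\theta}$.

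The only step needing care is the normalisation of $\vee$. Depending on whether one uses $\cP_<(\alpha\diamond\beta+i\ell*\tau(\alpha\diamond\beta))$ or $2\cP_<\cP_\ell(\alpha\diamond\beta)$, there is a factor to check. I will check it on the degree-zero component, using $1\vee1=1$ as a consistency test. Both expressions give the same answer for products landing only in degrees $0$ and $1$, so nothing changes.

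\emph{Cubic condition and conclusion.} For necessity, $r=\frac{1}{2}\scS(\eta,\eta)$ (from the explicit expansion with $d=3$), and $\scS$ is positive definite. So $r$ is nowhere vanishing and $r^2=\escal{\theta,\theta}$ holds. For sufficiency, take $\beta=1$. Then $(\widehat{\alpha}\vee\beta)^{(0)}=r\neq0$, and the cubic identity reduces to $\widehat{\alpha}\vee\widehat{\alpha}\vee\widehat{\alpha}=2r\,\widehat{\alpha}\vee\widehat{\alpha}$. This follows from the quadratic identity by associativity: $\widehat{\alpha}\vee(\widehat{\alpha}\vee\widehat{\alpha})=\widehat{\alpha}\vee 2r\widehat{\alpha}$. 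Theorem \ref{thm_Hermitian_square_odd} then gives, pointwise, a spinor with Hermitian square $\widehat{\alpha}$.

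\emph{Globalisation.} Following the convention of the two-dimensional case, we pass from pointwise to global spinors via the line bundle of Proposition \ref{prop:spin_cohomology_class}, working in the spinor bundle for which the spinor class vanishes (Proposition \ref{prop:spin_spinor_class=0}). The nowhere vanishing condition on $\eta$ is equivalent to $r\neq0$ everywhere. I expect no real obstacle; the main thing to get right is the truncated-product bookkeeping above.
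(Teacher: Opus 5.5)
Your proposal is correct and follows the paper's proof step for step. Both arguments use the same four ingredients: reality of $\widehat{\alpha}^{(0)}$ and $\widehat{\alpha}^{(1)}$ from $\tau(\widehat{\alpha})=\overline{\widehat{\alpha}}$; the degree-zero part of $\widehat{\alpha}\vee\widehat{\alpha}=2r\,\widehat{\alpha}$, which gives $r^2=\escal{\theta,\theta}$; nowhere vanishing of $r=\frac{1}{2}\scS(\eta,\eta)$ by positive-definiteness; and the choice $\beta=1$ for the cubic condition.

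There is one small slip. With $\beta=1$ the cubic condition is literally the quadratic identity $\widehat{\alpha}\vee\widehat{\alpha}=2r\,\widehat{\alpha}$, not $\widehat{\alpha}\vee\widehat{\alpha}\vee\widehat{\alpha}=2r\,\widehat{\alpha}\vee\widehat{\alpha}$, so no associativity argument is needed. Your normalisation worry is also moot: the paper's two formulas for $\vee$ coincide identically, since $2\cP_<(\cP_\ell(x))=\cP_<(x+i\ell*\tau(x))$ for every $x$.
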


\begin{proof}
    Let $\widehat{\alpha}=\widehat{\alpha}^{(0)}+\widehat{\alpha}^{(1)}\in\Omega^<_\C(M)$. The linear equation $\tau(\widehat{\alpha})=\overline{\widehat{\alpha}}$ implies that $\widehat{\alpha}^{(0)}$ and $\widehat{\alpha}^{(1)}$ are real. Set $r:=\alpha^{(0)}\in C^\infty(M)$ and $\theta:=\alpha^{(1)}\in\Omega^1(M)$. Hence $\widehat{\alpha}=r+\theta$. Using that $\widehat{\alpha}\diamond\widehat{\alpha}=r^2+2r\theta+\escal{\theta,\theta}$, we obtain that the quadratic equation $\widehat{\alpha}\vee\widehat{\alpha}=2\widehat{\alpha}^{(0)}\widehat{\alpha}$ is equivalent to $r^2=\escal{\theta,\theta}$. Since $\scS$ is a positive-definite admissible Hermitian pairing and $\eta\in\Gamma(S)$ is nowhere vanishing, we have that $r=\frac{1}{2}\scS(\eta,\eta)$ is nowhere vanishing as well, thus it suffices to take $\beta=1$ in the quadratic equation $\smash{\widehat{\alpha}\vee\beta\vee\widehat{\alpha}=2(\widehat{\alpha}\vee\beta)^{(0)}\widehat{\alpha}}$ to conclude.
\end{proof}

\begin{remark}
    Since $\widehat{\alpha}$ and $r$ are nowhere vanishing, and the relation $r^2=\escal{\theta,\theta}$ holds, the one-form $\theta$ is nowhere vanishing too.
\end{remark}

\begin{prop}\label{prop:(3,0)_Killing}
    Let $(M,g)$ be a three-dimensional spin-c Riemannian manifold and let $\eta\in\Gamma(S)$ be a Killing spinor with Killing number $\lambda=\lambda_R+i\lambda_I$. Then: $$\dd r=-2\lambda_I\theta,\qquad \nabla^g\theta=-2\lambda_Irg-2\ell\lambda_R*\theta,$$ where $\widehat{\alpha}=r+\theta$ is the Hermitian square of $\eta$.
\end{prop}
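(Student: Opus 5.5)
The plan mirrors the proof of Proposition \ref{prop:(2,0)_Killing}, now using the odd-dimensional Corollary \ref{cor:odd_constrained_parallel_spinors}. For a Killing spinor we have $\cA_X=i\lambda\,\Psi^<_\ell(X^\flat)$, so its symbol is $\fra_X=i\lambda X^\flat\in\Omega^1_\C(M)\subset\Omega^<_\C(M)$. The positive adjoint type $s=1$ makes the transpose simply $\tau(\overline{\fra}_X)=-i\bar\lambda X^\flat$, since $\tau$ acts trivially on one-forms. The corollary then gives
\begin{equation*}
\nabla^g_X\widehat{\alpha}=i\lambda X^\flat\vee(r+\theta)-i\bar\lambda(r+\theta)\vee X^\flat ,
\end{equation*}
where $\widehat{\alpha}=r+\theta$ with $r$ and $\theta$ real, by Proposition \ref{prop:square_(3,0)}.

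The key step is computing the truncated products. In dimension three with $q=0$, the complex volume form is $\nu_\C=i\nu$. The truncated product is $\alpha\vee\beta=\cP_<(\alpha\diamond\beta+i\ell*\tau(\alpha\diamond\beta))$, which keeps degrees $0,1$. For the degree-two part $X^\flat\wedge\theta$ of $X^\flat\diamond\theta=X^\flat\wedge\theta+\theta(X)$, we have $\tau(X^\flat\wedge\theta)=-X^\flat\wedge\theta$. This part is therefore replaced by $-i\ell*(X^\flat\wedge\theta)$ in degree one, while the degree-zero and degree-one parts $r$, $rX^\flat$, $\theta(X)$ pass unchanged, since their Hodge duals have degree $\geq2$ and are projected away. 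This yields
\begin{align*}
X^\flat\vee(r+\theta)&=rX^\flat+\theta(X)-i\ell*(X^\flat\wedge\theta),\\
(r+\theta)\vee X^\flat&=rX^\flat+\theta(X)+i\ell*(X^\flat\wedge\theta).
\end{align*}

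Combining the two products, $i\lambda-i\bar\lambda=-2\lambda_I$ multiplies $rX^\flat+\theta(X)$. For the starred term, $i\lambda(-i\ell)+(-i\bar\lambda)(i\ell)=\ell(\lambda+\bar\lambda)=2\ell\lambda_R$ multiplies $*(X^\flat\wedge\theta)$. Separating by degree gives $\dd r=-2\lambda_I\theta$ and
\begin{equation*}
\nabla^g_X\theta=-2\lambda_I rX^\flat+2\ell\lambda_R*(X^\flat\wedge\theta).
\end{equation*}

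It remains to identify $*(X^\flat\wedge\theta)$ with $-\iota_X*\theta$, i.e.\ $-(*\theta)(X,\cdot)$, as in the two-dimensional case. In Riemannian dimension three, $*(X^\flat\wedge\theta)=\iota_X*\theta$ up to a sign fixed by convention. Substituting gives $\nabla^g\theta=-2\lambda_I rg-2\ell\lambda_R*\theta$, with $*\theta$ read as a two-form, i.e.\ a tensor in $T^*M\otimes T^*M$.

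The main obstacle is sign bookkeeping. Three conventions interact: the truncation formula (the factor $i^{q+\frac{d-1}{2}}=i$), the Hodge convention used in Proposition \ref{prop:product_volume_form}, and the identity $*(X^\flat\wedge\theta)=\pm\iota_X*\theta$. I would first check the sign of $\nu\diamond\alpha=*\tau(\alpha)$ from the appendix, then verify the identity on an orthonormal frame with $X=e_1$, $\theta=e^2$. The overall sign of the $\lambda_R$ term should come out as stated, matching the conventions already used in the two-dimensional proof.
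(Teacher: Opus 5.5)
Your proposal is correct and matches the paper's proof: the same symbol $\fra_X=i\lambda X^\flat$ and the same two truncated products $X^\flat\vee(r+\theta)$ and $(r+\theta)\vee X^\flat$. Separating by degrees then gives $\nabla^g_X\theta=-2\lambda_I rX^\flat+2\ell\lambda_R*(X^\flat\wedge\theta)$, exactly as in the paper. The sign you flag follows from the appendix identity $\alpha\triangle_1(*\beta)=*(\beta\wedge\alpha)$ with $\alpha=X^\flat$, which gives $*(X^\flat\wedge\theta)=-\iota_X*\theta$ as in the two-dimensional case, so no separate frame check is needed.
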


\begin{proof}
    In the case of a Killing spinor, we have that $\cA_X=i\lambda\Psi^<_\ell(X^\flat)\in\Gamma(\End(S))$, so its symbol is just $\fra_X=i\lambda X^\flat\in\Omega^1_\C(M)$. We compute: \begin{align*}
        \fra_X\vee\widehat{\alpha}&=i\lambda X^\flat\vee(r+\theta)=i\lambda(rX^\flat+\theta(X)-i\ell*(X^\flat\wedge\theta)),\\
        \widehat{\alpha}\vee\tau(\overline{\fra}_X)&=(r+\theta)\vee(-i\bar{\lambda}X^\flat)=-i\bar{\lambda}(rX^\flat+\theta(X)+i\ell*(X^\flat\wedge\theta)).
    \end{align*}

    By Corollary \ref{cor:odd_constrained_parallel_spinors}, if the spinor $\eta\in\Gamma(S)$ satisfies $\nabla^{g,A}_X\eta=\cA_X(\eta)$, then $\nabla^g_X\widehat{\alpha}=\fra_X\vee\widehat{\alpha}+\widehat{\alpha}\vee\tau(\overline{\fra}_X)$. Separating by degrees this equation we obtain: $$\nabla^g_Xr=-2\lambda_I\theta(X),\qquad \nabla^g_X\theta=-2\lambda_IrX^\flat+2\ell\lambda_R*(X^\flat\wedge\theta)$$ for all $X\in\Gamma(TM)$. These two equations are equivalent to those of the statement.
\end{proof}

Using $\dd r=-2\lambda_I\theta$ and $\nabla^g\theta=-2\lambda_Irg-2\ell\lambda_R*\theta$ we compute the Hessian of the function $r$: $$\Hess(r)=\nabla^g\dd r=4\lambda_I^2rg+4\ell\lambda_R\lambda_I*\theta.$$

Since $\Hess(r)$ is a symmetric tensor and $\theta$ is nowhere vanishing, we have that $\lambda_R\lambda_I=0$, as expected.

\begin{cor}
A three-dimensional Riemannian manifold $(M,g)$ admits a real spin-c Killing spinor with Killing number $\lambda_R$ if and only if it is an $\alpha$-Sasakian manifold with $\alpha=2\lambda_R$.
\end{cor}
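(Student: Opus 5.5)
The plan is to prove the two directions separately. The forward direction follows from the Hermitian spinorial form equations of Proposition \ref{prop:(3,0)_Killing}. For the converse I will build the spinor from its Hermitian square and then use the freedom in the spin-c connection $A$ to absorb the residual phase. That freedom is what makes the spin-c setting work without simple connectedness or completeness.

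\emph{Forward direction.} Let $\eta$ be a real spin-c Killing spinor with Killing number $\lambda_R$, so $\lambda_I=0$. Proposition \ref{prop:(3,0)_Killing} gives
\begin{equation*}
\dd r=0,\qquad \nabla^g\theta=-2\ell\lambda_R*\theta .
\end{equation*}
Since $M$ is connected, $r$ is a nonzero constant, and Proposition \ref{prop:square_(3,0)} gives $\escal{\theta,\theta}=r^2$. Set $\xi:=r^{-1}\theta^\sharp$, which is a unit vector field. Then $\nabla^g\xi^\flat=-2\ell\lambda_R*\xi^\flat$ is skew-symmetric, so $\xi$ is a unit Killing field. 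In dimension three the endomorphism $\phi:=-\ell\,\iota_\xi\nu$ (suitably raised) restricts to a complex structure on $\xi^\perp$ and annihilates $\xi$, so $\nabla^g_X\xi=-2\lambda_R\phi X$. The remaining $\alpha$-Sasakian identity
\begin{equation*}
(\nabla^g_X\phi)Y=2\lambda_R\big(g(X,Y)\xi-g(\xi,Y)X\big)
\end{equation*}
then follows by differentiating $\phi=-\ell\iota_\xi\nu$, using $\nabla^g\nu=0$ and the formula for $\nabla^g\xi$. This is a pointwise computation in an adapted orthonormal frame $\{\xi,e_2,e_3\}$. The sign $\ell$ is absorbed by the choice of orientation, or equivalently by the convention for $\phi$.

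\emph{Converse direction.} Let $(M,g,\xi)$ be $\alpha$-Sasakian with $\alpha=2\lambda_R$, fix any spin-c structure (every orientable three-manifold is spin-c), and set $\widehat\alpha:=1+\xi^\flat$. By Proposition \ref{prop:square_(3,0)}, $\widehat\alpha$ is pointwise the Hermitian square of a nonzero spinor. By Proposition \ref{prop:spin_spinor_class=0}, after twisting $S$ by a line bundle we may assume its spinor class vanishes. Proposition \ref{prop:spin_cohomology_class} then gives a nowhere vanishing $\eta\in\Gamma(S)$ with $\widehat{\mathcal E}^1_{\gamma}(\eta)=\widehat\alpha$. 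Reading the $\alpha$-Sasakian equations backwards shows that $\widehat\alpha$ satisfies the differential equation of Corollary \ref{cor:odd_constrained_parallel_spinors} with $\fra_X=i\lambda_RX^\flat$; this is the computation in the proof of Proposition \ref{prop:(3,0)_Killing} reversed.

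The key step is a converse of Lemma \ref{lemma:nabla_square_alpha}. Put $\cD:=\nabla^{g,A}-\cA$ with $\cA_X=i\lambda_R\Psi^<_\ell(X^\flat)$. Mapping the equation for $\widehat\alpha$ through $\Psi^<_\ell$ yields
\begin{equation*}
\cD_X\eta\otimes\scS(-,\eta)+\eta\otimes\scS(-,\cD_X\eta)=0 .
\end{equation*}
Arguing as in Lemma \ref{lemma:complex_bilinear_parallel_spinor}, this forces $\cD_X\eta=c(X)\eta$ for some complex one-form $c$. Substituting back and using $\scS(\eta,\eta)\neq0$ gives $c+\bar c=0$, so $c$ is purely imaginary. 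This is the exact phase ambiguity of the Hermitian square.

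Now replace the connection $A$ on the characteristic bundle $P$ by $A-2c$. Since $\cL$ is associated to $P$ via $z\mapsto z^2$, this shifts $\nabla^{g,A}$ on $S$ by $-c$. The new connection then satisfies $\nabla^{g,A-2c}_X\eta=i\lambda_RX^\flat\cdot\eta$, so $\eta$ is a real spin-c Killing spinor with the required Killing number.

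\emph{Main obstacle.} I expect the hardest part to be the sign and orientation bookkeeping: matching $\ell$, the sign of $\phi$, and the convention $v^2=h(v,v)$ with the standard $\alpha$-Sasakian conventions, so that exactly $\alpha=2\lambda_R$ comes out, possibly after reversing the orientation. A second point to check carefully is that the Hermitian-square converse step really only loses an imaginary one-form.
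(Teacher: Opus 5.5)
Your forward direction is the paper's argument. Proposition \ref{prop:(3,0)_Killing} gives $\dd r=0$ and $\nabla^g\theta=-2\ell\lambda_R*\theta$, and one checks that $\xi=r^{-1}\theta^\sharp$ with $\varphi=\ell J$ is $2\lambda_R$-Sasakian.

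Your converse is correct but follows a genuinely different route.

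\emph{The paper's converse.} It switches to the complex-bilinear square. It builds the characteristic line bundle explicitly as $\cL=(\ker(\zeta)^*,J^*)$ and equips it with the connection induced by the projected Levi-Civita connection $\nabla^{\perp_\xi}$. It then writes down $\beta=(e^1+ie^2)\otimes(e^1-ie^2)$ and twists the connection by $-2i\ell\lambda_R\zeta$ so that $\nabla_X\beta=-2\ell\lambda_R\iota_X(*\beta)$ holds. Finally it invokes the if-and-only-if Theorem \ref{thm:odd_CB_CPS}.

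\emph{Your converse.} You stay with the Hermitian square $1+\zeta$ and realize it by a global section after twisting $S$ by a line bundle. You then prove a converse to Lemma \ref{lemma:nabla_square_alpha}: the transported equation forces $\cD\eta=c\,\eta$ with $c\in\Omega^1(M,i\R)$, which is exactly the phase ambiguity of the Hermitian square. You absorb $c$ via $A\mapsto A-2c$.

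This is sound, with two small points of precision. The factor $2$ is right because the connection induced on $S$ sees only $A/2$; but it is $P$, not $\cL$, that is associated to the spin-c structure via $z\mapsto z^2$. Also, the spinor-class Propositions you cite are stated in even dimensions, though their proofs carry over verbatim to odd dimensions.

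\emph{What each route buys.} Yours is shorter and more conceptual. It shows that whenever the $\U(1)$-connection is a free variable, the Hermitian-square system of Corollary \ref{cor:odd_constrained_parallel_spinors} is sufficient as well as necessary, so the complex-bilinear machinery is not needed here. The paper's route gives explicitness: it identifies the spin-c structure and the connection $A$ concretely in terms of the Sasakian data. Yours produces them only implicitly, through the twisting line bundle and the one-form $c$.
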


\begin{proof}
By Proposition \ref{prop:(3,0)_Killing} we have: $$\dd r=0,\qquad \nabla^g\theta=-2\ell\lambda_R*\theta.$$

The first equation implies that $r$ is a non-zero constant function. The second equation implies that $\theta^\sharp$ is a Killing vector field since $\mathscr{L}_{\theta^\sharp}g=\Sym(\nabla^g\theta)=0$. Set $\xi:=\frac{1}{r}\theta^\sharp$. Then $\xi$ is a Killing vector field of unit length. Set $\varphi:=-\frac{1}{2\lambda_R}\nabla^g\xi$ and $\zeta:=\xi^\flat$. Then: $$\nabla^g_X\xi=\tfrac{1}{r}\nabla^g_X\theta^\sharp=\tfrac{1}{r}(\nabla^g_X\theta)^\sharp=\tfrac{1}{r}(-2\ell\lambda_R\iota_X(*\theta))^\sharp=-2\ell\lambda_R(\iota_X(*\zeta))^\sharp$$ for all $X\in\Gamma(TM)$. Setting $J(X):=(\iota_X(*\zeta))^\sharp$, we get $\varphi(X)=-\frac{1}{2\lambda_R}\nabla^g_X\xi=\ell J(X)$, thus $\varphi^2=J^2$. Now let $\{e_1,e_2,\xi\}$ be an orthonormal frame of $(M,g)$. Then $J(e_1)=e_2$, $J(e_2)=-e_1$, and $J(\xi)=0$. Hence: $$\varphi^2=-\Id+\zeta\otimes\xi$$ since $\zeta(e_1)=\zeta(e_2)=0$ and $\zeta(\xi)=1$. Finally, we compute: 
\begin{align*}
((\nabla^g_X\varphi)Y)^\flat&=\nabla^g_X\varphi(Y)^\flat-\varphi(\nabla^g_XY)^\flat=\ell\nabla^g_X(\iota_Y(*\zeta))-\ell\iota_{\nabla^g_XY}(*\zeta)=\ell\iota_Y(*\nabla^g_X\zeta)\\
&=-2\lambda_R\iota_Y(*\iota_X(*\zeta))=-2\lambda_R\iota_Y(**(\zeta\wedge X^\flat))=2\lambda_R\iota_Y(X^\flat\wedge\zeta)\\
&=2\lambda_R(g(X,Y)\zeta-\zeta(Y)X^\flat),
\end{align*} 
which gives $(\nabla^g_X\varphi)Y=2\lambda_R(g(X,Y)\xi-\zeta(Y)X)$. Therefore, we conclude that $(M,g)$ is an $\alpha$-Sasakian manifold with $\alpha=2\lambda_R$.\medskip

To prove the converse, we use the complex-bilinear square of an irreducible complex spinor $\eta$, which is computed in \cite[Cor.\ 5.5]{Algebraic_Complex_Square_2025} using an admissible complex-bilinear pairing of negative adjoint type. Its global version yields a nowhere vanishing isotropic $\cL$-valued complex one-form $\beta\in\Omega^1_\C(M,\cL)$. By Corollary \ref{cor:odd_constrained_parallel_spinorsbilinear}, the equation $\nabla^{g,A}_X\eta=i\lambda X^\flat\cdot\eta$ is equivalent to $\nabla^{g,A}_X\beta=\fra_X\vee\beta+\beta\vee(\pi\circ\tau)(\fra_X)$, where $\fra_X=i\lambda X^\flat$. In the case that $\eta$ is a real Killing spinor, i.e.\ $\lambda=\lambda_R$, we obtain: 
\begin{equation}
\label{eq:bilinearKilling3d}
\nabla^{g,A}_X\beta=i\lambda_R(X^\flat\vee\beta-\beta\vee X^\flat)=i\lambda_R(2i\ell*(\beta\wedge X^\flat))= -2\ell\lambda_R\iota_X(*\beta)
\end{equation}
for all $X\in\Gamma(TM)$. Hence, we have to prove that every $\alpha$-Sasakian three-manifold, which automatically admits a $\Spin^c(3)$-structure, admits a nowhere vanishing isotropic $\cL$-valued one-form $\beta\in\Omega^1_\C(M,\cL)$ for which Equation \eqref{eq:bilinearKilling3d} is satisfied.\medskip

Assume $(M,g)$ is an $\alpha$-Sasakian manifold with $\alpha = 2\lambda_R$. Let $(\xi, \zeta, \varphi, g)$ be the $\alpha$-Sasakian structure, where $\zeta = \xi^\flat$ and the fundamental endomorphism satisfies:
\begin{equation*}
\nabla^g_X \xi = - \alpha \varphi(X) = -2\lambda_R \varphi(X).
\end{equation*}

\noindent
We identify $\varphi = \ell J$, where $J$ is the complex structure defined by the induced orientation in the contact distribution $\ker (\zeta)\subset TM$. Thus, we have:
\begin{equation*}
\nabla^g_X \xi = - 2\ell \lambda_R J(X^{\perp_{\xi}}),
\end{equation*}
where $X^{\perp_{\xi}} \in \ker(\zeta)$ denotes the orthogonal projection  of $X$ to $\ker(\zeta)$. The Levi-Civita connection does not, in general, preserve the contact distribution $\ker(\zeta)\subset TM$. Instead, by projecting orthogonally, we obtain an induced connection that we denote by $\nabla^{\perp_{\xi}}$ and is defined as follows:
\begin{equation*}
\nabla^{\perp_{\xi}}_X w := (\nabla_X^g w)^{\perp_{\xi}}\, , \quad X\in\Gamma(TM)\, , \quad w \in \Gamma(\ker(\zeta)).
\end{equation*}

\noindent
Furthermore, $\nabla^{\perp_{\xi}}$ preserves the complex structure $J$ on $\ker(\zeta)$, and therefore it defines a unitary connection on the complex line bundle $(\ker(\zeta),J)$ equipped with the connection $\nabla^{\perp_{\xi}}$. We identify:
\begin{equation*}
\cL := (\ker(\zeta)^{\ast},J^{\ast})
\end{equation*}
as complex line bundles, where $\ker(\zeta)^{\ast}$ is the vector bundle dual to $\ker(\zeta)$ equipped with the induced complex structure $J^*$. Since $\nabla^{\perp_{\xi}}$ preserves $J$, it induces a unitary connection on $\cL$ which is naturally associated to a connection $A$ on a principal $\U(1)$-bundle $P$ inducing $\cL$. The pair $(P,A)$ is unique modulo isomorphism of principal $\U(1)$-bundles with connection. Let $\{e_1, e_2, \xi\}$ be a local orthonormal frame such that $J(e_1) = e_2$ and $J(e_2) = -e_1$, with dual coframe $\{e^1, e^2, \zeta\}$. We define $\beta\in \Omega^1_{\mathbb{C}}(M,\cL)$ as follows:
\begin{equation*}
\beta = (e^1 + i e^2)\otimes (e^1 - i e^2),
\end{equation*}
where $e^1-ie^2$ is regarded as a local unitary frame of $\cL$. Note that this gives a globally well-defined, nowhere vanishing, isotropic $\cL$-valued complex one-form on $M$. We have:
\begin{equation*}
\nabla^{g}_X (e^1 + i e^2) = -i A(X) (e^1 + i e^2) + 2 \ell \lambda_R (iX_1 - X_2) \zeta,
\end{equation*}
where $X_i = e^i(X)$, $i=1,2$. Define the connection $\nabla^{g,A} := \nabla^g\otimes \nabla^{\perp_{\xi}}$ on the tensor product bundle $T^{\ast}M\otimes \cL$. By the previous calculation, we have:
\begin{align*}
\nabla^{g,A}_X\beta&=\nabla^g_X(e^1+ie^2)\otimes(e^1-ie^2) + (e^1+ie^2)\otimes\nabla^{\perp_\xi}_X(e^1-ie^2)\\
&=2\ell\lambda_R(iX_1-X_2)\zeta\otimes(e^1-ie^2)\\
&=-2\ell\lambda_R(*\beta)(X^{\perp_\xi}),
\end{align*}
where $X^{\perp_{\xi}}$ is the projection of $X$ to the orthogonal complement of $\xi$. Defining a new connection $\nabla^{g,\bar{A}}$ as follows:
\begin{equation*}
\nabla^{g,\bar{A}}_X \beta := \nabla^{g,A}_X \beta - 2i\ell \lambda_R\zeta(X)\beta
\end{equation*}
we immediately obtain:
\begin{equation*}
\nabla^{g,\bar{A}}_X \beta = -2\ell\lambda_R\iota_X(*\beta)
\end{equation*}
and thus we obtain a solution to Equation \eqref{eq:bilinearKilling3d}. Hence, applying Theorem \ref{thm:odd_CB_CPS}, we conclude the existence of a real spin-c Killing spinor with Killing number $\lambda_R$ on $(M,g)$.
\end{proof}

\begin{cor}
    Let $(M,g)$ be a three-dimensional spin-c Riemannian manifold and let $\eta\in\Gamma(S)$ be an imaginary Killing spinor with Killing number $i\lambda_I$. Then the metric $g$ is locally as follows: $$g=\dd t\otimes\dd t+e^{-4\lambda_It}g_N,$$ where $g_N$ is a Riemannian metric on the two-dimensional manifold $N$.
\end{cor}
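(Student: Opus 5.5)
We start from Proposition \ref{prop:(3,0)_Killing} with $\lambda=i\lambda_I$, so $\lambda_R=0$. This gives
\begin{equation*}
\dd r=-2\lambda_I\theta,\qquad \nabla^g\theta=-2\lambda_I r\, g .
\end{equation*}
By Proposition \ref{prop:square_(3,0)}, $r$ is nowhere vanishing, and we may take $r>0$ since $r=\frac12\scS(\eta,\eta)$ with $\scS$ positive. The relation $r^2=\escal{\theta,\theta}$ says that $\theta$ has norm $r$. The plan is to build from $r$ a distance-like function $t$ with unit gradient whose level sets are totally umbilic with constant mean curvature, and then read off the warped product form.

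\textbf{Step 1: a unit gradient.} Since $\dd r=-2\lambda_I\theta$, we get $|\dd r|=2|\lambda_I|\,r$. I would set
\begin{equation*}
t:=-\frac{1}{2\lambda_I}\log r,
\end{equation*}
so that $\dd t=\theta/r$ and $|\dd t|=1$. Thus $t$ is a Riemannian submersion locally and has no critical points. Write $\zeta:=\dd t=\theta/r$.

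\textbf{Step 2: the Hessian of $t$.} We compute
\begin{equation*}
\nabla^g\zeta=\frac{1}{r}\nabla^g\theta-\frac{1}{r^2}\,\dd r\otimes\theta=-2\lambda_I\, g+2\lambda_I\,\zeta\otimes\zeta .
\end{equation*}
Hence $\Hess(t)=-2\lambda_I\,(g-\dd t\otimes\dd t)$. The level sets of $t$ are therefore totally umbilic, with second fundamental form equal to $-2\lambda_I$ times the induced metric.

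\textbf{Step 3: normal form of the metric.} Locally, flowing along the unit geodesic vector field $\partial_t=\zeta^\sharp$ identifies a neighbourhood with $(a,b)\times N$, where $N$ is a level set, and $g=\dd t\otimes\dd t+g_t$ for a family of metrics $g_t$ on $N$. Using the identity $\mathscr{L}_{\zeta^\sharp}g=\Sym(\nabla^g\zeta)$, together with $\iota_{\zeta^\sharp}$ of the Hessian vanishing, we obtain
\begin{equation*}
\partial_t g_t=2\Hess(t)|_{N}=-4\lambda_I\, g_t .
\end{equation*}
Integrating gives $g_t=e^{-4\lambda_I t}g_N$.

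The main obstacle is Step 3. The care needed is in justifying that $\partial_t$ is geodesic and orthogonal to the level sets. Both follow from $|\dd t|=1$, which gives $\nabla_{\zeta^\sharp}\zeta=0$. One must also check that the flow preserves the product splitting. This is routine once the Hessian formula is in hand, and it is the step where the factor $4$ (rather than $2$) in the exponent must be verified.
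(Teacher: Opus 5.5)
Your proof is correct and takes essentially the same route as the paper: both use $t=-(2\lambda_I)^{-1}\log r$, $\dd t=\theta/r$, and the Hessian identity coming from $\dd r=-2\lambda_I\theta$ and $\nabla^g\theta=-2\lambda_I r\,g$. The only difference is that the paper cites the warped-product theorem \cite[Thm.\ 4.3.3]{Petersen2016} for $\Hess(r)=4\lambda_I^2 r\,g$ and then fixes the warping function by comparing with the warped-product Hessian formula, whereas you obtain the splitting directly from $\Hess(t)=-2\lambda_I(g-\dd t\otimes\dd t)$ and $\partial_t g_t=-4\lambda_I g_t$, which makes the argument self-contained.
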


\begin{proof}
    By Proposition \ref{prop:(3,0)_Killing} we have: $$\dd r=-2\lambda_I\theta,\qquad \nabla^g\theta=-2\lambda_Irg.$$

    These equations imply that $\Hess(r)=4\lambda_I^2rg$. Hence, the metric $g$ is locally a warped product: $$g=\dd t\otimes\dd t+F(t)^2g_N$$ for a positive function $F(t)$, see e.g.\ \cite[Thm.\ 4.3.3]{Petersen2016}. The coordinate $t$ is defined by $t:=-(2\lambda_I)^{-1}\log(r)$, which satisfies that $\dd t=\frac{1}{r}\theta$ is nowhere vanishing and $\partial_t=(\dd t)^\sharp$ is a unit vector field such that $\dd t(\partial_t)=1$. The Riemannian metric $g_N$ is defined on a suitable domain $N\subset\{p\in M\mid t(p)=t_0\in\R\}$. It is a standard exercise to show that the Hessian of a function $r(t)$ in this warped product situation is given by: \begin{equation}\label{eq:Hessian_warped_product}
        \Hess(r)=r''(t)\dd t\otimes\dd t+r'(t)F'(t)F(t)g_N.
    \end{equation}
    
    In our case $r(t)=e^{-2\lambda_It}$, so $r'(t)=-2\lambda_Ir(t)$ and $r''(t)=4\lambda_I^2r(t)$. Comparing with $\Hess(r)=4\lambda_I^2r(t)g$ we obtain that $F'(t)=-2\lambda_IF(t)$, thus $F(t)=e^{-2\lambda_It}$, where we have absorbed the constant into $g_N$.
\end{proof}


\subsection{Spin-c Killing spinors in dimension four}


Let $(S,\gamma)$ be a bundle of irreducible complex Clifford modules on a four-dimensional spin-c Riemannian manifold $(M,g)$. Assume that $(S,\gamma)$ is equipped with an admissible Hermitian pairing $\scS$ of positive adjoint type. Set $\kappa=1$ for simplicity. By the global version of Theorem \ref{thm:even_Hermitian_forms}, a complex exterior form $\widehat{\alpha}\in\Omega_\C(M)$ is the Hermitian square of a nowhere vanishing irreducible complex spinor $\eta\in\Gamma(S)$ if and only if: $$\widehat{\alpha}\diamond\widehat{\alpha}=4\widehat{\alpha}^{(0)}\widehat{\alpha},\qquad \widehat{\alpha}\diamond\beta\diamond\widehat{\alpha}=4(\widehat{\alpha}\diamond\beta)^{(0)}\widehat{\alpha},\qquad \tau(\widehat{\alpha})=\overline{\widehat{\alpha}}$$ for a complex exterior form $\beta\in\Omega_\C(M)$ satisfying $(\widehat{\alpha}\diamond\beta)^{(0)}\neq0$.

\begin{prop}\label{prop:square_(4,0)}
    Let $(M,g)$ be a four-dimensional spin-c Riemannian manifold. A complex exterior form $\widehat{\alpha}\in\Omega_\C(M)$ is the Hermitian square of a nowhere vanishing irreducible complex spinor $\eta\in\Gamma(S)$ if and only if: $$\widehat{\alpha}=r+\theta+i\omega+i*\vartheta+f\nu,$$ where $r\in C^\infty(M)$ is nowhere vanishing, $f\in C^\infty(M)$, $\theta,\vartheta\in\Omega^1(M)$, and $\omega\in\Omega^2(M)$ satisfying: \begin{gather*}
        \escal{\theta,\theta}+\escal{\omega,\omega}+\escal{\vartheta,\vartheta}+f^2=3r^2,\qquad *(\omega\wedge\vartheta)=r\theta,\\
        *(\theta\wedge\vartheta)+f*\omega=-r\omega,\qquad \theta\wedge\omega=r*\vartheta,\qquad \omega\wedge\omega=-2rf\nu.
    \end{gather*}
\end{prop}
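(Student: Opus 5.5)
Following the proofs of Propositions \ref{prop:square_(2,0)} and \ref{prop:square_(3,0)}, the plan is to apply the global version of Theorem \ref{thm:even_Hermitian_forms} with $d=4$, $s=+1$, $\kappa=1$. First I solve the linear condition $\tau(\widehat{\alpha})=\overline{\widehat{\alpha}}$ degree by degree. Since $\tau$ acts on $k$-forms by $(-1)^{\binom{k}{2}}$, it is $+1$ in degrees $0,1,4$ and $-1$ in degrees $2,3$. Hence $\widehat{\alpha}^{(0)},\widehat{\alpha}^{(1)},\widehat{\alpha}^{(4)}$ are real and $\widehat{\alpha}^{(2)},\widehat{\alpha}^{(3)}$ are purely imaginary. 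I will write $\widehat{\alpha}^{(0)}=r$, $\widehat{\alpha}^{(1)}=\theta$, $\widehat{\alpha}^{(2)}=i\omega$, and $\widehat{\alpha}^{(4)}=f\nu$. Since in Riemannian dimension four $*$ is an isomorphism $\Omega^1\to\Omega^3$, I also write $\widehat{\alpha}^{(3)}=i*\vartheta$ with $\vartheta$ real. As in the lower-dimensional cases, $r=\frac14\scS(\eta,\eta)$ is nowhere vanishing because $\scS$ is positive definite and $\eta$ is nowhere zero. Therefore $\beta=1$ satisfies $(\widehat{\alpha}\diamond 1)^{(0)}=r\neq0$, and the cubic condition with $\beta=1$ reduces to the quadratic one $\widehat{\alpha}\diamond\widehat{\alpha}=4r\,\widehat{\alpha}$. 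So everything comes down to expanding this single equation.

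The main step is to compute $\widehat{\alpha}\diamond\widehat{\alpha}$ degree by degree. For this I use the expansion of the geometric product in Appendix \ref{app:KA}, $\alpha\diamond\beta=\sum_k \frac{(-1)^{\binom{k+1}{2}}}{k!}\,(\pi^k\alpha)\wedge_k\beta$ (generalized contractions), together with these Riemannian-signature identities:
\begin{itemize}
\item $\nu\diamond\nu=1$, and $\nu\diamond\gamma=*\tau(\gamma)$ as in Proposition \ref{prop:product_volume_form};
\item $\theta\diamond\theta=\escal{\theta,\theta}$;
\item $\omega\diamond\omega=-\escal{\omega,\omega}+\omega\wedge\omega$, because the degree-two commutator part cancels;
\item $(*\vartheta)\diamond(*\vartheta)=-\escal{\vartheta,\vartheta}$, up to the sign fixed by $\tau$.
\end{itemize}
The cross terms are the real work. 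I would group them into anticommutator pairs $\theta\diamond\omega+\omega\diamond\theta=2\theta\wedge\omega$ and $\theta\diamond *\vartheta+*\vartheta\diamond\theta$. The remaining pairs are $\omega$ with $*\vartheta$, $\omega$ with $\nu$ (giving $\pm2f*\omega$), $\theta$ with $\nu$, and so on; in each I keep only the symmetric part.

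Once the product is expanded, I collect real and imaginary parts in each degree:
\begin{itemize}
\item Degree $0$ should give $r^2+\escal{\theta,\theta}+\escal{\omega,\omega}+\escal{\vartheta,\vartheta}+f^2=4r^2$, i.e.\ the first relation.
\item Degree $1$ gives $2r\theta+(\text{term from }i\omega\diamond i*\vartheta)=4r\theta$, i.e.\ $*(\omega\wedge\vartheta)=r\theta$.
\item Degree $2$ (imaginary) gives the relation involving $*(\theta\wedge\vartheta)+f*\omega=-r\omega$.
\item Degree $3$ gives $\theta\wedge\omega=r*\vartheta$.
\item Degree $4$ gives $\omega\wedge\omega=-2rf\nu$.
\end{itemize}
Wherever a real part would survive in a purely imaginary slot, or conversely, it should vanish identically by the reality structure, and I will check that this happens. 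Conversely, given forms of the stated shape satisfying these relations, the three conditions of Theorem \ref{thm:even_Hermitian_forms}(b) hold with $\beta=1$, so $\widehat{\alpha}$ is pointwise a Hermitian square. Choosing the spinor bundle so that the spinor class vanishes (Proposition \ref{prop:spin_spinor_class=0}) then yields a global nowhere vanishing $\eta$.

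The main obstacle is the sign bookkeeping in the cross products. These are $\omega\diamond *\vartheta$, $*\vartheta\diamond\omega$ and $\theta\diamond*\vartheta$, which involve double contractions, and the products with $\nu$, which turn degree-$k$ pieces into Hodge duals with signs from $\tau$. I would fix an orthonormal coframe and test each identity on basis elements, e.g.\ $e^1$, $e^{12}$, $*e^1=e^{234}$, to pin down the signs, rather than trusting general contraction formulas.
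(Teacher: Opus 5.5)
Your proposal is correct and follows the paper's proof. Both solve $\tau(\widehat{\alpha})=\overline{\widehat{\alpha}}$ degree by degree, use positivity of $\scS$ to make $r$ nowhere vanishing so that $\beta=1$ reduces everything to $\widehat{\alpha}\diamond\widehat{\alpha}=4r\widehat{\alpha}$, and then expand by degree; the paper does the expansion by writing $*\vartheta=\vartheta\diamond\nu$ and using $\nu\diamond\alpha=\pi(\alpha)\diamond\nu$ and $\nu\diamond\nu=1$. One correction to your sign bookkeeping: in dimension four $\gamma\diamond\nu=*\tau(\gamma)$ but $\nu\diamond\gamma=*(\pi\circ\tau)(\gamma)$, so $\nu$ anticommutes with $\theta$ and $*\vartheta$ and those cross terms cancel, rather than producing the spurious terms your stated identity would give (your reality-structure check would have flagged this).
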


\begin{proof}
    Let $\widehat{\alpha}=\sum_{k=0}^4\widehat{\alpha}^{(k)}$, where $\widehat{\alpha}^{(k)}\in\Omega^k_\C(M)$. The linear equation $\tau(\widehat{\alpha})=\overline{\widehat{\alpha}}$ implies that $\widehat{\alpha}^{(0)}$, $\widehat{\alpha}^{(1)}$, and $\widehat{\alpha}^{(4)}$ are real; and $\widehat{\alpha}^{(2)}$ and $\widehat{\alpha}^{(3)}$ are imaginary. Set $r:=\widehat{\alpha}^{(0)}\in C^\infty(M)$, $\theta:=\widehat{\alpha}^{(1)}\in\Omega^1(M)$, $i\omega:=\widehat{\alpha}^{(2)}$ for $\omega\in\Omega^2(M)$, and $i*\vartheta:=\widehat{\alpha}^{(3)}$ for $\vartheta\in\Omega^1(M)$. The term $\widehat{\alpha}^{(4)}$ is a multiple of the volume form $\nu$, that is, $\widehat{\alpha}^{(4)}=f\nu$ for some $f\in C^\infty(M)$. Hence $\widehat{\alpha}=r+\theta+i\omega+i*\vartheta+f\nu$. Note that $*\vartheta=\vartheta\diamond\nu$. A computation gives us: \begin{align*}
        \widehat{\alpha}\diamond\widehat{\alpha}&=(r+\theta+i\omega+i\vartheta\diamond\nu+f\nu)\diamond(r+\theta+i\omega+i\vartheta\diamond\nu+f\nu)\\
        &=r^2+2r\theta+2ir\omega+2ir\vartheta\diamond\nu+2rf\nu\\
        &\quad+\escal{\theta,\theta}+i(\theta\diamond\omega+\omega\diamond\theta)+i(\theta\diamond\vartheta-\vartheta\diamond\theta)\diamond\nu\\
        &\quad-\omega\diamond\omega-(\omega\diamond\vartheta+\vartheta\diamond\omega)\diamond\nu+2if\omega\diamond\nu+\escal{\vartheta,\vartheta}+f^2\\
        &=r^2+\escal{\theta,\theta}+\escal{\omega,\omega}+\escal{\vartheta,\vartheta}+f^2\\
        &\quad+2r\theta+2*(\omega\wedge\vartheta)+2ir\omega-2i*(\theta\wedge\vartheta)-2if*\omega\\
        &\quad+2ir*\vartheta+2i\theta\wedge\omega+2rf\nu-\omega\wedge\omega,
    \end{align*} where we have used that $\nu\diamond\alpha=\pi(\alpha)\diamond\nu$ for all $\alpha\in\Omega_\C(M)$ and $\nu\diamond\nu=1$. Therefore, the quadratic equation $\widehat{\alpha}\diamond\widehat{\alpha}=4\widehat{\alpha}^{(0)}\widehat{\alpha}$ becomes the system of equations of the statement. Since $\scS$ is a positive-definite admissible Hermitian pairing and $\eta\in\Gamma(S)$ is nowhere vanishing, we have that $r=\frac{1}{4}\scS(\eta,\eta)$ is nowhere vanishing as well, thus it suffices to take $\beta=1$ in the quadratic equation $\widehat{\alpha}\diamond\beta\diamond\widehat{\alpha}=4(\widehat{\alpha}\diamond\beta)^{(0)}\widehat{\alpha}$ to conclude.
\end{proof}

\begin{remark}
    Since $r$ is nowhere vanishing, from the equations of Proposition \ref{prop:square_(4,0)} we conclude that $\omega$ is nowhere vanishing too. Moreover, if $\theta=0$, then $\vartheta=0$ and $f*\omega=-r\omega$, thus $f$ is also nowhere vanishing. Hence, using that $*^2=\Id$ on two-forms, we obtain that $f^2=r^2$, i.e.\ $f=-\mu r$ for some $\mu\in\Z_2$. In this situation $\widehat{\alpha}=r+i\omega-\mu r\nu$ satisfying $*\omega=\mu\omega$ and $\escal{\omega,\omega}=2r^2$ is the Hermitian square of a \emph{chiral} spinor with chirality $\mu\in\Z_2$, see \cite[Cor.\ 5.7]{Algebraic_Complex_Square_2025}. The same conclusion holds if $\vartheta=0$.
\end{remark}

\begin{lemma}\label{lemma:square_(4,0)_scalar_part}
    Let $\widehat{\alpha}=r+\theta+i\omega+i*\vartheta+f\nu$ be the Hermitian square of $\eta\in\Gamma(S)$. Then: $$\escal{\vartheta,\theta}=0,\qquad \escal{\vartheta,\vartheta}=\escal{\theta,\theta}=r^2-f^2,\qquad \escal{\omega,\omega}=r^2+f^2.$$
\end{lemma}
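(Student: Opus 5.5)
The plan is to derive the three identities purely algebraically from the five pointwise relations in Proposition \ref{prop:square_(4,0)}. At each point, I will pair those relations with suitable forms and use the nowhere vanishing of $r$. I will use the standard Riemannian four-dimensional facts: $\gamma\wedge*\delta=\escal{\gamma,\delta}\nu$, $**=(-1)^{k}$ on $k$-forms, and $\omega\wedge\omega=\escal{\omega,*\omega}\nu$ for two-forms.

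First, I rewrite $*(\omega\wedge\vartheta)=r\theta$ by applying $*$ to both sides. This expresses the three-form $\omega\wedge\vartheta$ as $\pm r*\theta$, with the sign fixed by $**=-1$ on three-forms. Together with $\theta\wedge\omega=r*\vartheta$, this gives two descriptions of the four-form $\theta\wedge\omega\wedge\vartheta$:
\begin{itemize}
\item wedging $\theta\wedge\omega=r*\vartheta$ with $\vartheta$ on the right gives $\pm r\escal{\vartheta,\vartheta}\nu$;
\item wedging $\omega\wedge\vartheta=\pm r*\theta$ with $\theta$ on the left gives $\pm r\escal{\theta,\theta}\nu$.
\end{itemize}
Comparing the two and dividing by $r$ yields $\escal{\theta,\theta}=\escal{\vartheta,\vartheta}$, provided the signs match; I will check this carefully. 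Similarly, wedging $\omega\wedge\vartheta=\pm r*\theta$ with $\vartheta$ gives $\vartheta\wedge\omega\wedge\vartheta=0=\pm r\escal{\vartheta,\theta}\nu$. Since $r$ is nowhere vanishing, this gives $\escal{\vartheta,\theta}=0$.

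Next, from $\omega\wedge\omega=-2rf\nu$ I read off $\escal{\omega,*\omega}=-2rf$. I then take the inner product of the relation $*(\theta\wedge\vartheta)+f*\omega=-r\omega$ with $\omega$. The term $\escal{*(\theta\wedge\vartheta),\omega}$ equals, up to sign, $*(\theta\wedge\vartheta\wedge\omega)$, which is again the four-form computed above, namely $\pm r\escal{\theta,\theta}$. The term $f\escal{*\omega,\omega}$ equals $-2rf^2$. Dividing by $r$ then gives a linear relation of the form $\escal{\omega,\omega}=2f^2+\epsilon\escal{\theta,\theta}$ for a sign $\epsilon$. Substituting this and $\escal{\vartheta,\vartheta}=\escal{\theta,\theta}$ into the scalar relation $\escal{\theta,\theta}+\escal{\omega,\omega}+\escal{\vartheta,\vartheta}+f^2=3r^2$, the expected sign $\epsilon=+1$ gives $3\escal{\theta,\theta}+3f^2=3r^2$. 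Hence $\escal{\theta,\theta}=r^2-f^2$ and $\escal{\omega,\omega}=r^2+f^2$.

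As a consistency check, I will also pair the same relation with $*\omega$. This should reproduce the identity $\escal{\omega,*\omega}=-2rf$ and guard against sign errors.

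The main obstacle is entirely the bookkeeping of signs. This covers the orientation conventions in $*$, the reordering of wedge factors such as $\theta\wedge\omega\wedge\vartheta=\vartheta\wedge\theta\wedge\omega$, and the sign of $**$ on odd forms. The final answer forces $\epsilon=+1$ and the equality $\escal{\theta,\theta}=+\escal{\vartheta,\vartheta}$, and a wrong sign would contradict positivity of norms, since $r\neq0$. So I would fix conventions once in an oriented orthonormal coframe and verify each sign there before assembling the argument.
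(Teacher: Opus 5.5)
Your proposal is correct and follows essentially the same route as the paper. Both arguments rewrite the three-form relations as $r*\theta=-\vartheta\wedge\omega$ and $r*\vartheta=\theta\wedge\omega$ to get $\escal{\vartheta,\theta}=0$ and $\escal{\vartheta,\vartheta}=\escal{\theta,\theta}$, then pair the degree-two relation with $\omega$ (the paper wedges it with $*\omega$) to obtain $\escal{\omega,\omega}=2f^2+\escal{\theta,\theta}$, and finish with the degree-zero relation; the signs do come out as you expect (in particular $\epsilon=+1$), so only the bookkeeping you already plan remains.
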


\begin{proof}
    From Proposition \ref{prop:square_(4,0)} we have $r*\theta=-\vartheta\wedge\omega$ and $r*\vartheta=\theta\wedge\omega$. Hence $r\escal{\theta,\vartheta}\nu=\theta\wedge(r*\vartheta)=0$, which implies $\escal{\vartheta,\theta}=0$ since $r$ is nowhere vanishing. Similarly: $$r\escal{\vartheta,\vartheta}\nu=\vartheta\wedge(r*\vartheta)=\vartheta\wedge\theta\wedge\omega=r\theta\wedge*\theta=r\escal{\theta,\theta}\nu,$$ which implies $\escal{\vartheta,\vartheta}=\escal{\theta,\theta}$. Recall that for $\sigma_1,\sigma_2\in\Omega^2(M)$ we have $(*\sigma_1)\wedge(*\sigma_2)=\sigma_1\wedge\sigma_2$. Using this, $r\omega=-f*\omega-*(\theta\wedge\vartheta)$, and $\omega\wedge\omega=-2rf\nu$ we obtain: \begin{align*}
        r\escal{\omega,\omega}\nu&=r\omega\wedge*\omega=-f(*\omega)\wedge(*\omega)-*(\theta\wedge\vartheta)\wedge(*\omega)\\
        &=-f\omega\wedge\omega-\theta\wedge\vartheta\wedge\omega=2rf^2\nu+r\escal{\theta,\theta}\nu,
    \end{align*} which implies $\escal{\omega,\omega}=2f^2+\escal{\theta,\theta}$. Finally, plugging all these relations in the degree zero equation of Proposition \ref{prop:square_(4,0)} yields $\escal{\theta,\theta}=r^2-f^2$, which in turn gives $\escal{\omega,\omega}=r^2+f^2$.
\end{proof}

\begin{prop}\label{prop:(4,0)_Killing}
    Let $(M,g)$ be a four-dimensional spin-c Riemannian manifold and let $\eta\in\Gamma(S)$ be a Killing spinor with Killing number $\lambda=\lambda_R+i\lambda_I$. Then: $$\dd r=-2\lambda_I\theta,\qquad \dd f=-2\lambda_R\vartheta,\qquad \nabla^g\theta=-2\lambda_Irg-2\lambda_R\omega,\qquad \nabla^g\vartheta=2\lambda_I*\omega+2\lambda_Rfg,$$ and: $$\nabla^g_X\omega=2\lambda_RX^\flat\wedge\theta-2\lambda_I*(\vartheta\wedge X^\flat)$$ for all $X\in\Gamma(TM)$, where $\widehat{\alpha}=r+\theta+i\omega+i*\vartheta+f\nu$ is the Hermitian square of $\eta$.
\end{prop}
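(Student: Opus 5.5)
The plan follows the two- and three-dimensional cases (Propositions \ref{prop:(2,0)_Killing} and \ref{prop:(3,0)_Killing}). For a Killing spinor, $\cA_X=i\lambda\Psi_\gamma(X^\flat)$, so its symbol is $\fra_X=i\lambda X^\flat$. Since $\scS$ has positive adjoint type, $\tau(\overline{\fra}_X)=-i\bar\lambda X^\flat$. Corollary \ref{cor:even_constrained_parallel_spinors} then gives the necessary condition
\begin{equation*}
\nabla^g_X\widehat{\alpha}=i\lambda\, X^\flat\diamond\widehat{\alpha}-i\bar\lambda\,\widehat{\alpha}\diamond X^\flat ,
\end{equation*}
and the task is to expand the right-hand side and separate it by degree.

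For the expansion I use $\theta\diamond\alpha=\theta\wedge\alpha+\iota_{\theta^\sharp}\alpha$ and $\alpha\diamond\theta=\theta\wedge\pi(\alpha)-\iota_{\theta^\sharp}\pi(\alpha)$ for one-forms $\theta$. I split into symmetric and antisymmetric parts:
\begin{equation*}
i\lambda\, x\diamond\widehat{\alpha}-i\bar\lambda\,\widehat{\alpha}\diamond x=i\lambda_R\,(x\diamond\widehat{\alpha}-\widehat{\alpha}\diamond x)-\lambda_I\,(x\diamond\widehat{\alpha}+\widehat{\alpha}\diamond x),\qquad x:=X^\flat .
\end{equation*}
On a $k$-form $\alpha_k$, the commutator $x\diamond\alpha_k-\alpha_k\diamond x$ equals $2x\wedge\alpha_k$ for $k$ odd and $2\iota_X\alpha_k$ for $k$ even. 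The anticommutator is the other one: $2\iota_X\alpha_k$ for $k$ odd and $2x\wedge\alpha_k$ for $k$ even. I apply this to $r$, $\theta$, $i\omega$, $i*\vartheta$ and $f\nu$, using $\iota_X*\vartheta=*(\vartheta\wedge x)$, $x\wedge*\vartheta=\langle x,\vartheta\rangle\nu$ and $\iota_X(f\nu)=f*x$.

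Collecting terms degree by degree:
\begin{itemize}
\item Degree $0$: $-2\lambda_I\theta(X)$, giving $\dd r=-2\lambda_I\theta$.
\item Degree $1$: $-2\lambda_I r x+2i\lambda_R\,\iota_X(i\omega)=-2\lambda_I r x-2\lambda_R\iota_X\omega$, giving $\nabla^g\theta=-2\lambda_I rg-2\lambda_R\omega$.
\item Degree $2$, divided by $i$: $\nabla_X\omega=2\lambda_R x\wedge\theta-2\lambda_I\,*(\vartheta\wedge x)$.
\item Degree $3$: $i\nabla_X*\vartheta=2i\lambda_R f*x-2i\lambda_I\,x\wedge\omega$. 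Applying $*$ and using $*(x\wedge\omega)=\pm\iota_X*\omega$ gives $\nabla_X\vartheta=2\lambda_R f x+2\lambda_I\iota_X*\omega$.
\item Degree $4$: $\nabla_X f\,\nu=-2\lambda_R\langle x,\vartheta\rangle\nu$, giving $\dd f=-2\lambda_R\vartheta$.
\end{itemize}

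The only real obstacle is sign bookkeeping. This concerns the Hodge star identities in four Riemannian dimensions ($*^2=\pm1$ by degree, and $*(\alpha\wedge x)$ versus $\iota_X*\alpha$) and the reality conventions that put the factors $i$ on $\omega$ and $*\vartheta$. To control this I will check the degree-$3$ and degree-$2$ equations in an explicit orthonormal frame, say $X=e_1$, before accepting the signs $+2\lambda_I*\omega$ and $-2\lambda_I*(\vartheta\wedge X^\flat)$. As a consistency check, the resulting $\Hess(r)=4\lambda_I^2rg+4\lambda_R\lambda_I\omega$ has a skew part, which must vanish just as in lower dimensions.
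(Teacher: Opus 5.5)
Your proposal is correct and is essentially the paper's proof: apply Corollary~\ref{cor:even_constrained_parallel_spinors} with $\fra_X=i\lambda X^\flat$ and $\tau(\overline{\fra}_X)=-i\bar\lambda X^\flat$, then separate $\nabla^g_X\widehat{\alpha}=\fra_X\diamond\widehat{\alpha}+\widehat{\alpha}\diamond\tau(\overline{\fra}_X)$ by degree; your commutator/anticommutator split is only a tidier way of organizing the paper's two explicit products. The sign you left open is $*(X^\flat\wedge\omega)=+\iota_X*\omega$, because $\iota_X*\omega=*(\omega\wedge X^\flat)$, and together with $*^2=-\Id$ on one- and three-forms this gives exactly $\nabla^g\vartheta=2\lambda_I*\omega+2\lambda_Rfg$, as in the paper.
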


\begin{proof}
    In the case of a Killing spinor, we have that $\cA_X=i\lambda\Psi_\gamma(X^\flat)\in\Gamma(\End(S))$, so its symbol is just $\fra_X=i\lambda X^\flat\in\Omega^1_\C(M)$. We compute: \begin{align*}
        \fra_X\diamond\widehat{\alpha}&=i\lambda X^\flat\diamond(r+\theta+i\omega+i*\vartheta+f\nu)\\
        &=i\lambda(rX^\flat+X^\flat\wedge\theta+\theta(X)+iX^\flat\wedge\omega+i\iota_X\omega+i\vartheta(X)\nu+i*(\vartheta\wedge X^\flat)+f*X^\flat),\\
        \widehat{\alpha}\diamond\tau(\overline{\fra}_X)&=(r+\theta+i\omega+i*\vartheta+f\nu)\diamond(-i\bar\lambda X^\flat)\\
        &=-i\bar{\lambda}(rX^\flat+\theta\wedge X^\flat+\theta(X)+i\omega\wedge X^\flat-i\iota_X\omega+i*(\vartheta\wedge X^\flat)-i\vartheta(X)\nu-f*X^\flat).
    \end{align*}
    
    By Corollary \ref{cor:even_constrained_parallel_spinors}, if the spinor $\eta\in\Gamma(S)$ satisfies $\nabla^{g,A}_X\eta=\cA_X(\eta)$, then $\nabla^g_X\widehat{\alpha}=\fra_X\diamond\widehat{\alpha}+\widehat{\alpha}\diamond\tau(\overline{\fra}_X)$. Separating by degrees this equation we obtain: \begin{gather*}
        \nabla^g_Xr=-2\lambda_I\theta(X),\qquad \nabla^g_X\theta=-2\lambda_IrX^\flat-2\lambda_R\iota_X\omega,\qquad \nabla^g_X\omega=2\lambda_RX^\flat\wedge\theta-2\lambda_I*(\vartheta\wedge X^\flat),\\
        \nabla^g_X(*\vartheta)=-2\lambda_IX^\flat\wedge\omega+2\lambda_Rf*X^\flat,\qquad \nabla^g_X(f\nu)=-2\lambda_R\vartheta(X)\nu
    \end{gather*} for all $X\in\Gamma(TM)$. From the first equation we obtain $\dd r=-2\lambda_I\theta$, from the second equation we obtain $\nabla^g\theta=-2\lambda_Irg-2\lambda_R\omega$. From the fourth one we get $\nabla^g\vartheta=2\lambda_I*\omega+2\lambda_Rfg$ since the Levi-Civita connection $\nabla^g$ commutes with the Hodge star operator and $*^2=-\Id$ on one-forms, and from the last one we get $\dd f=-2\lambda_R\vartheta$ since $\nabla^g\nu=0$.
\end{proof}

Using $\dd r=-2\lambda_I\theta$ and $\nabla^g\theta=-2\lambda_Irg-2\lambda_R\omega$ we compute the Hessian of the function $r$: $$\Hess(r)=\nabla^g\dd r=4\lambda_I^2rg+4\lambda_R\lambda_I\omega.$$

Since $\Hess(r)$ is a symmetric tensor and $\omega$ is nowhere vanishing, we have that $\lambda_R\lambda_I=0$, as expected.

\begin{cor}
    Let $(M,g)$ be a four-dimensional spin-c Riemannian manifold and let $\eta\in\Gamma(S)$ be a real Killing spinor with Killing number $\lambda_R$. Then the metric $g$ is locally as follows: $$g=\dd t\otimes\dd t+\sin^2(2\lambda_Rt)g_N$$ around any point where $\dd f\neq0$. Moreover, if $\dd f_p=0$ for $p\in M$, then $(M,g)$ is locally isometric to the standard sphere of constant positive sectional curvature $\mathrm{sec}^g\equiv 4\lambda_R^2$ on some neighborhood of $p$.
\end{cor}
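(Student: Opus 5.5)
Setting $\lambda_I=0$ in Proposition \ref{prop:(4,0)_Killing}, I will work from the system
\[
\dd r=0,\qquad \dd f=-2\lambda_R\vartheta,\qquad \nabla^g\theta=-2\lambda_R\omega,\qquad \nabla^g\vartheta=2\lambda_R fg,\qquad \nabla^g_X\omega=2\lambda_R X^\flat\wedge\theta .
\]
So $r$ is a nonzero constant and $\Hess(f)=-2\lambda_R\nabla^g\vartheta=-4\lambda_R^2 f\,g$. Lemma \ref{lemma:square_(4,0)_scalar_part} gives $|\dd f|^2=4\lambda_R^2|\vartheta|^2=4\lambda_R^2(r^2-f^2)$, and in particular $f^2\le r^2$. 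After normalizing $r=1$ (by constant rescaling of $\eta$), the function $f$ satisfies Obata's equation together with the first integral $|\dd f|^2+4\lambda_R^2f^2=4\lambda_R^2$.

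\emph{Step 1 (points with $\dd f\neq0$).} Near such a point I apply the same local warped-product theorem as in the three-dimensional imaginary case (\cite[Thm.\ 4.3.3]{Petersen2016}), since $\Hess(f)$ is proportional to $g$ and $\dd f\neq 0$. Define $t$ by $f=\cos(2\lambda_R t)$, which is possible because $|f|<1$ when $\dd f\ne0$. The first integral then gives $|\dd t|=1$. Writing $g=\dd t\otimes\dd t+F(t)^2g_N$ and using \eqref{eq:Hessian_warped_product}, the $\dd t\otimes \dd t$ component is automatic, since $f''=-4\lambda_R^2 f$. The $g_N$ component gives $f'F'/F=-4\lambda_R^2 f$, that is,
\[
-2\lambda_R\sin(2\lambda_R t)\,F'/F=-4\lambda_R^2\cos(2\lambda_R t),
\]
so $F'/F=2\lambda_R\cot(2\lambda_R t)$ and $F=c\sin(2\lambda_R t)$. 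Absorbing $c$ into $g_N$ yields the stated local form.

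\emph{Step 2 (points with $\dd f_p=0$).} Here $\vartheta_p=0$, so $\theta_p=0$ and $f(p)=\pm r$. I expect this to be the main obstacle. Two approaches are available. The first: $\dd f$ cannot vanish on an open set, because there $\Hess(f)=0$ would force $f=0$ and hence $|\vartheta|^2=r^2\ne0$, a contradiction. So the points with $\dd f\neq 0$ are dense, and on that dense set the warped product from Step 1 has warping function $\sin(2\lambda_R t)$. One would then check that the warping equation $F''=-4\lambda_R^2F$, together with the compatibility condition $F'^2+K_NF^2=\ldots$, forces curvature $4\lambda_R^2$; but this route needs information about $g_N$.

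The cleaner approach, which I will use, is a curvature computation as in the two-dimensional case. From $\nabla^g\vartheta=2\lambda_R f g$ and $\dd f=-2\lambda_R\vartheta$ I get
\[
R^g(X,Y)\vartheta^\sharp=4\lambda_R^2\bigl(\vartheta(X)Y-\vartheta(Y)X\bigr),
\]
and the analogous identity for $\theta$ follows from the equations for $\nabla^g\theta$ and $\nabla^g\omega$. This says that the curvature acts on the span of $\theta$ and $\vartheta$ as on the round sphere.

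To obtain the full curvature tensor, I will use that $\eta$ itself is a Killing spinor. Its integrability condition gives $\frac12 R^g(X,Y)\cdot\eta+\frac{i}{2}F_A(X,Y)\eta=-\lambda_R^2(X\wedge Y)\cdot\eta$, the Clifford form of constant curvature up to the Ricci/Weyl ambiguity. Combining this with the Einstein property is the delicate part; I would try showing that the Weyl tensor and $F_A$ must vanish, using that $\omega$ and $\theta,\vartheta$ generate enough of $\wedge^2$ near $p$. Once the sectional curvature equals $4\lambda_R^2$ on a neighborhood of $p$, the manifold is locally isometric to the round sphere of that curvature.
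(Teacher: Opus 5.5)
Your Step 1 is correct and is essentially the paper's argument. The paper sets $\dd t=\vartheta/\sqrt{r^2-f^2}$, integrates to $f=r\cos(2\lambda_R t)$, and reads off $F=\sin(2\lambda_R t)$ from the warped-product Hessian; you do the same after normalizing $r=1$.

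The genuine gap is Step 2, which is the substantive part of the statement. You leave it at ``I would try'', and the curvature route you choose does not close as stated, for three reasons:
\begin{enumerate}
\item There is no Einstein property available to combine with. For a spin-c Killing spinor the Ricci tensor picks up a contribution from $F_A$; this is why non-Einstein Sasakian manifolds carry spin-c real Killing spinors. So $\Ric^g$ is not known a priori to be a multiple of $g$.
\item The round-type identities for $R^g(X,Y)\theta$ and $R^g(X,Y)\vartheta^\sharp$ only fix the components of $R^g$ with at least one slot in $\mathrm{span}(\theta^\sharp,\vartheta^\sharp)$. The sectional curvature of the $2$-plane orthogonal to $\theta^\sharp$ and $\vartheta^\sharp$ stays completely free.
\item At $p$ itself both $\theta$ and $\vartheta$ vanish, so the identities carry no information there.
\end{enumerate}
Hence nothing you have written gives constant curvature on a neighborhood of $p$.

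The missing idea is much more elementary and uses only $\Hess(f)=-4\lambda_R^2 f g$.
\begin{enumerate}
\item At a point with $\dd f_p=0$ you have $f(p)=\pm r\neq 0$. So $\Hess_p(f)=-4\lambda_R^2 f(p)g_p$ is nondegenerate, and $p$ is a nondegenerate critical point of a function whose Hessian is a multiple of the metric.
\item The same local structure result you already invoke in Step 1 \cite[Thm.\ 4.3.3]{Petersen2016} covers this case. In geodesic polar coordinates about $p$, the gradient lines of $f$ are radial geodesics and the level sets are geodesic spheres. Thus $g=\dd t\otimes\dd t+G(t)^2 g_{S^3}$ with $f=f(t)$.
\item Here $g_{S^3}$ is the unit round metric, because $G(t)^2 g_{S^3}/t^2$ must converge to the round metric on the unit sphere of $T_pM$.
\item Comparing with \eqref{eq:Hessian_warped_product} gives $f''=-4\lambda_R^2 f$ and $f'G'=-4\lambda_R^2 fG$.
\item The regularity conditions $f'(0)=0$, $G(0)=0$, $G'(0)=1$ then force $f=f(p)\cos(2\lambda_R t)$ and $G=\frac{1}{2\lambda_R}\sin(2\lambda_R t)$. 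This is the round metric of curvature $4\lambda_R^2$, which is the paper's argument.
\end{enumerate}
This is exactly the information about $g_N$ that your first approach lacked: near a critical point the fiber is a small geodesic sphere, and smoothness at $t=0$ pins it down. No spinorial integrability condition is needed.
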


\begin{proof}
    By Proposition \ref{prop:(4,0)_Killing} we have: $$\dd r=0,\qquad \dd f=-2\lambda_R\vartheta,\qquad \nabla^g\theta=-2\lambda_R\omega,\qquad \nabla^g\vartheta=2\lambda_Rfg,\qquad \nabla^g_X\omega=2\lambda_RX^\flat\wedge\theta.$$

    From $\dd f=-2\lambda_R\vartheta$ and $\nabla^g\vartheta=2\lambda_Rfg$ we get $\Hess(f)=-4\lambda_R^2fg$. Let $U:=\{p\in M\mid \dd f_p\neq0\}$. The function $f$ has no critical points on $U$, hence $\vartheta$ is nowhere vanishing on $U$. From $\Hess(f)=-4\lambda_R^2fg$ we get that the metric $g$ is locally a warped product: $$g=\dd t\otimes\dd t+F(t)^2g_N$$ for a positive function $F(t)$. The coordinate $t$ is defined by: $$\dd t:=\tfrac{1}{\sqrt{r^2-f^2}}\vartheta.$$

    Integrating $\dd f=-2\lambda_R\vartheta=-2\lambda_R\sqrt{r^2-f^2}\dd t$ we obtain $\arccos(\frac{f}{r})=2\lambda_R(t-t_0)$ for some $t_0\in\R$. Then, after shifting $t$ to absorb the factor $t_0$, we obtain $f(t)=r\cos(2\lambda_Rt)$ and $\smash{\sqrt{r^2-f(t)^2}}=r\sin(2\lambda_Rt)$. Comparing $\Hess(f)=-4\lambda_R^2fg$ with the expression \eqref{eq:Hessian_warped_product}, we obtain: $$f''(t)=-4\lambda_R^2f(t),\qquad f'(t)F'(t)=-4\lambda_R^2f(t)F(t),$$ which is solved by $F(t)=\sin(2\lambda_Rt)$, where we have absorbed the constant into $g_N$.\medskip

    For every point in the complement $U^c=\{p\in M\mid \dd f_p=0\}$ we have $f_p\neq0$. Indeed, if $\dd f_p=0$, then $\vartheta_p=0$, which implies that $f^2_p=r^2_p$ by Lemma \ref{lemma:square_(4,0)_scalar_part}, thus $f_p$ is non-zero since $r$ is nowhere vanishing. Hence $g=\dd t\otimes\dd t+G(t)^2g_{S^3}$ on some open neighborhood of $p\in M$. Comparing $\Hess(f)=-4\lambda_R^2fg$ with the expression \eqref{eq:Hessian_warped_product}, we obtain: $$f''(t)=-4\lambda_R^2f(t),\qquad f'(t)G'(t)=-4\lambda_R^2f(t)G(t),$$ with the initial conditions $f(0)=f_0>0$, $f'(0)=0$, $G(0)=0$, and $G'(0)=1$. The first equation above gives $f(t)=f_0\cos(2\lambda_Rt)$ and the second equation gives $G(t)=\frac{1}{2\lambda_R}\sin(2\lambda_Rt)$. Then: $$g=\dd t\otimes\dd t+\frac{1}{4\lambda_R^2}\sin^2(2\lambda_Rt)g_{S^3}.$$
    
    This is precisely the standard metric of the four-dimensional sphere, which has constant positive sectional curvature $\mathrm{sec}^g\equiv4\lambda_R^2$.
\end{proof}

Let $q_\eta:=r^2-\escal{\theta,\theta}$. By Lemma \ref{lemma:square_(4,0)_scalar_part} we have $q_\eta=f^2$. If we assume that $\eta\in\Gamma(S)$ is an imaginary Killing spinor, then $\dd f=0$ by Proposition \ref{prop:(4,0)_Killing}. This implies that $q_\eta$ is constant. We distinguish two cases depending on whether $q_\eta$ is zero or not.

\begin{cor}
    Let $(M,g)$ be a four-dimensional spin-c Riemannian manifold and let $\eta\in\Gamma(S)$ be an imaginary Killing spinor with Killing number $i\lambda_I$ and $q_\eta=0$. Then the metric $g$ is locally as follows: $$g=\dd t\otimes\dd t+e^{-4\lambda_It}g_N,$$ where $g_N$ is a Riemannian metric on the three-dimensional manifold $N$.
\end{cor}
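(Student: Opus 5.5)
Since $\eta$ is an imaginary Killing spinor, $\lambda_R=0$ and Proposition \ref{prop:(4,0)_Killing} reduces to
$$\dd r=-2\lambda_I\theta,\qquad \dd f=0,\qquad \nabla^g\theta=-2\lambda_I r g,\qquad \nabla^g\vartheta=2\lambda_I*\omega.$$
The hypothesis $q_\eta=0$ means $f=0$, because $q_\eta=f^2$ by Lemma \ref{lemma:square_(4,0)_scalar_part}. The same lemma then gives $\escal{\theta,\theta}=r^2-f^2=r^2$. Since $r$ is nowhere vanishing, $\theta$ is nowhere vanishing as well, and so is $\dd r=-2\lambda_I\theta$. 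In particular $r$ has no critical points, and I intend to copy the three-dimensional argument for imaginary Killing spinors.

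\emph{Step 1.} Combining the first and third equations gives $\Hess(r)=\nabla^g\dd r=-2\lambda_I\nabla^g\theta=4\lambda_I^2 r g$. A function with no critical points whose Hessian is a multiple of the metric, $\Hess(r)=\phi\, g$, makes the metric locally a warped product $g=\dd t\otimes\dd t+F(t)^2g_N$ with $r=r(t)$, by \cite[Thm.\ 4.3.3]{Petersen2016}. Here $N$ is a level set of $r$, hence three-dimensional.

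\emph{Step 2.} I fix $t$ explicitly by $t:=-(2\lambda_I)^{-1}\log|r|$, replacing $r$ by $-r$ if necessary so that $r>0$. Then $\dd t=-\frac{1}{2\lambda_I r}\dd r=\frac{1}{r}\theta$, which has unit norm because $\escal{\theta,\theta}=r^2$. This unit-norm check is exactly where $q_\eta=0$ is used: if $f\neq0$, then $\dd r/r$ would not have unit length and the coordinate would change. So $t$ is a distance-type coordinate and $r(t)=e^{-2\lambda_I t}$ up to a constant rescaling.

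\emph{Step 3.} I compare $\Hess(r)=4\lambda_I^2 r g$ with the warped-product Hessian formula \eqref{eq:Hessian_warped_product}. The $\dd t\otimes\dd t$ components match automatically, since $r''=4\lambda_I^2 r$. The $g_N$ component gives $r'F'F=4\lambda_I^2 rF^2$, that is $F'=-2\lambda_I F$, so $F=e^{-2\lambda_I t}$ after absorbing the constant into $g_N$. Hence $g=\dd t\otimes\dd t+e^{-4\lambda_I t}g_N$.

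The main obstacle I foresee is justifying that the level sets of $r$ really do split off, that is, applying the warped-product theorem in dimension four. That step needs $\nabla^g$ of the unit normal $\theta/r$ to be proportional to the metric restricted to the level set. This follows directly from $\nabla^g\theta=-2\lambda_I r g$ together with $\dd r\propto\theta$, so I expect it to go through cleanly.
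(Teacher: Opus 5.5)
Your proposal is correct and follows essentially the same route as the paper. You use Lemma~\ref{lemma:square_(4,0)_scalar_part} with $q_\eta=f^2=0$ to get $\escal{\theta,\theta}=r^2$, so that $\dd r$ never vanishes. From $\Hess(r)=4\lambda_I^2rg$ you obtain the local warped product with $t=-(2\lambda_I)^{-1}\log(r)$ and unit-length $\dd t=\frac{1}{r}\theta$, and comparing with \eqref{eq:Hessian_warped_product} gives $F=e^{-2\lambda_It}$.

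The only superfluous step is replacing $r$ by $-r$. Here $r$ is already positive, being a positive multiple of $\scS(\eta,\eta)$ for the positive-definite admissible pairing.
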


\begin{proof}
    By Proposition \ref{prop:(4,0)_Killing} we have: $$\dd r=-2\lambda_I\theta,\qquad \dd f=0,\qquad \nabla^g\theta=-2\lambda_Irg,\qquad \nabla^g\vartheta=2\lambda_I*\omega,\qquad \nabla^g_X\omega=-2\lambda_I*(\vartheta\wedge X^\flat).$$

    Note that in this case we have $\escal{\theta,\theta}=r^2$ by Lemma \ref{lemma:square_(4,0)_scalar_part}, which implies that $\theta$ is nowhere vanishing, as well as $\dd r$. We have seen that $\Hess(r)=4\lambda_I^2rg$. Hence, the metric $g$ is locally a warped product: $$g=\dd t\otimes\dd t+F(t)^2g_N$$ for a positive function $F(t)$. The coordinate $t$ is defined by $t:=-(2\lambda_I)^{-1}\log(r)$, which satisfies that $\dd t=\frac{1}{r}\theta$ is nowhere vanishing and $\partial_t=(\dd t)^\sharp$ is a unit vector field such that $\dd t(\partial_t)=1$. Comparing $\Hess(r)=4\lambda_I^2r(t)g$ with the expression \eqref{eq:Hessian_warped_product}, we obtain that $F(t)=e^{-2\lambda_It}$, where we have absorbed the constant into $g_N$.
\end{proof}

\begin{cor}
    Let $(M,g)$ be a four-dimensional spin-c Riemannian manifold and let $\eta\in\Gamma(S)$ be an imaginary Killing spinor with Killing number $i\lambda_I$ and $q_\eta>0$. Then the metric $g$ is locally as follows: $$g=\dd t\otimes\dd t+\sinh^2(2\lambda_It)g_N$$ around any point where $\dd r\neq0$. Moreover, if $\dd r_p=0$ for $p\in M$, then $(M,g)$ is locally isometric to the hyperbolic space of constant negative sectional curvature $\mathrm{sec}^g\equiv-4\lambda_I^2$ on some neighborhood of $p$.
\end{cor}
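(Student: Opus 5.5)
Here $q_\eta=f^2>0$ is constant, and by Proposition \ref{prop:(4,0)_Killing} with $\lambda_R=0$ we have
$$\dd r=-2\lambda_I\theta,\qquad \dd f=0,\qquad \nabla^g\theta=-2\lambda_Irg .$$
The plan mirrors the real Killing case, with $r$ playing the role that $f$ played there. Combining the first and third equations gives $\Hess(r)=4\lambda_I^2 r\,g$. Since $f$ is a nonzero constant $f_0$, Lemma \ref{lemma:square_(4,0)_scalar_part} gives $\escal{\theta,\theta}=r^2-f_0^2$. Replacing $\eta$ by $-\eta$ if necessary (the Hermitian pairing is positive), we may assume $r>0$, and then $r\geq |f_0|>0$ everywhere.

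\emph{Points with $\dd r\neq0$.} On $U=\{\dd r\neq 0\}$ the function $r$ has no critical points and its Hessian is proportional to $g$. Hence $g$ is locally a warped product $g=\dd t\otimes\dd t+F(t)^2g_N$ with $r=r(t)$. Take $\dd t:=\theta/\sqrt{r^2-f_0^2}$, which is closed because $\theta$ is proportional to $\dd r$. Then
$$\dd r=-2\lambda_I\sqrt{r^2-f_0^2}\,\dd t,$$
and integrating gives $r(t)=|f_0|\cosh(2\lambda_I t)$ after shifting $t$ (and possibly reversing its sign). Substituting into \eqref{eq:Hessian_warped_product} yields $r''=4\lambda_I^2 r$, which holds automatically, together with $r'F'=4\lambda_I^2 rF$. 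The latter gives $F'/F=\coth(2\lambda_I t)\cdot 2\lambda_I$, so $F=\sinh(2\lambda_It)$ up to a constant absorbed into $g_N$.

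\emph{Critical points.} If $\dd r_p=0$, then $\theta_p=0$, so $r_p^2=f_0^2$ and $r$ attains its minimum $|f_0|>0$ at $p$. A function with $\Hess(r)=c^2 r g$, $c=2\lambda_I$, and a nondegenerate critical point at $p$ (its Hessian there is $c^2 r_p g$, positive definite) forces polar form around $p$. Using geodesic polar coordinates $g=\dd t\otimes\dd t+G(t)^2g_{S^3}$ with $r$ radial (the standard consequence, as in \cite[Thm.\ 4.3.3]{Petersen2016}), we impose the conditions $r(0)=|f_0|$, $r'(0)=0$, $G(0)=0$, $G'(0)=1$. These give $r=|f_0|\cosh(2\lambda_It)$ and $G=\frac{1}{2\lambda_I}\sinh(2\lambda_It)$. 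This is the hyperbolic metric of curvature $-4\lambda_I^2$.

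The main obstacle is justifying this rotationally symmetric form at the critical point, i.e.\ that the level sets of $r$ near $p$ are geodesic spheres and the metric is radial. The approach is to note that $\nabla r/|\nabla r|$ is a unit geodesic field on the punctured neighbourhood, so $r$ is a function of the distance to $p$. One then applies the warped-product structure on the punctured ball and fixes $G$ through the smoothness conditions at $t=0$.
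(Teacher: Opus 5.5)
Your proposal is correct and follows essentially the same route as the paper. Both arguments use the identity $\Hess(r)=4\lambda_I^2 r\,g$, the local warped-product splitting on $\{\dd r\neq 0\}$ with $\dd t=\theta/\sqrt{r^2-f^2}$ and the resulting $\cosh$/$\sinh$ profiles, and, at critical points, the rotationally symmetric normal form of \cite[Thm.\ 4.3.3]{Petersen2016} with the initial conditions $r'(0)=0$, $G(0)=0$, $G'(0)=1$.

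One small correction: no sign change is needed to get $r>0$. Positivity holds directly because $r=\frac{1}{4}\scS(\eta,\eta)$ with $\scS$ positive definite, and replacing $\eta$ by $-\eta$ would not change the Hermitian square anyway.
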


\begin{proof}
    By Proposition \ref{prop:(4,0)_Killing} we have: $$\dd r=-2\lambda_I\theta,\qquad \dd f=0,\qquad \nabla^g\theta=-2\lambda_Irg,\qquad \nabla^g\vartheta=2\lambda_I*\omega,\qquad \nabla^g_X\omega=-2\lambda_I*(\vartheta\wedge X^\flat).$$
    
    Let $U:=\{p\in M\mid\dd r_p\neq0\}$. The function $r$ has no critical points on $U$, hence $\theta$ is nowhere vanishing on $U$. From $\Hess(r)=4\lambda_I^2rg$ we get that the metric $g$ is locally a warped product: $$g=\dd t\otimes\dd t+F(t)^2g_N$$ for a positive function $F(t)$. The coordinate $t$ is defined by: $$\dd t:=\tfrac{1}{\sqrt{r^2-f^2}}\theta.$$
    
    Integrating $\dd r=-2\lambda_I\theta=-2\lambda_I\sqrt{r^2-f^2}\dd t$ we obtain $\operatorname{arcosh}(\frac{r}{f})=-2\lambda_I(t-t_0)$ for some $t_0\in\R$. Then, after shifting $t$ to absorb the factor $t_0$, we obtain $r(t)=f\cosh(2\lambda_It)$ and $\smash{\sqrt{r(t)^2-f^2}}=-f\sinh(2\lambda_It)$. Comparing $\Hess(r)=4\lambda_I^2rg$ with the expression \eqref{eq:Hessian_warped_product}, we obtain: $$r''(t)=4\lambda_I^2r(t),\qquad r'(t)F'(t)=4\lambda_I^2r(t)F(t),$$ which is solved by $F(t)=-\sinh(2\lambda_It)$, where we have absorbed the constant into $g_N$.\medskip

    For every point in the complement $U^c=\{p\in M\mid\dd r_p=0\}$ we have $\dd r_p=0$ and $4\lambda_I^2r_p\neq0$, hence $g=\dd t\otimes\dd t+G(t)^2g_{S^3}$ on some open neighborhood of $p\in M$, where $g_{S^3}$ is the standard metric on the unit sphere $S^3\subset\R^4$, see e.g.\ \cite[Thm.\ 4.3.3]{Petersen2016}. Comparing $\Hess(r)=4\lambda_I^2rg$ with the expression \eqref{eq:Hessian_warped_product}, we obtain: $$r''(t)=4\lambda_I^2r(t),\qquad r'(t)G'(t)=4\lambda_I^2r(t)G(t),$$ with the initial conditions $r(0)=r_0>0$, $r'(0)=0$, $G(0)=0$, and $G'(0)=1$. The first equation above gives $r(t)=r_0\cosh(2\lambda_It)$ and the second equation gives $G(t)=\frac{1}{2\lambda_I}\sinh(2\lambda_It)$. Then: $$g=\dd t\otimes\dd t+\frac{1}{4\lambda_I^2}\sinh^2(2\lambda_It)g_{S^3}.$$
    
    This is precisely the metric of the four-dimensional hyperbolic space, which has constant negative sectional curvature $\mathrm{sec}^g\equiv-4\lambda_I^2$.
\end{proof}


\section{Supersymmetric Brinkmann four-manifolds}
\label{sec:sugraspinors}


In this section, we consider a natural spinorial differential system that arises as the supersymmetry conditions for the celebrated Freedman's minimal gauged four-dimensional supergravity, or Freedman's gauged supergravity for short \cite{Freedman_1977}. Let $M$ be an oriented four-manifold and let $P$ be a principal $\U(1)$-bundle defined on $M$. A \emph{supersymmetric configuration} of Freedman's gauged supergravity on $(M,P)$ is, by definition, a tuple $(g,A,\eta)$ consisting of a Lorentzian metric $g$ of \emph{mostly plus} signature on $M$, a connection $A$ on $P$, and an irreducible complex chiral spinor $\eta\in\Gamma(S^\mu)$ with chirality $\mu\in\Z_2$ satisfying the following equations:
\begin{equation}
\label{eq:susytrans}
\nabla^{g,A} \eta = 0\, , \qquad (F^{\mu}_A -  \lambda) \cdot \eta = 0,
\end{equation}

\noindent
where:
\begin{itemize}
\item $S = S^{+}\oplus S^{-}$ is a bundle of irreducible complex spinors on $(M,g)$, which is associated to a given $\Spin^c_o(3,1)$-structure $Q$, whose characteristic $\U(1)$-bundle is $P$.
\item $\nabla^{g,A}$ is the canonical lift of the Levi-Civita connection of $g$ and $A$ to $P$.
\item $F_A\in \Omega^2(M)$ is the curvature of $A$, understood as a real two-form on $M$.
\item  $\mu \in \mathbb{Z}_2$ and $F_A^{\mu} \in \Omega^2_\C(M)$ is the following complex combination:
\begin{equation*}
F_A^{\mu} := \frac{1}{2}(F_A + i \mu\ast_g F_A).
\end{equation*}

\noindent
Note that we have $\ast_g F_A^{\mu} = - i\mu F_A^{\mu}$ since $\ast_g^2 = -\Id$ on two-forms. Therefore, $F_A^{+}$ is complex anti-self-dual, whereas $F_A^{-}$ is complex self-dual.
\item $\lambda\in \mathbb{C}^{\ast}$ is a non-zero complex constant.
\end{itemize}

\noindent
The first equation in \eqref{eq:susytrans} is typically called the \emph{gravitino equation}, since it arises from imposing the vanishing of the infinitesimal supersymmetry transformation of the gravitino on a bosonic background, whereas the second equation in \eqref{eq:susytrans} is commonly called the \emph{gaugino equation}, since it arises from imposing the vanishing of the infinitesimal supersymmetry transformation of the gaugino on a bosonic background. The gravitino equation implies, therefore, the existence of a spin-c parallel spinor on a Lorentzian four-manifold. The study of such parallel spinors in pseudo-Riemannian signature has been pioneered in \cite{Ikemakhen06,Ikemakhen07}, where the holonomy groups of all simply connected irreducible non-locally symmetric pseudo-Riemannian spin-c manifolds that admit parallel spinors are described, obtaining a classification in the Lorentzian symmetric case. In contrast to \emph{opere citato}, the $\U(1)$-connection occurring in the parallelicity condition of the gravitino equation has to satisfy its own coupled equation.\medskip

Let $\Lor_o(M)$ denote the set of time-orientable, strongly spin-c, Lorentzian metrics on $M$, and let $C(P)$ denote the affine space of connections on $P$. It is convenient to refer to supersymmetric solutions simply as pairs $(g,A)\in \Lor_o(M)\times C(P)$, meaning that $(g,A)$ is supersymmetric for some choice of irreducible complex spinor bundle $S$ and section $\eta\in \Gamma(S^{\mu})$. A supersymmetric configuration $(g,A)$ is a \emph{supersymmetric solution} of Freedman's gauged supergravity on $(M,P)$ if $(g,A)$ satisfies the following differential system:
\begin{equation}
\label{eq:Freedmaneqs}
\mathrm{Ric}^g - \frac{1}{2} s_g g  = \Lambda g + \mathfrak{e}^2\, F_A \circ_g F_A - \frac{\mathfrak{e}^2}{2}  \vert F_A \vert^2_g g\, , \qquad \dd \ast_g F_A =0 
\end{equation}

\noindent
for a pair of non-zero real constants $\Lambda, \mathfrak{e}\in\mathbb{R}^{\ast}$. These are precisely the equations of the Lorentzian-signature Einstein-Maxwell system with a possibly non-vanishing cosmological constant $\Lambda$ and coupling constant $\mathfrak{e}$.

\begin{remark}
The proper supersymmetry conditions and equations of motion of Freedman's gauged supergravity fix $\mathfrak{e}$ and $\Lambda$ in terms of $\lambda$. Here, we have opted for greater initial flexibility in order to encounter the relation between $\mathfrak{e}$ and $\lambda$ as a consistency condition derived from imposing that a given supersymmetric configuration is a supersymmetric solution. Note that, crucially, if $\eta\in \Gamma(S^{\mu})$, then $F_A^{-\mu} \cdot \eta = 0$ identically on $M$.     
\end{remark}

\begin{lemma}
\label{lemma:squareslorentzian}
Let $\eta \in \Gamma(S^{\mu})$ be an irreducible complex chiral spinor with chirality $\mu\in\Z_2$ on an orientable Lorentzian four-manifold $(M,g)$. Then:
\begin{itemize}
    \item A complex exterior form $\widehat{\alpha} \in \Omega_\C(M)$ is the Hermitian square of $\eta$ with $\kappa=1$ if and only if there exists an isotropic real one-form $\widehat{u}\in \Omega^1(M)$ such that:
    \begin{equation*}
    \widehat{\alpha} = \widehat{u} + i\mu \ast \widehat{u}.
    \end{equation*}

    \noindent
    The one-form $\widehat{u}$ is the so-called \emph{Dirac current} of $\eta$.
    \item A $\cL$-valued complex exterior form $\alpha \in \Omega_\C(M,\cL)$ is the complex-bilinear square of $\eta$ if and only if it is a decomposable complex $\mu$-self-dual two-form valued in $\cL$. That is, around every point in $M$ there exists a local section $\mathfrak{l}$ of $\cL$ such that:
    \begin{equation*}
    \alpha = (\theta_1\wedge \theta_2) \otimes \mathfrak{l} \, , \qquad i\ast (\theta_1\wedge \theta_2 ) = \mu \theta_1\wedge \theta_2
    \end{equation*} 

    \noindent
    for a pair of local isotropic and orthogonal complex one-forms $\theta_1 , \theta_2 \in \Omega^1_\C(M)$.
\end{itemize}
\end{lemma}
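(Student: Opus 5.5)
The plan is to apply the pointwise algebraic characterizations, Theorem \ref{thm:even_Hermitian_forms} for the Hermitian square and Theorem \ref{thm:bilinearsquare} for the complex-bilinear square. Both are specialized to signature $(p,q)=(3,1)$, $d=4$, together with the chirality condition $i^{q+\frac{d}{2}}*(\pi\circ\tau)(\alpha)=\mu\alpha$, which here reads $-i*(\pi\circ\tau)(\alpha)=\mu\alpha$. The proof is then essentially a degree-by-degree algebraic analysis.

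The chirality condition is analyzed first, since it does most of the work. The composition $*\circ\pi\circ\tau$ exchanges degrees $k$ and $4-k$. Writing $\alpha=\sum_k\alpha^{(k)}$, the condition pairs $\alpha^{(0)}$ with $\alpha^{(4)}$, $\alpha^{(1)}$ with $\alpha^{(3)}$, and imposes a self-duality condition on $\alpha^{(2)}$. Chirality is only compatible with a vanishing scalar part: the spinor is $\scS$-isotropic for the appropriate adjoint type, and likewise $\scB(\eta,\eta)=0$ because chiral subspaces are isotropic for the relevant pairing in this signature. Indeed, in Lorentzian dimension four the admissible pairings pair $S^+$ with $S^-$ for the Hermitian case and $S^\mu$ with itself for the bilinear case, in the corresponding degrees. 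I will use the explicit expansions of $\widehat\alpha_\eta$ and $\alpha_\eta$ to decide which degrees survive: $\scS(\gamma^{I}\eta,\eta)$ vanishes unless $\gamma^I$ maps $S^\mu$ to the $\scS$-dual of $S^\mu$.

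The expected outcome is as follows. In the Hermitian case, with the adjoint type for which $\scS$ pairs $S^\mu$ with $S^{-\mu}$, only odd degrees survive. Chirality then gives $\widehat\alpha=\widehat u+i\mu*\widehat u$ with $\widehat u$ real by the reality condition $(\pi^{\frac{1-s}{2}}\circ\tau)(\widehat\alpha)=\overline{\widehat\alpha}$. Substituting into $\widehat\alpha\diamond\widehat\alpha=4\widehat\alpha^{(0)}\widehat\alpha=0$ and using $*\widehat u=\widehat u\diamond\nu$ (up to sign), the product expands to $\langle \widehat u,\widehat u\rangle(1\pm\ldots)$, which forces $\widehat u$ to be isotropic. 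For the converse I will check condition (c) of Theorem \ref{thm:even_Hermitian_forms} for every $\beta$, reducing by linearity to a basis. It is easiest to work in a null frame with $\widehat u=e^+$: then $\widehat\alpha$ is a multiple of $e^+\diamond(1+c\,e^1\diamond e^2)$, an idempotent-times-nilpotent of rank one. In the bilinear case only degree two survives, and the condition becomes $i*\alpha=\mu\alpha$. The equation $\alpha\diamond\alpha=0$ then gives $\alpha\wedge\alpha=0$ and $\langle\alpha,\alpha\rangle=0$. For a $\mu$-self-dual two-form, $\alpha\wedge\alpha=0$ is equivalent to decomposability, so $\alpha=\theta_1\wedge\theta_2$. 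Self-duality of a decomposable form then forces $\mathrm{span}(\theta_1,\theta_2)$ to be totally isotropic. The line-bundle twist is handled by trivializing $\cL$ locally with a local section $\mathfrak l$.

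The main obstacle is bookkeeping the signs: choosing the correct adjoint types $s$ and verifying that the chirality operator $-i*\pi\tau$ in mostly plus signature yields exactly $i\mu*$ (rather than $-i\mu*$) on the relevant pieces. To fix these I will compute the squares of an explicit chiral spinor in a null frame and match the result. The quadratic identity in the converse direction is routine once a normal form is reached via $\Spin_o(3,1)$-equivariance.
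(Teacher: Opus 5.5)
\textbf{The Hermitian part has a step that fails.} Your treatment of the complex-bilinear square is essentially the paper's argument. The paper removes degrees $0,1,3,4$ using the linear conditions $\tau(\alpha)=-\alpha$ and chirality from Theorem~\ref{thm:bilinearsquare}, rather than the explicit expansion. Your stated reason for $\scB(\eta,\eta)=0$ is wrong, though the conclusion holds. It holds because $\scB$ is skew-symmetric ($\sigma=-1$), not because $S^\mu$ is $\scB$-isotropic; the latter contradicts your own correct remark that $\scB$ pairs $S^\mu$ with itself.

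For the Hermitian square the paper simply quotes \cite[Prop.\ 3.26]{Algebraic_Complex_Square_2025}, while you attempt a direct derivation. Your step ``$\widehat\alpha\diamond\widehat\alpha=0$ forces $\widehat u$ to be isotropic'' fails. In signature $(3,1)$ we have $\widehat u\diamond\nu=*\widehat u$, $\nu\diamond\widehat u=-\widehat u\diamond\nu$ and $\nu\diamond\nu=-1$. Hence $\widehat\alpha=\widehat u\diamond(1+i\mu\nu)$, and
\[
\widehat\alpha\diamond\widehat\alpha=\widehat u\diamond\widehat u\diamond(1-i\mu\nu)\diamond(1+i\mu\nu)=\escal{\widehat u,\widehat u}\,(1+\nu\diamond\nu)=0
\]
for \emph{every} real one-form $\widehat u$. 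This is because $\tfrac12(1\pm i\mu\nu)$ are complementary orthogonal idempotents. So the first quadratic equation is vacuous here and cannot detect nullity.

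\textbf{The fix.} You must use the second quadratic equation instead. For a one-form $\beta$, the same commutation rules give $\widehat\alpha\diamond\beta\diamond\widehat\alpha=4\escal{\widehat u,\beta}\,\widehat\alpha-2\escal{\widehat u,\widehat u}\,\beta\diamond(1+i\mu\nu)$ and $(\widehat\alpha\diamond\beta)^{(0)}=\escal{\widehat u,\beta}$. In the forward direction, condition (c) of Theorem~\ref{thm:even_Hermitian_forms} holds for all $\beta$. Since $\beta\diamond(1+i\mu\nu)=\beta+i\mu*\beta\neq0$, this forces $\escal{\widehat u,\widehat u}=0$.

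Equivalently, for non-null $\widehat u$ the endomorphism $\Psi_\gamma(\widehat u)$ is invertible. Then $\Psi_\gamma(\widehat\alpha)$, which is $\Psi_\gamma(\widehat u)$ times twice a rank-two chiral projector, has rank two. A Hermitian square must have rank one. With this replacement, the rest of your plan goes through: the degree analysis, reality of $\widehat u$, and the null-frame rank-one normal form for the converse.
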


\begin{proof}
The Hermitian square of $\eta$ is computed in \cite[Prop.\ 3.26]{Algebraic_Complex_Square_2025} using an admissible Hermitian pairing of positive adjoint type. We proceed to compute the complex-bilinear square of $\eta$ pointwise, thus $S^\mu_p\cong\Sigma^\mu$ and $(\wedge T^*_\C M\otimes\cL)_p\cong\wedge V^*_\C$ for all $p\in M$. We equip $\Sigma$ with a skew-symmetric admissible complex-bilinear pairing of positive adjoint type, see \cite[Prop.\ 3.18 and Table 3]{Algebraic_Complex_Square_2025}. Applying Theorem \ref{thm:bilinearsquare} to four Lorentzian dimensions, it follows that $\alpha\in\wedge V^*_\C$ is the complex-bilinear square of $\eta \in \Sigma^\mu$ if and only if:
\begin{eqnarray}
\label{eq:algebraic31}
\alpha\diamond\alpha = 4 \alpha^{(0)}\alpha,\quad \alpha \diamond \beta \diamond \alpha = 4(\alpha\diamond\beta)^{(0)}\alpha,\quad \tau(\alpha) = -\alpha , \quad  \ast (\pi\circ\tau)(\alpha) = i\mu \alpha
\end{eqnarray}

for a complex exterior form $\beta\in\wedge V^*_\C$ satisfying $(\alpha\diamond\beta)^{(0)} \neq 0$. The linear equations in \eqref{eq:algebraic31} are solved by setting $\alpha = \omega$ for a complex two-form $\omega\in\wedge^2V^*_\C$ satisfying:
\begin{equation*}
i\ast \omega =  \mu \omega.
\end{equation*}

The first equation in \eqref{eq:algebraic31} becomes:
\begin{equation*}
\alpha\diamond\alpha=\omega\diamond\omega=\omega\wedge\omega-\escal{\omega,\omega}=0,
\end{equation*}

which in turn is equivalent to $\alpha = \theta_1\wedge \theta_2$ and $\escal{\theta_1,\theta_1}\escal{\theta_2,\theta_2}=\escal{\theta_1,\theta_2}^2$ for a pair of complex one-forms $\theta_1 , \theta_2 \in V^*_\C$. Then the equation $i*\omega=\mu\omega$ translates into $i*(\theta_1\wedge\theta_2)=\mu\theta_1\wedge\theta_2$. Plugging $\theta_1^{\smash{\sharp}}$ and $\theta_2^{\smash{\sharp}}$ into this equation, we conclude that the one-forms $\theta_1$ and $\theta_2$ are isotropic and mutually orthogonal. We complete $\theta_1$ and $\theta_2$ into a basis $\{\theta_1,\theta_2,\vartheta_1,\vartheta_2\}$ of $V^*_\C$, where $\vartheta_1$ and $\vartheta_2$ are isotropic and orthogonal, and furthermore are \emph{conjugate} to $\theta_1$ and $\theta_2$, that is:
\begin{equation*}
\escal{\theta_1,\vartheta_1}=\escal{\theta_2,\vartheta_2}=1,\qquad \escal{\theta_1,\vartheta_2}=\escal{\theta_2,\vartheta_1}=0.
\end{equation*}

Setting $\beta = \vartheta_1\wedge\vartheta_2$, the second equation \eqref{eq:algebraic31} becomes:
\begin{equation*}
(\theta_1\wedge\theta_2)\diamond(\vartheta_1\wedge\vartheta_2)\diamond(\theta_1\wedge\theta_2)=-4\theta_1\wedge\theta_2,
\end{equation*}

\noindent
where we have used that $((\theta_1\wedge\theta_2)\diamond(\vartheta_1\wedge\vartheta_2))^{(0)}=-1$. Expanding the above equation yields an identity; hence, we conclude.
\end{proof}

\noindent
The geometry of decomposable complex self-dual isotropic two-forms in four Lorentzian dimensions, as they occur in the previous lemma, has been extensively studied within the Newman-Penrose formalism. We use this characterization to refine the description of the complex-bilinear square of $\eta$.

\begin{prop}
A $\cL$-valued complex exterior form $\alpha \in \Omega_\C(M,\cL)$ is the complex-bilinear square of $\eta\in\Gamma(S^\mu)$ if and only if there exists a nowhere vanishing isotropic one-form $u\in \Omega^1(M)$ such that locally around every point in $M$ we have:
\begin{equation*}
\alpha =  u\wedge(l+i\mu n)\otimes \mathfrak{l},
\end{equation*} 

\noindent
where $\mathfrak{l}$ is a local section of $\cL$, and $l, n$ are unit-norm orthogonal local one-forms orthogonal to $u$.    
\end{prop}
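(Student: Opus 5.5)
The plan is to start from Lemma \ref{lemma:squareslorentzian}. It says that locally $\alpha=(\theta_1\wedge\theta_2)\otimes\mathfrak{l}$, where $\theta_1,\theta_2\in\Omega^1_\C(M)$ are isotropic, mutually orthogonal, and satisfy $i\ast(\theta_1\wedge\theta_2)=\mu\,\theta_1\wedge\theta_2$. The task is then pointwise linear algebra in $V^*_\C$ with $V$ Lorentzian of signature $(3,1)$. The complex two-plane $\Pi=\mathrm{span}_\C\{\theta_1,\theta_2\}\subset V^*_\C$ is totally isotropic, i.e.\ an $\alpha$-plane or $\beta$-plane depending on $\mu$.

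First I show that $\Pi\cap\overline{\Pi}$ is spanned by a real isotropic one-form $u$. In Lorentzian signature, a totally isotropic complex two-plane cannot satisfy $\Pi\cap\overline\Pi=0$. If it did, $V^*_\C=\Pi\oplus\overline\Pi$, and the Hermitian form $\langle\theta,\bar\theta\rangle$ restricted to $\Pi$ would be non-degenerate. Its signature would then give a real splitting of $V^*$ of signature $(2,2)$ or $(4,0)$, which is impossible for $(3,1)$. Nor can we have $\Pi=\overline\Pi$, since then $\Pi$ would be the complexification of a real totally isotropic two-plane, which does not exist in signature $(3,1)$. Hence $\dim_\C(\Pi\cap\overline\Pi)=1$. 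This intersection is conjugation-invariant, so it is spanned by a real one-form $u$, and $u$ is isotropic because it lies in $\Pi$. Concretely, $u$ is proportional to the Dirac current $\widehat u$ from the Hermitian square. I would use $\widehat u$ directly to fix $u$ globally as a nowhere vanishing real isotropic one-form, since the spinor is nowhere vanishing.

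Next I write $\Pi=\mathrm{span}\{u,w\}$ with $w\in\Pi$ completing the basis. Since $w\perp u$, we may add a multiple of $u$ to $w$. Decomposing into real and imaginary parts then gives $w=c(l+i n')$ with $l,n'$ real, both orthogonal to $u$, and spacelike modulo $u$. The condition $\langle w,w\rangle=0$ reads $|l|=|n'|$ and $l\perp n'$. After rescaling, $l,n'$ are unit and orthogonal, and absorbing the constant $c$ into $\mathfrak{l}$ gives $\alpha=u\wedge(l+i n')\otimes\mathfrak{l}$. The self-duality condition $i\ast(u\wedge(l+in'))=\mu\,u\wedge(l+in')$ then fixes the orientation of $(u,l,n')$, which amounts to $n'=\pm\mu n$. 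Changing the sign of $n$ if needed gives $l+i\mu n$. I would verify this step with the identity $\ast(u\wedge l)=\pm u\wedge n$ for a null frame.

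For the converse, a form $u\wedge(l+i\mu n)\otimes\mathfrak l$ with these properties is decomposable. Its factors $u$ and $l+i\mu n$ are isotropic and orthogonal, and the orientation convention makes it $\mu$-self-dual. Lemma \ref{lemma:squareslorentzian} therefore shows it is the complex-bilinear square of $\eta$. I expect the main obstacle to be the signature argument excluding $\Pi\cap\overline\Pi=0$, together with correctly tracking the orientation and sign conventions that produce exactly $l+i\mu n$, and not $l-i\mu n$.
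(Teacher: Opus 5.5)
Your argument is correct, but it reaches the real null direction by a different route from the paper's. The paper writes $\theta_1\wedge\theta_2=\omega_R+i\mu\ast\omega_R$ with $\omega_R$ real. It derives $\omega_R\wedge\omega_R=0=\omega_R\wedge\ast\omega_R$, and then uses, without proof, the normal form of a null real two-form, $\omega_R=u\wedge l$, defining $n$ by $\ast(u\wedge l)=u\wedge n$. It makes $u$ global by gluing local choices on a good cover: the rescaling cocycle $r_{ab}>0$ is trivialized by softness of the sheaf of positive functions. You instead work with the totally isotropic plane $\Pi$. Your signature argument (an orthogonal splitting $V^*_\C=\Pi\oplus\overline{\Pi}$ would force real signature $(2a,2b)$) shows that $\Pi\cap\overline{\Pi}$ is a real null line. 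Splitting the second generator into real and imaginary parts then replaces the null-two-form lemma. Your version is more self-contained, and it makes clear that the null line depends only on $\alpha$, since $\Pi=\{\theta\in T^*_\C M : \theta\wedge\alpha=0\}$. The paper's version stays at the level of two-forms but treats the Lorentzian classification of null two-forms as a black box.

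Two remarks. First, your globalization step asserts $u\propto\widehat u$ without proof. This is true: the paper uses exactly this identification right after the proposition, citing \cite[Prop.\ 3.30]{Algebraic_Complex_Square_2025}. Within your argument, though, it is an unproved step, and you can bypass it. Because $\Pi$ has constant rank and is determined by $\alpha$, the set $\Pi\cap\overline{\Pi}\cap T^*M$ is a smooth real null line subbundle. It is trivial because $(M,g)$ is time-orientable here (the spin-c structure is strong), so every nonzero null covector is either future or past. Hence it has a nowhere vanishing section $u$; this is the same input that makes the paper's $r_{ab}$ positive. Second, the forward direction needs no orientation bookkeeping. The statement puts no orientation condition on $n$, so you may simply set $n:=\mu n'$. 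In the converse, however, the convention $\ast(u\wedge l)=u\wedge n$ (implicit in the paper) really is needed: with the opposite orientation, $u\wedge(l+i\mu n)$ satisfies $i\ast\omega=-\mu\,\omega$. You are right to impose it.
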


\begin{proof}
By Lemma \ref{lemma:squareslorentzian}, there exists a local section $\mathfrak{l}$ of $\cL$ and local complex one-forms, isotropic and orthogonal, such that $\alpha = (\theta_1\wedge \theta_2) \otimes \mathfrak{l}$ and $i\ast (\theta_1\wedge \theta_2 ) = \mu \theta_1\wedge \theta_2$. Write:
\begin{equation*}
\theta_1\wedge \theta_2  = \omega_R + i \omega_I
\end{equation*}

\noindent
for real two-forms $\omega_R , \omega_I \in \Omega^2(M)$. The complex self-duality condition translates into:
\begin{equation*}
\ast\omega_R = \mu \omega_I
\end{equation*}

\noindent
and therefore $\theta_1\wedge\theta_2 = \omega_R + i\mu\ast\omega_R$. From $\theta_1\wedge\theta_2\wedge\theta_1\wedge\theta_2=0$ we obtain $\omega_R\wedge \ast\omega_R = 0$, and from the complex self-duality condition we automatically have $(\theta_1\wedge\theta_2)\wedge(\bar{\theta}_1\wedge\bar{\theta}_2)=0$, which yields $\omega_R \wedge \omega_R = 0$. These equations are solved by: 
\begin{eqnarray*}
\theta_1\wedge \theta_2  = u\wedge l + i\mu \ast(u\wedge l)
\end{eqnarray*}

\noindent
for local real one-forms $u,l$ such that $u$ is isotropic and $l$ is of unit norm and orthogonal to $u$. We can introduce another, unit-norm, local one-form $n$ that is orthogonal to both $u$ and $l$ and satisfies:
\begin{equation*}
\ast(u\wedge l) = u\wedge n \, , \qquad \ast(u\wedge n) = - u\wedge l.
\end{equation*}

\noindent
Hence:
\begin{equation*}
\theta_1\wedge \theta_2  = u\wedge (l + i\mu n)
\end{equation*}

\noindent
and thus we can identify $\theta_1 = u$ and $\theta_2 = l+i\mu n$.\medskip
 
Given a good open cover $\{U_a\}$ of $M$ labeled by an index $a$, we can perform the previous construction at every $U_a$, obtaining tuples $\{u_a,l_a,n_a,\mathfrak{l}_a\}$ such that:
\begin{equation*}
u_a \wedge (l_a + i\mu n_a) \otimes \mathfrak{l}_a = u_b \wedge (l_b + i\mu n_b) \otimes \mathfrak{l}_b
\end{equation*}
 
\noindent
on non-trivial overlaps $U_{ab}:=U_a\cap U_b \neq \emptyset$. Hence:
\begin{equation}
\label{eq:transitionL}
u_a = r_{ab} u_b\, , \qquad l_a + f^l_{ab} u_a + i \mu (n_a + f^n_{ab} u_a) = e^{i\theta_{ab}} (l_b + i \mu n_b)
\end{equation}

\noindent
for families of functions defined on overlaps:
\begin{equation*}
r_{ab}\in C^{\infty}(U_{ab}, \mathbb{R}_{>0}),\qquad f^l_{ab},f^n_{ab},\theta_{ab}\in C^{\infty}(U_{ab}, \mathbb{R})
\end{equation*}

\noindent
such that $\mathfrak{l}_{a} = r^{-1}_{ab} e^{i\theta_{ab}} \mathfrak{l}_b$. That is, $r^{-1}_{ab} e^{i\theta_{ab}}\colon U_{ab} \to \mathbb{C}^{\ast}$ are transition functions for $\cL$, which therefore satisfy the corresponding cocycle condition. Since the sheaf of smooth positive functions on $M$ is soft, its cohomology vanishes, implying that the family of isotropic real one-forms $u_a$ assembles into a well-defined isotropic one-form $u\in\Omega^1(M)$.  
\end{proof}

\begin{remark}
The decomposition of the complex-bilinear square of $\eta$ given in the previous proposition provides an alternative perspective on the notion of \emph{isotropic parallelism} introduced in \cite{ShahbaziThesis} for an irreducible real spinor on a four-dimensional Lorentzian manifold.
\end{remark}

\noindent
The Hermitian and complex-bilinear squares of the same irreducible chiral spinor $\eta\in \Gamma(S^{\mu})$ are not independent. Instead, it follows from \cite[Prop.\ 3.30]{Algebraic_Complex_Square_2025} that $\widehat{u} = c\, u$ for a non-zero real constant $c$ which is unimportant for our purposes. Hence, in the following, we will only refer to $u\in \Omega^1(M)$ as the Dirac current of $\eta$. As a corollary to the previous proposition, we have the following characterization of $\cL$.

\begin{cor}
Let $(S,\gamma,\scB)$ be a complex-bilinear paired spinor bundle. Then, $S$ admits a nowhere vanishing section $\eta\in \Gamma(S)$ with Dirac current $u\in \Omega^1(M)$ only if:
\begin{eqnarray*}
\mathfrak{S}_u \xrightarrow{\cong} \cL,
\end{eqnarray*}

\noindent
that is, only if the screen bundle determined by $u$ is isomorphic to $\cL$ as rank-two real vector bundles. 
\end{cor}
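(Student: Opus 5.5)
The plan is to read the isomorphism off the local data produced in the proof of the previous proposition, namely the transition relations \eqref{eq:transitionL}. Suppose $\eta\in\Gamma(S)$ is nowhere vanishing with Dirac current $u$. By the previous proposition, on a good open cover $\{U_a\}$ we have tuples $\{u_a,l_a,n_a,\mathfrak{l}_a\}$ with $u_a=r_{ab}u_b$ and
\[
l_a+i\mu n_a \equiv e^{i\theta_{ab}}(l_b+i\mu n_b) \mod u,
\]
while $\cL$ has transition functions $\mathfrak{l}_a=r^{-1}_{ab}e^{i\theta_{ab}}\mathfrak{l}_b$. Since $u$ is globally defined, we may rescale each $u_a$ to equal $u$, so that $r_{ab}=1$. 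This absorbs a positive function into $\mathfrak{l}_a$, which is allowed because the positive scalings form a soft sheaf. The transition functions of $\cL$ then become $e^{i\theta_{ab}}$, which are $\U(1)$-valued.

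Next we set up the screen bundle. It is $\mathfrak{S}_u:=u^{\perp}/\langle u\rangle$, a rank-two real vector bundle with a positive-definite metric induced by $g$; it is oriented by the Lorentzian orientation together with $u$. Locally, the classes $[l_a],[n_a]$ form an oriented orthonormal frame of $\mathfrak{S}_u$. We use the complex structure $J[l]=[n]$, possibly with sign $\mu$, to make $\mathfrak{S}_u$ a complex line bundle, with local unitary section $[l_a]+i\mu[n_a]$. The displayed relation, taken modulo $u$, says exactly that
\[
[l_a]+i\mu[n_a]=e^{i\theta_{ab}}\bigl([l_b]+i\mu[n_b]\bigr),
\]
so the transition functions of $\mathfrak{S}_u$ in these frames are $e^{i\theta_{ab}}$.

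Since $\mathfrak{S}_u$ and $\cL$ have the same cocycle, the map $\mathfrak{l}_a\mapsto [l_a]+i\mu[n_a]$ on each $U_a$ glues to a global isomorphism $\mathfrak{S}_u\cong\cL$ of complex line bundles, and hence of rank-two real bundles. Depending on conventions, the isomorphism might instead land on $\overline{\cL}$, which is fine. Equivalently, the global $\cL$-valued form $\alpha=u\wedge(l+i\mu n)\otimes\mathfrak{l}$ defines the nowhere vanishing section $[\iota_{v}\alpha]$ of $\mathfrak{S}_u\otimes\cL$ (complexified appropriately), for $v$ a null vector with $u(v)=1$. This trivializes $\mathfrak{S}_u\otimes_\C\cL$ up to conjugation, and the same conclusion follows.

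The main obstacle is checking that the normalization $r_{ab}=1$ is compatible with the cocycle of $\cL$. If $r_{ab}\neq 1$ the two cocycles differ only by a positive real coboundary, which is trivial because the sheaf of positive smooth functions is soft; I will argue this explicitly. I also need to confirm that the ambiguity $f^l_{ab},f^n_{ab}$ disappears after passing to the quotient by $u$. Finally, I need to confirm that the choice of $J$ is independent of the local frames, which I will deduce from $\ast(u\wedge l)=u\wedge n$ being frame-independent.
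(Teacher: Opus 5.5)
Your proposal is correct and is essentially the paper's proof: quotienting Equation \eqref{eq:transitionL} by $u$ removes the $f^l_{ab},f^n_{ab}$ terms. Once the cocycle $r^{-1}_{ab}e^{i\theta_{ab}}$ of $\cL$ is reduced to $\U(1)$ (your normalization $u_a=u$, justified by softness of the sheaf of positive functions), the frames $([l_a],[n_a])$ of $\mathfrak{S}_u$ transform by the same $\U(1)$-cocycle up to conjugation, which is harmless since $\cL$ and $\overline{\cL}$ agree as real rank-two bundles; your variant via the globally defined class $[\iota_v\alpha]$ modulo $u$ is a cocycle-free restatement of the same observation.
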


\begin{remark}
We remind the reader that the \emph{screen bundle} defined by a nowhere vanishing isotropic one form $u\in \Omega^1(M)$ is the following abstract vector bundle:
\begin{equation*}
\mathfrak{S}_u := \frac{\ker(u)}{\langle \mathbb{R} u^{\sharp_g}\rangle}.
\end{equation*}
\end{remark}
 
\begin{proof}
Follows from Equation \eqref{eq:transitionL} after taking the quotient by the span of $u$, which eliminates the terms proportional to $u$ and therefore implies that the collection of local basis $(l_a,n_a)$ of the screen bundle is related on overlaps by the transition functions of $\cL$, once these are reduced to $\U(1)$. 
\end{proof}

\noindent
The Hermitian square of $\eta$ suffices to obtain convenient necessary and sufficient conditions for the existence of a solution to the gaugino equation in \eqref{eq:susytrans}.

\begin{lemma}
A pair $(M,P)$ admits a solution $(g,A)\in \Lor_o(M)\times C(P)$ to the gaugino equation for a nowhere vanishing spinor $\eta\in \Gamma(S^{\mu})$ if and only if there exists a nowhere vanishing isotropic one-form $u\in \Omega^1(M)$ such that:
\begin{equation}
\label{eq:Asugra}
F_A(u^{\sharp_g}) + \lambda_R u =  0\, , \qquad F_A\wedge u + \mu\,  \lambda_I \ast_g u = 0.
\end{equation}
\end{lemma}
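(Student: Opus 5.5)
The plan is to dequantize the gaugino equation with Lemma \ref{lemma:constrained_spinor_as_section} and then read the result off degree by degree. The constraint endomorphism is $\cQ=\Psi_\gamma(F^\mu_A-\lambda)$, so its symbol is $\frq=F^\mu_A-\lambda$. For a nowhere vanishing $\eta\in\Gamma(S^\mu)$, the gaugino equation $\cQ(\eta)=0$ is then equivalent to
\begin{equation*}
(F^\mu_A-\lambda)\diamond\widehat{\alpha}=0,
\end{equation*}
where $\widehat{\alpha}$ is the Hermitian square of $\eta$ with $\kappa=1$. By Lemma \ref{lemma:squareslorentzian}, $\widehat{\alpha}=u+i\mu\ast u$ with $u$ the (nowhere vanishing, isotropic) Dirac current.

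For the converse I need the reverse implication. Suppose $u$ is nowhere vanishing and isotropic. Then $u+i\mu\ast u$ satisfies the algebraic conditions of Theorem \ref{thm:even_Hermitian_forms}, including the chirality condition, so it is pointwise the Hermitian square of a chiral spinor. Using Proposition \ref{prop:spin_spinor_class=0}, I retwist the spinor bundle so that the spinor class vanishes, which produces a global $\eta$. The subtlety is that this retwisting changes the characteristic bundle $P$. Since the statement only asks about the pair $(M,P)$ existing, I would handle this by choosing the twist appropriately, or simply note that the gaugino equation is purely algebraic and pointwise. So the main content is the algebraic equivalence
\begin{equation*}
(F^\mu_A-\lambda)\diamond(u+i\mu\ast u)=0\iff \text{Equation }\eqref{eq:Asugra}.
\end{equation*}

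For the computation I will write $u\diamond\nu$ for $\ast u$ up to sign, using $\nu\diamond\nu=-1$ in Lorentzian signature. Since $F^\mu_A\diamond\nu$ is proportional to $i\mu F^\mu_A$, the factor $F^\mu_A$ acting on $u+i\mu\ast u=u\diamond(1\pm i\mu\nu)$ reduces to $F_A$ acting, up to a factor, on the chiral projector. So I compute
\begin{equation*}
F_A\diamond u=F_A\wedge u+\iota_{u^\sharp}F_A
\end{equation*}
with appropriate signs, and then separate the product into degree one and degree three components. The degree one part gives $-\iota_{u^\sharp}F_A$ against $\lambda u$. The degree three part gives $F_A\wedge u$ against $\lambda\,i\mu\ast u$. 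After splitting into real and imaginary parts, the imaginary parts of the two components are Hodge duals of the real parts, by the chirality of the projector. This should leave exactly two real equations: $F_A(u^\sharp)+\lambda_R u=0$, and $F_A\wedge u+\mu\lambda_I\ast u=0$. A consistency check is that the degree one equation, applied to $u^\sharp$, gives $0=0$ because $u$ is isotropic.

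I expect the main obstacle to be sign bookkeeping: the conventions $v^2=+h(v,v)$, $\ast^2=-1$ on two-forms, and the relation between $\ast u$ and $u\diamond\nu$. These determine whether $\lambda_R$ pairs with the one-form equation and $\lambda_I$ with the three-form equation. To control this, I would verify the identities in an explicit null frame $u=e^0+e^1$ with $F_A$ general, and check both chiralities directly.
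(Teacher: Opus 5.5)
Your core argument is the paper's proof. You dequantize the gaugino equation through Lemma~\ref{lemma:constrained_spinor_as_section} with symbol $\frq=F^\mu_A-\lambda$, insert the Hermitian square $u+i\mu\ast_g u$ of Lemma~\ref{lemma:squareslorentzian}, and expand
\begin{equation*}
(F^{\mu}_A-\lambda)\diamond(u+i\mu\ast_g u)=2\big(F^{\mu}_A\wedge u-F^{\mu}_A(u^{\sharp_g})\big)-\lambda u-i\mu\lambda\ast_g u .
\end{equation*}
You then split by degree and into real and imaginary parts. The four real equations pair off under $\ast_g$ (real part in degree three with imaginary part in degree one, and vice versa), which leaves \eqref{eq:Asugra}. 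The identities you plan to use, $\nu\diamond\nu=-1$, $u\diamond\nu=\ast_g u$ and $F^\mu_A\diamond\nu=i\mu F^\mu_A$, are correct in these conventions.

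What does not work is your paragraph on the converse, which attempts more than the paper's proof does. Retwisting as in Proposition~\ref{prop:spin_spinor_class=0} replaces $S_o$ by $S_o\otimes L^*$, where $c_1(L)=\mathfrak{s}_{\gamma_o}(\alpha)$ is dictated by $\alpha$. Because the characteristic bundle is obtained through the square morphism $\xi$, the characteristic line bundle becomes $\cL\otimes (L^*)^{\otimes 2}$. Hence $P$ is preserved only when $2\,\mathfrak{s}_{\gamma_o}(\alpha)=0$, so there is no freedom to ``choose the twist appropriately''. The pointwise nature of the gaugino equation does not help, because the statement requires a global nowhere vanishing section of $S^\mu$.

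The obstruction is genuine. By the corollary characterizing $\cL$ through the screen bundle (stated after Lemma~\ref{lemma:squareslorentzian}), a nowhere vanishing $\eta$ with Dirac current $u$ forces $\mathfrak{S}_u\cong\cL$. For example, take $\R^2\times S^2$ with $g=\dd\fru\odot\dd\frv+h$, $h$ round, $P$ trivial, $A=\lambda_R\fru\,\dd\frv$ and $\lambda=\lambda_R$. Then $u=\dd\fru$ satisfies \eqref{eq:Asugra}. However, the first equation in \eqref{eq:Asugra} forces the Dirac current of any solution $\eta$ to be a multiple of $\dd\fru$, whose screen bundle $TS^2$ is not trivial. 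So no such $\eta$ exists for this $(g,A)$.

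The paper's proof establishes, and the rest of the section uses, only this: a nowhere vanishing $\eta\in\Gamma(S^\mu)$ satisfies the gaugino equation if and only if its Dirac current satisfies \eqref{eq:Asugra}. You should state the converse in that form and drop the retwisting argument.
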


\begin{proof}
By Lemma \ref{lemma:constrained_spinor_as_section}, a pair $(g,A)\in \Lor_o(M)\times C(P)$ satisfies the gaugino equation if and only if:
\begin{align*}
0 &= (F^{\mu}_A -  \lambda) \diamond (u + i \mu \ast_g u)\\
&= 2 (F^{\mu}_A \wedge u - F_A^{\mu}(u^{\sharp_g})) - (\lambda_R + i \lambda_I) u - \mu (i \lambda_R - \lambda_I) \ast_g u\\
&= F_A\wedge u + i \mu (\ast_g F_A) \wedge u - F_A(u^{\sharp_g}) - i \mu \ast_g (F_A\wedge u)- (\lambda_R + i \lambda_I) u - \mu (i \lambda_R - \lambda_I) \ast_g u,
\end{align*}

\noindent
where we have written $\lambda = \lambda_R + i\lambda_I$. Isolating by degree and real and imaginary part, this equation is equivalent to:
\begin{eqnarray*}
& F_A\wedge u + \mu\,  \lambda_I \ast_g u = 0\, , \qquad (\ast_g F_A)\wedge u - \lambda_R \ast_g u = 0\, ,\\
& F_A(u^{\sharp_g}) + \lambda_R u =  0\, , \qquad \ast_g(F_A\wedge u) + \mu\, \lambda_I u = 0\, .
\end{eqnarray*}

\noindent
The first and fourth equations are equivalent via applying the Hodge operator. Similarly, the second and third equations are equivalent via the same mechanism, upon using $\ast_g^2 = -\Id$ on two-forms.
\end{proof}

\noindent
Equations \eqref{eq:Asugra} are a natural generalization of the \emph{Lorentzian instanton equations} obtained for an equation of the form $F_A\cdot \eta = 0$ and studied briefly in \cite{ShahbaziThesis} for a real irreducible spinor. The Hermitian square of $\eta$ can be further exploited to obtain a neat necessary condition for the first equation in \eqref{eq:susytrans}.

\begin{lemma}
A pair $(M,P)$ admits a solution $(g,A)\in \Lor_o(M)\times C(P)$ to the gravitino equation for a nowhere vanishing spinor $\eta\in \Gamma(S^{\mu})$ only if its Dirac current $u\in \Omega^1(M)$ satisfies:
\begin{equation}
\label{eq:nablasugra}
\nabla^g u = 0,
\end{equation}

that is, only if its Dirac current is parallel with respect to the Levi-Civita connection on $(M,g)$. In particular, $(M,g,u)$ is a Brinkmann four-manifold. 
\end{lemma}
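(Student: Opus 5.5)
The plan is to apply the Hermitian square machinery with zero symbol. The gravitino equation $\nabla^{g,A}\eta=0$ is a constrained parallel spinor equation relative to $(\cA,\cQ)=(0,0)$, so its symbol is $\fra=0$. By Lemma \ref{lemma:nabla_square_alpha}, equivalently Corollary \ref{cor:even_constrained_parallel_spinors}, the Hermitian square $\widehat{\alpha}=\hcE^{\kappa}_\gamma(\eta)$ then satisfies
\begin{equation*}
\nabla^g\widehat{\alpha}=\fra\diamond\widehat{\alpha}+\widehat{\alpha}\diamond(\pi^{\frac{1-s}{2}}\circ\tau)(\overline{\fra})=0.
\end{equation*}
The proof of that lemma in fact shows this identity for any choice of $\kappa$. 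Note that the $\U(1)$-part of $\nabla^{g,A}$ plays no role here: the admissible Hermitian pairing $\scS$ is $\nabla^{g,A}$-compatible, and Proposition \ref{prop:nabla_compatible_Psi_gamma} intertwines $\nabla^{g,A}$ on $\End(S)$ with $\nabla^g$ on forms. Hence the connection $A$ drops out entirely.

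Next I take $\kappa=1$ and use the first item of Lemma \ref{lemma:squareslorentzian}. It gives $\widehat{\alpha}=\widehat{u}+i\mu\ast_g\widehat{u}$, with $\widehat{u}$ a real isotropic one-form, namely the Dirac current. Since $\nabla^g$ preserves degree and commutes with complex conjugation, the degree-one part of $\nabla^g\widehat{\alpha}=0$ gives $\nabla^g\widehat{u}=0$. The degree-three part gives the same condition again, because $\nabla^g$ commutes with $\ast_g$. The paper identifies the Dirac current with $u$ via $\widehat{u}=c\,u$ for a nonzero real constant $c$, so $\nabla^g u=0$ follows. Nonvanishing of $u$ follows from that of $\eta$: the support of $\widehat{\alpha}$ equals the support of $\eta$, and $\widehat{\alpha}=0$ exactly when $\widehat{u}=0$.

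Finally, a nowhere vanishing isotropic Levi-Civita-parallel one-form is by definition a Brinkmann structure, so $(M,g,u)$ is a Brinkmann four-manifold. The only delicate point is justifying that $A$ does not appear in the equation for $\widehat{\alpha}$, which rests on the Hermitian pairing being $\U(1)$-invariant. This is exactly what Proposition \ref{prop:nabla_compatible_Psi_gamma} and the compatibility of $\scS$ with $\nabla^{g,A}$ guarantee.
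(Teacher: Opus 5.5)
Your proposal is correct and follows the paper's proof. The paper simply invokes Theorem~\ref{thm:even_constrained_parallel_spinors} (equivalently Lemma~\ref{lemma:nabla_square_alpha}) with $\cA=0$ to get $\nabla^g\widehat{\alpha}=0$. You additionally spell out what the paper leaves implicit: extracting the degree-one part of $\widehat{\alpha}=\widehat{u}+i\mu\ast_g\widehat{u}$, and identifying $\widehat{u}=c\,u$.
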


\begin{proof}
Follows directly from Theorem \ref{thm:even_constrained_parallel_spinors}, after setting $\cA = 0$, which is the case for the first equation in \eqref{eq:susytrans}.
\end{proof}

Equations \eqref{eq:Asugra} and \eqref{eq:nablasugra} contain all the information that can be extracted from the supersymmetry transformations \eqref{eq:susytrans} through the Hermitian square of $\eta$. In other words, the Hermitian square of $\eta$ satisfies the system of Theorem \ref{thm:even_constrained_parallel_spinors} for the supersymmetry transformations \eqref{eq:susytrans} if and only if it satisfies equations \eqref{eq:Asugra} and \eqref{eq:nablasugra}. This leads us to introduce the following weaker notion of supersymmetry.

\begin{definition}
A pair $(g,A)\in \Lor_o(M)\times C(P)$ on $(M,P)$ is a \emph{quasi-supersymmetric} configuration of Freedman's gauged supergravity if and only if there exists a nowhere vanishing isotropic one-form $u\in \Omega^1(M)$ such that $(g,A,u)$ satisfies equations  \eqref{eq:Asugra} and \eqref{eq:nablasugra}.
\end{definition}

\begin{remark}
The notion of \emph{quasi-supersymmetry}, or \emph{quasi-supersymmetric solutions}, can be applied to every supergravity with complex infinitesimal supersymmetry generators, and hence defines a weaker notion of supersymmetry which we are very intrigued to explore in the future for other, more general, supergravities. 
\end{remark}

Studying the full supersymmetry system of Freedman's gauged supergravity is a subtle problem that we will consider in a separate publication. For the remainder of this section, we consider the necessary and sufficient conditions for a quasi-supersymmetric configuration of Freedman's gauged supergravity to be a solution of the theory, focusing on a class of solutions naturally adapted to the presence of a nowhere vanishing parallel isotropic vector field.

\begin{definition}
Let $X$ be an oriented two-dimensional manifold equipped with a principal $\U(1)$-bundle $\pi \colon P\to X$, and let $\mathfrak{p} \colon \cI_\fru\times \cI_\frv \times X \to X$ be the canonical projection, where $\cI_\fru$ and $\cI_\frv$ are intervals with coordinates $\fru$ and $\frv$, respectively. A tuple $(g,A,u)$, where $(g,A) \in \Lor_o(\cI_\fru\times \cI_\frv \times X)\times C(\mathfrak{p}^{\ast}P)$ and $u\in \Omega^1(\cI_\fru\times\cI_\frv\times X)$ is isotropic on $(\cI_\fru \times\cI_\frv\times X,g)$, is \emph{standard Brinkmann} if:
\begin{equation}
\label{eq:standardBrinkmann}
g = \cH_{\fru} \dd \fru \otimes \dd \fru +  \dd \fru \odot (\dd \frv + \cA_{\fru}) + h_{\fru}\, , \qquad  u = \dd \fru\, , \qquad \partial_{\frv}A = 0,
\end{equation}

\noindent
where $\cH_{\fru} \in C^{\infty}(X)$ is a family of functions on $X$, $\cA_{\fru}$ is a family of one-forms on $X$, and $h_{\fru}$ is a family of complete Riemannian metrics on $X$, all parametrized by $\fru\in \cI_{\fru}$.   
\end{definition}

\begin{remark}
It is well-known that a Brinkmann space-time \cite{brinkmann1925einstein}, that is, a Lorentzian four-manifold equipped with a nowhere vanishing parallel vector field, is locally isometric to an open set in $\mathbb{R}^4$ equipped with a metric of the form given in \eqref{eq:standardBrinkmann}, hence the terminology introduced in the previous definition. Brinkmann space-times describe idealized gravitational waves and are therefore of enormous importance in general relativity and mathematical physics. In fact, each of the ingredients has a neat interpretation:
\begin{itemize}
\item $\mathcal{H}_{\fru}$ is the tidal potential. That is, in the geodesic equations for a test particle, the negative spatial gradient of $\mathcal{H}_{\fru}$ appears exactly as an external force pushing or pulling the particle across the transverse space $X$.

\item $\cA_{\fru}$ encodes the gravito-magnetic twist of the space-time, which induces Coriolis-like forces on the transverse spatial slices. This represents the gravito-magnetic twist, transforming the spacetime into a \emph{gyraton} \cite{Frolov:2005in}. It signifies that the source of the geometry is a beam of radiation carrying intrinsic angular momentum. For a test particle moving on the spatial slice $X$, $\mathcal{A}_{\fru}$ acts identically to a magnetic vector potential, generating a velocity-dependent Coriolis-like force that twists the particle's trajectory.

\item $(X,h_{\fru})$ is the dynamical wave-front of the wave. The coordinate $\fru$ acts as the retarded time of the wave, and the $ \fru$-constant slices define a family of parallel null hypersurfaces. Each of these null hypersurfaces represents the spacetime history of a single phase of the wave. Taking a spatial cross-section of this propagating wave isolates the wavefront itself. As $\fru$ advances, $h_{\fru}$ allows this screen to expand or shear. This dynamic geometry acts as a time-dependent mass matrix for particles, leading to phenomena such as kinetic damping as the space stretches beneath them.
\end{itemize}
\end{remark}

\noindent 
Let $(g,A,u)$ be a standard Brinkmann triple on $(\cI_u\times \cI_v\times X , \mathfrak{p}^{\ast}P)$. Then, we have the canonical identification:
\begin{equation*}
\mathfrak{p}^{\ast}P = \cI_u\times \cI_v \times P \to \cI_u\times \cI_v \times X.
\end{equation*}

\noindent
Using this identification, we write:
\begin{equation}
\label{eq:Aexpansion}
A = (\psi_{\fru}\circ \mathfrak{p})\, \dd \fru + (\phi_{\fru}\circ \mathfrak{p}) \, \dd \frv + a_{\fru}
\end{equation}

\noindent
for uniquely determined families of functions $\psi_{\fru}, \phi_{\fru}\in C^{\infty}(X)$ and a family of connections $a_{\fru} \in \Omega^1(P)$, all parametrized by $\fru$. Note that condition $\partial_{\frv} A = 0$ is equivalent to none of the terms in the above expansion depending on $\frv$. Hence, we will equivalently refer to standard Brinkmann triples $(g,A,u)$ on $(\cI_{\fru}\times\cI_{\frv} \times X, \mathfrak{p}^{\ast}P)$ as tuples $(h_{\fru}, a_{\fru}, \cA_{\fru}, \psi_{\fru}, \phi_{\fru}, \cH_{\fru})$ on $(X,P)$, where $h_{\fru}$ is a family of complete Riemannian metrics on $X$, $a_{\fru}$ is a family of connections on $P$, and $\psi_{\fru},\phi_{\fru},\cH_{\fru}\in C^{\infty}(X)$ are families of functions on $X$. \medskip

Here, and for the remainder of this section, $\dd \colon \Omega(X) \to \Omega(X)$ will denote the exterior derivative on $X$.

\begin{prop}
\label{prop:brinkmannsusy}
A standard Brinkmann configuration $(h_{\fru}, a_{\fru}, \cA_{\fru}, \psi_{\fru}, \phi_{\fru}, \cH_{\fru})$ on $(X,P)$ is quasi-supersymmetric if and only if it satisfies the following differential system:
\begin{equation*}
\phi_{\fru} = \lambda_R \fru + \lambda_R^o \, , \qquad \ast_{h_{\fru}} F_{a_{\fru}} = \mu \lambda_I,
\end{equation*}
where $\lambda_R^o\in \mathbb{R}$ is a real constant.
\end{prop}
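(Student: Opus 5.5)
The plan is to unwind the definition of quasi-supersymmetry for a standard Brinkmann triple $(g,A,u)$ with $u=\dd\fru$. The task then reduces to checking the parallelicity equation \eqref{eq:nablasugra} and the two algebraic equations \eqref{eq:Asugra}, using the explicit form \eqref{eq:standardBrinkmann} of $g$ and the expansion \eqref{eq:Aexpansion} of $A$.

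First I will show that \eqref{eq:nablasugra} holds automatically. From \eqref{eq:standardBrinkmann} we get $g(\partial_\frv,-)=\dd\fru$, that is, $u^{\sharp_g}=\partial_\frv$. Since none of $\cH_\fru,\cA_\fru,h_\fru$ depends on $\frv$, the field $\partial_\frv$ is Killing, so $\mathrm{Sym}(\nabla^g u)=\mathscr{L}_{\partial_\frv}g=0$. Moreover $\mathrm{Alt}(\nabla^g u)=\dd u=\dd\dd\fru=0$. Hence $\nabla^g u=0$ for every standard Brinkmann configuration, and quasi-supersymmetry is equivalent to \eqref{eq:Asugra} alone.

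Next I will compute the curvature from \eqref{eq:Aexpansion}, writing $\dd$ for the exterior derivative on $X$:
\begin{equation*}
F_A=\dd\psi_\fru\wedge\dd\fru+\dd\phi_\fru\wedge\dd\frv+\partial_\fru\phi_\fru\,\dd\fru\wedge\dd\frv+\dd\fru\wedge\partial_\fru a_\fru+F_{a_\fru}.
\end{equation*}
Contracting with $\partial_\frv$ gives $F_A(\partial_\frv)=-\dd\phi_\fru-\partial_\fru\phi_\fru\,\dd\fru$. The first equation of \eqref{eq:Asugra} then splits by components along $X$ and along $\dd\fru$ into $\dd\phi_\fru=0$ and $\partial_\fru\phi_\fru=\lambda_R$. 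Since $X$ is connected, these integrate to $\phi_\fru=\lambda_R\fru+\lambda_R^o$.

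For the second equation of \eqref{eq:Asugra}, I will use $\dd\phi_\fru=0$ and observe that every other term of $F_A$ except $F_{a_\fru}$ contains $\dd\fru$. Therefore $F_A\wedge\dd\fru=F_{a_\fru}\wedge\dd\fru=(\ast_{h_\fru}F_{a_\fru})\,\nu_{h_\fru}\wedge\dd\fru$. I then need $\ast_g\dd\fru$. Using the null coframe $\{\dd\fru,\ \dd\frv+\cA_\fru+\tfrac12\cH_\fru\dd\fru,\ e^1,e^2\}$, with $e^1,e^2$ orthonormal for $h_\fru$, one finds $\ast_g\dd\fru=\epsilon\,\dd\fru\wedge\nu_{h_\fru}$ for a sign $\epsilon$ fixed by the orientation of $\cI_\fru\times\cI_\frv\times X$. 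The equation thus becomes $(\ast_{h_\fru}F_{a_\fru}+\epsilon\,\mu\lambda_I)\,\nu_{h_\fru}\wedge\dd\fru=0$. With the orientation convention for which $\epsilon=-1$, this is $\ast_{h_\fru}F_{a_\fru}=\mu\lambda_I$. Conversely, if both conditions of the statement hold, the computations above show that both equations of \eqref{eq:Asugra} are satisfied.

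The main obstacle is bookkeeping of the Hodge star in Lorentzian signature. I must verify that the null structure forces $\ast_g\dd\fru$ to be proportional to $\dd\fru\wedge\nu_{h_\fru}$, with no contribution from $\cA_\fru$ or $\cH_\fru$, and I must fix the orientation so that the sign matches the statement. I will do this in the null coframe above, checking $\alpha\wedge\ast_g\beta=\escal{\alpha,\beta}\nu_g$ on the basis elements.
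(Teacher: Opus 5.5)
Your proposal is correct and follows the paper's proof. You compute $F_A$ from \eqref{eq:Aexpansion}, contract with $u^{\sharp_g}=\partial_\frv$, wedge with $\dd\fru$, and use $\ast_g\dd\fru=-\dd\fru\wedge\nu_{h_\fru}$; this is your $\epsilon=-1$, corresponding to the product orientation $\nu_g=\dd\fru\wedge\dd\frv\wedge\nu_{h_\fru}$, which the paper adopts silently. Your explicit check that $\nabla^g u=0$ holds automatically, via $\dd u=0$ and $\partial_\frv$ being Killing, is a step the paper leaves implicit.
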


\begin{proof}
Let $(g,A,u)$ be such a standard Brinkmann configuration on $(\cI_u\times \cI_v \times X, \mathfrak{p}^{\ast}P)$, with associated tuple $(h_{\fru}, a_{\fru}, \cA_{\fru}, \psi_{\fru}, \phi_{\fru}, \cH_{\fru})$. Taking the exterior derivative of Equation \eqref{eq:Aexpansion} as a one-form on the $\U(1)$-bundle $\mathfrak{p}^{\ast}P = \cI_u\times \cI_v \times P$, we obtain:
\begin{equation*}
F_A =  - \dd \fru \wedge \dd\psi_{\fru}  -  \dd \frv\wedge \dd\phi_{\fru} + \partial_{\fru} \phi_{\fru}\, \dd \fru\wedge  \dd \frv + \dd \fru \wedge \partial_{\fru} a_{\fru} + F_{a_{\fru}}.
\end{equation*}

\noindent
We quickly compute:
\begin{align*}
F_A(u^{\sharp_g}) &= F_A(\partial_{\frv}) = - \dd \phi_{\fru} - \partial_{\fru}\phi_{\fru} \, \dd \fru,\\
F_A \wedge u &= F_A\wedge \dd \fru = -\dd \fru\wedge \dd \frv \wedge \dd \phi_{\fru} + \dd \fru \wedge F_{a_{\fru}}.
\end{align*}
Plugging these expressions into both equations in \eqref{eq:Asugra}, we obtain:
\begin{equation*}
\dd \phi_{\fru} = 0\, , \qquad \partial_{\fru}\phi_{\fru} = \lambda_R\, , \qquad F_{a_{\fru}} = \mu \lambda_I \nu_{h_{\fru}},
\end{equation*}
where $\nu_{h_{\fru}}$ is the Riemannian volume form of $(X,h_{\fru})$, and we have used $\ast_g u = \ast_g \dd \fru = - \dd \fru \wedge \nu_{h_{\fru}}$. Hence, $\phi_{\fru} = \lambda_R \, \fru + \lambda_R^o$ for $\lambda_R^o\in\R$ and the only non-trivial remaining condition is $ F_{a_{\fru}} = \mu \lambda_I \nu_{h_{\fru}}$, or equivalently:
\begin{equation*}
\ast_{h_{\fru}} F_{a_{\fru}} = \mu \lambda_I.
\end{equation*}

\noindent
In particular, $a_{\fru}$ is a family of Yang-Mills connections on $P$ over $(X,h_{\fru})$.
\end{proof}

\begin{lemma}
Let $(h_{\fru}, a_{\fru},\cA_{\fru}, \psi_{\fru}, \phi_{\fru}, \cH_{\fru})$ be a quasi-supersymmetric configuration on $(X,P)$. Then:
\begin{eqnarray}
\label{eq:FAdual}
\ast_g F_A =  \dd \fru \wedge \ast_{h_{\fru}} (\partial_{\fru} a_{\fru} - \dd \psi_{\fru} - \lambda_R \cA_{\fru})  - \lambda_R \nu_{h_{\fru}} + \mu\lambda_I \dd \fru \wedge (\dd \frv + \cA_{\fru}),
\end{eqnarray}

\noindent
where $A$ is given as in Equation \eqref{eq:Aexpansion}. 
\end{lemma}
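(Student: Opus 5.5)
The plan is a direct computation. First I simplify $F_A$ using the quasi-supersymmetry conditions, then I rewrite it in a null coframe adapted to $g$, and finally I apply the Hodge star term by term. From the proof of Proposition \ref{prop:brinkmannsusy}, a quasi-supersymmetric configuration satisfies $\dd\phi_{\fru}=0$, $\partial_{\fru}\phi_{\fru}=\lambda_R$ and $F_{a_{\fru}}=\mu\lambda_I\nu_{h_{\fru}}$. Substituting these into the expression for $F_A$ obtained there gives
\begin{equation*}
F_A=\dd\fru\wedge(\partial_{\fru}a_{\fru}-\dd\psi_{\fru})+\lambda_R\,\dd\fru\wedge\dd\frv+\mu\lambda_I\,\nu_{h_{\fru}}.
\end{equation*}

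Next I introduce the null coframe $e^+:=\dd\fru$ and $e^-:=\dd\frv+\cA_{\fru}+\tfrac12\cH_{\fru}\dd\fru$, completed by a local $h_{\fru}$-orthonormal coframe $\{e^1,e^2\}$ of $X$. By \eqref{eq:standardBrinkmann} one checks that $g=e^+\odot e^-+h_{\fru}$. Since $\dd\fru\wedge\dd\frv=e^+\wedge(e^--\cA_{\fru})$, the curvature becomes
\begin{equation*}
F_A=e^+\wedge(\partial_{\fru}a_{\fru}-\dd\psi_{\fru}-\lambda_R\cA_{\fru})+\lambda_R\,e^+\wedge e^-+\mu\lambda_I\,\nu_{h_{\fru}}.
\end{equation*}
This already shows how the combination $\partial_{\fru}a_{\fru}-\dd\psi_{\fru}-\lambda_R\cA_{\fru}$ appearing in \eqref{eq:FAdual} arises.

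Then I compute the action of $\ast_g$ on the three types of two-forms: $e^+\wedge\beta$ with $\beta$ a one-form on $X$, $e^+\wedge e^-$, and $\nu_{h_{\fru}}$. The orientation is fixed by the convention already used in the proof of Proposition \ref{prop:brinkmannsusy}, namely $\ast_g\dd\fru=-\dd\fru\wedge\nu_{h_{\fru}}$. With this convention, a standard computation in the null frame should give
\begin{equation*}
\ast_g(e^+\wedge\beta)=e^+\wedge\ast_{h_{\fru}}\beta,\qquad \ast_g(e^+\wedge e^-)=-\nu_{h_{\fru}},\qquad \ast_g\nu_{h_{\fru}}=e^+\wedge e^-.
\end{equation*}
The last two identities must be compatible with $\ast_g^2=-\Id$ on two-forms, and indeed they are. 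Moreover, $e^+\wedge e^-=\dd\fru\wedge(\dd\frv+\cA_{\fru})$, because the $\cH_{\fru}$-term is killed by $e^+$. Assembling the three pieces gives exactly \eqref{eq:FAdual}.

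The main obstacle is getting the signs right in these Hodge star identities. I would fix them by evaluating $\alpha\wedge\ast_g\beta=\escal{\alpha,\beta}_g\,\nu_g$ on basis elements. For this I would use $\escal{e^+,e^-}=1$ and $\escal{e^+,e^+}=0$, together with the volume form $\nu_g=\pm e^+\wedge e^-\wedge\nu_{h_{\fru}}$, where the sign is calibrated against $\ast_g\dd\fru=-\dd\fru\wedge\nu_{h_{\fru}}$. As a cross-check, I would verify that the resulting identities satisfy $\ast_g^2=-\Id$ on each pair.
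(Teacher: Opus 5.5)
Your proposal is correct and is essentially the paper's proof: the paper also sets $u=\dd\fru$ and $v=\frac{1}{2}\cH_{\fru}\dd\fru+\dd\frv+\cA_{\fru}$ (your $e^{\pm}$), rewrites $F_A=u\wedge(\partial_{\fru}a_{\fru}-\dd\psi_{\fru}-\lambda_R\cA_{\fru})+\lambda_R\,u\wedge v+\mu\lambda_I\nu_{h_{\fru}}$, and applies the same three identities $\ast_g(u\wedge\beta)=u\wedge\ast_{h_{\fru}}\beta$, $\ast_g(u\wedge v)=-\nu_{h_{\fru}}$ and $\ast_g\nu_{h_{\fru}}=u\wedge v$. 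Your signs are right, because calibrating against $\ast_g\dd\fru=-\dd\fru\wedge\nu_{h_{\fru}}$ forces $\nu_g=e^+\wedge e^-\wedge\nu_{h_{\fru}}$, and all three identities follow from that.
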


\begin{proof}
Set:
\begin{equation*}
u := \dd \fru \, , \qquad v:= \frac{1}{2}\cH_{\fru} \dd \fru + \dd \frv + \cA_{\fru},
\end{equation*}

\noindent
in terms of which we have:
\begin{equation*}
F_A = u\wedge (\partial_{\fru} a_{\fru} - \dd \psi_{\fru} - \lambda_R \cA_{\fru})   + \lambda_R u\wedge v + F_{a_{\fru}}.
\end{equation*}

\noindent
The result follows from applying the following general identities to the previous formula:
\begin{equation*}
\ast_g (u\wedge v) = - \nu_{h_{\fru}}\, , \qquad \ast_g (u \wedge \alpha) = u\wedge \ast_{h_{\fru}} \alpha,
\end{equation*}

\noindent
where $\alpha \in \Omega^1(X)$ and we have used that, by Proposition \ref{prop:brinkmannsusy}, $F_{a_{\fru}} = \mu\lambda_I \nu_{h_{\fru}}$, which implies that $\ast_g F_{a_{\fru}} = \mu\lambda_I \ast_g \nu_{h_{\fru}} = \mu\lambda_I u \wedge v$.
\end{proof}

\begin{prop}
\label{prop:susyYM}
A quasi-supersymmetric configuration $(h_{\fru}, a_{\fru}, \cA_{\fru}, \psi_{\fru}, \phi_{\fru}, \cH_{\fru})$ on $(X,P)$ satisfies the Maxwell equation in \eqref{eq:Freedmaneqs} if and only if the following equation holds:
\begin{equation*}
\dd \ast_{h_{\fru}} (\partial_{\fru} a_{\fru} - \dd \psi_{\fru} - \lambda_R  \cA_{\fru})  + \lambda_R   \partial_{\fru} \nu_{h_{\fru}} + \mu\lambda_I \dd \cA_{\fru} = 0.
\end{equation*} 
\end{prop}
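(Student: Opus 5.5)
The plan is to take the exterior derivative of the expression for $\ast_g F_A$ given in Equation \eqref{eq:FAdual} on $\cI_\fru\times\cI_\frv\times X$ and split it by type. On the product, every form decomposes as $\dd\fru\wedge(\cdot)+\dd\frv\wedge(\cdot)+\dd\fru\wedge\dd\frv\wedge(\cdot)+(\cdot)$ with coefficients being $\fru$-dependent forms on $X$ independent of $\frv$, since by the standard Brinkmann assumption and Proposition \ref{prop:brinkmannsusy} nothing depends on $\frv$. For such a form $\beta_\fru$ on $X$, the full differential reads $\dd_M\beta_\fru=\dd\beta_\fru+\dd\fru\wedge\partial_\fru\beta_\fru$, where $\dd$ denotes the exterior derivative on $X$ as in the convention fixed above.

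The computation proceeds term by term. For the first term of \eqref{eq:FAdual}, $\dd_M(\dd\fru\wedge\ast_{h_\fru}\xi_\fru)=-\dd\fru\wedge\dd\ast_{h_\fru}\xi_\fru$, where $\xi_\fru:=\partial_\fru a_\fru-\dd\psi_\fru-\lambda_R\cA_\fru$; the $\partial_\fru$ part drops out because it produces $\dd\fru\wedge\dd\fru$. For the second term, $\dd_M(-\lambda_R\nu_{h_\fru})=-\lambda_R\,\dd\fru\wedge\partial_\fru\nu_{h_\fru}$, since $\dd\nu_{h_\fru}=0$ on the surface $X$. For the third term, $\dd_M(\mu\lambda_I\,\dd\fru\wedge(\dd\frv+\cA_\fru))=-\mu\lambda_I\,\dd\fru\wedge\dd\cA_\fru$, where $\lambda_I$ is constant, $\dd\dd\frv=0$, and again the $\partial_\fru\cA_\fru$ contribution is killed by $\dd\fru\wedge\dd\fru$.

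Summing these contributions gives
\[
\dd_M\ast_gF_A=-\dd\fru\wedge\big(\dd\ast_{h_\fru}\xi_\fru+\lambda_R\partial_\fru\nu_{h_\fru}+\mu\lambda_I\dd\cA_\fru\big).
\]
The bracket is an $\fru$-family of two-forms on $X$, and wedging with $\dd\fru$ is injective on forms pulled back from $X$. Hence the Maxwell equation $\dd\ast_gF_A=0$ holds if and only if the bracket vanishes, which is precisely the stated equation.

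The only delicate point is sign bookkeeping: commuting $\dd_M$ past $\dd\fru$ produces a minus sign, but this sign is uniform across all three terms, so it does not affect the final condition. One must also confirm that \eqref{eq:FAdual} already encodes $F_{a_\fru}=\mu\lambda_I\nu_{h_\fru}$. This holds because the preceding lemma was derived under quasi-supersymmetry, which is assumed here, so no further input is needed.
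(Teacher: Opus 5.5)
Your proposal is correct and is the same argument as the paper's proof. You differentiate \eqref{eq:FAdual}, obtain $\dd\ast_gF_A=-\dd\fru\wedge\big(\dd\ast_{h_\fru}(\partial_\fru a_\fru-\dd\psi_\fru-\lambda_R\cA_\fru)+\lambda_R\partial_\fru\nu_{h_\fru}+\mu\lambda_I\dd\cA_\fru\big)$, and conclude using that $\dd\fru\wedge(\cdot)$ is injective on forms on $X$; your term-by-term sign bookkeeping just spells out what the paper leaves implicit.
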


\begin{proof}
We apply the exterior derivative to Equation \eqref{eq:FAdual}, obtaining:
\begin{equation*}
0 = \dd \ast_g F_A = - \dd \fru \wedge \dd \ast_{h_{\fru}} (\partial_{\fru} a_{\fru} - \dd \psi_{\fru} - \lambda_R  \cA_{\fru})  - \lambda_R \dd\fru \wedge \partial_{\fru} \nu_{h_{\fru}} - \mu\lambda_I \dd \fru \wedge \dd \cA_{\fru},
\end{equation*}

\noindent
from which the result follows.
\end{proof}

\noindent
We now introduce a natural class of standard Brinkmann configurations, which will be our main focus for the remainder of this section. 

\begin{definition}
A standard Brinkmann configuration $(h_{\fru}, a_{\fru}, \cA_{\fru}, \psi_{\fru}, \phi_{\fru}, \cH_{\fru})$ on $(X,P)$ is \emph{stationary} if it is independent of $\fru$, in which case we will drop the subscript and write $(h, a, \cA, \psi, \phi, \cH)$.
\end{definition}

We proceed now to elucidate the necessary and sufficient conditions for a stationary quasi-supersymmetric configuration $(h, a, \cA, \psi, \phi, \cH)$ on $(X,P)$ to be an honest solution of Freedman's gauged supergravity. The general, \emph{non-stationary}, case leads to an interesting evolution problem in the null coordinate $\fru$, and will be considered elsewhere.

\begin{lemma}
\label{lemma:curvatureBrinkmann}
Let $(h, a, \cA, \psi, \phi, \cH)$ be a stationary configuration on $(X,P)$. Then:
\begin{eqnarray*}
& \mathrm{Ric}^g = \mathrm{Ric}^{h} + \frac{1}{2}(\nabla^{h\ast} F_{\cA}) \odot \dd \fru + \frac{1}{2} \left( \vert F_{\cA}\vert^2_{h} + \nabla^{h\ast}\dd \cH  \right) \dd \fru \otimes \dd \fru \, , \\%
& s_g = s_{h} \, ,
\end{eqnarray*}

\noindent
where $g$ is given as in Equation \eqref{eq:standardBrinkmann} in terms of the given tuple $(h, a, \cA, \psi, \phi, \cH)$, and for convenience we have set $F_{\cA} := \dd\cA$. 
\end{lemma}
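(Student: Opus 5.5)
The plan is to compute the curvature of the stationary metric
\[
g = \cH\,\dd\fru\otimes\dd\fru + \dd\fru\odot(\dd\frv+\cA) + h
\]
directly, using an adapted null coframe together with Cartan's structure equations. Set
\[
u=\dd\fru,\qquad v=\tfrac12\cH\,\dd\fru+\dd\frv+\cA,
\]
and let $\{e^1,e^2\}$ be a local $h$-orthonormal coframe on $X$ pulled back to $M$. Then $g=u\odot v+\sum_i e^i\otimes e^i$, and all coefficients are independent of $\fru$ and $\frv$. The structure equations are
\[
\dd u=0,\qquad \dd v=\tfrac12\dd\cH\wedge u+F_{\cA},\qquad \dd e^i=\dd_X e^i,
\]
where $F_{\cA}=\dd\cA$ is a two-form on $X$.

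First, I will solve for the Levi-Civita connection one-forms. Since $u$ is parallel (it is $\dd\fru$ with $\partial_{\frv}$ Killing and null), the connection forms mix only $u$ with the transverse directions. Concretely, I expect $\omega^i{}_j$ to equal the $h$-connection form plus a term proportional to $F_{\cA\,ij}\,u$, and $\omega$ between $v$ and $e^i$ to involve $\tfrac12\partial_i\cH\,u+\tfrac12 F_{\cA\,ij}e^j$. I will fix these by matching torsion-freeness against $\dd v$.

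Next, I will compute the curvature two-forms $\Omega=\dd\omega+\omega\wedge\omega$ and contract to obtain the Ricci tensor. The transverse-transverse block gives $\mathrm{Ric}^h$; the extra $F_{\cA}$ contributions there should cancel, which is the standard gyraton fact. The mixed $u\,e^i$ components give $\tfrac12(\nabla^{h*}F_{\cA})$, i.e.\ minus the divergence of $F_{\cA}$ in the sign convention $\nabla^{h*}=-\Tr$. The $uu$ component gives $\tfrac12\nabla^{h*}\dd\cH$ from the $\cH$ terms, plus the quadratic term $\tfrac12|F_{\cA}|^2_h$ arising from $\omega\wedge\omega$ products of the $F_{\cA}$ pieces. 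Components involving $v$ vanish because $u^\sharp=\partial_{\frv}$ is parallel, so $\mathrm{Ric}(\partial_{\frv},\cdot)=0$.

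For the scalar curvature, I will trace with $g^{-1}$. Here $g^{-1}(u,u)=0$ and $\mathrm{Ric}$ has no $v$-components, so the $\dd\fru\otimes\dd\fru$ and $\dd\fru\odot(\cdot)$ terms drop out and $s_g=s_h$.

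The main obstacle is bookkeeping of signs and factors, in particular the quadratic $|F_{\cA}|^2$ term with the paper's normalization of $|\cdot|_h$ on two-forms, and the sign convention of $\nabla^{h*}$. As a cross-check, I will verify the $uu$ term against the known pp-wave limit $\cA=0$. There the formula gives $\mathrm{Ric}_{\fru\fru}=\tfrac12\Delta_h\cH$ with the positive Laplacian, which matches the standard $-\tfrac12\Delta_{\mathrm{flat}}\cH$ in the usual negative convention. The $F$-terms I will check against the flat gyraton case, where $h$ is Euclidean and the computation is elementary in coordinates.
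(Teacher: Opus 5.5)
Your proposal is correct, but it takes a different route from the paper. The paper works in the coordinate basis $(\partial_\fru,\partial_\frv,\partial_i)$. It lists the non-zero Christoffel symbols, which carry $\cA$ itself (through $\Gamma^\frv_{\fru i}$ and $2\Gamma^\frv_{ij}=(\delta^h\cA)_{ij}$), and uses that $\partial_\frv$ is parallel to discard all curvature with a $\frv$ slot. It then writes out $\cR^g_{ij}\partial_k$, $\cR^g_{\fru i}\partial_j$, $\cR^g_{ij}\partial_\fru$ and $\cR^g_{\fru i}\partial_\fru$ explicitly, and contracts.

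Your null coframe absorbs $\cA$ into $v$, so only $\dd\cH$, $F_{\cA}$ and the connection of $h$ ever enter. Your guessed connection forms are right, up to sign conventions:
\begin{itemize}
\item $\omega^u{}_b=0$ for all $b$, reflecting $\nabla^g u=0$;
\item $\omega^v{}_i=\tfrac12\partial_i\cH\,u+\tfrac12F_{\cA\,ij}e^j$;
\item $\omega^i{}_j$ is the connection form of $h$ shifted by $-\tfrac12F_{\cA\,ij}\,u$.
\end{itemize}
Consequently, the transverse curvature forms differ from those of $h$ only by a term proportional to $(\nabla^hF_{\cA})_{ij}\wedge u$. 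That term cannot contribute to the transverse Ricci block, since $u$ vanishes on the transverse frame vectors. It produces exactly the mixed term $\tfrac12\nabla^{h\ast}F_{\cA}$. The term $\tfrac12\vert F_{\cA}\vert^2_h$ comes solely from the $\omega\wedge\omega$ part of the $v$--$e^i$ curvature form, with exactly coefficient $\tfrac12$ for the form norm $\vert F_{\cA}\vert^2_h=\tfrac12F_{\cA\,ij}F_{\cA}^{ij}$.

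So your frame computation is more structural and manifestly gauge-invariant in $\cA$. The coordinate computation is more elementary but involves heavier bookkeeping, since one must reassemble partial derivatives and Christoffel symbols into $\nabla^{h\ast}$-expressions.

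Two small remarks. First, you need not assume $u$ parallel a priori. It suffices to check that your ansatz solves the torsion-free structure equations and then invoke uniqueness of the Levi-Civita connection. If you do argue a priori, the relevant fact is that a Killing field with closed dual one-form is parallel; nullity plays no role. Second, the dual frame is
\[
E_u=\partial_\fru-\tfrac12\cH\,\partial_\frv,\qquad E_v=\partial_\frv,\qquad E_i=\hat e_i-\cA(\hat e_i)\,\partial_\frv .
\]
Because $\mathrm{Ric}^g(\partial_\frv,\cdot)=0$, your frame components coincide with the coordinate ones. Your result is therefore literally the displayed formula, and the trace argument for $s_g=s_h$ goes through as you describe.
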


\begin{proof}
For further reference, recall that the components of the metric inverse to \eqref{eq:standardBrinkmann} are given by:
\begin{eqnarray*}
g^{\fru\fru} = 0 \, , \qquad g^{\fru\frv} = 1 \, , \qquad g^{\fru i} = 0 \, , \qquad
g^{\frv\frv} = \vert\cA\vert^2_h -\cH \, , \qquad g^{\frv i} = -\cA^i \, , \qquad g^{ij} = h^{ij}.
\end{eqnarray*}

\noindent
To obtain the Ricci curvature of $g$, we first compute its non-zero Christoffel symbols in a coordinate basis $(\partial_{\fru},\partial_{\frv},\partial_i)$ for $i=1,2$:
\begin{eqnarray*}
\label{eq:christoffel_stationary}
& 2\Gamma^{\frv}_{\fru\fru} = \dd \cH(\cA^{\sharp_h}) \, , \qquad  2 \Gamma^{\frv}_{\fru i} = \partial_i \cH + \dd \cA (\cA^{\sharp_h},\partial_i) \, , \qquad 2 \Gamma^{\frv}_{ij}= (\delta^h\cA)(\partial_i , \partial_j) \, , \\%
& 2\Gamma_{\fru\fru}^i = -  h^{ik} \partial_k\cH \, , \qquad 2 \Gamma^i_{\fru j} =   h^{ik} \left( \partial_j \cA_{k} - \partial_k \cA_{j} \right) \, , \qquad
\Gamma^{i}_{jk} = (\Gamma^{h})^{i}_{jk} \, ,
\end{eqnarray*}

\noindent
where $\delta^{h}\colon \Omega^1(X) \to \Gamma(TX\odot TX)$ denotes the symmetrization of the Levi-Civita connection $\nabla^{h}$ on $(X , h)$, and $\Gamma^{h}$ denotes the Christoffel symbols of $\nabla^{h}$. We proceed to compute the Riemann curvature tensor $\cR^g$ in the coordinate basis $(\partial_{\fru},\partial_{\frv},\partial_i)$, which we label by Greek indices. We have:
\begin{eqnarray*}
\cR^g_{\mu\nu}\partial_{\rho} = \nabla^g_{\mu}\nabla^g_{\nu} \partial_{\rho} - \nabla^g_{\nu}\nabla^g_{\mu} \partial_{\rho}.
\end{eqnarray*}

\noindent
Since $\partial_\frv$ is parallel, we immediately obtain $\cR^g_{\mu\nu}\partial_{\frv} = 0$. Furthermore, because all Christoffel symbols with a lower $\frv$ index identically vanish and the metric is independent of $\frv$, it follows that $\cR^g_{\frv\mu}\partial_{\rho} = 0$. For the remaining elements in the coordinate basis, we obtain:
\begin{eqnarray*}
& \cR^g_{ij}\partial_{k} = \cR^{h}_{ij}\partial_{k} + ( \partial_i \Gamma^{\frv}_{jk} - \partial_j \Gamma^{\frv}_{ik} + \Gamma^m_{jk}\Gamma^{\frv}_{im} - \Gamma^m_{ik}\Gamma^{\frv}_{jm} ) \partial_{\frv}  \, , \\
& \cR^g_{\fru i}\partial_{j} = ( \Gamma^m_{ij}\Gamma^{\frv}_{\fru m} - \partial_i \Gamma^{\frv}_{\fru j} - \Gamma^m_{\fru j}\Gamma^{\frv}_{im} ) \partial_{\frv} + (\Gamma^k_{ij}\Gamma^m_{\fru k} - \partial_i \Gamma^{m}_{\fru j}  - \Gamma^k_{\fru j}\Gamma^m_{ik} ) \partial_{m} \, , \\
& \cR^g_{ij}\partial_{\fru } = ( \partial_i \Gamma^{\frv}_{j\fru } - \partial_j \Gamma^{\frv}_{i\fru } + \Gamma^m_{j\fru }\Gamma^{\frv}_{im} - \Gamma^m_{i\fru }\Gamma^{\frv}_{jm} ) \partial_{\frv} + ( \partial_i \Gamma^{m}_{j\fru } - \partial_j \Gamma^{m}_{i\fru } + \Gamma^k_{j\fru }\Gamma^m_{ik} - \Gamma^k_{i\fru }\Gamma^m_{jk} ) \partial_{m} \, , \\
& \cR^g_{\fru i}\partial_{\fru } = ( \Gamma^m_{i\fru }\Gamma^{\frv}_{\fru m} - \partial_i \Gamma^{\frv}_{\fru \fru } - \Gamma^m_{\fru \fru }\Gamma^{\frv}_{im} ) \partial_{\frv} + (\Gamma^k_{i\fru }\Gamma^m_{\fru k} - \partial_i \Gamma^{m}_{\fru \fru } - \Gamma^k_{\fru \fru }\Gamma^m_{ik} ) \partial_{m} \, ,
\end{eqnarray*}
 
\noindent
where $\cR^{h}_{ij}\partial_{k}$ is the Riemann curvature tensor of the transverse Riemannian metric $h$. From these explicit equations, we obtain the following non-zero components of the Ricci tensor of $g$. First, we consider $\mathrm{Ric}^g (\partial_{\fru} , \partial_{\fru})$:
\begin{align*}
\Ric^g (\partial_{\fru} , \partial_{\fru}) &= g( \cR^g_{\fru i}\partial_{j} , \partial_{\fru}) h^{ij}  \\
&= ( \Gamma^m_{ij}\Gamma^\frv_{\fru m} - \partial_i \Gamma^{\frv}_{\fru j}  - \Gamma^m_{\fru j}\Gamma^{\frv}_{im} ) h^{ij} + (\Gamma^k_{ij}\Gamma^m_{\fru k} - \partial_i \Gamma^{m}_{\fru j}  - \Gamma^k_{\fru j}\Gamma^m_{ik} ) \cA_m h^{ij} \\
& =  \frac{1}{2} h^{ij} \Gamma^k_{ij} \partial_k \cH - \partial_i \Gamma^{\frv}_{\fru j}  h^{ij}  - \partial_i \Gamma^{m}_{\fru j}  \cA_m h^{ij} \\
&= \frac{1}{2} \left( \vert F_{\cA}\vert^2_{h} + \nabla^{h\ast}\dd \cH  \right),
\end{align*}

\noindent
where $\nabla^{h\ast}\alpha = - \mathrm{Tr}_h (\nabla^h \alpha)$ for any one-form $\alpha\in \Omega^1(X)$. For the non-zero \emph{mixed} components, we obtain:
\begin{align*}
\Ric^g (\partial_{\fru} , \partial_{k}) &= g(\cR^g_{\fru i} \partial_j , \partial_k) h^{ij} \\
&=  h_{mk} \left( \Gamma^l_{ij}\Gamma^m_{\fru l} - \partial_i \Gamma^{m}_{\fru j} - \Gamma^l_{\fru j}\Gamma^m_{il} \right) h^{ij} \\
&=  h_{mk} \left( \Gamma^l_{ij}\Gamma^m_{\fru l} - \partial_i \Gamma^{m}_{\fru j}  \right) h^{ij} \\
&= \frac{1}{2}(\nabla^{h\ast}F_{\cA})_k.
\end{align*}

\noindent
For the \emph{purely spatial} components, we have:
\begin{equation*}
\Ric^g (\partial_{i} , \partial_{m}) = g(\cR^g_{ij}\partial_k , \partial_m) h^{jk} = h(\cR^h_{ij}\partial_k , \partial_m) h^{jk} = \Ric^h (\partial_{i} , \partial_{m})
\end{equation*}

\noindent
and thus we conclude.
\end{proof}

\begin{prop}
\label{prop:equivalentBrinkmann}
A quasi-supersymmetric stationary configuration  $(h, a, \cA, \psi, \phi, \cH)$ on $(X,P)$ satisfies Freedman's gauged supergravity equations \eqref{eq:Freedmaneqs} on $\cI_u\times \cI_v\times X$ if and only if:
\begin{eqnarray*}
& \vert F_{\cA}\vert^2_{h} + \nabla^{h\ast}\dd \cH     =   2 \mathfrak{e}^2 \vert \dd \psi\vert^2_h \, , \qquad \nabla^{h\ast} F_{\cA}   =     2 \mathfrak{e}^2 \mu \lambda_I \ast_h \dd\psi \, , \qquad \dd\ast_h \dd \psi   = \mu\lambda_I F_{\cA}\, ,\\
&   s_h = 2 \mathfrak{e}^2 \lambda_I^2\, , \qquad    2 \Lambda  +  \mathfrak{e}^2 \lambda_I^2 = 0\, ,
\end{eqnarray*}

\noindent
where $g$ is given as in Equation \eqref{eq:standardBrinkmann} in terms of the given tuple $(h, a, \cA, \psi, \phi, \cH)$ and $F_{\cA} = \dd\cA$.
\end{prop}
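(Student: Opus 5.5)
We plan to substitute the stationary quasi-supersymmetric data into the two equations of \eqref{eq:Freedmaneqs} and decompose each by components in the frame $(\dd\fru,\dd\frv+\cA,\text{transverse})$. For the Maxwell equation we use Proposition \ref{prop:susyYM} in the stationary case: $\partial_\fru a=0$ and $\partial_\fru\nu_h=0$, so it becomes $-\dd\ast_h\dd\psi-\lambda_R\dd\ast_h\cA+\mu\lambda_I F_\cA=0$. In the stationary case Proposition \ref{prop:brinkmannsusy} forces $\phi=\lambda_R\fru+\lambda^o_R$ to be $\fru$-independent, hence $\lambda_R=0$. This term therefore drops, and Maxwell is equivalent to $\dd\ast_h\dd\psi=\mu\lambda_I F_\cA$.

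For the Einstein equation, Lemma \ref{lemma:curvatureBrinkmann} gives $\mathrm{Ric}^g$ and $s_g=s_h$. We next compute $F_A$ with $\lambda_R=0$: $F_A=-\dd\fru\wedge\dd\psi+\mu\lambda_I\nu_h$. We then compute $F_A\circ_g F_A$, defined as $(F_A)_{\mu\alpha}(F_A)_\nu{}^\alpha$, and $\vert F_A\vert^2_g$ using the inverse metric listed in the proof of Lemma \ref{lemma:curvatureBrinkmann}. Since $g^{\fru\fru}=0$, the $\dd\fru\wedge\dd\psi$ part contributes nothing to the norm, so $\vert F_A\vert_g^2=\lambda_I^2$ up to the normalization convention. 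For the product:
\begin{itemize}
\item the transverse components are $\lambda_I^2 h$;
\item the $\fru\fru$ component is $\vert\dd\psi\vert_h^2$;
\item the $\fru i$ components are the cross term $\mu\lambda_I(\ast_h\dd\psi)_i$ up to sign;
\item the $\frv$ components vanish.
\end{itemize}
We must track the difference between $g_{ij}=h_{ij}$ and $g^{ij}=h^{ij}$ and the terms coming from $\cA$. Writing $g=\cH\dd\fru\otimes\dd\fru+\dd\fru\odot(\dd\frv+\cA)+h$, we also need the components of $g$ itself in this basis.

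We match components of $\mathrm{Ric}^g-\tfrac12 s_g g=\Lambda g+\mathfrak{e}^2(F_A\circ F_A-\tfrac12\vert F_A\vert^2 g)$ as follows.
\begin{itemize}
\item The $\fru\frv$ component gives $-\tfrac12 s_h=\Lambda-\tfrac{\mathfrak{e}^2}{2}\lambda_I^2$.
\item The transverse components give $\mathrm{Ric}^h-\tfrac12 s_h h=(\Lambda+\tfrac{\mathfrak{e}^2}{2}\lambda_I^2)h$. In dimension two the left side vanishes identically, so $2\Lambda+\mathfrak{e}^2\lambda_I^2=0$; combined with the $\fru\frv$ equation this yields $s_h=2\mathfrak{e}^2\lambda_I^2$.
\item The $\fru i$ components give $\tfrac12\nabla^{h\ast}F_\cA=\mathfrak{e}^2\mu\lambda_I\ast_h\dd\psi$. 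The $\Lambda g$ and trace terms contribute through $\cA_i$, and these cancel using the previous two equations.
\item The $\fru\fru$ component gives $\tfrac12(\vert F_\cA\vert^2+\nabla^{h\ast}\dd\cH)=\mathfrak{e}^2\vert\dd\psi\vert^2$, again after the $\cH$ terms cancel by the same scalar relations.
\end{itemize}
Conversely, all these steps are equivalences, so the listed system implies \eqref{eq:Freedmaneqs}.

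The main obstacle is the bookkeeping in the mixed and $\fru\fru$ components. There the off-diagonal metric terms $\cH$ and $\cA$ appear on both sides, and we must verify they cancel exactly once $s_h$ and $\Lambda$ are fixed. Signs, the factor-of-two conventions for $\vert F\vert^2$ and $\circ_g$, and the orientation sign in $\ast_h$ also need care. We would carry out this computation in the null frame $e^+=\dd\fru$, $e^-=\tfrac12\cH\dd\fru+\dd\frv+\cA$, in which $g=e^+\odot e^-+h$, to simplify the cancellations.
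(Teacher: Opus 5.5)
Your proposal is correct and follows essentially the same route as the paper. You reduce the Maxwell equation via Proposition~\ref{prop:susyYM} with $\lambda_R=0$ forced by stationarity, and you split the Einstein equation into components using Lemma~\ref{lemma:curvatureBrinkmann}, $F_A=-\dd\fru\wedge\dd\psi+\mu\lambda_I\nu_h$, $\vert F_A\vert^2_g=\lambda_I^2$ and the two-dimensional identity $\mathrm{Ric}^h=\tfrac12 s_h h$; the $\cH$- and $\cA$-terms then cancel exactly by the $\fru\frv$ relation $s_h+2\Lambda=\mathfrak{e}^2\lambda_I^2$ (working in your null coframe only makes this automatic, since there $g$ has no $\cH$ or $\cA$ components while $\mathrm{Ric}^g$ and $F_A\circ_g F_A$ both vanish on $\partial_\frv$).
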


\begin{proof}
By Proposition \ref{prop:brinkmannsusy}, a stationary configuration $(h, a, \cA, \psi, \phi, \cH)$ on the pair $(X,P)$ is quasi-supersymmetric if and only if $\phi$ is constant, whence $\lambda_R = 0$, and $a$ satisfies $\ast_h F_a = \mu \lambda_I$. The corresponding connection $A$ on $\cI_{\fru}\times \cI_{\frv} \times X$ is given by Equation \eqref{eq:Aexpansion}, and consequently reads:
\begin{equation*}
F_A =  - \dd \fru \wedge \dd\psi + F_a.
\end{equation*}

\noindent
Recall that $(F_A\circ_g F_A)(w_1,w_2):=\escal{F_A(w_1),F_A(w_2)}_g$. We compute:
\begin{eqnarray*}
& (F_A\circ_g F_A)(\partial_{\fru} , \partial_{\fru}) = \vert \dd \psi\vert^2_h\, , \qquad (F_A\circ_g F_A)(\partial_{\frv} , \partial_{\frv}) = 0\, ,\qquad (F_A\circ_g F_A)(\partial_{\fru} , \partial_{\frv}) = 0\, ,\\
& (F_A\circ_g F_A)(\partial_{\fru} , w) = - F_a(w,(\dd\psi)^{\sharp_h}) \, , \qquad (F_A\circ_g F_A)(\partial_{\frv} , w) = 0\, ,\\
& (F_A\circ_g F_A)(w_1 , w_2) = \escal{F_a(w_1) , F_a(w_2)}_h\, ,
\end{eqnarray*}

\noindent
where $w, w_1 , w_2 \in\Gamma(TX)$. Hence:
\begin{equation*}
F_A \circ_g F_A = \vert \dd \psi\vert^2_h \dd \fru \otimes \dd \fru + \dd\fru \odot F_a((\dd\psi)^{\sharp_h}) + F_a\circ_h F_a.
\end{equation*}

\noindent
Using that $s_g = s_h$ and $\vert F_A\vert_g^2 = \vert F_a \vert_h^2$, together with the previous formulae and Lemma \ref{lemma:curvatureBrinkmann}, we substitute in the Einstein equation of \eqref{eq:Freedmaneqs}, obtaining:
\begin{eqnarray*}
& \vert F_{\cA}\vert^2_{h} + \nabla^{h\ast}\dd \cH    - \cH s_h =  2\Lambda \cH + \mathfrak{e}^2 (2\vert \dd \psi\vert^2_h -  \cH \vert F_a\vert^2_h)\, ,\\
&  2\Lambda + s_h = \mathfrak{e}^2 \vert F_a \vert^2_h\, , \qquad  \nabla^{h\ast} F_{\cA} - s_h \cA = 2\Lambda \cA + 2\mathfrak{e}^2F_a((\dd\psi)^{\sharp_h}) -  \mathfrak{e}^2 \vert F_a\vert^2_h \cA\, ,\\
& 0 = 2 \Lambda h + 2 \mathfrak{e}^2\, F_a \circ_h F_a -  \mathfrak{e}^2 \vert F_a \vert^2_g h\, .
\end{eqnarray*}

\noindent
Substituting $F_a = \mu\lambda_I \nu_h$ and manipulating, the previous set of equations reduces to:
\begin{eqnarray*}
& \vert F_{\cA}\vert^2_{h} + \nabla^{h\ast}\dd \cH     =   2 \mathfrak{e}^2 \vert \dd \psi\vert^2_h \, , \qquad \nabla^{h\ast} F_{\cA}   =     2\mathfrak{e}^2\mu \lambda_I \ast_h \dd\psi\, ,\\
&  2\Lambda + s_h = \mathfrak{e}^2 \lambda_I^2\, , \qquad    2 \Lambda  +  \mathfrak{e}^2\, \lambda_I^2 = 0\, .
\end{eqnarray*}
Combining these equations with Proposition \ref{prop:susyYM}, we obtain the conditions in the statement.
\end{proof}

\begin{thm}
\label{thm:NoGoFreedman}
A standard stationary configuration  $(h, a, \cA, \psi, \phi, \cH)$ on the pair $(X,P)$ is a quasi-supersymmetric solution of Freedman's gauged supergravity equations \eqref{eq:Freedmaneqs} on $\mathbb{R}^2 \times X$ if and only if all the following conditions hold:
\begin{itemize}
    \item $X = S^2$, that is, $X$ is diffeomorphic to the two-dimensional sphere.
    \item $h$ is the round metric of scalar curvature $s_h = 2 \mathfrak{e}^2 \lambda_I^2$, that is, of radius $R=\vert\mathfrak{e} \lambda_I\vert^{-1}$.
    \item The Chern number $c(P)$ satisfies $\lambda_I \mathfrak{e}^2 c(P) = 2\mu$, and $a$ is a Yang-Mills connection on $P$.  
    \item $\psi \in C^{\infty}(S^2)$ is an eigenfunction of the Laplacian on $(S^2,h)$ with the lowest non-zero eigenvalue. 
    \item $\cA \in \Omega^1(S^2)$ is given by $\cA = \mu \lambda^{-1}_I (\ast_h \dd \psi + \dd \mathfrak{f})$ for a function $\mathfrak{f}\in C^{\infty}(S^2)$.
    \item $\cH\in C^{\infty}(S^2)$ is given by $\lambda_I^2\cH = -(R^{-2}\psi^2 + \frc)$ for a real constant $\frc$.
    \item The cosmological constant $\Lambda$ is determined by $2\Lambda = -\mathfrak{e}^2 \lambda_I^2$ and $\phi$ is constant.
\end{itemize}
In particular, the aforementioned Poisson equation for $\cH$ is guaranteed to have a solution.
\end{thm}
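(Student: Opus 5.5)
The plan is to start from Proposition~\ref{prop:equivalentBrinkmann}, which already reduces the problem to the system
\begin{gather*}
\vert F_{\cA}\vert^2_{h} + \nabla^{h\ast}\dd \cH = 2 \mathfrak{e}^2 \vert \dd \psi\vert^2_h , \qquad \nabla^{h\ast} F_{\cA} = 2 \mathfrak{e}^2 \mu \lambda_I \ast_h \dd\psi , \qquad \dd\ast_h \dd \psi = \mu\lambda_I F_{\cA},\\
s_h = 2 \mathfrak{e}^2 \lambda_I^2 , \qquad 2 \Lambda + \mathfrak{e}^2 \lambda_I^2 = 0,
\end{gather*}
together with the quasi-supersymmetry conditions of Proposition~\ref{prop:brinkmannsusy}. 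In the stationary case the latter force $\phi$ to be constant, hence $\lambda_R=0$, and $\ast_h F_a=\mu\lambda_I$. The strategy is to solve this system on the surface $X$ equation by equation, and then to check the converse by direct substitution.

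First I would treat the purely transverse conditions. Since $h$ is complete of constant positive scalar curvature $s_h=2\mathfrak{e}^2\lambda_I^2$ (recall $\lambda_I\neq0$ because $\lambda\neq0$ and $\lambda_R=0$), Bonnet--Myers shows $X$ is compact. Uniformization of constant-curvature surfaces, together with orientability, then gives $X=S^2$ with the round metric of Gaussian curvature $\mathfrak{e}^2\lambda_I^2$, that is, radius $R=\vert\mathfrak{e}\lambda_I\vert^{-1}$. Integrating $F_a=\mu\lambda_I\nu_h$ over $S^2$, with $\mathrm{Area}=4\pi R^2$, gives the Chern number relation; the normalization must be matched to the paper's convention for $c(P)$, and up to that constant this yields $\lambda_I\mathfrak{e}^2c(P)=2\mu$. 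Moreover $\ast_hF_a$ is constant, so $a$ is Yang--Mills.

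Next comes the coupled system for $(\psi,\cA)$, which I expect to be the main step. On a surface, $F_{\cA}=f\nu_h$ and $\nabla^{h\ast}F_{\cA}=\pm\ast_h\dd f$, so the second equation reads $\dd f=\pm 2\mathfrak{e}^2\mu\lambda_I\dd\psi$ up to sign. The third equation gives $f=-\mu\lambda_I^{-1}\Delta_h\psi$. Combining these gives $\dd(\Delta_h\psi - c\,\psi)=0$ with $c=2\mathfrak{e}^2\lambda_I^2=2R^{-2}$, so $\Delta_h\psi=2R^{-2}\psi+k$. Integrating over $S^2$ shows $k$ is proportional to the mean of $\psi$. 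The work here is to fix the additive constant, either by absorbing it (shifting $\psi$ by a constant changes neither $F_A$ nor the equations) or by showing it vanishes, and to get the signs right. Once this is done, $\psi$ is a first eigenfunction of the positive Laplacian. From $\dd\ast_h\dd\psi=\mu\lambda_I\dd\cA$ we get $\dd(\cA-\mu\lambda_I^{-1}\ast_h\dd\psi)=0$, and since $H^1(S^2)=0$ this gives $\cA=\mu\lambda_I^{-1}(\ast_h\dd\psi+\dd\mathfrak f)$.

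Finally, the first equation is a Poisson equation for $\cH$ on $S^2$. It is solvable precisely when the integral of $2\mathfrak{e}^2\vert\dd\psi\vert^2-\vert F_{\cA}\vert^2$ vanishes. I would verify this, and in fact solve explicitly, using the identity $\Delta_h(\psi^2)=2\psi\Delta_h\psi-2\vert\dd\psi\vert^2$ together with $\vert F_{\cA}\vert^2=\lambda_I^{-2}(\Delta_h\psi)^2=4R^{-4}\lambda_I^{-2}\psi^2$. For first eigenfunctions on the round sphere one has $\vert\dd\psi\vert^2+R^{-2}\psi^2=\text{const}$. This makes the source a multiple of $\Delta_h(\psi^2)$ plus a constant, and the constant must vanish by integration. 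Hence $\lambda_I^2\cH=-(R^{-2}\psi^2+\frc)$, and harmonic functions on $S^2$ are constant, which accounts for $\frc$.

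The converse follows by reversing each step and substituting back into Proposition~\ref{prop:equivalentBrinkmann}.
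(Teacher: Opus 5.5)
Your proposal is correct and follows essentially the same route as the paper: reduce via Proposition~\ref{prop:equivalentBrinkmann}, obtain the round sphere and the Chern number from $s_h=2\mathfrak{e}^2\lambda_I^2$ and $F_a=\mu\lambda_I\nu_h$, integrate $\dd\ast_h\dd\psi=\mu\lambda_I\dd\cA$ using $H^1(S^2,\mathbb{R})=0$, derive the first-eigenfunction equation for $\psi$ (absorbing the additive constant by a gauge shift of $\psi$, as the paper also does implicitly), and exhibit $\lambda_I^2\cH=-(R^{-2}\psi^2+\frc)$ as the solution of the Poisson equation. The differences are cosmetic: you work with the scalar $f=\ast_h F_{\cA}$ rather than with the Hodge Laplacian acting on $\ast_h\dd\psi$, and the sign you left open does come out right, since $\nabla^{h\ast}(f\nu_h)=-\ast_h\dd f$ and $\dd\ast_h\dd\psi=-(\Delta_h\psi)\nu_h$ yield $\dd\Delta_h\psi=2R^{-2}\dd\psi$; moreover, you do not need the identity $\vert\dd\psi\vert^2_h+R^{-2}\psi^2=\mathrm{const}$, because $\Delta_h(\psi^2)=2\psi\Delta_h\psi-2\vert\dd\psi\vert^2_h$ alone shows that the source term equals $-\lambda_I^{-2}R^{-2}\Delta_h(\psi^2)$ exactly.
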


\begin{proof}
Let $(h, a, \cA, \psi, \phi, \cH)$ be a stationary configuration of Freedman's gauged supergravity on $\mathbb{R}^2 \times X$. By Proposition \ref{prop:brinkmannsusy}, the quasi-supersymmetry conditions reduce to $\lambda_R = 0$, hence $\phi$ is constant, and $a$ being a Yang-Mills connection on $P$ with normalization:
\begin{equation*}
F_a = \mu\lambda_I \nu_h.
\end{equation*}

\noindent
Equivalently, the Chern number $c(P)$ of $P$ satisfies:
\begin{equation*}
c(P) = \frac{\mu \chi(X)}{\lambda_I \mathfrak{e}^2},
\end{equation*}

\noindent
where $\chi(X)$ is the Euler characteristic of $X$. These conditions, together with Proposition \ref{prop:equivalentBrinkmann}, give the necessary and sufficient conditions for the tuple $(h, a, \cA, \psi, \phi, \cH)$ being a stationary quasi-supersymmetric solution of Freedman's gauged supergravity equations \eqref{eq:Freedmaneqs}. By condition $s_h = 2\mathfrak{e}^2 \lambda_I^2$ in Proposition \ref{prop:equivalentBrinkmann} it immediately follows that $(X,h)$ is the round sphere of radius $R := \vert \mathfrak{e} \lambda_I \vert^{-1}$ since $h$ is complete by definition. Hence $\chi(X)=2$. On the other hand, the equation $\dd\ast_h \dd \psi   = \mu\lambda_I F_{\cA}$ in Proposition \ref{prop:equivalentBrinkmann} can be explicitly integrated by setting:
\begin{equation*}
\cA = \mu \lambda^{-1}_I (\ast_h \dd \psi + \dd \mathfrak{f})
\end{equation*}

\noindent
for a smooth function $\frf \in C^{\infty}(X)$. Substituting in the remaining conditions contained in Proposition \ref{prop:equivalentBrinkmann}, and setting $2 \Lambda  := -  \mathfrak{e}^2 \lambda_I^2$, we obtain:
\begin{equation*}
\lambda_I^{-2}\vert \dd\ast_h \dd\psi \vert^2_{h} + \nabla^{h\ast}\dd \cH     =   2 \mathfrak{e}^2 \vert \dd \psi\vert^2_h \, , \qquad \nabla^{h\ast} \dd \ast_h \dd\psi   =     2 \mathfrak{e}^2   \lambda^2_I \ast_h \dd\psi  
\end{equation*}

\noindent
as the remaining system that needs to be solved. We observe that the second equation above can be rewritten as follows:
\begin{equation*}
\nabla^{h\ast} \dd \ast_h \dd\psi  =  \dd^{h\ast}\dd \ast_h \dd \psi =    (\dd^{h\ast}\dd + \dd \dd^{h\ast}) \ast_h \dd \psi = \Delta_h \ast_h \dd \psi = 2 \mathfrak{e}^2   \lambda^2_I \ast_h \dd\psi = \frac{2}{R^2} \ast_h \dd\psi,
\end{equation*}

\noindent
where $\dd^{h\ast}$ is the formal adjoint of the exterior derivative, $R$ is the radius of $(S^2,h)$, and $\Delta_h$ is the positive Laplace operator on $(S^2,h)$. Hence, we conclude that $\psi$ is, modulo a constant, a linear combination of the three linearly independent eigenfunctions of the Laplacian with the lowest eigenvalue. Using standard notation for the spherical harmonics $Y^m_l$ on $(S^2,h)$, we have: 
\begin{equation*}
\psi = \sum_{m =-1}^1 Y_1^m\, c_m
\end{equation*}

\noindent
for constants $c_1 , c_0, c_{-1}$. It remains to solve:
\begin{equation*}
\nabla^{h\ast}\dd \cH     =   2 \mathfrak{e}^2 \vert \dd \psi\vert^2_h   - \lambda_I^{-2}\vert \dd\ast_h \dd\psi \vert^2_{h}.
\end{equation*}

\noindent
Integrating, we obtain:
\begin{equation*}
\int_{S^2} \vert \dd\ast_h \dd\psi \vert^2_{h} \nu_h = \int_{S^2} \langle  \dd^{h\ast}\dd\psi , \dd^{h\ast}\dd\psi \rangle_{h} = \int_{S^2} \langle   \dd\psi , \dd \dd^{h\ast}\dd\psi \rangle_{h}= \int_{S^2} \langle  \ast_h  \dd\psi , \Delta_h \ast_h\dd\psi \rangle_{h} = 2 \mathfrak{e}^2 \lambda_I^2  \vert \dd \psi\vert_h^2.
\end{equation*}

\noindent
Hence:
\begin{equation*}
\int_{S^2} \nabla^{h\ast}\dd \cH  \nu_h   =  \int_{S^2}    (2 \mathfrak{e}^2 \vert \dd \psi\vert^2_h   - \lambda_I^{-2}\vert \dd\ast_h \dd\psi \vert^2_{h}) \nu_h  =  \int_{S^2}   (2\mathfrak{e}^2 \vert \dd \psi\vert^2_h   - 2 \mathfrak{e}^2   \vert \dd \psi\vert_h^2 ) \nu_h = 0
\end{equation*}
and thus, crucially, the standard theory of elliptic operators implies the existence of a solution $\cH$ that is unique up to an additive constant. Furthermore, we have:
\begin{equation*}
\lambda_I^{-2}\vert \dd\ast_h \dd\psi \vert^2_{h} = \lambda_I^{-2} (\Delta_h \psi)^2 = \frac{4}{R^4 \lambda^2_I} \psi^2.
\end{equation*}

\noindent
Hence, the Poisson equation for $\psi$ reduces to:
\begin{equation*}
\lambda^2_I \nabla^{h\ast}\dd \cH     =   \frac{2}{R^2} \vert \dd \psi\vert^2_h   - \frac{4}{R^4}\psi^2.
\end{equation*}

\noindent
Now, we observe that the following choice:
\begin{equation*}
\lambda_I^2\cH=-(R^{-2}\psi^2+\frc),
\end{equation*}
where $\frc$ is a real constant, is a solution of the Poisson equation for $\cH$. Since solutions to such an equation are unique up to an additive constant, this is the general solution.
\end{proof}

After a change of coordinates in the $(\fru,\frv)$-plane to absorb the factor $\dd\frf$ and the constants $\lambda_I$ and $\mu$, we conclude the following.

\begin{cor}
Every standard stationary quasi-supersymmetric solution of Freedman's gauged supergravity is isometric to a Lorentzian four-manifold $(M,g)$ of the form:
\begin{equation}
\label{eq:Freedmanmetric}
M = \mathbb{R}^2\times S^2 \, , \qquad g =  -(R^{-2}\psi^2+\frc)\dd\fru\otimes \dd\fru + \dd \fru \odot ( \dd\frv +  \ast_h\dd\psi) + h\, ,
\end{equation}

\noindent
where $(\fru,\frv)$ are the Cartesian coordinates of $\mathbb{R}^2$, $h$ is the round metric on $S^2$ of radius $R$, $\frc$ is a real constant, and $\psi$ is a first eigenfunction of the Laplacian on $(S^2,h)$, that is, $\Delta_h\psi=2R^{-2}\psi$.
\end{cor}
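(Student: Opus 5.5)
The plan is to start from Theorem \ref{thm:NoGoFreedman}, which lists every standard stationary quasi-supersymmetric solution on $\mathbb{R}^2\times X$, and then use coordinate changes in the $(\fru,\frv)$-plane to bring the metric \eqref{eq:standardBrinkmann} into the form \eqref{eq:Freedmanmetric}. By that theorem, $X=S^2$ with the round metric $h$ of radius $R=\vert\mathfrak{e}\lambda_I\vert^{-1}$, and the remaining data are
\begin{equation*}
\cA=\mu\lambda_I^{-1}(\ast_h\dd\psi+\dd\frf),\qquad \lambda_I^2\cH=-(R^{-2}\psi^2+\frc),
\end{equation*}
with $\Delta_h\psi=2R^{-2}\psi$. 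Substituting gives
\begin{equation*}
g=-\lambda_I^{-2}(R^{-2}\psi^2+\frc)\,\dd\fru\otimes\dd\fru+\dd\fru\odot\big(\dd\frv+\mu\lambda_I^{-1}\ast_h\dd\psi+\mu\lambda_I^{-1}\dd\frf\big)+h .
\end{equation*}

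The coordinate changes proceed in three steps.

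\begin{enumerate}
\item First remove the exact part of the twist. Set $\frv' := \frv+\mu\lambda_I^{-1}\frf\circ\mathfrak{p}$. Since $\frf$ is a function on $S^2$, we have $\dd\frv'=\dd\frv+\mu\lambda_I^{-1}\dd\frf$. The map $(\fru,\frv,x)\mapsto(\fru,\frv',x)$ is a diffeomorphism of $\mathbb{R}^2\times S^2$ that preserves $u=\dd\fru$.
\item Next rescale $\fru':=\lambda_I^{-1}\fru$ and $\frv'':=\lambda_I\frv'$. This leaves $\dd\fru\odot\dd\frv'$ unchanged and turns $\lambda_I^{-2}\dd\fru\otimes\dd\fru$ into $\dd\fru'\otimes\dd\fru'$. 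The cross term $\mu\lambda_I^{-1}\dd\fru\odot\ast_h\dd\psi$ becomes $\mu\,\dd\fru'\odot\ast_h\dd\psi$.
\item Finally absorb the sign $\mu$ by replacing $\psi$ with $\mu\psi$. This is still a first eigenfunction, and $\psi^2$ does not change. Alternatively one can flip $\fru'$ and $\frv''$ together, which preserves $\dd\fru\odot\dd\frv$.
\end{enumerate}

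After these steps we obtain exactly \eqref{eq:Freedmanmetric}. The constant $\frc$ is unchanged, because the factor $\lambda_I^{-2}$ has been absorbed into $\fru$.

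The main point to check is that these changes are \emph{global} diffeomorphisms of $\mathbb{R}^2\times S^2$, so that the result is an isometry and not merely a local statement. This holds because the standard configuration is defined on all of $\mathbb{R}^2\times X$, $\frf$ is globally defined and smooth on $S^2$, and each map is affine in $(\fru,\frv)$ with coefficients depending only on the point of $S^2$. I would also note that the gauge data $(a,\psi,\phi)$ enter the isometry class only through $g$, since the statement concerns the Lorentzian manifold alone. The consistency conditions on $\Lambda$ and $c(P)$ can therefore be dropped.
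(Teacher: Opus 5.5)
Your proposal is correct and follows the paper's argument: the paper obtains the corollary from Theorem~\ref{thm:NoGoFreedman} by a change of coordinates in the $(\fru,\frv)$-plane that absorbs $\dd\frf$, $\lambda_I$ and $\mu$. That is exactly your shear $\frv\mapsto\frv+\mu\lambda_I^{-1}\frf$, followed by the rescaling of $(\fru,\frv)$ and the sign flip; your check that these are global diffeomorphisms of $\mathbb{R}^2\times S^2$ just makes the paper's one-line remark explicit.
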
 

In local standard angular coordinates $(\theta,\phi)$ on the sphere, the function $\psi$ can be written as follows:
\begin{equation*}
\psi(\theta,\phi)= c_1 \sin(\theta) \cos (\phi) + c_2 \sin(\theta)\sin(\phi) + c_3 \cos(\theta)
\end{equation*}

\noindent
for constants $c_1, c_2, c_3$. Plugging this expression in Equation \eqref{eq:Freedmanmetric}, we obtain its complete expression in local coordinates. We also find that standard stationary quasi-supersymmetric solutions of Freedman's gauged supergravity are very well-behaved causally.

\begin{cor}
Every standard stationary quasi-supersymmetric solution of Freedman's gauged supergravity is geodesically complete and globally hyperbolic.
\end{cor}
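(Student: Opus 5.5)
The plan is to exploit two facts: $\partial_{\frv}$ is a parallel null vector field, and the wave-front $(S^2,h)$ is compact, so that $\cH=-(R^{-2}\psi^2+\frc)$, $\cA=\ast_h\dd\psi$ and all their derivatives are uniformly bounded. Throughout I use the metric \eqref{eq:Freedmanmetric} together with the Christoffel symbols computed in the proof of Lemma \ref{lemma:curvatureBrinkmann}.

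\emph{Geodesic completeness.} Let $\gamma(s)=(\fru(s),\frv(s),x(s))$ be a geodesic defined on a maximal interval. The key steps, in order, are the following.
\begin{enumerate}
\item Since $\partial_{\frv}$ is Killing (indeed parallel), the quantity $g(\partial_{\frv},\dot\gamma)=\dot\fru=:k$ is constant. Hence $\fru(s)=\fru_0+ks$ is defined for all $s$.
\item The transverse component then satisfies
\begin{equation*}
\nabla^h_{\dot x}\dot x=\tfrac{k^2}{2}\,\mathrm{grad}_h\cH+k\,(F_{\cA}(\dot x))^{\sharp_h}.
\end{equation*}
The magnetic term does not change $|\dot x|_h$, because $F_{\cA}$ is skew.
\item It follows that $\tfrac{\dd}{\dd s}|\dot x|_h^2\le C k^2|\dot x|_h$, where $C$ depends only on $\sup|\dd\cH|_h$. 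So $|\dot x|_h$ grows at most linearly in $s$. Since $S^2$ is compact, $(x,\dot x)$ cannot escape every compact subset of $TS^2$ in finite time, and $x$ extends to all $s\in\mathbb{R}$.
\item It remains to control $\frv$.
\begin{itemize}
\item If $k\neq0$, the conserved energy $E=g(\dot\gamma,\dot\gamma)$ gives
\begin{equation*}
2k\dot\frv=E-\cH k^2-2k\,\cA(\dot x)-|\dot x|_h^2,
\end{equation*}
whose right-hand side is continuous on all of $\mathbb{R}$. Integrating yields $\frv$ globally.
\item If $k=0$, then $x$ is an $h$-geodesic and $\ddot\frv=-\Gamma^{\frv}_{ij}\dot x^i\dot x^j$. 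The right-hand side is bounded, since $|\dot x|_h$ is constant, so $\frv$ is again globally defined.
\end{itemize}
\end{enumerate}

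\emph{Global hyperbolicity.} The strategy is to find a globally hyperbolic comparison metric whose cones contain those of $g$. I take $\tau:=\frv-C\fru$ for a large constant $C>0$. The inverse metric gives
\begin{equation*}
g^{-1}(\dd\tau,\dd\tau)=g^{\frv\frv}-2C\,g^{\fru\frv}=|\cA|_h^2-\cH-2C,
\end{equation*}
which is negative for $C$ large by compactness of $S^2$. Hence $\tau$ is a temporal function and $g$ is stably causal. Next I will show that, for suitable constants, $g\ge g_0:=-\Lambda_0\,\dd\tau^2+\epsilon\,\dd\sigma^2+\epsilon\,h$ in the sense of cones. Here $\sigma$ is a complementary linear coordinate on $\mathbb{R}^2$ (for instance $\sigma=\fru$), and $\Lambda_0$ is a large constant. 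Concretely, one checks that every $g$-causal vector is $g_0$-causal. This comparison uses only the uniform bounds on $\cH$ and $\cA$ coming from compactness.

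The product metric $g_0$ on $\mathbb{R}_\tau\times(\mathbb{R}_\sigma\times S^2)$ has a complete Riemannian fibre. It is therefore globally hyperbolic, with Cauchy surfaces $\{\tau=\mathrm{const}\}$. A metric whose causal cones are contained in those of a globally hyperbolic metric is itself globally hyperbolic, with the same Cauchy surfaces. Applying this to $g$ concludes the argument.

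I expect the main obstacle to be this cone-comparison step. The null direction $\partial_{\frv}$ and the cross term $\dd\fru\odot\ast_h\dd\psi$ must be handled uniformly, and one has to verify that a single choice of $C$ and $\Lambda_0$ works at every point. My first attempt would be to complete the square in $\dd\fru$ and $\dd\frv$, and then use $\sup_{S^2}(|\cH|+|\dd\psi|_h)<\infty$. As a fallback, I would invoke the known criterion for pp-wave and Brinkmann spacetimes, namely that spacetimes with a subquadratic (here bounded) profile $\cH$ and a compact wave-front are globally hyperbolic.
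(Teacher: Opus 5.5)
Your proof is correct, and it takes a genuinely different, more self-contained route than the paper.

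\textbf{Completeness.} The paper proves completeness in one line by citing the criteria of \cite{Candela:2002rr,Samann:2016svf}: $\psi$ and $\dd\psi$ are bounded, hence sublinear. You instead integrate the geodesic equations directly, from $\dot\fru=k$, the transverse magnetic-type equation and the energy. This works. Two small sharpenings are available:
\begin{itemize}
\item $|\dot x|_h^2-k^2\cH$ is actually conserved, so the transverse speed is bounded, not merely linearly growing.
\item The Killing field $\partial_\fru$ gives $\dot\frv+k\cH+\cA(\dot x)=\mathrm{const}$, which treats $k=0$ and $k\neq0$ uniformly.
\end{itemize}

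\textbf{Global hyperbolicity.} Both proofs are cone comparisons. The paper compares $g$ with the twist-free wave $g_{co}=(\cH-\delta^{-1}|\cA|^2_h)\dd\fru\otimes\dd\fru+\dd\fru\odot\dd\frv+(1-\delta)h$ and then appeals to \cite{Flores:2002fx} for $g_{co}$. You compare directly with a static product, which removes that citation.

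The step you flagged as the main obstacle does close. It holds as a pointwise inequality of quadratic forms, exactly like the paper's $g\ge g_{co}$. For $w=a\partial_\fru+b\partial_\frv+w^\perp$, completing the square in $w^\perp$ gives
\[
(g-g_0)(w,w)=(1-\epsilon)\,|w^\perp+\tfrac{a}{1-\epsilon}\cA^{\sharp_h}|_h^2+P\,a^2+2(1-\Lambda_0C)\,ab+\Lambda_0\,b^2,\qquad P=\cH+\Lambda_0C^2-\epsilon-\tfrac{|\cA|_h^2}{1-\epsilon}.
\]
The remaining binary form is positive semidefinite if and only if $\Lambda_0\big(2C+\cH-\epsilon-(1-\epsilon)^{-1}|\cA|^2_h\big)\ge 1$. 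Since $\cH$ and $\cA$ are bounded on $S^2$, this holds on all of $M$ for, say, $\Lambda_0=1$, $\epsilon=\tfrac12$ and $C$ large. Hence $g\ge g_0$, and every $g$-causal vector is $g_0$-causal, with the induced time orientations compatible. Your fallback criterion is therefore unnecessary, and your separate check that $\tau$ is temporal becomes redundant.

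\textbf{What each route buys.} Yours gives an elementary proof with explicit Cauchy surfaces $\{\frv-C\fru=\mathrm{const}\}$. The price is that it relies essentially on the uniform bounds on $\cH$, $\cA$ and $\nabla^h\cA$ that come from compactness of $S^2$. The results the paper cites are designed for non-compact wave fronts with sublinear or subquadratic growth, where no single constant $C$ would work.
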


\begin{proof}
Geodesic completeness follows from \cite{Candela:2002rr,Samann:2016svf}, after observing that $\psi$ and its gradient are both bounded on $M = \mathbb{R}^2 \times S^2$, and hence have sublinear behavior, satisfying thus the conditions for completeness obtained in \emph{opere citato}. To establish the global hyperbolicity of the space-time \eqref{eq:Freedmanmetric}, we apply the general theory developed in \cite{Flores:2002fx} together with an isocausal metric comparison argument to bound the \emph{gyratonic} twist term. Let $\mathcal{A} = \ast_h \dd\psi$ denote the twist one-form on $S^2$. For any constant $\delta \in (0,1)$, we define a twist-free comparison metric $g_{co}$:
\begin{equation*}
g_{co} :=  (\cH - \frac{1}{\delta} \vert\mathcal{A}\vert^2_h ) \dd\fru \otimes \dd\fru + \dd\fru \odot \dd\frv + (1-\delta) h.
\end{equation*}

\noindent
Evaluating the difference $g - g_{co}$ on an arbitrary tangent vector $w = w^\fru \partial_\fru + w^\frv \partial_\frv + w^\perp$, we obtain:
\begin{equation*}
(g - g_{co})(w,w) = \frac{1}{\delta} \vert\mathcal{A}\vert^2_h (w^\fru)^2 + 2 w^\fru \mathcal{A}(w^\perp) + \delta \vert w^\perp \vert^2_h = \left\vert \frac{1}{\sqrt{\delta}} w^\fru \mathcal{A} + \sqrt{\delta} w^\perp \right\vert^2_h \ge 0.
\end{equation*}
Thus, $g \ge g_{co}$ everywhere on $M$. The comparison metric $g_{co}$ represents a standard generalized plane-fronted wave om $\mathbb{R}\times S^2$ as defined in \cite{Flores:2002fx}. Because the transverse manifold $(S^2, h)$ is compact, the effective profile function $\cH_{co} = \cH - \frac{1}{\delta} \vert\mathcal{A}\vert^2_h$ is globally bounded, satisfying the conditions of \cite{Flores:2002fx} and ensuring $g_{co}$ is globally hyperbolic on $M = \mathbb{R}^2 \times S^2$. The algebraic bound $g \ge g_{co}$ implies that for any $g$-causal vector $w \in TM$, we necessarily have $g_{co}(w,w) \le g(w,w) \le 0$. The light cones of $g$ are therefore contained within the light cones of $g_{co}$. Since $g_{co}$ admits a global Cauchy hypersurface, this spatial slice is intersected exactly once by any inextendible causal curve of $g_{co}$, and by causal containment, exactly once by any inextendible causal curve of $g$. Thus, $(M,g)$ inherits strict global hyperbolicity.
\end{proof}

\begin{remark}
Imposing strict supersymmetry requires the existence of an isomorphism between $P$ and a certain power of the unit bundle in $TS^2$. The quasi-supersymmetric solutions of the theory will, in general, not satisfy this condition and therefore will be strictly non-supersymmetric.
\end{remark}

The class of solutions obtained in Theorem \ref{thm:NoGoFreedman} gives a vast generalization of the solution found by Meessen and Ortín in \cite{Meessen:2010ph}, which inspired this section.


\section{Skew-torsion parallel spinors on Lorentzian six-manifolds}
\label{sec:skewtorsionparallel}


In this section, we characterize Lorentzian six-dimensional manifolds equipped with an irreducible complex and chiral spinor parallel under a metric connection with totally skew-symmetric torsion $H\in \Omega^3(M)$, with the goal of proving a structural result regarding their foliated structure. We will refer to such spinors as \emph{skew-torsion parallel spinors}, and we will denote a metric connection on $(M,g)$ with totally skew-symmetric torsion $H\in \Omega^3(M)$ by $\nabla^{g,H}$. Hence, a skew-torsion parallel spinor on $(M,g)$ is an irreducible complex and chiral spinor $\eta\in \Gamma(S^{\mu})$ satisfying:
\begin{equation*}
\nabla^{g,H} \eta = 0,
\end{equation*}
where we are denoting by the same symbol the lift of $\nabla^{g,H}$ to $S^{\mu}$. Since every metric connection $\nabla^{g,H}$ with totally skew-symmetric torsion $H\in \Omega^3(M)$ can be expanded as: 
\begin{equation*}
 \nabla^{g,H}_{w_1}w_2=\nabla^g_{w_1}w_2+\frac{1}{2}H(w_1,w_2)^{\sharp_g}  \, , \qquad w_1 , w_2 \in\Gamma(TM),
\end{equation*}
it follows that equation $\nabla^{g,H}\eta=0$ is equivalent to $\nabla^g_w\eta=\cA_w(\eta)$ with:
\begin{equation*}
\cA_w = -\frac{1}{4}\Psi_\gamma(H(w)) \in\Gamma(\End(S))\, , \qquad w\in\Gamma(TM),
\end{equation*}
whose symbol is given by $4\fra_w = -H(w)\in\Omega^2(M)$. This identification will be important later in this section to apply Theorems \ref{thm:even_constrained_parallel_spinors} and \ref{thm:even_CB_CPS}.\medskip

In Section \ref{sec:sugra6d} we will apply the results of this section to the study of the quasi-supersymmetric and supersymmetric solutions of six-dimensional minimal supergravity. 


\subsection{Algebraic squares in six dimensions}


In this subsection, we compute the complex-bilinear and Hermitian squares of an irreducible complex chiral spinor in six Lorentzian dimensions. The algebraic equations characterizing the Hermitian and complex-bilinear square of such spinor were obtained in \cite[Prop.\ 7.4]{Algebraic_Complex_Square_2025} and \cite[Cor.\ 7.8]{Algebraic_Complex_Square_2025}, respectively. We state the global characterization in the following lemma.

\begin{lemma}\label{lemma:(5,1)_squares}
Let $\eta\in\Gamma(S^\mu)$ be an irreducible complex chiral spinor with chirality $\mu\in\Z_2$ on an oriented Lorentzian six-manifold $(M,g)$. Then: 
\begin{itemize}
\item A complex exterior form $\widehat{\alpha}\in\Omega_\C(M)$ is the Hermitian square of $\eta$ if and only if: $$\widehat{\alpha}=u+iu\wedge\omega-\mu*_{g}u$$ for a unique nowhere vanishing isotropic one-form $u\in\Omega^1(M)$ and a two-form $\omega\in\Omega^2(M)$ satisfying: $$*_g(u\wedge\omega)=\mu u\wedge\omega,\qquad\omega(u^{\sharp_{g}})=0,\qquad \escal{\omega,\omega}_g=2,\qquad 2\mu*_{g}u=u\wedge\omega\wedge\omega.$$
\item A complex exterior form $\alpha\in\Omega_\C(M)$ is the complex-bilinear square of $\eta$ if and only if it is a decomposable $\mu$-self-dual three-form. That is, around every point in $M$ we have: $$\alpha=\theta_1\wedge\theta_2\wedge\theta_3,\qquad *_{g}(\theta_1\wedge\theta_2\wedge\theta_3)=\mu\theta_1\wedge\theta_2\wedge\theta_3$$ 
in terms of three isotropic and orthogonal complex local one-forms $\theta_1,\theta_2,\theta_3\in\Omega^1_\C(M)$.
\end{itemize}
\end{lemma}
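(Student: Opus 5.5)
The plan is to argue pointwise and then globalize, in the same way as Lemma \ref{lemma:squareslorentzian}. At each $p\in M$ we identify $S^\mu_p\cong\Sigma^\mu$ and $(\wedge T^*_\C M)_p\cong\wedge V^*_\C$, where $V$ is six-dimensional Lorentzian, $(p,q)=(5,1)$. We fix admissible pairings from \cite[Table 3]{Algebraic_Complex_Square_2025}: a Hermitian pairing of positive adjoint type with $\kappa=1$, and a complex-bilinear pairing of the symmetry and adjoint type for which the square of a chiral spinor lands in odd degree. We then solve the algebraic systems of Theorem \ref{thm:even_Hermitian_forms} and Theorem \ref{thm:bilinearsquare}, including the chirality condition $i^{q+3}*(\pi\circ\tau)(\alpha)=\mu\alpha$. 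Since $i^{4}=1$, this reduces to $*(\pi\circ\tau)(\alpha)=\mu\alpha$.

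\textbf{Hermitian square.} The linear conditions $\tau(\widehat\alpha)=\overline{\widehat\alpha}$ and the chirality condition pair degree $k$ with degree $6-k$. Chirality also forces $\widehat\alpha^{(0)}=0$, because $\scS$ pairs opposite chiralities in this signature. So $\widehat\alpha$ takes the form $u+i\rho-\mu*u$, with $u$ a real one-form and $\rho$ a real three-form satisfying $*\rho=\mu\rho$.

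Next we use $\widehat\alpha\diamond\widehat\alpha=0$. Its degree-zero part gives $\escal{u,u}=0$, and $u\neq0$ since $\eta\neq0$. Its degree-two part gives $\iota_{u^\sharp}\rho=0$. A self-dual three-form annihilated by a null vector must have the form $u\wedge\omega$. Fixing a null frame $\{u,v,e_1,\dots,e_4\}$ and requiring $\omega(u^\sharp)=0$ removes the ambiguity $\omega\mapsto\omega+u\wedge\cdot$ on the screen. The degree-four part then gives $2\mu*u=u\wedge\omega\wedge\omega$, and the condition $\alpha\diamond\beta\diamond\alpha=\dots$, taken with $\beta=v$, should give the normalization $\escal{\omega,\omega}=2$. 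In effect this says $\omega$ is a Hermitian form on the screen, of type anti-self-dual or self-dual according to $\mu$.

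Rather than expanding everything in full, we will invoke \cite[Prop.\ 7.4]{Algebraic_Complex_Square_2025}, which records exactly this solution. What remains is to check that $u$ is unique and globally defined. Uniqueness holds because $u=\widehat\alpha^{(1)}$, the degree-one component, with $\kappa$ fixed. The form $\omega$ is then determined globally only modulo $u\wedge(\cdot)$, and we fix it by the gauge condition $\omega(u^\sharp)=0$.

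\textbf{Complex-bilinear square.} The linear symmetry condition together with chirality should select pure degree three, giving $\alpha=\rho$ with $*\rho=\mu\rho$. The quadratic condition $\alpha\diamond\alpha=0$, combined with $\alpha\diamond\beta\diamond\alpha=\dots$, says that $\rho$ is a pure spinor form in the sense of Cartan. Concretely, its annihilator $\{\theta:\theta\wedge\rho=0\}$ should be a maximal isotropic subspace $L\subset V^*_\C$ of dimension three, and $\rho$ should be proportional to its top form $\theta_1\wedge\theta_2\wedge\theta_3$.

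For the converse, we choose $\beta=\vartheta_1\wedge\vartheta_2\wedge\vartheta_3$ built from a dual isotropic triple, exactly as in the four-dimensional proof, and verify the identity directly. Self-duality is automatic up to a sign fixed by the orientation of $L$, and that sign matches $\mu$. Locally we can pick a frame of $L$, which gives the local description in the statement; no global frame is claimed.

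\textbf{Main obstacle.} The hardest step is showing that the quadratic system forces decomposability, that is, that $\alpha$ is pure. To do this, we would expand $\rho\diamond\rho$ in degrees two and four and show that these components vanish if and only if $\dim\mathrm{Ann}(\rho)=3$. The cleanest route is to cite \cite[Cor.\ 7.8]{Algebraic_Complex_Square_2025}, or alternatively to use that $\Spin_o(5,1)\times\C^*$ acts transitively on nonzero pure spinors of fixed chirality, together with the equivariance of $\cE_\gamma$.
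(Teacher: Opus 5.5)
Your overall route is the paper's. The paper gives no computation for this lemma beyond globalizing \cite[Prop.~7.4]{Algebraic_Complex_Square_2025} and \cite[Cor.~7.8]{Algebraic_Complex_Square_2025} pointwise, so insofar as you finally rest on those two citations your argument is fine. The derivation you sketch around them, however, places the constraints in the wrong equation, and it would fail if you tried to carry it out in place of the citations.

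With your choice of pairings, the linear conditions leave only odd forms $\alpha$ with $\nu\diamond\alpha=\mu\alpha$ (here $\nu_\C=\nu$). For every such $\alpha$ one has $\alpha\diamond\nu=-\mu\alpha$, hence $\alpha\diamond\alpha=\mu\,\alpha\diamond\nu\diamond\alpha=-\alpha\diamond\alpha=0$. Equivalently, the associated endomorphism maps $S^{-\mu}$ into $S^{\mu}$ and kills $S^{\mu}$, so it squares to zero whatever its rank.

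Concretely, for $\widehat\alpha=u+i\rho-\mu*u$:
\begin{itemize}
\item The degree-zero part of $\widehat\alpha\diamond\widehat\alpha$ is $\escal{u,u}+\escal{*u,*u}+\escal{\rho,\rho}$. This vanishes identically, since $\escal{*u,*u}=-\escal{u,u}$ and $\escal{\rho,\rho}\nu=\mu\,\rho\wedge\rho=0$.
\item The degree-two part is $2i(\iota_{u^\sharp}\rho+\mu*(u\wedge\rho))$. This vanishes identically because $*(u\wedge\rho)=-\mu\,\iota_{u^\sharp}\rho$ by self-duality.
\item The degree-four part is $-\rho\triangle_1\rho$. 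This vanishes because $\rho$ is itself odd with $\nu\diamond\rho=\mu\rho$, so $\rho\diamond\rho=0$.
\end{itemize}
So $\escal{u,u}=0$, $\iota_{u^\sharp}\rho=0$ and $2\mu*u=u\wedge\omega\wedge\omega$ do not follow from $\widehat\alpha\diamond\widehat\alpha=0$.

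For the same reason, every $\mu$-self-dual complex three-form satisfies $\rho\diamond\rho=0$. Its degree-two part $\rho\triangle_2\rho$ vanishes for any three-form, and its degree-four part vanishes as above. Yet the decomposable ones form only a four-dimensional cone inside the ten-dimensional space of such forms. Hence your ``main obstacle'' strategy of detecting purity from the degree-two and degree-four components of $\rho\diamond\rho$ cannot work.

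All the content sits in the rank-one equation $\alpha\diamond\beta\diamond\alpha=8(\alpha\diamond\beta)^{(0)}\alpha$. This includes nullity of $u$, $\rho=u\wedge\omega$ with the normalization and quartic relation on $\omega$, and decomposability. A self-contained proof must therefore be built on that equation. Your converse check with $\beta=\vartheta_1\wedge\vartheta_2\wedge\vartheta_3$ is of this type and is fine.

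Alternatively, use your orbit argument, which does work. $\Spin_o(5,1)\cong\mathrm{SL}(2,\mathbb{H})$ acts transitively on nonzero chiral spinors. Since every chiral spinor is pure in this dimension, it therefore also acts transitively on the relevant family of maximal isotropic subspaces of $V^*_\C$.
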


The two-form $\omega$ in the Hermitian square $\widehat{\alpha}=u+iu\wedge\omega-\mu*_{g}u$ is determined up to transformations of the form $\omega\mapsto\omega+u\wedge\theta$ with $\theta\in\Omega^1(M)$ orthogonal to $u$. Given an isotropic one-form $u\in\Omega^1(M)$ we choose a \emph{conjugate} one-form $v\in\Omega^1(M)$, which by definition is isotropic and satisfies $\escal{u,v}_g=1$. Note that the existence of $v$ is guaranteed since $(M,g)$ is assumed to be oriented and time-oriented. For each choice of one-form $v$ conjugate to $u$, we have a canonical codimension-two distribution in $TM$ defined by:
\begin{equation*}
D_{uv}:=\ker(u)\cap\ker(v)=(\escal{\R u^{\sharp_{g}}}\oplus\escal{\R v^{\sharp_{g}}})^{\perp_g}\subset TM.
\end{equation*}

\noindent
We will occasionally refer to $D_{uv}$ as the \emph{transverse} distribution. In terms of $D_{uv} \subset TM$, the tangent bundle splits orthogonally as follows:
\begin{equation*} 
TM=\escal{\R u^{\sharp_g}}\oplus\escal{\R v^{\sharp_g}}\oplus D_{uv} 
\end{equation*}
in terms of which the metric $g$ splits accordingly:
\begin{equation*}
g = u\odot v + h_{uv}\, , \qquad h_{uv} = g\vert_{D_{uv}}.
\end{equation*}

\noindent 
Note that $h_{uv}$ is a positive-definite metric on $D_{uv}$. Given $u$ and a conjugate one-form $v$, we may choose the unique representative for $\omega$ satisfying $\omega(v^{\sharp_g})=0$, which we denote by $\omega_{uv}$. With this choice, we have the following identities: 
\begin{equation}
\label{eq:Hodge_(u,v,omega)}
\begin{gathered}
    *_gu=\frac{\mu}{2}u\wedge\omega_{uv}\wedge\omega_{uv},\qquad*_gv=-\frac{\mu}{2}v\wedge\omega_{uv}\wedge\omega_{uv},\qquad *_g(u\wedge v)=\frac{\mu}{2}\omega_{uv}\wedge\omega_{uv},\\
    *_g\omega_{uv}=-\mu u\wedge v\wedge\omega_{uv},\qquad*_g(u\wedge\omega_{uv})=\mu u\wedge\omega_{uv},\qquad *_g(v\wedge\omega_{uv})=-\mu v \wedge\omega_{uv}.
\end{gathered}
\end{equation}

\noindent
Since $\omega_{uv} (u^{\sharp_g}) = \omega_{uv}(v^{\sharp_g}) = 0$, it follows that $\omega_{uv}$ descends to $D_{uv}$, and hence we can consider it as a section of the form $\omega_{uv} \in \Gamma(\wedge^2 D^{\ast}_{uv})$, where $D_{uv}^{\ast}$ denotes the vector bundle dual to $D_{uv}$. One-forms conjugate to a given isotropic one-form $u\in\Omega^1(M)$ are not unique. Indeed, if $v,\tilde{v}\in\Omega^1(M)$ are both conjugate to $u$, then there exists a unique $\mathfrak{w}\in\Omega^1(M)$ orthogonal to $u$ and $v$ such that: 
\begin{equation*}
\tilde{v}=v-\frac{\vert \mathfrak{w}\vert^2_g}{2}u+\mathfrak{w} \, .    
\end{equation*}

\noindent
The transverse distribution changes accordingly via the following isomorphism of vector bundles:
\begin{equation*}
D_{uv} \to D_{u\tilde{v}} \, , \qquad w \mapsto w - \mathfrak{w}(w) u.
\end{equation*}

\noindent
In particular, we have the following isomorphisms of vector bundles: 
\begin{equation*}
\mathrm{Ker}(u) =  \langle \mathbb{R} u^{\sharp_g} \rangle \oplus D_{uv} =  \langle \mathbb{R} u^{\sharp_g} \rangle \oplus D_{u\tilde{v}}.
\end{equation*}

\noindent
The two-form $\omega_{uv}$ also changes accordingly: 
\begin{equation*}
\omega_{u\tilde{v}} =\omega_{uv} -u\wedge\omega_{uv}(\mathfrak{w}^{\sharp_g}),
\end{equation*}

\noindent
where we have used that $\omega_{u\tilde{v}}(\tilde{v}^{\sharp_g})=0$. We induce a natural orientation on $D_{uv}$ by defining a volume form $\nu_{uv}$ on $D_{uv}$ via the relation $\nu_g =u\wedge v\wedge\nu_{uv}$, where $\nu_g$ is the Lorentzian volume form on $(M,g)$. In turn, $\nu_{uv}$ defines a Hodge star operator $*_{uv}$ on $D_{uv}$. In particular, it follows that: 
\begin{equation*}
\ast_{uv}\omega_{uv} = -\mu \omega_{uv}.
\end{equation*} 

\noindent
Since $\omega_{uv}$ is a non-degenerate two-form on $D_{uv}$, which is also equipped with a Euclidean metric $h_{uv}$, there exists a unique almost complex structure $J_{uv} \colon D_{uv} \to D_{uv}$ on $D_{uv}$ such that:
\begin{equation*}
h_{uv}(w_1 , w_2) = \omega_{uv}( w_1 , J_{uv} w_2).
\end{equation*}

\noindent
Furthermore, by the duality relation satisfied by $\omega_{uv}$, it follows that $(\omega_{uv},J_{uv})$ defines an almost Hermitian structure on $D_{uv}$, compatible with the given orientation if $\mu = -1$, and opposite to the given orientation if $\mu = 1$. Equivalently, we can consider the pair $(\omega_{uv},J_{uv})$ as defining a $\U(2)$-structure on $D_{uv}$. In particular, the six-dimensional Lorentzian metric $g$ can be written as follows:
\begin{equation*}
g = u\odot v + \omega_{uv} \circ J_{uv},
\end{equation*}

\noindent
and therefore the metric $g$ is completely determined by $u, v, \omega_{uv}$ and $J_{uv}$. As expected, changing the conjugate one-form $v$ changes $J_{uv}$ accordingly.
\begin{lemma} 
The following relation holds:
\begin{equation*}
J_{u\tilde{v}} =J_{uv} -u\otimes J_{uv}(\mathfrak{w}^{\sharp_g}) + \omega_{uv}(\mathfrak{w}^{\sharp_g})\otimes u^{\sharp_g}
\end{equation*}
for $\mathfrak{w}\in\Omega^1(M)$ orthogonal to $u$ and $v$. 
\end{lemma}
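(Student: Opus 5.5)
Since $J_{uv}$ is defined on $D_{uv}$, the identity has to be read with $J_{uv}$ and $J_{u\tilde v}$ extended to endomorphisms of $TM$ that vanish on $u^{\sharp_g}$ and on $v^{\sharp_g}$ (respectively $\tilde v^{\sharp_g}$), and with $\omega_{uv}(\mathfrak{w}^{\sharp_g})$ viewed as a one-form. The plan is to characterize $J_{u\tilde v}$ by the properties that determine it uniquely, and then check that the right-hand side has them. These properties are: it annihilates $u^{\sharp_g}$ and $\tilde v^{\sharp_g}$, it preserves $D_{u\tilde v}$, and $g(w_1,w_2)=\omega_{u\tilde v}(w_1,J_{u\tilde v}w_2)$ for all $w_1,w_2\in D_{u\tilde v}$.

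We already know two things. First, the isomorphism $\Phi\colon D_{uv}\to D_{u\tilde v}$, $w\mapsto w-\mathfrak{w}(w)u^{\sharp_g}$, is an isometry, because $u$ is null and orthogonal to $D_{uv}$. Second, $\omega_{u\tilde v}=\omega_{uv}-u\wedge\omega_{uv}(\mathfrak{w}^{\sharp_g})$, and $\Phi^*\omega_{u\tilde v}=\omega_{uv}$, since $u$ vanishes on $D_{uv}$ and $\omega_{uv}(u^{\sharp_g})=0$. It follows that $J_{u\tilde v}=\Phi\circ J_{uv}\circ\Phi^{-1}$ on $D_{u\tilde v}$, because compatibility is transported by $\Phi$. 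The first step is therefore to compute $\Phi\circ J_{uv}\circ\Phi^{-1}$ on a general $w\in D_{u\tilde v}$. We have $\Phi^{-1}(w)=w+\mathfrak{w}(w)u^{\sharp_g}$. Since $J_{uv}$ kills $u^{\sharp_g}$, this gives $J_{uv}\Phi^{-1}(w)=J_{uv}(w)$. Applying $\Phi$ then yields
\[
J_{u\tilde v}(w)=J_{uv}(w)-\mathfrak{w}(J_{uv}w)\,u^{\sharp_g}.
\]

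Next we rewrite $\mathfrak{w}(J_{uv}w)=h_{uv}(\mathfrak{w}^{\sharp_g},J_{uv}w)$ in terms of $\omega_{uv}$, using $h_{uv}(a,b)=\omega_{uv}(a,J_{uv}b)$ together with $J_{uv}^2=-1$ and the $J_{uv}$-invariance of $h_{uv}$. Getting the signs and index placement right here is the main obstacle. The conventions $h(w_1,w_2)=\omega(w_1,Jw_2)$ and $\omega(\mathfrak{w}^{\sharp})=\iota_{\mathfrak{w}^\sharp}\omega$ must be tracked carefully, and the outcome should be $\mathfrak{w}(J_{uv}w)=-\omega_{uv}(\mathfrak{w}^{\sharp_g})(w)$ up to the sign those conventions fix. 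This produces the term $\omega_{uv}(\mathfrak{w}^{\sharp_g})\otimes u^{\sharp_g}$. One subtlety must be handled: $J_{uv}(w)$ is computed on the $D_{uv}$-component of $w$, which coincides with $w$ modulo $u^{\sharp_g}$.

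The last step is to verify the right-hand side on the remaining directions, where it must agree with the extension of $J_{u\tilde v}$ by zero. On $u^{\sharp_g}$ every term vanishes, because $u(u^{\sharp_g})=0$, $J_{uv}u^{\sharp_g}=0$ and $\omega_{uv}(\mathfrak{w}^\sharp,u^\sharp)=0$. On $\tilde v^{\sharp_g}=v^{\sharp_g}-\tfrac{|\mathfrak{w}|^2}{2}u^{\sharp_g}+\mathfrak{w}^{\sharp_g}$ the three terms contribute
\begin{itemize}
\item $J_{uv}\tilde v^\sharp=J_{uv}\mathfrak{w}^\sharp$,
\item $-u(\tilde v^\sharp)J_{uv}\mathfrak{w}^\sharp=-J_{uv}\mathfrak{w}^\sharp$,
\item $\omega_{uv}(\mathfrak{w}^\sharp,\mathfrak{w}^\sharp)u^\sharp=0$.
\end{itemize}
These cancel, which explains the middle term $-u\otimes J_{uv}(\mathfrak{w}^{\sharp_g})$. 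For $w\in D_{u\tilde v}$ the middle term drops out because $u(w)=0$, so the right-hand side reduces to the formula obtained in the first two steps, and uniqueness gives the identity.
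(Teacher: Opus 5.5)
Your argument is correct, but it reaches the formula by a different route than the paper.

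\textbf{The paper's route.} The paper takes the right-hand side as a candidate and checks three things: it kills $u^{\sharp_g}$, it kills $\tilde v^{\sharp_g}$, and it maps $D_{u\tilde v}$ into itself. The last check rests on the same identity $h_{uv}(\mathfrak{w}^{\sharp_g},J_{uv}w)=-\omega_{uv}(\mathfrak{w}^{\sharp_g},w)$ that you need. The paper lists the compatibility condition $g=u\odot\tilde v+\omega_{u\tilde v}(-,J_{u\tilde v}-)$ as a fourth requirement, but never verifies it; after the lemma it only asserts that ``one can check'' it. Without that condition, the three checks do not single out $J_{u\tilde v}$: any endomorphism of $D_{u\tilde v}$, extended by zero, passes them.

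\textbf{Your route.} You obtain the restriction to $D_{u\tilde v}$ as $\Phi\circ J_{uv}\circ\Phi^{-1}$, where $\Phi$ is an isometry with $\Phi^*\omega_{u\tilde v}=\omega_{uv}$. Compatibility and uniqueness are therefore automatic, and the formula is derived rather than guessed. Your checks on $u^{\sharp_g}$ and $\tilde v^{\sharp_g}$ coincide with the paper's. The price is some bookkeeping, which you handle:
\begin{itemize}
\item the inverse $\Phi^{-1}(w)=w+\mathfrak{w}(w)u^{\sharp_g}$ for $w\in D_{u\tilde v}$, which uses $v(w)=-\mathfrak{w}(w)$;
\item the fact that $\omega_{u\tilde v}(u^{\sharp_g})=0$, needed for $\Phi^*\omega_{u\tilde v}=\omega_{uv}$.
\end{itemize}

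\textbf{The sign.} Your hedge is unnecessary; the sign is fixed in one line:
\[
\mathfrak{w}(J_{uv}w)=h_{uv}(\mathfrak{w}^{\sharp_g},J_{uv}w)=\omega_{uv}(\mathfrak{w}^{\sharp_g},J_{uv}^2w)=-\omega_{uv}(\mathfrak{w}^{\sharp_g},w).
\]
This uses only $h_{uv}(a,b)=\omega_{uv}(a,J_{uv}b)$, the fact that $\mathfrak{w}^{\sharp_g}\in D_{uv}$, $J_{uv}^2=-1$, and $\omega_{uv}(u^{\sharp_g})=0$. The last fact lets you pass from the $D_{uv}$-component of $w$ to $w$ itself. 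The $J_{uv}$-invariance of $h_{uv}$ is not needed, and you should state this sign outright rather than ``up to the sign those conventions fix.''
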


\begin{proof}
To prove that the given endomorphism $J_{u\tilde{v}}$ is the correct almost complex structure on the new transverse distribution $D_{u\tilde{v}}$, we must verify that it annihilates the null vectors $u^{\sharp_g}$ and $\tilde{v}^{\sharp_g}$, preserves the transverse space $D_{u\tilde{v}} = \ker(u) \cap \ker(\tilde{v})$, and satisfies the metric compatibility condition $g = u \odot \tilde{v} + \omega_{u\tilde{v}}( - , J_{u\tilde{v}} - )$. By evaluating the proposed formula for $J_{u\tilde{v}}$ on $u^{\sharp_g}$, we immediately get:
\begin{equation*}
J_{u\tilde{v}}(u^{\sharp_g}) = J_{uv}(u^{\sharp_g}) - u(u^{\sharp_g})J_{uv}(\mathfrak{w}^{\sharp_g}) + \omega_{uv}(\mathfrak{w}^{\sharp_g}, u^{\sharp_g}) u^{\sharp_g} = 0.
\end{equation*} 

\noindent
Next, we evaluate $J_{u\tilde{v}}$ on the new conjugate one-form $\tilde{v} = v - \frac{1}{2}\vert\mathfrak{w}\vert_g^2 u + \mathfrak{w}$:
\begin{equation*}
J_{u\tilde{v}}(\tilde{v}^{\sharp_g}) = J_{uv}(\tilde{v}^{\sharp_g}) - u(\tilde{v}^{\sharp_g})J_{uv}(\mathfrak{w}^{\sharp_g}) + \omega_{uv}(\mathfrak{w}^{\sharp_g}, \tilde{v}^{\sharp_g}) u^{\sharp_g}   = J_{uv}(\mathfrak{w}^{\sharp_g}) - J_{uv}(\mathfrak{w}^{\sharp_g}) = 0.
\end{equation*}

\noindent
For any vector field $w \in D_{u\tilde{v}}$, we have $u(w) = 0$ and $\tilde{v}(w) = 0$. Applying $J_{u\tilde{v}}$ yields:
\begin{equation*}
J_{u\tilde{v}}(w) = J_{uv}(w) + \omega_{uv}(\mathfrak{w}^{\sharp_g}, w) u^{\sharp_g}.
\end{equation*}

\noindent
We must check that $J_{u\tilde{v}}(w) \in D_{u\tilde{v}}$ for every $w\in D_{u\tilde{v}}$. We have:
\begin{equation*}
J_{uv}(w) =  J_{uv}(w - \mathfrak{w}^{\sharp_g}(w) u^{\sharp_g})  \in D_{uv} \subset \ker(u).
\end{equation*}

\noindent
We compute:
\begin{align*}
\tilde{v}(J_{u\tilde{v}}(w)) &= \tilde{v}(J_{uv}(w)) + \omega_{uv}(\mathfrak{w}^{\sharp_g}, w) \tilde{v}(u^{\sharp_g}) \\
&= h_{uv}(\mathfrak{w}^{\sharp_g} , J_{uv}(w - \mathfrak{w}(w) u^{\sharp_g})) + \omega_{uv}(\mathfrak{w}^{\sharp_g}, w)  \\
&= - \omega_{uv}(\mathfrak{w}^{\sharp_g}, w)   + \omega_{uv}(\mathfrak{w}^{\sharp_g}, w) \\
&= 0.
\end{align*}
Hence $J_{u\tilde{v}}(w) \in \ker(u) \cap \ker(\tilde{v})=D_{u\tilde{v}}$.
\end{proof}

Given two tuples $(u,v,\omega_{uv},J_{uv})$ and $(u,\tilde{v},\omega_{u\tilde{v}},J_{u\tilde{v}})$ associated to the same chiral complex spinor $\eta$, one can check that:
\begin{equation*}
g = u\odot v + \omega_{uv} \circ J_{uv} = u\odot \tilde{v} + \omega_{u\tilde{v}} \circ J_{u\tilde{v}} 
\end{equation*}
as required by the internal consistency of the construction.  The previous discussion shows that every pair $(g,\eta)$ consisting of a Lorentzian metric $g$ on $M$ and an irreducible complex and chiral spinor $\eta\in \Gamma(S^{\mu})$ defines a tuple $(u,v,\omega_{uv},J_{uv})$ as defined above, which is unique modulo the choice of conjugate one-form $v$. Hence, the pair $(g,\eta)$ defines a natural equivalence class $[u,v,\omega,J]$ of tuples $(u,v,\omega,J)$ consisting of an isotropic vector field $u$, a conjugate one-form $v$, and a $\U(2)$-structure $(\omega,J)$ on $D_{uv}$ compatible with the induced orientation if $\mu = -1$ and opposite otherwise. Two such tuples $(u,v,\omega,J)$ and $(\tilde{u},\tilde{v},\tilde{\omega},\tilde{J})$ are considered as equivalent if and only if:
\begin{equation}
\label{eq:equivalence_relation_(u,v,omega)}
(\tilde{u},\tilde{v},\tilde{\omega},\tilde{J})=(u,v-\frac{\vert \mathfrak{w}\vert_g^2}{2}u+\mathfrak{w},\omega-u\wedge\omega(\mathfrak{w}^{\sharp_g}), J  -u\otimes J(\mathfrak{w}^{\sharp_g}) + \omega(\mathfrak{w}^{\sharp_g})\otimes u^{\sharp_g})
\end{equation}

for a unique section $\mathfrak{w} \in \Gamma(D_{uv})$ of the transverse distribution $D_{uv}\subset TM$. We will refer to such a tuple $[u,v,\omega,J]$ as an \emph{isotropic almost Hermitian class}. They represent a natural generalization of the notion of isotropic parallelism, introduced in \cite{ShahbaziThesis}, from four to six dimensions. Note that there is a one-to-one correspondence between isotropic almost Hermitian classes and pairs $(g,\widehat{\alpha})$, where $g$ is a Lorentzian metric on $M$ and $\widehat{\alpha}$ is the Hermitian square of an irreducible complex and chiral spinor. Hence, the Hermitian square of such a spinor can be equivalently studied in terms of the notion of isotropic almost Hermitian class.


\subsection{The Hermitian square of a skew-torsion parallel spinor}


In this subsection, we study the differential system satisfied, via Theorem \ref{thm:even_constrained_parallel_spinors}, by the Hermitian square of every skew-torsion parallel spinor. This will provide a set of necessary conditions for a Lorentzian six-manifold $(M,g)$ to admit a skew-torsion parallel spinor that we will interpret in terms of an associated foliation of $M$.  

\begin{lemma}
A complex exterior form $\widehat{\alpha}\in \Omega_{\C}(M)$ is the Hermitian square of a skew-torsion parallel spinor with torsion $H\in \Omega^3(M)$ if and only if it satisfies the following differential system:
\begin{equation}
\label{eq:quasisusy6d}
\nabla^g_w\widehat{\alpha}=\fra_w\diamond\widehat{\alpha}+\widehat{\alpha}\diamond\tau(\overline{\fra}_w)\, , \qquad 4\fra_w = -H(w)\in\Omega^2(M)\, , \qquad w\in\Gamma(TM)\, ,
\end{equation}
where $\widehat{\alpha} = u + i u \wedge \omega - \mu \ast_g u$ is the Hermitian square of a chiral complex spinor $\eta\in \Gamma(S^{\mu})$.
\end{lemma}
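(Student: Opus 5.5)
The plan is to specialize Theorem \ref{thm:even_constrained_parallel_spinors} (through Lemma \ref{lemma:nabla_square_alpha}) to the connection $\cD=\nabla^{g,H}$, and then to handle the converse by redoing the argument of Lemma \ref{lemma:complex_bilinear_parallel_spinor} in the Hermitian setting.

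\textbf{Necessity.} As recalled above, $\nabla^{g,H}\eta=0$ is equivalent to $\nabla^{g}_w\eta=\cA_w(\eta)$ with $\cA_w=-\tfrac14\Psi_\gamma(H(w))$, so the symbol is $\fra_w=-\tfrac14 H(w)$, a real two-form. In signature $(5,1)$ I will fix an admissible Hermitian pairing of positive adjoint type, as used in Lemma \ref{lemma:(5,1)_squares}. Then $\pi^{\frac{1-s}{2}}\circ\tau=\tau$, and the $\scS$-transpose of $\fra_w$ is $\tau(\overline{\fra}_w)=-\fra_w$. Lemma \ref{lemma:nabla_square_alpha} then gives directly
\[
\nabla^g_w\widehat{\alpha}=\fra_w\diamond\widehat{\alpha}+\widehat{\alpha}\diamond\tau(\overline{\fra}_w),
\]
that is, $4\nabla^g_w\widehat\alpha=-[H(w),\widehat\alpha]_\diamond$. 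By Lemma \ref{lemma:(5,1)_squares}, $\widehat\alpha$ has the stated form $u+iu\wedge\omega-\mu\ast_g u$. Chirality is preserved, because the algebraic constraint $i^{q+\frac d2}\ast(\pi\circ\tau)(\widehat\alpha)=\mu\widehat\alpha$ is built into that normal form.

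\textbf{Converse.} Assume $\widehat\alpha$ has the normal form of Lemma \ref{lemma:(5,1)_squares} and satisfies \eqref{eq:quasisusy6d}. By the last clause of Theorem \ref{thm:even_constrained_parallel_spinors}, $\widehat\alpha=\hcE^\kappa_\gamma(\eta)$ for a nowhere vanishing chiral $\eta$, possibly after replacing $S$ as in Proposition \ref{prop:spin_spinor_class=0}, and $\eta$ is unique up to a $\U(1)$-valued function. Applying $\Psi_\gamma$ to \eqref{eq:quasisusy6d} and using Proposition \ref{prop:nabla_compatible_Psi_gamma}, I obtain
\[
\kappa\,\scS(\chi,\eta)\,\nabla^{g,H}_w\eta+\kappa\,\scS(\chi,\nabla^{g,H}_w\eta)\,\eta=0\quad\text{for all }\chi .
\]
As in the proof of Lemma \ref{lemma:complex_bilinear_parallel_spinor}, this forces $\nabla^{g,H}_w\eta=c(w)\eta$ for a complex one-form $c$. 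Substituting back gives $c+\bar c=0$, so $c=i\beta$ with $\beta$ real. Replacing $\eta$ by $e^{-i f}\eta$ shifts $\beta$ by $-\dd f$ and leaves $\widehat\alpha$ unchanged. Hence $\eta$ can be normalized to be $\nabla^{g,H}$-parallel exactly when $\beta$ is exact, or after twisting the spinor bundle by a flat line bundle when $\beta$ is closed.

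\textbf{Main obstacle.} The hard step is this residual phase one-form $\beta$. I would first try to read the lemma as identifying the Hermitian-square system with \eqref{eq:quasisusy6d}, with the converse understood up to this phase, which matches the ``quasi'' philosophy of the paper. I would then examine whether the normal form forces $\dd\beta=0$. The route is to differentiate again, compare curvatures: $\beta$ absorbs a $\U(1)$-connection, and $\dd\beta$ must act trivially on $\widehat\alpha$ under the commutator, which constrains it but does not force it to vanish in general. If closedness cannot be obtained, I would state the converse with the explicit caveat that $\nabla^{g,H}\eta=i\beta\otimes\eta$ for some real one-form $\beta$, and a genuinely parallel spinor exists iff $\beta$ can be gauged away.
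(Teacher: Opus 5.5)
Your necessity argument is exactly the paper's proof. The paper invokes Corollary \ref{cor:even_constrained_parallel_spinors}, equivalently Lemma \ref{lemma:nabla_square_alpha}, with a Hermitian pairing of positive adjoint type, so that $\tau(\overline{\fra}_w)=-\fra_w$.

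That corollary is only an ``only if'' statement, and the opening of Section \ref{sec:Complex_bilinear_parallel_forms} says explicitly that Hermitian squares give only necessary conditions. So the paper offers no argument for the ``if'' half of the lemma. In fact that half is false, so the step you plan next, deriving $\dd\beta=0$ from the normal form, cannot succeed.

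Your reduction of the converse to $\nabla^{g,H}\eta=i\beta\otimes\eta$ with $\beta$ real is correct and is all one can extract. The identity $R^{\nabla^{g,H}}(X,Y)\eta=i\,\dd\beta(X,Y)\,\eta$ does not force $\dd\beta=0$, because real two-forms can act on $\eta$ by nonzero imaginary scalars. For instance, one checks from the normal form that $\omega\diamond\widehat\alpha=-2i\,\widehat\alpha$, i.e.\ $\omega\cdot\eta=-2i\,\eta$.

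Here is a concrete counterexample.
\begin{itemize}
\item Take $H=0$ and $M=\R^2\times\mathbb{H}^2\times\R^2$ with $g=\dd\fru\odot\dd\frv+h_{\mathbb{H}^2}+\dd x^2+\dd y^2$.
\item Let $\widehat\alpha=\dd\fru+i\,\dd\fru\wedge\omega-\mu\ast_g\dd\fru$, where $\omega=\nu_{\mathbb{H}^2}\pm\dd x\wedge\dd y$ and the sign is fixed by the algebraic conditions of Lemma \ref{lemma:(5,1)_squares}.
\item Since $M$ is contractible, $\widehat\alpha$ is the Hermitian square of a chiral spinor. It is $\nabla^g$-parallel, hence solves \eqref{eq:quasisusy6d}.
\item Yet $(M,g)$ carries no nonzero parallel spinor, even locally. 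The spin curvature along $\mathbb{H}^2$ is a nonzero multiple of Clifford multiplication by $\nu_{\mathbb{H}^2}$, and this is invertible because $\nu_{\mathbb{H}^2}\diamond\nu_{\mathbb{H}^2}=-1$.
\end{itemize}

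So do not try to close this gap. State the lemma either as a necessary condition, which is how it is used afterwards (e.g.\ in Proposition \ref{prop:covariant_derivative_(u,v,omega)}), or as your caveated equivalence: $\widehat\alpha$ solves \eqref{eq:quasisusy6d} if and only if $\nabla^{g,H}\eta=i\beta\otimes\eta$ for some real one-form $\beta$. The latter is exactly what the paper's notion of quasi-supersymmetry encodes.

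One correction to your discussion of when $\beta$ can be removed. The spinor $\eta$ is fixed only up to a $\U(1)$-valued function $\phi$, which need not be of the form $e^{-if}$ globally. Hence $\beta$ can be gauged away if and only if it is closed with periods in $2\pi\Z$, not only when it is exact. Twisting by a flat line bundle instead generally replaces the spin structure by a spin-c one, which takes you outside the class of spinors the lemma is about.
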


\begin{proof}
Follows from Corollary \ref{cor:even_constrained_parallel_spinors} after noticing that the Hermitian square $\widehat{\alpha}$ is computed with respect to a Hermitian pairing of positive adjoint type.  
\end{proof}

\noindent
The goal now is to expand Equation \eqref{eq:quasisusy6d} into its components, which can be studied and understood geometrically.

\begin{lemma}
\label{lemma:skew_torsion_Hermitian}
The Hermitian square $\widehat{\alpha} = u + i u\wedge \omega - \mu \ast_g u$ of an irreducible complex and chiral spinor $\eta\in \Gamma(S^{\mu})$ satisfies Equation \eqref{eq:quasisusy6d} if and only if:
\begin{equation*}
\nabla^g_wu=\frac{1}{2}H(w,u^{\sharp_g}),\qquad\nabla^g_w\omega=\frac{1}{2}H(w)\triangle_1\omega+\kappa_w\wedge u    
\end{equation*} 
for a certain $\kappa\in\Omega^1(M,T^*M)$ such that $\escal{\kappa_w,u}_g=0$ for all $w\in\Gamma(TM)$.
\end{lemma}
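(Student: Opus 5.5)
The plan is to expand both sides of Equation \eqref{eq:quasisusy6d} by degree and compare. Since $\fra_w=-\frac14 H(w)$ is a real two-form, we have $\tau(\overline{\fra}_w)=-\fra_w$. The right-hand side is therefore the commutator $\fra_w\diamond\widehat{\alpha}-\widehat{\alpha}\diamond\fra_w$. For a two-form $\beta$, the Kähler--Atiyah commutator with any form is the generalized product contraction, $[\beta,\gamma]_\diamond=-2\beta\triangle_1\gamma$ (up to the sign convention of Appendix \ref{app:KA}). This commutator is a derivation of $\wedge$ and preserves degree, so the equation becomes degree by degree:
\begin{equation*}
\nabla^g_w\widehat{\alpha}=\tfrac12 H(w)\triangle_1\widehat{\alpha}.
\end{equation*}
Here $\beta\triangle_1(-)$ acts on one-forms by $\theta\mapsto \beta(\theta^{\sharp_g})$ up to sign, and it commutes with $\ast_g$ since it is the infinitesimal action of $\mathfrak{so}(TM)$.

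First I compare degree one. This gives $\nabla^g_w u=\frac12 H(w)\triangle_1 u=\frac12 H(w,u^{\sharp_g})$, after fixing signs against the paper's contraction conventions. Degree five is the Hodge dual of the degree-one equation, because $\nabla^g$ and $H(w)\triangle_1$ both commute with $\ast_g$, so it gives nothing new.

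Next I compare degree three: $\nabla^g_w(u\wedge\omega)=\frac12 H(w)\triangle_1(u\wedge\omega)$. Because the operator is a derivation and the degree-one equation already holds, this reduces to
\begin{equation*}
u\wedge\Big(\nabla^g_w\omega-\tfrac12 H(w)\triangle_1\omega\Big)=0.
\end{equation*}
By the division lemma for the nowhere vanishing one-form $u$, the bracket equals $\kappa_w\wedge u$ for some one-form $\kappa_w$. This $\kappa_w$ is only defined modulo $u$. The condition $\escal{\kappa_w,u}_g=0$ is then obtained by differentiating the algebraic constraint $\omega(u^{\sharp_g})=0$, or equivalently $\escal{u\wedge\omega,\,\cdot\,}$-type constraints. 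Concretely, contract the bracket with $u^{\sharp_g}$: $\nabla^g_w$ of $\omega(u^{\sharp_g})=0$ together with the first equation shows that $(\nabla^g_w\omega-\frac12H(w)\triangle_1\omega)(u^{\sharp_g})=0$. Since $(\kappa_w\wedge u)(u^{\sharp_g})=\escal{\kappa_w,u}u$, this forces $\escal{\kappa_w,u}_g=0$.

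For the converse, I reverse the argument. The two stated equations give the degree-one and degree-three equations directly, since $\kappa_w\wedge u\wedge u=0$. Applying $\ast_g$ to the degree-one equation then yields degree five.

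The main obstacle I expect is verifying the identity $[\beta,\gamma]_\diamond=\mp2\beta\triangle_1\gamma$ with the correct sign and normalization in the plus-convention Kähler--Atiyah algebra. Every coefficient $\frac12$ in the statement depends on it. I would first check it on one-forms, $\beta\diamond\theta-\theta\diamond\beta=-2\iota_{\theta^\sharp}\beta$, and then extend by the derivation property.
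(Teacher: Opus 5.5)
Your proposal is correct and follows essentially the same route as the paper. You reduce the equation degree by degree to $\nabla^g_w\widehat{\alpha}=-2\fra_w\triangle_1\widehat{\alpha}$ and read off $\nabla^g_w u=\frac12 H(w,u^{\sharp_g})$ in degree one. In degree three you use the derivation property of $\fra_w\triangle_1$ to get $u\wedge(\nabla^g_w\omega-\frac12 H(w)\triangle_1\omega)=0$, hence the $\kappa_w\wedge u$ term. Finally you differentiate $\omega(u^{\sharp_g})=0$ to obtain $\escal{\kappa_w,u}_g=0$.

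The only differences are cosmetic. You package the right-hand side as the commutator $[\fra_w,\widehat{\alpha}]_\diamond$ and handle degree five by Hodge-equivariance, whereas the paper expands both Clifford products term by term. The identity $[\beta,\gamma]_\diamond=-2\beta\triangle_1\gamma$ holds with exactly that sign, by the symmetry rule for $\triangle_k$ in the appendix.
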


\begin{proof}
We expand Equation \eqref{eq:quasisusy6d}. We have: 
\begin{align*}
\fra_w\diamond\widehat{\alpha}&=\fra_w\diamond(u+iu\wedge\omega-\mu u\diamond\nu_g)\\
&=\fra_w\wedge u-\fra_w(u^{\sharp_{g}})+\mu*_{g}(\fra_w\wedge u)+\mu*_{g}(\fra_w(u^{\sharp_{g}}))\\
&\quad+i(\fra_w\wedge u\wedge\omega-\fra_w\triangle_1(u\wedge\omega)-\fra_w\triangle_2(u\wedge\omega)),\\
\widehat{\alpha}\diamond\tau(\overline{\fra}_w)&=(u+iu\wedge\omega+\mu\nu_g\diamond u)\diamond(-\fra_w)\\
&=-u\wedge\fra_w-\fra_w(u^{\sharp_{g}})-\mu*_{g}(u\wedge\fra_w)+\mu*_{g}(\fra_w(u^{\sharp_{g}}))\\
&\quad-i(u\wedge\omega\wedge\fra_w+(u\wedge\omega)\triangle_1\fra_w-(u\wedge\omega)\triangle_2\fra_w),
\end{align*} 
where we have used that $*_{g}u=u\diamond\nu_g=-\nu_g\diamond u$. Separating by degrees this equation we obtain: $$\nabla^g_wu=-2\fra_w(u^{\sharp_{g}}),\qquad \nabla^g_w(u\wedge\omega)=-2\fra_w\triangle_1(u\wedge\omega)$$ for all $w\in\Gamma(TM)$. Let $\{e_1,\ldots,e_6\}$ be a frame of $M$. Then: 
\begin{align*}
        \fra_w\triangle_1(u\wedge\omega)&=g^{ij}\iota_{e_i}\fra_w\wedge\iota_{e_j}(u\wedge\omega)\\
        &=g^{ij}\iota_{e_i}\fra_w\wedge(\iota_{e_j}u\wedge\omega-u\wedge\iota_{e_j}\omega)\\
        &=(g^{ij}\iota_{e_i}\fra_w\wedge\iota_{e_j}u)\wedge\omega+u\wedge(g^{ij}\iota_{e_i}\fra_w\wedge\iota_{e_j}\omega)\\
        &=\fra_w(u^{\sharp_g})\wedge\omega+u\wedge(\fra_w\triangle_1\omega),
    \end{align*} where we have used $\fra_w\triangle_1u=\fra_w(u^{\sharp_g})$. Thus $\nabla^g_w(u\wedge\omega)=-2\fra_w(u^{\sharp_g})\wedge\omega-2u\wedge(\fra_w\triangle_1\omega)$, which implies: $$u\wedge\nabla^g_w\omega=-2u\wedge(\fra_w\triangle_1\omega)$$ since $\nabla^g_wu=-2\fra_w(u^{\sharp_{g}})$. Hence: $$\nabla^g_w\omega=-2\fra_w\triangle_1\omega+\kappa_w\wedge u$$ for some $\kappa\in\Omega^1(M,T^*M)$. Substituting $\fra_w=-\frac{1}{4}H(w)$ yields the equations from the statement. To get the condition $\escal{\kappa_w,u}_g=0$ for all $w\in\Gamma(TM)$, we differentiate $\omega(u^{\sharp_g})=0$. Then we get: \begin{align*}
        0&=(\nabla^g_w\omega)(u^{\sharp_g})+\omega((\nabla^g_wu)^{\sharp_g})\\
        &=\frac{1}{2}\iota_{u^{\sharp_g}}(H(w)\triangle_1\omega)+\iota_{u^{\sharp_g}}(\kappa_w\wedge u)+\frac{1}{2}\omega\triangle_1H(w,u^{\sharp_g})\\
        &=-\frac{1}{2}H(w,u^{\sharp_g})\triangle_1\omega+\escal{\kappa_w,u}_gu+\frac{1}{2}\omega\triangle_1H(w,u^{\sharp_g})\\
        &=\escal{\kappa_w,u}_gu,
    \end{align*} which implies that $\escal{\kappa_w,u}_g=0$ since $u$ is nowhere vanishing.
\end{proof}

As we explained in the previous subsection, pairs $(g,\widehat{\alpha})$ are in one-to-one correspondence with isotropic almost Hermitian classes $[u,v,\omega,J]$.

\begin{prop}
\label{prop:covariant_derivative_(u,v,omega)}
A Lorentzian six-dimensional manifold $(M,g)$ admits a skew-torsion parallel spinor with torsion $H\in\Omega^3(M)$ only if it admits an isotropic almost Hermitian class $[u,v,\omega,J]$ satisfying:
\begin{gather*}
\nabla^g_w u=\frac{1}{2}H(w,u^{\sharp_{g}}),\qquad \nabla^g_w v=\frac{1}{2}H(w,v^{\sharp_g})-\kappa_w\triangle_1\omega,\\
\nabla^g_w\omega=\frac{1}{2}H(w)\triangle_1\omega+\kappa_w\wedge u,\qquad \nabla^g_w J = \frac{1}{2} ( H(w) \circ J - J \circ H(w) ) + \kappa_w\otimes u^{\sharp_g}-u\otimes\kappa_w^{\sharp_g}
\end{gather*}
for all $w\in\Gamma(TM)$ and some $\kappa\in\Omega^1(M,T^*M)$ such that $\escal{\kappa_w,u}_g=0$.
\end{prop}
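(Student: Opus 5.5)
The plan is to start from Lemma \ref{lemma:skew_torsion_Hermitian}. A skew-torsion parallel spinor $\eta\in\Gamma(S^\mu)$ has Hermitian square $\widehat{\alpha}=u+iu\wedge\omega-\mu\ast_g u$, and this square satisfies \eqref{eq:quasisusy6d}. The lemma then already gives the equations for $\nabla^g u$ and $\nabla^g\omega$, for some $\kappa\in\Omega^1(M,T^*M)$ with $\escal{\kappa_w,u}_g=0$. By the discussion preceding the proposition, the pair $(g,\widehat{\alpha})$ determines an isotropic almost Hermitian class $[u,v,\omega,J]$. So it remains to choose a representative $(u,v,\omega_{uv},J_{uv})$, with $\omega=\omega_{uv}$ normalized by $\omega(v^{\sharp_g})=0$, and to derive the equations for $\nabla^g v$ and $\nabla^g J$.

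There is a subtlety in the normalization. The $\omega$ produced by the lemma is only defined modulo $u\wedge\theta$. Replacing $\omega$ by $\omega_{uv}=\omega+u\wedge\theta$ changes $\nabla^g_w\omega$ by $\nabla^g_w u\wedge\theta+u\wedge\nabla^g_w\theta$. The second term is absorbed into a redefinition of $\kappa_w$. For the first term we have $\nabla^g_wu\wedge\theta=\tfrac12 H(w,u^{\sharp_g})\wedge\theta$, and we should check that it is compensated by the corresponding change in $\tfrac12H(w)\triangle_1(u\wedge\theta)$. Using the identity $\fra_w\triangle_1(u\wedge\omega)=\fra_w(u^{\sharp_g})\wedge\omega+u\wedge(\fra_w\triangle_1\omega)$ from the proof of the lemma, the discrepancy is again of the form $u\wedge(\cdot)$. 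So the equation for $\omega$ keeps its shape for the representative $\omega_{uv}$, with a modified $\kappa$. We also need $\escal{\kappa_w,u}_g=0$, which follows by the same differentiation of $\omega(u^{\sharp_g})=0$ as in the lemma.

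For $v$, the plan is to write $\nabla^g_wv=\tfrac12H(w,v^{\sharp_g})+\rho_w$ and determine $\rho_w$ from the constraints. These are $\escal{v,v}_g=0$, $\escal{u,v}_g=1$ and $\omega(v^{\sharp_g})=0$. Differentiating the first two constraints, and using skew-symmetry of $H$ together with the $u$-equation, gives $\escal{\rho_w,v}_g=\escal{\rho_w,u}_g=0$, so $\rho_w\in\Gamma(D_{uv}^*)$. Differentiating $\omega(v^{\sharp_g})=0$ gives
\[
0=(\nabla^g_w\omega)(v^{\sharp_g})+\omega(\nabla^g_wv).
\]
Inserting the $\omega$-equation, the $H$-terms cancel by the same $\triangle_1$ manipulation as in the lemma, and $(\kappa_w\wedge u)(v^{\sharp_g})=-\kappa_w+\escal{\kappa_w,v}_g u$. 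Hence $\omega(\rho_w^{\sharp_g})$ equals the transverse part of $\kappa_w$. Since $\omega$ is nondegenerate on $D_{uv}$ with $\omega^2=-1$ there (via $J$), this yields $\rho_w=-\kappa_w\triangle_1\omega$, after replacing $\kappa_w$ by its transverse part. This replacement is harmless, because the components of $\kappa_w$ along $u$ do not affect $\kappa_w\wedge u$, and the component along $v$ is excluded by $\escal{\kappa_w,u}_g=0$.

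Finally, $J$ is determined algebraically by $g=u\odot v+\omega\circ J$, i.e.\ $J=-\omega^{\sharp}$ restricted appropriately, or equivalently $J$ is the endomorphism associated to $\omega$ through $g$. Differentiating this relation and using $\nabla^g g=0$ together with the $\omega$-equation gives $\nabla^g_wJ$ as the endomorphism associated to $\nabla^g_w\omega$. The term $\tfrac12H(w)\triangle_1\omega$ corresponds to the commutator $\tfrac12(H(w)\circ J-J\circ H(w))$. The term $\kappa_w\wedge u$ corresponds to $\kappa_w\otimes u^{\sharp_g}-u\otimes\kappa_w^{\sharp_g}$.

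The main obstacle is sign and convention bookkeeping: in $\triangle_1$, in the identification of two-forms with endomorphisms, and in the normalization of $\omega$ and $\kappa$. I would fix an adapted frame $(u^{\sharp},v^{\sharp},e_1,\dots,e_4)$ and check each identity componentwise there.
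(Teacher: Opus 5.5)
Your proposal is correct. For $\nabla^g u$, $\nabla^g\omega$ and $\nabla^g J$ it matches the paper, which simply quotes Lemma \ref{lemma:skew_torsion_Hermitian} and translates $\nabla^g\omega$ into $\nabla^g J$ through $\omega=g(J-,-)$. Your normalization paragraph is harmless but unnecessary. The proof of that lemma works verbatim for any representative with $\omega(u^{\sharp_g})=0$, in particular for $\omega_{uv}$, and your transformation rule for $\kappa$ agrees with the remark following the proposition.

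The genuine difference is in the $v$-equation. The paper differentiates the duality relation $2u\wedge v=-\mu\ast_g(\omega\wedge\omega)$ and contracts with $v^{\sharp_g}$. It then needs a chain of identities: the appendix formula $\alpha\triangle_a(\ast\beta)=\ast(\beta\wedge\alpha)$, the Hodge relations for $(u,v,\omega_{uv})$, the vanishing of $\escal{H(w)\triangle_1\omega,\omega}$, and $(\beta\triangle_1\omega)\triangle_1\omega=-\beta^\perp$. You instead differentiate only the three normalization constraints $\escal{v,v}_g=0$, $\escal{u,v}_g=1$ and $\omega(v^{\sharp_g})=0$.
\begin{itemize}
\item The first two constraints place $\rho_w$ in $D^*_{uv}$.
\item The third gives $\rho_w\triangle_1\omega=\kappa_w-\escal{\kappa_w,v}_g\,u$. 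This is exactly the transverse part of $\kappa_w$, because $\escal{\kappa_w,u}_g=0$.
\item A single use of $(\beta\triangle_1\omega)\triangle_1\omega=-\beta^\perp$ then yields $\rho_w=-\kappa_w\triangle_1\omega$.
\end{itemize}
I checked each of these steps, including the cancellation of the $H$-terms via $\iota_{v^{\sharp_g}}(H(w)\triangle_1\omega)=-H(w,v^{\sharp_g})\triangle_1\omega$, and they go through.

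Your route is shorter and avoids Hodge duality and chirality-dependent signs altogether. It also makes transparent that the transverse rotation of $v$ is forced by the $u$-component $\kappa$ of $\nabla^g\omega$. It would apply verbatim whenever $\omega$ induces a compatible almost complex structure on $D_{uv}$. The paper's route, by contrast, is tied to the specific six-dimensional duality relations of the chiral Hermitian square.

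One small simplification: you do not need to replace $\kappa_w$ by its transverse part. The form $\kappa_w\triangle_1\omega$ only sees that part anyway, since $\omega(u^{\sharp_g})=\omega(v^{\sharp_g})=0$.
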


\begin{proof}
We have computed $\nabla^g_wu$ and $\nabla^g_w\omega$ in Lemma \ref{lemma:skew_torsion_Hermitian}. It remains to compute $\nabla^g_w v$ and $\nabla^g_w J$. First note that from $\escal{v,v}_g=0$ we obtain $\escal{\nabla_w^g v,v}_g=0$, that is, $\iota_{v^{\sharp_g}}(\nabla^g_wv)=0$. Set $2T_w:=H(w)\triangle_1\omega\in\Omega^2(M)$ to simplify the notation. We will use equations \eqref{eq:Hodge_(u,v,omega)} repeatedly. Using the relation $2 u\wedge v=-\mu \ast_g(\omega\wedge\omega)$ we compute on one hand: $$\nabla^g_w(u\wedge v)=\nabla^g_wu\wedge v+u\wedge\nabla^g_wv=\frac{1}{2}H(w,u^{\sharp_g})\wedge v+u\wedge\nabla^g_wv.$$
    
    Taking the interior product with $v^{\sharp_g}$ yields: $$\iota_{v^{\sharp_g}}(\nabla^g_w(u\wedge v))=\frac{1}{2}H(w,u^{\sharp_g},v^{\sharp_g})v+\nabla^g_wv.$$

    On the other hand: $$\nabla^g_w(-\frac{\mu}{2}*_g(\omega\wedge\omega))=-\mu*(\nabla^g_w\omega\wedge\omega)=-\mu*_g(T_w\wedge\omega+\kappa_w\wedge u\wedge\omega).$$
    
    Taking the interior product with $v^{\sharp_g}$ yields: $$\iota_{v^{\sharp_g}}(\nabla^g_w(-\frac{\mu}{2}*_g(\omega\wedge\omega)))=-\mu*_g(T_w\wedge\omega\wedge v+\kappa_w\wedge u\wedge\omega\wedge v).$$
    
    Using $*_g(v\wedge\omega)=-\mu v\wedge\omega$ and Proposition \ref{prop:appendix_properties} we obtain: $$-\mu*_g(T_w\wedge\omega\wedge v)=-\mu*_g(v\wedge\omega\wedge T_w)=-\mu T_w\triangle_2(*_g(v\wedge\omega))=T_w\triangle_2(v\wedge\omega).$$

    A computation using the definition of $\triangle_2$ and the fact that $\omega(v^{\sharp_g})=0$ gives us: $$T_w\triangle_2(v\wedge\omega)=\escal{T_w,\omega}v+T_w(v^{\sharp_g})\triangle_1\omega.$$

    Note also that $T_w(v^{\sharp_g})=\frac{1}{2}\iota_{v^{\sharp_g}}(H(w)\triangle_1\omega)=-\frac{1}{2}H(w,v^{\sharp_g})\triangle_1\omega$. Now we need to compute two identities: \begin{itemize}
        \item Let $\theta_1,\theta_2\in\Omega^1(M)$. Then $\escal{\theta_1\triangle_1\omega,\theta_2}=-\escal{\theta_1,\theta_2\triangle_1\omega}$. Indeed: $$\escal{\theta_1\triangle_1\omega,\theta_2}=\escal{\omega(\theta_1^{\sharp_g}),\theta_2}=\omega(\theta_1^{\sharp_g},\theta_2^{\sharp_g})=-\omega(\theta_2^{\sharp_g},\theta_1^{\sharp_g})=-\escal{\theta_1,\theta_2\triangle_1\omega}.$$
        \item Let $w_1,w_2\in\Gamma(D_{uv})$. Then $\escal{\iota_{w_1}\omega,\iota_{w_2}\omega}=g(w_1,w_2)$. Indeed, using that the exterior product is adjoint to the interior product, $\escal{\omega,\omega}=2$, and $*_{uv}\omega=-\mu\omega$ on $D_{uv}$ we have: \begin{align*}
            \escal{\iota_{w_1}\omega,\iota_{w_2}\omega}&=\escal{\omega\wedge w_1^\flat,\omega\wedge w_2^\flat}\\
            &=\escal{\omega,\iota_{w_1}\omega\wedge w_2^\flat}+\escal{\omega,\omega}g(w_1,w_2)\\
            &=-\escal{\iota_{w_1}\omega,\iota_{w_2}\omega}+2g(w_1,w_2).
        \end{align*}
    \end{itemize}

    A one-form $\beta\in\Omega^1(M)$ can be decomposed as $\beta=\beta(v^{\sharp_g})u+\beta(u^{\sharp_g})v+\beta^\perp$, where $\beta^\perp(u^{\sharp_g})=\beta^\perp(v^{\sharp_g})=0$, i.e.\ $\beta^\perp\in\Gamma(D_{uv}^*)$. Thus $\beta\triangle_1\omega=\beta^\perp\triangle_1\omega$. Using the two identities above we compute: \begin{align*}
        \escal{(\beta\triangle_1\omega)\triangle_1\omega,\theta}&=\escal{(\beta^\perp\triangle_1\omega)\triangle_1\omega,\theta}\\
        &=-\escal{\beta^\perp\triangle_1\omega,\theta\triangle_1\omega}\\
        &=-\escal{\beta^\perp\triangle_1\omega,\theta^\perp\triangle_1\omega}\\
        &=-\escal{\beta^\perp,\theta^\perp}\\
        &=-\escal{\beta^\perp,\theta}
    \end{align*} for all $\theta\in\Omega^1(M)$. Hence: $$(\beta\triangle_1\omega)\triangle_1\omega=-\beta^\perp=-\beta+\beta(v^{\sharp_g})u+\beta(u^{\sharp_g})v.$$

    In our situation we have: $$-\frac{1}{2}(H(w,v^{\sharp_g})\triangle_1\omega)\triangle_1\omega=\frac{1}{2}H(w,v^{\sharp_g})-\frac{1}{2}H(w,v^{\sharp_g},u^{\sharp_g})v.$$

    One can show that $\escal{\sigma_1\triangle_1\omega,\sigma_2}=-\escal{\sigma_1,\sigma_2\triangle_1\omega}$ for all $\sigma_1,\sigma_2\in\Omega^2(M)$. Using this and $\omega\triangle_1\omega=0$ we get: $$\escal{T_w,\omega}=\frac{1}{2}\escal{H(w)\triangle_1\omega,\omega}=-\frac{1}{2}\escal{H(w),\omega\triangle_1\omega}=0.$$
    
    Finally, using $*_g(u\wedge v\wedge\omega)=\mu\omega$ and Proposition \ref{prop:appendix_properties} we obtain: $$-\mu*_g(\kappa_w\wedge u\wedge v\wedge\omega)=-\mu*_g(u\wedge v\wedge\omega\wedge\kappa_w)=-\mu\kappa_w\triangle_1(*_g(u\wedge v\wedge\omega))=-\kappa_w\triangle_1\omega.$$

    Putting everything together, we obtain the expression for $\nabla^g_w v$ from the statement. Finally, the expression for $\nabla^g_wJ$ is obtained from that of $\nabla^g_w\omega$ and the relation $\omega=g(J-,-)$.
\end{proof}

\begin{remark}
We note that if a tuple $(u,v,\omega,J)$ satisfies the equations of Proposition \ref{prop:covariant_derivative_(u,v,omega)} for a certain $\kappa\in \Gamma (T^{\ast} M\otimes T^{\ast}M$), then another tuple $(u,\tilde{v},\tilde{\omega},\tilde{J})$ as in Equation \eqref{eq:equivalence_relation_(u,v,omega)} satisfies the same equations for: $$\tilde{\kappa}_w=\kappa_w+\nabla^g_w(\omega(\mathfrak{w}^{\sharp_g}))-\frac{1}{2}H(w)\triangle_1(\omega(\mathfrak{w}^{\sharp_g})).$$
\end{remark}

Let $\eta$ be an irreducible complex and chiral spinor on $(M,g)$, with Hermitian square $\widehat{\alpha}=u+iu\wedge\omega-\mu*_{g}u$. Since $u\in \Omega^1(M)$ is an isotropic one-form, it canonically determines a \emph{screen bundle} over $(M,g)$, that is, a vector bundle $\mathfrak{S}_u \to (M,g)$ defined by:
\begin{equation*}
\mathfrak{S}_u := \frac{\ker(u)}{\langle \mathbb{R} u^{\sharp_g}\rangle}.
\end{equation*}
In our case, $\mathfrak{S}_u$ is of real rank $4$. Since $u$ is canonically determined by $\eta$, it follows that $\mathfrak{S}_u$ is also canonically determined by $\eta$. Note that every choice of one-form $v$ conjugate to $u$ determines a distribution $D_{uv} \subset TM$ that is naturally isomorphic, as a vector bundle, to $\mathfrak{S}_u$.

\begin{definition}
The screen bundle $\mathfrak{S}_u$ is $\emph{integrable}$ if there exists a one-form $v$ conjugate to $u$ such that $D_{uv}\subset TM$ is Frobenius integrable. 
\end{definition}

Let us study under which conditions the distribution $D_{uv}=\ker(u)\cap\ker(v)\subset TM$ is integrable. For that, we use the Frobenius criterion, which says that $D_{uv}$ is integrable if and only if:
\begin{equation*}
u\wedge v\wedge \dd u=0,\qquad u\wedge v\wedge\dd v=0.
\end{equation*}
 
\noindent
To evaluate this criteria, it is convenient to decompose $H\in\Omega^3(M)$ with respect to: $$T^*M=\escal{\R u}\oplus\escal{\R v}\oplus D^*_{uv}$$ as follows:
\begin{equation}
\label{eq:Hdecomposition}
H = u\wedge v\wedge\beta + u\wedge\chi_u + v\wedge\chi_v + H^\perp    
\end{equation}
for uniquely determined sections:
\begin{equation*}
\beta\in\Gamma(D_{uv}^*),\qquad \chi_u,\chi_v\in\Gamma(\wedge^2D_{uv}^*),\qquad H^\perp\in\Gamma(\wedge^3D_{uv}^*).    
\end{equation*}

\begin{remark}
Note that in the decomposition of $H$ given in Equation \eqref{eq:Hdecomposition}, it follows that $H$ is $\mu$-self-dual if and only if:
\begin{equation*}
*_{uv}\chi_u=-\mu\chi_u,\qquad *_{uv}\chi_v=\mu\chi_v,\qquad *_{uv}H^\perp=\mu\beta.
\end{equation*}
\end{remark}
 
\begin{lemma}\label{lemma:D_integrable}
The distribution $D_{uv}\subset TM$ is integrable if and only if: $$\chi_v=0,\qquad \chi_u(w_1,w_2)=-\omega(\kappa_{w_1}^{\sharp_g},w_2)+\omega(\kappa_{w_2}^{\sharp_g},w_1)$$ for all $w_1,w_2\in\Gamma(D_{uv})$.
\end{lemma}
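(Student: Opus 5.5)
The plan is to evaluate the two Frobenius conditions $u\wedge v\wedge\dd u=0$ and $u\wedge v\wedge\dd v=0$ directly, using Proposition \ref{prop:covariant_derivative_(u,v,omega)}. Since $\nabla^g$ is torsion-free, we have $\dd\theta=\mathrm{Alt}(\nabla^g\theta)$. Moreover, $u\wedge v\wedge\sigma$ for a two-form $\sigma$ only sees the restriction $\sigma|_{D_{uv}}$. Hence the conditions are equivalent to $\dd u(w_1,w_2)=0$ and $\dd v(w_1,w_2)=0$ for all $w_1,w_2\in\Gamma(D_{uv})$.

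For $u$: from $\nabla^g_w u=\tfrac12 H(w,u^{\sharp_g})$ we get
\[
\dd u(w_1,w_2)=\tfrac12 H(w_1,u^{\sharp_g},w_2)-\tfrac12 H(w_2,u^{\sharp_g},w_1)=-H(u^{\sharp_g},w_1,w_2).
\]
Inserting the decomposition \eqref{eq:Hdecomposition} and using $u(u^{\sharp_g})=0$, $v(u^{\sharp_g})=1$, $u(w_i)=v(w_i)=0$, only the term $v\wedge\chi_v$ survives. This gives $\dd u|_{D_{uv}}=-\chi_v$, so the first condition is exactly $\chi_v=0$.

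For $v$: we use $\nabla^g_w v=\tfrac12 H(w,v^{\sharp_g})-\kappa_w\triangle_1\omega$. The $H$-part contributes $-H(v^{\sharp_g},w_1,w_2)$, and by the decomposition (now $v(v^{\sharp_g})=0$, $u(v^{\sharp_g})=1$) this is $-\chi_u(w_1,w_2)$. For the $\kappa$-part, note that $\kappa_w\triangle_1\omega=\omega(\kappa_w^{\sharp_g})$ up to the sign convention for $\triangle_1$ fixed in the proof of Proposition \ref{prop:covariant_derivative_(u,v,omega)}, where $\theta\triangle_1\omega=\omega(\theta^{\sharp_g})$. Evaluating at $w_2$ and antisymmetrising gives
\[
-\omega(\kappa_{w_1}^{\sharp_g},w_2)+\omega(\kappa_{w_2}^{\sharp_g},w_1).
\]
Therefore $\dd v|_{D_{uv}}=0$ is equivalent to
\[
\chi_u(w_1,w_2)=-\omega(\kappa_{w_1}^{\sharp_g},w_2)+\omega(\kappa_{w_2}^{\sharp_g},w_1),
\]
which is the stated formula.

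The main obstacle is purely bookkeeping of signs: the convention $H(w_1,w_2)=\iota_{w_2}\iota_{w_1}H$ and the ordering in $H(w,u^{\sharp_g})$, the precise meaning of $\triangle_1$ on a one-form and a two-form, and the evaluation of $u\wedge v\wedge\chi_u$-type terms against $v^{\sharp_g}$. I would fix these by checking against the identities already used in the proof of Proposition \ref{prop:covariant_derivative_(u,v,omega)}, in particular $T_w(v^{\sharp_g})=-\tfrac12 H(w,v^{\sharp_g})\triangle_1\omega$. A sign flip in $\chi_u$ would then be absorbed consistently.
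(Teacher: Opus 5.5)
Your proposal is correct and is essentially the paper's proof. Both arguments use Proposition \ref{prop:covariant_derivative_(u,v,omega)} to compute $\dd u=-H(u^{\sharp_g})=u\wedge\beta-\chi_v$ and $\dd v=-H(v^{\sharp_g})-\mathrm{Alt}(\kappa\triangle_1\omega)$ with $H(v^{\sharp_g})=v\wedge\beta+\chi_u$, then use that $u\wedge v\wedge\sigma=0$ if and only if $\sigma$ vanishes on $D_{uv}$. Your hedging about signs is unnecessary: $\theta\triangle_1\omega=\iota_{\theta^{\sharp_g}}\omega$ follows directly from the explicit formula for $\triangle_k$ in Appendix \ref{app:KA}, so no sign ambiguity in $\chi_u$ remains.
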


\begin{proof}
    By Proposition \ref{prop:covariant_derivative_(u,v,omega)}, the skew-symmetrization of $\nabla^gu$ is: $$\dd u=-H(u^{\sharp_g})=u\wedge\beta-\chi_v.$$

    Hence $u\wedge v\wedge\dd u=0$ if and only if $\chi_v=0$. We now compute $\dd v$: \begin{align*}
        \dd v(w_1,w_2)&=-H(v^{\sharp_g},w_1,w_2)-(\kappa_{w_1}\triangle_1\omega)(w_2)+(\kappa_{w_2}\triangle_1\omega)(w_1)\\
        &=-H(v^{\sharp_g},w_1,w_2)-\omega(\kappa_{w_1}^{\sharp_g},w_2)+\omega(\kappa_{w_2}^{\sharp_g},w_1)\\
        &=-v(w_1)\beta(w_2)+v(w_2)\beta(w_1)-\chi_u(w_1,w_2)-\omega(\kappa_{w_1}^{\sharp_g},w_2)+\omega(\kappa_{w_2}^{\sharp_g},w_1),
    \end{align*} where we have used that $H(v^{\sharp_g})=v\wedge\beta+\chi_u$. Since $u\wedge v\wedge\dd v=0$ if and only if $\dd v(w_1,w_2)=0$ for all $w_1,w_2\in\Gamma(D_{uv})$, we conclude that: $$\chi_u(w_1,w_2)=-\omega(\kappa_{w_1}^{\sharp_g},w_2)+\omega(\kappa_{w_2}^{\sharp_g},w_1)$$ for all $w_1,w_2\in\Gamma(D_{uv})$.
\end{proof}

Using the aforementioned notion of integrable screen bundle together with the previous lemmata, we extract the following structural result.

\begin{thm}
\label{thm:transverse_LCK}
An oriented Lorentzian six-manifold admits a skew-torsion parallel spinor with an integrable screen bundle only if it admits a codimension-two foliation by locally conformally Kähler complex surfaces.
\end{thm}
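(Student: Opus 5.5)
Given a skew-torsion parallel spinor $\eta\in\Gamma(S^\mu)$, its Hermitian square determines an isotropic almost Hermitian class $[u,v,\omega,J]$ satisfying the system of Proposition \ref{prop:covariant_derivative_(u,v,omega)}. By assumption the screen bundle is integrable, so we may fix a conjugate one-form $v$ with $D_{uv}$ Frobenius integrable; this yields a codimension-two foliation $\mathcal{F}$ whose leaves $L$ are four-dimensional. By Lemma \ref{lemma:D_integrable}, $\chi_v=0$, and $\chi_u$ restricted to $D_{uv}$ is expressed through $\kappa$ and $\omega$. Each leaf carries the Riemannian metric $h=h_{uv}|_L$ and the almost Hermitian structure $(\omega_{uv},J_{uv})|_L$, since $\omega(u^{\sharp_g})=\omega(v^{\sharp_g})=0$. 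The plan is to show that $(L,h,J,\omega)$ is integrable and locally conformally Kähler, i.e.\ $J$ is integrable and $\dd\omega_L=\theta\wedge\omega_L$ for a closed one-form $\theta$.

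First I would compute the Levi-Civita connection $\nabla^h$ of a leaf via the Gauss formula: for $w_1,w_2\in\Gamma(D_{uv})$, $\nabla^h_{w_1}w_2$ is the $D_{uv}$-projection of $\nabla^g_{w_1}w_2$. Then restrict $\nabla^g_w\omega=\tfrac12 H(w)\triangle_1\omega+\kappa_w\wedge u$ to $w\in D_{uv}$ and pull back to $L$; the term $\kappa_w\wedge u$ vanishes on $L$, and the second fundamental form contributions come from $\nabla^g u$ and $\nabla^g v$ restricted to $D_{uv}$, i.e.\ from $\beta$, $\chi_u$, $\kappa$. Thus $\nabla^h\omega_L=\tfrac12 H^{\mathrm{eff}}\triangle_1\omega_L$ for an effective three-form on $L$ which, after decomposing $H$ as in \eqref{eq:Hdecomposition}, should be $H^\perp|_L$ (the terms with $u$ or $v$ drop on pullback). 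Hence $\omega_L$ is parallel for the metric connection with totally skew torsion $H^\perp|_L$ on $L$, i.e.\ $(L,h,J)$ carries a Hermitian connection with skew torsion.

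By Gauduchon/Friedrich–Ivanov, an almost Hermitian manifold admitting a compatible connection with totally skew torsion has integrable $J$ (the Nijenhuis tensor is then totally skew, and in real dimension four the $(3,0)+(0,3)$ part must vanish) and the torsion is $-J\dd\omega$, the Bismut torsion. So $J$ is integrable and $L$ is a complex surface. In dimension four, $\dd\omega_L=\theta\wedge\omega_L$ automatically for the Lee form $\theta=J\delta\omega$, because wedge with $\omega$ is an isomorphism $\Omega^1\to\Omega^3$. It remains to show $\dd\theta=0$. Here I would use that $H^\perp$ is three-dimensional data on a four-manifold, so $*_{uv}H^\perp$ is a one-form. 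The self-duality remark relates this to $\beta$, giving $\theta=\pm\beta|_L$ up to a constant factor fixed by $\ast_{uv}\omega=-\mu\omega$. Closedness of $\theta$ should then follow from $\dd^2u=0$ with $\dd u=u\wedge\beta-\chi_v=u\wedge\beta$: applying $\dd$ gives $u\wedge\beta\wedge\beta$-type terms plus $-u\wedge\dd\beta$, and wedging with $v$ and restricting to $L$ yields $\dd(\beta|_L)=0$.

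The main obstacle is this final step: pinning down exactly which combination of $\beta$, $\chi_u$ and $\kappa$ is the Lee form, and verifying that it is closed on leaves. If $\theta$ picks up $\kappa$-dependent pieces through the second fundamental form, I would absorb them using the freedom $v\mapsto\tilde v$ and the transformation of $\kappa$ in the Remark after Proposition \ref{prop:covariant_derivative_(u,v,omega)}. Alternatively, I would settle for local conformal Kählerness via $\dd\theta|_L=0$ computed directly from $\dd^2v=0$ together with Lemma \ref{lemma:D_integrable}.
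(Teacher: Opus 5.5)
Your proposal is correct. Its last two steps coincide with the paper's. The Lee form of a leaf is identified with $\beta$ through $*_{uv}H^\perp=\mu\beta$. Its closedness comes from $\dd u=u\wedge\beta$ (that is, $\chi_v=0$), since $0=\dd^2u=-u\wedge\dd\beta$ kills the $\wedge^2D_{uv}^*$-component of $\dd\beta$. The paper additionally claims $\dd\beta=0$ on all of $M$; you only need, and only get, leafwise closedness, which suffices.

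The real difference is how you extract the leafwise Hermitian geometry. The paper stays with the ambient $\nabla^g$:
\begin{itemize}
\item it evaluates $(\nabla^g_{w_1}\omega)(w_2,w_3)=\tfrac12(H(w_1,w_2,Jw_3)+H(w_1,Jw_2,w_3))$ on $D_{uv}$ and antisymmetrizes to get $\dd\omega$;
\item it turns $H^\perp\triangle_1\omega$ into $-\beta\wedge\omega$ by a Kähler--Atiyah computation;
\item it proves $N_J=0$ by substituting $H^\perp=\omega(\beta^{\sharp_g})\wedge\omega$ into an explicit formula for $g(N_J(w_1,w_2),w_3)$.
\end{itemize}
You instead use the Gauss formula to see that $\omega_L$ is parallel for a leafwise Hermitian connection with totally skew torsion $\pm H^\perp|_L$. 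Friedrich--Ivanov/Gauduchon then gives integrability, since a totally skew Nijenhuis tensor is of type $(3,0)+(0,3)$ and vanishes in real dimension four. It also identifies $H^\perp|_L$ with the Bismut torsion $\pm *_L\theta$.

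What each route buys:
\begin{itemize}
\item Yours is more conceptual. It shows integrability of $J$ on the leaves holds for \emph{every} $H$, which is consistent with the paper's Nijenhuis formula, whose right-hand side is a multiple of the $(3,0)+(0,3)$-part of $H^\perp$. It also isolates exactly where a hypothesis on $H$ enters.
\item The paper's is self-contained and gives the exact identity $\dd\omega=\beta\wedge\omega$ with no undetermined constant.
\end{itemize}
The hedging in your last paragraph is unnecessary. $\omega$ annihilates the normal bundle $\mathrm{span}\{u^{\sharp_g},v^{\sharp_g}\}$ of the leaves, so the second fundamental form never enters $\nabla^h\omega_L$. Also $\kappa_w\wedge u$ pulls back to zero. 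No $\kappa$-dependent pieces appear, and no fallback via $\dd^2v=0$ is needed.

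One point should be made explicit; it is equally implicit in the paper, which uses $*_{uv}\beta=-\mu H^\perp$. The step $\theta\propto\beta|_L$ requires $H$ to be $\mu$-self-dual. That is not among the hypotheses and is not implied by $\nabla^{g,H}\eta=0$. For example, take $\R^{1,1}\times(S^1\times S^3)$ with $H$ the torsion of the flat left-invariant connection on $S^3$. There is a chiral parallel spinor with integrable screen and $\beta=0$, yet the leaves have Lee form $\pm\dd t$. Without self-duality your argument still yields $\theta=\pm *_LH^\perp|_L$, but closedness of $\theta$ then needs separate input.
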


\begin{proof}
Let $\eta$ be such a parallel spinor, with associated isotropic almost Hermitian class $[u,v,\omega,J]$. We first show that $\dd\omega=\beta\wedge\omega$. Let $w_1,w_2,w_3\in\Gamma(D_{uv})$. Then: \begin{align*}
        (H(w_1)\triangle_1\omega)(w_2,w_3)&=\iota_{w_3}\iota_{w_2}(H(w_1)\triangle_1\omega)\\
        &=\iota_{w_3}(-H(w_1,w_2)\triangle_1\omega+H(w_1)\triangle_1\omega(w_2))\\
        &=H(w_1,w_2)\triangle_1\omega(w_3)-H(w_1,w_3)\triangle_1\omega(w_2)\\
        &=H(w_1,w_2,Jw_3)+H(w_1,Jw_2,w_3),
    \end{align*} where we have used $\omega(w)^{\sharp_g}=Jw$. Hence: $$(\nabla^g_{w_1}\omega)(w_2,w_3)=\frac{1}{2}(H(w_1,w_2,Jw_3)+H(w_1,Jw_2,w_3)).$$
    
    We now compute: \begin{align*}
        \dd\omega(w_1,w_2,w_3)&=(\nabla^g_{w_1}\omega)(w_2,w_3)+(\nabla^g_{w_2}\omega)(w_3,w_1)+(\nabla^g_{w_3}\omega)(w_1,w_2)\\
        &=H(Jw_1,w_2,w_3)+H(w_1,Jw_2,w_3)+H(w_1,w_2,Jw_3).
    \end{align*}
    
    Using that $*_{uv}\omega=-\mu\omega$, $*_{uv}\beta=-\mu H^\perp$, and $\nu_{uv}\diamond\nu_{uv}=1$ on $D_{uv}$, we compute: $$H^\perp\diamond\omega=(-\mu*_{uv}\beta)\diamond(-\mu*_{uv}\omega)=(\beta\diamond\nu_{uv})\diamond(-\nu_{uv}\diamond\omega)=-\beta\diamond\omega.$$
    
    In particular, $H^\perp\triangle_1\omega=-\beta\wedge\omega$. Then: $$-(\beta\wedge\omega)(w_1)=\iota_{w_1} (H^\perp\triangle_1\omega)=-H^\perp(w_1)\triangle_1\omega-H^\perp\triangle_1\omega(w_1)$$ and: \begin{align*}
        (H^\perp(w_1)\triangle_1\omega)(w_2,w_3)&=H(w_1,w_2,Jw_3)+H(w_1,Jw_2,w_3),\\
        (H^\perp\triangle_1\omega(w_1))(w_2,w_3)&=H(Jw_1,w_2,w_3)
    \end{align*} for all $w_1,w_2,w_3\in\Gamma(D_{uv})$. Hence, we conclude that $\dd\omega=\beta\wedge\omega$ on $D_{uv}$.\medskip

    Now we show that $\beta$ is closed. From $\dd\omega=\beta\wedge\omega$ we obtain $\dd\beta\wedge\omega=0$. Moreover, from the proof of Lemma \ref{lemma:D_integrable} we get $\dd u=-H(u^{\sharp_g})=u\wedge\beta$, which in turn implies $\dd\beta\wedge u=0$. Write: $$\dd\beta=\gamma_0u\wedge v+u\wedge\gamma_u+v\wedge\gamma_v+\gamma^\perp,$$ where $\gamma_0$ is a function, $\gamma_u,\gamma_v\in\Gamma(D_{uv}^*)$ and $\gamma^\perp\in\Gamma(\wedge^2D_{uv}^*)$. The equation $\dd\beta\wedge u=0$ implies that $\gamma_v=0$ and $\gamma^\perp=0$. The equation $\dd\beta\wedge\omega=0$ implies that $\gamma_0=0$ and $\gamma_u\wedge\omega=0$. Since $\omega$ is non-degenerate on $D_{uv}$, there exists $w\in\Gamma(D_{uv})$ such that $\gamma_u=\iota_w\omega$. Hence: $$0=\gamma_u\wedge\omega=\iota_w\omega\wedge\omega=\tfrac{1}{2}\iota_w(\omega\wedge\omega)=-\mu\iota_w\nu_{uv}=-\mu*_{uv}w^\flat$$ if and only if $w=0$, i.e.\ $\gamma_u=0$. Therefore $\dd\beta=0$.\medskip

    Finally, we show that $J$ is integrable. From $\omega=g(J-,-)$ and the equation for $(\nabla^g_{w_1}\omega)(w_2,w_3)$ we get: $$g((\nabla^g_{w_1}J)w_2,w_3)=\frac{1}{2}(H(w_1,w_2,Jw_3)+H(w_1,Jw_2,w_3))$$ for all $w_1,w_2,w_2\in\Gamma(D_{uv})$. Using the standard formula for the Nijenhuis tensor $N_J$ in terms of the Levi-Civita connection $\nabla^g$, namely: $$N_J(w_1,w_2)=(\nabla^g_{w_1}J)Jw_2-(\nabla^g_{w_2}J)Jw_1+(\nabla^g_{Jw_1}J)w_2-(\nabla^g_{Jw_2}J)w_1,$$ we obtain: $$g(N_J(w_1,w_2),w_3)=H(Jw_1,Jw_2,w_3)+H(Jw_1,w_2,Jw_3)+H(w_1,Jw_2,Jw_3)-H(w_1,w_2,w_3).$$
    
    Now note that: $$\omega(\beta^{\sharp_g})\wedge\omega=\tfrac{1}{2}\iota_{\beta^{\sharp_g}}(\omega\wedge\omega)=-\mu*_{uv}\beta=H^\perp.$$
    
    Using $\omega(\beta^{\sharp_g},w)=g(J\beta^{\sharp_g},w)=-g(\beta^{\sharp_g},Jw)=-\beta(Jw)$ we compute: $$H(w_1,w_2,w_3)=H^\perp(w_1,w_2,w_3)=-\beta(Jw_1)\omega(w_2,w_3)-\beta(Jw_2)\omega(w_3,w_1)-\beta(Jw_3)\omega(w_1,w_2)$$ for all $w_1,w_2,w_3\in\Gamma(D_{uv})$. Plugging this into $g(N_J(w_1,w_2),w_3)$ we obtain that $N_J=0$ on $D_{uv}$. Hence, the almost complex structure $J$ on $D_{uv}$ is integrable.
\end{proof}


\subsection{The complex-bilinear square of a skew-torsion parallel spinor}


In this subsection, we compute, via Theorem \ref{thm:even_CB_CPS} with $4\fra_w=-H(w)\in\Omega^2(M)$, the necessary and sufficient conditions for a Lorentzian six-dimensional manifold $(M,g)$ to admit a skew-torsion parallel spinor with totally skew-symmetric torsion $H\in \Omega^3(M)$. Since we have already obtained the conditions that arise from the Hermitian square via Theorem \ref{thm:even_constrained_parallel_spinors}, for internal consistency across sections, it is convenient to first obtain the necessary and sufficient conditions for a Hermitian and complex-bilinear square to be the square of the \emph{same} spinor $\eta$.

\begin{lemma}
\label{lemma:squares_same_spinor_(5,1)}
The Hermitian $\widehat{\alpha}\in\Omega_\C(M)$ and complex-bilinear $\alpha\in\Omega_\C(M)$ squares are the squares of the same irreducible complex chiral spinor $\eta\in\Gamma(S^\mu)$ with chirality $\mu\in\Z_2$ if and only if: $$u=\frac{\mu}{4}*_g(\alpha(v^{\sharp_g})\wedge\overline{\alpha}),\qquad\omega=-\frac{i}{4}\alpha(v^{\sharp_g})\triangle_1\overline{\alpha}(v^{\sharp_g}),$$ where $v$ is a one-form conjugate to $u$.
\end{lemma}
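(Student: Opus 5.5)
The statement is pointwise algebraic, so I will work at a single point $p\in M$, trivialize $\cL_p\cong\C$, and use the explicit normal forms in Lemma \ref{lemma:(5,1)_squares}. Everything will be done in a Witt (null) frame adapted to the spinor.

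\textbf{Reduction to a model spinor.} The squaring maps are $\Spin^c_o(5,1)$-equivariant: $\hcE^\kappa_\gamma$ is equivariant under $\Ad$, and $\cE_\gamma$ under $\Ad^c$, so it picks up the phase $z^2$. The proposed identities are phase invariant on the right-hand side, because $\alpha$ and $\overline{\alpha}$ appear together and the $\U(1)$ phases cancel. They are also covariant under $\SO_o(5,1)$. It therefore suffices to check them on a single representative of each $\Spin^c_o(5,1)$-orbit of nonzero chiral spinors.

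Nonzero chiral spinors in signature $(5,1)$ are pure, so there is a single orbit up to scale. I would fix $\eta$ as the Fock vacuum associated with a null frame $\{u,v,\theta,\bar\theta,\vartheta,\bar\vartheta\}$. Here $u,v$ are real, isotropic and conjugate, and $\theta,\vartheta$ are complex, isotropic and orthonormal to their conjugates on $D_{uv}$. In this frame Lemma \ref{lemma:(5,1)_squares} should give, up to normalization constants $c,c'$:
\begin{equation*}
\alpha=c\,u\wedge\theta\wedge\vartheta,\qquad \widehat{\alpha}=c'\big(u+i\,u\wedge\omega-\mu *_g u\big),\qquad \omega=\tfrac{i}{2}\big(\theta\wedge\bar\theta+\vartheta\wedge\bar\vartheta\big),
\end{equation*}
with $\mu$ fixed by the self-duality $*_g(u\wedge\theta\wedge\vartheta)=\mu\, u\wedge\theta\wedge\vartheta$. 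I would fix $c,c'$ using the explicit expansions of the Hermitian and complex-bilinear squares in an orthonormal basis, evaluating the degree-one and degree-three coefficients $\scS(\gamma_i^{-1}\eta,\eta)$ and $\scB((\gamma^{ijk})^{-1}\eta,\eta)$ on the vacuum. The relation \cite[Prop.\ 3.30]{Algebraic_Complex_Square_2025}-type link between the Dirac current and the real part of $\alpha$ serves as a check.

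\textbf{Direct computation.} With the model in hand, I compute $\alpha(v^{\sharp_g})=c\,\theta\wedge\vartheta$, using $u(v^{\sharp_g})=1$, $\theta(v^{\sharp_g})=\vartheta(v^{\sharp_g})=0$ and the paper's convention $\alpha(w)=\iota_w\alpha$. This gives:
\begin{equation*}
\alpha(v^{\sharp_g})\wedge\overline{\alpha}=|c|^2\,\theta\wedge\vartheta\wedge u\wedge\bar\theta\wedge\bar\vartheta,
\end{equation*}
whose Hodge dual is proportional to $u$. For the second identity, $\theta\wedge\vartheta\,\triangle_1\,\bar\theta\wedge\bar\vartheta$ contracts $\theta$ with $\bar\theta$ and $\vartheta$ with $\bar\vartheta$, giving a multiple of $\theta\wedge\bar\theta+\vartheta\wedge\bar\vartheta$, that is, of $\omega$.

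Matching constants, including the $\mu$ coming from the orientation convention $\nu_g=u\wedge v\wedge\nu_{uv}$ and from the identities \eqref{eq:Hodge_(u,v,omega)}, should yield exactly the factors $\frac{\mu}{4}$ and $-\frac{i}{4}$. Independence from the choice of conjugate $v$ follows from \eqref{eq:equivalence_relation_(u,v,omega)}. Under $v\mapsto v-\frac{|\mathfrak{w}|^2}{2}u+\mathfrak{w}$, the form $\alpha(v^{\sharp_g})$ shifts by $\alpha(\mathfrak{w}^{\sharp_g})$, which contains a factor $u$. This factor is killed in the wedge with $\overline{\alpha}$, and in the $\triangle_1$ product it produces precisely the $u\wedge\omega(\mathfrak{w}^{\sharp_g})$ shift.

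\textbf{Converse and main obstacle.} The converse direction follows from uniqueness. By Lemma \ref{lemma:(5,1)_squares}, $\alpha$ determines $\eta$ up to sign, hence determines its Hermitian square uniquely with $\kappa=1$. The formulas then say exactly that the given $\widehat{\alpha}$ coincides with it, after noting that the pair $(u,[\omega])$ determines $\widehat{\alpha}$. I expect the main obstacle to be the bookkeeping of normalizations and signs: the relative constants $c,c'$, the sign $\mu$ relating self-duality to the orientation of $D_{uv}$, and the sign conventions for $\triangle_1$ and $*_g$. I would fix all of these by checking the first identity against the constraint $2\mu *_g u=u\wedge\omega\wedge\omega$ in the same frame.
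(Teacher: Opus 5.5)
Your route differs from the paper's. The paper never passes to a normal form. It ties the two pairings together through the quaternionic structure, $\scB(\eta_1,\eta_2)=\scS(\eta_1,K\eta_2)$, uses $\alpha_{K\eta}=\overline{\alpha}$, and reads both formulas off the degree $1$, $3$, $5$ components of $\alpha\diamond v\diamond\overline{\alpha}=-8\,\widehat{\alpha}$, which is \cite[Prop.\ 3.30]{Algebraic_Complex_Square_2025} with $\beta=v$. Your structural steps are fine: transitivity on nonzero chiral spinors, phase invariance of the right-hand sides, the proportionality of $*_g(\alpha(v^{\sharp_g})\wedge\overline{\alpha})$ to $u$ and of $\alpha(v^{\sharp_g})\triangle_1\overline{\alpha}(v^{\sharp_g})$ to $\theta\wedge\bar\theta+\vartheta\wedge\bar\vartheta$, independence of $v$, and the converse by uniqueness. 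The lemma's actual content, however, is the constants, and your calibration has two genuine gaps.

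\textbf{Missing normalization.} Admissibility fixes $\scB$ only up to $\lambda\in\C^*$. Replacing $\scB$ by $\lambda\scB$ leaves $\widehat{\alpha}$ unchanged but multiplies both right-hand sides by $|\lambda|^2$. So $\frac{\mu}{4}$ and $-\frac{i}{4}$ are correct only for $\scB=\scS(-,K-)$, and you never impose this. Your proposed check against $2\mu*_gu=u\wedge\omega\wedge\omega$ involves only $\widehat{\alpha}$, so it cannot fix $|c|$. Take your frame with $\escal{u,v}_g=\escal{\theta,\bar\theta}_g=\escal{\vartheta,\bar\vartheta}_g=1$. What you actually need is $|c|^2=4$. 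This follows from $\alpha\diamond v\diamond\overline{\alpha}=-8\widehat{\alpha}$, which for $\Omega=c\,\theta\wedge\vartheta$ reduces to $\Omega\diamond\overline{\Omega}=-4(1+i\omega+\mu\nu_{uv})$.

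\textbf{Wrong sign of $\omega$.} Your tentative normal form $\omega=\frac{i}{2}(\theta\wedge\bar\theta+\vartheta\wedge\bar\vartheta)$ has $\escal{\omega,\omega}_g=\frac12$ rather than $2$. More seriously, it has the wrong sign. The sign of $\omega$ relative to $\Omega=\alpha(v^{\sharp_g})$ is determined by $\eta$, not by convention, and it is exactly what the second formula asserts. Every constraint in Lemma \ref{lemma:(5,1)_squares} is invariant under $\omega\mapsto-\omega$, so your check cannot detect it. With your normal form, the second identity would force $|c|^2<0$.

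The sign comes from a $K$-independent relation coupling the two squares: $\widehat{\alpha}\diamond v\diamond\alpha=8(\widehat{\alpha}\diamond v)^{(0)}\alpha=8\alpha$. Write $\widehat{\alpha}=u\diamond(1+i\omega-\mu\nu_g)$ and $\alpha=u\diamond\Omega$, and use $u\diamond\nu_g=-u\diamond\nu_{uv}$ and $\nu_{uv}\diamond\Omega=\mu\Omega$. The relation then reduces to $\omega\diamond\Omega=-2i\,\Omega$, that is, $\omega=-i(\theta\wedge\bar\theta+\vartheta\wedge\bar\vartheta)$. With this and $|c|^2=4$, your frame computation gives exactly $\frac{\mu}{4}$ and $-\frac{i}{4}$.

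Evaluating $\scS(\gamma_i^{-1}\eta,\eta)$ and $\scB((\gamma^{ijk})^{-1}\eta,\eta)$ on the vacuum would also work. It does so only in an explicit model where $\scB=\scS(-,K-)$ holds, and that link is the missing ingredient.
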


\begin{proof}
In order to compute the Hermitian and the complex-bilinear squares of $\eta\in\Gamma(S)$, we needed to equip the bundle of irreducible complex Clifford modules $(S,\gamma)$ with a Hermitian pairing $\scS$ of positive adjoint type, and a skew-symmetric complex-bilinear pairing $\scB$, also of positive adjoint type. These two pairings are related through a quaternionic structure $K\in\Gamma(\End(S))$, namely $\scB(\eta_1,\eta_2)=\scS(\eta_1,K\eta_2)$ for all $\eta_1,\eta_2\in\Gamma(S)$.\medskip
    
Let $\widehat{\alpha}_\eta,\alpha_\eta\in\Omega_\C(M)$ and $\widehat{\alpha}_{K\eta},\alpha_{K\eta}\in\Omega_\C(M)$ denote the Hermitian and complex-bilinear squares of a chiral spinor $\eta\in\Gamma(S^\mu)$ and its conjugate $K\eta\in\Gamma(S^\mu)$, respectively. From Lemma \ref{lemma:(5,1)_squares} and \cite[Prop.\ 3.29]{Algebraic_Complex_Square_2025} we have that $\alpha_{K\eta}=\overline{\alpha}_\eta$. By \cite[Prop.\ 3.30]{Algebraic_Complex_Square_2025}, the Hermitian and complex-bilinear squares are the squares of the same spinor $\eta\in\Gamma(S^\mu)$ if and only if the following algebraic relations are satisfied: $$\widehat{\alpha}_\eta\diamond\beta\diamond\alpha_\eta=8(\widehat{\alpha}_\eta\diamond\beta)^{(0)}\alpha_\eta,\qquad\alpha_\eta\diamond\beta\diamond\alpha_{K\eta}=-8(\widehat{\alpha}_\eta\diamond\tau(\beta))^{(0)}\widehat{\alpha}_\eta$$ for a complex exterior form $\beta\in\Omega_\C(M)$ satisfying $(\widehat{\alpha}_\eta\diamond\beta)^{(0)}\neq0$. Taking $\beta=v$ a conjugate one-form of $u$ is sufficient since $(\widehat{\alpha}_\eta\diamond v)^{(0)}=\escal{u,v}_g=1$. We compute $\alpha_\eta\diamond v\diamond\alpha_{K\eta}$. First of all note that $\alpha_\eta\diamond\alpha_{K\eta}=0$. Indeed: $$\alpha_\eta\diamond\alpha_{K\eta}=(\mu*_g\alpha_\eta)\diamond(\mu*_g\alpha_{K\eta})=(-\alpha_\eta\diamond\nu_g)\diamond(\nu_g\diamond\alpha_{K\eta})=-\alpha_\eta\diamond\alpha_{K\eta},$$ where we have used $*_g\alpha_\eta=\mu\alpha_\eta$, $*_g\alpha_{K\eta}=\mu\alpha_{K\eta}$, $\nu_g\diamond\nu_g=1$, and Proposition \ref{prop:product_volume_form}. Then: $$\alpha_\eta\diamond v\diamond\alpha_{K\eta}=(2\alpha_\eta(v^{\sharp_g})-v\diamond\alpha_\eta)\diamond\alpha_{K\eta}=2(\alpha_\eta(v^{\sharp_g})\wedge\alpha_{K\eta}-\alpha_\eta(v^{\sharp_g})\triangle_1\alpha_{K\eta}-\alpha_\eta(v^{\sharp_g})\triangle_2\alpha_{K\eta}).$$

From $\alpha_\eta \diamond\beta\diamond\alpha_{K\eta} = -8(\widehat{\alpha}_\eta \diamond \tau(\beta))^{(0)} \widehat{\alpha}_\eta$ we obtain: $$u=\frac{1}{4}\alpha_\eta(v^{\sharp_g})\triangle_2\alpha_{K\eta},\qquad iu\wedge\omega=\frac{1}{4}\alpha_\eta(v^{\sharp_g})\triangle_1\alpha_{K\eta},\qquad\mu*_gu=\frac{1}{4}\alpha_\eta(v^{\sharp_g})\wedge\alpha_{K\eta}.$$

The first and last equations are equivalent. Taking the interior product with $v^{\sharp_g}$ on the second equation gives $i\omega=\frac{1}{4}\alpha_\eta(v^{\sharp_g})\triangle_1\alpha_{K\eta}(v^{\sharp_g})$. Therefore, we obtain the equations from the statement.
\end{proof}

As explained in detail in \cite{Algebraic_Complex_Square_2025}, the Hermitian square of a spinor $\eta$ contains \emph{less} information than its complex-bilinear square, which is equivalent to $\eta$ modulo a sign. Hence, it is natural to wonder about which extra piece of information we need to add to the Hermitian square $\widehat{\alpha}$ of $\eta$ to uniquely determine its complex-bilinear square. This is determined in the following lemma.

\begin{lemma}
\label{lemma:alpha=u_wedge_Omega}
Let $\widehat{\alpha}\in\Omega_\C(M)$ and $\alpha\in\Omega_\C(M)$ be, respectively, the Hermitian and complex-bilinear squares of the same irreducible complex chiral spinor $\eta\in\Gamma(S^\mu)$ with chirality $\mu\in\Z_2$. Then: $$\alpha=u\wedge\Omega,\qquad\Omega:=\alpha(v^{\sharp_g})\in\Omega_\C^2(M).$$

Two complex two-forms $\Omega$ and $\tilde{\Omega}$ are related if and only if: $$\tilde{\Omega}=\Omega-u\wedge\Omega(\mathfrak{w}^{\sharp_g})$$ for $\mathfrak{w}\in\Omega^1(M)$ orthogonal to $u$ and $v$.
\end{lemma}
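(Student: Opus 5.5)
The plan is to use the pointwise structure from Lemma \ref{lemma:(5,1)_squares}: near each point, $\alpha=\theta_1\wedge\theta_2\wedge\theta_3$ with $\theta_i$ isotropic, mutually orthogonal and spanning a maximal isotropic subspace $L\subset T^*_\C M$. We then identify the real isotropic line $\R u$ inside this decomposition using Lemma \ref{lemma:squares_same_spinor_(5,1)}.

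The first step is to show that $u\wedge\alpha=0$, i.e.\ that $u$ lies in $L$. Contracting $\alpha$ with vectors dual to $L$ does not obviously give this, so we start from the formula of Lemma \ref{lemma:squares_same_spinor_(5,1)}, $\mu *_g u=\frac14\alpha(v^{\sharp_g})\wedge\overline\alpha$. Equivalently, we use $\alpha\diamond v\diamond\overline\alpha\propto\widehat\alpha$ together with the identity $\widehat\alpha\diamond v\diamond\alpha=8(\widehat\alpha\diamond v)^{(0)}\alpha=8\alpha$ from its proof.

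A cleaner route is to apply the Clifford relation $u\cdot\eta=0$. This is the standard fact that the Dirac current of a chiral spinor annihilates it in this signature. It follows from Lemma \ref{lemma:constrainedspinoreven} applied to the degree-one component. Concretely, the identity $\widehat\alpha\diamond\widehat\alpha=8\widehat\alpha^{(0)}\widehat\alpha=0$ and the structure of $\widehat\alpha$ give $u\diamond\widehat\alpha=0$. Hence $\Psi_\gamma(u)$ kills the image of $\hcE_\kappa(\eta)$, which is $\C\eta$. Then Lemma \ref{lemma:constrainedspinoreven} with $Q=\Psi_\gamma(u)$ yields $u\diamond\alpha=0$. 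Since $\alpha$ is a 3-form, splitting $u\diamond\alpha=u\wedge\alpha-\iota_{u^\sharp}\alpha=0$ by degree gives both $u\wedge\alpha=0$ and $\alpha(u^{\sharp_g})=0$.

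Next we contract. From $u\wedge\alpha=0$, apply $\iota_{v^{\sharp_g}}$ and use $\langle u,v\rangle_g=1$. This gives $\alpha-u\wedge\alpha(v^{\sharp_g})=0$, i.e.\ $\alpha=u\wedge\Omega$ with $\Omega=\alpha(v^{\sharp_g})$. This is the first claim.

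For the second claim, replace $v$ by $\tilde v=v-\frac{|\mathfrak w|^2}{2}u+\mathfrak w$. Then
\[
\tilde\Omega=\alpha(\tilde v^{\sharp_g})=\Omega-\tfrac{|\mathfrak w|^2}{2}\alpha(u^{\sharp_g})+\alpha(\mathfrak w^{\sharp_g}).
\]
The middle term vanishes because $\alpha(u^{\sharp_g})=0$. Moreover $\alpha(\mathfrak w^{\sharp_g})=\iota_{\mathfrak w^\sharp}(u\wedge\Omega)=-u\wedge\Omega(\mathfrak w^{\sharp_g})$, since $u(\mathfrak w^\sharp)=0$. This gives the stated relation. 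Conversely, every such $\mathfrak w$ arises from a conjugate $\tilde v$, and the parametrization of conjugate one-forms recalled earlier shows the correspondence is exact.

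The main obstacle is justifying $u\diamond\alpha=0$ cleanly. If the Hermitian-square argument is awkward, I would fall back on a pointwise check in an adapted null frame, where $\alpha=\theta_1\wedge\theta_2\wedge\theta_3$. There, Lemma \ref{lemma:squares_same_spinor_(5,1)} shows $u$ is the real null direction of $L\cap\overline L$, so $u\in L$.
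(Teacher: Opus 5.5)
Your proof is correct, but it obtains the key identity $u\wedge\alpha=0$ by a different route than the paper.

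The paper works entirely with forms. It writes $\alpha=\theta_1\wedge\theta_2\wedge\theta_3$ locally and substitutes this into the relation $u=\frac{\mu}{4}*_g(\alpha(v^{\sharp_g})\wedge\overline{\alpha})$ of Lemma \ref{lemma:squares_same_spinor_(5,1)}. It then argues that the Hodge dual of the resulting five-form lies in $\mathrm{span}\{\theta_1,\theta_2,\theta_3\}$. Only afterwards does it derive $\alpha(u^{\sharp_g})=0$ from self-duality, via $\iota_{u^{\sharp_g}}\alpha=\mu *_g(\alpha\wedge u)$.

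You instead check directly that $u\diamond\widehat{\alpha}=0$, i.e.\ $u\cdot\eta=0$. This needs only $\langle u,u\rangle_g=0$, $\omega(u^{\sharp_g})=0$ and $*_g u=u\diamond\nu_g$. You then transfer it to the complex-bilinear square with Lemma \ref{lemma:constrainedspinoreven}. The single identity $u\diamond\alpha=0$ then gives both $u\wedge\alpha=0$ and $\alpha(u^{\sharp_g})=0$ by splitting into degrees.

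What each route buys:
\begin{itemize}
\item Your route bypasses Lemma \ref{lemma:squares_same_spinor_(5,1)} and the local null frame altogether. It also avoids the five-form step, which silently relies on the intersection $L\cap\overline{L}$ (with $L$ the maximal isotropic subspace spanned by the $\theta_i$) being the real null line through $u$, a Lorentzian feature. Your stated fallback is essentially the paper's argument.
\item The paper's route keeps the explicit bilinear expression of $u$ in terms of $\alpha$. It reuses that expression immediately afterwards to compute $\Omega\wedge\overline{\Omega}=2\omega\wedge\omega$.
\end{itemize}

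The contraction with $v^{\sharp_g}$ and the transformation law for $\Omega$ are the same in both proofs. One small slip: with the paper's convention $\theta\diamond\alpha=\theta\wedge\alpha+\iota_{\theta^\sharp}\alpha$, the interior term in $u\diamond\alpha$ carries a plus sign. This does not affect the degree splitting.
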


\begin{proof}
Locally, the complex-bilinear square can be written as $\alpha=\theta_1\wedge\theta_2\wedge\theta_3$ for three isotropic and orthogonal complex local one-forms $\theta_1,\theta_2,\theta_3\in\Omega^1_\C(M)$. Set $t_i:=\escal{v,\theta_i}_g$ for $i=1,2,3$. Then: $$\alpha(v^{\sharp_g})=\iota_{v^{\sharp_g}}(\theta_1\wedge\theta_2\wedge\theta_3)=t_1\theta_2\wedge\theta_3-t_2\theta_1\wedge\theta_3+t_3\theta_1\wedge\theta_2.$$

Taking the wedge product with $\overline{\alpha}=\overline{\theta}_1\wedge\overline{\theta}_2\wedge\overline{\theta}_3$ yields: $$\alpha(v^{\sharp_g})\wedge\overline{\alpha}=t_1\theta_2\wedge\theta_3\wedge\overline{\theta}_1\wedge\overline{\theta}_2\wedge\overline{\theta}_3-t_2\theta_1\wedge\theta_3\wedge\overline{\theta}_1\wedge\overline{\theta}_2\wedge\overline{\theta}_3+t_3\theta_1\wedge\theta_2\wedge\overline{\theta}_1\wedge\overline{\theta}_2\wedge\overline{\theta}_3.$$

Hence $*_g(\alpha(v^{\sharp_g})\wedge\overline{\alpha})\in\mathrm{span}\{\theta_1,\theta_2,\theta_3\}$, which implies that $*_g(\alpha(v^{\sharp_g})\wedge\overline{\alpha})\wedge\alpha=0$. By Lemma \ref{lemma:squares_same_spinor_(5,1)} we have: $$u\wedge\alpha=\frac{\mu}{4}*_g(\alpha(v^{\sharp_g})\wedge\overline{\alpha})\wedge\alpha=0,$$ thus $0=\iota_{v^{\sharp_g}}(u\wedge\alpha)=\alpha-u\wedge\alpha(v^{\sharp_g})$, which yields the claimed result. Note also that: $$\alpha(u^{\sharp_g})=\iota_{u^{\sharp_g}}\alpha=\mu\iota_{u^{\sharp_g}}(*_g\alpha)=\mu*_g(\alpha\wedge u)=0$$ since $*_g\alpha=\mu\alpha$. Finally, a direct computation shows that if $\tilde{v} = v - \frac{1}{2}\vert\mathfrak{w}\vert_g^2 u + \mathfrak{w}$ for a unique $\mathfrak{w}\in\Omega^1(M)$ orthogonal to $u$ and $v$, then $\tilde{\Omega}=\Omega-u\wedge\Omega(\mathfrak{w}^{\sharp_g})$.
\end{proof}

By Lemma \ref{lemma:alpha=u_wedge_Omega}, we see that the pair $(\widehat{\alpha},\alpha)$ canonically determines a class of complex two-forms, and that every one-form $v$ conjugate to $u$ determines a complex two-form $\Omega$ on $D_{uv}$, which in turn defines a complex structure $J_\Omega$ on $D_{uv}$ since $\Omega$ is decomposable and satisfies $\Omega\wedge\overline{\Omega}=2\omega\wedge\omega\neq0$. Indeed, from Lemma \ref{lemma:squares_same_spinor_(5,1)} and equations \eqref{eq:Hodge_(u,v,omega)} we obtain: $$\Omega\wedge\overline{\Omega}=\alpha(v^{\sharp_g})\wedge\overline{\alpha}(v^{\sharp_g})=\iota_{v^{\sharp_g}}(\alpha(v^{\sharp_g})\wedge\overline{\alpha})=4\mu\iota_{\v^{\sharp_g}}(*_gu)=4\mu*_g(u\wedge v)=2\omega\wedge\omega.$$

Hence, $\Omega$ is a complex two-form of type $(2,0)$ with respect to the complex structure $J_\Omega$. Using $J_\Omega$ and the metric $g$ we define $\omega_\Omega:=g(J_\Omega-,-)$. Therefore, the tuple $(\omega_\Omega,J_\Omega,\Omega)$ defines an $\mathrm{SU}(2)$-structure on $D_{uv}$.

\begin{prop}
Let $(M,g)$ be an oriented Lorentzian six-manifold. An irreducible complex chiral spinor $\eta\in\Gamma(S^\mu)$ with chirality $\mu\in\Z_2$ satisfies $\nabla^{g,H}\eta=0$ if and only if: $$\nabla^g_wu=\frac{1}{2}H(w,u^{\sharp_g}),\qquad\nabla^g_w\Omega=\frac{1}{2}H(w)\triangle_1\Omega+\Theta_w\wedge u$$ for all $w\in\Gamma(TM)$ and some $\Theta\in\Omega^1(M,T_\C^*M)$.
\end{prop}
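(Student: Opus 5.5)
The plan is to apply Corollary \ref{cor:spin_CB_CPS}, or more generally Theorem \ref{thm:even_CB_CPS} with the connection on $\cL$ absorbed. With $4\fra_w=-H(w)$, this says that $\nabla^{g,H}\eta=0$ holds if and only if the complex-bilinear square satisfies
\begin{equation*}
\nabla^g_w\alpha=\fra_w\diamond\alpha+\alpha\diamond\tau(\fra_w).
\end{equation*}
Here the adjoint type is positive, so the transpose is $\tau$, and $\tau(\fra_w)=-\fra_w$ because $\fra_w$ is a two-form. The right-hand side is therefore the commutator $[\fra_w,\alpha]_\diamond$. For a two-form acting by commutator in the Kähler-Atiyah algebra, only the $\triangle_1$ contraction survives:
\begin{equation*}
\fra_w\diamond\alpha-\alpha\diamond\fra_w=-2\,\fra_w\triangle_1\alpha=\tfrac12 H(w)\triangle_1\alpha,
\end{equation*}
and this is a derivation of the wedge product. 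So the first step is to record that the equivalence reads $\nabla^g_w\alpha=\frac12 H(w)\triangle_1\alpha$.

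Next, by Lemma \ref{lemma:alpha=u_wedge_Omega}, write $\alpha=u\wedge\Omega$. The forward direction goes as follows. The Hermitian square gives $\nabla^g_wu=\frac12H(w,u^{\sharp_g})$ by Lemma \ref{lemma:skew_torsion_Hermitian}; alternatively, this follows from Lemma \ref{lemma:squares_same_spinor_(5,1)}. Using the derivation property, $\frac12H(w)\triangle_1u=\frac12H(w,u^{\sharp_g})$ up to the sign convention already used in that lemma, and we obtain
\begin{equation*}
u\wedge\nabla^g_w\Omega=u\wedge\tfrac12H(w)\triangle_1\Omega.
\end{equation*}
By the division property of wedging with the nowhere vanishing $u$, there is a $\Theta_w\in\Omega^1_\C(M)$ with $\nabla^g_w\Omega-\frac12H(w)\triangle_1\Omega=\Theta_w\wedge u$. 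This mirrors exactly the computation for $\omega$ in Lemma \ref{lemma:skew_torsion_Hermitian}.

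For the converse, suppose both equations hold. Then
\begin{equation*}
\nabla^g_w(u\wedge\Omega)=\tfrac12H(w)\triangle_1u\wedge\Omega+u\wedge\tfrac12H(w)\triangle_1\Omega+u\wedge\Theta_w\wedge u,
\end{equation*}
and the last term vanishes. This equals $\frac12H(w)\triangle_1(u\wedge\Omega)=[\fra_w,\alpha]_\diamond$, so Theorem \ref{thm:even_CB_CPS} yields $\nabla^{g,H}\eta=0$.

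The main obstacle is the algebraic identity $[\fra_w,\alpha]_\diamond=-2\fra_w\triangle_1\alpha$ together with its Leibniz rule for $\triangle_1$ against wedge products, and getting the signs to agree with $\nabla^g_wu=\frac12H(w,u^{\sharp_g})$. I would take these from the appendix expansion of $\diamond$ into the generalized products $\triangle_k$: the even-$k$ terms cancel in the commutator, and for a two-form only $\triangle_1$ remains. The Leibniz rule is the fact that $\triangle_1$ with a two-form is the action of $\mathfrak{so}(V)$ on $\wedge V^*$. A secondary point is the independence of the conditions from the choice of $v$. This is handled by Lemma \ref{lemma:alpha=u_wedge_Omega}, since changing $\Omega\mapsto\Omega-u\wedge\Omega(\mathfrak{w}^{\sharp_g})$ only shifts $\Theta$.
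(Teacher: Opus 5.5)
Your proposal is correct and follows essentially the same route as the paper's proof: apply Corollary \ref{cor:spin_CB_CPS} to get $\nabla^g_w\alpha=-2\fra_w\triangle_1\alpha$, split $\alpha=u\wedge\Omega$ using the Leibniz rule for $\triangle_1$, insert $\nabla^g_wu=\frac{1}{2}H(w,u^{\sharp_g})$ from the Hermitian square, and divide by the nowhere vanishing $u$. Your explicit converse, where you reassemble $\nabla^g_w(u\wedge\Omega)$ and the $\Theta_w\wedge u$ term is killed by wedging with $u$, fills in a step the paper leaves implicit; also, $H(w)\triangle_1u=H(w,u^{\sharp_g})$ holds exactly, so no sign caveat is needed there.
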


\begin{proof}
The complex-bilinear square $\alpha$ is computed with respect to a complex-bilinear pairing of positive adjoint type. Hence we compute: 
\begin{align*}
\fra_w\diamond\alpha&=\fra_w\wedge\alpha-\fra_w\triangle_1\alpha-\fra_w\triangle_2\alpha,\\
\alpha\diamond\tau(\fra_w)&=-\alpha\wedge\fra_w-\alpha\triangle_1\fra_w+\alpha\triangle_2\fra_w.
\end{align*}

By Corollary \ref{cor:spin_CB_CPS}, the spinor $\eta\in\Gamma(S)$ satisfies $\nabla^g_w\eta=\cA_w(\eta)$ if and only if $\nabla^g_w\alpha=\fra_w\diamond\alpha+\alpha\diamond\tau(\fra_w)$. This equation gives us: $$\nabla^g_w\alpha=-2\fra_w\triangle_1\alpha.$$

We now write $\alpha=u\wedge\Omega$ as in Lemma \ref{lemma:alpha=u_wedge_Omega}. A computation gives us: $$\fra_w\triangle_1(u\wedge\Omega)=\fra_w(u^{\sharp_g})\wedge\Omega+u\wedge(\fra_w\triangle_1\Omega).$$

Using that $\nabla^g_wu=-2\fra_w(u^{\sharp_g})$ we obtain: $$\nabla^g_w\Omega=-2\fra_w\triangle_1\Omega+\Theta_w\wedge u$$ for some $\Theta\in\Omega^1(M,T_\C^*M)$. Substituting $\fra_w=-\frac{1}{4}H(w)$ yields the equations from the statement.
\end{proof}
 
\noindent
The previous proposition, which guarantees the existence of the complex two-form $\Omega$ of type $(2,0)$, can be used to refine Theorem \ref{thm:transverse_LCK} in terms of a foliation by locally conformally Kähler surfaces with topologically trivial canonical bundle. This and other associated results on skew-torsion parallel spinors on Lorentzian six-manifolds will be presented in a separate publication.


\section{Supersymmetric self-dual bundle gerbes}
\label{sec:sugra6d}


In this section, we characterize the quasi-supersymmetric and supersymmetric solutions of minimal supergravity in six dimensions, whose bosonic sector naturally couples a Lorentzian metric on a six-dimensional oriented manifold $M$ to a self-dual curving on a bundle gerbe with connective structure defined on $M$.  


\subsection{Minimal six-dimensional supergravity}


In the following, the triple $(\cP,\cA,Y)$ will denote a bundle gerbe \cite{Murray1996}, with principal $\U(1)$-bundle $\cP\to Y \times_M Y$ over the fibered product of a submersion $Y\to M$, equipped with a fixed connective structure $\cA \in \Omega^1(\cP)$ and defined on the given six-dimensional manifold $M$. Given a curving $b\in \Omega^2(Y)$ on $(\cP,\cA,Y)$ we denote its curvature by $H_b\in \Omega^3(M)$.  A triple $(\cP,\cA,Y)$ determines a unique minimal six-dimensional bosonic supergravity on $M$, as prescribed in the following definition. For simplicity in the exposition, we will omit the term \emph{bosonic} in the following.
 
\begin{definition}
\label{label:NSNSsystem}
The \emph{six-dimensional minimal supergravity system} determined by $(\cP,\cA,Y)$ on $M$ is the following differential system:
\begin{equation}
\label{eq:NSNSsystem}
\Ric^g =  \frac{1}{2} H_b \circ_g H_b \, , \qquad \ast_g H_b = \mu H_b 
\end{equation}
 	
\noindent
for pairs $(g,b)$ consisting of a Lorentzian metric $g$ on $M$ and a curving $b\in \Omega^2(Y)$ on $(\cP,\cA,Y)$, where $H_b\in \Omega^3(M)$ is the curvature of $b$, and $\mu\in\Z_2$.
\end{definition}
 
\noindent
By the previous definition, we can consider the six-dimensional minimal supergravity system as a natural \emph{gauge-theoretic} system that couples Lorentzian metrics to curvings on a gerbe.

\begin{remark}
Solutions $(g,b)$ to equations \eqref{eq:NSNSsystem} will be called \emph{minimal supergravity solutions}. The first equation in \eqref{eq:NSNSsystem} is the so-called \emph{Einstein equation}, whereas the second equation in \eqref{eq:NSNSsystem} is called the self-duality condition for the curving $b$. Curvings on $(\cP,\cA,Y)$ are usually called \emph{b-fields}, a terminology that we will use occasionally. 
\end{remark}
 
\noindent
Given $(\cP,\cA,Y)$, the \emph{configuration space} of the NS-NS system on $(\cP,\cA,Y)$ is the set of pairs $(g,b)$ consisting of a Lorentzian metric $g$ on $M$, and a curving $b$ on $(\cP,\cA,Y)$. Given an element $(g,b)$ we denote by $\nabla^{g,b}$ the unique metric connection on $(M,g)$ with totally skew-symmetric torsion given by $H_b\in \Omega^3(M)$, the curvature of $b\in\Omega^2(Y)$. For ease of notation, we denote by the same symbol $\nabla^{g,b}$ its lift to any irreducible complex spinor bundle defined on $(M,g)$.

\begin{definition}
A \emph{supersymmetric configuration} on $(\cP,\cA,Y)$ is a pair $(g,b)$ consisting of a Lorentzian metric $g$ on $M$, and a curving $b\in \Omega^2(Y)$ satisfying the following differential system:
\begin{equation}
\label{eq:susyparallel}
\nabla^{g,b} \eta = 0
\end{equation}
	
\noindent
for a non-vanishing section $\eta\in\Gamma(S^{\mu})$ of an irreducible complex chiral spinor bundle $S^{\mu}$, of chirality $\mu\in \mathbb{Z}_2$, defined on $(M,g)$ and associated to a strong spin structure.
\end{definition}

\noindent
Therefore, the supersymmetry parameter of a supersymmetric configuration is a particular instance of an irreducible spinor parallel under a metric connection with totally skew-symmetric, closed, torsion.\medskip

In this section, we consider the quasi-supersymmetric and supersymmetric solutions of six-dimensional minimal supergravity whose underlying Lorentzian metric belongs to the following class of standard Kundt six-manifolds, which is especially well-adapted to the existence of an isotropic Killing vector field.

\begin{definition}
\label{def:standardKundt}
A six-dimensional oriented Lorentzian manifold $(M,g)$ is \emph{standard Kundt}, or \emph{Kundt} for short, if it has the following isometry type:
\begin{equation}
\label{eq:standardKundt}
M = \mathcal{I}_{\mathfrak{u}}\times \mathcal{I}_{\mathfrak{v}}\times N\, , \qquad g = \cH_{\mathfrak{u}} \dd \mathfrak{u} \otimes \dd \mathfrak{u} +  \dd \mathfrak{u} \odot (e^{\cF_{\mathfrak{u}}}\dd \mathfrak{v} + \cA_{\mathfrak{u}}) + h_{\mathfrak{u}},
\end{equation}

\noindent
where, $\mathcal{I}_{\mathfrak{u}}\times \mathcal{I}_{\mathfrak{v}}$ is a direct product with Cartesian coordinates $(\mathfrak{u},\mathfrak{v})$, $N$ is an oriented four-manifold, $\cH_{\mathfrak{u}}, \cF_{\mathfrak{u}} \in C^{\infty}(N)$ are families of smooth functions on $N$, $\cA_{\mathfrak{u}} \in \Omega^1(N)$ is a family of one-forms on $N$, and $h_{\mathfrak{u}}$ is a family of complete Riemannian metrics on $N$. 
\end{definition}

\noindent
Similarly to the standard Brinkmann case considered in Section \ref{sec:sugraspinors}, we say that a standard Kundt manifold is \emph{stationary} if all elements in the tuple $(\cH_{\mathfrak{u}}, \cF_{\mathfrak{u}} , \cA_{\fru} , h_{\fru} )$ are independent of $\fru$, in which case we will drop the subscript $\fru$. On the other hand, we refer to a standard Kundt manifold as \emph{non-twisting} if $\cA_{\fru}=0$.\medskip

\noindent
Studying the quasi-supersymmetric or supersymmetric systems of minimal six-dimensional for such a class of metrics yields an evolution differential system defined on the transverse wave front of the underlying Kundt six-manifold. We begin with the corresponding reduction of the self-dual bundle gerbe that defines the given six-dimensional minimal supergravity. 


\subsection{Self-dual bundle gerbes on standard Kundt six-manifolds}


In the following, we fix a Kundt six-manifold of the form \eqref{eq:standardKundt}. Let $(\bar{\cP} , \bar{A}, \bar{Y})$ be a bundle gerbe on $M = \cI_{\fru}\times \cI_{\frv} \times N$. Since the goal is to reduce the minimal supergravity system on $(\bar{\cP} , \bar{A}, \bar{Y})$, we will assume that the \emph{topological data} contained in $(\bar{\cP} , \bar{A}, \bar{Y})$, namely $(\bar{\cP},\bar{Y})$, is given by the pull-back of a bundle gerbe $(\cP,Y)$ defined over the four-dimensional manifold $N$, while we will allow the connective structure $\bar{A}$ to be genuinely $\fru$-\emph{dependent}, that is, not necessarily given by the pull-back of a fixed connective structure on $(\cP,Y)$. We will refer to such bundle gerbes as \emph{reducible} bundle gerbes on $\cI_{\fru}\times \cI_{\frv} \times N$. This setup captures the dynamics of the $\fru$-evolution problem in its full generality. Since the submersion $\bar{Y} \to M$ of a reducible bundle gerbe on $\cI_{\fru}\times \cI_{\frv} \times N$ is, by assumption, a pull-back submersion, it follows that it is isomorphic to:
\begin{equation*}
\bar{Y} = \cI_{\fru}\times \cI_{\frv} \times Y \to \cI_{\fru}\times \cI_{\frv} \times N,
\end{equation*}

\noindent
where the projection on the interval factors is the identity map and $Y\to N$ is the submersion underlying the bundle gerbe $(\cP , Y)$. Similarly:
\begin{equation*}
\bar{Y}\times_M  \bar{Y} = \cI_{\fru}\times \cI_{\frv}\times Y \times_N Y \to \cI_{\fru}\times \cI_{\frv} \times N,
\end{equation*}

\noindent
where the projection restricts to the identity map on $\cI_{\fru}\times \cI_{\frv}$. Since $\bar{\cP}$ is the pull-back of $\cP\to Y\times_{N} Y$ to $\cI_{\fru}\times \cI_{\frv} \times Y\times_{N} Y$, we have:
\begin{eqnarray*}
\bar{\cP}= \cI_{\fru}\times \cI_{\frv} \times \cP \to \cI_{\fru}\times \cI_{\frv} \times Y\times_{N} Y
\end{eqnarray*}

\noindent
again restricting to the identity on $\cI_{\fru}\times \cI_{\frv}$. The principal action of the group $\U(1)$ on $\bar{\cP} = \cI_{\fru}\times \cI_{\frv} \times \cP$ is the one induced by the principal $\U(1)$-action carried by $\cP$ and the trivial action on $\cI_{\fru}\times \cI_{\frv}$. Let $\bar{A} \in \Omega^1(\cI_{\fru}\times \cI_{\frv}\times \cP , \mathbb{R})$ be a connective structure on $\bar{\cP}$. Then we can write:
\begin{equation*}
\bar{A} = \psi_{\fru} \, \dd \fru + \phi_{\fru} \dd \frv  + A_{\fru},
\end{equation*}

\noindent
where $\psi_{\fru}$ and $\phi_{\fru}$ are families of functions on $\cP$ and $A_{\fru}$ is a family of connections on $\cP$. Recall that $\fru$ denotes the Cartesian coordinate on $\cI_{\fru}$. Since $\bar{A}$ is a connection on $\bar{\cP}$, it is in particular invariant under the $\U(1)$-action of the principal bundle $\bar{\cP}$, and since this action is ineffective on $\cI_{\fru}\times \cI_{\frv}$ we conclude that $\psi_{\fru}$ and $\phi_{\fru}$ define in fact families of invariant functions on $\cP$, whence they descend to families of invariant functions on $Y\times_{N} Y$ that we denote by the same symbol for ease of notation. Since $(\bar{\cP} , \bar{A}, \bar{Y})$ is a bundle gerbe, it comes equipped with an isomorphism:
\begin{equation*}
(\bar{\pi}^{\ast}_{12}\bar{\cP}\otimes\bar{\pi}^{\ast}_{23}\bar{\cP} , \bar{\pi}^{\ast}_{12}\bar{A}\otimes \bar{\pi}^{\ast}_{23}\bar{A})  \xrightarrow{\cong} (\bar{\pi}^{\ast}_{13} \bar{\cP} , \bar{\pi}^{\ast}_{13}\bar{A}),
\end{equation*}

\noindent
where $\bar{\pi}\colon \bar{Y} \to M$ and the notation $\bar{\pi}_{ij} \colon \bar{Y}\times_M \bar{Y}\times_M \bar{Y} \to \bar{Y}\times_M \bar{Y}$ forgets the entry labeled neither by $i$ nor $j$. This implies $\delta \psi_{\fru} = \delta \phi_{\fru} = 0$, where $\delta$ is the simplicial differential of the \u{C}ech nerve of $\pi\colon Y \to N$. Since the \u{C}ech complex for smooth functions is exact, it follows that both $\psi_{\fru}$ and $\phi_{\fru}$ descend through $\delta$ to a pair of families of functions on $Y$ that we denote again by $\psi_{\fru}$ and $\phi_{\fru}$, respectively. Hence, we obtain the following correspondence. 

\begin{prop}
There is a natural one-to-one correspondence between reducible bundle gerbes $(\bar{\cP},\bar{Y},\bar{A})$ on $M = \cI_{\fru}\times \cI_{\frv} \times N$ and tuples $(\cP,Y,A_{\fru} , \psi_{\fru} , \phi_{\fru})$, consisting of a bundle gerbe $(\cP,Y)$ on $N$, a pair of families of functions $\psi_{\fru}$ and $\phi_{\fru}$ on $Y$, and a family of connective structures $A_{\fru}$ on $(\cP,Y)$.
\end{prop}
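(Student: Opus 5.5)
The plan is to build both directions of the correspondence explicitly and check that they are mutually inverse. Most of the ingredients already appear in the paragraphs preceding the statement. The forward direction starts from a reducible bundle gerbe $(\bar{\cP},\bar{Y},\bar{A})$. By definition of reducibility, $(\bar{\cP},\bar{Y})$ is the pull-back of some $(\cP,Y)$ on $N$. This gives the identifications $\bar{Y}=\cI_{\fru}\times\cI_{\frv}\times Y$, $\bar{Y}\times_M\bar{Y}=\cI_{\fru}\times\cI_{\frv}\times Y\times_N Y$ and $\bar{\cP}=\cI_{\fru}\times\cI_{\frv}\times\cP$, together with the bundle gerbe multiplication pulled back from that of $\cP$.

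Next decompose $\bar{A}$ along the product structure. Evaluate $\bar{A}$ on the vector fields $\partial_{\fru}$ and $\partial_{\frv}$, which are the horizontal lifts in the product $\cI_{\fru}\times\cI_{\frv}\times\cP$. This yields the functions $\psi_{\fru}:=\bar{A}(\partial_{\fru})$ and $\phi_{\fru}:=\bar{A}(\partial_{\frv})$ on $\cI_{\fru}\times\cI_{\frv}\times\cP$. The remainder, restricted to each slice $\{(\fru,\frv)\}\times\cP$, gives $A_{\fru}$. There are two points to settle here.

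First, one needs independence of $\frv$. Strictly, the coefficients could depend on $\frv$; I would either read the statement's ``family'' notation as allowing this, or note that the intended setting assumes $\partial_{\frv}$-invariance as in the Brinkmann section. I expect to treat the parameters as $(\fru,\frv)$-families and suppress $\frv$ in notation, following the paper.

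Second, $\U(1)$-invariance and verticality. Since $\bar{A}$ is a connection, it is $\U(1)$-invariant and the $\U(1)$-action commutes with $\partial_{\fru}$ and $\partial_{\frv}$, so $\psi_{\fru}$ and $\phi_{\fru}$ are invariant and descend to $Y\times_NY$. Because $\bar{A}$ reproduces the fundamental vector field, and these fundamental vector fields are tangent to the $\cP$-slices, each $A_{\fru}$ is a connection on $\cP$.

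The multiplicative compatibility of $\bar{A}$ with the gerbe product, restricted to the $\partial_{\fru}$ and $\partial_{\frv}$ components and to the slices, then splits into three statements: $\delta\psi_{\fru}=0$, $\delta\phi_{\fru}=0$, and that $A_{\fru}$ is a connective structure for every $\fru$. This splitting is the step I expect to be the main obstacle. It requires checking that the isomorphism $\bar{\pi}_{12}^\ast\bar{\cP}\otimes\bar{\pi}_{23}^\ast\bar{\cP}\cong\bar{\pi}_{13}^\ast\bar{\cP}$ is the pull-back of the one on $\cP$, so that it preserves the product decomposition. Once that is established, the connection identity decomposes component by component. Exactness of the \v{C}ech (Mayer--Vietoris) complex of smooth functions for the surjective submersion $Y\to N$ then shows that $\psi_{\fru},\phi_{\fru}$ are of the form $\delta$ of families of functions on $Y$. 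These functions are determined only up to pull-backs of functions from $N$, and I would fix them, or quotient by this ambiguity, as the paper implicitly does.

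For the converse, given a tuple $(\cP,Y,A_{\fru},\psi_{\fru},\phi_{\fru})$, set $\bar{A}:=\delta\psi_{\fru}\,\dd\fru+\delta\phi_{\fru}\,\dd\frv+A_{\fru}$ on $\cI_{\fru}\times\cI_{\frv}\times\cP$. The first two terms are invariant and basic, and they are $\delta$-closed because $\delta^2=0$. The last term is fibrewise a connective structure. Hence $\bar{A}$ is a connection compatible with the pulled-back gerbe product, which makes $(\bar{\cP},\bar{Y},\bar{A})$ a reducible bundle gerbe. Finally, check that the two constructions are inverse to each other. This is immediate from the uniqueness of the decomposition of a one-form along $\dd\fru$, $\dd\frv$ and the slice directions, and from the fact that the descent through $\delta$ is as determined above.
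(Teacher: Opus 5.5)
Your proposal follows the paper's own argument: decompose $\bar{A}=\psi_{\fru}\,\dd\fru+\phi_{\fru}\,\dd\frv+A_{\fru}$ along the product, use $\U(1)$-invariance to push $\psi_{\fru},\phi_{\fru}$ down to $Y\times_N Y$, read off $\delta\psi_{\fru}=\delta\phi_{\fru}=0$ from the connection-preserving (pulled-back) gerbe multiplication, and use exactness of the \v{C}ech complex of $Y\to N$ to descend to $Y$; your explicit converse and inverse check also cover what the paper leaves implicit. Both caveats you raise are genuine, and the paper glosses over them: the decomposition silently assumes $\partial_{\frv}$-independence, and the descended functions on $Y$ are only unique modulo $\pi^{\ast}C^{\infty}(N)$, so the correspondence is literally one-to-one only after quotienting by this ambiguity (or after keeping $\psi_{\fru},\phi_{\fru}$ as $\delta$-closed functions on $Y\times_N Y$).
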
 

\noindent
We will refer to $(\cP,Y, A_{\fru} , \psi_{\fru} ,\phi_{\fru})$ as the \emph{reduction} of $(\bar{\cP},\bar{Y},\bar{A})$. A direct computation gives the following result.

\begin{lemma}
\label{lemma:gerbecurvaturedecomposition}
The curvature $F_{\bar{A}} \in \Omega^2(\cI_{\fru}\times \cI_{\frv}  \times Y\times_{N} Y)$ of $\bar{A} = \psi_{\fru} \dd \fru + \phi_{\fru} \dd \frv + A_{\fru}$ satisfies the following equation:
\begin{eqnarray*}
F_{\bar{A}} = \dd_{Y^{[2]}} \psi_{\fru} \wedge \dd \fru + \dd_{Y^{[2]}} \phi_{\fru} \wedge \dd \frv + \partial_{\fru}\phi_{\fru} \dd\fru \wedge \dd\frv + \dd \fru \wedge \partial_{\fru} A_{\fru} + F_{A_{\fru}},
\end{eqnarray*}

\noindent
where $\dd_{Y^{[2]}}\colon \Omega(Y\times_{N} Y) \to \Omega(Y\times_{N} Y) $ is the exterior derivative on $\Omega(Y\times_{N} Y)$ and $F_{A_{\fru}}$ is the family of curvatures of $A_{\fru}$.
\end{lemma}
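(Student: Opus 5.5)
The plan is to compute $F_{\bar A}=\dd\bar A$ directly on the total space $\bar{\cP}=\cI_{\fru}\times\cI_{\frv}\times\cP$. We regard $\bar A$ as a $\U(1)$-invariant real one-form on $\bar{\cP}$, so its curvature is simply its exterior derivative (the group is abelian). The curvature descends to a two-form on $\cI_{\fru}\times\cI_{\frv}\times Y\times_N Y$ because it is basic. We split the exterior derivative on the product as
\begin{equation*}
\dd = \dd\fru\wedge\partial_{\fru} + \dd\frv\wedge\partial_{\frv} + \dd_{\cP},
\end{equation*}
and apply it term by term to $\bar A=\psi_{\fru}\,\dd\fru+\phi_{\fru}\,\dd\frv+A_{\fru}$.

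For the first term, $\dd(\psi_{\fru}\dd\fru)=\dd_{\cP}\psi_{\fru}\wedge\dd\fru+\partial_{\frv}\psi_{\fru}\,\dd\frv\wedge\dd\fru$. The $\dd\fru\wedge\dd\fru$ piece vanishes trivially. The last piece also vanishes, since the coefficient functions are by construction families parametrized only by $\fru$; I will state explicitly that $\frv$-independence is part of the setup, as it is implicit in the notation $\psi_{\fru},\phi_{\fru}$.

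For the second term, $\dd(\phi_{\fru}\dd\frv)=\dd_{\cP}\phi_{\fru}\wedge\dd\frv+\partial_{\fru}\phi_{\fru}\,\dd\fru\wedge\dd\frv$.

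For the third term, $\dd A_{\fru}=\dd\fru\wedge\partial_{\fru}A_{\fru}+\dd_{\cP}A_{\fru}$, and $\dd_{\cP}A_{\fru}$ is by definition the pull-back to $\cP$ of the family of curvatures $F_{A_{\fru}}$.

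The remaining step is to identify the fiber-wise derivatives with $\dd_{Y^{[2]}}$. Since $\psi_{\fru}$ and $\phi_{\fru}$ are $\U(1)$-invariant, they are pull-backs of functions on $Y\times_N Y$, and $\dd_{\cP}$ commutes with pull-back. Likewise, $\partial_{\fru}A_{\fru}$ is a difference quotient of connections, hence a basic one-form that descends to $Y\times_N Y$. Summing the pieces gives the stated formula.

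The only delicate point I expect is bookkeeping. One issue is the order and sign conventions in the wedges: $\dd_{Y^{[2]}}\psi_{\fru}\wedge\dd\fru$ appears rather than $\dd\fru\wedge\dd\psi_{\fru}$, consistent with $\dd(f\,\dd\fru)=\dd f\wedge\dd\fru$. The other is the descent argument. Note also that the paper later descends $\psi_{\fru},\phi_{\fru}$ all the way to $Y$; if that is used, $\dd_{Y^{[2]}}\psi_{\fru}$ should be read as the pull-back of the differential on $Y$, and I would remark on this but not rely on it.
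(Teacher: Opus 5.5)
Your proposal is correct and is exactly the direct computation the paper intends. Splitting $\dd$ on $\cI_{\fru}\times\cI_{\frv}\times\cP$ and differentiating $\psi_{\fru}\,\dd\fru$, $\phi_{\fru}\,\dd\frv$ and $A_{\fru}$ term by term reproduces the stated formula, and basicness gives the descent to $\cI_{\fru}\times\cI_{\frv}\times Y\times_N Y$. You are right that the $\frv$-independence of the coefficients, implicit in the paper's notation, is needed. Your unused closing remark is slightly off: after descent to $Y$, $\dd_{Y^{[2]}}\psi_{\fru}$ is the simplicial difference $\delta(\dd_Y\psi_{\fru})$, not a single pull-back.
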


\begin{remark}
Since the set of connections on $\cP$ is an affine space over the one-forms on $Y\times_N Y$, the one-form $\partial_{\fru} A_{\fru}$, which is a priori a one-form on $\cP$, is well-defined as a one-form on the base space $Y\times_{N} Y$.
\end{remark}

\noindent
A curving on $(\bar{\cP},\bar{Y},\bar{A})$ is by definition a two-form $\bar{b}\in \Omega^2(\bar{Y})$ satisfying:
\begin{equation*}
\bar{\delta} \bar{b} = F_{\bar{A}}.
\end{equation*}

\noindent 
Since $\bar{Y} = \cI_{\fru}\times \cI_{\frv}  \times Y$, we can canonically write every curving $\bar{b}$ on $(\bar{\cP},\bar{Y},\bar{A})$ as follows:
\begin{equation}
\label{eq:decompositionbarb}
\bar{b} = \mathfrak{b}_{\fru} \dd \fru \wedge \dd \frv + \dd \fru \wedge \varrho_{\fru} + \dd \frv \wedge \vartheta_{\fru}   + b_{\fru}
\end{equation}

\noindent
for uniquely determined families of functions $\mathfrak{b}_{\fru}$, one forms $\varrho_{\fru}$ and $\vartheta_{\fru}$, and two-forms $b_{\fru}$ on $Y$.

\begin{lemma}
\label{lemma:reduciblegerbe}
Let $(\bar{\cP},\bar{Y},\bar{A})$ be a reducible bundle gerbe with connective structure on $M = \cI_{\fru}\times \cI_{\frv} \times N$ and let  $(\cP,Y,A_{\fru} , \psi_{\fru} , \phi_{\fru})$ be its reduction on $N$. A two-form $\bar{b}\in \Omega^2(\bar{Y})$ is a curving on $(\bar{\cP},\bar{Y},\bar{A})$ if and only if, in the decomposition given in \eqref{eq:decompositionbarb}, $b_{\fru}$ is a family of curvings on $(\cP,Y,A_{\fru})$ and the triplet $(\mathfrak{b}_{\fru}, \varrho_{\fru}, \vartheta_{\fru})$  satisfies:
\begin{equation}
\label{eq:conditionat}
\delta b_{\fru} = F_{A_{\fru}}\, , \qquad \delta \varrho_{\fru} = \partial_{\fru} A_{\fru} - \dd_{Y^{[2]}} \psi_{\fru}\, , \qquad \delta \vartheta_{\fru} = - \dd_{Y^{[2]}}\phi_{\fru}\, , \qquad  \delta \mathfrak{b}_{\fru} =  \partial_{\fru} \phi_{\fru}
\end{equation}

\noindent
for every $(\fru , \frv)\in \cI_{\fru} \times \cI_{\frv}$, where $\delta$ is the simplicial differential of the \u{C}ech nerve of $\pi \colon Y \to N$.
\end{lemma}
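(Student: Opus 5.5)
The plan is to expand the curving condition $\bar{\delta}\bar{b} = F_{\bar{A}}$ component by component along the product decomposition $\bar{Y}\times_M\bar{Y} = \cI_{\fru}\times\cI_{\frv}\times Y\times_N Y$, and compare with Lemma \ref{lemma:gerbecurvaturedecomposition}. The first observation is that the simplicial differential $\bar{\delta}$ of the \u{C}ech nerve of $\bar{Y}\to M$ is the alternating sum of pull-backs along the two projections $\bar{Y}\times_M\bar{Y}\to\bar{Y}$. These projections are the identity on the $\cI_{\fru}\times\cI_{\frv}$ factor and the usual projections $Y\times_N Y\to Y$ on the other factor. Hence $\bar{\delta}$ commutes with wedging by $\dd\fru$ and $\dd\frv$, and it acts on the coefficient families by the simplicial differential $\delta$ of $\pi\colon Y\to N$, pointwise in $(\fru,\frv)$.

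Applying $\bar{\delta}$ to the decomposition \eqref{eq:decompositionbarb} therefore gives
\begin{equation*}
\bar{\delta}\bar{b} = \delta\mathfrak{b}_{\fru}\,\dd\fru\wedge\dd\frv + \dd\fru\wedge\delta\varrho_{\fru} + \dd\frv\wedge\delta\vartheta_{\fru} + \delta b_{\fru}.
\end{equation*}
Lemma \ref{lemma:gerbecurvaturedecomposition}, after rewriting $\dd_{Y^{[2]}}\psi_{\fru}\wedge\dd\fru = -\dd\fru\wedge\dd_{Y^{[2]}}\psi_{\fru}$ and similarly for $\phi_{\fru}$, gives
\begin{equation*}
F_{\bar{A}} = \partial_{\fru}\phi_{\fru}\,\dd\fru\wedge\dd\frv + \dd\fru\wedge(\partial_{\fru}A_{\fru} - \dd_{Y^{[2]}}\psi_{\fru}) - \dd\frv\wedge\dd_{Y^{[2]}}\phi_{\fru} + F_{A_{\fru}}.
\end{equation*}

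Two-forms on the product split uniquely into the four types $\dd\fru\wedge\dd\frv$, $\dd\fru\wedge(\cdot)$, $\dd\frv\wedge(\cdot)$ and purely $Y^{[2]}$-directional, with coefficients being families of forms on $Y\times_N Y$. So the equation $\bar{\delta}\bar{b}=F_{\bar{A}}$ is equivalent to the four equations \eqref{eq:conditionat}. The first of these, $\delta b_{\fru}=F_{A_{\fru}}$, says exactly that each $b_{\fru}$ is a curving on $(\cP,Y,A_{\fru})$. Both directions of the statement follow at once.

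The main point needing care is the first step: justifying that $\bar{\delta}$ acts only on the $Y$-coefficients. One also has to make sure that the pieces of $F_{\bar{A}}$, a priori defined on $\bar{\cP}$, descend to the base $Y\times_N Y$. The remark after Lemma \ref{lemma:gerbecurvaturedecomposition} handles $\partial_{\fru}A_{\fru}$, and $\psi_{\fru},\phi_{\fru}$ were shown to be basic. With these in hand, the rest is bookkeeping of signs.
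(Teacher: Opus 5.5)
Your proposal is correct and follows the paper's proof: apply $\bar{\delta}$ termwise to the decomposition \eqref{eq:decompositionbarb}, rewrite $F_{\bar{A}}$ from Lemma \ref{lemma:gerbecurvaturedecomposition} in the same four tensor types, and match coefficients to get \eqref{eq:conditionat}. Your signs are right, and your remarks on why $\bar{\delta}$ acts only on the $Y$-coefficients and why the components of $F_{\bar{A}}$ descend to $Y\times_N Y$ make explicit what the paper leaves implicit.
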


\begin{proof}
Plugging Equation \eqref{eq:decompositionbarb} into $\bar{\delta}\bar{b}$ we have:
\begin{align*}
\bar{\delta}\bar{b} &= \bar{\delta}( \mathfrak{b}_{\fru} \dd \fru \wedge \dd \frv + \dd \fru \wedge \varrho_{\fru} + \dd \frv \wedge \vartheta_{\fru}   + b_{\fru}) \\
&= \delta\mathfrak{b}_{\fru} \dd \fru \wedge \dd \frv + \dd \fru \wedge \delta \varrho_{\fru} + \dd \frv \wedge \delta \vartheta_{\fru}   + \delta b_{\fru}.
\end{align*}
Using the expression for $F_{\bar{A}}$ obtained in Lemma \ref{lemma:gerbecurvaturedecomposition} and isolating by tensor type in $\bar{\delta}\bar{b}=F_{\bar{A}}$ we obtain \eqref{eq:conditionat} and thus we conclude.
\end{proof}

\begin{cor}
\label{cor:curvingreduction}
There is a natural one-to-one correspondence between curvings $\bar{b}$ on a reducible bundle gerbe $(\bar{\cP},\bar{Y},\bar{A})$ with reduction $(\cP,Y,A_{\fru} , \psi_{\fru} , \phi_{\fru})$ and tuples $(b_{\fru}, \mathfrak{b}_{\fru}, \varrho_{\fru}, \vartheta_{\fru})$ consisting of a family of curvings $b_{\fru}$ on $(\cP,Y,A_{\fru})$, families of one-forms $\varrho_{\fru}$ and $\vartheta_{\fru}$ on $Y$, and a family of functions $\mathfrak{b}_{\fru}$ on $Y$ satisfying Equation \eqref{eq:conditionat}.
\end{cor}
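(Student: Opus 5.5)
The plan is to deduce Corollary \ref{cor:curvingreduction} directly from Lemma \ref{lemma:reduciblegerbe}, combined with the uniqueness of the decomposition \eqref{eq:decompositionbarb}. I will define the correspondence by sending a curving $\bar{b}$ to the tuple $(b_{\fru},\mathfrak{b}_{\fru},\varrho_{\fru},\vartheta_{\fru})$ read off from \eqref{eq:decompositionbarb}. For this assignment to be well defined and injective, I first observe the following. Since $\bar{Y}=\cI_{\fru}\times\cI_{\frv}\times Y$ is a product, every two-form on $\bar{Y}$ splits uniquely according to the bigrading induced by $T^*\bar{Y}=\langle\dd\fru\rangle\oplus\langle\dd\frv\rangle\oplus T^*Y$. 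The components are $(\fru,\frv)$-dependent families of functions, one-forms, and two-forms on $Y$. Hence the tuple determines $\bar{b}$, and $\bar{b}$ determines the tuple.

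Next I check that the image consists exactly of tuples satisfying \eqref{eq:conditionat}. This is the content of Lemma \ref{lemma:reduciblegerbe}: $\bar{\delta}\bar{b}=F_{\bar{A}}$ holds if and only if the four component equations hold. In particular, $\delta b_{\fru}=F_{A_{\fru}}$ says precisely that $b_{\fru}$ is a family of curvings on $(\cP,Y,A_{\fru})$. The one point I must justify is that $\bar{\delta}$ acts componentwise and commutes with the $\dd\fru,\dd\frv$ factors. This holds because the fibered products of $\bar{Y}\to M$ are $\cI_{\fru}\times\cI_{\frv}\times Y^{[k]}$, and the face maps are the identity on the interval factors, so pullback along them preserves the bigrading.

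For surjectivity, I take any tuple satisfying \eqref{eq:conditionat}, assemble $\bar{b}$ via \eqref{eq:decompositionbarb}, and apply the converse direction of Lemma \ref{lemma:reduciblegerbe} to conclude that $\bar{b}$ is a curving. Naturality is then immediate, since the construction uses only the canonical product structure.

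I expect the only mildly delicate point to be bookkeeping. The families in \eqref{eq:decompositionbarb} may a priori depend on $\frv$ as well as $\fru$. To handle this, I will either note that the statement should be read pointwise in $(\fru,\frv)$, as Lemma \ref{lemma:reduciblegerbe} does ("for every $(\fru,\frv)$"), or accept $(\fru,\frv)$-families throughout. Either way, nothing beyond Lemma \ref{lemma:reduciblegerbe} is needed.
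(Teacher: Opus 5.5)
Your proposal is correct and follows the paper's route: the corollary is stated as an immediate consequence of the canonical decomposition \eqref{eq:decompositionbarb} together with Lemma \ref{lemma:reduciblegerbe}, and that lemma's proof is exactly the componentwise evaluation of $\bar{\delta}\bar{b}=F_{\bar{A}}$ (using that the face maps act as the identity on $\cI_{\fru}\times\cI_{\frv}$) that you describe. Your caveat about possible $\frv$-dependence of the components is well taken: the paper implicitly treats $\bar{A}$ and $\bar{b}$ as $\frv$-independent (compare the formula in Lemma \ref{lemma:gerbecurvaturedecomposition}), and reading the correspondence pointwise in $(\fru,\frv)$, as you propose, is the right way to make the statement precise.
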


\begin{remark}
\label{remark:bdecomposition}
Note that the second, third, and fourth equations in \eqref{eq:conditionat} can be equivalently written as follows:
\begin{equation*}
\delta (\varrho_{\fru} + \dd_{Y} \psi_{\fru}) = \partial_{\fru} A_{\fru}   \, , \qquad  \delta (\vartheta_{\fru} + \dd_{Y} \phi_{\fru}) = 0 \, , \qquad   \mathfrak{b}_{\fru} =  \partial_{\fru} \phi_{\fru} + \pi^{\ast} f_{\fru}
\end{equation*}

\noindent
for a unique family of functions $f_{\fru}$ on $N$. Here we use the same symbols $\psi_\fru$ and $\phi_\fru$ for the families of functions of $Y$ whose simplicial differentials are the families of functions in Lemma \ref{lemma:reduciblegerbe}. In particular, from the second equation above, it follows that there exists a unique family of one-forms $\alpha_{\fru}$ on $N$ such that:
\begin{equation*}
\pi^{\ast} \alpha_{\fru} =   \vartheta_{\fru} + \dd_{Y} \phi_{\fru}.
\end{equation*}

\noindent
We will respectively refer to $f_{\fru}$ and $\alpha_{\fru}$ as the \emph{derived families of functions and one-forms} of the curving $\bar{b}$ on the reducible bundle gerbe $(\bar{\cP},\bar{Y},\bar{A})$ with reduction $(\cP,Y,A_{\fru} , \psi_{\fru} , \phi_{\fru})$.
\end{remark} 

\noindent
Let $\bar{b} = \mathfrak{b}_{\fru} \dd \fru \wedge \dd \frv + \dd \fru \wedge \varrho_{\fru} + \dd \frv \wedge \vartheta_{\fru}   + b_{\fru}$ be a curving on the reducible bundle gerbe $(\bar{\cP},\bar{Y},\bar{A})$ with reduction $(\cP,Y,A_{\fru} , \psi_{\fru} , \phi_{\fru})$, defined over the six-dimensional manifold $M = \cI_{\fru}\times \cI_{\frv} \times N$. By Remark \ref{remark:bdecomposition}, it follows that $\mathfrak{b}_{\fru}$ is completely determined by $\phi_{\fru}$ and $f_{\fru}$. Hence, we will refer to the reduction of a curving $\bar{b}\in \Omega^2(\bar{Y})$ as a tuple of the form $(f_{\fru},\varrho_{\fru},\vartheta_{\fru},b_{\fru})$. We have:
\begin{equation*}
\dd_{\bar{Y}}\bar{b} = \dd_Y b_{\fru} + \pi^{\ast}(\partial_{\fru}\alpha_{\fru} + \dd f_{\fru})\wedge \dd\fru \wedge \dd \frv + \dd \fru \wedge (\partial_{\fru} b_{\fru} - \dd_Y \varrho_{\fru})  - \dd\frv \wedge \pi^{\ast}\dd \alpha_{\fru},
\end{equation*}

\noindent
where $\dd_{\bar{Y}}$ is the exterior derivative on $\bar{Y}$ and $\dd_Y$ is the exterior derivative on $Y$.

\begin{lemma}
The family of two-forms $\partial_{\fru} b_{\fru} - \dd_Y \varrho_{\fru} \in \Omega^2(Y)$ satisfies $\delta (\partial_{\fru} b_{\fru} - \dd_Y \varrho_{\fru}) = 0$ for every $\fru \in \cI_{\fru}$, where $\delta$ denotes the simplicial differential of the \u{C}ech nerve of $\pi \colon Y\to N$. 
\end{lemma}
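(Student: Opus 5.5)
The argument is a direct commutation computation, using only that the simplicial differential $\delta$ of the \v{C}ech nerve of $\pi\colon Y\to N$ commutes with $\dd_Y$ (it is an alternating sum of pull-backs along the face maps $Y^{[k+1]}\to Y^{[k]}$, which commute with exterior derivatives) and with $\partial_{\fru}$ (the face maps do not involve the parameter $\fru$). I would then feed in the relations of Lemma \ref{lemma:reduciblegerbe} and Remark \ref{remark:bdecomposition}.

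The steps run as follows. First, $\delta$ is linear and commutes with $\partial_{\fru}$, so applying $\partial_{\fru}$ to the first equation in \eqref{eq:conditionat} gives
\[
\delta(\partial_{\fru} b_{\fru}) = \partial_{\fru} F_{A_{\fru}} = \dd_{Y^{[2]}}(\partial_{\fru} A_{\fru}).
\]
The last equality holds because $F_{A_{\fru}} = \dd A_{\fru}$ on $\cP$ and $\partial_{\fru}A_{\fru}$ is a basic one-form on $Y\times_N Y$, as noted in the remark following Lemma \ref{lemma:gerbecurvaturedecomposition}. Second, by the second equation in \eqref{eq:conditionat},
\[
\delta(\dd_Y \varrho_{\fru}) = \dd_{Y^{[2]}}\delta\varrho_{\fru} = \dd_{Y^{[2]}}(\partial_{\fru} A_{\fru} - \dd_{Y^{[2]}}\psi_{\fru}) = \dd_{Y^{[2]}}\partial_{\fru} A_{\fru}.
\]
Subtracting the two identities gives $\delta(\partial_{\fru} b_{\fru} - \dd_Y\varrho_{\fru}) = 0$ for every $\fru$.

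The only point needing care is the identity $\partial_{\fru}F_{A_{\fru}} = \dd_{Y^{[2]}}\partial_{\fru}A_{\fru}$ as forms on $Y\times_N Y$ rather than on $\cP$. I would justify it by writing $A_{\fru} = A_0 + p^\ast a_{\fru}$ with $a_{\fru}\in\Omega^1(Y\times_NY)$, so that $F_{A_{\fru}} = F_{A_0} + \dd a_{\fru}$ and $\partial_{\fru}A_{\fru} = p^\ast\partial_{\fru}a_{\fru}$. A subsidiary check is that $\psi_{\fru}$ in \eqref{eq:conditionat} is the function on $Y^{[2]}$ given by $\delta$ of the function on $Y$. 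Even without this, the term $\dd_{Y^{[2]}}\dd_{Y^{[2]}}\psi_{\fru}=0$ drops out, so this does not affect the argument.
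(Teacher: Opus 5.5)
Your proposal is correct and follows the paper's proof. You commute $\delta$ past $\partial_{\fru}$ and $\dd_Y$, substitute the first two relations of \eqref{eq:conditionat}, and cancel using $\partial_{\fru}F_{A_{\fru}}=\dd_{Y^{[2]}}\partial_{\fru}A_{\fru}$ and $\dd_{Y^{[2]}}\dd_{Y^{[2]}}\psi_{\fru}=0$. Your justification of that identity via the affine decomposition $A_{\fru}=A_0+p^\ast a_{\fru}$ is a useful detail that the paper leaves implicit.
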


\begin{proof}
We compute:
\begin{equation*}
\delta (\partial_{\fru} b_{\fru} - \dd_Y \varrho_{\fru}) = \partial_{\fru} \delta b_{\fru} - \dd_{Y^{[2]}} \delta \varrho_{\fru} =  \partial_{\fru} F_{A_{\fru}}- \dd_{Y^{[2]}} (\partial_{\fru} A_{\fru} - \dd_{Y^{[2]}} \psi_{\fru}) = 0 \, ,
\end{equation*}

\noindent
where we have used Lemma \ref{lemma:reduciblegerbe} and the fact that $F_{A_{\fru}} = \dd_{Y^{[2]}} A_{\fru}$.
\end{proof}

\noindent
Since the \u{C}ech complex for smooth forms is exact, it follows from the previous lemma that there exists a unique family of two-forms $\Theta_{\fru} \in \Omega^2(N)$ on $N$ such that:
\begin{equation}
\label{eq:buevaluation}
\pi^{\ast} \Theta_{\fru} = \partial_{\fru} b_{\fru} - \dd_Y \varrho_{\fru} \, .
\end{equation}

\noindent
This leads us to introducing the following definition.
\begin{definition}
The family of two-forms $\Theta_{\fru}$ is the \emph{derived family of two-forms} of the curving $\bar{b}$ on the reducible bundle gerbe $(\bar{\cP},\bar{Y},\bar{A})$.
\end{definition}

\noindent
With the previous definition we have:
\begin{equation*}
\dd_{\bar{Y}}\bar{b} = \dd_Y b_{\fru} + \pi^{\ast}(\partial_{\fru}\alpha_{\fru} + \dd f_{\fru}) \wedge \dd\fru \wedge \dd \frv + \dd \fru \wedge \pi^{\ast}\Theta_{\fru} - \dd\frv \wedge \pi^{\ast}\dd \alpha_{\fru}\, .
\end{equation*}

\noindent
Since $\dd_{\bar{Y}} \bar{b} = \bar{\pi}^{\ast} H_{\bar{b}}$ for a uniquely determined three-form $H_{\bar{b}}\in \Omega^3(M)$, and $\dd_Y b_{\fru} = \pi^{\ast} H_{b_{\fru}}$, it follows that:
\begin{equation}
\label{eq:decompositionH}
H_{\bar{b}} =  H_{b_{\fru}} + (\partial_{\fru}\alpha_{\fru} + \dd f_{\fru}) \wedge \dd\fru \wedge \dd \frv + \dd \fru \wedge \Theta_{\fru} - \dd\frv \wedge  \dd \alpha_{\fru},
\end{equation}

\noindent
where $H_{b_{\fru}} \in \Omega^3(N)$ is the curvature of the family of curvings $b_{\fru}\in \Omega^2(Y)$. We conclude that every curving $\bar{b}$ on a reducible bundle gerbe is equivalent to a tuple $(b_{\fru}, \mathfrak{b}_{\fru}, \varrho_{\fru}, \vartheta_{\fru})$ as introduced in Lemma \ref{lemma:reduciblegerbe}, and this tuple defines in turn families of \emph{derived} one-forms and two-forms $\alpha_{\fru}$ and $\Theta_{\fru}$ on $N$ as described above. The fact that $H_{\bar{b}}\in \Omega^3(\cI_{\fru}\times \cI_{\frv} \times N)$ is closed translates into:
\begin{equation}
\label{eq:closedimplication}
\partial_{\fru} H_{b_{\fru}} = \dd \Theta_{\fru}\, , \qquad \dd H_{b_{\fru}} = 0 \, .
\end{equation}

\noindent
Since $b_{\fru}$ is a family of curvings on $(\cP, Y, A_{\fru})$, the second condition holds automatically.

\begin{remark}
Using the decomposition given in \eqref{eq:decompositionH}, we have:
\begin{equation*}
\vert H_{\bar{b}}\vert_g^2 = \vert H_{b_{\fru}}\vert_{h_\fru}^2  - e^{-2\cF_\fru} \vert \partial_{\fru}\alpha_{\fru} +\dd f_\fru \vert^2_{h_\fru}    -2 e^{-\cF_\fru} \langle \Theta_{\fru} , \dd \alpha_{\fru}\rangle_{h_\fru},
\end{equation*}

\noindent
where $\vert - \vert_g^2$ denotes the pseudo-norm defined by $g$ and $\vert - \vert_{h_\fru}^2$ denotes the norm defined by $h_\fru$.
\end{remark}
 
\noindent
Let $\alpha\in \Omega^k(N)$ be a $k$-form on $N$. A direct computation gives the following relations:
\begin{gather*}
\ast_g (u\wedge \alpha) = (-1)^{k+1} u\wedge \ast_{h_\fru}\alpha\, , \qquad \ast_g (v\wedge \alpha) = (-1)^{k} v\wedge \ast_{h_\fru}\alpha\, ,\\
\ast_g (u\wedge v \wedge \alpha) = -\ast_{h_\fru} \alpha\, , \qquad \ast_g \alpha = u\wedge v \wedge \ast_{h_\fru}\alpha\, ,
\end{gather*}

\noindent
where $u$ and $v$ are one-forms on $M = \cI_{\fru}\times \cI_{\frv}\times N$ in terms of which the six-dimensional Lorentzian metric $g$ is given by:
\begin{equation*}
g = u\odot v + h.
\end{equation*}

\noindent
For our standard Kundt metric we can take:
\begin{equation*}
u := \dd \fru \, , \qquad v := \frac{1}{2} \cH_{\fru} u + e^{\cF_{\fru}} \dd\frv + \cA_{\fru}.
\end{equation*}

\noindent
With these definitions, we have:
\begin{align*}
H_{\bar{b}} &= H_{b_{\fru}} + e^{-\cF_{\fru}} (\partial_{\fru}\alpha_{\fru} + \dd f_{\fru})  \wedge u \wedge v \\
&\quad + u\wedge \Big(\Theta_{\fru} + e^{-\cF_{\fru}} (\partial_{\fru}\alpha_{\fru} + \dd f_{\fru}) \wedge \cA_{\fru} + \frac{1}{2}\cH_{\fru} e^{-\cF_{\fru}}   \dd \alpha_{\fru} \Big) \\
&\quad + e^{-\cF_{\fru}} (\cA_{\fru} - v)\wedge \dd \alpha_{\fru}.
\end{align*}

Hence:
\begin{align*}
\ast_g H_{\bar{b}} & = u\wedge v \wedge \ast_{h_{\fru}}  \Big( H_{b_{\fru}} + e^{-\cF_{\fru}} \cA_{\fru}  \wedge \dd \alpha_{\fru}\Big) \\
&\quad - e^{-\cF_{\fru}} \ast_{h_{\fru}} (\partial_{\fru}\alpha_{\fru} + \dd f_{\fru}) - e^{-\cF_{\fru}}   v \wedge \ast_{h_{\fru}} \dd \alpha_{\fru}\\
&\quad - u\wedge \ast_{h_{\fru}} \Big( \Theta_{\fru} + e^{-\cF_{\fru}} (\partial_{\fru}\alpha_{\fru} + \dd f_{\fru}) \wedge \cA_{\fru} + \frac{1}{2} \cH_{\fru} e^{-\cF_{\fru}}   \dd \alpha_{\fru} \Big),
\end{align*}

\noindent
from which we immediately obtain the following result.

\begin{lemma}
\label{lemma:selfdualH}
A curving $\bar{b}$ on a reducible bundle gerbe $(\bar{\cP},\bar{Y},\bar{A})$ with reduction $(\cP,Y,A_{\fru} , \psi_{\fru} , \phi_{\fru})$ is $\mu$-self-dual, that is:
\begin{equation*}
\ast_g H_{\bar{b}} = \mu H_{\bar{b}}\, , \qquad \mu \in \mathbb{Z}_2\, ,
\end{equation*}

\noindent
if and only if:
\begin{eqnarray*}
& \ast_{h_{\fru}}  ( e^{\cF_{\fru}}  H_{b_{\fru}} +  \cA_{\fru}  \wedge \dd \alpha_{\fru}) = \mu  (\partial_{\fru}\alpha_{\fru} + \dd f_{\fru})  \, ,\qquad \ast_{h_{\fru}} \dd\alpha_{\fru} = \mu \dd\alpha_{\fru}\, ,\\
& \Theta_{\fru} + e^{-\cF_{\fru}} (\partial_{\fru}\alpha_{\fru} + \dd f_{\fru}) \wedge \cA_{\fru} + \cH_{\fru} e^{-\cF_{\fru}}   \dd \alpha_{\fru} = -\mu \ast_{h_{\fru}} ( \Theta_{\fru} + e^{-\cF_{\fru}} (\partial_{\fru}\alpha_{\fru} + \dd f_{\fru}) \wedge \cA_{\fru} )\, ,
\end{eqnarray*}

\noindent
where $f_{\fru}\in C^{\infty}(N)$, $\alpha_{\fru}\in \Omega^1(N)$ and $\Theta_{\fru}\in \Omega^2(N)$ are the derived forms of the reduction of $\bar{b}$.
\end{lemma}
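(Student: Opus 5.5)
The plan is to compare $\ast_g H_{\bar b}$ with $\mu H_{\bar b}$ component by component. Both three-forms are expanded in the splitting of $\wedge T^*M$ induced by $u$, $v$ and the transverse forms pulled back from $N$. Every three-form on $M$ can be written uniquely as
\[
u\wedge v\wedge \gamma_1 + u\wedge\gamma_2 + v\wedge \gamma_2' + \gamma_3 ,
\]
with $\gamma_1\in\Omega^1(N)$, $\gamma_2,\gamma_2'\in\Omega^2(N)$ and $\gamma_3\in\Omega^3(N)$, all depending on $\fru$. The uniqueness holds because $\{u,v\}$ together with a transverse coframe is a coframe of $M$, and $u,v$ are independent of the transverse directions. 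The displayed expansions of $H_{\bar b}$ and $\ast_g H_{\bar b}$, which follow from \eqref{eq:decompositionH} and the Hodge identities for $\ast_g(u\wedge\alpha)$, $\ast_g(v\wedge\alpha)$, $\ast_g(u\wedge v\wedge\alpha)$ and $\ast_g\alpha$, are already written almost in this form. The first step is to rewrite them completely in this basis; this only requires rewriting $(\cA_\fru - v)\wedge \dd\alpha_\fru$ as $\cA_\fru\wedge\dd\alpha_\fru - v\wedge\dd\alpha_\fru$.

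Writing $\xi := \partial_\fru\alpha_\fru + \dd f_\fru$, the components of $H_{\bar b}$ are as follows:
\begin{itemize}
\item $u\wedge v\wedge$-component: $-e^{-\cF_\fru}\xi$, using $\xi\wedge u\wedge v = u\wedge v\wedge\xi$;
\item $u\wedge$-component: $\Theta_\fru + e^{-\cF_\fru}\xi\wedge\cA_\fru + \tfrac12\cH_\fru e^{-\cF_\fru}\dd\alpha_\fru$;
\item $v\wedge$-component: $-e^{-\cF_\fru}\dd\alpha_\fru$;
\item transverse component: $H_{b_\fru} + e^{-\cF_\fru}\cA_\fru\wedge\dd\alpha_\fru$.
\end{itemize}
The components of $\ast_g H_{\bar b}$ can be read off directly from the displayed formula. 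Equating the four pairs gives the system.

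The pairs are matched as follows.
\begin{itemize}
\item The $v\wedge$-components give $-e^{-\cF_\fru}\ast_{h_\fru}\dd\alpha_\fru = -\mu e^{-\cF_\fru}\dd\alpha_\fru$, which is the self-duality of $\dd\alpha_\fru$.
\item The $u\wedge v$ and transverse components both yield the first equation, once they are multiplied by $e^{\cF_\fru}$ and $\ast_{h_\fru}$ is applied. This uses $\ast_{h}^2 = (-1)^{k(4-k)}$ on $k$-forms on the Riemannian four-manifold $N$, so $\ast_h^2=-1$ on one- and three-forms; it also uses $\mu^2=1$. The two resulting equations are equivalent, in the same way that the first and fourth equations were in the gaugino lemma of Section \ref{sec:sugraspinors}.
\item The $u\wedge$-components give the last equation. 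The $\cH_\fru\dd\alpha_\fru$ term is handled with the already obtained $\ast_{h_\fru}\dd\alpha_\fru = \mu\dd\alpha_\fru$: the term $-\mu\ast_{h_\fru}(\tfrac12\cH_\fru e^{-\cF_\fru}\dd\alpha_\fru)$ becomes $-\tfrac12\cH_\fru e^{-\cF_\fru}\dd\alpha_\fru$. Moving it to the left doubles the coefficient and gives the $\cH_\fru e^{-\cF_\fru}\dd\alpha_\fru$ that appears in the statement.
\end{itemize}

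The main obstacle I expect is sign bookkeeping, specifically in the transverse versus $u\wedge v$ matching and in the $\cA_\fru\wedge\dd\alpha_\fru$ contributions. The displayed $\ast_g H_{\bar b}$ has to be checked against the quoted Hodge identities, because $v$ contains the transverse piece $\cA_\fru$ and this contaminates components. The cleanest check is to redo the expansion with the decomposition $\cA_\fru - v$ kept explicit. I would then verify that the two equivalent scalar-type equations collapse to the single stated one, rather than to contradictory ones, as a consistency test of the conventions.
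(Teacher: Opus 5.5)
Your strategy is the same as the paper's, which reads the conditions off the expansions of $H_{\bar b}$ and $\ast_g H_{\bar b}$ in the coframe $\{u,v\}$ completed by a transverse coframe. Your $v\wedge$- and $u\wedge$-matchings are correct, including the doubling of the $\cH_\fru e^{-\cF_\fru}\dd\alpha_\fru$ coefficient.

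There is, however, one sign error, and it breaks your middle step. The $u\wedge v$-component of $H_{\bar b}$ is $+e^{-\cF_\fru}\xi$, not $-e^{-\cF_\fru}\xi$. Substituting $\dd\frv = e^{-\cF_\fru}(v-\tfrac12\cH_\fru u-\cA_\fru)$ gives $\xi\wedge\dd\fru\wedge\dd\frv = e^{-\cF_\fru}\xi\wedge u\wedge v - e^{-\cF_\fru}\xi\wedge u\wedge\cA_\fru$. By the identity you quote yourself, $\xi\wedge u\wedge v=u\wedge v\wedge\xi$.

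With your sign, the two matchings you call equivalent disagree:
\begin{itemize}
\item The $u\wedge v$-component of the displayed $\ast_g H_{\bar b}$ is $\ast_{h_\fru}(H_{b_\fru}+e^{-\cF_\fru}\cA_\fru\wedge\dd\alpha_\fru)$. Equating it with $-\mu e^{-\cF_\fru}\xi$ gives $\ast_{h_\fru}(e^{\cF_\fru}H_{b_\fru}+\cA_\fru\wedge\dd\alpha_\fru)=-\mu\xi$.
\item The transverse matching uses $-e^{-\cF_\fru}\ast_{h_\fru}\xi$ from the display and gives $+\mu\xi$.
\end{itemize}
Together these would force $\xi=0$ rather than coincide. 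If you instead recomputed $\ast_g H_{\bar b}$ from your own components, you would get the negative of the stated first equation. Once the sign is corrected, both matchings give exactly $\ast_{h_\fru}(e^{\cF_\fru}H_{b_\fru}+\cA_\fru\wedge\dd\alpha_\fru)=\mu\xi$, and the argument is complete.

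Your worry that $\cA_\fru$ contaminates the components is unfounded. Since $g=u\odot v+h_\fru$ with $h_\fru$ pulled back from $N$, forms pulled back from $N$ are $g$-orthogonal to $u$ and $v$. The quoted Hodge identities therefore apply verbatim, and the displayed $\ast_g H_{\bar b}$ is correct. $\cA_\fru$ enters only through the coefficients, after $\dd\frv$ has been rewritten in terms of $v$.
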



\subsection{Minimal six-dimensional supergravity on a wave-front}


In this subsection, we reduce the differential system \eqref{eq:NSNSsystem} of minimal six-dimensional supergravity to the wave front of a stationary \emph{non-twisting} standard Kundt six-manifold. Recall that a stationary standard Kundt six-manifold is non-twisting if, using the notation of Definition \ref{def:standardKundt}, it has $\cA = 0$. The general twisting case is remarkably richer and is therefore reserved for a series of separate future publications. We begin by computing the six-dimensional Ricci and scalar curvatures of the Lorentzian metric: 
\begin{equation}
\label{eq:nontwistingmetric}
    g = \cH  \dd \mathfrak{u} \otimes \dd \mathfrak{u} +  e^{\cF} \dd \mathfrak{u} \odot \dd \mathfrak{v}  + h 
\end{equation}

\noindent
as a differential system on the underlying wave-front $(N,h)$. We first compute the non-zero Christoffel symbols of $g$:
\begin{eqnarray*}
& \Gamma^{\mathfrak{v}}_{\mathfrak{v}i}  = \frac{1}{2} \partial_i \cF \, , \qquad \Gamma^k_{\mathfrak{u}\mathfrak{v}} = -\frac{1}{2} e^{\cF} \partial^k \cF  \, , \qquad  \Gamma^{\mathfrak{v}}_{\mathfrak{u}i}  = \frac{1}{2} e^{-\cF} ( \partial_i \cH -  \cH  \partial_i \cF )\, ,\\
& \Gamma^k_{\mathfrak{u}\mathfrak{u}} =  - \frac{1}{2} \partial^k \cH  \, , \qquad \Gamma^{\mathfrak{u}}_{\mathfrak{u}i}  = \frac{1}{2} \partial_i \cF \, , \qquad  \Gamma^k_{ij}  =  (\Gamma^h)^k_{ij}\, .
\end{eqnarray*}
 
Here we have used the fact that the inverse metric of $g$ is given by: 
\begin{eqnarray*}
& g^{\mathfrak{u}\mathfrak{u}} = g^{\mathfrak{u}i} = 0 \, , \qquad g^{\mathfrak{u}\mathfrak{v}} = g^{\mathfrak{v}\mathfrak{u}} = e^{-\cF} \, , \qquad g^{ij} = h^{ij}\, ,\\
& g^{\mathfrak{v}i} = g^{i\mathfrak{v}} = 0 \, , \qquad g^{\mathfrak{v}\mathfrak{v}} = -e^{-2\cF} \cH\, .
\end{eqnarray*}

\noindent
In particular, we have:
\begin{eqnarray*} 
& \nabla^g_{\frv}\partial_{\fru}   =  \nabla^g_{\fru}\partial_{\frv} =    - \frac{1}{2} e^{\cF} \partial^k \cF \partial_k \, , \quad \nabla^g_{i}\partial_{\fru}   = \nabla^g_{\fru}\partial_{i}  =  \frac{1}{2} \partial_i \cF \partial_{\fru} + \frac{1}{2} e^{-\cF} ( \partial_i \cH -   \cH  \partial_i \cF ) \partial_{\frv}\, ,\\ 
& \nabla^g_{\fru}\partial_{\fru}  =   - \frac{1}{2} \partial^k \cH \partial_k\, , \quad \nabla^g_{\frv} \partial_{i} = \nabla^g_{i} \partial_{\frv} = \frac{1}{2}\partial_i \cF \partial_{\frv}\, , \quad  \nabla^g_{i}\partial_j =  \nabla^g_{j}\partial_i  = \nabla^h_{i}\partial_j\, .
\end{eqnarray*} 

\noindent
From these expressions, a direct computation gives the following non-zero components of the six-dimensional Ricci tensor:
\begin{equation}\label{eq:Riccitensor}
\begin{aligned}
\mathrm{Ric}^g(\partial_{\fru} , \partial_{\fru}) &= \frac{1}{2} \nabla^{h\ast} \dd\cH + \frac{1}{2} \langle \dd\cH, \dd\cF \rangle_h - \frac{1}{2} \cH \vert \dd\cF \vert^2_h \, ,\\
\mathrm{Ric}^g(\partial_{\fru} , \partial_{\frv}) &=  \frac{1}{2} e^{\cF} \left( \nabla^{h\ast}\dd \cF - \vert \dd\cF \vert^2_h \right) \, ,\\
\mathrm{Ric}^g\vert_{TN\times TN} &= \mathrm{Ric}^h - \nabla^h \dd \cF - \frac{1}{2} \dd \cF \otimes \dd \cF\, ,
\end{aligned}
\end{equation}

\noindent
where $\nabla^{h\ast}$ is the formal adjoint of the Levi-Civita connection $\nabla^h$ and $\mathrm{Ric}^h$ denotes the Ricci tensor of $h$. Taking the trace of the Ricci tensor $\mathrm{Ric}^g$ with $g$, we obtain the scalar curvature of $(M,g)$:
\begin{eqnarray*}
s_g = s_h + 2 \nabla^{h\ast}\dd \cF  - \frac{3}{2} \vert \dd\cF \vert^2_h \, ,
\end{eqnarray*}

\noindent
where $s_h$ is the scalar curvature of the wave-front manifold $(N,h)$.

\begin{prop}
\label{prop:wavefrontreduction}
Let $\bar{b}$ be a curving on a reducible bundle gerbe $(\bar{\cP},\bar{A},\bar{Y})$ on $M = \cI_{\fru}\times \cI_{\frv}\times N$ with reduction $(\cP,Y,A_{\fru} , \psi_{\fru} , \phi_{\fru})$, and let $g$ be a non-twisting, stationary, standard Kundt six-dimensional metric. Then, $(g,\bar{b})$ is a solution of six-dimensional minimal supergravity if and only if the following differential system:
\begin{eqnarray}
& \nabla^{h\ast} \dd\cH + \langle \dd\cH, \dd\cF \rangle_h -   \cH \vert \dd\cF \vert^2_h  =   \vert\Theta_{\fru} \vert^2_h - \cH \vert H_{b_{\fru}} \vert^2_h\, , \quad  \nabla^{h\ast}\dd \cF   = \vert \dd\cF \vert^2_h -   \vert H_{b_{\fru}} \vert^2_h\, ,\label{eq:wavefront1}\\
& \mathrm{Ric}^h - \nabla^h \dd \cF - \frac{1}{2} \dd \cF \otimes \dd \cF = \frac{1}{2} (2H_{b_{\fru}}\circ_h H_{b_{\fru}} - h \vert H_{b_{\fru}} \vert^2_h )\, ,\label{eq:wavefront2}\\
& \dd\alpha_{0} = 0\, ,\quad \ast_h\Theta_{\fru} = -\mu \Theta_{\fru}\, , \quad \dd (e^{\cF} \ast_h H_{b_{\fru}}) = 0\, ,\quad \langle \Theta_{\fru} , H_{b_{\fru}}(w) \rangle_h = 0\label{eq:wavefront3}
\end{eqnarray}

\noindent
for all $w\in\Gamma(TN)$, is satisfied by the components of $g$ and $\bar{b}$ in the splitting \eqref{eq:standardKundt} and \eqref{eq:decompositionH}.
\end{prop}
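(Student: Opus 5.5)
The plan is to substitute the non-twisting stationary Kundt metric \eqref{eq:nontwistingmetric} and the decomposition \eqref{eq:decompositionH} of $H_{\bar b}$ (with $\cA=0$ and all data $\fru$-independent where the metric is concerned) into the two equations of \eqref{eq:NSNSsystem}. Then I would separate components along the splitting $TM=\langle\partial_\fru\rangle\oplus\langle\partial_\frv\rangle\oplus TN$.

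\textbf{Self-duality.} This part is immediate from Lemma \ref{lemma:selfdualH} with $\cA=0$. It gives $\ast_h\dd\alpha=\mu\dd\alpha$, together with $\ast_h(e^{\cF}H_{b_\fru})=\mu(\partial_\fru\alpha+\dd f)$ and an anti-self-duality condition on $\Theta_\fru+\cH e^{-\cF}\dd\alpha$.

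\textbf{Einstein equation.} I would compute $H\circ_g H$ in the coframe $u=\dd\fru$, $v=\tfrac12\cH u+e^{\cF}\dd\frv$, using the inverse metric recorded before the proposition, and compare componentwise with \eqref{eq:Riccitensor}. The $(\frv,\frv)$ component of the Ricci tensor vanishes. Its right-hand side is $\tfrac12|\iota_{\partial_\frv}H|^2$, essentially $e^{-2\cF}|\dd\alpha|^2_h$ plus terms in $u$. Since this is a sum of squares, it forces $\dd\alpha=0$ (I expect the stationarity assumption to be read as $\alpha_\fru=\alpha_0$).

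With $\dd\alpha=0$, self-duality simplifies. It gives $\ast_h\Theta=-\mu\Theta$, and the first relation becomes $\partial_\fru\alpha+\dd f=\mu e^{\cF}\ast_hH_b$. Closedness of the left side, since it is exact up to $\partial_\fru\alpha_0=0$, then yields $\dd(e^{\cF}\ast_hH_b)=0$.

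The remaining components go as follows.
\begin{itemize}
\item The $(\fru,\frv)$ component: $\tfrac12 H\circ H(\partial_\fru,\partial_\frv)$ involves $e^{-\cF}|\partial_\fru\alpha+\dd f|^2=e^{\cF}|H_b|^2_h$. This should produce the $\cF$ equation in \eqref{eq:wavefront1}.
\item The $(\fru,\fru)$ component: it gives the $\cH$ equation, with contributions $|\Theta|^2_h$ and $-\cH|H_b|^2_h$.
\item The $TN\times TN$ block: it gives \eqref{eq:wavefront2}. The $-h|H_b|^2$ term comes from the $u\wedge v\wedge(\cdot)$ piece of $H$, after rewriting it via Hodge duality as $e^{\cF}\ast_hH_b$.
\item The mixed $(\fru,i)$ components: Ricci vanishes there by \eqref{eq:Riccitensor}, so the cross term pairing $\Theta$ with $H_b$ must vanish. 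This is $\langle\Theta,H_b(w)\rangle_h=0$.
\item The mixed $(\frv,i)$ components: these are automatic once $\dd\alpha=0$.
\end{itemize}

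\textbf{Converse.} Each step above is an equivalence, so the reduced system \eqref{eq:wavefront1}--\eqref{eq:wavefront3} implies both equations of \eqref{eq:NSNSsystem}.

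\textbf{Main obstacle.} The hard part is the bookkeeping of $H\circ_gH$ in the null coframe, in particular the factors of $e^{\pm\cF}$ and $\cH$ coming from $g^{\frv\frv}=-e^{-2\cF}\cH$. The useful trick is to express the $u\wedge v$ part of $H$ through $\ast_hH_b$, so that $|\partial_\fru\alpha+\dd f|^2$ turns into $e^{2\cF}|H_b|^2_h$. After that I would check the signs by taking the trace against $s_g$ for consistency.
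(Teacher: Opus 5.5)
Your route is the paper's. You compare \eqref{eq:Riccitensor} componentwise with $H_{\bar b}\circ_g H_{\bar b}$ and use the vanishing $(\frv,\frv)$ Ricci component to force $\dd\alpha_\fru=0$. You then feed this into Lemma~\ref{lemma:selfdualH} to get $\ast_h\Theta_\fru=-\mu\Theta_\fru$ and $\partial_\fru\alpha_\fru+\dd f_\fru=\mu e^{\cF}\ast_hH_{b_\fru}$, and use the latter to convert $|\partial_\fru\alpha_\fru+\dd f_\fru|^2_h$ into $e^{2\cF}|H_{b_\fru}|^2_h$. The forward direction goes through.

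The gap is in your treatment of $\alpha_\fru$, and the converse hinges on it.
\begin{itemize}
\item Stationarity constrains only the metric. The curving data $b_\fru, f_\fru,\alpha_\fru,\Theta_\fru$ stay $\fru$-dependent, and $\alpha_0$ in \eqref{eq:wavefront3} is the value of the family at $\fru=0$, not an $\fru$-independent $\alpha$.
\item The relation $\partial_\fru\alpha_\fru+\dd f_\fru=\mu e^{\cF}\ast_hH_{b_\fru}$ is needed for the converse. You use it for self-duality itself and for every Einstein component you simplified with it.
\item That relation is not among \eqref{eq:wavefront1}--\eqref{eq:wavefront3}, and it is not implied by $\dd(e^{\cF}\ast_hH_{b_\fru})=0$. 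So passing from it to the closedness of $e^{\cF}\ast_hH_b$ is not an equivalence, and your claim that each step is an equivalence is false as written.
\end{itemize}
The missing idea, which is how the paper proceeds, is to treat that relation as an evolution equation in $\fru$ that determines $\alpha_\fru$ (equivalently $\vartheta_\fru$) from its initial datum:
\[
\alpha_\fru=\alpha_0+\int_0^\fru\big(\mu e^{\cF}\ast_hH_{b_\tau}-\dd f_\tau\big)\,\dd\tau .
\]
With this choice, $\dd\alpha_\fru=0$ for all $\fru$ is equivalent to $\dd\alpha_0=0$ together with $\dd(e^{\cF}\ast_hH_{b_\fru})=0$. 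That is where these two conditions in \eqref{eq:wavefront3} come from, and it is the sense in which the ``if'' direction holds. Under your reading $\alpha_\fru\equiv\alpha_0$, the relation would become $\dd f_\fru=\mu e^{\cF}\ast_hH_{b_\fru}$. That is an exactness condition strictly stronger than the listed closedness condition, so the converse would fail.

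A smaller point concerns the $(\fru,w)$ component of $H_{\bar b}\circ_gH_{\bar b}$. It has two cross terms, $\langle\Theta_\fru,H_{b_\fru}(w)\rangle_h-e^{-\cF}\langle\partial_\fru\alpha_\fru+\dd f_\fru,\Theta_\fru(w)\rangle_h$, not one. Only after inserting $\partial_\fru\alpha_\fru+\dd f_\fru=\mu e^{\cF}\ast_hH_{b_\fru}$ and $\ast_h\Theta_\fru=-\mu\Theta_\fru$ does the second term become $+\langle\Theta_\fru,H_{b_\fru}(w)\rangle_h$. The two then add to $2\langle\Theta_\fru,H_{b_\fru}(w)\rangle_h$ rather than cancelling. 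You need to carry out this sign check, because it decides whether the last condition in \eqref{eq:wavefront3} appears at all.
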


\begin{proof}
A pair $(g,\bar{b})$ is a solution of six-dimensional minimal supergravity on $(\bar{\cP},\bar{A},\bar{Y})$ if and only if:
\begin{equation*}
\mathrm{Ric}^g =\frac{1}{2} H_{\bar{b}} \circ_g H_{\bar{b}}\, , \qquad \ast_g H_{\bar{b}} = \mu H_{\bar{b}}\, .
\end{equation*}

\noindent
Since $g$ is by assumption a standard non-twisting Kundt six-dimensional metric on $\cI_{\fru}\times \cI_{\frv}\times N$ as given in \eqref{eq:nontwistingmetric}, its Ricci tensor is given in Equation \eqref{eq:Riccitensor}. We proceed then to decompose the symmetric tensor $H_{\bar{b}}\circ_g H_{\bar{b}}\in \Gamma(T^{\ast}M\odot T^{\ast}M)$ in the splitting given by $M= \cI_{\fru}\times \cI_{\frv} \times N$. First, we compute:
\begin{align*}
H_{\bar{b}}(\partial_{\fru}) &=  e^{-\cF_{\fru}} (\partial_{\fru}\alpha_{\fru} + \dd f_{\fru})   \wedge (\frac{1}{2} \cH u - v) +    \Theta_{\fru}\, ,\\
H_{\bar{b}}(\partial_{\frv}) &= (\partial_{\fru}\alpha_{\fru} + \dd f_{\fru})  \wedge u   - \dd \alpha_{\fru}\, ,\\
H_{\bar{b}}(w) &= H_{b_{\fru}}(w) + e^{-\cF} (\partial_{\fru}\alpha_{\fru}(w) + \dd f_{\fru}(w) ) u \wedge v \\
&\quad - u\wedge (\Theta_{\fru}(w)  + \frac{1}{2}\cH e^{-\cF}   \dd \alpha_{\fru}(w) ) + e^{-\cF}  v \wedge \dd \alpha_{\fru}(w)
\end{align*}

\noindent
for every $w\in\Gamma(TN)$. Using these expressions, a calculation gives:
\begin{equation}\label{eq:HHstandard}
\begin{aligned}
(H_{\bar{b}}\circ_g H_{\bar{b}})(\partial_{\fru} , \partial_{\fru}) &= \vert\Theta_{\fru}\vert^2_h - \cH e^{-2\cF} \vert \partial_{\fru} \alpha_{\fru} + \dd f_{\fru}\vert_h^2\, ,\\
(H_{\bar{b}}\circ_g H_{\bar{b}})(\partial_{\frv} , \partial_{\frv}) &= \vert \dd \alpha_{\fru} \vert^2_h\, ,\\
(H_{\bar{b}}\circ_g H_{\bar{b}})(\partial_{\fru} , \partial_{\frv}) &= - e^{-\cF} \vert \partial_{\fru} \alpha_{\fru} + \dd f_{\fru}\vert^2_h - \langle \dd\alpha_{\fru}, \Theta_{\fru} \rangle_h\, ,\\
(H_{\bar{b}}\circ_g H_{\bar{b}})(\partial_{\fru} , w) &= \langle \Theta_{\fru} , H_{b_{\fru}}(w) \rangle_h - e^{-\cF}\langle \partial_{\fru}\alpha_{\fru} + \dd f_{\fru} , \Theta_{\fru}(w) + \cH e^{-\cF} \dd\alpha_{\fru}(w)\rangle_h\, ,\\
(H_{\bar{b}}\circ_g H_{\bar{b}})(\partial_{\frv} , w) &= - \langle \dd\alpha_{\fru} , H_{b_{\fru}}(w) \rangle_h - e^{-\cF}\langle \partial_{\fru}\alpha_{\fru} + \dd f_{\fru}, \dd\alpha_{\fru}(w)\rangle_h\, ,\\
(H_{\bar{b}}\circ_g H_{\bar{b}})\vert_{TN\times TN} &= H_{b_{\fru}}\circ_h H_{b_{\fru}} - e^{-2\cF} (\partial_{\fru}\alpha_{\fru} + \dd f_{\fru} ) \circ_h (\partial_{\fru}\alpha_{\fru} + \dd f_{\fru})\\
&\quad - e^{-\cF} (\Theta_{\fru}\circ_h \dd \alpha_{\fru} + \dd \alpha_{\fru} \circ_h \Theta_{\fru}) - \cH e^{-2\cF} \dd \alpha_{\fru} \circ_h \dd \alpha_{\fru}\, .
\end{aligned}
\end{equation}

\noindent
Combining these equations with the previous decomposition of the Ricci tensor, it follows that the equation $\mathrm{Ric}^g = \frac{1}{2} H_{\bar{b}} \circ_g H_{\bar{b}}$ is equivalent to:
 \begin{eqnarray*}
& \frac{1}{2} \nabla^{h\ast} \dd\cH + \frac{1}{2} \langle \dd\cH, \dd\cF \rangle_h - \frac{1}{2} \cH \vert \dd\cF \vert^2_h  =  \frac{1}{2} (\vert\Theta_{\fru} \vert^2_h - \cH e^{-2\cF} \vert \partial_{\fru} \alpha_{\fru} + \dd f_{\fru}\vert_h^2)\, ,\\  
& \frac{1}{2} e^{\cF} \left( \nabla^{h\ast}\dd \cF - \vert \dd\cF \vert^2_h \right)  = - \frac{1}{2} e^{-\cF} \vert \partial_{\fru} \alpha_{\fru} + \dd f_{\fru} \vert^2_h\, ,\\
& \mathrm{Ric}^h - \nabla^h \dd \cF - \frac{1}{2} \dd \cF \otimes \dd \cF =\frac{1}{2} (H_{b_{\fru}}\circ_h H_{b_{\fru}} - e^{-2\cF} (\partial_{\fru}\alpha_{\fru} + \dd f_{\fru})\circ_h (\partial_{\fru}\alpha_{\fru} + \dd f_{\fru}))\, ,\\
&  \dd\alpha_{\fru} = 0\, ,\qquad \langle \Theta_{\fru} , H_{b_{\fru}}(w) \rangle_h = e^{-\cF}\langle \partial_{\fru}\alpha_{\fru} + \dd f_{\fru} , \Theta_{\fru}(w)  \rangle_h\, , \qquad w\in\Gamma(TN)\, .
\end{eqnarray*}

\noindent
On the other hand, by Lemma \ref{lemma:selfdualH} it follows that if $(g,\bar{b})$ is a solution to the Einstein equation $\mathrm{Ric}^g = \frac{1}{2} H_{\bar{b}} \circ_g H_{\bar{b}}$, then $H_{\bar{b}}$ is $\mu$-self-dual if and only if:
\begin{equation*}
\ast_h\Theta_{\fru} = -\mu \Theta_{\fru}\, , \qquad  \partial_{\fru}\alpha_{\fru} + \dd f_{\fru}= \mu e^{\cF} \ast_h H_{b_{\fru}}\, .
\end{equation*}

\noindent
The second equation is immediately solved by:
\begin{equation*}
\alpha_\fru:=\int_{0}^{\fru}(\mu e^{\cF} \ast_h H_{b_{\tau}}-\dd f_{\tau}) \dd \tau + \alpha_0
\end{equation*}

\noindent
for a $\fru$-independent one-form $\alpha_0\in \Omega^1(N)$. Such family of one-forms $\alpha_{\fru}$ is closed if and only if:
\begin{equation*}
\dd(e^{\cF} \ast_h H_{b_{\tau}}) = 0\, , \qquad \dd\alpha_0 = 0\, ,
\end{equation*}

\noindent
in which case it automatically satisfies $\partial_{\fru}\dd\alpha_{\fru} = 0$. Combining the previous two sets of equations, we conclude that $(g,\bar{b})$ is a solution to minimal six-dimensional supergravity if and only if the equations in the statement of the proposition hold. For this, we have used the following identity:
\begin{eqnarray*}
e^{-\cF}\langle \partial_{\fru}\alpha_{\fru} + \dd f_{\fru}, \Theta_{\fru}(w)  \rangle_h = \mu \langle \ast_h H_{b_{\fru}}, \Theta_{\fru}(w)  \rangle_h  = -\langle \ast_h H_{b_{\fru}}, \ast_h (\Theta_{\fru}\wedge w^{\flat_h})  \rangle_h = -\langle  H_{b_{\fru}}(w) ,  \Theta_{\fru}  \rangle_h
\end{eqnarray*}

\noindent
for all $w\in\Gamma(TN)$.
\end{proof}

\begin{prop}
\label{prop:onlycompactwave}
Let $\bar{b}$ be a curving on a reducible bundle gerbe $(\bar{\cP},\bar{A},\bar{Y})$ on $M = \cI_{\fru}\times \cI_{\frv}\times N$ with reduction $(\cP,Y,A_{\fru} , \psi_{\fru} , \phi_{\fru})$, and let $g$ be a non-twisting, stationary, standard Kundt six-dimensional metric. Then, $(g,\bar{b})$ is a solution of six-dimensional minimal supergravity only if:
\begin{equation*}
s_h = 2 \vert H_{b_{\fru}}\vert^2_h - \frac{1}{2}\vert\dd \cF\vert^2_h \, , \qquad \nabla^{h\ast}(e^{\cF} \dd (\cH e^{-\cF})) =   \vert \Theta_{\fru}\vert^2_h \, ,
\end{equation*}

\noindent
where $s_h$ is the scalar curvature of $h$.
\end{prop}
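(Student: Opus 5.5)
Both identities will be obtained directly from the reduced system of Proposition \ref{prop:wavefrontreduction}, which we may assume holds. No new geometric input is needed; only traces and algebraic recombinations of equations \eqref{eq:wavefront1} and \eqref{eq:wavefront2} are involved.

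\emph{The scalar curvature identity.} We take the $h$-trace of \eqref{eq:wavefront2} on the four-manifold $N$. On the left, $\Tr_h \mathrm{Ric}^h = s_h$ and $\Tr_h \nabla^h\dd\cF = -\nabla^{h\ast}\dd\cF$ (with the convention $\nabla^{h\ast}\dd f = -\Tr_h\nabla^h\dd f$). The term $\frac12\dd\cF\otimes\dd\cF$ contributes $\frac12|\dd\cF|^2_h$. On the right we need $\Tr_h(H_{b_\fru}\circ_h H_{b_\fru})$. With the paper's convention $(H\circ H)(w_1,w_2)=\langle H(w_1),H(w_2)\rangle$, this trace equals $3|H|^2_h$ when $|\cdot|_h$ is the form norm. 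The right-hand side therefore traces to $\frac12(2\cdot 3 - 4)|H_{b_\fru}|^2_h = |H_{b_\fru}|^2_h$, and we get
\begin{equation*}
s_h + \nabla^{h\ast}\dd\cF - \tfrac12|\dd\cF|^2_h = |H_{b_\fru}|^2_h .
\end{equation*}
Substituting the second equation of \eqref{eq:wavefront1}, namely $\nabla^{h\ast}\dd\cF = |\dd\cF|^2_h - |H_{b_\fru}|^2_h$, yields $s_h = 2|H_{b_\fru}|^2_h - \frac12|\dd\cF|^2_h$. The main risk here is normalization: the factor relating $\Tr_h(H\circ_h H)$ to $|H|^2_h$ must match the convention implicit in \eqref{eq:wavefront2}. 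We will fix it by checking the trace of the six-dimensional Einstein equation $\Ric^g=\frac12 H\circ_g H$ against the scalar curvature formula $s_g = s_h + 2\nabla^{h\ast}\dd\cF - \frac32|\dd\cF|^2_h$ stated before the proposition, and adjust if necessary.

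\emph{The $\cH$ identity.} We expand
\begin{equation*}
\nabla^{h\ast}(e^{\cF}\dd(\cH e^{-\cF})) = \nabla^{h\ast}(\dd\cH - \cH\dd\cF).
\end{equation*}
Using $\nabla^{h\ast}(f\theta) = f\nabla^{h\ast}\theta - \langle \dd f,\theta\rangle_h$, this becomes
\begin{equation*}
\nabla^{h\ast}\dd\cH - \cH\nabla^{h\ast}\dd\cF + \langle\dd\cH,\dd\cF\rangle_h .
\end{equation*}
Now replace $\nabla^{h\ast}\dd\cH + \langle\dd\cH,\dd\cF\rangle_h$ using the first equation of \eqref{eq:wavefront1}, which gives $|\Theta_\fru|^2_h - \cH|H_{b_\fru}|^2_h + \cH|\dd\cF|^2_h$. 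Then replace $\cH\nabla^{h\ast}\dd\cF$ using the second equation, which gives $\cH|\dd\cF|^2_h - \cH|H_{b_\fru}|^2_h$. All the $\cH$-terms cancel, leaving $|\Theta_\fru|^2_h$. This step is a short signed computation, and the only care needed is the sign convention for the divergence.
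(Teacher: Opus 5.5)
Your proposal is correct and is essentially the paper's own proof. You trace \eqref{eq:wavefront2} and substitute the second equation of \eqref{eq:wavefront1}; your normalization $\Tr_h(H_{b_\fru}\circ_h H_{b_\fru}) = 3\vert H_{b_\fru}\vert^2_h$ is the right one, so the extra consistency check you planned is unnecessary. You then combine both equations of \eqref{eq:wavefront1} via $\dd\cH - \cH\dd\cF = e^{\cF}\dd(\cH e^{-\cF})$, exactly as the paper does.
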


\begin{proof}
Taking the trace of Equation \eqref{eq:wavefront2}, we obtain:    
\begin{equation*}
s_h + \nabla^{h\ast}\dd\cF - \frac{1}{2}\vert\dd \cF\vert^2_h =   \vert H_{b_{\fru}}\vert^2_h \, ,
\end{equation*}

\noindent
which, combined with the second equation in \eqref{eq:wavefront1}, gives $s_h = 2 \vert H_{b_{\fru}}\vert^2_h - \frac{1}{2}\vert\dd \cF\vert^2_h$. On the other hand, combining both equations in \eqref{eq:wavefront1} to eliminate $\vert H_{b_{\fru}}\vert^2_h$ and $\vert \dd\cF \vert^2_h$, we obtain:
\begin{equation*}
0 = \nabla^{h\ast} \dd\cH - \cH \nabla^{h\ast} \dd\cF  + \langle \dd\cH, \dd\cF \rangle_h  -   \vert\Theta_{\fru} \vert^2_h = \nabla^{h\ast} (\dd\cH - \cH  \dd\cF)   -   \vert\Theta_{\fru} \vert^2_h \, ,
\end{equation*}

\noindent
which gives the second equation in the statement since $\dd\cH - \cH \dd\cF = e^{\cF} \dd (\cH e^{-\cF})$.
\end{proof}

\begin{remark}
\label{remark:tuples}
For simplicity in the exposition, we will refer to tuples $(h,\cH,\cF, f_{\fru} , \varrho_{\fru}, \vartheta_{\fru} , b_{\fru})$ satisfying the system of Proposition \ref{prop:wavefrontreduction} as solutions of minimal six-dimensional supergravity on $(\cP,Y,A_{\fru} , \psi_{\fru} , \phi_{\fru})$.
\end{remark}

\begin{lemma}
\label{lemma:Ncompact}
Let $(h,\cH,\cF, f_{\fru} , \varrho_{\fru}, \vartheta_{\fru} , b_{\fru})$ be a solution of minimal six-dimensional supergravity on $(\cP,Y,A_{\fru} , \psi_{\fru} , \phi_{\fru})$, defined over a compact four-manifold $N$ without boundary. Then its derived family of two-forms is zero, that is $\Theta_{\fru}=0$, and consequently $b_{\fru}$ is flat and $\cF$ and $\cH$ are constant.
\end{lemma}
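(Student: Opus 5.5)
The natural route is to integrate the identities of Proposition \ref{prop:onlycompactwave} over the closed four-manifold $N$ against the Riemannian volume form $\nu_h$. The second identity there reads
\begin{equation*}
\nabla^{h\ast}\big(e^{\cF}\dd(\cH e^{-\cF})\big) = \vert\Theta_{\fru}\vert^2_h .
\end{equation*}
Its left-hand side is the divergence of a globally defined one-form on $N$. The functions $\cH$ and $\cF$ are smooth functions on $N$ because the configuration is stationary. Since $N$ is compact without boundary, Stokes' theorem gives
\begin{equation*}
\int_N \nabla^{h\ast}\big(e^{\cF}\dd(\cH e^{-\cF})\big)\,\nu_h = 0 ,
\end{equation*}
and therefore $\int_N\vert\Theta_{\fru}\vert^2_h\,\nu_h=0$. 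As $h$ is Riemannian, this forces $\Theta_{\fru}=0$ for every $\fru$. This is the first claim.

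Next I would treat $\cF$. The second equation in \eqref{eq:wavefront1} is $\nabla^{h\ast}\dd\cF=\vert\dd\cF\vert^2_h-\vert H_{b_{\fru}}\vert^2_h$. I would rewrite it with the substitution $\Phi:=e^{-\cF}$. With the positive Laplacian convention of the paper, $\nabla^{h\ast}\dd e^{-\cF}=-e^{-\cF}\nabla^{h\ast}\dd\cF - e^{-\cF}\vert\dd\cF\vert^2_h$ (I would double-check this sign bookkeeping carefully). Substituting the equation gives
\begin{equation*}
\nabla^{h\ast}\dd e^{-\cF}=e^{-\cF}\big(\vert H_{b_{\fru}}\vert^2_h-2\vert\dd\cF\vert^2_h\big).
\end{equation*}
This does not close by itself, so I would pick the exponent to kill the gradient term. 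Take $\Phi=e^{-k\cF}$, for which
\begin{equation*}
\nabla^{h\ast}\dd\Phi=-k\Phi\big(\nabla^{h\ast}\dd\cF+k\vert\dd\cF\vert^2_h\big)=-k\Phi\big((1+k)\vert\dd\cF\vert^2_h-\vert H_{b_{\fru}}\vert^2_h\big).
\end{equation*}
Choosing $k=-1$, that is $\Phi=e^{\cF}$, yields $\nabla^{h\ast}\dd e^{\cF}=-e^{\cF}\vert H_{b_{\fru}}\vert^2_h$. Integrating over the closed manifold $N$, the left side vanishes and the right side is non-positive with integrand $e^{\cF}\vert H_{b_{\fru}}\vert^2_h\geq 0$. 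Hence $H_{b_{\fru}}=0$, which is the flatness of $b_{\fru}$. Then $\nabla^{h\ast}\dd e^{\cF}=0$, so $e^{\cF}$ is harmonic on a compact connected manifold, hence constant. Therefore $\cF$ is constant.

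Finally, with $\Theta_{\fru}=0$ and $\cF$ constant, the first equation in \eqref{eq:wavefront1} reduces to $\nabla^{h\ast}\dd\cH=0$, since every term involving $\dd\cF$ or $H_{b_{\fru}}$ drops out. Equivalently, the identity of Proposition \ref{prop:onlycompactwave} becomes $\nabla^{h\ast}\dd(\cH e^{-\cF})=0$. So $\cH$ is harmonic on closed $N$ and therefore constant, assuming $N$ is connected, as is implicit throughout.

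The main obstacle is the middle step. The sign conventions for $\nabla^{h\ast}$ must be verified, and one must confirm that the choice $e^{\cF}$ indeed produces a sign-definite right-hand side. If the signs turned out differently, I would instead multiply the $\cF$-equation by a suitable power $e^{m\cF}$ and integrate by parts, searching for the exponent $m$ that makes both remaining terms sign-definite.
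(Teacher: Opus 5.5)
Your proof is correct and follows essentially the same route as the paper. You integrate the divergence identity of Proposition \ref{prop:onlycompactwave} to obtain $\Theta_{\fru}=0$. You rewrite the second equation in \eqref{eq:wavefront1} as $\nabla^{h\ast}\dd e^{\cF}=-e^{\cF}\vert H_{b_{\fru}}\vert^2_h$ and integrate, which gives $H_{b_{\fru}}=0$ and $e^{\cF}$ harmonic, hence constant. You then read off that $\cH$ is constant from the first equation in \eqref{eq:wavefront1}. Your sign bookkeeping for $\nabla^{h\ast}\dd e^{-k\cF}$ checks out, so the fallback you mention is not needed.
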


\begin{proof}
Integrating the second equation in Proposition \ref{prop:onlycompactwave} and using the divergence theorem we get $\Theta_{\fru}=0$. On the other hand, the second equation in \eqref{eq:wavefront1} can be written as $\nabla^{h*}\dd e^\cF = -e^{\cF}\vert H_{b_{\fru}} \vert^2_h$. Integrating again we obtain $H_{b_{\fru}}=0$, so $b_{\fru}$ is flat. Moreover, since $N$ is compact, the only harmonic functions are constant, thus $e^\cF$ is constant, which implies that $\cF$ is also constant. Finally, we obtain that $\cH$ is constant from the first equation in \eqref{eq:wavefront1}.
\end{proof}

\noindent
Hence, if $N$ is compact without boundary then the reduced differential system \eqref{eq:wavefront1}, \eqref{eq:wavefront2}, and \eqref{eq:wavefront3} reduces to the Ricci-flatness condition on $(N,h)$.

\begin{example}
\label{ep:susyexample}
As an example of solution to Proposition \ref{prop:wavefrontreduction}, let us consider $\Theta_{\fru} = 0$, $\cH=e^{\cF}$ and a globally conformally flat ansatz for the transverse four-manifold $N = \mathbb{R}^4 \setminus \{0\} = \mathbb{R}_{>0}\times S^3$ with metric:
\begin{equation*}
h = e^{U}(\dd r^2 + r^2 h_{S^3}),
\end{equation*}

\noindent
where $U = U(r)$ is a smooth function depending only on the radial coordinate $r$, and $h_{S^3}$ is the standard round metric on the unit three-sphere $S^3$. By spherical symmetry, we assume the three-form gerbe flux is proportional to the Riemannian volume form $\nu_{S^3}$  on the standard sphere:
\begin{equation*}
H_b = \mathfrak{e} \nu_{S^3},
\end{equation*}

\noindent
where $\mathfrak{e}\in\R$ is a constant magnetic charge. The Hodge dual of the flux with respect to $h$ is given by:
\begin{equation*}
\ast_h H_b = \mathfrak{e} e^{-U} r^{-3} \dd r.
\end{equation*}

\noindent
Substituting this expression into the Maxwell equation in Proposition \ref{prop:wavefrontreduction} (third equation in \eqref{eq:wavefront3}) yields:
\begin{equation*}
\dd(e^{\cF} \ast_h H_b) = \dd(\mathfrak{e} e^{\cF - U} r^{-3} \dd r) = 0,
\end{equation*}

\noindent
which is automatically satisfied for any choice of radial functions $\cF(r)$ and $U(r)$. To continue further, we impose the BPS-like constraint linking the longitudinal wave profile to the transverse conformal factor:
\begin{equation*}
\cF(r) = - U(r).
\end{equation*}

\noindent
Under this assumption, the squared norm of the flux becomes $\vert H_b \vert^2_h = \mathfrak{e}^2 e^{-3U} r^{-6}$. Substituting $\cF = -U$ into the non-linear dilaton and Einstein equations in Proposition \ref{prop:wavefrontreduction} (second equation in \eqref{eq:wavefront1} and equation \eqref{eq:wavefront2}, respectively) causes the non-linear terms to cancel, and the coupled system reduces entirely to the Laplace equation on the flat Euclidean background $\mathbb{R}^4$ together with a constraint on the integration constants:
\begin{equation*}
\partial_r^2 e^{U} + \frac{3}{r} \partial_r e^{U} = 0,\qquad \mathfrak{e}  = 2\mathfrak{m}.
\end{equation*}

\noindent
The general spherically symmetric solution to this equation is the harmonic function:
\begin{equation*}
e^{U(r)} = 1 + \frac{\mathfrak{m}}{r^2},
\end{equation*}

\noindent
for a positive constant $\mathfrak{m}\in\mathbb{R}_{>0}$. The Lorentzian metric on $M$ is given by $g = e^{\cF}(\dd\fru\otimes \dd\fru +  \dd \fru\odot \dd \frv) + h$. However, by the change of coordinates $\frv\mapsto\frv+\frac{1}{2}\fru$, we can get rid of the term $\dd\fru\otimes\dd\fru$. Thus, we obtain the solution:
\begin{equation*}
g = \Big(1+\frac{\mathfrak{m}}{r^2}\Big)^{-1} \dd \fru\odot \dd \frv + \Big(1+\frac{\mathfrak{m}}{r^2}\Big)(\dd r^2 + r^2 h_{S^3}),\qquad  H_b = 2\mathfrak{m}\nu_{S^3},
\end{equation*}

\noindent
which represents a \emph{black brane} in six Lorentzian dimensions and recovers a well-known supersymmetric solution of minimal six-dimensional supergravity \cite{Duff_Lu_1994}.
\end{example}

For the remainder of the article, we consider the case $\Theta_{\fru} = 0$, in which case the system in Proposition \ref{prop:wavefrontreduction} reduces to the following differential system on $(\cP,Y,A_{\fru} , \psi_{\fru} , \phi_{\fru})$:
\begin{eqnarray}
& \nabla^{h\ast} \dd\cH + \langle \dd\cH, \dd\cF \rangle_h -   \cH \vert \dd\cF \vert^2_h  +   \cH \vert H_{b_{\fru}} \vert^2_h = 0\, , \quad  \nabla^{h\ast}\dd \cF   = \vert \dd\cF \vert^2_h -   \vert H_{b_{\fru}} \vert^2_h\, ,\label{eq:wavefront01}\\
& \mathrm{Ric}^h - \nabla^h \dd \cF - \frac{1}{2} \dd \cF \otimes \dd \cF = \frac{1}{2} (2H_{b_{\fru}}\circ_h H_{b_{\fru}} - h \vert H_{b_{\fru}} \vert^2_h )\, ,\label{eq:wavefront02}\\
& \dd\alpha_{0} = 0\, ,\  \quad \dd (e^{\cF} \ast_h H_{b_{\fru}}) = 0\, .\label{eq:wavefront03}
\end{eqnarray}

To proceed further, it is convenient to perform the following conformal transformation of the metric, as implemented in the following lemma.

\begin{lemma}
\label{lemma:conformaltrans}
A tuple $(h,\cH,\cF, f_{\fru} , \varrho_{\fru}, \vartheta_{\fru} , b_{\fru})$ satisfies the system in Proposition \ref{prop:wavefrontreduction} if and only if $(\mathfrak{h} = e^{\cF}h,\cH,\cF, f_{\fru} , \varrho_{\fru}, \vartheta_{\fru} , b_{\fru})$ satisfies:
\begin{eqnarray}
& \nabla^{\mathfrak{h}\ast} \dd\cH + 2\langle \dd\cH, \dd\cF \rangle_{\mathfrak{h}} -   \cH \vert \dd\cF \vert^2_{\mathfrak{h}}  +  \cH e^{2\cF} \vert H_{b_{\fru}} \vert^2_{\mathfrak{h}} = 0\, , \quad  \nabla^{\mathfrak{h}\ast}\dd \cF  +   e^{2\cF} \vert H_{b_{\fru}} \vert^2_{\mathfrak{h}} = 0\, ,\label{eq:wavefront001}\\
& \mathrm{Ric}^{\mathfrak{h}} = \dd\cF \otimes \dd\cF +   e^{2\cF} (H_{b_{\fru}}\circ_{\mathfrak{h}} H_{b_{\fru}} -  \vert H_{b_{\fru}} \vert^2_{\mathfrak{h}}\mathfrak{h} )\, , \quad \dd\alpha_{0} = 0\, ,  \quad \dd (e^{2\cF} \ast_{\mathfrak{h}} H_{b_{\fru}}) = 0\, .\label{eq:wavefront002}
\end{eqnarray}
\end{lemma}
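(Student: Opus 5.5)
This is a direct verification, using standard conformal change formulas in dimension four. Set $\mathfrak{h}=e^{\cF}h$, so $n=4$ and the conformal factor is $e^{2\varphi}$ with $\varphi=\cF/2$. The steps below are carried out in the order listed.

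\textbf{Step 1: the algebraic quantities.} We have $\vert\dd f\vert^2_{\mathfrak{h}}=e^{-\cF}\vert\dd f\vert^2_h$ and $\langle\dd\cH,\dd\cF\rangle_{\mathfrak{h}}=e^{-\cF}\langle\dd\cH,\dd\cF\rangle_h$. For the flux, $\vert H\vert^2_{\mathfrak{h}}=e^{-3\cF}\vert H\vert^2_h$, hence $e^{2\cF}\vert H\vert^2_{\mathfrak{h}}=e^{-\cF}\vert H\vert^2_h$. Similarly $e^{2\cF}H\circ_{\mathfrak{h}}H=H\circ_hH$. For the Hodge dual of a three-form in dimension four, $\ast_{\mathfrak{h}}H=e^{-\cF}\ast_hH$. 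The last relation immediately converts $\dd(e^{\cF}\ast_hH)=0$ into $\dd(e^{2\cF}\ast_{\mathfrak{h}}H)=0$, and the condition $\dd\alpha_0=0$ is untouched.

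\textbf{Step 2: the scalar equations.} For a function $f$ and $\mathfrak{h}=e^{2\varphi}h$ in dimension $n$, one has $\nabla^{\mathfrak{h}\ast}\dd f=e^{-2\varphi}\big(\nabla^{h\ast}\dd f-(n-2)\langle\dd\varphi,\dd f\rangle_h\big)$. With $n=4$ and $\varphi=\cF/2$ this gives $e^{\cF}\nabla^{\mathfrak{h}\ast}\dd f=\nabla^{h\ast}\dd f-\langle\dd\cF,\dd f\rangle_h$.

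Applied to $f=\cF$, the second equation of \eqref{eq:wavefront01} turns into $\nabla^{\mathfrak{h}\ast}\dd\cF+e^{2\cF}\vert H\vert^2_{\mathfrak{h}}=0$. The $\vert\dd\cF\vert^2$ terms cancel exactly.

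Applied to $f=\cH$, the first equation of \eqref{eq:wavefront01} becomes, after multiplying by $e^{-\cF}$, $\nabla^{\mathfrak{h}\ast}\dd\cH+2\langle\dd\cH,\dd\cF\rangle_{\mathfrak{h}}-\cH\vert\dd\cF\vert^2_{\mathfrak{h}}+\cH e^{2\cF}\vert H\vert^2_{\mathfrak{h}}=0$. This is the first equation of \eqref{eq:wavefront001}.

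\textbf{Step 3: the Ricci equation (the main bookkeeping step).} Use $\mathrm{Ric}^{\mathfrak{h}}=\mathrm{Ric}^h-(n-2)(\nabla^h\dd\varphi-\dd\varphi\otimes\dd\varphi)+(\Delta\text{-term})\,h$. With $\varphi=\cF/2$ and $n=4$ this reads
\[
\mathrm{Ric}^{\mathfrak{h}}=\mathrm{Ric}^h-\nabla^h\dd\cF+\tfrac12\dd\cF\otimes\dd\cF+\tfrac12\big(\nabla^{h\ast}\dd\cF-\vert\dd\cF\vert^2_h\big)h,
\]
where the last coefficient is fixed using the paper's positive-Laplacian convention. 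Now substitute \eqref{eq:wavefront02} for $\mathrm{Ric}^h-\nabla^h\dd\cF$, which contributes an extra $\tfrac12\dd\cF\otimes\dd\cF$. Then use the dilaton equation $\nabla^{h\ast}\dd\cF-\vert\dd\cF\vert^2_h=-\vert H\vert^2_h$. The result is
\[
\mathrm{Ric}^{\mathfrak{h}}=\dd\cF\otimes\dd\cF+H\circ_hH-\vert H\vert^2_hh=\dd\cF\otimes\dd\cF+e^{2\cF}\big(H\circ_{\mathfrak{h}}H-\vert H\vert^2_{\mathfrak{h}}\mathfrak{h}\big),
\]
using Step 1 and $h=e^{-\cF}\mathfrak{h}$.

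The step I expect to be delicate is exactly this one: getting the sign and normalization of the trace term in the conformal Ricci formula right with the paper's conventions. I would double-check it by tracing both sides and comparing with Proposition \ref{prop:onlycompactwave}.

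\textbf{Converse.} Every manipulation above is invertible: each step is an identity or multiplication by a nowhere vanishing factor. The dilaton equation, used in Step 3, is itself part of both systems. Reversing the steps therefore gives the converse, and the tuple components other than $h$ are unchanged.
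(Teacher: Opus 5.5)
Your proposal is correct and follows essentially the same route as the paper. Both arguments use the conformal-change formulas for $\nabla^{\ast}\mathrm{d}$ and $\mathrm{Ric}$, together with the scalings of $\vert H\vert^2$, $H\circ H$ and $\ast H$ on three-forms, and both invoke the dilaton equation to absorb the trace term in the Ricci equation. The only cosmetic difference is that you write $\mathrm{Ric}^{\mathfrak{h}}$ in terms of $\mathrm{Ric}^{h}$, whereas the paper writes $\mathrm{Ric}^{h}$ in terms of $\mathrm{Ric}^{\mathfrak{h}}$. Your remark that the dilaton equation belongs to both systems is exactly what makes the converse direction work.
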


\begin{proof}
Setting $\mathfrak{h} = e^{\cF} h$, we compute:
\begin{equation*}
\nabla^{h} \dd\cH = \nabla^{\mathfrak{h}}\dd \cH + \frac{1}{2} (\dd\cF \otimes \dd\cH + \dd\cH \otimes \dd\cF) - \frac{1}{2} \langle \dd\cF ,\dd\cH\rangle_{\mathfrak{h}} \mathfrak{h}\, ,
\end{equation*}
from which we obtain:
\begin{equation*}
\nabla^{h\ast}\dd\cH = e^{\cF} (\nabla^{\mathfrak{h}\ast}\dd\cH  + \langle \dd\cF , \dd\cH \rangle_{\mathfrak{h}})\, , \qquad \nabla^{h\ast}\dd\cF = e^{\cF} (\nabla^{\mathfrak{h}\ast}\dd\cF  + \vert \dd\cF\vert^2_{\mathfrak{h}})\, .
\end{equation*}

\noindent
Plugging these expressions into the equations in \eqref{eq:wavefront01}, we obtain both equations in \eqref{eq:wavefront001}. On the other hand, a computation gives the following formulas:
\begin{equation*}
\mathrm{Ric}^{h} =  \mathrm{Ric}^{\mathfrak{h}} + \nabla^{\mathfrak{h}}\dd\cF + \frac{1}{2} \dd\cF \otimes \dd\cF - \frac{1}{2} (\nabla^{\mathfrak{h}\ast}\dd\cF + \vert \dd\cF\vert^2_{\mathfrak{h}}) \mathfrak{h}\, , \qquad \ast_h H_{b_{\fru}} = e^{\cF}  \ast_{\mathfrak{h}} H_{b_{\fru}}\, .
\end{equation*}

\noindent
Plugging these expressions into the equations in \eqref{eq:wavefront02} and \eqref{eq:wavefront03}, we obtain both equations in \eqref{eq:wavefront002} after noticing that $H_{b_{\fru}}\circ_h H_{b_{\fru}} =  e^{2\cF}   H_{b_{\fru}}\circ_{\mathfrak{h}} H_{b_{\fru}}$.
\end{proof}

\noindent
We now proceed to decouple the system comprising equations \eqref{eq:wavefront01}, \eqref{eq:wavefront02}, and \eqref{eq:wavefront03}, thereby providing a clear procedure for constructing standard Kundt solutions in minimal six-dimensional supergravity with a vanishing derived family of two-forms.

\begin{thm}
\label{thm:finalreducedsystem}
Every $\fru$-independent and non-twisting standard Kundt solution of minimal six-dimen\-sional supergravity with vanishing derived family of two-forms $\Theta_{\fru} = 0$ on $\mathbb{R}^2\times N$ determines a solution $(\mathfrak{h},b,\cF)$ of the system:
\begin{equation}\label{eq:finalreducedsystem}
\begin{gathered}
\mathrm{Ric}^{\mathfrak{h}} = \dd\cF \otimes \dd\cF +  e^{2\cF} (H_b\circ_{\mathfrak{h}} H_b -  \vert H_b \vert^2_{\mathfrak{h}}\mathfrak{h} )\, ,\\
\nabla^{\mathfrak{h}\ast}\dd \cF  + e^{2\cF} \vert H_b \vert^2_{\mathfrak{h}} = 0\, ,\\
\dd (e^{2\cF} \ast_{\mathfrak{h}} H_b) = 0
\end{gathered}
\end{equation}

\noindent
on $(\cP,Y,A)$ together with a harmonic function $\bar{\cH}$ on $(N,\mathfrak{h})$. Conversely, every tuple $(\mathfrak{h},b,\cF,\bar{\cH})$ consisting of a solution $(h,b,\cF)$ to the previous system together with a harmonic function $\bar{\cH}$ on $(N,\mathfrak{h})$, defines a natural solution to minimal six-dimensional supergravity on $\mathbb{R}^2 \times N$ with vanishing derived family of two-forms, Lorentzian metric: 
\begin{equation*}
g =  \bar{\cH} e^{\cF} \dd\fru\otimes \dd\fru +  e^{\cF} \dd \fru\odot \dd \frv + e^{-\cF} \mathfrak{h}\, ,
\end{equation*}
and curving curvature $H_{\bar{b}} = H_b + \mu e^{2\cF} \dd\fru \wedge \dd\frv \wedge \ast_{\mathfrak{h}} H_b$.
\end{thm}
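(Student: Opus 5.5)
The plan is to derive the statement directly from Lemma \ref{lemma:conformaltrans}, which already rewrites the reduced system \eqref{eq:wavefront01}--\eqref{eq:wavefront03} (valid when $\Theta_{\fru}=0$) in terms of $\mathfrak{h}=e^{\cF}h$. Since the solution is $\fru$-independent, the families $b_{\fru}$ and $H_{b_{\fru}}$ are constant in $\fru$, and we write $b$, $H_b$. In Lemma \ref{lemma:conformaltrans}, equations \eqref{eq:wavefront002} together with the second equation in \eqref{eq:wavefront001} are precisely the system \eqref{eq:finalreducedsystem}. The condition $\dd\alpha_0=0$ only constrains an integration constant and can be satisfied by choosing $\alpha_0=0$. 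So the only equation left to decouple is the first equation of \eqref{eq:wavefront001}, which is linear in $\cH$.

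\textbf{Decoupling the equation for $\cH$.} Write $\cH=\bar{\cH}e^{\cF}$, as the claimed form of the metric suggests. We then compute $\nabla^{\mathfrak{h}\ast}\dd(\bar{\cH}e^{\cF})$ using the convention $\nabla^{\mathfrak{h}\ast}\dd=-\Tr\nabla\dd$. This gives
\begin{equation*}
\nabla^{\mathfrak{h}\ast}\dd(\bar{\cH}e^{\cF}) = e^{\cF}\bigl(\nabla^{\mathfrak{h}\ast}\dd\bar{\cH} - 2\langle\dd\bar{\cH},\dd\cF\rangle_{\mathfrak{h}} + \bar{\cH}\nabla^{\mathfrak{h}\ast}\dd\cF - \bar{\cH}\vert\dd\cF\vert^2_{\mathfrak{h}}\bigr).
\end{equation*}
The other terms expand as
\begin{equation*}
2\langle\dd\cH,\dd\cF\rangle_{\mathfrak{h}} = e^{\cF}\bigl(2\langle\dd\bar{\cH},\dd\cF\rangle_{\mathfrak{h}} + 2\bar{\cH}\vert\dd\cF\vert^2_{\mathfrak{h}}\bigr),
\end{equation*}
while $-\cH\vert\dd\cF\vert^2_{\mathfrak{h}}$ and $\cH e^{2\cF}\vert H_b\vert^2_{\mathfrak{h}}$ contribute $e^{\cF}\bar{\cH}$ times their coefficients.

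After summing, the cross terms $\pm2\langle\dd\bar{\cH},\dd\cF\rangle_{\mathfrak{h}}$ cancel, and the $\vert\dd\cF\vert^2_{\mathfrak{h}}$ terms total $(-1+2-1)=0$. What remains is
\begin{equation*}
e^{\cF}\bigl(\nabla^{\mathfrak{h}\ast}\dd\bar{\cH} + \bar{\cH}(\nabla^{\mathfrak{h}\ast}\dd\cF + e^{2\cF}\vert H_b\vert^2_{\mathfrak{h}})\bigr).
\end{equation*}
The bracket multiplying $\bar{\cH}$ vanishes by the second equation of \eqref{eq:wavefront001}. Hence the first equation is equivalent to $\nabla^{\mathfrak{h}\ast}\dd\bar{\cH}=0$, i.e. $\bar{\cH}$ is harmonic on $(N,\mathfrak{h})$. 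This cancellation is the crux, and I would double-check the signs in it carefully.

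\textbf{Converse.} Starting from $(\mathfrak{h},b,\cF,\bar{\cH})$, set $h=e^{-\cF}\mathfrak{h}$ and $\cH=\bar{\cH}e^{\cF}$, and take $\Theta=0$, $f=0$, $\alpha_0=0$. Running the computation above backwards and applying Lemma \ref{lemma:conformaltrans} shows that Proposition \ref{prop:wavefrontreduction} holds. We must also realize the remaining curving data. By the proof of Proposition \ref{prop:wavefrontreduction}, $\partial_{\fru}\alpha_{\fru}=\mu e^{\cF}\ast_h H_b$, which equals $\mu e^{2\cF}\ast_{\mathfrak{h}}H_b$. Substituting this into \eqref{eq:decompositionH}, with $\Theta=0$ and $\dd\alpha_{\fru}=0$, gives
\begin{equation*}
H_{\bar{b}}=H_b+\mu e^{2\cF}\ast_{\mathfrak{h}}H_b\wedge\dd\fru\wedge\dd\frv,
\end{equation*}
which matches the stated formula up to reordering the one-form factors.

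\textbf{Remaining obstacles.} To build the curving we need to choose $\varrho$ and $\vartheta$ satisfying \eqref{eq:conditionat} with $\Theta=0$, i.e. $\dd_Y\varrho=0$ modulo \v{C}ech-trivial terms; the choice $\varrho=-\dd_Y\psi$ works when $A$ is $\fru$-independent. We also need $\vartheta$ with $\vartheta+\dd_Y\phi=\pi^\ast\alpha_{\fru}$. Both follow from exactness of the \v{C}ech complex. Finally, the metric $g$ stated in the theorem is obtained by substituting $h=e^{-\cF}\mathfrak{h}$ and $\cH=\bar{\cH}e^{\cF}$ into \eqref{eq:nontwistingmetric}.
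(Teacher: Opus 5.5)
Your proposal is correct and follows the same route as the paper. You pass to $\mathfrak{h}=e^{\cF}h$ via Lemma \ref{lemma:conformaltrans}, substitute $\cH=\bar{\cH}e^{\cF}$ so that the $\cH$-equation becomes $\nabla^{\mathfrak{h}\ast}\dd\bar{\cH}+\bar{\cH}(\nabla^{\mathfrak{h}\ast}\dd\cF+e^{2\cF}\vert H_b\vert^2_{\mathfrak{h}})=0$, and reverse the steps for the converse; your explicit cancellation check and choice of curving data are more detailed than the paper's ``tracing back''.

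The one difference is how you obtain a single $\fru$-independent curving. The paper does not read this off the hypothesis; it deduces from $\Theta_{\fru}=0$ and \eqref{eq:buevaluation} that $b_{\fru}=b+\dd_Y k_{\fru}$, so $H_{b_{\fru}}$ is constant, and then removes $k_{\fru}$ by a $\fru$-dependent gauge transformation. That is the safer justification, since in the converse $\bar{b}$ itself depends on $\fru$.
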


\begin{remark}
The differential system \eqref{eq:finalreducedsystem} has a strong resemblance to the generalized Ricci soliton system \cite{MarioStreets}, albeit being different. It would be interesting to explore whether some of the techniques used to study the generalized Ricci soliton system, see for instance \cite{StreetsI,StreetsUstinovskiy,StreetsUstinovskiyII}, are also applicable to \eqref{eq:finalreducedsystem}.
\end{remark}

\begin{proof}
Let $(g,\bar{b})$ be a $\fru$-independent non-twisting standard Kundt solution of minimal six-dimensional supergravity on the reducible bundle gerbe $(\bar{\cP},\bar{Y},\bar{A})$ defined on $\mathbb{R}^2\times N$, with reduction $(\cP,Y,A_{\fru} , \psi_{\fru} , \phi_{\fru})$. Then, by Proposition \ref{prop:wavefrontreduction} together with Remark \ref{remark:tuples}, $(g,\bar{b})$  defines a tuple $(h,\cH,\cF, f_{\fru} , \varrho_{\fru}, \vartheta_{\fru} , b_{\fru})$ satisfying the differential system \eqref{eq:wavefront01}, \eqref{eq:wavefront02}, and \eqref{eq:wavefront03}. In turn, by Lemma \ref{lemma:conformaltrans}, the associated tuple $(\mathfrak{h} = e^{\cF}h,\cH,\cF, f_{\fru} , \varrho_{\fru}, \vartheta_{\fru} , b_{\fru})$ satisfies the differential system \eqref{eq:wavefront001} and \eqref{eq:wavefront002}.  Then, it follows by Equation \eqref{eq:buevaluation} that the family of curvings $b_{\fru}$ can be written as follows:
\begin{equation*}
b_{\fru} = b + \dd_Y k_{\fru}
\end{equation*}
for a given curving $b\in \Omega^2(Y)$ and a family of one-forms $k_{\fru}\in C^{\infty}(Y)$. Hence, there exists a $\fru$-dependent family of gauge transformations of $(\cP,Y,A_{\fru})$ such that the transformed curving, denoted by $b$, is $\fru$-independent. We define the new variable:
\begin{equation*}
\cH = \bar{\cH} e^{\cF}\, .
\end{equation*}

\noindent
Plugging this expression into the first equation of \eqref{eq:wavefront001} we obtain:
\begin{equation*}
\nabla^{\mathfrak{h}\ast} \dd \bar{\cH} + \bar{\cH}(\nabla^{\mathfrak{h}\ast}\dd \cF  +   e^{2\cF} \vert H_{b} \vert^2_{\mathfrak{h}}) = 0\, .
\end{equation*}

\noindent
Combining this equation with the second equation in \eqref{eq:wavefront001}, we obtain $\nabla^{\mathfrak{h}\ast} \dd \bar{\cH} = 0$, and thus the Laplacian equations for $\bar{\cH}$ and $\cF$ decouple. In particular, $\bar{\cH}$ is harmonic on $(N,\mathfrak{h})$ and $(\mathfrak{h},b, \cF)$ satisfies the differential system \eqref{eq:finalreducedsystem}.\medskip

Conversely, let $(\mathfrak{h},b,\cF)$ be a solution of the differential system \eqref{eq:finalreducedsystem} on $N$ for some bundle gerbe $(\cP,Y,A)$, whose pull-back to $\mathbb{R}^2\times N$ will be denoted $(\bar{\cP}, \bar{Y}, \bar{A})$, and let $\bar{\cH}$ be a harmonic function on $(N,\mathfrak{h})$. We set:
\begin{equation*}
g = \bar{\cH} e^{\cF} \dd\fru \otimes \dd\fru  + e^{\cF} \dd\fru \odot \dd \fru + e^{-\cF} \mathfrak{h}\, , \qquad \bar{b} = b + \fru \, \dd\frv \wedge \pi^{\ast}(\mu e^{2\cF} \ast_{\mathfrak{h}} H_b)\, ,
\end{equation*}

\noindent
which implies that the curvature of $\bar{b}$ is given by:
\begin{equation*}
H_{\bar{b}} = H_b + \dd\fru \wedge \dd\frv \wedge \mu e^{2\cF} \ast_{\mathfrak{h}} H_b\, .
\end{equation*}

\noindent
Tracing back the computations that lead to the reduced system \eqref{eq:finalreducedsystem} shows that such a choice of $g$ and $\bar{b}$ is, by construction, a non-twisting solution of minimal six-dimensional supergravity.
\end{proof}


\subsection{Families of solutions to six-dimensional minimal supergravity}
\label{eq:familiessolutions}


To go beyond the supersymmetric Example \ref{ep:susyexample}, we consider $N = \cI\times X$ for an interval $\cI$ with Cartesian coordinate $r$, where $X$ is an oriented three-manifold equipped with an Einstein metric $h_X$ with Einstein constant $\lambda\in\R$. We assume that $h$ is given by the following warped metric:
\begin{equation*}
\mathfrak{h} =  e^{K}(\dd r^2 +  h_X),
\end{equation*}

\noindent
where $K := K(r)$ is a smooth function of the \emph{radial} coordinate $r$. We assume the three-form gerbe curvature is proportional to the volume form of $X$, that is:
\begin{equation*}
H_b = \mathfrak{e} \nu_{h_X},
\end{equation*}

\noindent
where $\nu_{h_X}$ is the Riemannian volume form on $(X,h_X)$ and $\mathfrak{e}\in\R$. The Hodge dual of the flux with respect to $\mathfrak{h}$ takes the form:
\begin{equation*}
\ast_{\mathfrak{h}} H_b = \mathfrak{e} e^{-K} \dd r   
\end{equation*}

\noindent
and thus the Maxwell equation $\dd(e^{2\cF} \ast_{\mathfrak{h}} H_b) = 0$ in \eqref{eq:finalreducedsystem} is trivially satisfied for any choice of smooth functions $K(r)$ and $\cF(r)$. Substituting this ansatz into the remaining equations of reduced system \eqref{eq:finalreducedsystem}, and noting that for the aforementioned choice of three-form $H_b$ on a four-dimensional manifold we have the algebraic identity $H_b \circ_{\mathfrak{h}} H_b = |H_b|^2_{\mathfrak{h}} \mathfrak{h}$ on the transverse space $X$, the Einstein and dilaton equations reduce to the following set of coupled ordinary differential equations:
\begin{equation*}
\cF'' + K' \cF' = \mathfrak{e}^2 e^{2(\cF - K)} \, , \qquad 3 K'' + 2 (\cF')^2  =    2\mathfrak{e}^2 e^{2(\cF - K)} \, , \quad   K'' + (K')^2 = 2\lambda\, ,
\end{equation*}
where the prime denotes differentiation with respect to $r$. This system can be recast into the following coupled system of ordinary differential equations for $\cF$ and $K$:
\begin{equation} 
\label{eq:radialODE}
\cF'' + K' \cF' = \mathfrak{e}^2 e^{2(\cF - K)} \, ,   \qquad   K'' + (K')^2 = 2\lambda
\end{equation}
together with the \emph{Hamiltonian constraint}:
\begin{equation}
\label{eq:radialhamiltonian}
C := 3(K')^2 - 2(\mathcal{F}')^2 + 2\mathfrak{e}^2 e^{2(\mathcal{F}-K)} - 6\lambda = 0\, .
\end{equation}

\noindent
Assuming that $\cF$ and $K$ satisfy the \emph{evolution equations} \eqref{eq:radialODE}, it follows that the derivative of the Hamiltonian constraint \eqref{eq:radialhamiltonian} satisfies:
\begin{equation*}
C^{\prime} = -2 K^{\prime} C\, .
\end{equation*}

\noindent
Hence, if $C \vert_{r=0} = 0$ at $r=0$, it then follows that $C= 0$ everywhere $\cF$ and $K$ are defined, and thus we have a well-defined initial data problem. The evolution equation for $K$ in \eqref{eq:radialODE} clearly decouples. Defining $Z = e^{K}$, the equation $K'' + (K')^2 = 2\lambda$ reduces to the following linear second-order ordinary differential equation:
\begin{equation}
\label{eq:linearwarp}
Z''  = 2\lambda Z\, .
\end{equation}

\noindent
Plugging a solution to this equation into the remaining equation in \eqref{eq:radialODE}, gives an explicit ordinary differential equation for $\cF$, which can be studied by specific methods depending on the precise solution chosen for $Z$, which depends on the sign of $\lambda$. Finally, the harmonic function $\bar{\cH}$ on $(N, \mathfrak{h})$ required by Theorem \ref{thm:finalreducedsystem} is determined by the equation $\bar{\cH}'' + K' \bar{\cH}' = 0$, which yields:
\begin{equation*}
\bar{\cH}(r) = c_1 \int e^{-K(r)} \dd r + c_2
\end{equation*}
for integration constants $c_1, c_2 \in \mathbb{R}$. This provides a systematic construction of examples of $\fru$-independent non-twisting Kundt solutions to minimal six-dimensional supergravity.\medskip
 
We proceed to apply the previous formalism to the $\lambda < 0$ case, which we solve explicitly. As explained above, the first task is to solve Equation \eqref{eq:linearwarp}. Defining the characteristic frequency $k := \sqrt{2|\lambda|} > 0$, the linear equation \eqref{eq:linearwarp} for the warp factor $Z = e^K$ becomes $Z'' = -k^2 Z$, whose general solution with zero phase is given by:
\begin{equation}
e^{K(r)} = c \cos(kr) \, ,
\end{equation}
for a real non-zero constant $c\in \mathbb{R}^{\ast}$. To ensure a well-defined metric $\mathfrak{h}$, we restrict the radial coordinate to the interval $r \in (-\frac{\pi}{2k}, \frac{\pi}{2k})$ and hence we take $c > 0$ to be positive. Hence:
\begin{equation*}
K(r) = \ln(c) + \ln(\cos(kr))
\end{equation*}
solves the second equation in \eqref{eq:radialODE}, whereas the first equation in \eqref{eq:radialODE} becomes:
\begin{equation*}
 \cF'' -k \tan(kr) \cF' = \frac{\mathfrak{e}^2 e^{2\cF}}{c^2 \cos^2(kr)}\, ,
\end{equation*}
where we have used that $K' = -k \tan(kr)$. In order to proceed further, it is convenient to introduce the coordinate $\rho$, defined via the differential equation $\dd\rho = e^{-K} \dd r$, which is integrated by:
\begin{equation*}
\rho(r) = \frac{1}{c k} \ln \left| \sec(kr) + \tan(kr) \right| \, .
\end{equation*}

\noindent
Note that as $r \to \pm \frac{\pi}{2k}$, the coordinate $\rho \to \pm \infty$, and hence $\rho$ spans the whole real line $\mathbb{R}$. In this coordinate, the previous equation for $\cF$ reduces to the following one-dimensional Liouville equation:
\begin{equation*}
\partial_{\rho}^2 \mathcal{F}= \mathfrak{e}^2 e^{2\mathcal{F}}\, .
\end{equation*}

\noindent
On the other hand, substituting the specific profiles for $K' = -k \tan(kr)$ and $\mathcal{F}' = e^{-K} \partial_{\rho}\mathcal{F}$ into the Hamiltonian constraint \eqref{eq:radialhamiltonian}, we obtain:
\begin{equation*}
3k^2 \tan^2(kr) - 2 e^{-2K} \left[ (\partial_\rho\mathcal{F})^2 - \mathfrak{e}^2 e^{2\mathcal{F}} \right] + 3k^2 = 0 \implies E = \frac{3}{2} c^2 k^2 = 3 c^2 |\lambda| > 0 \, ,
\end{equation*}
where $E := (\partial_\rho\mathcal{F})^2 - \mathfrak{e}^2 e^{2\mathcal{F}}$ is the constant, necessarily positive, Liouville energy. Since $E > 0$, it follows that $\cF$ belongs to the hyperbolic branch of the Liouville solution, that is, we have:
\begin{equation*}
e^{-\mathcal{F}(\rho)} = \frac{\mathfrak{e}}{\sqrt{E}} \sinh(\sqrt{E}(\rho - \rho_*))
\end{equation*}
for a real integration constant $\rho_*\in \mathbb{R}$. Reconstructing the components for the six-dimensional Kundt metric, we find:
\begin{equation*}
\bar{\cH}(\rho) = m_1 \rho + m_2 \, , \qquad \cH(\rho) = \frac{c\sqrt{3|\lambda|}\left( m_1 \rho + m_2 \right)}{\mathfrak{e} \sinh \big( c \sqrt{3|\lambda|} (\rho - \rho_*)\big)}
\end{equation*}
for real constants $m_1 , m_2 \in \mathbb{R}$. The resulting solution describes a wave-front that topologically pinches off as $\rho$ approaches the limits $\rho \to \pm \infty$.\medskip

The cases $\lambda > 0$ and $\lambda = 0$ are solved analogously. Hence, we obtain the following.

\begin{cor}
Every Einstein three-manifold admits a conformal embedding into a $\fru$-independent non-twisting solution of minimal six-dimensional supergravity.
\end{cor}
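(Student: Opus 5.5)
The plan is to assemble the final corollary from Theorem \ref{thm:finalreducedsystem} together with the radial reduction just carried out. Given an oriented Einstein three-manifold $(X,h_X)$ with Einstein constant $\lambda$, take $N=\cI\times X$ with $\mathfrak{h}=e^{K}(\dd r^2+h_X)$ and $H_b=\mathfrak{e}\,\nu_{h_X}$. This three-form is closed on $N$ and, for a suitable gerbe $(\cP,Y,A)$ on $N$ with curving $b$, can be realized as the curvature of $b$. When $X$ is compact, the class $[\mathfrak{e}\nu_{h_X}]$ must be integral, which only constrains $\mathfrak{e}$; alternatively one can allow $\mathfrak{e}=0$ if needed, or take the trivial gerbe when $H^3$ vanishes. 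By the computation preceding the corollary, the Maxwell equation in \eqref{eq:finalreducedsystem} holds automatically. The remaining equations reduce to the ODE system \eqref{eq:radialODE}, subject to the Hamiltonian constraint \eqref{eq:radialhamiltonian}. Since $C'=-2K'C$, it suffices to impose $C=0$ at a single value of $r$.

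The key step is producing a local solution for each sign of $\lambda$. I would set $Z=e^{K}$ and solve $Z''=2\lambda Z$. For $\lambda<0$ take $Z=c\cos(kr)$, as done above. For $\lambda>0$ take $Z=c\cosh(kr)$ with $k=\sqrt{2\lambda}$. For $\lambda=0$ take $Z=c$, or an affine positive function. In each case $Z>0$ on an interval.

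Next I would pass to the coordinate $\dd\rho=e^{-K}\dd r$. This turns the $\cF$-equation into the Liouville equation $\partial_\rho^2\cF=\mathfrak{e}^2e^{2\cF}$, which has conserved energy $E=(\partial_\rho\cF)^2-\mathfrak{e}^2e^{2\cF}$. The constraint $C=0$ then becomes
\[
E=\tfrac{3}{2}\bigl(Z'^2-2\lambda Z^2\bigr),
\]
which is constant precisely because $Z''=2\lambda Z$. The main obstacle is checking that this choice of $E$ is always compatible with an actual Liouville solution. For $E>0$ use the $\sinh$ branch. For $E=0$ use $e^{-\cF}=\mathfrak{e}(\rho_*-\rho)$. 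For $E<0$ use the $\sin$ branch. Since every real value of $E$ is attained by some Liouville solution on a small $\rho$-interval, I expect this to work after shrinking the interval. If it fails in the degenerate case $\mathfrak{e}=0$, I would fall back to linear $\cF$.

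Finally, choose the harmonic function $\bar{\cH}=c_1\int e^{-K}\dd r+c_2$ and invoke the converse direction of Theorem \ref{thm:finalreducedsystem}. This yields the solution
\[
g=\bar{\cH}e^{\cF}\dd\fru\otimes\dd\fru+e^{\cF}\dd\fru\odot\dd\frv+e^{-\cF}\mathfrak{h}
\]
on $\mathbb{R}^2\times\cI\times X$. The slice $\{\fru,\frv,r\ \text{fixed}\}\cong X$ inherits the metric $e^{K-\cF}h_X$, which is conformal to $h_X$. This gives the claimed conformal embedding.
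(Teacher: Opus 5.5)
Your proposal is correct and follows the paper's own route. It uses the same warped ansatz $\mathfrak{h}=e^{K}(\dd r^2+h_X)$ with $H_b=\mathfrak{e}\,\nu_{h_X}$, the same reduction to $Z''=2\lambda Z$ plus the Liouville equation in $\rho$, and the converse direction of Theorem~\ref{thm:finalreducedsystem}; your identity $E=\tfrac{3}{2}(Z'^2-2\lambda Z^2)$ makes explicit the cases $\lambda\geq 0$ that the paper only calls analogous to the worked case $\lambda<0$.

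The only loose end is the hedge about $\mathfrak{e}=0$. It is never needed, since a nonzero quantized $\mathfrak{e}$ is always available, and it would not work as stated for $\lambda>0$ with $Z=c\cosh(kr)$: there $E<0$, while $\mathfrak{e}=0$ forces $E=(\partial_\rho\cF)^2\geq 0$, so you would have to change $Z$ (e.g.\ to $c\,e^{kr}$ or $c\sinh(kr)$) rather than $\cF$.
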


\begin{remark}
Non-supersymmetric solutions of minimal six-dimensional supergravity have been considered previously in \cite{Murcia:2019cck} using a construction based on a general notion of metric contact structure in three dimensions that allows the Reeb vector field to be isotropic. It would be interesting to compare the solutions obtained in this section with the formalism and solutions obtained in \emph{opere citato}.
\end{remark}


\subsection{On quasi-supersymmetric wave-fronts}


In this subsection, we relate the six-dimensional minimal supergravity solutions that are constructed via solutions of the reduced system presented in Theorem \ref{thm:finalreducedsystem} with the notion of \emph{quasi-supersymmetry}. That is, by applying Lemma \ref{lemma:skew_torsion_Hermitian}, we immediately obtain a key necessary condition for such a solution to be quasi-supersymmetric. This necessary solution is strong enough that we can conclude that solutions of the reduced system \eqref{eq:finalreducedsystem}, and in particular the class of solutions obtained in Subsection \ref{eq:familiessolutions}, will typically lead to non-quasi-supersymmetric solutions of minimal six-dimensional supergravity.\medskip

Hence, we take $M=\mathbb{R}^2 \times N$ for an orientable four-dimensional manifold $N$, and we consider the six-dimensional minimal supergravity determined on $M$ by a reducible bundle gerbe with connective structure. As explained in Theorem \ref{thm:finalreducedsystem}, given such a triple $(\mathfrak{h},b,\cF,\bar{\cH})$, the corresponding six-dimensional solution $(g,\bar{b})$ has metric $g$ given by:
\begin{equation*}
g = e^{\cF} (\bar{\cH} \dd\fru \otimes \dd \fru + \dd\fru\odot \dd\frv) + e^{-\cF}\mathfrak{h}\, ,
\end{equation*}

\noindent
whereas the curvature of the curving $\bar{b}$ reads:
\begin{equation}
\label{eq:reducedH}
H_{\bar{b}} =  H_b + \mu e^{2\cF} \dd\fru \wedge \dd \frv  \wedge \ast_{\mathfrak{h}} H_b\, .
\end{equation}

\noindent
Such a class of six-dimensional solutions may admit various types of skew-torsion parallel spinors with torsion $H_{\bar{b}}$. Given a tuple $(\mathfrak{h},b,\cF,\bar{\cH})$, we say that an irreducible complex spinor $\eta\in \Gamma(S^{\mu})$ on $(M,g)$ is \emph{adapted} if its Hermitian square $\widehat{\alpha}=u+iu\wedge\omega-\mu*_{g}u$ satisfies $u = e^{\cF} \dd\fru$. If such a solution is quasi-supersymmetric with respect to an adapted spinor, then we will say that it is \emph{adapted quasi-supersymmetric}.

\begin{prop}
\label{prop:conditiondualH}
A solution $(\mathfrak{h},b,\cF,\bar{\cH})$ of the reduced system \eqref{eq:finalreducedsystem} defines an adapted quasi-super\-symmetric solution of minimal six-dimensional supergravity only if:
\begin{equation*}
\ast_{\mathfrak{h}}H_b = -\mu e^{-\cF} \dd\cF\, .
\end{equation*}
In particular, $\mathfrak{h}$ is Ricci-flat.
\end{prop}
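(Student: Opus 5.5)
The plan is to use only the first equation of Lemma \ref{lemma:skew_torsion_Hermitian}, $\nabla^g_w u=\frac{1}{2}H_{\bar b}(w,u^{\sharp_g})$, with the adapted Dirac current $u=e^{\cF}\dd\fru$. Skew-symmetrizing gives
\begin{equation*}
\dd u = -H_{\bar b}(u^{\sharp_g}).
\end{equation*}
Since this is a necessary condition for quasi-supersymmetry, its validity for the explicit pair $(g,H_{\bar b})$ of Theorem \ref{thm:finalreducedsystem} will constrain $(\mathfrak{h},b,\cF)$.

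Both sides are explicit. On the left we have $\dd u=\dd\cF\wedge e^{\cF}\dd\fru$, since $\cF$ depends only on $N$. For the right side we first compute $u^{\sharp_g}$. With $g=e^{\cF}(\bar\cH\,\dd\fru\otimes\dd\fru+\dd\fru\odot\dd\frv)+e^{-\cF}\mathfrak{h}$, we get $g(\partial_\frv,\cdot)=e^{\cF}\dd\fru$, so $u^{\sharp_g}=\partial_\frv$. Using \eqref{eq:reducedH}, $H_b$ is transverse and therefore $\iota_{\partial_\frv}H_b=0$. The remaining contribution is
\begin{equation*}
\iota_{\partial_\frv}\big(\mu e^{2\cF}\dd\fru\wedge\dd\frv\wedge\ast_{\mathfrak{h}}H_b\big)=-\mu e^{2\cF}\dd\fru\wedge\ast_{\mathfrak{h}}H_b .
\end{equation*}
Hence $-H_{\bar b}(u^{\sharp_g})=\mu e^{2\cF}\dd\fru\wedge\ast_{\mathfrak{h}}H_b$. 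Equating this with $-e^{\cF}\dd\fru\wedge\dd\cF$ and stripping $\dd\fru$, which is legitimate because both remaining one-forms live on $N$, yields $\ast_{\mathfrak{h}}H_b=-\mu e^{-\cF}\dd\cF$. I expect the main risk here to be sign conventions for interior products and the ordering in \eqref{eq:reducedH}. I would double-check them against the analogous computation in Lemma \ref{lemma:D_integrable}, namely $\dd u=u\wedge\beta-\chi_v$ with $\beta$ the $u\wedge v$ component.

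For Ricci-flatness, substitute $\ast_{\mathfrak{h}}H_b=-\mu e^{-\cF}\dd\cF$ into \eqref{eq:finalreducedsystem}. This gives $e^{2\cF}|H_b|^2_{\mathfrak{h}}=|\dd\cF|^2_{\mathfrak{h}}$, since the Hodge star is an isometry. For the tensor term, in four Riemannian dimensions with $H=\ast\theta$ one has the algebraic identity
\begin{equation*}
H\circ H=|\theta|^2\mathfrak{h}-\theta\otimes\theta .
\end{equation*}
With $\theta=-\mu e^{-\cF}\dd\cF$ this gives
\begin{equation*}
e^{2\cF}\big(H_b\circ_{\mathfrak{h}}H_b-|H_b|^2_{\mathfrak{h}}\mathfrak{h}\big)=-\dd\cF\otimes\dd\cF,
\end{equation*}
which cancels the first term of the Einstein equation, so $\mathrm{Ric}^{\mathfrak{h}}=0$. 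Verifying the normalization of the identity $H\circ H=|\theta|^2\mathfrak{h}-\theta\otimes\theta$ in the paper's conventions for $\circ$ and $|\cdot|$ is the second point I would check carefully, for instance by taking $\theta=e^1$ in an orthonormal frame.
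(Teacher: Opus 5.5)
Your proposal is correct and is essentially the paper's own proof. You skew-symmetrize $\nabla^g_w u=\frac{1}{2}H_{\bar b}(w,u^{\sharp_g})$ with $u^{\sharp_g}=\partial_\frv$ to get $e^{\cF}\dd\cF\wedge\dd\fru+H_{\bar b}(\partial_\frv)=0$, which gives $\ast_{\mathfrak{h}}H_b=-\mu e^{-\cF}\dd\cF$; the identity $H_b\circ_{\mathfrak{h}}H_b=e^{-2\cF}(\vert\dd\cF\vert^2_{\mathfrak{h}}\mathfrak{h}-\dd\cF\otimes\dd\cF)$ then cancels the $\dd\cF\otimes\dd\cF$ term in the Einstein equation, and both sign and normalization checks you flag come out as you expect.
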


\begin{proof}
Applying Lemma \ref{lemma:skew_torsion_Hermitian}, it follows that a tuple $(h,b,\cF)$ is an adapted quasi-supersymmetric configuration only if:  
\begin{equation*} 
\nabla^g_w (e^{\cF} \dd \fru) = \frac{1}{2}H_{\bar{b}}(w,\partial_{\frv}),\qquad  w\in\Gamma(TM)\, ,
\end{equation*}

\noindent
where $H_{\bar{b}}$ is given as in Equation \eqref{eq:reducedH}. The symmetric projection of this equation is automatically satisfied. On the other hand, its skew-symmetric projection gives:
\begin{equation*}
e^{\cF} \dd\cF \wedge \dd\fru  + H_{\bar{b}}(\partial_{\frv}) = 0\, ,
\end{equation*}
which, upon use of Equation \eqref{eq:reducedH}, reduces to:
\begin{equation*}
\ast_{\mathfrak{h}}H_b = -\mu e^{-\cF} \dd\cF\, .
\end{equation*}

\noindent
Assuming this equation, we compute:
\begin{align*}
(H_b\circ_{\mathfrak{h}} H_b)(w_1 , w_2) &= e^{-2\cF} \langle \ast_{\mathfrak{h}}(\dd\cF \wedge w_1^{\flat_{\mathfrak{h}}}) , \ast_{\mathfrak{h}}(\dd\cF \wedge w_2^{\flat_{\mathfrak{h}}}) \rangle_{\mathfrak{h}} \\
&= e^{-2\cF} (\vert \dd\cF\vert_{\mathfrak{h}}^2 \mathfrak{h} - \dd\cF \otimes \dd\cF)(w_1,w_2)\, ,
\end{align*}
which plugged into the first equation in \eqref{eq:finalreducedsystem} gives $\mathrm{Ric}^{\mathfrak{h}} = 0$.
\end{proof}

Elaborating on Proposition \ref{prop:conditiondualH}, we obtain the following strict characterization of tuples $(\mathfrak{h},b,\cF,\bar{\cH})$ satisfying $\ast_{\mathfrak{h}}H_b = -\mu e^{-\cF} \dd\cF$ and defining $\fru$-independent, non-twisting, solutions of six-dimensional supergravity as explained in Theorem \ref{thm:finalreducedsystem}.

\begin{prop}
A tuple $(\mathfrak{h},b,\cF,\bar{\cH})$ satisfying $\ast_{\mathfrak{h}}H_b = -\mu e^{-\cF} \dd\cF$ defines a $\fru$-independent, non-twisting, solution of minimal six-dimensional supergravity if and only if $\mathfrak{h}$ is Ricci-flat and both $\smash{\bar{\cH}}$ and $e^{-\cF}$ are harmonic functions on $(N,\mathfrak{h})$.   
\end{prop}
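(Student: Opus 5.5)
The plan is to substitute the constraint $\ast_{\mathfrak{h}}H_b = -\mu e^{-\cF}\dd\cF$ directly into the reduced system \eqref{eq:finalreducedsystem} of Theorem \ref{thm:finalreducedsystem}. That theorem already shows that $\fru$-independent non-twisting solutions correspond exactly to solutions $(\mathfrak{h},b,\cF)$ of \eqref{eq:finalreducedsystem} together with a harmonic $\bar{\cH}$. Harmonicity of $\bar{\cH}$ therefore comes for free in both directions. What remains is to show that, under the constraint, the three equations of \eqref{eq:finalreducedsystem} are equivalent to ``$\mathfrak{h}$ Ricci-flat and $e^{-\cF}$ harmonic''.

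First I would record the algebraic consequences of the constraint. Since $\ast_{\mathfrak{h}}^2=\pm1$ in Riemannian dimension four, we get $|H_b|^2_{\mathfrak{h}} = e^{-2\cF}|\dd\cF|^2_{\mathfrak{h}}$. The computation in the proof of Proposition \ref{prop:conditiondualH} gives $H_b\circ_{\mathfrak{h}}H_b = e^{-2\cF}(|\dd\cF|^2_{\mathfrak{h}}\mathfrak{h}-\dd\cF\otimes\dd\cF)$.

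Next I would check each equation of \eqref{eq:finalreducedsystem} in turn.
\begin{itemize}
\item \textbf{Einstein equation.} The right-hand side becomes $\dd\cF\otimes\dd\cF + (|\dd\cF|^2\mathfrak{h}-\dd\cF\otimes\dd\cF) - |\dd\cF|^2\mathfrak{h} = 0$. So the Einstein equation is exactly $\mathrm{Ric}^{\mathfrak{h}}=0$.
\item \textbf{Dilaton equation.} It reads $\nabla^{\mathfrak{h}\ast}\dd\cF + |\dd\cF|^2_{\mathfrak{h}}=0$. Using $\nabla^{\mathfrak{h}\ast}\dd e^{-\cF} = -e^{-\cF}(\nabla^{\mathfrak{h}\ast}\dd\cF + |\dd\cF|^2_{\mathfrak{h}})$, which follows from $\Delta(\phi\circ\cF)=\phi'(\cF)\Delta\cF-\phi''(\cF)|\dd\cF|^2$ with the paper's sign convention, this is equivalent to harmonicity of $e^{-\cF}$.
\item \textbf{Maxwell equation.} We have $e^{2\cF}\ast_{\mathfrak{h}}H_b = -\mu e^{\cF}\dd\cF = -\mu\,\dd e^{\cF}$, which is exact. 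Hence $\dd(e^{2\cF}\ast_{\mathfrak{h}}H_b)=0$ holds automatically.
\end{itemize}
Combining these gives both directions. Given Ricci-flatness and the two harmonic functions, all three equations of \eqref{eq:finalreducedsystem} hold, and Theorem \ref{thm:finalreducedsystem} produces the six-dimensional solution. Conversely, a solution yields \eqref{eq:finalreducedsystem} by the same theorem, and the computations above convert it into the stated conditions.

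The main point needing care is consistency with $H_b$ being the curvature of a curving. The constraint forces $H_b = \pm\mu e^{-\cF}\ast_{\mathfrak{h}}\dd\cF$, and $H_b$ must be closed. I would verify this with $\dd\ast_{\mathfrak{h}}(e^{-\cF}\dd\cF) = -\dd\ast_{\mathfrak{h}}\dd e^{-\cF}$, which vanishes precisely when $e^{-\cF}$ is harmonic. So closedness is equivalent to the dilaton equation and imposes no extra condition. In the statement the tuple is given as satisfying the constraint with $b$ a curving, so this is a consistency remark rather than an additional hypothesis.

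The step most prone to error is the sign bookkeeping for $\ast_{\mathfrak{h}}^2$ and for the Laplacian convention. I would settle these by checking against the identity already used in Proposition \ref{prop:conditiondualH}.
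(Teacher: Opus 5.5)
Your proposal is correct and follows essentially the same route as the paper. You substitute the constraint into \eqref{eq:finalreducedsystem}. The Einstein equation then collapses to $\mathrm{Ric}^{\mathfrak{h}}=0$ via the $H_b\circ_{\mathfrak{h}}H_b$ computation of Proposition \ref{prop:conditiondualH}. The Maxwell equation becomes $\dd(-\mu\,\dd e^{\cF})=0$, the dilaton equation becomes $-e^{\cF}\nabla^{\mathfrak{h}\ast}\dd e^{-\cF}=0$, and harmonicity of $\bar{\cH}$ is inherited from Theorem \ref{thm:finalreducedsystem}.

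Two small remarks. Your closedness observation goes slightly beyond the paper: it shows that harmonicity of $e^{-\cF}$ is already forced once $b$ is a curving. Also, $|H_b|^2_{\mathfrak{h}}=e^{-2\cF}|\dd\cF|^2_{\mathfrak{h}}$ follows from $\ast_{\mathfrak{h}}$ being an isometry, not from $\ast_{\mathfrak{h}}^2=\pm1$.
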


\begin{proof}
By Proposition \ref{prop:conditiondualH}, the Einstein equation of the reduced system \eqref{eq:finalreducedsystem} evaluated on a tuple $(\mathfrak{h},b,\cF,\bar{\cH})$ such that $\ast_{\mathfrak{h}}H_b = -\mu e^{-\cF} \dd\cF$ simplifies to the condition $\mathrm{Ric}^{\mathfrak{h}} = 0$. The Maxwell equation in \eqref{eq:finalreducedsystem} is clearly satisfied automatically, since:
\begin{equation*}
\dd (e^{2\cF} \ast_{\mathfrak{h}} H_b) = - \mu\,\dd( e^{\cF} \dd\cF) = 0\, .
\end{equation*}

\noindent
For the remaining equation, we observe:
\begin{equation*}
0 = \nabla^{\mathfrak{h}\ast}\dd \cF  + e^{2\cF} \vert H_b \vert^2_{\mathfrak{h}} = \nabla^{\mathfrak{h}\ast}\dd \cF  +  \vert \dd\cF \vert^2_{\mathfrak{h}} = - e^{\cF} \nabla^{\mathfrak{h}\ast}\dd e^{-\cF}
\end{equation*}
and thus we conclude.
\end{proof}

Hence, every tuple $(\mathfrak{h},b,\cF,\bar{\cH})$ determining a quasi-supersymmetric solution of minimal six-dimensional supergravity via Theorem \ref{thm:finalreducedsystem} has conformally Ricci-flat wave-front $(N,h)$, which implies the following.

\begin{cor}
\label{cor:nonquasisusy}
The class of solutions constructed in Subsection \ref{eq:familiessolutions} with Einstein constant $\lambda\neq 0$ is not quasi-supersymmetric.
\end{cor}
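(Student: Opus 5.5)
The corollary follows by combining Proposition \ref{prop:conditiondualH} with the explicit form of the warped solutions of Subsection \ref{eq:familiessolutions}. Suppose, for contradiction, that a solution with Einstein constant $\lambda\neq0$ is quasi-supersymmetric with respect to an adapted spinor. By Proposition \ref{prop:conditiondualH}, the transverse metric $\mathfrak{h}=e^{K}(\dd r^2+h_X)$ must then be Ricci-flat, and moreover $\ast_{\mathfrak{h}}H_b=-\mu e^{-\cF}\dd\cF$. The plan is to show that each of these conditions separately contradicts the ODE system \eqref{eq:radialODE}--\eqref{eq:radialhamiltonian} when $\lambda\neq 0$.

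\textbf{Step 1: compare the fluxes.} We have $\ast_{\mathfrak{h}}H_b=\mathfrak{e}e^{-K}\dd r$. The second condition therefore reads
\begin{equation*}
\mathfrak{e}e^{-K}=-\mu e^{-\cF}\cF',
\end{equation*}
that is, $(e^{-\cF})'=\mu\,\mathfrak{e}\,e^{-K}$.

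\textbf{Step 2: reduce the Hamiltonian constraint.} Substituting $(\cF')^2=\mathfrak{e}^2e^{2(\cF-K)}$ into \eqref{eq:radialhamiltonian} makes the $\cF$-terms cancel, leaving
\begin{equation*}
3(K')^2=6\lambda .
\end{equation*}
Hence $K'$ is constant, so $K''=0$. The evolution equation $K''+(K')^2=2\lambda$ then forces $(K')^2=2\lambda$. Combined with $(K')^2=2\lambda$ from the constraint, this is consistent only if $\lambda\geq0$. This already excludes $\lambda<0$. For $\lambda>0$, however, $K=\pm\sqrt{2\lambda}\,r+\mathrm{const}$ is still allowed, so an independent argument is needed in that case.

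\textbf{Step 3: use Ricci-flatness.} The cleanest way to handle $\lambda>0$ is through the Ricci-flatness of $\mathfrak{h}$. With $\mathfrak{h}=e^{K}(\dd r^2+h_X)$ conformal to the product metric and $K'$ constant, compute the Ricci tensor of the conformally rescaled product using the standard formula in dimension four. The $h_X$-directions pick up the term $2\lambda h_X$ from $\mathrm{Ric}^{h_X}$, together with contributions of the form $-\tfrac12(K''+(K')^2)h_X$ (schematically). Ricci-flatness along $X$ gives one relation between $\lambda$ and $K'$. The radial component gives another, roughly $-\tfrac32K''+\tfrac34(K')^2\cdot(\text{coefficient})=0$. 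Together these should force $K'=0$ and hence $\lambda=0$.

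Cross-checking with Step 2: $K'=0$ gives $(K')^2=2\lambda=0$, contradicting $\lambda\neq0$. Therefore no adapted quasi-supersymmetric structure exists.

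\textbf{Main obstacle.} The hardest point is the coefficient bookkeeping in the conformal Ricci computation. Since Step 2 alone does not rule out $\lambda>0$, I would verify the radial and transverse Ricci components carefully. Concretely, I would express the radial component as $\tfrac{3}{2}\big((K')^2/2-K''\big)$-type terms and confirm that it vanishes together with the transverse component only when $\lambda=K'=0$.
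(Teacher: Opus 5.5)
Your Steps 1 and 2 are correct, and on their own they settle the case $\lambda<0$. The flux condition gives $(\cF')^2=\mathfrak{e}^2e^{2(\cF-K)}$, so the Hamiltonian constraint \eqref{eq:radialhamiltonian} collapses to $(K')^2=2\lambda$, which is impossible for $\lambda<0$. The paper's one-line justification goes instead through the Ricci-flatness of $\mathfrak{h}$ from Proposition \ref{prop:conditiondualH}. Within this family both routes lead to the same constraint $(K')^2=2\lambda$.

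The gap is Step 3, and no coefficient bookkeeping will close it. For $\mathfrak{h}=e^{K}(\dd r^2+h_X)$ with $\mathrm{Ric}^{h_X}=\lambda h_X$, the four-dimensional conformal-change formula gives
\begin{equation*}
\mathrm{Ric}^{\mathfrak{h}}=-\tfrac{3}{2}K''\,\dd r\otimes\dd r+\big(\lambda-\tfrac{1}{2}K''-\tfrac{1}{2}(K')^2\big)h_X .
\end{equation*}
So Ricci-flatness is equivalent to $K''=0$ and $(K')^2=2\lambda$. This is exactly what Step 2 already produced, and it does not force $K'=0$.

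For $\lambda>0$, take the warp factor $e^{K}=c\,e^{\sqrt{2\lambda}\,r}$. These are the solutions $Z=c_1e^{kr}+c_2e^{-kr}$ of $Z''=2\lambda Z$ with $c_1c_2=0$. Then:
\begin{itemize}
\item $\mathfrak{h}=\dd s^2+s^2\,\tfrac{\lambda}{2}h_X$ with $s=\sqrt{2/\lambda}\,e^{K/2}$, a Ricci-flat cone.
\item The Liouville energy $E=\tfrac{3}{2}\big((Z')^2-2\lambda Z^2\big)=-12\lambda c_1c_2$ vanishes.
\item In the paper's coordinate $\dd\rho=e^{-K}\dd r$ you may take the $E=0$ branch $\partial_\rho\cF=-\mu\,\mathfrak{e}\,e^{\cF}$. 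This is precisely $\ast_{\mathfrak{h}}H_b=-\mu e^{-\cF}\dd\cF$.
\item $e^{-\cF}$ is then affine in $\rho$, hence harmonic on $(N,\mathfrak{h})$.
\end{itemize}
Thus every necessary condition extracted from Proposition \ref{prop:conditiondualH} is met.

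Moreover, for $X=S^3$ with its unit round metric ($\lambda=2$), this member of the family is exactly the black brane of Example \ref{ep:susyexample}. The paper itself calls that solution supersymmetric, so it is quasi-supersymmetric.

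Consequently the statement cannot be proved for $\lambda>0$ as written. What your Steps 1--2 actually establish, and what the paper's Ricci-flatness argument actually establishes, is the case $\lambda<0$, together with the $\lambda>0$ members with $E\neq0$, i.e.\ $c_1c_2\neq0$. The paper's justification has the same defect for $\lambda>0$.
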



\appendix



\section{The Kähler-Atiyah model for the Clifford algebra}
\label{app:KA}


Let $V$ be an oriented $d$-dimensional real vector space equipped with a non-degenerate metric $h$ of signature $(p,q)$ and let $(V^*,h^*)$ be the quadratic space dual to $(V,h)$, where $h^*$ denotes the metric dual to $h$. Let $\mathrm{Cl}(V^*,h^*)$ be the real Clifford algebra of the quadratic vector space $(V^*,h^*)$, viewed as a $\Z_2$-graded associative algebra with decomposition: $$\mathrm{Cl}(V^*,h^*)=\mathrm{Cl}^{\mathrm{ev}}(V^*,h^*)\oplus\mathrm{Cl}^{\mathrm{odd}}(V^*,h^*).$$

In our conventions the Clifford algebra satisfies: $$\theta^2=h^*(\theta,\theta),\qquad\theta\in V^*.$$

We identify the real Clifford algebra $\mathrm{Cl}(V^*,h^*)$ with the \emph{Kähler-Atiyah algebra} of $(V^*,h^*)$, which we denote by $(\wedge V^*,\diamond)$ (see \cite{Chev54,Chev55}). The map $\diamond\colon\wedge V^*\times\wedge V^*\to\wedge V^*$ denotes the \emph{geometric product} determined by $h$. This is given by the linear and associative extension of the following expression: \begin{equation}\label{eq:def_geometric_product}
    \theta\diamond\alpha=\theta\wedge\alpha+\iota_{\theta^\sharp}\alpha,\qquad\theta\in V^*,\,\alpha\in\wedge V^*,
\end{equation} where $\theta^\sharp\in V$ denotes the $h$-dual vector of the one-form $\theta$.\medskip

In order to do computations with the geometric product, it is convenient to introduce the \emph{generalized products} of $(V^*,h^*)$. These are the bilinear operators:
\begin{equation}
\label{eq:bilinearDelta}
\triangle_k\colon\wedge^i V^*\times\wedge^j V^*\to\wedge^{i + j - 2k}V^*,
\end{equation}

\noindent
where $k=0,\ldots,d$, defined through the expansion: 
\begin{equation*}
\alpha\diamond\beta=\sum_{k=0}^d(-1)^{\binom{k+1}{2}+jk}\alpha\triangle_k\beta,\qquad\alpha\in\wedge^j V^*,\,\beta\in\wedge V^*.
\end{equation*}

\noindent
Choosing a basis $\{e_1,\ldots,e_d\}$ of $V$ we can express the generalized products as: 
\begin{equation*}
    \alpha\triangle_k\beta=\tfrac{1}{k!}h^{i_1j_1}\cdots h^{i_kj_k}(\iota_{e_{i_1}}\ldots\iota_{e_{i_k}}\alpha)\wedge(\iota_{e_{j_1}}\ldots\iota_{e_{j_k}}\beta).
\end{equation*}

\noindent
Below, we collect some useful properties of the generalized products that we will use in the computations.

\begin{prop}[{\cite{LB13,LBC13,LBC16}}]\label{prop:appendix_properties}
    Let $\alpha\in\wedge^aV^*$ and $\beta\in\wedge^bV^*$. Then: \begin{itemize}
        \item $\alpha\triangle_k\beta=0$ if $k>a$ or $k>b$.
        \item $\alpha\triangle_k\beta=(-1)^{(a-k)(b-k)}\beta\triangle_k\alpha$. In particular $\alpha\triangle_k\alpha=0$ if $a-k$ is odd.
        \item $\alpha\triangle_0\beta=\alpha\wedge\beta$ and $\alpha\triangle_a\beta=\escal{\alpha,\beta}$ if $b=a$.
        \item $\alpha\triangle_a(*\beta)=*(\beta\wedge\alpha)$ if $a+b\leq d$.
    \end{itemize}
\end{prop}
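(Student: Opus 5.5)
We will prove all four items directly from the coordinate expression
\[
\alpha\triangle_k\beta=\tfrac{1}{k!}h^{i_1j_1}\cdots h^{i_kj_k}(\iota_{e_{i_1}}\cdots\iota_{e_{i_k}}\alpha)\wedge(\iota_{e_{j_1}}\cdots\iota_{e_{j_k}}\beta).
\]
By multilinearity it suffices to work with an $h$-orthonormal basis $\{e_i\}$ with dual basis $\{e^i\}$ and $h^{ii}=\epsilon_i\in\{\pm1\}$. We may also take $\alpha=e^{I}$ and $\beta=e^{J}$ to be monomials with $|I|=a$ and $|J|=b$. The sum over $(i_1,\dots,i_k)$ then collapses to ordered $k$-tuples of distinct indices lying in $I\cap J$. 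The factor $1/k!$ is cancelled by the $k!$ orderings of each such subset, since simultaneously permuting the $i$'s and the $j$'s produces the same sign on both factors.

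The first item is then immediate: $k$ successive contractions of an $a$-form vanish when $k>a$, and likewise for $\beta$.

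For the second item we relabel the dummy indices $i\leftrightarrow j$ and use the symmetry $h^{ij}=h^{ji}$. This rewrites $\alpha\triangle_k\beta$ as the wedge product of $(\iota\cdots\iota\,\beta)$, a $(b-k)$-form, with $(\iota\cdots\iota\,\alpha)$, an $(a-k)$-form, in the opposite order. Commuting the two factors in the wedge produces exactly the sign $(-1)^{(a-k)(b-k)}$. Setting $\beta=\alpha$ gives $\alpha\triangle_k\alpha=-\alpha\triangle_k\alpha$ whenever $a-k$ is odd, hence $\alpha\triangle_k\alpha=0$ in that case.

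The third item splits into two cases. The case $k=0$ is the definition, giving $\alpha\triangle_0\beta=\alpha\wedge\beta$. For $k=a=b$ we note that $\iota_{e_{i_1}}\cdots\iota_{e_{i_a}}\alpha=\alpha(e_{i_a},\dots,e_{i_1})$, so each factor carries the same index reversal. The two reversal signs cancel in the product, leaving
\[
\tfrac{1}{a!}\,\alpha_{i_1\cdots i_a}\beta^{i_1\cdots i_a}=\langle\alpha,\beta\rangle,
\]
which is the paper's (Euclidean-type) convention for the induced inner product on forms.

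The fourth item is where we expect the real work, because it requires pinning down the Hodge-star convention $\alpha\wedge*\beta=\langle\alpha,\beta\rangle\nu$ and tracking signs carefully.

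1. We first establish the one-form case $*(\beta\wedge\theta)=\iota_{\theta^\sharp}*\beta$. To do this, wedge both sides with an arbitrary $(b+1)$-form $\gamma$. We then use that $\iota_{\theta^\sharp}$ is adjoint to $\theta\wedge$ with respect to $\langle\,,\rangle$, together with the defining property of $*$, to compare the two sides. Alternatively, this can be checked directly on orthonormal monomials.
2. Next, for decomposable $\alpha=\theta_1\wedge\cdots\wedge\theta_a$, we iterate step 1 to obtain $*(\beta\wedge\alpha)=\iota_{\theta_a^\sharp}\cdots\iota_{\theta_1^\sharp}*\beta$.
3. Finally, we compare with $\alpha\triangle_a(*\beta)$. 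By the first item this is a full contraction of $*\beta$ against $\alpha$, and the factor $\iota\cdots\iota\,\alpha$ is a scalar equal to $\alpha(e_{i_a},\dots,e_{i_1})$. That scalar reproduces exactly the reversed ordering of contractions $\iota_{\theta_a^\sharp}\cdots\iota_{\theta_1^\sharp}$ appearing in step 2.

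Linearity in $\alpha$ then gives the identity in general. The hypothesis $a+b\le d$ is only needed so that both sides are not trivially zero.
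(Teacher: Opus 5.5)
Your proposal is correct and follows essentially the paper's route. The paper calls the first three items clear and proves the fourth by iterating $\iota_v(*\beta)=*(\beta\wedge v^\flat)$ inside the coordinate formula for $\triangle_a$; the only difference is that it inserts the basis vectors $e_{j}$ and recombines the components of $\alpha$ directly, whereas you pass through decomposable $\alpha$ and then extend by linearity (your check of the one-form identity and your remark that $a+b\le d$ is not needed for validity are both accurate).
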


\begin{proof}
    The first three properties are clear. To prove the last property we use $\iota_v(*\beta)=*(\beta\wedge v^\flat)$ for $v\in V$ and $\beta\in\wedge V^*$, and $(e_j)^\flat=h_{jk}e^k$: \begin{align*}
        \alpha\triangle_a(*\beta)&=\tfrac{1}{a!}h^{i_1j_1}\cdots h^{i_aj_a}(\iota_{e_{i_1}}\ldots\iota_{e_{i_a}}\alpha)\wedge(\iota_{e_{j_1}}\ldots\iota_{e_{j_a}}(*\beta))\\
        &=\tfrac{1}{a!}h^{i_1j_1}\cdots h^{i_aj_a}\alpha_{{i_a}\cdots{i_1}}*(\beta\wedge(e_{j_a})^\flat\wedge\cdots\wedge(e_{j_1})^\flat)\\
        &=\tfrac{1}{a!}h^{i_1j_1}\cdots h^{i_aj_a}h_{j_ak_a}\cdots h_{j_1k_1}*(\beta\wedge\alpha_{{i_a}\cdots{i_1}}e^{k_a}\wedge\cdots\wedge e^{k_1})\\
        &=*(\beta\wedge(\tfrac{1}{a!}\alpha_{{i_1}\cdots{i_a}}e^{i_1}\wedge\cdots\wedge e^{i_a}))\\
        &=*(\beta\wedge\alpha).
    \end{align*}
\end{proof}

As a unital and associative algebra, the Kähler-Atiyah algebra $(\wedge V^*,\diamond)$ is isomorphic to the Clifford algebra $\mathrm{Cl}(V^*,h^*)$ through the $h$-dependent \emph{Chevalley-Riesz isomorphism} (see \cite{CLS21,LBC13,LBC16}), which we denote by: \begin{equation}\label{eq:ChevRiesz_iso}
    \Psi\colon(\wedge V^*,\diamond)\to\mathrm{Cl}(V^*,h^*).
\end{equation}

We denote by $\pi$ the \emph{parity automorphism} of the Kähler-Atiyah algebra, which is defined as the unique unital algebra automorphism which acts as minus the identity on $V^*\subset\wedge V^*$, and by $\tau$ the \emph{reversion anti-automorphism}, defined as the unique unital algebra anti-automorphism which acts as the identity on $V^*$. For $\alpha\in\wedge^aV^*$ we have: \begin{equation*}
    \pi(\alpha)=(-1)^a\alpha,\qquad\tau(\alpha)=(-1)^{\binom{a}{2}}\alpha.
\end{equation*}

Note that $\tau\circ\pi=\pi\circ\tau$ and $(\pi\circ\tau)(\alpha)=(-1)^{\binom{a+1}{2}}\alpha$.\medskip

Let $\nu\in\wedge^dV^*$ be the pseudo-Riemannian volume form on $(V,h)$. Then we have: $$\nu\diamond\nu=\begin{cases}
    (-1)^{\frac{p-q}{2}}&\mbox{ if $d$ is even},\\
    (-1)^{\frac{p-q-1}{2}}&\mbox{ if $d$ is odd},
\end{cases}$$ and: $$\nu\diamond\alpha=\pi^{d-1}(\alpha)\diamond\nu,\qquad \alpha\in\wedge V^*.$$

Moreover, the volume form $\nu$ satisfies the following properties:

\begin{prop}\label{prop:product_volume_form}
Let $(V,h)$ be a quadratic vector space of signature $(p,q)$ and dimension $d$. Then the following identities hold for all $\alpha\in\wedge V^*$:
\begin{equation}\label{eq:diamond_volume}
    \alpha\diamond\nu=*\tau(\alpha),\qquad \nu\diamond\alpha=*(\pi^{d-1}\circ\tau)(\alpha).
\end{equation}
\end{prop}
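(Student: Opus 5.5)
The plan is to reduce both identities to the case of a homogeneous form of degree $a$, and then to a single decomposable monomial in an orthonormal basis. Both sides are linear in $\alpha$, so this reduction is harmless. Fix an oriented orthonormal basis $\{e^1,\ldots,e^d\}$ of $(V^*,h^*)$ with $h^*(e^i,e^i)=\epsilon_i\in\{\pm1\}$, and take $\nu=e^1\wedge\cdots\wedge e^d$. For distinct indices the geometric product of basis covectors is just the wedge product, so $e^{i_1}\diamond\cdots\diamond e^{i_a}=e^{i_1}\wedge\cdots\wedge e^{i_a}$. It therefore suffices to treat $\alpha=e^{1}\wedge\cdots\wedge e^{a}$ after relabeling. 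Relabeling is harmless because permuting basis vectors changes $\nu$ and $*$ by the same sign on both sides.

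For the first identity I would compute $\alpha\diamond\nu=e^1\diamond\cdots\diamond e^a\diamond e^1\diamond\cdots\diamond e^d$. In this product I would move each $e^{k}$, for $k\le a$, from the left block to meet its partner in $\nu$, using the anticommutation $e^i\diamond e^j=-e^j\diamond e^i$ for $i\neq j$ and $e^k\diamond e^k=\epsilon_k$. The cleanest bookkeeping is to write $\alpha\diamond\nu=(e^1\diamond\cdots\diamond e^a)\diamond(e^1\diamond\cdots\diamond e^a)\diamond e^{a+1}\wedge\cdots\wedge e^d$. The first factor gives $(e^1\cdots e^a)^2=(-1)^{\binom a2}\epsilon_1\cdots\epsilon_a$, which accounts for the $\tau$-sign. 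The result is $(-1)^{\binom a2}\epsilon_1\cdots\epsilon_a\, e^{a+1}\wedge\cdots\wedge e^d$.

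I would then compare this with the Hodge star in the convention $\beta\wedge *\gamma=\langle\beta,\gamma\rangle\nu$. That convention gives $*(e^1\wedge\cdots\wedge e^a)=\epsilon_1\cdots\epsilon_a\,e^{a+1}\wedge\cdots\wedge e^d$, and hence $\alpha\diamond\nu=*\tau(\alpha)$.

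For the second identity I would use the commutation rule already stated before the proposition, namely $\nu\diamond\alpha=\pi^{d-1}(\alpha)\diamond\nu$. Applying the first identity to $\pi^{d-1}(\alpha)$ then gives $*\tau(\pi^{d-1}\alpha)=*(\pi^{d-1}\circ\tau)(\alpha)$, since $\pi$ and $\tau$ commute.

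The commutation rule itself is checked on generators: $\nu\diamond\theta=(-1)^{d-1}\theta\diamond\nu$. This holds because $\theta$ anticommutes with the $d-1$ factors of $\nu$ orthogonal to it and commutes with the one parallel to it.

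The only real obstacle is sign bookkeeping, and in particular the consistency of the Hodge star convention with the ordering $\nu=\theta\wedge\ldots$. I would verify the conventions against the already-used identities $*\vartheta=\vartheta\diamond\nu$ in four Riemannian dimensions and $*_g u=u\diamond\nu_g$ in Section 7. Those checks fix the convention $\beta\wedge*\gamma=\langle\beta,\gamma\rangle\nu$.
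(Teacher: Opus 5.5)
Your proposal is correct and follows essentially the same route as the paper: reduce to orthonormal monomials, check $\alpha\diamond\nu=*\tau(\alpha)$ by direct Clifford computation with the convention $\beta\wedge*\gamma=\langle\beta,\gamma\rangle\nu$, and then obtain the second identity from $\nu\diamond\alpha=\pi^{d-1}(\alpha)\diamond\nu$. The only cosmetic difference is that the paper works with arbitrary index sets, writing $*\alpha$ as $\langle\alpha,\alpha\rangle\,\alpha^{-1}\diamond\nu$ via $\alpha\diamond*\alpha=\alpha\wedge*\alpha$, whereas you relabel so that $\alpha$ is an initial segment of the basis; that relabeling is legitimate because reversing the orientation changes the sign of both sides.
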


\begin{proof}
    Since multiplication by the volume form $\nu$ is $\R$-linear, it suffices to consider homogeneous elements $\alpha=e^{i_1}\wedge\cdots\wedge e^{i_k}$ with $1\leq i_1<\cdots<i_k\leq d$, where $\{e^1,\ldots,e^d\}$ is an orthonormal basis of $(V^*,h^*)$. Set $\varepsilon_i:=h^*(e^i,e^i)\in\Z_2$. Let us start with $\alpha=e^i$. On one hand we have: $$e^i\diamond\nu=(-1)^{i-1}\varepsilon_ie^1\diamond\cdots\diamond e^{i-1}\diamond e^{i+1}\diamond\cdots\diamond e^d.$$
    
    On the other hand we have $e^i\diamond(*e^i)=e^i\wedge(*e^i)=h^*(e^i,e^i)\nu=\varepsilon_i\nu$, thus: $$*e^i=\varepsilon_i(e^i)^{-1}\diamond\nu=(-1)^{i-1}\varepsilon_ie^1\diamond\cdots\diamond e^{i-1}\diamond e^{i+1}\diamond\cdots\diamond e^d$$ and $e^i\diamond\nu=*e^i=*\tau(e^i)$. Arguing analogously for $\alpha=e^{i_1}\wedge\cdots\wedge e^{i_k}$ we get on one hand: $$\alpha\diamond\nu=(-1)^{i_1+\cdots+i_k-k}\varepsilon_{i_1}\cdots\varepsilon_{i_k}e^1\diamond\cdots\diamond e^{i_1-1}\diamond e^{i_1+1}\diamond\cdots\diamond e^{i_k-1}\diamond e^{i_k+1}\diamond\cdots\diamond e^d,$$ and on the other hand: $$(-1)^{\binom{k}{2}}*(e^{i_1}\wedge\cdots\wedge e^{i_k})=\varepsilon_{i_1}\cdots\varepsilon_{i_k}(e^{i_1})^{-1}\diamond\cdots\diamond(e^{i_k})^{-1}\diamond\nu=\alpha\diamond\nu.$$
    
    Hence $\alpha\diamond\nu=(-1)^{\binom{k}{2}}*\alpha=*\tau(\alpha)$. Using $\nu\diamond\alpha=(-1)^{k(d-1)}\alpha\diamond\nu=\pi^{d-1}(\alpha)\diamond\nu$ we conclude.
\end{proof}


\bibliographystyle{myamsplain}
\bibliography{biblio}

\end{document}

%% file: Preamble.tex
\usepackage[a4paper,
left=1in,right=1in,top=1.2in,bottom=1.2in,%
footskip=.25in]{geometry}

\usepackage{bm}
\usepackage{latexsym, amsmath, amstext, amssymb, amsfonts, amscd, bm, array, multirow, amsbsy, mathrsfs, stmaryrd}
\usepackage{amsthm}
\usepackage{t1enc}
\usepackage[mathscr]{eucal}
\usepackage{indentfirst}
\usepackage{pb-diagram}
\usepackage{graphicx}
\usepackage{fancyhdr}
\usepackage{fancybox}
\usepackage{color}
\usepackage{tikz-cd}
\usetikzlibrary{arrows}
\usepackage[all]{xy}
\usepackage{hyperref}
\usepackage{tikz}
\usepackage{xparse}
\hypersetup{colorlinks=false,pdfborderstyle={/S/U/W 0}}
\usetikzlibrary{matrix}
\usepackage{upgreek}
\usepackage[utf8]{inputenc}
\usepackage[T1]{fontenc}
\usepackage{bbm}
\usepackage{enumitem}
\setlist[enumerate]{leftmargin=*,noitemsep}
\setlist[itemize]{leftmargin=*,noitemsep}
\usepackage{float}
\usepackage{soul}

\newcommand{\doi}[1]{\url{https://doi.org/#1}}
\usepackage[noadjust]{cite}

\makeatletter
\def\@defaultbiblabelstyle#1{[#1]}
\makeatother

\usepackage{url}
\theoremstyle{plain}
\newtheorem{thm}{Theorem}[section]

\newtheorem{prop}[thm]{Proposition}

\newtheorem{lemma}[thm]{Lemma}
\newtheorem{cor}[thm]{Corollary}

\theoremstyle{definition}
\newtheorem{definition}[thm]{Definition}

\theoremstyle{remark}
\newtheorem{remark}[thm]{Remark}
\newtheorem{example}[thm]{Example}

\newtheorem*{ack}{Acknowledgements}

\newcommand{\Sym}{\mathrm{Sym}}

\newcommand{\End}{\mathrm{End}}

\newcommand{\Mat}{\mathrm{Mat}}

\DeclareFontFamily{U}{rsf}{}
\DeclareFontShape{U}{rsf}{m}{n}{<5> <6> rsfs5 <7> <8> <9> rsfs7 <10-> rsfs10}{}
\DeclareMathAlphabet\Scr{U}{rsf}{m}{n}

\def\N{\mathbb{N}}
\def\Z{\mathbb{Z}}
\def\C{\mathbb{C}}
\def\R{\mathbb{R}}

\def\rk{{\rm rk}}

\def\dd{\mathrm{d}}

\def\v{\mathbf{\mathrm{v}}}

\def\Ad{\mathrm{Ad}}

\def\cQ{\mathcal{Q}}

\def\Id{\mathrm{Id}}

\def\fra{\mathfrak{a}}
\def\frq{\mathfrak{q}}
\def\frf{\mathfrak{f}}

\newcommand{\be}{\begin{equation*}}
\newcommand{\ee}{\end{equation*}}
\newcommand{\ben}{\begin{equation}}
\newcommand{\een}{\end{equation}}
\newcommand{\beqa}{\begin{eqnarray*}}
\newcommand{\eeqa}{\end{eqnarray*}}
\newcommand{\beqan}{\begin{eqnarray}}
\newcommand{\eeqan}{\end{eqnarray}}

\newcommand{\Tr}{\mathrm{Tr}}

\def\cR{{\mathcal R}}

\def\scB{\Scr B}
\def\scS{\Scr H}

\def\cH{\mathcal{H}}

\def\Spin{\mathrm{Spin}}

\def\Spin{\mathrm{Spin}}

\def\SO{\mathrm{SO}}

\def\U{\mathrm{U}}

\def\cD{\mathcal{D}}

\def\cA{\mathcal{A}}
\def\cE{\mathcal{E}}
\def\hcE{\smash{\widehat{\mathcal{E}}}}
\def\cI{\mathcal{I}}
\def\cP{\mathcal{P}}

\def\cP{\mathcal{P}}
\def\cF{\mathcal{F}}

\def\G_2{\mathrm{G_2}}

\def\P{\mathbb{P}}

\newcommand{\Lor}{{\rm Lor}}

\def\v{\mathbf{v}}

\def\Im{\mathrm{Im}}

\def\G{\mathrm{G}}

\def\R{\mathbb{R}}

\def\mCl{\mathbb{C}\mathrm{l}}

\def\cL{\mathcal{L}}
\def\cH{\mathcal{H}}
\def\dd{\mathrm{d}}

\newcommand{\escal}[1]{\langle#1\rangle}

\def\fru{\mathfrak{u}}
\def\frv{\mathfrak{v}}

\def\frc{\mathfrak{F}}
\def\frc{\mathfrak{c}}

\def\Ric{\mathrm{Ric}}

\def\Hess{\mathrm{Hess}}

\renewcommand{\ker}{\mathrm{Ker}}

\newcolumntype{P}[1]{>{\centering\arraybackslash}p{#1}}

\usepackage[math]{anttor}
\usepackage{orcidlink}

\allowdisplaybreaks

%% file: biblio.bib
@MISC{Algebraic_Complex_Square_2025,
    author       = "A. Gil-García and C. S. Shahbazi",
    title        = "The algebraic square of an irreducible complex spinor",
    howpublished = "\url{https://arxiv.org/abs/2510.13801}",
    year         = "2025"
}

@ARTICLE{CLS21,
    author  = "V. Cortés and C. Lazaroiu and C. S. Shahbazi",
    title   = "Spinors of real type as polyforms and the generalized {K}illing equation",
    journal = "Math. Z.",
    year    = "2021",
    volume  = "299",
    number  = "3-4",
    pages   = "1351-1419",
    note    = "\doi{10.1007/s00209-021-02726-6}"
}

@MISC{Sha24,
    author       = "C. S. Shahbazi",
    title        = "Differential spinors and {K}undt three-manifolds with skew-torsion",
    howpublished = "\url{https://arxiv.org/abs/2405.03756}",
    year         = "2024"
}

@ARTICLE{LBC13,
    author  = "C. I. Lazaroiu and E. M. Babalic and I. A. Coman",
    title   = "The geometric algebra of {F}ierz identities in arbitrary dimensions and signatures",
    journal = "J. High Energy Phys.",
    year    = "2013",
    volume  = "2013",
    number  = "9",
    pages   = "156",
    note    = "\doi{10.1007/JHEP09(2013)156}"
}

@ARTICLE{LBC16,
    author  = "C. I. Lazaroiu and E. M. Babalic and I. A. Coman",
    title   = "Geometric algebra techniques in flux compactifications",
    journal = "Adv. High Energy Phys.",
    year    = "2016",
    volume  = "2016",
    number  = "1",
    pages   = "7292534 (42 pp)",
    note    = "\doi{10.1155/2016/7292534}"
}

@ARTICLE{AC97,
    author  = "D. V. Alekseevsky and V. Cortés",
    title   = "Classification of ${N}$-(super)-extended {P}oincaré algebras and bilinear invariants of the spinor representation of $\mathrm{Spin}(p,q)$",
    journal = "Commun. Math. Phys.",
    year    = "1997",
    volume  = "183",
    number  = "3",
    pages   = "477-510",
    note    = "\doi{10.1007/s002200050039}"
}

@ARTICLE{ACDVP05,
    author  = "D. V. Alekseevsky and V. Cortés and C. Devchand and A. Van Proeyen",
    title   = "Polyvector super-{P}oincaré algebras",
    journal = "Commun. Math. Phys.",
    year    = "2005",
    volume  = "253",
    number  = "2",
    pages   = "385-422",
    note    = "\doi{10.1007/s00220-004-1155-y}"
}

@ARTICLE{LB13,
    author  = "C. I. Lazaroiu and E. M. Babalic",
    title   = "Geometric algebra techniques in flux compactifications {II}",
    journal = "J. High Energy Phys.",
    year    = "2013",
    volume  = "2013",
    number  = "6",
    pages   = "054",
    note    = "\doi{10.1007/JHEP06(2013)054}"
}

@ARTICLE{LS18,
    author  = "C. I. Lazaroiu and C. S. Shahbazi",
    title   = "Complex {L}ipschitz structures and bundles of complex {C}lifford modules",
    journal = "Differential Geom. Appl.",
    year    = "2018",
    volume  = "61",
    pages   = "147-169",
    note    = "\doi{10.1016/j.difgeo.2018.08.006}"
}

@ARTICLE{LS19,
    author  = "C. I. Lazaroiu and C. S. Shahbazi",
    title   = "Real pinor bundles and real {L}ipschitz structures",
    journal = "Asian J. Math.",
    year    = "2019",
    volume  = "23",
    number  = "5",
    pages   = "749-836",
    note    = "\doi{10.4310/AJM.2019.v23.n5.a3}"
}

@ARTICLE{LS22_complex_type,
    author  = "C. Lazaroiu and C. S. Shahbazi",
    title   = "Dirac operators on real spinor bundles of complex type",
    journal = "Differential Geom. Appl.",
    year    = "2022",
    volume  = "80",
    pages   = "101849 (53 pp)",
    note    = "\doi{10.1016/j.difgeo.2022.101849}"
}

@ARTICLE{BGM05,
    author  = "C. Bär and P. Gauduchon and A. Moroianu",
    title   = "Generalized cylinders in semi-{R}iemannian and spin geometry",
    journal = "Math. Z.",
    year    = "2005",
    volume  = "249",
    number  = "3",
    pages   = "545-580",
    note    = "\doi{10.1007/s00209-004-0718-0}"
}

@ARTICLE{Ikemakhen06,
    author  = "A. Ikemakhen",
    title   = "Parallel spinors on pseudo-{R}iemannian spin$^c$ manifolds",
    journal = "J. Geom. Phys.",
    year    = "2006",
    volume  = "56",
    number  = "9",
    pages   = "1473-1483",
    note    = "\doi{10.1016/j.geomphys.2005.07.005}"
}

@ARTICLE{Ikemakhen07,
    author  = "A. Ikemakhen",
    title   = "Parallel spinors on {L}orentzian spin$^c$ manifolds",
    journal = "Differential Geom. Appl.",
    year    = "2007",
    volume  = "25",
    number  = "3",
    pages   = "299-308",
    note    = "\doi{10.1016/j.difgeo.2006.11.008}"
}

@ARTICLE{Murcia:2020zig,
    author  = "Á. Murcia and C. S. Shahbazi",
    title   = "Parallel spinors on globally hyperbolic {L}orentzian four-manifolds",
    journal = "Ann. Glob. Anal. Geom.",
    year    = "2022",
    volume  = "61",
    number  = "2",
    pages   = "253-292",
    note    = "\doi{10.1007/s10455-021-09808-y}"
}

@ARTICLE{Murcia:2021dur,
    author  = "Á. Murcia and C. S. Shahbazi",
    title   = "Parallel spinor flows on three-dimensional {C}auchy hypersurfaces",
    journal = "J. Phys. A: Math. Theor.",
    year    = "2023",
    volume  = "56",
    number  = "20",
    pages   = "205204 (28 pp)",
    note    = "\doi{10.1088/1751-8121/accd2f}"
}

@ARTICLE{Murray1996,
    author  = "M. K. Murray",
    title   = "Bundle gerbes",
    journal = "J. London Math. Soc.",
    year    = "1996",
    volume  = "54",
    number  = "2",
    pages   = "403-416",
    note    = "\doi{10.1112/jlms/54.2.403}"
}

@ARTICLE{Figueroa_Lorentzian_6d_2018,
    author  = "P. de Medeiros and J. Figueroa-O'Farrill and A. Santi",
    title   = "Killing superalgebras for {L}orentzian six-manifolds",
    journal = "J. Geom. Phys.",
    year    = "2018",
    volume  = "132",
    pages   = "13-44",
    note    = "\doi{10.1016/j.geomphys.2018.05.019}"
}

@ARTICLE{Krasnov2024,
    author  = "K. Krasnov",
    title   = "Lorentzian {C}ayley form",
    journal = "J. Geom. Phys.",
    year    = "2024",
    volume  = "201",
    pages   = "105211 (25 pp)",
    note    = "\doi{10.1016/j.geomphys.2024.105211}"
}

@ARTICLE{Moroianu1997,
    author  = "A. Moroianu",
    title   = "Parallel and {K}illing spinors on spin$^c$ manifolds",
    journal = "Commun. Math. Phys.",
    year    = "1997",
    volume  = "187",
    number  = "2",
    pages   = "417-427",
    note    = "\doi{10.1007/s002200050142}"
}

@ARTICLE{Herzlich_Moroianu_1999,
    author  = "M. Herzlich and A. Moroianu",
    title   = "Generalized {K}illing spinors and conformal eigenvalue estimates for spin$^c$ manifolds",
    journal = "Ann. Glob. Anal. Geom.",
    year    = "1999",
    volume  = "17",
    number  = "4",
    pages   = "341-370",
    note    = "\doi{10.1023/A:1006546915261}"
}

@ARTICLE{Grosse_Nakad_2015,
    author  = "N. Große and R. Nakad",
    title   = "Complex generalized {K}illing spinors on {R}iemannian spin$^c$ manifolds",
    journal = "Results Math.",
    year    = "2015",
    volume  = "67",
    number  = "1-2",
    pages   = "177-195",
    note    = "\doi{10.1007/s00025-014-0401-7}"
}

@MISC{Lockman_2025,
    author       = "S. Lockman",
    title        = "Semi-{R}iemannian spin$^c$ manifolds carrying generalized {K}illing spinors and the classification of {R}iemannian spin$^c$ manifolds admitting a type {I} imaginary generalized {K}illing spinor",
    howpublished = "\url{https://arxiv.org/abs/2509.08477}",
    year         = "2025"
}

@ARTICLE{Freedman_1977,
    author  = "D. Z. Freedman",
    title   = "Supergravity with axial-gauge invariance",
    journal = "Phys. Rev. D",
    year    = "1977",
    volume  = "15",
    number  = "4",
    pages   = "1173-1174",
    note    = "\doi{10.1103/PhysRevD.15.1173}"
}

@ARTICLE{FriedrichTrautman,
    author  = "T. Friedrich and A. Trautman",
    title   = "Spin spaces, {L}ipschitz groups, and spinor bundles",
    journal = "Ann. Glob. Anal. Geom.",
    year    = "2000",
    volume  = "18",
    number  = "3-4",
    pages   = "221-240",
    note    = "\doi{10.1023/A:1006713405277}"
}

@BOOK{Chev54,
    author    = "C. Chevalley",
    title     = "The algebraic theory of spinors",
    publisher = "Columbia University Press",
    year      = "1954",
    note      = "\doi{10.7312/chev93056}"
}

@BOOK{Chev55,
    author    = "C. Chevalley",
    title     = "The construction and study of certain important algebras",
    publisher = "The Mathematical Society of Japan",
    year      = "1955"
}

@BOOK{Friedrich2000,
    author    = "T. Friedrich",
    title     = "Dirac operators in {R}iemannian geometry",
    publisher = "American Mathematical Society",
    year      = "2000"
}

@BOOK{Mei13,
    author    = "E. Meinrenken",
    title     = "Clifford algebras and {L}ie theory",
    publisher = "Springer Berlin, Heidelberg",
    year      = "2013",
    note      = "\doi{10.1007/978-3-642-36216-3}"
}

@BOOK{HarveyBook,
    author    = "F. R. Harvey",
    title     = "Spinors and calibrations",
    publisher = "Academic Press",
    year      = "1990",
    volume    = "9",
    series    = "Perspectives in mathematics"
}

@BOOK{Petersen2016,
    author    = "P. Petersen",
    title     = "Riemannian geometry",
    publisher = "Springer Cham",
    year      = "2016",
    note      = "\doi{10.1007/978-3-319-26654-1}"
}

@ARTICLE{AtiyahSeminar,
    author  = "M. Atiyah",
    title   = "What is a spinor?",
    journal = "Conference at the Institut des Hautes Etudes Scientifiques",
    year    = "2013",
    pages   = "Paris",
    note    = "\url{https://www.youtube.com/watch?v=SBdW978Ii_E}"
}

@PHDTHESIS{ShahbaziThesis,
    author = "C. S. Shahbazi",
    title  = "Torsion parallel spinors on {L}orentzian four-manifolds and supersymmetric evolution flows on bundle gerbes",
    school = "Universität Hamburg",
    year   = "2025",
    note   = "\url{https://ediss.sub.uni-hamburg.de/handle/ediss/11620}"
}

@ARTICLE{Gutowski:2003rg,
    author  = "J. B. Gutowski and D. Martelli and H. S. Reall",
    title   = "All supersymmetric solutions of minimal supergravity in six dimensions",
    journal = "Class. Quantum Grav.",
    year    = "2003",
    volume  = "20",
    number  = "23",
    pages   = "5049-5078",
    note    = "\doi{10.1088/0264-9381/20/23/008}"
}

@ARTICLE{Tod:1983pm,
    author  = "K. P. Tod",
    title   = "All metrics admitting super-covariantly constant spinors",
    journal = "Phys. Lett. B",
    year    = "1983",
    volume  = "121",
    number  = "4",
    pages   = "241-244",
    note    = "\doi{10.1016/0370-2693(83)90797-9}"
}

@ARTICLE{Frolov:2005in,
    author  = "V. P. Frolov and D. V. Fursaev",
    title   = "Gravitational field of a spinning radiation beam pulse in higher dimensions",
    journal = "Phys. Rev. D",
    year    = "2005",
    volume  = "71",
    pages   = "104034 (16 pp)",
    note    = "\doi{10.1103/PhysRevD.71.104034}"
}

@ARTICLE{brinkmann1925einstein,
    author  = "H. W. Brinkmann",
    title   = "Einstein spaces which are mapped conformally on each other",
    journal = "Math. Ann.",
    year    = "1925",
    volume  = "94",
    number  = "1",
    pages   = "119-145",
    note    = "\doi{10.1007/BF01208647}"
}

@ARTICLE{Meessen:2006tu,
    author  = "P. Meessen and T. Ortín",
    title   = "The supersymmetric configurations of ${N}=2$, $d=4$ supergravity coupled to vector supermultiplets",
    journal = "Nucl. Phys. B",
    year    = "2006",
    volume  = "749",
    number  = "1-3",
    pages   = "291-324",
    note    = "\doi{10.1016/j.nuclphysb.2006.05.025}"
}

@ARTICLE{Meessen:2010ph,
    author  = "P. Meessen and T. Ortín",
    title   = "Ultracold spherical horizons in gauged ${N}=1$, $d=4$ supergravity",
    journal = "Phys. Lett. B",
    year    = "2010",
    volume  = "693",
    number  = "3",
    pages   = "358-361",
    note    = "\doi{10.1016/j.physletb.2010.08.050}"
}

@ARTICLE{Samann:2016svf,
    author  = "C. Sämann and R. Steinbauer and R. Svarc",
    title   = "Completeness of general pp-wave spacetimes and their impulsive limit",
    journal = "Class. Quantum Grav.",
    year    = "2016",
    volume  = "33",
    number  = "21",
    pages   = "215006 (27 pp)",
    note    = "\doi{10.1088/0264-9381/33/21/215006}"
}

@ARTICLE{Candela:2002rr,
    author  = "A. M. Candela and J. L. Flores and M. Sánchez",
    title   = "On general plane fronted waves. {G}eodesics",
    journal = "Gen. Relativ. Gravit.",
    year    = "2003",
    volume  = "35",
    number  = "4",
    pages   = "631-649",
    note    = "\doi{10.1023/A:1022962017685}"
}

@ARTICLE{Flores:2002fx,
    author  = "J. L. Flores and M. Sánchez",
    title   = "Causality and conjugate points in general plane waves",
    journal = "Class. Quantum Grav.",
    year    = "2003",
    volume  = "20",
    number  = "11",
    pages   = "2275-2291",
    note    = "\doi{10.1088/0264-9381/20/11/322}"
}

@ARTICLE{Duff_Lu_1994,
    author  = "M. J. Duff and J. X. Lu",
    title   = "Black and super $p$-branes in diverse dimensions",
    journal = "Nucl. Phys. B",
    year    = "1994",
    volume  = "416",
    number  = "1",
    pages   = "301-334",
    note    = "\doi{10.1016/0550-3213(94)90586-X}"
}

@ARTICLE{Gauntlett:2002nw,
    author  = "J. P. Gauntlett and J. B. Gutowski and C. M. Hull and S. Pakis and H. S. Reall",
    title   = "All supersymmetric solutions of minimal supergravity in five dimensions",
    journal = "Class. Quantum Grav.",
    year    = "2003",
    volume  = "20",
    number  = "21",
    pages   = "4587-4634",
    note    = "\doi{10.1088/0264-9381/20/21/005}"
}

@MISC{Chamseddine:2003yy,
    author       = "A. Chamseddine and J. Figueroa-O'Farrill and W. Sabra",
    title        = "Supergravity vacua and {L}orentzian {L}ie groups",
    howpublished = "\url{https://arxiv.org/abs/hep-th/0306278}",
    year         = "2003"
}

@MISC{Carmona_2026,
    author       = "J. L. {Carmona Jiménez}",
    title        = "On generalized imaginary {S}pin$^c$-{K}illing spinors",
    howpublished = "\url{https://arxiv.org/abs/2605.07813}",
    year         = "2026"
}

@ARTICLE{Legramandi:2019ulq,
    author  = "A. Legramandi and A. Tomasiello",
    title   = "Breaking supersymmetry with pure spinors",
    journal = "J. High Energy Phys.",
    year    = "2020",
    volume  = "2020",
    number  = "11",
    pages   = "098",
    note    = "\doi{10.1007/JHEP11(2020)098}"
}

@BOOK{Tomasiello:2022dwe,
    author    = "A. Tomasiello",
    title     = "Geometry of string theory compactifications",
    publisher = "Cambridge University Press",
    year      = "2022",
    note      = "\doi{10.1017/9781108635745}"
}

@BOOK{Ortin:2015hya,
    author    = "T. Ortín",
    title     = "Gravity and strings",
    publisher = "Cambridge University Press",
    year      = "2015",
    note      = "\doi{10.1017/CBO9781139019750}"
}

@BOOK{MarioStreets,
    author    = "M. Garcia-Fernandez and J. Streets",
    title     = "Generalized {R}icci flow",
    publisher = "American Mathematical Society",
    year      = "2021",
    note      = "\doi{10.1090/ulect/076}"
}

@MISC{Garcia-Fernandez:2015lsa,
    author       = "M. Garcia-Fernandez and C. S. Shahbazi",
    title        = "Self-dual generalized metrics for pure $\mathcal{N}=1$ six-dimensional supergravity",
    howpublished = "\url{https://arxiv.org/abs/1505.03088}",
    year         = "2015"
}

@ARTICLE{StreetsUstinovskiy,
    author  = "J. Streets and Y. Ustinovskiy",
    title   = "Classification of generalized {K}ähler-{R}icci solitons on complex surfaces",
    journal = "Comm. Pure Appl. Math.",
    year    = "2021",
    volume  = "74",
    number  = "9",
    pages   = "1896-1914",
    note    = "\doi{10.1002/cpa.21947}"
}

@ARTICLE{StreetsUstinovskiyII,
    author  = "J. Streets and Y. Ustinovskiy",
    title   = "The {G}ibbons-{H}awking ansatz in generalized {K}ähler geometry",
    journal = "Commun. Math. Phys.",
    year    = "2022",
    volume  = "391",
    number  = "2",
    pages   = "707-778",
    note    = "\doi{10.1007/s00220-022-04329-6}"
}

@MISC{Bunk:2025glt,
    author       = "S. Bunk and M. {Pino Carmona} and C. S. Shahbazi",
    title        = "The {C}auchy problem for gradient generalized {R}icci solitons on a bundle gerbe",
    howpublished = "\url{https://arxiv.org/abs/2510.25888}",
    year         = "2025"
}

@ARTICLE{StreetsI,
    author  = "J. Streets",
    title   = "Classification of solitons for pluriclosed flow on complex surfaces",
    journal = "Math. Ann.",
    year    = "2019",
    volume  = "375",
    number  = "3-4",
    pages   = "1555-1595",
    note    = "\doi{10.1007/s00208-019-01887-4}"
}

@ARTICLE{Murcia:2019cck,
    author  = "Á. Murcia and C. S. Shahbazi",
    title   = "Contact metric three manifolds and {L}orentzian geometry with torsion in six-dimensional supergravity",
    journal = "J. Geom. Phys.",
    year    = "2020",
    volume  = "158",
    pages   = "103868 (37 pp)",
    note    = "\doi{10.1016/j.geomphys.2020.103868}"
}
